\documentclass[letter,12pt,table]{article}

\usepackage[english]{babel}
\usepackage[utf8]{inputenc}

\usepackage[margin=1 in, top=1 in, bottom= 1.2 in]{geometry}

\makeatletter
\g@addto@macro\normalsize{%
  \setlength\abovedisplayskip{7pt}
  \setlength\belowdisplayskip{7pt}
  \setlength\abovedisplayshortskip{7pt}
  \setlength\belowdisplayshortskip{7pt}
}
\makeatother

\usepackage{tocloft}

\usepackage{amsfonts}
\usepackage{mathrsfs}
\usepackage{bbm}
\usepackage{latexsym}
\usepackage{math dots}
\usepackage{amssymb}
\usepackage{mathtools}
\usepackage{relsize}
\usepackage{todonotes}
\usepackage{enumitem}
\setlist{nolistsep} 	%[label=(\roman{enumi}),ref=(\roman{enumi}),leftmargin=*]
\usepackage{amsthm}
\usepackage{array}
\usepackage{tikz}
\usetikzlibrary{decorations.pathreplacing}

\usepackage{xcolor}
\definecolor{Color1}{rgb}{0.0, 0.42, 0.47}%Blue
\definecolor{Color2}{rgb}{0.78, 0.11, 0.0}%Scarlet
\definecolor{Color3}{RGB}{209, 3, 115}
\definecolor{Color4}{RGB}{3, 141, 88}

\usepackage{titlesec}
\titleformat{\section}
  {\large\center\bfseries}
  {\thesection.}{.7em}{}
\titlespacing*{\section}{0pt}{3.5ex plus 0ex minus 0ex}{1.5ex plus 0ex}
\titleformat{\subsection}
  {\center\bfseries}
  {\thesubsection.}{.7em}{}
\titlespacing*{\subsection}{0pt}{3.5ex plus 0ex minus 0ex}{1.5ex plus 0ex}
\titleformat{\subsubsection}
  {\center\bfseries}
  {\thesubsubsection.}{.7em}{}
\titlespacing*{\subsubsection}{0pt}{3.5ex plus 0ex minus 0ex}{1.5ex plus 0ex}

\addto\captionsenglish{}

\usepackage{titling}
\makeatletter
\renewenvironment{abstract}{
\begin{center}
{\bfseries \large\abstractname\vspace{\z@}}
\end{center}
\quotation
}
\makeatother

\usepackage[linktocpage=true]{hyperref}
\usepackage[capitalize]{cleveref}
\hypersetup{citecolor = Color2,colorlinks,
			linkcolor = black,
			urlcolor = Color2}

\newtheoremstyle{plain}{3mm}{3mm}{\slshape}{}{\bfseries}{.}{.5em}{}
\newtheoremstyle{definition}{2mm}{2mm}{}{}{\bfseries}{.}{.5em}{}
\theoremstyle{plain}
	
\newtheorem{Theorem}{Theorem}

\newtheorem{Lemma}[Theorem]{Lemma}
\newtheorem{Proposition}[Theorem]{Proposition}
\newtheorem{Corollary}[Theorem]{Corollary}

\newtheorem{Question}[Theorem]{Question}
\newtheorem{Problem}[Theorem]{Problem}
\theoremstyle{definition}
\newtheorem{Definition}[Theorem]{Definition}
\newtheorem{Remark}[Theorem]{Remark}
\newtheorem{Example}[Theorem]{Example}

\theoremstyle{plain} 
\newcounter{MainTheoremCounter}

\theoremstyle{plain}
\newtheorem*{namedthm}{\namedthmname}
\newcounter{namedthm}
\makeatletter
	\newenvironment{named}[2]
	{\def\namedthmname{#1}
	\refstepcounter{namedthm}
	\namedthm[#2]\def\@currentlabel{#1}}
	{\endnamedthm}
\makeatother
\usepackage{chngcntr}
\counterwithin{Theorem}{section}

\numberwithin{equation}{section}

\allowdisplaybreaks

\newcommand{\Erdos}{Erd\H{o}s}

\newcommand{\Turan}{Tur{\'a}n}

\newcommand{\Oh}{{\rm O}}
\newcommand{\oh}{{\rm o}}
\newcommand{\N}{\mathbb{N}}
\newcommand{\Z}{\mathbb{Z}}
\newcommand{\R}{\mathbb{R}}
\newcommand{\C}{\mathbb{C}}
\newcommand{\Q}{\mathbb{Q}}
\newcommand{\T}{\mathbb{T}}

\newcommand{\define}[1]{{\itshape #1}}

\renewcommand{\epsilon}{\varepsilon}
\renewcommand{\leq}{\leqslant}
\renewcommand{\geq}{\geqslant}
\renewcommand{\setminus}{\backslash}
\renewcommand{\Re}{{\rm Re}}

\renewcommand{\subset}{\subseteq}

\newcommand{\E}{\mathbb{E}}

\newcommand{\1}{1}
\usepackage[normalem]{ulem}

\usepackage[makeroom]{cancel}

\DeclareMathOperator*{\Leb}{Leb}

\newcommand{\A}{\mathcal{A}}
\newcommand{\innprod}[2]{\left\langle #1, #2 \right\rangle}

\newcommand{\str}{\mathrm{str}}
\newcommand{\unf}{\mathrm{unf}}

\newcommand{\inv}{\mathrm{inv}}
\newcommand{\erg}{\mathrm{erg}}

\newcommand{\Bohr}{\mathrm{Bohr}}
\usepackage{bm}

\usepackage[refpage]{nomencl}
\renewcommand*\pagedeclaration[1]{\leaders\hbox to 0.8em{\hss.\hss}\hfill \makebox[2em][r]{\hyperlink{page.#1}{#1}}}

\makenomenclature

\author{By~~{\scshape Ethan Ackelsberg}~~and~~{\scshape Florian~K.~Richter}}
\date{\small \today}
\title{\bfseries An inverse theorem for sumsets of sets of positive density in the integers}

\begin{document}

\maketitle
\begin{abstract}
Let $d(\cdot)$ denote the natural density on the positive integers. We characterize all sets $A,B$ with positive density satisfying $d(A+B)=d(A)+d(B)$, under the assumption that the two sets are not both contained in a proper finite union of residue classes. This gives a new inverse theorem for Kneser's sumset inequality in the integers, and provides a partial answer to a long-standing open question of \Erdos{} and Graham.
\end{abstract}

\tableofcontents
\thispagestyle{empty}

%==========================================================
%==========================================================

%SECTION
\section{Introduction}

A central topic in additive combinatorics is the study of \define{sumsets}
\[
A+B=\{a+b:a\in A,\,b\in B\},
\]
where $A$ and $B$ are subsets of an abelian group $(G,+)$.
In the setting where the ambient group is compact, Kneser established the following landmark result.

\begin{Theorem}[Kneser's sumset inequality in compact abelian groups,~{\cite[Satz~1]{Kneser56}}]
\label{thm_Kneser_for_compact_groups}
Let $G$ be a compact abelian group with Haar measure $m$.
If $A, B \subseteq G$ are non-empty Borel sets then either
\begin{equation}
\label{eqn_Kneser_for_compact_groups}
m(A+B)\geq m(A)+m(B) %\min\{m(G),\, m(A)+m(B)\},
\end{equation}
or there exists a %proper
finite index subgroup $H$ of $G$ such that $A+B$ is a finite union of cosets of $H$ and $m(A+B)=m(A+H)+m(B+H)-m(H)$.
\end{Theorem}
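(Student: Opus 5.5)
The plan is to prove Kneser's inequality for compact abelian groups by reduction to a finite-dimensional approximation, together with a stabilizer argument. The standard route runs through the \emph{inner-regularity} and \emph{stabilizer} formulation, which avoids measurability issues. Since $A+B$ need not be Borel, I would first interpret $m(A+B)$ as inner measure. Then I reduce to compact $A,B$ by inner regularity of Haar measure: if the statement holds for all compact $K\subseteq A$, $L\subseteq B$, we obtain $m_*(A+B)\geq \sup m(K+L)$. The only subtlety is that the exceptional subgroup $H=H(K,L)$ may vary with $K,L$. Since $H$ has finite index and $m(K+L)=m(K+H)+m(L+H)-m(H)$, I would handle this by extracting a subsequence along which $H$ is constant. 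This uses the fact that $m(H)\geq m(K+L)-\dots$ is bounded below whenever the inequality fails by a definite amount, so only finitely many indices are possible. With $H$ fixed, I then pass to the limit.

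For compact sets (or, more generally, measurable sets with $A+B$ measurable), the core step is Kneser's theorem in the form of \emph{Kemperman/Kneser's $e$-transform argument} adapted to measure. Define the stabilizer $H=\{g: g+A+B=A+B\}$, a closed subgroup. I would argue by contradiction, choosing a counterexample that minimizes $m(A)$ or, equivalently, maximizes along Dyson $e$-transforms $A\mapsto A\cup(B+e)$, $B\mapsto B\cap(A-e)$. Each such transform preserves $m(A)+m(B)$, since
\[
m(A\cup(B+e))+m(B\cap(A-e))=m(A)+m(B),
\]
and shrinks the sumset. Averaging over $e\in G$ gives $\int m(B\cap(A-e))\,dm(e)=m(A)m(B)$, so there is a choice of $e$ that strictly decreases $m(B)$ while keeping it positive, unless $B$ is essentially $H'$-periodic for the group $H'$ of essential periods. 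Iterating, or taking a limit via weak-* compactness of indicator functions, yields a pair in which $B$ is a coset-union of a subgroup. From there we either get the inequality or the periodic alternative.

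The cleanest alternative, which I would actually pursue, is to quote Kneser's theorem for finite abelian groups and approximate. First, reduce to $G$ metrizable by factoring through a quotient $G/N$ with $N$ a closed subgroup such that $A,B$ are, up to null sets, $N$-invariant. This is possible since Borel sets depend on countably many coordinates. Next, approximate a compact metrizable abelian group by finite quotients or torus quotients: its dual is countable, so $G$ is an inverse limit of groups $\T^k\times F$. On $\T^k\times F$, discretize to $(\frac1n\Z/\Z)^k\times F$, apply the finite Kneser theorem to the discretized sets, and pass to the limit $n\to\infty$.

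The main obstacle is controlling the exceptional subgroups along the approximation. The finite-group periods $H_n$ may be nontrivial yet shrink, in which case $m(H_n)\to0$ and the inequality survives in the limit. Alternatively they may stay large, and then they must converge to a finite-index subgroup of $G$. I would show that if $m(A_n+B_n)< m(A_n)+m(B_n)-\delta$, then $|G_n/H_n|\leq 1/\delta$, so only finitely many index types occur. Finite-index subgroups of bounded index in $G$ form a finite set, which lets us identify a limiting $H$ and verify that $A+B$ is a union of $H$-cosets with the stated measure identity.
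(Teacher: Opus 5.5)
The paper gives no proof of this statement; it is quoted from \cite[Satz~1]{Kneser56}. Your proposal has a genuine gap at exactly the step you call the main obstacle, and the two facts you rely on there are false as stated.

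First, closed subgroups of bounded index in a compact (even metrizable) abelian group need not be finite in number. The group $(\Z/2\Z)^{\N}$ has infinitely many closed subgroups of index $2$, namely the kernels of the coordinate characters. So neither ``finite-index subgroups of bounded index in $G$ form a finite set'' nor your earlier ``extract a subsequence along which $H$ is constant'' survives the passage through inverse limits. Compactness in $\widehat{G}$ only gives a limiting subgroup whose index may be strictly smaller.

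Second, even in $\T^k\times F$, where finiteness does hold, the discrete periods $H_n$ do not converge to $H$ with the right size. In $\T$, the subgroup $\tfrac{1}{n}\Z/\Z$ has index $2$ in $\tfrac{1}{2n}\Z/\Z$, yet its Hausdorff limit is all of $\T$. So $|H_n|/|G_n|$ need not tend to $m(H)$. Moreover, a limiting argument only bounds $|A_n+H_n|/|G_n|$ from \emph{above} by (essentially) $m(A+H)$, whereas Kneser's bound $|A_n+B_n|\ge|A_n+H_n|+|B_n+H_n|-|H_n|$ needs a lower bound. Hence the discrete identity does not pass to $m(A+B)=m(A+H)+m(B+H)-m(H)$.

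What is missing is an argument that the fine part of $H_n$ disappears in the limit. If the index of $H_n$ is at most $D$, then $H_n\supseteq D!\,G_n$, a grid of mesh $D!/n$ in the torus directions. One would show, for instance via Lebesgue density points, that $A_n$ meets every coset of $D!\,G_n$ in each torus-coset where $A$ has positive measure. In the connected case this is precisely the assertion $A+B=G$, i.e.\ $m(A+B)\ge\min\{1,m(A)+m(B)\}$, and nothing in your sketch proves it.

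The exceptional alternative is also an \emph{exact} statement: $A+B$ is a union of $H$-cosets, and $m(A+H)$, $m(B+H)$ are computed for the original sets. Sumsets are not stable under null modifications. Adjoining a single point $a$ to $A$ can add the positive-measure set $a+B$ to $A+B$, and can enlarge $A+H$. Consequently, neither ``pass to compact $K\subseteq A$, $L\subseteq B$ and take limits'' nor ``make $A,B$ invariant under $N$ up to null sets'' can by itself return the periodic conclusion for $A,B$, since limits only see the measures of $K+L$, $K+H$, $L+H$. You need an extra step. For instance, apply the compact case to $K\cup\{a\}$, $L\cup\{b\}$ for arbitrary $a\in A$, $b\in B$, and show the resulting subgroup does not depend on $(a,b)$; this again requires the subgroup control that is missing above.

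Finally, the $e$-transform paragraph cannot serve as a fallback. Weak-$*$ limits of indicator functions need not be indicators, sumsets are not continuous under such limits, and the stabilizer of a non-closed set such as $A+B$ need not be closed.
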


Note that if $A$ and $B$ are Borel measurable, then their sumset $A+B$ is analytic and therefore Haar measurable, which ensures that $m(A+B)$ is well defined.
If the assumption on the sets $A$ and $B$ is relaxed and one only assumes Haar measurability, then the sumset $A+B$ is no longer guaranteed to be measurable, but the statement of \cref{thm_Kneser_for_compact_groups} remains true if one replaces $m(A+B)$ by $m_*(A+B)$, where $m_*(C)=\sup\{m(K): K\subset C,~K~\text{compact}\}$ is the inner measure corresponding to $m$.

Kneser's theorem contains several other classical sumset inequalities as special cases. For example, when $G=\Z/p\Z$ for some prime $p$, we obtain from \cref{thm_Kneser_for_compact_groups} the Cauchy--Davenport inequality, which says that for all non-empty sets $A,B\subset\Z/p\Z$,
\begin{equation*}
\tag{Cauchy--Davenport}
\label{eqn_cauchy-davenport}
|A+B|\geq  \min\{p,\,|A|+|B|-1\}.
\end{equation*}

Another classical result on sumsets is the Brunn--Minkowski inequality: for any bounded, non-empty, Borel sets $A,B\subset \R$ we have
\begin{equation}
\tag{Brunn--Minkowski}
\label{eqn_brunn-minkowski}
\operatorname{Leb}(A+B)\geq \operatorname{Leb}(A)+\operatorname{Leb}(B),
\end{equation}
where $\operatorname{Leb}$ denotes the Lebesgue measure on $\R$.
By embedding $A$, $B$, and $A+B$ into the compact connected group $\R/(4N\Z)$, where $N$ is sufficiently large so that $A,B\subset [-N,N]$, we see that the Brunn–Minkowski inequality also follows from \cref{thm_Kneser_for_compact_groups}. 

Of great importance in additive combinatorics are \emph{inverse theorems}, which aim to classify the extremal configurations for which a given arithmetic inequality, such as \eqref{eqn_Kneser_for_compact_groups}, is sharp or nearly sharp, usually affirming strong structural constraints.
For the Cauchy-Davenport inequality, an inverse theorem was proved by Vosper \cite{Vosper56a,Vosper56b}, who showed that two sets $A,B\subset\Z/p\Z$ with $|A|,|B| > 1$ and $|A|+|B|<p$ satisfy the equality
$|A+B|=|A|+|B|-1$ if and only if they are arithmetic progressions of the same step-size.
An inverse theorem for the Brunn–Minkowski inequality states that the equality 
$
\Leb(A+B)= \Leb(A)+\Leb(B)
$
happens if and only if there exist two compact intervals $I,J\subset\R$ such that
\[
A\subset I, \quad B\subset J,\quad\text{and}\quad \Leb(I\setminus A)=\Leb(J\setminus B)=0.
\]    

There is also an inverse theorem for \cref{thm_Kneser_for_compact_groups}.
To avoid the degenerate case where \eqref{eqn_Kneser_for_compact_groups} fails due to the interference of a finite index subgroup, it is convenient to impose the hypothesis that one of the sets, say the set $B$, has non-empty intersection with every coset of every finite index subgroup of $G$, so that no local obstructions can arise.
Under these assumptions, the circle group $\T = \R/\Z$ \nomenclature{$\T$}{circle group $\R/\Z$} plays a fundamental role in the inverse theorem.

\begin{Theorem}[Inverse theorem for sumsets in compact abelian groups, {\cite{Kneser56,Griesmer14}}]
\label{thm_inverse_theorem_sumsets_compact_groups}
Suppose $A, B \subseteq G$ are non-empty Borel sets with $m(A), m(B) > 0$, $m(A) + m(B) < m(G)$, and suppose that $B$ has non-empty intersection with every coset of every finite index subgroup of $G$.
If
\[
m(A+B) = m(A) + m(B)
\]
then there exists a finite index subgroup $H$ of $G$ and a decomposition
\begin{equation}
\label{eqn_inverse_theorem_sumsets_compact_groups_lifting_representation}
A=A_0+a_0\qquad\text{and}\qquad B = (B_0 \cup B_1)+b_0,
\end{equation}
where $A_0,B_0\subset H$, $a,b\in G$, $B_1 \subseteq G \setminus H$ with $m((G\setminus H)\setminus B_1) = 0$, and one of the following two conditions is satisfied: 
\begin{enumerate}[label=(\arabic*), leftmargin=*]
\item
\label{itm_inverse_theorem_sumsets_compact_groups_i}
There exists a continuous surjective homomorphism $\phi\colon H\to\T$ and closed intervals $I,J\subset\T$ such that
\[
A_0\subset \phi^{-1}(I), \quad B_0\subset \phi^{-1}(J),\quad\text{and}\quad m(\phi^{-1}(I)\setminus A_0)=m(\phi^{-1}(J)\setminus B_0)=0.
\]
\item
\label{itm_inverse_theorem_sumsets_compact_groups_ii}
$m(B_0) = 0$ and $m(A\triangle (A-t))=m(B\triangle (B-t))=0$ for all $t$ in the group generated by $B_0-B_0$.
\end{enumerate}
\end{Theorem}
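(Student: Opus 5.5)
The plan is to combine Kneser's theorem (\cref{thm_Kneser_for_compact_groups}) with a structure theorem for sets of small doubling in compact connected groups; the latter is essentially Kneser's and Griesmer's classification of critical pairs. The difficult part will be the connected/torus case, as explained below.

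\emph{Step 1: reduce to a finite quotient.} Let $H$ be the stabilizer-type subgroup produced by iterating Kneser's theorem. Since $m(A+B)=m(A)+m(B)$, equality holds in \eqref{eqn_Kneser_for_compact_groups}. Apply \cref{thm_Kneser_for_compact_groups} to the pairs $(A\cap(H+x),\, B\cap(H+y))$ for cosets $x,y$, and pass to the finite group $G/H$, where $H$ is the smallest finite index subgroup modulo which the projections $\pi(A),\pi(B)$ form a critical pair.

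\emph{Step 2: use Kemperman/Vosper in $G/H$.} Run a counting argument in the finite abelian group $G/H$, along the lines of Kemperman's structure theory for critical pairs. The hypothesis that $B$ meets every coset of every finite index subgroup forces $\pi(B)=G/H$ whenever $H$ is proper. Equality then forces the following structure:
\begin{itemize}
\item $A$ lives in a single coset $a_0+H$, so $A=A_0+a_0$;
\item $B$ is full measure on every coset except one coset $b_0+H$;
\item the fibers satisfy $m(A+B)=m(A_0+B_0)+\bigl(m(G)-m(H)\bigr)$.
\end{itemize}
This is where the decomposition \eqref{eqn_inverse_theorem_sumsets_compact_groups_lifting_representation} comes from. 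Comparing measures shows that the fiber pair satisfies $m_H(A_0+B_0)=m_H(A_0)+m_H(B_0)$ inside $H$. By minimality of $H$, there is no further finite-index obstruction inside $H$.

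\emph{Step 3: analyze the pair $(A_0,B_0)$ in $H$.} Split on whether $m(B_0)>0$.

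If $m(B_0)=0$, then $m(A_0+B_0)=m(A_0)$. Since $A_0+b\subset A_0+B_0$ for each $b\in B_0$, the set $A_0$ is invariant up to null sets under translation by $B_0-B_0$, and hence under the group it generates. The same invariance then transfers to $A$ and $B$, giving \ref{itm_inverse_theorem_sumsets_compact_groups_ii}.

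If $m(B_0)>0$, we are in the genuinely critical case with no finite-index obstruction. Here I would invoke the Kneser/Griesmer theorem that critical pairs in compact groups come from Bohr intervals. Concretely:
\begin{itemize}
\item use Kneser's density argument (the $e$-transform) to show that $A_0$ and $B_0$ are, up to null sets, preimages of intervals under some continuous character $\phi\colon H\to\T$;
\item surjectivity of $\phi$ follows since otherwise $\phi(H)$ is finite, contradicting minimality;
\item equality in Brunn--Minkowski on $\T$, i.e.\ the inverse Brunn--Minkowski statement quoted above, pins the images down to intervals $I,J$.
\end{itemize}
This gives \ref{itm_inverse_theorem_sumsets_compact_groups_i}.

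\emph{Main obstacle.} The hard step is producing the character $\phi$ when $m(B_0)>0$. This is the deep content of Kneser's 1956 paper for connected (or obstruction-free) groups, and of Griesmer's extension to the non-metrizable case. I would take it as the cited black box, approaching it via $e$-transforms that reduce the pair to one where the sumset is stable, followed by a Fourier/Bohr argument that extracts the character.
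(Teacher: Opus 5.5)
The paper does not prove this statement. It quotes it, attributing the connected case to Kneser's Satz~2 and the general case to Griesmer's Theorem~1.3. Your plan also defers the hard case to these sources. The genuine gap is in Step~3, where that deferral is set up.

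\textbf{Step~3.} Minimality of $H$ only says that $A_0$ is not contained in one coset of a proper finite-index subgroup of $H$. It does not make $H$ connected or ``obstruction-free,'' so Kneser's connected-case theorem cannot be applied to $(A_0,B_0)$.
\begin{itemize}
\item Example: on $G=\T\times\Z/2\Z$ let $\phi(t,e)=2t+e/2$, and take $A=\phi^{-1}(I)$, $B=\phi^{-1}(J)$ with $|I|+|J|<1$. All hypotheses hold, and $A$ meets both components, so your $H$ is $G$. Yet the Bohr structure mixes $G^\circ$ and $G/G^\circ$: $\phi$ factors neither through $G/G^\circ$ nor through the projection to $\T$.
\item So the black box you need is Griesmer's classification for arbitrary, possibly disconnected, compact abelian groups. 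That passage from connected to general groups is what his theorem adds to Kneser's; it is not a non-metrizable extension.
\item That theorem applied directly to $(A,B)$ already yields the statement, which is exactly the paper's citation. Your Steps~1--2 therefore become redundant.
\end{itemize}
Two smaller points in Step~3:
\begin{itemize}
\item Surjectivity of $\phi$ does not come from minimality. If $\phi(H)$ were finite, $A_0$ would be co-null in some coset of $\ker\phi$. Since $B_0$ meets every coset of $\ker\phi$ in $H$, $A_0+B_0$ would be co-null in $H$, forcing $m(A+B)=m(G)$ and contradicting $m(A)+m(B)<m(G)$.
\item The equality case on $\T$ is Kneser's theorem for the connected group $\T$. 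It is not the inverse Brunn--Minkowski statement on $\R$ quoted in the paper.
\end{itemize}

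\textbf{Step~1.} As written, Step~1 does not produce $H$, and it uses $H$ in its own definition. In the equality case \cref{thm_Kneser_for_compact_groups} yields no subgroup. Indeed no finite-index $K$ can stabilize $A+B$, since $B+K=G$ would give $A+B=G$.

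The workable definition is: $H$ is the smallest closed finite-index subgroup such that $A$ lies in one coset of $H$. It exists because such subgroups are closed under intersection and have index at most $m(G)/m(A)$. Since $\pi(B)=G/H$ for every finite-index $H$, your ``critical pair'' condition reduces to exactly this. So the first bullet of Step~2 is a definition, not a consequence.

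The other two bullets of Step~2 need an actual argument, not an allusion to Kemperman. Here is one:
\begin{itemize}
\item Each fibre $B_y=B\cap(y+H)$ meets every coset of every finite-index subgroup contained in $y+H$. So \cref{thm_Kneser_for_compact_groups} in $H$ gives $m(A+B_y)\ge\min\{m(H),\,m(A)+m(B_y)\}$.
\item The sets $A+B_y$ lie in distinct cosets. Sum over $y$ and compare with $m(A)+\sum_y m(B_y)$.
\item Exactly one $y_0$ can satisfy $m(A)+m(B_y)\le m(H)$: two or more contradicts $m(A)>0$, and none forces $m(A+B)=m(G)$.
\item Hence $B_y$ is co-null in $y+H$ for $y\ne y_0$, and $m(A+B_{y_0})=m(A)+m(B_{y_0})$.
\end{itemize}
With this in place, your treatment of the case $m(B_0)=0$ is correct.
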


In the case when $G$ is connected, \cref{thm_inverse_theorem_sumsets_compact_groups} was proved by 
Kneser in \cite[Satz~2]{Kneser56}; the general case is a corollary of \cite[Theorem 1.3]{Griesmer14}.
Note that if $B$ does not necessarily meet every coset of every finite index subgroup then there are more complicated situations that can arise; we refer the reader to \cite{Griesmer14} for more details. 

Sets of the form $\phi^{-1}(I)$ and $\phi^{-1}(J)$, where $I$ and $J$ are closed intervals in $\T$ and $\phi\colon G\to \T$ is a surjective homomorphism, are called \emph{parallel Bohr intervals} in $G$.
Case~\ref{itm_inverse_theorem_sumsets_compact_groups_i} of \cref{thm_inverse_theorem_sumsets_compact_groups} says that either $A$ and $B$ are, up to zero measure, parallel Bohr intervals in $G$ (which corresponds to the situation when $H=G$ and $B_1=\emptyset$), or they come from parallel Bohr intervals of a proper finite index subgroup $H$ lifted to $G$ according to the representation in \eqref{eqn_inverse_theorem_sumsets_compact_groups_lifting_representation}.

Case~\ref{itm_inverse_theorem_sumsets_compact_groups_ii} also imposes strict structural constraints on the sets $A$ and $B$, but of a different nature than those arising in case~\ref{itm_inverse_theorem_sumsets_compact_groups_i}.
Note, in particular, that case~\ref{itm_inverse_theorem_sumsets_compact_groups_ii} can only happen when the set $B$ satisfies $m(B)=m(G)(1-\frac{1}{h})$ for some $h\in\N$.

Kneser's theorem on compact abelian groups, \cref{thm_Kneser_for_compact_groups}, is complemented by an analogous result on sumsets in the positive integers $\N=\{1,2,3,\ldots\}$.\nomenclature{$\N$}{positive integers}
The natural way to measure the size of (infinite) subsets of $\N$ is to use the notion of density.
The \define{lower density} and \define{upper density} of $A\subset\N$ are defined respectively as\nomenclature{$\underline{d},\overline{d}$}{lower and upper density}
\[
\underline{d}(A)=\liminf_{N\to\infty}\frac{|A\cap[N]|}{N} \qquad\text{and}\qquad
\overline{d}(A)=\limsup_{N\to\infty}\frac{|A\cap[N]|}{N},
\]
where $[N]=\{1,\ldots,N\}$.\nomenclature{$[N]$}{$\{1,\ldots,N\}$}
If $\underline{d}(A)= \overline{d}(A)$ then the limit
\[
d(A)=\lim_{N\to\infty}\frac{|A\cap [N]|}{N}
\]
exists and we call this number the \define{density} of $A$.\nomenclature{$d$}{density}

The next theorem, which is also due to Kneser, indicates that sumsets of positive density sets in $\N$ behave surprisingly similarly to sumsets of positive measure sets in compact abelian groups. 

\begin{Theorem}[Kneser's sumset inequality in the integers,~{\cite{Kneser53}}]
\label{thm_Kneser_for_integers}
If $A, B \subseteq \N$ are non-empty sets then either
\begin{equation}
\label{eqn_Kneser_in_integers}
\underline{d}(A+B)\geq \underline{d}(A)+\underline{d}(B),
\end{equation}
or there exists $H=h\N$ for some $h\in\N$ such that $A+B$ equals, up to finitely many elements, a finite union of translates of $H$ and $\underline{d}(A+B)=d(A+H)+d(A+H)-d(H)$.
\end{Theorem}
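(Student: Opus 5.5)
The plan is to argue by contradiction. Assume \eqref{eqn_Kneser_in_integers} fails, and set $\alpha=\underline{d}(A)$, $\beta=\underline{d}(B)$, $C=A+B$ and $\gamma=\underline{d}(C)<\alpha+\beta$. The goal is then to produce a modulus $g$ with $C+g\N\subset C$ up to finitely many elements. Write $A(n)=|A\cap[n]|$, and similarly for $B$ and $C$.

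The core of the argument is a finitary statement in the spirit of Kneser's original proof. For every $n$, either
\[
C(n)\geq A(n)+B(n)-\Oh(1),
\]
or there is some $g\leq n$ such that $C\cap[n]$ is, apart from a short initial segment, stable under $+g$ inside $[n]$, and
\[
C(n)\geq \frac{|(A+g\Z)\cap[n]|}{g}\cdot g+|(B+g\Z)\cap[n]|-g-\oh(n).
\]
I would prove this by Dyson/Kneser $e$-transform induction on $B(n)$, normalising so that $1\in B$ and $\min A=1$. For $e\in A$ the transform is
\[
A'=A\cup(B+e),\qquad B'=B\cap(A-e),
\]
which satisfies $A'+B'\subset A+B$ and $A'(n)+B'(n)\geq A(n)+B(n)-\Oh(1)$ after accounting for boundary effects at $n$. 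We iterate until either $B'$ collapses to $\{1\}$, which gives the first alternative, or $A'+B'=A'$, in which case the stabiliser of $A'+B'$ contains $B'-B'$ and its gcd $g$ is the period. This step is the main obstacle. The transform interacts badly with truncation at $n$, because elements near $n$ leak out, so the bookkeeping has to be done with the counting functions. In the periodic alternative one must also check that the period $g$ is genuinely controlled, and that the count is measured through residue classes mod $g$, which is where the finite-group Kneser theorem (\cref{thm_Kneser_for_compact_groups} for $G=\Z/g\Z$) enters.

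Next I pass to the limit. Choose $n_k\to\infty$ with $C(n_k)/n_k\to\gamma$. Since $\gamma<\alpha+\beta$, the first alternative fails for large $k$. In the second alternative, the lower bound forces $g\leq 1/(\alpha+\beta-\gamma)+\oh(1)$: a large period contributes only $\Oh(1/g)$ to the deficit while costing at least $1/g$ per missing residue. Hence $g$ is bounded, so by pigeonhole one value $g$ occurs for infinitely many $k$. Since $C\cap[n_k]$ is $g$-stable beyond a bounded initial segment for arbitrarily large $n_k$, $C$ itself is eventually $g$-periodic. Thus $C$ equals a finite union of classes of $g\N$ up to finitely many elements, and in particular $C$ has a density.

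Finally, put $H=g\N$. Because $C$ is eventually periodic, $C\supset A+B+H$ up to finitely many elements, so $C$ equals $A+H+B+H$ up to finitely many elements. Let $\pi\colon\N\to\Z/g\Z$ be reduction mod $g$. Then $d(C)=|\pi(A)+\pi(B)|/g$ and $d(A+H)=|\pi(A)|/g$, and similarly for $B$. Apply \cref{thm_Kneser_for_compact_groups} in $\Z/g\Z$, replacing $g$ by the order of the stabiliser of $\pi(A)+\pi(B)$ if necessary, which only enlarges $H$. This gives
\[
|\pi(A)+\pi(B)|=|\pi(A)|+|\pi(B)|-1,
\]
which yields $d(A+B)=d(A+H)+d(B+H)-d(H)$, the desired second alternative.
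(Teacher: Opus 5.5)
The paper gives no proof of this theorem; it is quoted from \cite{Kneser53}. The only related argument in the paper is \cref{cor: Kneser without local obstructions}, which re-derives the special case where both densities exist and $B$ meets every residue class. Your proposal therefore has to stand on its own. Its outer shell (bounding $g$ by $1/(\alpha+\beta-\gamma)$, pigeonholing, reducing to $\Z/g\Z$) has the standard shape of Kneser's argument. However, everything rests on your finitary dichotomy, and that is false as formulated.

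A statement that only sees $A(n),B(n),C(n)$ at a single $n$ cannot hold. Take $n_1<n_2<\cdots$ growing very fast and $A=B=\{1\}\cup\bigcup_k([n_k/2,n_k]\cap\N)$. At $n=n_k$ you get $A(n)=B(n)\geq n/2$, but $C\cap[n]\subseteq[1,2n_{k-1}+1]\cup[n/2+1,n]$. So $C(n)\leq n/2+\oh(n)$ and the first alternative fails. On the other hand, $(A+g\Z)\cap[n]\supseteq[n/2,n]$ for every $g$, with equality to $[n]$ when $g\leq n/2+1$. With error term $-|g\Z\cap[n]|\approx-n/g$ (which is what your bound on $g$ needs; the literal $-g$ would instead force $g\gtrsim(\alpha+\beta-\gamma)n$), the second alternative then demands $C(n)\geq n-\oh(n)$ for every $g\leq n$, which also fails.

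A correct finitary statement must use the density profile on a whole range, e.g.\ $A(m)\geq(\alpha-\epsilon)m$ and $B(m)\geq(\beta-\epsilon)m$ for all $m_0\leq m\leq n$, as in Mann's theorem and Kneser's proof. That is exactly where your $e$-transform bookkeeping breaks. For $e\in A\cap[n]$ the transform can lose up to $\min\{e,B(n)\}$ elements of $B\cap(n-e,n]$, not $\Oh(1)$. One therefore needs a careful choice of $e$ and an induction controlling all $m\leq n$ simultaneously. You label this ``the main obstacle'' but leave it unresolved, so the proposal defers the whole theorem to an unproved lemma.

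The limiting step also silently upgrades ``short'' initial segments to ones bounded uniformly in $k$. Without that uniformity, stability of $C\cap[n_k]$ under $+g$ beyond $\oh(n_k)$ does not give eventual periodicity of $C$.

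The last step needs repair as well. Eventual $g$-periodicity of $C$ does not give $C\supseteq A+B+H$ up to finitely many elements, nor $d(C)=|\pi(A)+\pi(B)|/g$. For $A=B=5\N\cup\{1\}$, the class $2+5\N$ lies in $A+B+5\N$ but meets $C$ only in $2$. In general the residues eventually occupied by $C$ are $(S_A+R_B)\cup(R_A+S_B)$. Here $S_A$ (resp.\ $R_A$) is the set of classes mod $g$ meeting $A$ (resp.\ meeting $A$ infinitely often). You must show $S_A=R_A$ and $S_B=R_B$.

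This is true when $g$ is the minimal period:
\begin{itemize}
\item $\gamma<\alpha+\beta\leq(|R_A|+|R_B|)/g$ gives at most $|R_A|+|R_B|-1$ eventual residues.
\item A Kneser-plus-coset-counting argument in $\Z/g\Z$ then shows these residues are exactly $R_A+R_B$.
\item A class in $S_A\setminus R_A$ would give $S_A+R_B=R_A+R_B$ a nontrivial stabiliser, contradicting minimality of $g$.
\end{itemize}
This has to be argued explicitly. Only after it can you exclude the first alternative of Kneser's theorem in $\Z/g\Z$ and obtain $|\pi(A)+\pi(B)|=|\pi(A)|+|\pi(B)|-1$. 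Finally, after quotienting by the stabiliser $K$, the new modulus is $g/|K|$, the index of $K$, not its order.
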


The main result of our paper establishes an inverse theorem for \cref{thm_Kneser_for_integers}.
In fact, our main result can be viewed as an integer analogue of  \cref{thm_inverse_theorem_sumsets_compact_groups} in the same way that \cref{thm_Kneser_for_integers} is an integer analogue of \cref{thm_Kneser_for_compact_groups}. Given a sequence of natural numbers $\mathbf{N} = (N_s)_{s \in \N}$ with $N_s \to \infty$, we define\nomenclature{$d_{\mathbf{N}}$}{density along a sequence $\mathbf{N}$}
\[
d_{\mathbf{N}}(A)=\lim_{s\to\infty} \frac{|A\cap [N_s]|}{N_s}
\]
whenever this limit exists. Similar to \cref{thm_inverse_theorem_sumsets_compact_groups}, we restrict to the case when one of the sets avoids all local obstructions: we say a set $B\subset\N$ \emph{meets every residue class in $\N$} if $B\cap(a\N+b)\neq\emptyset $ for all $a,b\in\N$.

\begin{Theorem}[Inverse theorem for sumsets in the integers]
\label{thm: main}
Let $A, B \subset \N$ with $d(A) > 0$, $d(A) + d(B) < 1$, and suppose that $B$ meets every residue class in $\N$.
If $\mathbf{N} = (N_s)_{s \in \N}$ is a sequence of natural numbers with $N_s \to \infty$ and
\[
d_{\mathbf{N}}(A+B) = d(A) + d(B),
\]
then there exists $H=h\N$ for some $h\in\N$ and a decomposition
\[
A=A_0-a_0\qquad\text{and}\qquad B = (B_0 \cup B_1)-b_0,
\]
where $A_0,B_0\subset H$, $a_0,b_0\in \{0,1,\ldots,h-1\}$, $B_1 \subseteq \N \setminus H$, and one of the following two conditions is satisfied: 
\begin{enumerate}[label=(\arabic*), leftmargin=*]
\item
\label{itm_main_thm_1}
$d((\N\setminus H)\setminus B_1) = 0$ and there exists an irrational number $\theta\in\T$ and closed intervals $I,J\subset\T$ such that if $\phi\colon H\to\T$ is the map $\phi(n)=n\theta\bmod 1$ for all $n\in H$ then
\[
A_0\subset \phi^{-1}(I), \quad B_0\subset \phi^{-1}(J),\quad\text{and}\quad d(\phi^{-1}(I)\setminus A_0)=d(\phi^{-1}(J)\setminus B_0)=0.
\]
\item
\label{itm_main_thm_2}
$d_{\mathbf{N}}((\N\setminus H)\setminus B_1) = 0$, $d_{\mathbf{N}}(B_0)=0$ and $d_{\mathbf{N}}(A\triangle(A-t))=d_{\mathbf{N}}(B\triangle(B-t))=0$ for all $t\in H$.
\end{enumerate}
\end{Theorem}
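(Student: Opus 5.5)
We transfer the problem to a compact abelian group through a Furstenberg-type correspondence principle and then invoke \cref{thm_inverse_theorem_sumsets_compact_groups}.

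\emph{Step 1: correspondence.} We pass to a subsequence of $\mathbf{N}$ along which $A$ and $B$ are generic for points of a topological system, namely the shift on $\{0,1\}^{\N}$ (or on $\{0,1\}^{\Z}$). This yields ergodic invariant measures $\mu$ and $\nu$, with clopen sets of measure $d(A)$ and $d(B)$. Since $d(A)$ exists, $A$ is generic along all of $\N$, and this extra uniformity will matter later. We then project onto the Kronecker factor. This is a compact monothetic group $K$ with a dense generator $\alpha$, which we may take to be a joint Kronecker factor, for instance the Bohr compactification quotient relevant to both sets. Let $f_A=\E(1_A\mid\mathcal{K})$ and $f_B=\E(1_B\mid\mathcal{K})$, and define the level sets $\tilde A=\{f_A>0\}$ and $\tilde B=\{f_B>0\}$ in $K$. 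The key lower bound we aim for is
\[
d_{\mathbf{N}}(A+B)\geq m(\tilde A+\tilde B)\geq m(\tilde A)+m(\tilde B)\geq d(A)+d(B),
\]
where the middle inequality comes from \cref{thm_Kneser_for_compact_groups}. The first inequality is the analytic heart of this step. For $x\in\tilde A+\tilde B$, the correlation $\sum_n 1_A(n)1_B(m-n)$ has positive average, because only the Kronecker (compact) part contributes to convolutions of two sets in the limit; this is a standard $U^2$ or spectral argument. Consequently almost every such $m$ lies in $A+B$ in density, and hence $d_{\mathbf{N}}(A+B)\ge m(\tilde A+\tilde B)$. The hypothesis that $B$ meets every residue class ensures that $\tilde B$ meets every coset of every finite index subgroup, so the degenerate alternative in \cref{thm_Kneser_for_compact_groups} is excluded.

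\emph{Step 2: equality forces rigidity.} Equality throughout gives $m(\tilde A)=d(A)$ and $m(\tilde B)=d(B)$. This means $f_A$ and $f_B$ are indicator functions, so $A$ and $B$ are, up to density zero, preimages of $\tilde A$ and $\tilde B$ under $n\mapsto n\alpha$. It also gives $m(\tilde A+\tilde B)=m(\tilde A)+m(\tilde B)$, so we may apply \cref{thm_inverse_theorem_sumsets_compact_groups} in $K$. The finite index subgroup $H'\le K$ pulls back to $h\N$, with shifts $a_0$ and $b_0$.
\begin{itemize}
\item In case \ref{itm_inverse_theorem_sumsets_compact_groups_i}, the character $\phi\colon H'\to\T$ composed with $n\mapsto n\alpha$ gives $n\mapsto n\theta$ on $h\N$. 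The number $\theta$ is irrational because $\phi$ is surjective and the orbit is dense. The Bohr intervals then pull back to the intervals in \ref{itm_main_thm_1}.
\item In case \ref{itm_inverse_theorem_sumsets_compact_groups_ii}, invariance under the group generated by $\tilde B_0-\tilde B_0$ translates into $d_{\mathbf{N}}$-invariance of $A$ and $B$ under $H$, giving \ref{itm_main_thm_2}.
\end{itemize}
In case \ref{itm_main_thm_2} the statements only hold along $\mathbf{N}$, which explains the weaker conclusion there.

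\emph{Step 3: upgrading to natural density in case \ref{itm_main_thm_1}.} The correspondence only gives conclusions along a subsequence, whereas \ref{itm_main_thm_1} asserts full density statements. To recover them, we use that $d(A)$ exists and that the Bohr interval sets $\phi^{-1}(I)$ have natural density $|I|/h$ by Weyl equidistribution. Let $S=\phi^{-1}(I)$. Along the subsequence, $A\cap S$ has relative density $1$ in $S$, and $d(A)=d(S)$. To conclude that $d(S\setminus A)=0$ in natural density, we argue that this conclusion holds along every subsequence for which $A+B$ attains $d(A)+d(B)$; by Kneser's inequality, the lower density of $A+B$ is already at least $d(A)+d(B)$.

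\emph{Main obstacle.} The hard step is the first inequality of Step 1. We must show that $A+B$ covers, in $d_{\mathbf{N}}$-density, the full pullback of $\tilde A+\tilde B$, not merely that generic points have many representations. We must also handle the boundary of $\tilde A+\tilde B$ and the fact that $B$'s density is assumed only to exist in the relevant sense. I would attack this by writing $1_A\ast 1_B$ as $f_A\ast f_B$ plus an error of small uniformity norm, and using that $f_A\ast f_B$ is continuous. Being continuous, it is positive on an open set whose measure approximates $m(\tilde A+\tilde B)$ from below after regularising $\tilde A$ and $\tilde B$ to their density points.
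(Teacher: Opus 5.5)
\textbf{The proposal has a genuine gap: Step 1 fails at its first sentence, and this is the central difficulty of the theorem, not a technicality.}

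Existence of $d(A)$ does not make $A$ generic along $\N$. It controls only the average of $1_A$, not the averages of $1_A(n+t_1)\cdots 1_A(n+t_k)$. A Furstenberg system along a subsequence of $\mathbf N$ is in general not ergodic. Passing to an ergodic component destroys $\mu(E_A)=d(A)$, on which your whole chain of inequalities rests.

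In case (2) an ergodic model with $\mu(E_A)=d(A)$ cannot exist at all:
\begin{itemize}
\item There $\mu(E_A\triangle T^{-h}E_A)=d_{\mathbf N}(A\triangle(A-h))=0$.
\item If $\mu$ were ergodic, $E_A$ would be a union of ergodic components of $T^h$. These number $j$ for some $j\mid h$ and each has measure $\frac1j$, so $\mu(E_A)\in\frac1h\Z$.
\item But $\beta=1-\frac1h$ forces $0<\alpha<\frac1h$.
\end{itemize}
For the sets of \cref{prop_ctmn}, the Furstenberg system of $A$ is the mixture, with weights $\alpha$ and $1-\alpha$, of the two fixed points of the shift. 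Without ergodicity the Kronecker factor is not a group rotation, so there is no compact monothetic group in which to invoke \cref{thm_inverse_theorem_sumsets_compact_groups}.

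This is why Sections~\ref{sec: intermediate scale seminorms}--\ref{sec: local ergodicity} are devoted to \cref{thm: ergodic or h}. Using $U^2(\mathbf N,\mathbf H)$-good scales, the local sumset theorem, the inverse theorem in $\Z/Q\Z$, and the elimination of rational frequencies, the paper shows one of two things. Either $1_A-\alpha$ and $1_B-\beta$ are locally ergodic along a subsequence, so by \cref{thm: U^1 uniformity implies constant measure in ergodic components} almost every ergodic component gives $E_A$ measure $\alpha$. Or case (2) holds outright, by \cref{prop: local to global residues}. In particular, your correspondence ``case (ii) of \cref{thm_inverse_theorem_sumsets_compact_groups} $\Rightarrow$ case (2)'' is not how case (2) arises. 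In the ergodic branch, case (ii) never occurs.

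\textbf{The second gap is the claim that $\tilde B$ meets every coset of every finite-index subgroup because $B$ meets every residue class.} The limit object sees only the positive-density part of $B$. If $B$ meets a residue class mod $k$ only in a density-zero set, which is exactly the situation $d_{\mathbf N}(B_0)=0$ of case (2), then $\tilde B$ misses a coset of an index-$k$ subgroup. The degenerate alternative of \cref{thm_Kneser_for_compact_groups} is then not excluded, $m(\tilde A+\tilde B)$ may be strictly smaller than $m(\tilde A)+m(\tilde B)$, and the chain breaks.

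The contribution of sparse parts of $B$ to $A+B$ has to be captured at the integer level.
\begin{itemize}
\item The paper does this locally: the models $B_n$ of \cref{thm: local sumset} contain initial segments of $B$, so they meet every residue class mod $m\le D_s$.
\item In the ergodic branch it excludes Kneser degeneracy, and also case (ii), through the bound $C_G<\frac1{1-\beta}$ of \cref{thm: Furstenberg system}. This gives $m(D_0)\ge\beta>1-\frac1{[G:H]}$, hence $D_0+H=G$.
\item That bound comes from \cref{lem: rational spectrum}, i.e.\ from the local ergodicity of $n\mapsto 1_A(n+t_1)\cdots 1_A(n+t_k)e(nq)$ for $q\notin\frac{\Z}{h}$, established in \cref{prop: local ergodicity of A irrational frequency}.
\end{itemize}

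By contrast, the step you single out as the main obstacle is comparatively routine once ergodic models exist:
\begin{itemize}
\item The portmanteau lemma gives $\mu_A(B+E_A)\le\underline{d}_{\mathbf N}(A+B)$ for the open set $B+E_A=\bigcup_{b\in B}T_A^bE_A$.
\item F{\o}lner averages at the transitive point of the $B$-system give $B+E_A\supseteq_{\mu_A}\pi_A^{-1}(\{\varphi_A*\varphi_B>0\})$ (\cref{prop: average equals convolution at given point}).
\item \cref{lem: replace convolution by sumset} turns the positivity set into a genuine sumset $C_0+D_0$.
\end{itemize}

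Finally, your Step 3 does not reach natural density as written. The paper instead replaces $C_0$ and $D_0$ by the null modifications $C_0\cup A\theta$ and $D_0\cup B\theta$. The genuine inclusions in \cref{thm_inverse_theorem_sumsets_compact_groups} then give, in the paper's notation, $A+a_0\subseteq\{n\in k\N : n\tilde\theta\in\tilde I\}$. This is a Bohr set of natural density $|\tilde I|=\alpha=d(A)$, which yields the natural-density conclusion at once.
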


If $I$ and $J$ are closed intervals in $\T$ and $\phi(n)=n\theta$ for some irrational $\theta\in\T$ then the sets $A=\phi^{-1}(I)$ and $B=\phi^{-1}(J)$ are called \emph{parallel Bohr intervals} in $\N$, and it is easy to verify that parallel Bohr intervals satisfy $d(A+B)=d(A)+d(B)$.
Similarly to \cref{thm_inverse_theorem_sumsets_compact_groups}, case~\ref{itm_main_thm_1} of \cref{thm: main} asserts that, up to zero density modifications, $A$ and $B$ are either parallel Bohr intervals in $\N$ (if $h=1$) or lifts of parallel Bohr intervals from a proper subsemigroup $H=h\N$ to $\N$ (if $h\geq 2$).

It is not immediately obvious that there actually exist positive density sets $A,B\subset\N$ that fall into case~\ref{itm_main_thm_2} of \cref{thm: main}; we construct explicit examples in \cref{prop_ctmn}.
We also note that in case~\ref{itm_main_thm_2} of \cref{thm: main}, it is in general not possible to replace $d_{\mathbf{N}}((\N\setminus H)\setminus B_1) = 0$ with $d((\N\setminus H)\setminus B_1) = 0$ as illustrated by \cref{prop_dens_sumset_DNE}. 
A peculiar conclusion that we draw from this observation is that while in case~\ref{itm_main_thm_1} the density $d(A+B)$ is guaranteed to exist, in case~\ref{itm_main_thm_2} the asymptotic density of $A+B$ might not exist. Indeed, in \cref{sec_non-existence_of_density_of_sumset} we provide a pair of sets satisfying $\underline{d}(A+B)=d(A)+d(B)<1$ and $\overline{d}(A+B)=1$.

\cref{thm: main} is closely related to a long-standing question of \Erdos{} and Graham \cite{EG80} with an updated formulation appearing as Problem \#335 on the \Erdos{} problem website \cite{ErdosProblems335}.

\begin{Problem}[cf.~{\cite[Problem~\#335]{ErdosProblems335}}]
\label{proplem_335}
Characterize all sets $A,B\subset\N$ with $d(A)>0$, $d(B)>0$, and
$d(A+B)=d(A)+d(B)$.
\end{Problem}

Note that our main result, \cref{thm: main}, resolves this problem under the additional assumption that $B$ meets every residue class in $\N$.
\Erdos{} and Graham speculated in \cite[p.~51]{EG80} that all sets of positive density that satisfy the conclusion of \cref{proplem_335} arise in a similar fashion to parallel Bohr intervals, perhaps using homomorphisms onto other groups. %However, this is not the case, as the following simple example illustrates.
However, there are several reason why this is not the case. Firstly, case~\ref{itm_main_thm_2} of \cref{thm: main} leads to counterexamples to this claim. Also, one can construct counterexamples by exploiting simple divisibility obstructions, as the following example shows.

\begin{Example}
\label{ex_even_coinflip_set}
Flip a fair coin infinitely often, and define $A=\{2n: n^{\mathrm{th}}~\text{coin flip is heads}\}$. By the law of large numbers, we have almost surely that $d(A)=\frac{1}{4}$ and $d(A+A)=\frac{1}{2}$. 
\end{Example}

%SECTION
\section*{Acknowledgments}

This work was supported by the Swiss National Science Foundation grant TMSGI2-211214.
We thank Marius Tiba for providing the reference \cite{Shao19}, which led to a streamlined proof of \cref{thm: delta-sum almost contains a sum}.
We also thank John Griesmer for comments on an earlier draft that enhanced the discussion of open problems in \cref{sec: questions}.

%SECTION
\section{Notation}

We use standard number theoretic asymptotic notation throughout the paper.
For sequences $(a_s)_{s \in \N}$ and $(b_s)_{s \in \N}$, we write
\begin{equation*}
    a_s = \Oh(b_s)
\end{equation*}
or
\begin{equation*}
    a_s \ll b_s
\end{equation*}
to mean that there exists a constant $C > 0$ such that $|a_s| \leq C |b_s|$ for all sufficiently large $s \in \N$.
If the asymptotic variable $s$ is not clear from context, we include it in the subscript, for example $a_s = \Oh_{s \to \infty}(b_s)$.
The notation $a_s = \oh(b_s)$ (or, more explicitly, $a_s = \oh_{s \to \infty}(b_s)$) means that $\lim_{s \to \infty} \frac{|a_s|}{|b_s|} = 0$.
For example, $a_s = \oh(1)$, means that $\lim_{s \to \infty} a_s = 0$.

For other notation used in the paper, we provide the following list with references to the page where the notation is defined.

{\footnotesize \printnomenclature}

%SECTION
\section{Outline of the proof} \label{sec: proof outline}

In this section we outline in broad strokes the proof of \cref{thm: main}.
Fix a sequence of natural numbers $\mathbf{N} = (N_s)_{s \in \N}$ with $N_s \to \infty$.
We start with a definition.

\begin{Definition} \label{def: locally ergodic}
We say a bounded function $f \colon \N \to \C$ is \emph{locally ergodic along $\mathbf{N}$} if
\begin{equation*}
	\limsup_{H \to \infty} \limsup_{s \to \infty} \frac{1}{N_s} \sum_{n=1}^{N_s} \left| \frac{1}{H} \sum_{h=1}^H f(n+h) \right| = 0.
\end{equation*}
\end{Definition}

In our proof of \cref{thm: main}, we treat separately the cases in which the functions $\1_A-d(A)$ and $\1_B-d(B)$ are locally ergodic along $\mathbf{N}$ and in which they are not, since our methods differ substantially in these two situations.
This naturally divides the overall structure of our argument into two parts.
The first part consists of showing the following theorem.
For convenience, given sets $C,D\subset\N$, we will henceforth write\nomenclature{$\sim,\sim_{\mathbf{N}}$}{equality up to zero density (along a sequence $\mathbf{N})$}
\[
C\sim_{\mathbf{N}} D\iff d_{\mathbf{N}}(C\triangle D)=0
\qquad\text{and}\qquad
C\sim D\iff d(C\triangle D)=0.
\]

\begin{Theorem} \label{thm: ergodic or h}
Let $A, B \subseteq \N$ with $d(A) = \alpha > 0$ and $d(B) = \beta > 0$ such that $\alpha + \beta < 1$.
Suppose $B$ meets every residue class in $\N$.
Let $\mathbf{N} = (N_s)_{s \in \N}$ be an increasing sequence with $\lim_{s \to \infty} N_s = \infty$ such that $d_{\mathbf{N}}(A+B) = \alpha + \beta$.
Then there exists a subsequence $\mathbf{N}'$ of $\mathbf{N}$ such that either
\begin{enumerate}[label=(\arabic*), leftmargin=*]
    \item\label{itm ergodic or h 1} $\1_A - \alpha$ and $\1_B - \beta$ are locally ergodic along $\mathbf{N}'$ and there exists $h \in \N$ with $h < \frac{1}{1-\beta}$ such that $n \mapsto \1_A(n+t_1) \ldots \1_A(n+t_k) e(nq)$ is locally ergodic along $\mathbf{N}'$ for every $q \in \Q \setminus \frac{\Z}{h}$, every $k \in \N$, and every $t_1, \ldots, t_k \in \N$, or
    \item\label{itm ergodic or h 2} there exist %a subsequence $\mathbf{N}'$ of $\mathbf{N}$, and 
    integers $h \geq 2$ and $a_0, b_0 \in \{0,1,\ldots,h-1\}$ such that $A \subseteq h\N - a_0$, $A + h \sim_{\mathbf{N}'} A$, and $B \sim_{\mathbf{N}'} (\N \setminus h\N) - b_0$.
\end{enumerate}
\end{Theorem}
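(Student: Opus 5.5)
We would prove \cref{thm: ergodic or h} by passing to a dynamical model via the Furstenberg correspondence principle and then applying \cref{thm_Kneser_for_compact_groups} and \cref{thm_inverse_theorem_sumsets_compact_groups} on a compact group factor. Pass first to a subsequence $\mathbf{N}'$ of $\mathbf{N}$ along which all correlations of $\1_A$, $\1_B$ and their shifts, twisted by $e(nq)$ for $q\in\Q$, converge. This produces a measure-preserving system $(X,\mu,T)$ with clopen sets $\tilde A,\tilde B$ such that $\mu(\tilde A)=\alpha$ and $\mu(\tilde B)=\beta$. Local ergodicity of a function along $\mathbf{N}'$ then corresponds to the function having zero conditional expectation on the invariant $\sigma$-algebra (after a standard averaging argument over short intervals $[n,n+H]$). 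Likewise, local ergodicity of $\1_A(n+t_1)\cdots\1_A(n+t_k)e(nq)$ corresponds to orthogonality of products of $T^{t_i}\tilde A$ to the rational eigenfunction $e(q\,\cdot)$.

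The key estimate is a ``local Kneser'' bound. For almost every ergodic component $\mu_x$, the sumset $A+B$ near a typical $n$ must have local density at least $\min\{1,\mu_x(\tilde A)+\mu_x(\tilde B)\}$, up to corrections from finite-index obstructions. To get this, we would apply \cref{thm_Kneser_for_integers} on long intervals $[n,n+H]$, in its finite (interval) form, treating the densities of $A$ and $B$ in the window as the local densities. Averaging over $n\le N_s$ gives
\[
d_{\mathbf{N}'}(A+B)\ \geq\ \int \min\{1,\alpha_x+\beta_x\}\,d\mu(x),
\]
where $\alpha_x,\beta_x$ are the local densities. Since $\int\alpha_x=\alpha$, $\int\beta_x=\beta$ and $d_{\mathbf{N}'}(A+B)=\alpha+\beta$, this forces equality. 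So either $\alpha_x,\beta_x$ are constant almost surely, which is the locally ergodic situation, or the $\min\{1,\cdot\}$ truncation is active on part of the space and a finite-index obstruction appears.

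In the truncated situation we use Kneser's theorem with the subgroup $h\N$. Because $B$ meets every residue class, the equality $\underline{d}(A+B)=d(A+h\N)+d(B+h\N)-1/h$ together with $d(A+B)=\alpha+\beta<1$ forces $A+h\N$ to be a single residue class $h\N-a_0$ and $B+h\N$ to be everything. Counting densities then gives $\beta\ge 1-1/h$ restricted to nonzero classes, which yields $B\sim_{\mathbf{N}'}(\N\setminus h\N)-b_0$ and shift invariance $A+h\sim_{\mathbf{N}'}A$. This is conclusion~\ref{itm ergodic or h 2}.

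In the non-truncated situation we get local ergodicity of $\1_A-\alpha$ and $\1_B-\beta$. We then analyse the rational Kronecker factor. The rational spectrum of $\tilde A$ must be contained in $\frac{\Z}{h}$ for some $h$, by \cref{thm_inverse_theorem_sumsets_compact_groups} applied on the profinite factor $\hat\Z$ projected to $\Z/m\Z$. The bound $h<1/(1-\beta)$ comes from the fact that the projection of $B$ to $\Z/h\Z$ must have at least $h-1$ classes of full measure whenever a nontrivial $h$ occurs.

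The main obstacle is the local Kneser step. Making the finite-window version of \cref{thm_Kneser_for_integers} uniform, and controlling how the obstruction subgroup $h$ varies with $n$, requires a compactness argument showing that only finitely many $h$ occur and that $h$ can be chosen globally on the subsequence.
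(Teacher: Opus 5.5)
There is a genuine gap, and it sits at the step you yourself flag as the main obstacle. No finite-window form of \cref{thm_Kneser_for_integers} is available. The paper lists even a Schnirelmann-type finitary Kneser theorem as an open question in \cref{sec: questions}. The version you need, phrased only in terms of window densities, is false: if $A=B=[H/2,H]$, each has density $1/2$ in $[0,H]$, yet $(A+B)\cap[0,H]=\{H\}$. Your pairing is also off. Sums of two elements near $n$ land near $2n$. What controls $(A+B)\cap[n,n+H)$ is $A$ near $n$ together with the initial segment $B\cap[0,H)$.

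Supplying exactly this local estimate is the main work of the paper. It chooses a $U^2(\mathbf{N},\mathbf{H})$ good scale $\mathbf{H}$ above Gowers scales for $A$ and $B$, so that local densities at scale $H_s$ are essentially the constants $\alpha$ and $\beta$. It then finds almost periods $Q_n\approx H_s$ of the structured part of $\1_A$, and hence of $\1_A*_{(\mathbf{N},\mathbf{H})}\1_B$. Finally it uses the arithmetic removal lemma to show (\cref{thm: local sumset}) that $(A+B-n)\cap[0,Q_n)$ almost contains $A_n+_{\bmod{Q_n}}B_n$, where $A_n\approx(A-n)\cap[0,Q_n)$ and $B_n\approx B\cap[0,Q_n)$. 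This wrap-around is what allows Kneser's theorem in $\Z/Q_n\Z$ to be used at all.

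Second, even granting your local bound, the averaged inequality carries no information. By concavity, $\int\min\{1,\alpha_x+\beta_x\}\,d\mu\le\min\{1,\alpha+\beta\}$, so it is weaker than global Kneser. It is therefore compatible with $d_{\mathbf{N}'}(A+B)=\alpha+\beta$ for arbitrary fluctuations of $\alpha_x,\beta_x$, and neither ``this forces equality'' nor your dichotomy follows. A density count cannot detect local ergodicity; the paper extracts it from structure. Near-equality $|A_n+_{\bmod{Q_n}}B_n|\le(\alpha+\beta+o(1))Q_n$ on most windows is fed into the cyclic-group inverse theorem (\cref{thm: cyclic sum without obstructions}, derived from Griesmer's stability theorem). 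This gives, in each window, either Bohr-interval structure on an index-$h$ subgroup or residue-class structure, and a single $h$ is obtained by pigeonholing.

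Your ``truncated'' case misuses Kneser. Since $B$ meets every residue class, the obstruction alternative would force density $1$, contradicting $\alpha+\beta<1$, so it cannot occur. Conclusion~(2) is an equality case. The paper reaches it through \cref{prop: local to global residues}: shift invariance comes from $B_0-B_0$ generating $h\Z$, and a Schnirelmann-density argument gives $d_{\mathbf{N}}(B_0)=0$.

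Likewise, conclusion~(1), including the twisted condition for $q\notin\frac{\Z}{h}$ and the bound $h<\frac{1}{1-\beta}$, comes from the local Bohr structure. Rational limiting frequencies are first excluded (\cref{prop: no rational frequencies}), and then discrepancy estimates are applied. Your appeal to \cref{thm_inverse_theorem_sumsets_compact_groups} on a profinite factor is unsupported, and in the paper's architecture it runs backwards. The compact-group model of \cref{thm: Furstenberg system}, and the bound on $C_G$ that excludes case~(ii) there, are built on top of conclusion~(1) of this theorem.
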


The second part of the argument is devoted to proving the following theorem.

\begin{Theorem}
\label{thm: 1_A is locally ergodic}
If we are in case \ref{itm ergodic or h 1} of \cref{thm: ergodic or h} then there exist $a_0, b_0 \in \{0,1,\ldots,h-1\}$, an irrational $\theta\in\T$, and closed intervals $I,J\subset\T$ such that if
    \[
    A_0=A+a_0,\qquad B_0= (B+b_0)\cap h\N,\qquad\text{and}\qquad B_1= (B+b_0)\setminus h\N,
    \]
    then $A_0\subset h\N$, $B_1\sim\N\setminus h\N$, $A_0\sim \{n\in h\N: n\theta\in I\}$, and $B_0\sim \{n\in h\N: n\theta\in J\}.$
\end{Theorem}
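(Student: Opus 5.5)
The plan is to pass from the integers to a compact abelian group, apply \cref{thm_inverse_theorem_sumsets_compact_groups} there, and then pull the resulting structure back to $\N$. I would start from the subsequence $\mathbf{N}'$ supplied by case \ref{itm ergodic or h 1} of \cref{thm: ergodic or h}, refining it further if needed.

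\textbf{Step 1 (dynamical model).} Using a Furstenberg-type correspondence along $\mathbf{N}'$, pass to a further subsequence along which all correlations $\frac{1}{N_s}\sum_{n\le N_s}\1_A(n+t_1)\cdots\1_A(n+t_k)\1_B(n+u_1)\cdots$ converge. This gives a measure-preserving system $(X,\mu,T)$ with sets $\tilde A,\tilde B$ satisfying $\mu(\tilde A)=\alpha$ and $\mu(\tilde B)=\beta$. Local ergodicity of $\1_A-\alpha$ and $\1_B-\beta$ along $\mathbf{N}'$ should let us assume the relevant invariant $\sigma$-algebra is trivial for these functions, so the system is effectively ergodic. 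Let $Z$ be the Kronecker factor, a compact monothetic group $K$ with rotation by a generator $\tau$. The extra hypothesis that $n\mapsto \prod_i \1_A(n+t_i)\,e(nq)$ is locally ergodic for $q\notin \frac{\Z}{h}$ then says that the rational eigenvalues relevant to $A$ all lie in $\frac{1}{h}\Z$. Hence the torsion part of the spectrum seen by $A$ is $\Z/h\Z$.

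\textbf{Step 2 (transfer of the sumset).} Let $f=\E(\1_{\tilde A}\mid Z)$ and $g=\E(\1_{\tilde B}\mid Z)$, and choose level sets $A'=\{f>\delta\}$ and $B'=\{g>\delta\}$ in $K$. I would prove that $A+B$ has $d_{\mathbf{N}'}$-density at least $m(A'+B')-\epsilon(\delta)$. The mechanism is the "sumset almost contains a sum" statement the paper mentions (\cref{thm: delta-sum almost contains a sum}, via \cite{Shao19}): for $n$ in a set of density close to $m(A'+B')$, the count $\sum_m \1_A(m)\1_B(n-m)$ is controlled by the Kronecker factor, so $n\in A+B$. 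Since $m(A')\ge\alpha-o(1)$ and $m(B')\ge\beta-o(1)$ as $\delta\to 0$, combining this with \cref{thm_Kneser_for_compact_groups} and the hypothesis $d_{\mathbf{N}}(A+B)=\alpha+\beta$ forces several things. First, $f$ and $g$ must be $\{0,1\}$-valued almost everywhere, so $\tilde A$ and $\tilde B$ are $Z$-measurable. Second, $m(A'+B')=m(A')+m(B')$ exactly.

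\textbf{Step 3 (compact inverse theorem and pull-back).} Apply \cref{thm_inverse_theorem_sumsets_compact_groups} in $K$; checking that $B'$ meets every coset of a finite index subgroup uses that $B$ meets every residue class. Case \ref{itm_inverse_theorem_sumsets_compact_groups_ii} would make $A'$ invariant under a closed subgroup carrying positive measure of $B'$. Combined with the restriction on the rational spectrum and $d(A)+d(B)<1$, I expect this case to reduce to a finite-index periodic structure, which is excluded here or absorbed into $h$. Case \ref{itm_inverse_theorem_sumsets_compact_groups_i} gives a finite index subgroup $H\le K$, which by Step 1 has index $h$ and corresponds to $h\N$. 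It also gives a character $\phi\colon H\to\T$ with $A'$ and $B'\cap H$ equal, up to null sets, to $\phi^{-1}(I)$ and $\phi^{-1}(J)$, and $B'$ of full measure off $H$. The value of $\phi$ along the orbit is $n\mapsto n\theta$ with $\theta$ irrational, since $\phi$ is surjective onto $\T$. Translating by $a_0$ and $b_0$, I would then show that $A_0$ agrees with the Bohr interval $\{n\in h\N:n\theta\in I\}$ up to density zero. Here $\1_A$ is close in $L^2(\mathbf{N}')$ to the pulled-back indicator of a Jordan measurable set of the same density $\alpha$. Because Bohr sets have genuine density and $d(A)=\alpha$ exists, the containment plus equal density upgrades $\sim_{\mathbf{N}'}$ to $\sim$; the same argument handles $B_0$ and $B_1$.

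\textbf{Main obstacle.} I expect Step 2 to be the hardest part: controlling $A+B$ from below by a sumset in the Kronecker factor along a sequence $\mathbf{N}'$, and forcing exact $Z$-measurability from equality. The upgrade from $d_{\mathbf{N}'}$ to $d$ in Step 3 is the other delicate point, since $\mathbf{N}'$ is only a subsequence. My first attempt there would use unique ergodicity of the rotation $n\mapsto n\theta$ together with the existence of $d(A)$ and $d(B)$.
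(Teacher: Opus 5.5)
There is a genuine gap, and it sits where you expected: Step 2 has no working mechanism.

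\textbf{Step 2.} The count $\sum_m \1_A(m)\1_B(n-m)$ pairs $A$ near $m$ with $B$ near $n-m$ across the whole range $[0,n]$. A Furstenberg system along $\mathbf{N}'$ only records correlations with bounded shifts, so nothing relates this count to $\E(\1_{\tilde A}\mid Z)*\E(\1_{\tilde B}\mid Z)$. Moreover, a merely measurable Kronecker map assigns no phase to the particular orbit that encodes $A$. Nor is the system ``effectively ergodic''. The hypotheses give $\E[\1_{\tilde A}\mid\mathcal{I}]=\alpha$, but the twisted hypothesis excludes $q=0$, so nothing is said about products $\1_A(n+t_1)\cdots\1_A(n+t_k)$. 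Hence $Z$ need not be a group rotation.

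The missing idea is the \emph{dynamical} sumset.
\begin{itemize}
\item In the Furstenberg system of $A$ alone, $A+B=\{n: T^n x_A\in B+E_A\}$ with $B+E_A=\bigcup_{b\in B}T^bE_A$ open. The portmanteau lemma then gives $\mu(B+E_A)\le d_{\mathbf N}(A+B)=\alpha+\beta$ (\cref{lem: properties of Furstenberg measures}).
\item This is a bound on a measure, so it survives averaging over the ergodic decomposition. You can therefore pick one ergodic component with $\nu_y(E_A)=\alpha$ and $\nu_y(B+E_A)\le\alpha+\beta$, and likewise for $B$.
\item Distality of rotations gives continuous factor maps to the maximal common rotational factor $G$ (\cref{lem: extension}).
\item A van der Corput argument, plus a F{\o}lner sequence of shifted windows along which the transitive point $x_B$ is generic, yields $B+E_A\supseteq_{\mu_A}\pi_A^{-1}(\{\varphi_A*\varphi_B>0\})$ (\cref{prop: average equals convolution at given point}).
\item Griesmer's lemma (\cref{lem: replace convolution by sumset}), not the finite-group \cref{thm: delta-sum almost contains a sum}, turns $\{\varphi_A*\varphi_B>0\}$ into a genuine sumset $C_0+D_0$ of a.e.-modifications of $\{\varphi_A>0\}$ and $\{\varphi_B>0\}$.
\end{itemize}
The coset condition, and the exclusion of case \ref{itm_inverse_theorem_sumsets_compact_groups_ii} of \cref{thm_inverse_theorem_sumsets_compact_groups}, come from $m(D_0)\ge\beta>1-\frac{1}{C_G}$ with $C_G\le h<\frac{1}{1-\beta}$. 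This requires working in $G$, whose rational spectrum is controlled by that of $A$, rather than in the full joint Kronecker factor. That $B$ meets every residue class does not help here, since $B$ may meet a residue class only in a set of density zero.

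\textbf{Step 3 upgrade.} The passage from $\sim_{\mathbf N'}$ to $\sim$ also fails as stated. Knowing that $\1_A$ is $L^2(\mathbf{N}')$-close to the indicator of a Bohr interval of density $\alpha$, and that $d(A)=\alpha$, does not give $A\sim$ that Bohr set. $A$ could agree with it on most of each $[N'_s]$ and with some other density-$\alpha$ set on the ranges in between, and unique ergodicity of $n\theta$ does not exclude this. What you need is exact containment $A_0\subseteq\{n\in h\N: n\theta\in I\}$, after which $d(A_0)=\alpha=|I|$ finishes.

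The paper gets this containment because \emph{every} element of $A$ acts in the dynamical sumset $A+E_B$.
\begin{itemize}
\item From $A+E_B=_{\mu_B}\pi_B^{-1}(C_0+D_0)$ and $E_B=_{\mu_B}\pi_B^{-1}(D_0)$ one gets $a\theta+D_0\subseteq_m C_0+D_0$ for all $a\in A$.
\item Hence $C=C_0\cup A\theta$ and $D=D_0\cup B\theta$ still satisfy $m(C+D)=m(C)+m(D)$.
\item The compact inverse theorem applied to $C,D$ gives $C'\subseteq\phi^{-1}(I)$ exactly. This means all of $A$, $B_0$, $B_1$ lie in the corresponding Bohr sets.
\item Comparing densities then upgrades containment to $\sim$.
\end{itemize}
The ``containment'' you invoke in Step 3 is never derived from anything you set up.
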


Let us show how Theorems~\ref{thm: ergodic or h} and~\ref{thm: 1_A is locally ergodic} combined imply \cref{thm: main}.

\begin{proof}[Proof of \cref{thm: main}]
Assume $d_{\mathbf{N}}(A+B) = \alpha + \beta$.
If the density of $B$ is not of the form $1-\frac{1}{h}$ for some positive integer $h$, then case~\ref{itm ergodic or h 2} of \cref{thm: ergodic or h} is impossible, and it follows from \cref{thm: 1_A is locally ergodic} that case~\ref{itm_main_thm_1} of \cref{thm: main} holds.
So suppose $d(B)=1-\frac{1}{h}$ for some $h\in\N$.

Next, suppose there exists some subsequence $\mathbf{N}'$ of $\mathbf{N}$ such that the density $d_{\mathbf{N}'}((A+h)\triangle A)$ exists and is positive. This means for any subsequence $\mathbf{N}''$ of $\mathbf{N}'$ we have $A+h\not\sim_{\mathbf{N}''} A$. Applying \cref{thm: ergodic or h} with $\mathbf{N}'$ in place of $\mathbf{N}$, we see that case~\ref{itm ergodic or h 2} of \cref{thm: ergodic or h} is again impossible, which by \cref{thm: 1_A is locally ergodic} implies that we are in case~\ref{itm_main_thm_1} of \cref{thm: main}.

We have therefore reduced to the situation in which, for every subsequence $\mathbf{N}'$ of $\mathbf{N}$
such that the density $d_{\mathbf{N}'}((A+h)\triangle A)$ exists, it necessarily holds that $d_{\mathbf{N}'}((A+h)\triangle A)=0$.
This implies that $d_{\mathbf{N}}((A+h)\triangle A)=0$, or in other words,
\[
A+h\sim_{\mathbf{N}} A.
\]

A similar argument shows that if there exists some subsequence $\mathbf{N}'$ such that $d_{\mathbf{N}'}(\triangle (N\setminus h\N-b_0))$ exists and is positive then we are again in case~\ref{itm_main_thm_1} of \cref{thm: main}. Hence we can assume that $d_{\mathbf{N}'}(\triangle (N\setminus h\N-b_0))$ is zero for any subsequence $\mathbf{N}'$ for which it is defined, implying that
\[
B \sim_{\mathbf{N}} (\N \setminus h\N) - b_0.
\]
We conclude that case~\ref{itm ergodic or h 2} of \cref{thm: ergodic or h} holds with $\mathbf{N}'=\mathbf{N}$, which is equivalent to case~\ref{itm_main_thm_2} of \cref{thm: main}.
\end{proof}

The methods involved in proving Theorems~\ref{thm: ergodic or h} and~\ref{thm: 1_A is locally ergodic} are rather different from one another, and therefore we discuss our approaches to each of them separately.

\paragraph{Outline of the proof of \cref{thm: ergodic or h}.}
A key part of the proof of \cref{thm: ergodic or h} consists of showing that there exists a sequence $\mathbf{H}_G=(H_{G,s})_{s\in\N}$ of natural numbers satisfying 
\[
\lim_{s\to\infty} H_{G,s} = \infty \qquad\text{and}\qquad  \lim_{s\to\infty} \frac{H_{G,s}}{N_s}=0,
\]
such that we have convenient structural control over the intersections of the form $A\cap [n,n+H_{G,s})$, $B\cap [0,H_{G,s})$, and $(A+B)\cap [n,n+H_{G,s})$.
That is, we show that for ``most'' $n\in \{N_{s-1}+1,\ldots,N_s\}$ there exist a number $Q_n$ and sets $A_n,B_n\subset \{0,1,\ldots,Q_n-1\}$ such that
\begin{equation}
\label{eqn_approx_local_strucutre}
\begin{aligned}
&Q_n=(1+\oh_{s\to\infty}(1)) H_{G,s}
\\
&|A_n|=\alpha+\oh_{s\to\infty}(1)
\\
&A_n~\text{is ``approximately equal'' to}~(A-n)\cap \{0,1,\ldots,Q_n-1\} 
\\
&|B_n|=\beta+\oh_{s\to\infty}(1)
\\
&B_n~\text{is ``approximately equal'' to}~B\cap \{0,1,\ldots,Q_n-1\}   
\\
&((A-n)+B)\cap  \{0,1,\ldots,Q_n-1\}~\text{contains}~\text{``most'' of}~(A_n+_{\bmod{Q_n}} B_n),
\end{aligned}
\end{equation}
where $A_n+_{\bmod{Q_n}} B_n = \{0\leq m< Q_n: m\equiv a+b\bmod Q_n~\text{for some}~(a,b)\in A_n\times B_n\}$;
for the precise statement, see \cref{thm: local sumset}.
It then follows that
\begin{align*}
\alpha+\beta &= \frac{|(A+B)\cap [N_s]|}{N_s} +\oh_{s\to\infty}(1)
\\
&= \frac{1}{N_s}\sum_{n=1}^N \frac{|((A-n)+B)\cap \{0,1,\ldots,H_{G,s}-1\}|}{H_{G,s}} +\oh_{s\to\infty}(1)
\\
&= \frac{1}{N_s}\sum_{n=1}^N \frac{|((A-n)+B)\cap \{0,1,\ldots,Q_n-1\}|}{Q_n}+\oh_{s\to\infty}(1)
\\
&\geq  \frac{1}{N_s}\sum_{n=1}^N \frac{|A_n+_{\bmod{Q_n}} B_n|}{Q_n} +\oh_{s\to\infty}(1).
\end{align*}
This reduces the ``global'' inverse problem for $(A+B)\cap [N_s]$ to a ``local'' inverse problem for $A_n+_{\bmod{Q_n}} B_n$.
The latter can be viewed as a sumset in $\Z/Q_n\Z$. Thus, using a quantitative inverse theorem for sumsets in finite cyclic groups (\cref{thm: cyclic sum without obstructions}) as a black box, we find that for most $n\in \{N_{s-1}+1,\ldots,N_s\}$ the sets $A_n$ and $B_n$ fall into one of only two categories:
\begin{enumerate}[label=(\Roman*), leftmargin=*]
\item\label{itm outline fin 1}
either $A_n$ and $B_n$ ``resemble'' Bohr sets,
\item\label{itm outline fin 2}
or there exists a subgroup of $\Z/Q_n\Z$ of uniformly bounded index such that most of $A_n$ is contained in a coset of this subgroup and $B_n$ is approximately equal to a finite union of cosets of this subgroup.
\end{enumerate}
For the details, see \cref{sec: local inverse theorem}, specifically \cref{thm: local inverse theorem}.
Since by \eqref{eqn_approx_local_strucutre} we have $A_n+n\approx A\cap [n,n+H_{G,s})$ and $B_n\approx B\cap [0,H_{G,s})$, the above provides insight into the local structure of the sets $A$ and $B$ on scale $\mathbf{H}_G=(H_{G,s})_{s\in\N}$. It remains to convert this into information about the global structure of $A$ and $B$ on scale $\mathbf{N}=(N_s)_{s\in\N}$.
This is done in Sections~\ref{sec_loc_to_glob_residue_classes}, \ref{sec_eliminating_rational_frequencies}, and~\ref{sec: local ergodicity}, where we show that the occurrence of case~\ref{itm outline fin 2} implies case~\ref{itm ergodic or h 2} of \cref{thm: ergodic or h}, whereas case~\ref{itm outline fin 1} implies case~\ref{itm ergodic or h 1} of \cref{thm: ergodic or h}.

Note that this approach depends heavily on the existence of the scale $\mathbf{H}_G=(H_{G,s})_{s\in\N}$ for which \eqref{eqn_approx_local_strucutre} is possible.
To prove its existence, we introduce a new class of uniformity seminorms that lie in an intermediate regime between the $U^2$~Gowers seminorms and the $U^2$~Host–Kra seminorms and we establish a structure theorem for these new seminorms; see \cref{sec: intermediate scale seminorms} for details.

\paragraph{Outline of the proof of \cref{thm: 1_A is locally ergodic}.} 
Our proof of \cref{thm: 1_A is locally ergodic} is based on techniques from dynamical systems, and we summarize the necessary prerequisites from ergodic theory in \cref{sec_ergodic_theory}.
The paper \cite{Griesmer13} provides an earlier example of the use of ergodic theory in the study of sumsets in the integers and serves as a source of inspiration for some of our arguments.

The basic idea of this approach is to replace sumsets of sets of integers by ``dynamical sumsets'' which are defined as follows:
Given an invertible measure-preserving system $(X, \mu, T)$, a measurable subset $E \subseteq X$, and a set of natural numbers $D \subseteq \N$, we define the sumset $D + E$ by
\begin{equation*}
	D + E = \bigcup_{d \in D} T^dE.
\end{equation*}
The link between sumsets in the integers and their dynamical counterparts is established via a variant of Furstenberg’s correspondence principle, which we prove in \cref{sec_dyn_model}. 
It asserts that for any sets $A,B\subset\N$ with $d(A) = \alpha > 0$, $d(B) = \beta > 0$ and $\alpha + \beta < 1$, there exist ergodic invertible measure-preserving systems $(X_A, \mu_A, T_A)$ and $(X_B, \mu_B, T_B)$, transitive points $x_A \in X_A$ and $x_B \in X_B$, clopen sets $E_A \subseteq X_A$ and $E_B \subseteq X_B$, and continuous factor maps $\pi_A : X_A \to G$ and $\pi_B : X_B \to G$, where $(G, m, \theta)$ is the maximal common rotational factor of the systems $(X_A, \mu_A, T_A)$ and $(X_B, \mu_B, T_B)$, such that:
\begin{enumerate}[label=(\arabic*),leftmargin=*]
	\item\label{itm_FC_i}
    $\pi_A(x_A) = \pi_B(x_B) = 0$,
	\item\label{itm_FC_ii}
    $A = \{n \in \N : T_A^n x_A \in E_A\}$ and $B = \{n \in \N : T_B^n x_B \in E_B\}$,
	\item\label{itm_FC_iii}
    $\mu_A(E_A) = \alpha$ and $\mu_B(E_B) = \beta$,
	\item\label{itm_FC_vi}
    $\max\{\mu_A(B + E_A), \mu_B(A + E_B)\} \leq d_{\mathbf{N}}(A+B)=\alpha + \beta$, and
    \item\label{itm_FC_v}
    the number $C_G$ of connected components of $G$ satisfies $C_G < \frac{1}{1-\beta}$.
\end{enumerate}
In light of property~\ref{itm_FC_ii}, one can regard the sets $E_A$ and $E_B$ as ``dynamical models'' of the sets $A$ and $B$, respectively, and  property~\ref{itm_FC_v} connects the size of the dynamical sumsets $B + E_A$ and $A + E_B$ to the size of the integer sumset $A+B$. We also note that the ergodicity of the systems $(X_A, \mu_A, T_A)$ and $(X_B, \mu_B, T_B)$ can be assumed because of local ergodicity of $\1_A - \alpha$ and $\1_B - \beta$, and property~\ref{itm_FC_v} is a consequence of the correlation condition on $\1_A$ assumed in part~\ref{itm ergodic or h 1} of \cref{thm: ergodic or h}.

The final important component in this correspondence principle is the role of the maximal common rotational factor system $(G, m, \theta)$. This factor turns out to be \underline{characteristic} for the dynamical sumsets $B + E_A$ and $A + E_B$, meaning that their structure is largely governed by their projection onto this factor (cf.~\cref{prop: correspondence with convolution}). This is particularly convenient, since sumsets in rotation systems can be studied using basic methods from Fourier analysis, such as the convolution operator.
More precisely, letting $\varphi_A = \E_{\mu_A}[\1_{E_A} \mid G]$ and $\varphi_B = \E_{\mu_B}[\1_{E_B} \mid G]$ denote the projections of $\1_{E_A}$ and $\1_{E_B}$ onto $G$, we show in \cref{sec_reducing_to_rotationl_factor} that the set $S = \{x \in G : (\varphi_A * \varphi_B)(x) > 0\}$ satisfies $m(S)\leq \alpha+\beta$, where $\varphi_A * \varphi_B$ is the convolution of $\varphi_A$ and $\varphi_B$ in $G$.

Finally, in \cref{sec_finishing_the_proof}, we show that since $m(S)\leq \alpha+\beta$, there exist a set $C_0\subset G$ that is $m$-a.e.~equal to $\{x\in G:\varphi_A(x)>0\}$ and another set $D_0\subset G$ that is $m$-a.e.~equal to $\{x\in G:\varphi_B(x)>0\}$
such that
\begin{itemize}
\item $m(C_0)=\alpha$, $m(D_0)=\beta$, and $m(C_0+D_0)=\alpha+\beta$;
\item $\pi_A^{-1}(C_0)$ and $\pi_A^{-1}(C_0+D_0)$ are $\mu_A$-a.e.~equal to $E_A$ and $B+E_A$, respectively;
\item $\pi_B^{-1}(D_0)$ and $\pi_B^{-1}(C_0+D_0)$ are $\mu_B$-a.e.~equal to $E_B$ and $A+E_B$, respectively.
\end{itemize}
Form there, we can use \cref{thm_inverse_theorem_sumsets_compact_groups} to deduce that $A$ and $B$ are parallel Bohr intervals. Note that case~\ref{itm_inverse_theorem_sumsets_compact_groups_ii} of \cref{thm_inverse_theorem_sumsets_compact_groups}  cannot happen because of the bound $C_G < \frac{1}{1-\beta}$.

%SECTION
\section{Sumsets in abelian groups}

As described in \cref{sec: proof outline} above, a key step in the proof of \cref{thm: ergodic or h} relates the inverse problem for sumsets of sets of positive density in $\N$ to a problem about sumsets in finite cyclic groups.
In this section, we compile useful results about sumsets to be used in the proof of \cref{thm: ergodic or h}.

%SUBSECTION
\subsection{Schnirelmann density}

The \emph{Schnirelmann density} of a set $A \subseteq \N$ is defined by\nomenclature{$\sigma$}{Schnirelmann density}
\begin{equation*}
    \sigma(A) = \inf_{N \in \N} \frac{|A \cap [N]|}{N}.
\end{equation*}
We similarly define the Schnirelmann density of a set $A$ on an interval $I = \{a, a+1,\ldots,b\}$ by
\begin{equation*}
    \sigma(A, I) = \min_{x \in I} \frac{|A \cap \{a,a+1,\ldots,x\}|}{x-a+1}.
\end{equation*}
Note that $\sigma(A) = \lim_{N\to\infty} \sigma(A, [N]) = \inf_{N \in \N} \sigma(A, [N])$.

In the article \cite{Schnirelmann33} in which he introduced his eponymous density, Schnirelmann established the following inequality for the Schnirelmann density of a sumset in terms of the densities of its summands.\footnote{For an exposition of Schnirelmann's theorem, we refer the reader to \cite[Chapter I, Theorem 1]{HR83}.
A later result of Mann \cite{Mann42} established the related inequality $\sigma(A \cup B \cup (A+B)) \geq \sigma(A) + \sigma(B)$.}

\begin{Theorem}[Schnirelmann \cite{Schnirelmann33}] \label{thm: Schnirelmann}
    Let $A, B \subseteq \N$.
    If $1 \in A$, then
    \begin{equation*}
        \sigma(A \cup (A+B)) \geq \sigma(A) + \sigma(B) - \sigma(A) \sigma(B).
    \end{equation*}
\end{Theorem}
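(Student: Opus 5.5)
The plan is to run the classical counting argument on initial segments. Since $\sigma(A)=\inf_N \sigma(A,[N])$, it suffices to show that for every $N\in\N$,
\[
|(A\cup(A+B))\cap[N]| \geq N\bigl(\sigma(A)+\sigma(B)-\sigma(A)\sigma(B)\bigr).
\]
Fix $N$ and list $A\cap[N]=\{a_1<a_2<\cdots<a_k\}$, where $a_1=1$ by hypothesis. Write $a_{k+1}=N+1$. The gaps $\{a_i+1,\ldots,a_{i+1}-1\}$, for $1\le i\le k$, have lengths $g_i=a_{i+1}-a_i-1$, so that $k+\sum_i g_i=N$. The requirement $1\in A$ ensures there is no gap before $a_1$, so these gaps cover all of $[N]\setminus A$.

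Next, in the $i$-th gap the elements $a_i+b$ with $b\in B$ and $1\le b\le g_i$ lie in $A+B$. They are distinct and all belong to that gap, so the gap contains at least $|B\cap[g_i]|\geq \sigma(B)g_i$ elements of $A+B$. This uses only the definition of $\sigma(B)$; if $g_i=0$ there is nothing to check. Summing over the gaps gives
\[
|(A\cup(A+B))\cap[N]| \geq k+\sigma(B)\sum_i g_i = k+\sigma(B)(N-k) = \sigma(B)N+(1-\sigma(B))k.
\]

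Finally, $k=|A\cap[N]|\geq\sigma(A)N$ and $1-\sigma(B)\geq 0$, so the right-hand side is at least $N(\sigma(B)+\sigma(A)-\sigma(A)\sigma(B))$. Dividing by $N$ and taking the infimum over $N$ gives the theorem.

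There is no real obstacle. The only points needing care are that gap elements are counted disjointly from $A$, which is automatic since the gaps lie outside $A$, and that the hypothesis $1\in A$ is exactly what rules out an uncovered initial segment before the first element of $A$.
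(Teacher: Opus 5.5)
Your proof is correct. It is the classical gap-counting argument: $1\in A$ removes any uncovered initial segment, each gap after $a_i$ contains at least $|B\cap[g_i]|\ge\sigma(B)g_i$ elements of $A+B$, and $k\ge\sigma(A)N$ together with $1-\sigma(B)\ge 0$ finishes it. The paper gives no proof of its own here and simply cites Schnirelmann and the exposition in Halberstam--Roth, where essentially this same argument appears.
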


\begin{Corollary} \label{cor: Schnirelmann up to N}
    Let $A, B \subseteq [N]$ for some $N \in \N$.
    Suppose $\sigma(A, [N]) = \alpha > 0$ and $\sigma(B, [N]) = \beta$.
    Then
    \begin{equation*}
        d(A \cup (A+B), [N]) \geq \alpha + \beta \left( 1 - \alpha \right).
    \end{equation*}
\end{Corollary}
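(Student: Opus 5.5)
The plan is to reduce \cref{cor: Schnirelmann up to N} to \cref{thm: Schnirelmann}. I will pad $A$ and $B$ with every integer beyond $N$, so that the Schnirelmann densities are unchanged and the sumset inside $[N]$ is unchanged too. Throughout, I read $d(C,[N])$ as $|C\cap[N]|/N$.

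\emph{Step 1: $1\in A$.} Since $\sigma(A,[N])=\alpha>0$, taking $x=1$ in the definition of $\sigma(A,[N])$ gives $|A\cap\{1\}|\geq \alpha>0$. Hence $1\in A$, which is the hypothesis needed for \cref{thm: Schnirelmann}.

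\emph{Step 2: extension.} Define $A'=A\cup\{n\in\N: n>N\}$ and $B'=B\cup\{n\in\N:n>N\}$. I claim that $\sigma(A')=\sigma(A,[N])=\alpha$, and likewise $\sigma(B')=\beta$.
\begin{itemize}
\item For $x\leq N$, the ratio $|A'\cap[x]|/x$ equals $|A\cap[x]|/x$.
\item For $x>N$,
\[
\frac{|A'\cap[x]|}{x}=\frac{|A\cap[N]|+(x-N)}{N+(x-N)}\geq \frac{|A\cap[N]|}{N}.
\]
This holds because adding $x-N$ elements, each counted with ratio $1$, can only raise an average that is at most $1$.
\end{itemize}
So the infimum over all $x\in\N$ is attained among $x\leq N$, and it equals $\sigma(A,[N])$. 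The same argument applies to $B'$.

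\emph{Step 3: apply \cref{thm: Schnirelmann} and truncate.} Since $1\in A'$, \cref{thm: Schnirelmann} gives
\[
\sigma(A'\cup(A'+B'))\geq \alpha+\beta-\alpha\beta.
\]
In particular, $|(A'\cup(A'+B'))\cap[N]|\geq N(\alpha+\beta(1-\alpha))$. Now any element of $A'+B'$ that uses a summand larger than $N$ is itself larger than $N$, because all elements are positive. Hence
\[
(A'\cup(A'+B'))\cap[N]=(A\cup(A+B))\cap[N].
\]
Dividing by $N$ gives the claim.

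No step here is a genuine obstacle. The only point requiring care is Step 2: checking that padding with all integers beyond $N$ does not lower the Schnirelmann density. This rests on the elementary observation that averaging in values equal to $1$ cannot decrease a ratio that is at most $1$.
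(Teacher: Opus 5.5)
Your proposal is correct and follows the paper's proof essentially verbatim: pad $A$ and $B$ with all integers beyond $N$, observe that the Schnirelmann densities and the sumset inside $[N]$ are unchanged, and apply \cref{thm: Schnirelmann} via $d(\,\cdot\,,[N])\geq\sigma(\,\cdot\,)$. You also verify the claim $\sigma(A')=\sigma(A,[N])$ explicitly (the mediant inequality), a detail the paper merely asserts.
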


\begin{proof}
    Note that the assumption $\sigma(A,[N]) > 0$ implies that $1 \in A$.
    Let $A' = A \cup \{N+1,N+2,\ldots\}$ and $B' = B \cup \{N+1,N+2,\ldots\}$.
    Then $\sigma(A') = \alpha$ and $\sigma(B') = \beta$.
    Moreover, $\left( A' \cup (A'+B') \right) \cap [N] = (A \cup (A+B)) \cap [N]$, so
    \begin{equation*}
        d(A \cup (A+B), [N]) = d(A' \cup (A'+B'), [N]) \geq \sigma(A' \cup (A'+B')),
    \end{equation*}
    and the latter quantity is bounded below by $\alpha + \beta - \alpha\beta$ by Theorem~\ref{thm: Schnirelmann}.
\end{proof}

The next lemma shows that if a set has positive density in an interval in $\N$, then it has Schnirelmann density in a subinterval.

\begin{Lemma} \label{lem: subinterval with Schnirelmann density}
    Let $A \subseteq [N]$ for some $N \in \N$, and suppose $|A| \ge \delta N$.
    Then for every $\epsilon > 0$, there exists $x \leq (1-\epsilon)N$ such that
    \begin{equation} \label{eq: large Schnirelmann density}
        \sigma(A, \{x+1,\ldots,N\}) = \min_{y \in \{x+1,\ldots,N\}} \frac{|A \cap \{x+1,\ldots,y\}|}{y-x} > \delta - \epsilon.
    \end{equation}
\end{Lemma}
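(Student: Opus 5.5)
The plan is a discrete ``rising sun'' (last-minimum) argument applied to a normalized counting function. The case $\epsilon \geq \delta$ is trivial: then $\delta - \epsilon \leq 0$, so $x = 0$ works. Indeed, every ratio in \eqref{eq: large Schnirelmann density} is nonnegative, and the only way to get a non-strict inequality is $\delta = \epsilon$ with some ratio equal to $0$. That borderline case is absorbed by the main argument below, since it only uses $\epsilon \leq \delta$. So from now on assume $0 < \epsilon \leq \delta$.

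Define, for $y \in \{0,1,\ldots,N\}$,
\[
f(y) = |A \cap [y]| - (\delta - \epsilon)\, y,
\]
so $f(0) = 0$ and $f(N) \geq \delta N - (\delta-\epsilon)N = \epsilon N > 0$. Let $x$ be the \emph{largest} element of $\{0,\ldots,N\}$ at which $f$ attains its minimum over $\{0,\ldots,N\}$. Since $f(N) > 0 = f(0) \geq \min f$, we have $x < N$. By the choice of $x$ as the last minimizer, $f(y) > f(x)$ for every $y \in \{x+1,\ldots,N\}$. This unwinds to $|A \cap \{x+1,\ldots,y\}| > (\delta-\epsilon)(y-x)$, which is exactly the strict inequality in \eqref{eq: large Schnirelmann density}.

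It remains to show $x \leq (1-\epsilon)N$. On one hand, $f(x) \leq f(0) = 0$, hence
\[
f(N) - f(x) \geq f(N) \geq \epsilon N.
\]
On the other hand, using $|A \cap \{x+1,\ldots,N\}| \leq N - x$, we get
\[
f(N) - f(x) = |A \cap \{x+1,\ldots,N\}| - (\delta-\epsilon)(N-x) \leq (1 - \delta + \epsilon)(N - x).
\]
Combining the two bounds gives $N - x \geq \epsilon N/(1-\delta+\epsilon)$. Since $\epsilon \leq \delta$ means $1 - \delta + \epsilon \leq 1$, this yields $N - x \geq \epsilon N$, i.e.\ $x \leq (1-\epsilon)N$.

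There is no real obstacle here. The only points needing care are the choice of the \emph{last} minimizer, which gives the strict inequality required by the statement, and the separate handling of $\epsilon > \delta$, where the final estimate $1-\delta+\epsilon \leq 1$ would fail but the conclusion is trivial.
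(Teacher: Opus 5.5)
Your proof is correct and is essentially the paper's argument. The paper takes $x$ to be the largest $M\in[N]$ with $|A\cap[M]|\le(\delta-\epsilon)M$ (or $x=0$ if there is none) rather than the last minimizer of your $f$, and gets the strict inequality from maximality just as you do; it then bounds $x$ via $\delta-\epsilon\ge |A\cap[x]|/x \ge (|A|-(N-x))/N$, which yields $x\le(1-\epsilon)N$ directly and so avoids your case split at $\epsilon=\delta$ (in both versions the choice $x=0$ tacitly uses $\epsilon\le 1$, which is harmless in context).
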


\begin{proof}
    If $|A \cap [M]| > (\delta - \epsilon)M$ for every $M \in [N]$, then \eqref{eq: large Schnirelmann density} holds for $x = 0$, so suppose this is not the case.
    Let $x = \max\{M \in [N] : |A \cap [M]| \leq (\delta - \epsilon)M\}$.
    Then
    \begin{equation*}
        \delta - \epsilon \geq \frac{|A \cap [x]|}{x} \geq \frac{|A \cap [N]| - (N-x)}{N} \geq \delta - \left( 1 - \frac{x}{N} \right).
    \end{equation*}
    Rearranging the inequality gives $x \leq (1-\epsilon)N$.
    Moreover, if $y \in \{x+1,\ldots,N\}$, then
    \begin{equation*}
        |A \cap \{x+1,\ldots,y\}| = \underbrace{|A \cap [y]|}_{> (\delta-\epsilon)y} - \underbrace{|A \cap [x]|}_{\leq (\delta-\epsilon)x} > (\delta-\epsilon)(y-x)
    \end{equation*}
    by maximality of $x$.
\end{proof}

%SUBSECTION
\subsection{An inverse theorem for sumsets in finite cyclic groups}

In the context of finite cyclic groups, Kneser's theorem (\cref{thm_Kneser_for_compact_groups}) has the following consequence.

\begin{Theorem} \label{thm: cyclic group Kneser}
    For every $\epsilon > 0$ and every $Q \in \N$, if $A,B \subseteq \Z/Q\Z$ are nonempty subsets and $B$ intersects every coset of every subgroup of index at most $\epsilon^{-1}$, then either
    \begin{equation*}
        |A+B| > |A| + |B| - \epsilon Q
    \end{equation*}
    or $A+B = \Z/Q\Z$.
\end{Theorem}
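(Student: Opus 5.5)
We derive the statement directly from Kneser's theorem for compact groups (\cref{thm_Kneser_for_compact_groups}), applied to $G=\Z/Q\Z$ with normalized counting measure $m(X)=|X|/Q$. Every Borel condition is automatic here, and a subgroup of $\Z/Q\Z$ has finite index dividing $Q$. If $|A+B|\ge |A|+|B|$, the first alternative holds at once, since $\epsilon Q>0$. So we may assume Kneser's exceptional case. Then there is a subgroup $H\le \Z/Q\Z$ such that $A+B$ is a union of cosets of $H$ and
\[
|A+B| = |A+H|+|B+H|-|H|.
\]

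Next we split on the index $k=[\Z/Q\Z:H]=Q/|H|$. Suppose $k\le \epsilon^{-1}$. By hypothesis $B$ meets every coset of $H$, so $B+H=\Z/Q\Z$. Since $A$ is nonempty, $A+B\supseteq a+B$ for any $a\in A$, and $a+B$ meets every coset of $H$. Because $A+B$ is a union of $H$-cosets, this forces $A+B=\Z/Q\Z$, which is the second alternative.

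Now suppose $k>\epsilon^{-1}$, so that $|H|=Q/k<\epsilon Q$. Using $|A+H|\ge|A|$ and $|B+H|\ge|B|$, we get
\[
|A+B|\ge |A|+|B|-|H|>|A|+|B|-\epsilon Q,
\]
which is the first alternative.

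The only delicate point is the bookkeeping of the exceptional case in Kneser's theorem: the identity $m(A+B)=m(A+H)+m(B+H)-m(H)$ must be read with the normalized measure and then multiplied by $Q$. The hypothesis is needed only for subgroups of index at most $\epsilon^{-1}$, and that is exactly the range in which it is used. The argument does not depend on whether Kneser's dichotomy is stated with the trivial subgroup allowed, because that case is absorbed into the first alternative.
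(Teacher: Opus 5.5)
Your proof is correct and is essentially the paper's argument. Both apply the exceptional case of Kneser's theorem, use $|A+H|\ge|A|$ and $|B+H|\ge|B|$ to obtain $|A+B|\ge|A|+|B|-|H|$, and then use the hypothesis on $B$ to force $A+B=\Z/Q\Z$ when $[\Z/Q\Z:H]\le\epsilon^{-1}$. The only difference is organizational: the paper argues by contraposition (assuming $|A+B|\le|A|+|B|-\epsilon Q$ forces $|H|\ge\epsilon Q$), whereas you split directly on the index of $H$.
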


\begin{proof}
    Suppose that $|A+B| \leq |A| + |B| - \epsilon Q$.
    By \cref{thm_Kneser_for_compact_groups} (see also \cite[Theorem 4.2]{Nathanson96_inverse} for the discrete case used here), there is a subgroup $H \leq \Z/Q\Z$ such that $A+B$ is a union of cosets of $H$ and $|A+B| = |A+H| + |B+H| - |H|$.
    Hence,
    \begin{equation*}
        |H| = |A+H| + |B+H| - |A+B| \geq |A| +|B| - |A+B| \geq \epsilon Q,
    \end{equation*}
    so $H$ is a subgroup of index at most $\epsilon^{-1}$.
    But then $B$ intersects every coset of $H$, so $A+B = A+B+H = A+(B+H) = \Z/Q\Z$.
\end{proof}

We now wish to obtain a corresponding inverse theorem.
Namely, if $A, B \subseteq \Z/Q\Z$, $B$ intersects every coset of every subgroup of small index, and $|A+B|$ is not much larger than $|A|+|B|$, what can be said about the structure of $A$ and $B$?
A more general inverse theorem was established by Griesmer \cite{Griesmer19}, and we will use his result to deduce strong structural information about $A$ and $B$ in our specific setting of interest.

First, we reproduce \cite[Theorem 1.15]{Griesmer19} below, specialized to finite cyclic groups.

\begin{Theorem} \label{thm:stability_cyclic_groups}
    For every $\epsilon > 0$ and $k \in \N$, there exists $\delta > 0$ and $D \in \N$ such that for every $Q \in \N$, if $A, B \subseteq \Z/Q\Z$ with $|A|, |B| \geq \epsilon Q$ and  $|A+B| \leq |A| + |B| + \delta Q$, then there is a subgroup $H$ of $\Z/Q\Z$ of index at most $D$ such that at least one of the following holds:
    \begin{enumerate}[label=(\Roman{enumi}),ref=(\Roman{enumi}),leftmargin=*]
        \item\label{itm stability_cyclic_groups I} $A+B$ is $\epsilon$-periodic with respect to $H$, meaning $|(A+B+H)\setminus(A+B)| < \epsilon |H|$, and $|A+B+H| \leq |A+H| + |B+H|$.
        \item\label{itm stability_cyclic_groups II} $A+B$ is not $\epsilon$-periodic with respect to $H$, while $A$ and $B$ have partitions $A = (A_0 \cup A_1) + a_0$ and $B = (B_0 \cup B_1) + b_0$ for some $a_0 \in A$ and $b_0 \in B$ such that
        \begin{enumerate}[label=(II.\alph*),leftmargin=*]
            \item\label{itm stability_cyclic_groups II.a} $A_0, B_0 \ne \emptyset$,
            \item\label{itm stability_cyclic_groups II.b} $A_0, B_0 \subseteq H$,
            \item\label{itm stability_cyclic_groups II.c} $|A_0+B_0| \leq |A_0| + |B_0| + \epsilon |H|$,
            \item\label{itm stability_cyclic_groups II.d} at least one of $A_1$ or $B_1$ is nonempty,
            \item\label{itm stability_cyclic_groups II.e} $(A_1+H) \cap A_0 = \emptyset$ and $(B_1 + H) \cap B_0 = \emptyset$, and
            \item\label{itm stability_cyclic_groups II.f} $|(A_1 + H) \setminus A_1| \leq \epsilon |H|$ and $|(B_1+H) \setminus B_1| \leq \epsilon |H|$.
        \end{enumerate}
        \item\label{itm stability_cyclic_groups III} $|A+B| < (1-\epsilon)|H|$ and there exist $A',B' \subseteq H$ and $x,y \in \Z/Q\Z$ such that
        \begin{enumerate}[label=(III.\alph*),leftmargin=*]
            \item\label{itm stability_cyclic_groups III.a} $A \subseteq x+A'$ and $B \subseteq y + B'$,
            \item\label{itm stability_cyclic_groups III.b} $|(x+A')\setminus A| < \epsilon Q$ and $|(y+B') \setminus B| < \epsilon Q$, and
            \item\label{itm stability_cyclic_groups III.c} there exists a surjective homomorphism $\phi : H \to \Z/N\Z$ such that $A' = \phi^{-1}(I)$ and $B' = \phi^{-1}(J)$ for some $N > k$ and intervals $I,J\subseteq\Z/N\Z$.
        \end{enumerate}
    \end{enumerate}
\end{Theorem}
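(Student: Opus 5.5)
The plan is to obtain this statement as a direct specialization of Griesmer's stability theorem \cite[Theorem~1.15]{Griesmer19}, rather than to prove anything from scratch. That theorem is formulated for an arbitrary compact abelian group $G$ with normalized Haar measure $m$. Its key feature is that the output constants $\delta$ and $D$ depend only on $\epsilon$ and $k$, and not on $G$. So I would apply it with $G=\Z/Q\Z$ and $m$ the normalized counting measure $m(X)=|X|/Q$. The uniformity of $\delta,D$ over all compact abelian groups is then exactly the uniformity over $Q\in\N$ that the statement asserts.

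The first step is the dictionary between the two settings. The hypotheses $|A|,|B|\geq\epsilon Q$ and $|A+B|\le|A|+|B|+\delta Q$ become $m(A),m(B)\ge\epsilon$ and $m(A+B)\le m(A)+m(B)+\delta$. In a finite group every set is Borel and every subgroup is closed, so no measurability issues arise. The subgroup $H$ produced by Griesmer's theorem is closed of index at most $D$, hence just a subgroup of $\Z/Q\Z$. Normalized Haar measure on $H$ (or on cosets of $H$) corresponds to $|\cdot|/|H|$. Thus the conditions ``$\epsilon$-periodic'' and the inequalities in \ref{itm stability_cyclic_groups I} and \ref{itm stability_cyclic_groups II.c}, \ref{itm stability_cyclic_groups II.f} translate verbatim, up to rescaling by $|H|$ versus $Q$. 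Where Griesmer's theorem uses a tolerance relative to $m$ and we want it relative to $|H|$ (or the reverse), I would shrink $\epsilon$ at the start: replace $\epsilon$ by $\epsilon/D$ or similar. This is possible because $|H|\ge Q/D$.

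The second step is the translation of the structured case. In \cite[Theorem~1.15]{Griesmer19}, case~\ref{itm stability_cyclic_groups III} provides a continuous surjective homomorphism $\phi$ from $H$ onto either $\T$ or a finite cyclic group $\Z/N\Z$ with $N>k$. The sets $A,B$ are nearly contained in, and nearly fill, translates of preimages of intervals. Since $H\le\Z/Q\Z$ is finite, there is no surjection onto $\T$, so only the $\Z/N\Z$ alternative can occur. The Bohr-interval preimages $\phi^{-1}(I),\phi^{-1}(J)$ then give $A',B'$. Items \ref{itm stability_cyclic_groups III.a}--\ref{itm stability_cyclic_groups III.c} and the bound $|A+B|<(1-\epsilon)|H|$ follow after multiplying the measure statements by $Q$.

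I expect the main obstacle to be bookkeeping rather than substance. The hard part is checking that the exact form of each alternative in Griesmer's statement matches ours once normalizations are changed, for instance whether the symmetric-difference error in case~\ref{itm stability_cyclic_groups III} is measured relative to $G$ or to $H$. It also requires confirming that any reduction of $\epsilon$ needed to reconcile the two normalizations is harmless, since $\delta$ and $D$ are allowed to depend on $\epsilon$. The plan is to make this reduction once at the start and then read off each case.
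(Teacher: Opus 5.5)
Your route is the paper's route. You specialize \cite[Theorem~1.15]{Griesmer19} to $\Z/Q\Z$ with normalized counting measure and discard the $\T$ alternative in (III), since a finite group has no surjection onto $\T$. But you pass over the one point where the statement is not a verbatim specialization, and that point is a genuine gap.

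You write that Griesmer's case (III) yields a surjection onto $\Z/N\Z$ with $N>k$. The cited theorem has no independent parameter $k$. A single constant $d$ both bounds the index of $H$ and supplies the lower bound $N>d$ in (III.c). The statement to be proved decouples these: $k$ is arbitrary and $D$ may depend on it. So you need an extra step that forces $N>k$. The paper handles this in two cases. If the constant $d$ attached to $\epsilon$ satisfies $d\ge k$, then $N>d\ge k$ and one takes $D=d$. If $d<k$, one reruns the theorem with a smaller $\epsilon'$ whose constant $d'$ is at least $k$, and takes $\delta=\delta(\epsilon')$ and $D=d'$.

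Two smaller remarks on your planned adjustments of $\epsilon$. First, no rescaling is needed to reconcile normalizations; the paper treats the translation of the tolerances (against $|H|$ in (I), (II.c), (II.f), and against $Q$ in (III.b)) as verbatim. In any case, ``replace $\epsilon$ by $\epsilon/D$'' would be circular, because $D$ is produced from whatever $\epsilon$ you feed in.

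Second, suppose you do shrink $\epsilon$ to $\epsilon'$, as the $k$-reduction requires. The hypothesis $|A|,|B|\ge\epsilon Q$ implies the one at $\epsilon'$, and the upper-bound tolerances obtained at $\epsilon'$ imply those at $\epsilon$. However, the clauses ``$A+B$ is not $\epsilon$-periodic'' in (II) and ``$|A+B|<(1-\epsilon)|H|$'' in (III) do not transfer from $\epsilon'$ back to $\epsilon$. These clauses need a separate word. What makes this harmless in context is that neither clause is used in the proof of \cref{thm: cyclic sum without obstructions}. The fact that $\delta$ and $D$ may depend on $\epsilon$ does not make it harmless.
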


\begin{proof}
    This is the special case of \cite[Theorem 1.15]{Griesmer19} applied to cyclic groups $\Z/Q\Z$.
    The general version of \cite[Theorem 1.15]{Griesmer19} applies to arbitrary compact abelian groups and has as an additional possibility in item \ref{itm stability_cyclic_groups III} that the sets $A'$ and $B'$ arise from surjective homomorphisms to $\T$.
    Since a discrete group cannot have $\T$ as a homomorphic image, we can rule out this case when specializing to cyclic groups.

    In \cite[Theorem 1.15]{Griesmer19}, there is a single parameter $d = D$, and $k$ in \ref{itm stability_cyclic_groups III.c} is also replaced by $d$ rather than having a separate quantifier.
    To deduce the present statement from \cite[Theorem 1.15]{Griesmer19}, we consider two cases.
    If $d \geq k$, then we get $N > k$ in \ref{itm stability_cyclic_groups III.c} from the conclusion $N > d$ appearing in \cite[Theorem 1.15]{Griesmer19}.
    If $d < k$, then we can take a smaller $\epsilon'$ so that the corresponding $d'$ satifies $d' \geq k$, reducing to the previously handled case.
\end{proof}

Under the additional assumption that $B$ has non-empty intersection with every coset of every subgroup of small index, we prove the following inverse theorem, which bears a strong resemblance to \cref{thm_inverse_theorem_sumsets_compact_groups} and \cref{thm: main}.

\begin{Theorem} \label{thm: cyclic sum without obstructions}
    For every $\alpha, \beta, \epsilon, \eta > 0$ and $k \in \N$, there exist $\delta > 0$ and $D \in \N$ such that if $Q \in \N$ and $A, B \subseteq \Z/Q\Z$ satisfy $|A| \geq \alpha Q$, $|B| \geq \beta Q$, and $B$ has non-empty intersection with every coset of every subgroup of index at most $D$, then at least one of the following holds:
    \begin{enumerate}[label=(\roman*), leftmargin=*]
        \item\label{itm cyclic sum without obstructions i} $|A+B| \geq (\alpha+\beta+\delta)Q$.
        \item\label{itm cyclic sum without obstructions ii} $|A+B| \geq (1-\epsilon)Q$.
        \item\label{itm cyclic sum without obstructions iii} There exists a subgroup $H \leq \Z/Q\Z$ of index at most $D$, elements $a_0 \in A$ and $b_0 \in B$, and a decomposition $A = A_0 + a_0$ and $B = (B_0 \cup B_1) + b_0$ such that
        \begin{enumerate}[label=(iii.\alph*), leftmargin=*]
            \item\label{itm cyclic sum without obstructions iii.a} $A_0, B_0 \subseteq H$,
            \item\label{itm cyclic sum without obstructions iii.b} $B_1 \subseteq (\Z/Q\Z) \setminus H$ and $|B_1| > Q - |H| - \epsilon |H|$, and
            \item\label{itm cyclic sum without obstructions iii.c} there exists a surjective homomorphism $\phi : H \to \Z/N\Z$ for some $N > k$ and intervals $I,J \subseteq \Z/N\Z$ such that
            \[
                A_0\subset \phi^{-1}(I), \quad B_0\subset \phi^{-1}(J),\quad\text{and}\quad |\phi^{-1}(I)\setminus A_0|, |\phi^{-1}(J)\setminus B_0| < \epsilon Q.
            \]
        \end{enumerate}
        \item\label{itm cyclic sum without obstructions iv} There exists a subgroup $H \leq \Z/Q\Z$ of index at most $D$, elements $a_0 \in A$ and $b_0 \in B$, and a decomposition $A = A_0 + a_0$ and $B = (B_0 \cup B_1) + b_0$ such that
        \begin{enumerate}[label=(iv.\alph*), leftmargin=*]
            \item\label{itm cyclic sum without obstructions iv.a} $A_0, B_0 \subseteq H$,
            \item\label{itm cyclic sum without obstructions iv.b} $B_1 \subseteq (\Z/Q\Z) \setminus H$ and $|B_1| > Q - |H| - \eta |H|$
            \item\label{itm cyclic sum without obstructions iv.c} $|B_0| < \eta |H|$.
        \end{enumerate}
    \end{enumerate}
\end{Theorem}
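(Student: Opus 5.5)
We derive \cref{thm: cyclic sum without obstructions} from Griesmer's stability theorem, \cref{thm:stability_cyclic_groups}, together with \cref{thm: cyclic group Kneser}.

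\textbf{Parameters and first reductions.} Given $\alpha,\beta,\epsilon,\eta,k$, fix a small $\epsilon'>0$ and apply \cref{thm:stability_cyclic_groups} with $(\epsilon',k)$. This yields $\delta'$ and $D'$. We take $\delta\le\delta'$ and $D\ge \max(D',1/\epsilon')$.

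Assume \ref{itm cyclic sum without obstructions i} fails. We may shrink $A$ and $B$ to sizes exactly about $\alpha Q$ and $\beta Q$; more precisely, we apply Griesmer's hypothesis with $|A|+|B|\ge(\alpha+\beta)Q$. Then $|A+B|\le |A|+|B|+\delta Q$. If $|A|$ or $|B|$ is below $\epsilon' Q$, we lower $\epsilon'$ so that $\epsilon'<\min(\alpha,\beta)$, and the hypotheses of \cref{thm:stability_cyclic_groups} hold. So we obtain a subgroup $H$ of index at most $D'$ in one of the three cases (I), (II), (III).

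\textbf{Case (I).} Since $B$ meets every coset of $H$ (the index is at most $D$), we have $B+H=\Z/Q\Z$. Hence $A+B+H$ is everything, and $\epsilon'$-periodicity gives $|A+B|\ge (1-\epsilon')Q$. This is \ref{itm cyclic sum without obstructions ii}.

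\textbf{Case (III).} Here $|A+B|<(1-\epsilon')|H|$ and $B\subseteq y+B'\subseteq y+H$. Since $B$ meets every coset of $H$, this forces $H=\Z/Q\Z$. Then (III.a)--(III.c) give \ref{itm cyclic sum without obstructions iii} with $B_1=\emptyset$, $A_0=A-x$, $B_0=B-y$. To match the required form, we move $x,y$ to elements $a_0\in A$, $b_0\in B$ by translating the intervals $I,J$: the shift is by an element of $H$, and its image under $\phi$ shifts $I$ and $J$ accordingly.

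\textbf{Case (II).} This is the main work. We have $A=(A_0\cup A_1)+a_0$ and $B=(B_0\cup B_1)+b_0$, where $A_1,B_1$ are nearly unions of $H$-cosets disjoint from $A_0+H=H$ and $B_0+H=H$, and $|A_0+B_0|\le |A_0|+|B_0|+\epsilon'|H|$.

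Because $B$ meets every coset of $H$, the set $B_1$ meets every nontrivial coset. By (II.f), $B_1$ fills all but $\epsilon'|H|$ of each nontrivial coset it touches, so $|B_1|\ge Q-|H|-D\epsilon'|H|$.

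Next we show $A_1=\emptyset$. If $A_1$ contains a near-full coset $c+H$, then $A+B\supseteq (c+H+a_0)+B$. Its size is nearly $Q$, since $B$ meets all cosets and $(c+H)+B$ covers each coset of $H$ entirely. That gives \ref{itm cyclic sum without obstructions ii} (after adjusting $\epsilon'$). Hence we may assume $A_1=\emptyset$, i.e.\ $A=A_0+a_0$.

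Now count: $A+B\supseteq (A_0+B_0)\cup(A_0+B_1)$ shifted. Here $A_0+B_1$ covers nearly all of the complement of $H$, since $A_0$ is nonempty and $B_1$ is nearly full off $H$. So
\[
|A+B|\ge |A_0+B_0|+Q-|H|-O(D\epsilon')|H|.
\]
Combined with $|A+B|\le |A|+|B|+\delta Q$, this gives $|A_0+B_0|\le |A_0|+|B_0|+(\delta+O(D\epsilon'))Q$ inside $H\cong\Z/|H|\Z$.

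We then split on the size of $B_0$. If $|B_0|<\eta|H|$, we land in \ref{itm cyclic sum without obstructions iv}. Otherwise $|A_0|,|B_0|\gg|H|$, and we reapply the argument inside $H$. We apply \cref{thm:stability_cyclic_groups} to $A_0,B_0\subseteq H$, after replacing $B_0$ by a nonperiodic structure, and iterate. Each iteration passes to a subgroup of index at least $2$ relative to the previous one, so the process terminates after boundedly many steps, in terms of $D$. The iteration ends either in a type (III) Bohr-interval conclusion within a subgroup $H'$, giving \ref{itm cyclic sum without obstructions iii} with $B_1$ enlarged by the cosets outside $H'$, or in small $B_0$, giving \ref{itm cyclic sum without obstructions iv}, or in large sumset, giving \ref{itm cyclic sum without obstructions ii}.

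\textbf{Main obstacle.} The delicate point is this iteration. We need to control how the errors $\epsilon'$ accumulate and to handle the fact that $B_0$ may not meet every coset of subgroups of $H$. The case where $B_0$ itself sits in a proper coset inside $H$ must be absorbed: that part of $B_0$ is moved into $B_1$ relative to a smaller subgroup. We choose the parameter hierarchy $\epsilon'\ll \epsilon,\eta$ and $D$ large depending on the bounded number of iterations, with $\delta$ chosen last.
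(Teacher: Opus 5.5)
Your outline has the same skeleton as the paper. You apply \cref{thm:stability_cyclic_groups} once: case (I) gives (ii), and case (III) forces $H=\Z/Q\Z$ and gives (iii). In case (II), $A_1\neq\emptyset$ gives (ii), $|B_0|<\eta|H|$ gives (iv), and otherwise you recurse into $H$. The gap is the recursion, which is where the proof actually lives.

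You bound the number of steps ``in terms of $D$'' and then take $D$ ``large depending on the bounded number of iterations''. This is circular. $D$ must dominate the product of the index bounds that \cref{thm:stability_cyclic_groups} returns at every level, and those parameters can only be fixed once the maximal depth is known.

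The missing idea is the density increment on $\alpha$. Whenever you recurse, $A=A_0+a_0$ lies in a single coset of a proper subgroup $H$, so $|A_0|=|A|\geq[\Z/Q\Z:H]\,\alpha|H|\geq 2\alpha|H|$. Once the relative density exceeds $\tfrac12$, case (II) with $A_1=\emptyset$ is impossible, since it would give $|A_0|\leq|H|\leq Q/2$. Hence the depth is at most $\log_2(1/\alpha)$, independently of $D$. The paper packages exactly this as the induction $P(2\alpha)\Rightarrow P(\alpha)$ with base case $\alpha>\tfrac12$.

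The induction also fixes the order of the parameters, as follows. The inner $(\delta_0,D_0)$ are chosen first. The outer application then uses $\epsilon''=\min\{\delta_0,\epsilon'/D_0\}$, so that (II.c) supplies the inner small-doubling hypothesis and the error $\epsilon'|K|+\epsilon''|H|$ in $|B_1|$ stays below $\epsilon|K|$. Finally $D=D_0D_1$. A single $\epsilon'$ used at every level, as in your last paragraph, cannot work, since Griesmer's $\delta(\epsilon')$ is in general far smaller than $\epsilon'$.

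Two further corrections. First, the obstacle you single out does not arise. Since $B\cap(b_0+H)=B_0+b_0$, any $K\leq H$ with $[H:K]\leq D/[\Z/Q\Z:H]$ has $[\Z/Q\Z:K]\leq D$. So the hypothesis on $B$ already makes $B_0$ meet every coset of $K$ in $H$, and no ``nonperiodic replacement'' of $B_0$ or transfer of part of $B_0$ into $B_1$ is needed.

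Second, your bounds $|B_1|\geq Q-|H|-D\epsilon'|H|$ and $|A_0+B_0|\leq|A_0|+|B_0|+(\delta+O(D\epsilon'))Q$ lose a factor of the index. They would need $\epsilon'$ times Griesmer's index bound $D'(\epsilon')$ to be small, which \cref{thm:stability_cyclic_groups} does not provide. However, (II.f) bounds the \emph{total} deficiency, so $|B_1|\geq|B_1+H|-\epsilon'|H|=Q-|H|-\epsilon'|H|$, because $B_1+H=(\Z/Q\Z)\setminus H$. The inner doubling bound is (II.c) verbatim.

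With the depth bound, the parameter order and these corrections, your unrolled iteration is a valid alternative to the paper's induction. It applies \cref{thm:stability_cyclic_groups} afresh at each of at most $\log_2(1/\alpha)$ levels. It even has a small advantage: you never need to rule out alternative (i) for $(A_0,B_0)$. The paper asserts this step, but with (i) phrased as $|A+B|\geq(\alpha+\beta+\delta)Q$ it would need $|A_0|+|B_0|<(2\alpha+\eta)|H|$, which fails in general. The step goes through once (i) is strengthened to $|A+B|\geq|A|+|B|+\delta Q$.
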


\begin{proof}    
    We carry out a density increment argument on $\alpha$.
    Let $P(\alpha)$ be the statement:
    ``Theorem \ref{thm: cyclic sum without obstructions} holds for $\alpha$ and arbitrary $\beta, \epsilon, \eta > 0$ and $k \in \N$.''
    We will prove $P(\alpha)$ for $\alpha > \frac{1}{2}$ and the implication $P(2\alpha) \implies P(\alpha)$ for $\alpha \in (0,\frac{1}{2}]$.
    These two parts combine to prove Theorem \ref{thm: cyclic sum without obstructions} for all $\alpha \in (0,1)$: there exists $j \in \N$ such that $2^j\alpha \in (\frac{1}{2},1]$, so $P(2^j\alpha)$ holds and by induction we conclude $P(\alpha)$.

    \bigskip

    \textbf{Step 1. $P(\alpha)$ for $\alpha > \frac{1}{2}$.}
    
    Let us first prove $P(\alpha)$ for $\alpha > \frac{1}{2}$.
    Suppose $\alpha > \frac{1}{2}$.
    Let $\delta > 0$ and $D \in \N$ be given by Theorem \ref{thm:stability_cyclic_groups} for $\epsilon' = \min\{\epsilon,\beta,\frac{1}{2}\}$.
    Suppose $Q \in \N$, $A, B \subseteq \Z/Q\Z$, $|A| \geq \alpha Q$, $|B| \geq \beta Q$, and $B$ intersects every coset of every subgroup of index at most $D$.
    If $|A+B| \geq (\alpha+\beta+\delta)Q$, then there is nothing to show, so assume $|A+B| < (\alpha+\beta+\delta)Q$.
    Then by the choice of $\delta$, there exists a subgroup $H \leq \Z/Q\Z$ of index at most $D$ such that one of \ref{itm stability_cyclic_groups I}, \ref{itm stability_cyclic_groups II}, or \ref{itm stability_cyclic_groups III} holds from Theorem \ref{thm:stability_cyclic_groups}.

    Suppose \ref{itm stability_cyclic_groups I} holds.
    Since $B$ intersects every coset of $H$, we have $A+B+H = A+(B+H) = \Z/Q\Z$.
    Hence, by $\epsilon'$-periodicity of $A+B$, we have
    \begin{equation*}
        |A+B| \geq |A+B+H| - \epsilon' |H| \geq Q - \epsilon' Q \geq (1-\epsilon)Q,
    \end{equation*}
    so \ref{itm cyclic sum without obstructions ii} holds.
    
    Suppose now that \ref{itm stability_cyclic_groups II} holds.
    We partition $A = (A_0 \cup A_1) + a_0$ and $B = (B_0 \cup B_1) + b_0$ to satisfy \ref{itm stability_cyclic_groups II.a}--\ref{itm stability_cyclic_groups II.f}.
    Note that by properties \ref{itm stability_cyclic_groups II.d} and \ref{itm stability_cyclic_groups II.e}, the group $H$ is a proper subgroup of $\Z/Q\Z$.
    By \ref{itm stability_cyclic_groups II.b}, we have $|A_0| \leq |H| \leq \frac{Q}{2}$.
    Therefore, since $|A| \geq \alpha Q > \frac{Q}{2}$, we have $A_1 \ne \emptyset$.
    Let $a \in A_1$.
    Then by \ref{itm stability_cyclic_groups II.f}, $|(a+H) \setminus A| \leq \epsilon |H|$.
    Let $\tilde{A} = A \cap (a+H)$.
    Since $B$ intersects every coset of $H$, we conclude
    \begin{equation*}
        |A+B| \geq |\tilde{A} + B| \geq (1 - \epsilon)Q.
    \end{equation*}
    That is, \ref{itm cyclic sum without obstructions ii} holds.

    Finally, suppose \ref{itm stability_cyclic_groups III} holds.
    Property \ref{itm stability_cyclic_groups III.a} says that $B$ is contained in a single coset of $H$, but $B$ intersects every coset of $H$ by assumption, so we must have $H = \Z/Q\Z$.
    Hence, there exist sets $A', B' \subseteq \Z/Q\Z$ such that $A \subseteq A'$, $B \subseteq B'$, $|A' \setminus A| < \epsilon Q$, $|B' \setminus B| < \epsilon Q$, and $A' = \varphi^{-1}(I)$, $B' = \varphi^{-1}(J)$ for a surjective homomorphism $\Z/Q\Z \to \Z/N\Z$ for some $N > k$ and intervals $I,J \subseteq \Z/N\Z$.
    Taking $H = \Z/Q\Z$ and $a_0 = b_0 = 0$, property \ref{itm cyclic sum without obstructions iii} is satified.

    \bigskip

    \textbf{Step 2. $P(2\alpha) \implies P(\alpha)$ for $\alpha \leq \frac{1}{2}$.}
    
    We now show that for $\alpha \leq \frac{1}{2}$, we have the implication $P(2\alpha) \implies P(\alpha)$.
    Let $\alpha \in (0, \frac{1}{2}]$, and assume $P(2\alpha)$.
    Let $\delta_0 > 0$ and $D_0 \in \N$ be given by $P(2\alpha)$ for the parameters $\beta'=\eta$, $\epsilon'= \min\{\frac{\epsilon}{2},2\alpha,\frac{\eta}{2}\}$, $\eta' = \eta$, and $k' = k$.
    Let $\delta_1 > 0$ and $D_1$ be given by Theorem \ref{thm:stability_cyclic_groups} for $\epsilon'' = \min\left\{ \delta_0, \frac{\epsilon'}{D_0} \right\}$.
    Then put $\delta = \delta_1$ and $D = D_0D_1$.
    Suppose $Q \in \N$, $A, B \subseteq \Z/Q\Z$, $|A| \geq \alpha Q$, $|B| \geq \beta Q$, and $B$ intersects every coset of every subgroup of index at most $D$.
    As above (in the case $\alpha > \frac{1}{2}$), we may assume $|A+B| < (\alpha+\beta+\delta)Q$ and there exists a subgroup $H \leq \Z/Q\Z$ of index at most $D_1$ such that one of \ref{itm stability_cyclic_groups I}, \ref{itm stability_cyclic_groups II}, or \ref{itm stability_cyclic_groups III} holds from Theorem \ref{thm:stability_cyclic_groups} with $\epsilon''$.

    \bigskip

    If \ref{itm stability_cyclic_groups I} holds, then we may apply the same argument from above (which did not use any information about $\alpha$) to conclude that $|A+B| \geq (1-\epsilon)Q$.

    \bigskip

    Suppose \ref{itm stability_cyclic_groups II} holds.
    Decompose $A = (A_0 \cup A_1) + a_0$ and $B = (B_0 \cup B_1) + b_0$ accordingly.
    As above, if $A_1 \ne \emptyset$, then $|A+B| \geq (1-\epsilon'')Q$.
    Suppose $A_1 = \emptyset$.
    Since $B$ intersects every coset of $H$, we have $B_1+H = (\Z/Q\Z) \setminus H$ and by \ref{itm stability_cyclic_groups II.f}, $|B_1| > Q - |H| - \epsilon'' |H|$.
    If $|B_0| < \eta|H|$, then property \ref{itm cyclic sum without obstructions iv} is satisfied and we are done.
    
    Assume $|B_0| \geq \eta |H|$.
    Note that $A = A_0+a_0$, so $|A_0| = |A| = [\Z/Q\Z:H] \cdot \alpha|H| \geq 2\alpha |H|$.
    The group $H$ is also a cyclic group and $A_0, B_0 \subseteq H$ satisfy
    \begin{itemize}
        \item $|A_0| \geq 2\alpha|H|$,
        \item $|B_0| \geq \eta |H|$,
        \item $B_0$ intersects every coset of every subgroup of index at most $\frac{D}{[\Z/Q\Z:H]} \geq \frac{D}{D_1} = D_0$, and
        \item $|A_0+B_0| \leq |A_0| + |B_0| + \epsilon'' |H| \leq |A_0| + |B_0| + \delta_0 |H|$.
    \end{itemize}
    We then apply $P(2\alpha)$ (with $\beta'$, $\epsilon'$, and $\eta'$ as defined above) and consider the different cases.

    The sets $A_0$ and $B_0$ do not satisfy \ref{itm cyclic sum without obstructions i}.
    
    If \ref{itm cyclic sum without obstructions ii} holds for $A_0$ and $B_0$, i.e. $|A_0+B_0| \geq (1-\epsilon')|H|$, then
    \begin{multline*}
        |A+B| = |A_0+B_0| + |A_0+B_1| \\
         \geq (1-\epsilon')|H| + (Q - |H|- \epsilon''|H|)
         = Q - (\epsilon' + \epsilon'')|H| \geq (1-\epsilon) Q,
    \end{multline*}
    since $\epsilon', \epsilon'' \leq \frac{\epsilon}{2}$.
    Hence, \ref{itm cyclic sum without obstructions ii} holds for $A$ and $B$.

    Suppose \ref{itm cyclic sum without obstructions iii} holds for $A_0$ and $B_0$.
    That is, there is a subgroup $K \leq H$ of index at most $D_0$, elements $a' \in A_0$ and $b' \in B_0$, and a decomposition $A_0 = A' + a'$ and $B_0 = (B' \cup B'') + b'$ such that
    \begin{enumerate}[label=(\alph*), leftmargin=*]
        \item $A', B' \subseteq K$,
        \item $B'' \subseteq H \setminus K$ and $|B''| > |H| - |K| - \epsilon' |K|$, and
        \item there exists a surjective homomorphism $\phi : K \to \Z/N\Z$ for some $N > k$ and intervals $I,J \subseteq \Z/N\Z$ such that
        \[
            A' \subset \phi^{-1}(I), \quad B' \subset \phi^{-1}(J),\quad\text{and}\quad |\phi^{-1}(I)\setminus A'|, |\phi^{-1}(J)\setminus B'| < \epsilon' |H|.
        \]
    \end{enumerate}
    Observe that $K$ is then a subgroup of index at most $D_0D_1 = D$ in $\Z/Q\Z$ and properties \ref{itm cyclic sum without obstructions iii.a}--\ref{itm cyclic sum without obstructions iii.c} are satisfied for $A$ and $B$ in relation to the decomposition $A = A' + (a' + a_0)$ and $B = (B' \cup (B'' \cup (B_1-b'))) + (b'+b_0)$:
    \begin{enumerate}[label=(iii.\alph*), leftmargin=*]
        \item $A', B' \subseteq K$,
        \item $B'' \cup (B_1-b') \subseteq (H \setminus K) \cup ((\Z/Q\Z) \setminus H) = (\Z/Q\Z) \setminus K$ and
            \begin{multline*}
                |B'' \cup (B_1 - b')| = |B''| + |B_1| > |H| - |K| - \epsilon' |K| + Q - |H| - \epsilon'' |H| \\
            = Q - |K| - \epsilon' |K| - \epsilon'' |H| \geq Q - |K| - \epsilon |K|,
            \end{multline*}
            where in the last inequality we have used the choice of parameters $\epsilon' \leq \frac{\epsilon}{2}$ and $\epsilon'' \leq \frac{\epsilon'}{D_0}$.
        \item there exists a surjective homomorphism $\phi : K \to \Z/N\Z$ for some $N > k$ and intervals $I,J \subseteq \Z/N\Z$ such that
        \[
            A' \subset \phi^{-1}(I), \quad B' \subset \phi^{-1}(J),\quad\text{and}\quad |\phi^{-1}(I)\setminus A'|, |\phi^{-1}(J)\setminus B'| < \epsilon' |H| < \epsilon Q.
        \]
    \end{enumerate}

    Finally, suppose \ref{itm cyclic sum without obstructions iv} holds for $A_0$ and $B_0$.
    That is, there exists a subgroup $K \leq H$ of index at most $D_0$, elements $a' \in A_0$ and $b' \in B_0$, and a decomposition $A_0 = A' + a'$ and $B_0 = (B' \cup B'') + b'$ such that
    \begin{enumerate}[label=(\alph*), leftmargin=*]
        \item $A', B' \subseteq K$,
        \item $B'' \subseteq H \setminus K$ and $|B'| > |H| - |K| - \eta |K|$
        \item $|B'| < \eta |K|$.
    \end{enumerate}
    Then $K$ has index at most $D$ in $\Z/Q\Z$ and the sets $A$ and $B$ satisfy \ref{itm cyclic sum without obstructions iv} with the decomposition $A = A' + (a'+a_0)$ and $B = (B' \cup (B'' \cup (B_1-b'))) + (b'+b_0)$:
    \begin{enumerate}[label=(iv.\alph*), leftmargin=*]
        \item $A', B' \subseteq K$,
        \item $B'' \cup (B_1-b') \subseteq (H \setminus K) \cup ((\Z/Q\Z) \setminus H) = (\Z/Q/Z) \setminus K$ and
            \begin{multline*}
                |B'' \cup (B_1 - b')| = |B''| + |B_1| > |H| - |K| - \epsilon' |K| + Q - |H| - \epsilon'' |H| \\
            = Q - |K| - \epsilon' |K| - \epsilon'' |H| \geq Q - |K| - \eta |K|,
            \end{multline*}
            where in the last inequality we have used the choice of parameters $\epsilon' \leq \frac{\eta}{2}$ and $\epsilon'' \leq \frac{\epsilon'}{D_0}$.
        \item $|B'| < \eta |K|$.
    \end{enumerate}

    \bigskip

    The final case to consider is when \ref{itm stability_cyclic_groups III} holds.
    The argument above handling the case $\alpha > \frac{1}{2}$ did not directly use any assumption on $\alpha$ so applies equally well for $\alpha \leq \frac{1}{2}$.
    Hence, property \ref{itm cyclic sum without obstructions iii} holds with $H = \Z/Q\Z$.
\end{proof}

%SUBSECTION
\subsection{Popular sumsets in finite abelian groups}

Let $G$ be a finite abelian group.
Given two sets $A, B \subseteq G$ and $\delta \in [0,1]$, the \emph{$\delta$-popular sumset} is\nomenclature{$+_{\delta}$}{$\delta$-popular sumset}
\begin{equation}
\label{def_popular_sumset}
    A +_{\delta} B = \{x \in G : |A \cap (x-B)| > \delta |G|\}.
\end{equation}

We make a couple of preliminary observations about $\delta$-popular sumsets.
First, the $\delta$-popular sumset can be expressed equivalently as $A +_{\delta} B = \{x \in G : (\1_A * \1_B)(x) > \delta\}$, where $f*g$ denotes the \emph{convolution}\nomenclature{$*$}{convolution}
\begin{equation*}
    (f*g)(x) = \frac{1}{|G|} \sum_{u+v=x} f(u) g(v).
\end{equation*}
Second, the $0$-popular sumset $A +_0 B$ is equal to the ordinary sumset $A+B$.
As $\delta$ decreases to 0, one may therefore expect that $A+_{\delta}B$ increasingly resembles a genuine sumset.
The following theorem makes this relationship precise.
Importantly here, $\delta$ does not depend on the group $G$ or the sets $A, B \subseteq G$, only on the degree $\epsilon$ to which $A+_{\delta}B$ should resemble a sumset.

\begin{Theorem}\label{thm: delta-sum almost contains a sum}
    For every $\epsilon > 0$, there exists $\delta > 0$ such that if $G$ is a finite abelian group and $A, B \subseteq G$, then there exist $A' \subseteq A$ and $B' \subseteq B$ such that
    \begin{itemize}
        \item $|A \setminus A'| < \epsilon |G|$,
        \item $|B \setminus B'| < \epsilon |G|$, and
        \item $|(A'+B') \setminus (A+_{\delta}B)| < \epsilon |G|$.
    \end{itemize}
\end{Theorem}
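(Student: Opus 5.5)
We will deduce the statement from the arithmetic removal lemma for abelian groups (Green's removal lemma, in the quantitative form available for arbitrary finite abelian groups; this is the kind of input the acknowledgment to \cite{Shao19} suggests). Set $C = G \setminus (A +_{\delta} B)$. The removal lemma we invoke says: for every $\epsilon>0$ there is $\delta>0$ such that if $X,Y,Z\subseteq G$ and the number of solutions of $x+y=z$ with $x\in X$, $y\in Y$, $z\in Z$ is at most $\delta|G|^2$, then one can delete fewer than $\epsilon|G|$ elements from each of $X,Y,Z$ so that no solutions remain. Applying this to $X=A$, $Y=B$, $Z=C$ gives $A'\subseteq A$, $B'\subseteq B$, $C'\subseteq C$ with $|A\setminus A'|,|B\setminus B'|,|C\setminus C'|<\epsilon|G|$ and $(A'+B')\cap C'=\emptyset$.

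The key steps are as follows. First we count solutions: the number of triples $(a,b,c)\in A\times B\times C$ with $a+b=c$ equals $\sum_{c\in C}|A\cap(c-B)|$. By definition of $C$, each summand is at most $\delta|G|$. Hence the total is at most $\delta|G|^2$, so the hypothesis of the removal lemma holds with the same $\delta$ (after shrinking $\delta$ to the value the lemma assigns to $\epsilon$). Next, from $(A'+B')\cap C'=\emptyset$ we get
\[
(A'+B')\setminus(A+_\delta B) = (A'+B')\cap C \subseteq C\setminus C',
\]
which has size less than $\epsilon|G|$. This gives all three bullet points. The $\delta$ depends only on $\epsilon$ because the removal lemma is uniform over all finite abelian groups.

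The main obstacle is justifying the uniform removal lemma itself. If we may cite it (Green; Král'--Serra--Vena for the general abelian case; or \cite{Shao19}), the argument above is complete and routine.

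If a self-contained route were needed, we would use an arithmetic regularity/Bohr-set approach. Decompose $1_A = f_{\mathrm{str}} + f_{\mathrm{unf}}$, where $f_{\mathrm{str}}$ is a conditional expectation onto a partition by a Bohr set of bounded rank and radius, and $\|f_{\mathrm{unf}}\|_{U^2}$ is small. Do the same for $1_B$. Then let $A'$ be the points of $A$ lying in Bohr-translates where $A$ has relative density at least $\epsilon$, and define $B'$ similarly. For $x\in A'+B'$, a counting argument on Bohr sets shows $1_A*1_B(x)\gtrsim c(\epsilon)$, except on a set of measure less than $\epsilon|G|$ coming from the uniform error. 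We would expect the bookkeeping of this last step to be the real difficulty, which is why we prefer to invoke the removal lemma directly.
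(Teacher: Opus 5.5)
Your proposal is correct and is essentially the paper's proof: apply Green's arithmetic removal lemma to $A$, $B$, and the set $C$ of unpopular sums, bound the number of solutions by $\delta|G|\cdot|C|\le\delta|G|^2$, and conclude from $(A'+B')\cap C'=\emptyset$ that $(A'+B')\setminus(A+_\delta B)\subseteq C\setminus C'$. The only differences are cosmetic: you take $C=G\setminus(A+_\delta B)$ where the paper takes $C=(A+B)\setminus(A+_\delta B)$, which changes nothing, and the boundary case of exactly $\delta|G|^2$ solutions is absorbed by shrinking $\delta$, as you indicate.
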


\cref{thm: delta-sum almost contains a sum} is closely related to the ``almost all'' version of the Balog--Szemer\'{e}di--Gowers theorem due to Shao \cite[Theorem 1.1]{Shao19}.
We follow a similar strategy of proof, deducing \cref{thm: delta-sum almost contains a sum} from the following special case of the arithmetic removal lemma of Green.

\begin{Lemma}[cf. {\cite[Theorem 1.5]{Green05}}] \label{lem: arithmetic removal}
    For every $\epsilon > 0$, there exists $\delta > 0$ such that if $G$ is a finite abelian group and $A, B, C \subseteq G$ are subsets such that the equation $a+b=c$ has fewer than $\delta|G|^2$ solutions $(a,b,c) \in A \times B \times C$, then one can remove fewer than $\epsilon|G|$ elements from each of the sets $A,B,C$ to obtain sets $A', B', C'$ such that $a'+b'=c'$ has no solutions $(a',b',c') \in A' \times B' \times C'$.
\end{Lemma}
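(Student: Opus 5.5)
We will deduce \cref{lem: arithmetic removal} from the graph-theoretic triangle removal lemma of Ruzsa--Szemer\'edi, by encoding solutions of $a+b=c$ as triangles in a Cayley-type tripartite graph; this is the standard route to Green's result in the abelian setting. The triangle removal lemma says: for every $\epsilon_1>0$ there is $\delta_1>0$ such that any graph on $n$ vertices with fewer than $\delta_1 n^3$ triangles can be made triangle-free by deleting fewer than $\epsilon_1 n^2$ edges. We take this as a black box.

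\textbf{Step 1: the graph.} Given $A,B,C\subseteq G$, build a tripartite graph $\Gamma$ on $n=3|G|$ vertices, with parts $X,Y,Z$, each a copy of $G$. The edges are
\begin{itemize}
\item $x\in X$ joined to $y\in Y$ when $y-x\in A$ (label $a=y-x$);
\item $y\in Y$ joined to $z\in Z$ when $z-y\in B$ (label $b=z-y$);
\item $x\in X$ joined to $z\in Z$ when $z-x\in C$ (label $c=z-x$).
\end{itemize}
A triangle $(x,y,z)$ yields a solution $(y-x,\,z-y,\,z-x)\in A\times B\times C$ of $a+b=c$. Conversely, each solution $(a,b,c)$ arises from exactly $|G|$ triangles, namely $(x,x+a,x+c)$ for $x\in G$. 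Hence $\Gamma$ has fewer than $\delta|G|^3$ triangles. Also, for each label value the edges carrying it are in bijection with $G$ via the parameter $x$. For example, the $Y$--$Z$ edges labeled $b$ inside the triangles above are $(x+a,x+c)$, and these are distinct as $x$ varies.

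\textbf{Step 2: apply removal.} Set $\epsilon_1=\epsilon/27$ and let $\delta_1$ be the corresponding constant. Put $\delta=27\delta_1$, so that fewer than $\delta|G|^3=\delta_1 n^3$ triangles are present. Triangle removal gives an edge set $E'$ with $|E'|<\epsilon_1 n^2=9\epsilon_1|G|^2$ whose deletion leaves $\Gamma$ triangle-free.

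\textbf{Step 3: convert edge removal into element removal.} This is the only step needing care. Let $A''$ be the set of $a\in A$ for which at least $|G|/3$ of the edges labeled $a$ lie in $E'$, and define $B''$ and $C''$ in the same way. Since the label classes are disjoint, $|A''|\cdot|G|/3\le|E'|$, so $|A''|<27\epsilon_1|G|=\epsilon|G|$, and likewise for $B''$ and $C''$. Put $A'=A\setminus A''$, $B'=B\setminus B''$, $C'=C\setminus C''$.

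Suppose $a'+b'=c'$ had a solution in $A'\times B'\times C'$. Consider its $|G|$ triangles $(x,x+a',x+c')$. Among them:
\begin{itemize}
\item fewer than $|G|/3$ have their $a'$-edge in $E'$;
\item fewer than $|G|/3$ have their $b'$-edge in $E'$;
\item fewer than $|G|/3$ have their $c'$-edge in $E'$.
\end{itemize}
Each of these three counts refers to distinct triangles as $x$ varies. So some $x$ gives a triangle with all three edges surviving, contradicting triangle-freeness. This yields the lemma.
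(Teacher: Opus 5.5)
Your proof is correct, and it takes a different route from the paper, which gives no argument for \cref{lem: arithmetic removal} at all. The paper simply invokes Green's arithmetic removal lemma from \cite{Green05}, which is proved there via a Fourier-analytic arithmetic regularity lemma for abelian groups.

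You instead reduce to the Ruzsa--Szemer\'edi triangle removal lemma through a tripartite Cayley-type graph. This is essentially the combinatorial argument later given by Kr\'al', Serra and Vena, and your bookkeeping checks out:
\begin{itemize}
\item triangles are in bijection with pairs (solution, $x$), so there are fewer than $\delta|G|^3$ of them;
\item each label class consists of exactly $|G|$ edges, and the label classes are pairwise disjoint, which gives $|A''|\,|G|/3\le|E'|$;
\item for a surviving solution $(a',b',c')$, the triangles $(x,x+a',x+c')$ meet each of its three label classes bijectively as $x$ varies, so the threshold $|G|/3$ makes the union bound strict.
\end{itemize}
If your version of triangle removal uses non-strict inequalities, just shrink $\epsilon_1$ slightly.

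What your route buys is that the only deep input is the standard graph removal lemma. It also uses nothing about commutativity: with $x^{-1}y\in A$ and so on, it works verbatim in any finite group. The citation buys brevity and fits the arithmetic-regularity viewpoint used elsewhere in the paper. Since the paper only needs the existence of some $\delta(\epsilon)$ in \cref{thm: delta-sum almost contains a sum}, the tower-type quality of the bounds in either approach is irrelevant.
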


\begin{proof}[Proof of \cref{thm: delta-sum almost contains a sum}]
    Let $\epsilon > 0$ be given.
    Take $\delta > 0$ as given by Lemma \ref{lem: arithmetic removal}.
    Let $G$ be a finite abelian group and $A, B \subseteq G$.
    Put $C = (A+B) \setminus (A+_{\delta}B)$.
    By definition of the $\delta$-popular sumset, if $c \in G$ such that $a+b=c$ has more than $\delta|G|$ solutions with $(a,b) \in A \times B$, then $c \notin C$.
    Therefore, the triple $(A, B, C)$ satisfies the hypothesis of the arithmetic removal lemma, so there are subsets $A' \subseteq A$, $B' \subseteq B$, and $C' \subseteq C$ such that
    \begin{itemize}
        \item $|A \setminus A'| < \epsilon |G|$,
        \item $|B \setminus B'| < \epsilon |G|$,
        \item $|C \setminus C'| < \epsilon |G|$, and
        \item $(A'+B') \cap C' = \emptyset$.
    \end{itemize}
    Reinterpreting this final property, we have
    \begin{equation*}
        A' + B' \subseteq (A+B) \setminus C' = (A+_{\delta}B) \cup (C \setminus C'),
    \end{equation*}
    so
    \begin{equation*}
        |(A'+B') \setminus (A+_{\delta}B)| \le |C\setminus C'| < \epsilon |G|.
    \end{equation*}
\end{proof}

%SECTION
\section{Uniform distribution and discrepancies of sequences mod 1} \label{sec: discrepancy}

A classical notion in number theory is the phenomenon of uniform distribution.
A sequence of real numbers $(x_n)_{n \in \N}$ is \emph{uniformly distributed mod 1} if for every interval $I \subseteq \T$,
\begin{equation*}
    \lim_{N \to \infty} \frac{\left| n \in [N] : \{x_n\} \in I\right|}{N} = |I|,
\end{equation*}
where $|I|$ is the length (Haar measure) of the interval $I$ and $\{x_n\}$ denotes the fractional part of $x_n$.
Uniform distribution was first systematically studied by Weyl \cite{Weyl16}, who showed that $(n\alpha)_{n \in \N}$ is uniformly distributed mod 1 for irrational $\alpha \in \R \setminus \Q$ and, more generally, the same is true for a polynomial sequence $(P(n))_{n \in \N}$ if $P(x) \in \R[x]$ has an irrational coefficient other than the constant term.

In this paper, we will need to estimate the rate at which sequences such as $(n\alpha)_{n \in \N}$ equidistribute.
A useful measure for such quantitative purposes is the discrepancy of a sequence.
For $N \in \N$, the \emph{discrepancy} of a sequence of real numbers $(x_n)_{n \in \N}$ is given by\nomenclature{$\textup{Disc}$}{discrepancy}
\begin{equation*}
    \textup{Disc}_N \left( (x_n)_{n \in \N} \right) = \sup_{I \subseteq \T~\text{interval}} \left| \frac{\left| \{n \in [N] : x_n \in I\}\right|}{N} - |I| \right|.
\end{equation*}
A sequence is uniformly distributed mod 1 if and only if its discrepancy tends to 0 as $N \to \infty$ (see \cite[Chapter 2, Theorem 1.1]{KN74}).
The rate of convergence to 0 provides for quantitative refinements of uniform distribution.

In this section, we recall general estimates on discrepancies of sequences (the inequalities of \Erdos{}--\Turan{} and \Erdos{}--\Turan{}--Koksma) and then apply the general estimates to specific sequences of interest.

%SUBSECTION
\subsection{The \Erdos{}--\Turan{} and \Erdos{}--\Turan{}--Koksma inequalities}

A useful tool for estimating discrepancies of sequences is the \Erdos{}--\Turan{} inequality, relating the discrepancy of a sequence to the behavior of associated exponential sums.
We use the standard number theoretic notation $e(x)$ to denote $e^{2\pi ix}$.\nomenclature{$e(\cdot)$}{complex exponential function}
In many computations, it is useful to estimate the difference $|e(x)-1|$ in terms of the distance of $x$ to the nearest integer\nomenclature{$\lVert\cdot\rVert_{\T}$}{distance to nearest integer}
\begin{equation*}
    \|x\|_{\T} = \min_{n \in \Z} |x-n|.
\end{equation*}
The quantities $|e(x)-1|$ and $\|x\|_{\T}$ are related by the inequalities
\begin{equation*}
    4\|x\|_{\T} \leq |e(x)-1| \leq 2\pi \|x\|_{\T}.
\end{equation*}

\begin{Theorem}[\Erdos{}--\Turan{} inequality {\cite[Theorem III]{ET48}}] \label{thm: Erdos-Turan}
    Let $t_1, \ldots, t_m$ be real numbers.
    There is a universal constant $C_0$ such that
    \begin{equation*}
        \textup{Disc}_m = \sup_{I \subseteq \T~\text{interval}} \left| \frac{|\{j \in [m] : t_j \in I \}|}{m} - |I| \right| \leq C_0 \left( \frac{1}{n} + \frac{1}{m} \sum_{k=1}^n \frac{1}{k} \left| \sum_{j=1}^m e(kt_j) \right|\right)
    \end{equation*}
    for every $n \in \N$.
\end{Theorem}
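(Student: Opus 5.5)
The statement is the classical \Erdos{}--\Turan{} inequality, cited from \cite{ET48}. I would give the standard Fourier-analytic proof via trigonometric polynomial majorants and minorants of indicator functions of intervals (Selberg/Beurling--Vaaler polynomials). The key steps are as follows.

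\textbf{Step 1: majorants and minorants.} Fix $n\in\N$ and an interval $I\subseteq\T$. I would construct trigonometric polynomials $P^{\pm}(x)=\sum_{|k|\le n}\widehat{P^{\pm}}(k)e(kx)$ of degree at most $n$ with the following properties:
\begin{itemize}
\item $P^-\le \1_I\le P^+$ pointwise on $\T$;
\item $\widehat{P^{\pm}}(0)=|I|\pm\frac{1}{n+1}$;
\item $|\widehat{P^{\pm}}(k)|\le \frac{C}{k}$ for $1\le |k|\le n$, with $C$ an absolute constant.
\end{itemize}
The Selberg polynomials provide exactly this. Alternatively, with a worse but still absolute constant, one can convolve the indicator of an interval enlarged (resp.\ shrunk) by $\frac{1}{n}$ with a Fejér kernel of degree $n$. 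The Fejér kernel $K_n$ is nonnegative and has mass at least $1/2$ within distance $\frac{1}{n}$ of the origin. Choosing the enlargement appropriately gives the majorant; the Fourier coefficients are then bounded by $|\widehat{\1_J}(k)|\le \frac{1}{\pi |k|}$.

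\textbf{Step 2: summing over the points.} Average over the points $t_1,\dots,t_m$:
\[
\frac{|\{j : t_j\in I\}|}{m}\le \frac{1}{m}\sum_{j=1}^m P^+(t_j)= \widehat{P^+}(0)+\sum_{1\le|k|\le n}\widehat{P^+}(k)\frac{1}{m}\sum_{j=1}^m e(kt_j).
\]
This is at most $|I|+\frac{1}{n+1}+\frac{2C}{m}\sum_{k=1}^n\frac1k\bigl|\sum_j e(kt_j)\bigr|$, where the negative frequencies are paired with the positive ones by conjugation. The analogous lower bound follows from $P^-$. Taking the supremum over $I$ gives the claim with $C_0=\max\{1,2C\}$.

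\textbf{Main obstacle.} The only nontrivial point is Step 1, namely producing majorants and minorants of degree $n$ whose constant term is within $O(1/n)$ of $|I|$ and whose higher coefficients decay like $1/k$. If I use the Fejér-kernel construction, I must check two things carefully:
\begin{itemize}
\item positivity of the kernel, which is what gives the pointwise inequality $\1_I\le P^+$;
\item the loss in the mean, which comes from the enlargement being of size $O(1/n)$.
\end{itemize}
Degenerate cases, such as intervals of length close to $1$ or $0$, are handled by the trivial bounds $0\le$ proportion $\le1$ together with $C_0\ge1$.
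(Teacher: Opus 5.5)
\textbf{Main route.} The paper gives no proof of this inequality. It is quoted as a black box from \cite{ET48}, so the only thing to assess is your argument itself.

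Your main route via Selberg's extremal polynomials is a correct, standard proof, different from the original Erd\H{o}s--Tur\'an argument and yielding explicit constants. For $I=[a,b]$, Selberg's $S^{\pm}$ of degree $n$ satisfy:
\begin{itemize}
\item $S^-\le \1_I\le S^+$; by continuity of $S^\pm$ this does not depend on which endpoints $I$ contains;
\item $\widehat{S^\pm}(0)=|I|\pm\frac{1}{n+1}$;
\item $|\widehat{S^\pm}(k)|\le \frac{1}{n+1}+\frac{1}{\pi|k|}\le \frac{1+1/\pi}{|k|}$ for $1\le |k|\le n$.
\end{itemize}
So your Step~2 goes through verbatim with $C_0=2+\frac{2}{\pi}$. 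The one input you take on faith is Selberg's construction (via Beurling's function), which is fine to cite.

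\textbf{The Fej\'er alternative.} This route does not work, and your ``main obstacle'' paragraph is mistaken: positivity of the kernel does not give $\1_I\le P^+$. Any $P=\1_J*K_n$ is a trigonometric polynomial with $0\le P\le 1$, and such a $P$ cannot be a nontrivial majorant or minorant.
\begin{itemize}
\item If $P\ge \1_I$, then $P=1$ on $I$, hence $P\equiv 1$, since a trigonometric polynomial constant on an interval is constant.
\item If $0\le P\le \1_I$, then $P\equiv 0$ by the same reasoning.
\end{itemize}
Concretely, $0<\1_J*K_n<1$ everywhere when $0<|J|<1$. Your ``mass $\ge \frac12$ near $0$'' only gives $P^+\ge\frac12$ on $I$. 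Repairing this by multiplying by $(1-\epsilon)^{-1}$, where $\epsilon\asymp (n\delta)^{-1}$ is the mass of $K_n$ outside $[-\delta,\delta]$, costs about $\delta+(n\delta)^{-1}\gg n^{-1/2}$ in the constant term, not $O(1/n)$. Efficient majorants must exceed $1$ and efficient minorants must go negative, which is exactly what Selberg's polynomials do.

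\textbf{How to get a Fej\'er-kernel proof.} Use the classical argument (see \cite{KN74}, Ch.~2). Apply $K_n$ not to $\1_I$ but to the mean-zero discrepancy function $\Delta(x)=\frac1m\#\{j:\{t_j\}\in[0,x)\}-x-c$.
\begin{itemize}
\item Its $k$-th Fourier coefficient, for $k\neq 0$, has modulus $\frac{1}{2\pi|k|}\bigl|\frac1m\sum_j e(kt_j)\bigr|$.
\item It satisfies the one-sided Lipschitz bound $\Delta(x+u)\ge \Delta(x)-u$ for $u\ge 0$.
\item If $\Delta(b)\approx D=\sup|\Delta|$, then $\Delta(b+\frac D2+v)\ge \frac D2-v$ for $|v|\le \frac D2$.
\item By symmetry of $K_n$ and the tail bound $\int_{|v|>\delta}K_n\ll \frac{1}{n\delta}$, this gives $(\Delta*K_n)(b+\frac D2)\ge \frac D2-O(\frac1n)$.
\item The left side is at most $\frac1\pi\sum_{k=1}^n\frac1k\bigl|\frac1m\sum_j e(kt_j)\bigr|$. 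The case $\Delta(b)\approx -D$ is symmetric.
\end{itemize}
Since $\textup{Disc}_m\le 2D$, this proves the inequality.
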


A multidimensional extension of the \Erdos{}--\Turan{} inequality is the \Erdos{}--\Turan{}--Koksma inequality, established independently by Koksma \cite{Koksma50} and Sz\"{u}sz \cite{Szusz52}.
In order to state the inequality, we need a higher dimensional notion of discrepancy.
By a \emph{box} in $\T^d$, we mean a set of the form $B = \prod_{i=1}^d I_i$, where each $I_i \subseteq \T$ is an interval.
For $N \in \N$, the \emph{discrepancy} of a sequence of $d$-dimensional vectors $(\bm{x}_n)_{n \in \N}$ is given by
\begin{equation*}
    \textup{Disc}_N \left( (\bm{x}_n)_{n \in \N} \right) = \sup_{B \subseteq \T^d~\text{box}} \left| \frac{\left| \{n \in [N] : \bm{x}_n \in B\}\right|}{N} - |B| \right|,
\end{equation*}
where $|B|$ is the volume (Haar measure) of $B$.
We now state the \Erdos{}--\Turan{}--Koksma inequality, as formulated in \cite[p. 116]{KN74}.

\begin{Theorem}[\Erdos{}--\Turan{}--Koksma inequality] \label{thm: Erdos-Turan-Koksma}
    Let $\bm{t}_1, \ldots, \bm{t}_m \in \R^d$.
    Then for every $n \in \N$,
    \begin{multline*}
        \textup{Disc}_m = \sup_{B \subseteq \T~\text{box}} \left| \frac{|\{j \in [m] : \bm{t}_j \in B \}|}{m} - |B| \right| \\
         \leq 6d^2 3^d\left( \frac{1}{n} + \frac{1}{m} \sum_{\bm{h} \in \Z^d, 0 < \|\bm{h}\|_{\infty} \leq n} \frac{1}{\prod_{i=1}^d \max\{1, |h_i|\}} \left| \sum_{j=1}^m e\left( \langle \bm{h}, \bm{t_j} \rangle \right) \right| \right).
    \end{multline*}
\end{Theorem}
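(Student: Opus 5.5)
The plan is to reduce the $d$-dimensional statement to a one-dimensional smoothing argument carried out in each coordinate, as in the standard proof of the \Erdos{}--\Turan{} inequality (\cref{thm: Erdos-Turan}), and then expand in multiple Fourier series. Fix a box $B=\prod_{i=1}^d I_i$ and $n\in\N$. For each interval $I_i\subseteq\T$ I will construct trigonometric polynomials $p_i^{\pm}$ of degree at most $n$ (Selberg/Vaaler-type majorant and minorant) with
\[
p_i^-\le \1_{I_i}\le p_i^+,\qquad \int_\T p_i^{\pm} = |I_i|\pm \frac{1}{n+1},\qquad |\widehat{p_i^{\pm}}(h)|\le \frac{1}{|h|}+\frac{1}{n+1}\ll \frac{1}{\max\{1,|h|\}}\ \ (0<|h|\le n).
\]
Since $0\le\1_{I_i}\le 1$, I will also use the trivial bounds to keep everything nonnegative where needed.

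For the upper bound, I take $P^+(\bm{x})=\prod_i p_i^+(x_i)$. This requires $p_i^+\ge 0$, which holds automatically since $p_i^+\ge \1_{I_i}\ge 0$. Then $\1_B\le P^+$. I will average over $\bm{t}_1,\dots,\bm{t}_m$ and expand $P^+$ in Fourier series. The zero frequency contributes $\prod_i(|I_i|+\frac1{n+1})\le |B|+\frac{(1+1/(n+1))^d-1}{1}\le |B|+\frac{d\,2^{d-1}}{n+1}$. Every nonzero frequency $\bm{h}$ with $\|\bm{h}\|_\infty\le n$ appears with coefficient $\prod_i\widehat{p_i^+}(h_i)$. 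This product is bounded by $C^d/\prod_i\max\{1,|h_i|\}$, using $|\widehat{p_i^+}(0)|\le 2$ for the coordinates with $h_i=0$. This gives
\[
\frac{|\{j:\bm t_j\in B\}|}{m}-|B|\le c_d\Bigl(\frac1n+\frac1m\sum_{0<\|\bm h\|_\infty\le n}\frac{1}{\prod_i\max\{1,|h_i|\}}\Bigl|\sum_{j}e(\langle\bm h,\bm t_j\rangle)\Bigr|\Bigr).
\]

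The lower bound is the main obstacle, because a product of minorants is not a minorant: $p_i^-$ can be negative. I would first try the inclusion–exclusion identity $\prod_i a_i-\prod_i b_i=\sum_k (\prod_{i<k}a_i)(a_k-b_k)(\prod_{i>k}b_i)$. I will apply it with $a_i=\1_{I_i}$ and $b_i=p_i^+$, and then use the difference $p_i^+-p_i^-\ge p_i^+-\1_{I_i}\ge0$. Concretely, $\1_B\ge P^+-\sum_k (p_k^+-p_k^-)\prod_{i\ne k}p_i^+$, since each term $\prod_{i<k}\1_{I_i}(p_k^+-\1_{I_k})\prod_{i>k}p_i^+$ is dominated by $(p_k^+-p_k^-)\prod_{i\ne k}p_i^+$, all factors being nonnegative. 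The right-hand side is again a trigonometric polynomial of degree at most $n$ in each variable. Its mean is $|B|-O(d\,2^d/n)$, and its Fourier coefficients are bounded by $(2d+1)C^d/\prod\max\{1,|h_i|\}$. Averaging over the $\bm t_j$ then gives the symmetric lower bound.

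Finally, I will take the supremum over boxes and collect constants. This yields a bound of the form $c_d(\cdots)$ with $c_d$ exponential in $d$ times a polynomial in $d$. Matching the exact constant $6d^23^d$ would require tracking the constants in the Vaaler coefficients (bounded by $1/(\pi|h|)+1/(n+1)$) carefully. I expect this bookkeeping, rather than any conceptual step, to be the only remaining work, and for the paper's purposes any constant depending only on $d$ suffices. Alternatively, one can cite \cite[p.~116]{KN74} directly for the stated constant.
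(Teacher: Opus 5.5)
The paper gives no proof of this inequality: it quotes it from \cite[p.~116]{KN74} and uses it only as a black box in \cref{lem_almost_denseness_condition}. Your argument is a correct proof. It is self-contained except for the one-dimensional Beurling--Selberg--Vaaler extremal polynomials, which you import as a black box in the same way the paper imports the whole inequality.

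The upper bound via $\prod_i p_i^+$ is fine. The inclusion--exclusion step for the lower bound is also valid: each term $\bigl(\prod_{i<k}\1_{I_i}\bigr)(p_k^+-\1_{I_k})\bigl(\prod_{i>k}p_i^+\bigr)$ is a product of nonnegative factors, and each factor is dominated by the corresponding factor of $\bigl(\prod_{i<k}p_i^+\bigr)(p_k^+-p_k^-)\bigl(\prod_{i>k}p_i^+\bigr)$. Your minorant can be rewritten as $\sum_k p_k^-\prod_{i\neq k}p_i^+-(d-1)\prod_i p_i^+$, which is the standard product minorant of Barton--Montgomery--Vaaler.

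Your one worry is unfounded: reaching $6d^23^d$ needs no delicate bookkeeping. Use only the crude bounds you already state, namely $|\widehat{p_i^{\pm}}(0)|\le 2$ and $|\widehat{p_i^{\pm}}(h)|\le \frac{1}{|h|}+\frac{1}{n+1}\le \frac{2}{|h|}$ for $0<|h|\le n$. Write $S$ for the normalized weighted exponential sum on the right-hand side. Then the upper deviation is at most $\frac{d2^{d-1}}{n+1}+2^dS$, and the lower deviation is at most $\frac{d2^d}{n+1}+(2d+1)2^dS$. Hence $\textup{Disc}_m\le(2d+1)2^d\bigl(\frac1n+S\bigr)$. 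Since $(2d+1)2^d\le 6d^23^d$ for every $d\ge1$, you prove the statement exactly as written, with a better constant.

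Compared with the citation, your route gives an explicit, checkable derivation and a sharper constant. The citation gives brevity, which is all the paper needs, since the constant only enters the choice of $D$ and $C_0$ in \cref{lem_almost_denseness_condition}.
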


%SUBSECTION
\subsection{Local densities of Bohr intervals}

For $\theta \in \T$ and an interval $I \subseteq \T$, let\nomenclature{$\Bohr(\cdot,\cdot)$}{Bohr interval in $\N$}
\begin{equation*}
    \Bohr(\theta,I) = \{n \in \N : n\theta \in I\}.
\end{equation*}

If $\theta \notin \Q$, then $(n\theta)_{n \in \N}$ is uniformly distributed mod 1, so $d(\Bohr(\theta,I)) = |I|$.
For $\theta = \frac{p}{q} \in \Q$, we can pick an interval $J = \left[ \frac{a}{q}, \frac{b}{q} \right)$ so that $\Bohr(\theta,I) = \Bohr(\theta,J)$, and $d(\Bohr(\theta,I)) = \frac{b-a}{q} = |J|$.
Thus, we are always free to assume that $I$ is chosen so that $d(\Bohr(\theta,I)) = |I|$.

In this subsection, we produce estimates on the quantity
\begin{equation} \label{eq: discrepancy}
    \left| \frac{|\Bohr(\theta,I) \cap \{x+1,\ldots,x+M\}|}{M} - |I| \right|
\end{equation}
at different scales $M$ depending on diophantine properties of $\theta$ and $I$.
In line with many classical applications of the Hardy--Littlewood circle method, the relevant property of $\theta$ is its distance from rational numbers with small denominators.
Interestingly, as we will see, there are additional, less conventional features that arise when estimating \eqref{eq: discrepancy}: the behavior of the rational points themselves is somewhat distinct from the behavior of other ``major arc'' points (numbers well approximated by rationals with small denominator); moreover, the behavior of ``major arcs'' depends on whether the length of the interval $I$ is also major arc.

A general observation that will be useful for establishing upper bounds on \eqref{eq: discrepancy} is that the quantity in \eqref{eq: discrepancy} is equal to
\begin{equation*}
    \left| \frac{|\{n \in [M] : (x+n)\theta \in I\}|}{M} - |I| \right|,
\end{equation*}
which can be bounded above by the discrepancy of the sequence $(n\theta)_{n \in \N}$.

%SUBSUBSECTION
\subsubsection{Minor arcs}

Our first estimate bounds the discrepancy \eqref{eq: discrepancy} when $\theta$ is far away from rational numbers with small denominator (the ``minor arcs'').

\begin{Lemma} \label{lem: discrepancy for irrational Bohr set}
    Let $\alpha, \delta > 0$ and $Q \in \N$.
    If $\Bohr(\theta,I)$ has density $\alpha$ and $\|q\theta\|_{\T} \geq \delta$ for all $1 \leq q \leq Q$, then
    \begin{equation*}
        \left| \frac{|\Bohr(\theta,I) \cap \{x+1,\ldots,x+M\}|}{M} - \alpha \right| \leq C \left( \frac{1}{Q} + \frac{\log{Q}}{M\delta} \right)
    \end{equation*}
    for every $x \in \N$ and every $M \in \N$, where $C$ is a universal constant.
\end{Lemma}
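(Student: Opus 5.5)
We reduce to the discrepancy of the finite sequence $((x+n)\theta)_{n\in[M]}$ and apply the \Erdos{}--\Turan{} inequality (\cref{thm: Erdos-Turan}) with truncation parameter $n=Q$. First observe that the hypothesis forces $\theta\notin\Q$ effectively on the relevant range. If $\theta=p/q$ with $q\le Q$, then $\|q\theta\|_{\T}=0<\delta$, which is excluded. If $\theta$ is rational with denominator $q>Q$, the argument below still applies verbatim. Either way we may take $\alpha=|I|$ by the normalization discussed before the lemma. For irrational $\theta$ this holds automatically. For rational $\theta$ with large denominator, we replace $I$ by the interval $J$ with endpoints in $\frac1q\Z$; this changes nothing in the counting and gives $|J|=\alpha$. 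Thus
\[
\left|\frac{|\Bohr(\theta,I)\cap\{x+1,\ldots,x+M\}|}{M}-\alpha\right|\le \textup{Disc}_M\big(((x+j)\theta)_{j}\big).
\]

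Applying \cref{thm: Erdos-Turan} with $t_j=(x+j)\theta$ and $n=Q$ gives
\[
\textup{Disc}_M\le C_0\left(\frac1Q+\frac1M\sum_{k=1}^{Q}\frac1k\left|\sum_{j=1}^M e(k(x+j)\theta)\right|\right).
\]
The inner sum is a geometric series. The phase $e(kx\theta)$ has modulus one, so
\[
\left|\sum_{j=1}^M e(jk\theta)\right|\le \frac{2}{|e(k\theta)-1|}\le \frac{1}{2\|k\theta\|_{\T}}\le\frac{1}{2\delta}
\]
for $1\le k\le Q$, using $|e(y)-1|\ge 4\|y\|_{\T}$ together with the hypothesis. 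Summing $\frac1k\cdot\frac{1}{2\delta}$ over $k\le Q$ gives at most $\frac{1+\log Q}{2\delta}$. Dividing by $M$ yields
\[
\textup{Disc}_M\le C_0\Big(\frac1Q+\frac{1+\log Q}{2M\delta}\Big).
\]

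It remains to absorb the $1$ into $\log Q$. For $Q\ge 3$ we have $1+\log Q\le 2\log Q$. For $Q\in\{1,2\}$ the right-hand side of the lemma is at least $C/2$ once $C\ge 2$, while the left-hand side is always at most $1$. So the bound holds trivially in those cases (interpreting $\log 1=0$, the $1/Q$ term alone suffices). The only point requiring any care is this normalization of $\alpha=|I|$ in the rational case; the estimate itself is routine. I do not expect a genuine obstacle here.
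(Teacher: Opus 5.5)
Your proof is correct and follows the same route as the paper's: the \Erdos{}--\Turan{} inequality with truncation $n=Q$, the geometric-series bound $\bigl|\sum_{j\le M} e(jk\theta)\bigr|\le 1/(2\|k\theta\|_{\T})\le 1/(2\delta)$ for each $k\le Q$, and the harmonic sum $\sum_{k\le Q}1/k$ producing the $\log Q$ factor. Your additional care with the normalization $|I|=\alpha$ when $\theta$ is rational with denominator exceeding $Q$, and with the small cases $Q\in\{1,2\}$ (where $\sum_{k\le Q}1/k$ is not bounded by a constant times $\log Q$), fills in details that the paper's proof leaves implicit.
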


\begin{proof}
    Without loss of generality, we may assume $|I| = \alpha$.
    Fix $x, M \in \N$.
    By the \Erdos{}--\Turan{} inequality (Theorem \ref{thm: Erdos-Turan}),
    \begin{align*}
        \left| \frac{|\Bohr(\theta,I) \cap \{x+1,\ldots,x+M\}|}{M} - \alpha \right| 
         & \leq \textup{Disc}_M \left( (n\theta)_{n \in \N} \right) \\
         & \leq C_0 \left( \frac{1}{Q} + \frac{1}{M} \sum_{q=1}^Q \frac{1}{q} \left| \sum_{m=1}^M e(mq\theta) \right| \right).
    \end{align*}
    Now for each $q \leq Q$, we may compute the geometric sum
    \begin{equation*}
        \left| \sum_{m=1}^M e(mq\theta) \right| = \left| \sum_{m=1}^M e(q\theta)^m \right| = \frac{\left| e(Mq\theta)-1 \right|}{\left| e(q\theta)-1 \right|} \leq \frac{2}{4\|q\theta\|_{\T}} \leq \frac{1}{2\delta}.
    \end{equation*}
    Therefore,
    \begin{equation*}
        \left| \frac{|\Bohr(\theta,I) \cap \{x+1,\ldots,x+M\}|}{M} - \alpha \right|  \leq C_0 \left( \frac{1}{Q} + \frac{1}{2M\delta} \sum_{q=1}^Q \frac{1}{q} \right) \ll \frac{1}{Q} + \frac{\log{Q}}{M\delta}.
    \end{equation*}
\end{proof}

%SUBSUBSECTION
\subsubsection{Rational frequencies}

For the complementary case, when $\theta$ is well approximated by a rational number with small denominator, there are three distinct possible behaviors for the discrepancy \eqref{eq: discrepancy}.
We first deal with the case that $\theta$ is rational, where we may leverage the periodicity of the sequence $(n\theta)_{n \in \N}$ to control the discrepancy whenever the scale $M$ is large compared to the denominator of $\theta$.

\begin{Lemma} \label{lem: small discrepancy for rational Bohr set}
    If $\theta = \frac{p}{q}$ and $I = \left[\frac{a}{q}, \frac{b}{q} \right)$, then
    \begin{equation*}
        \left| \frac{|\Bohr(\theta, I) \cap \{x+1,\ldots,x+M\}|}{M} - \frac{b-a}{q} \right| < \frac{q}{M}
    \end{equation*}
    for every $x \in \N$ and $M \in \N$.
\end{Lemma}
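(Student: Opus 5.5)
We may assume $\gcd(p,q)=1$, so the sequence $n\theta \bmod 1$ is periodic in $n$ with period exactly $q$. The plan is to cut the window $\{x+1,\ldots,x+M\}$ into complete periods and one incomplete remainder, and to count the complete periods exactly.

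\textbf{Step 1: exact count over one period.} Over any $q$ consecutive integers, $n p \bmod q$ runs through every residue class modulo $q$ exactly once, because $p$ is invertible mod $q$. So $n\theta$ runs through every point $\frac{j}{q}$ with $j\in\{0,\ldots,q-1\}$ exactly once. Of these points, exactly $b-a$ lie in $I=[\frac{a}{q},\frac{b}{q})$. Here we normalise $0\le b-a\le q$, reading $I$ as an arc of $\T$ if it wraps around. Hence every block of $q$ consecutive integers contains exactly $b-a$ elements of $\Bohr(\theta,I)$.

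\textbf{Step 2: decompose the window.} Write $M=kq+r$ with $0\le r<q$, and split $\{x+1,\ldots,x+M\}$ into $k$ consecutive blocks of length $q$ followed by a tail of length $r$. By Step 1 the blocks contribute exactly $k(b-a)$ elements. The tail contributes some number $t$ with $0\le t\le \min\{r,b-a\}$.

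\textbf{Step 3: compare with the expected count.} The expected count is $M\frac{b-a}{q}=k(b-a)+r\frac{b-a}{q}$. The error is therefore
\[
t-\frac{r(b-a)}{q}.
\]
Both $t$ and $\frac{r(b-a)}{q}$ lie in $[0,r]$, so the absolute error is at most $r\le q-1<q$. Dividing by $M$ gives the claimed bound $\frac{q}{M}$, with strict inequality.

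No step is a serious obstacle. The only care needed is at the edges: reducing $p/q$ to lowest terms (otherwise the true period is smaller and the count $b-a$ must be adjusted), and handling intervals that wrap around $0$ or have $b-a=q$. These are bookkeeping points and do not affect the argument.
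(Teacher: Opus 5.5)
Your proof is correct and is essentially the paper's argument: split $\{x+1,\ldots,x+M\}$ into $k$ blocks of length $q$, each containing exactly $b-a$ elements of $\Bohr(\theta,I)$, plus a tail of length $r<q$, so the count lies in $[k(b-a),\,k(b-a)+r]$ and the error is below $q/M$. One small correction: $\gcd(p,q)=1$ is not a harmless ``we may assume'' reduction but a tacit hypothesis of the lemma (for $\theta=\frac{2}{4}$ and $I=[0,\tfrac{1}{4})$ the Bohr set is the even numbers, with density $\frac12\neq\frac14$), and the paper's proof relies on it in exactly the same way when it asserts that each block contains $b-a$ elements.
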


\begin{proof}
    Write $M = kq + r$ with $r < q$.
    We can decompose
    \begin{equation*}
        \{x+1,\ldots,x+M\} = \bigcup_{j=0}^{k-1} \underbrace{\{x+jq+1, \ldots, x + (j+1)q)\}}_{J_j} \cup \underbrace{\{x+kq+1,\ldots,x+M\}}_{E}.
    \end{equation*}
    Each of the intervals $J_j$ of length $q$ contains exactly $b-a$ elements of $\Bohr(\theta, I)$.
    Therefore,
    \begin{equation*}
        k(b-a) \leq |\Bohr(\theta,I) \cap \{x+1,\ldots,x+M\}| \leq k(b-a) + r
    \end{equation*}
    Dividing through by $M = kq+r$ gives the desired inequality.
\end{proof}

%SUBSUBSECTION
\subsubsection{Major arcs}

In the case that $\theta$ is close but not equal to a rational number $\frac{p}{q}$, say with $\left| \theta - \frac{p}{q} \right| = \epsilon$, then the behavior \eqref{eq: discrepancy} depends critically on the scale $M$ relative to $\epsilon^{-1}$, so long as the length of the interval $I$ is not also well-approximated by a rational number of denominator $q$.

\begin{Lemma} \label{lem: large discrepancy for rational Bohr set}
    Let $\theta = \frac{p}{q}$, and let $\alpha \in (0,1)$.
    If $\epsilon \in (0,1)$, $K > 1$, and $KM \leq \epsilon^{-1} \leq K^{-1}H$, then
    \begin{equation*}
        \sup_{x \in \N} \left| \frac{|\Bohr(\theta + \epsilon, I) \cap \{x+1,\ldots,x+H\}|}{H} - \alpha \right| \ll q\epsilon + \frac{1}{K}
    \end{equation*}
    and
    \begin{equation*}
        \frac{1}{H} \sum_{x=1}^H \left| \frac{|\Bohr(\theta + \epsilon, I) \cap \{x+1,\ldots,x+M\}|}{M} - \alpha \right| \geq \frac{\|q\alpha\|_{\T}}{q} - \Oh \left( q\epsilon + \frac{1}{K} + \frac{q}{M} \right)
    \end{equation*}
    for every $I \subseteq \T$ with $d(\Bohr(\theta+\epsilon,I)) = \alpha$.
\end{Lemma}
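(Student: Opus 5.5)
The plan is to exploit that, with $\theta=\frac pq$, the orbit $n(\theta+\epsilon)=n\frac pq+n\epsilon$ is a rational rotation perturbed by a slow linear drift. Throughout I assume, as permitted by the remark preceding the lemma, that $|I|=\alpha$. Both estimates come from splitting $n$ by its residue mod $q$, or by localizing $n$ to a short window, so that the drift is either swept through many full turns or is essentially constant.

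\textbf{First estimate (scale $H$).} Fix $x$ and write each $n\in\{x+1,\ldots,x+H\}$ as $n=x+r+jq$ with $r\in\{1,\ldots,q\}$ and $0\le j<L\approx H/q$. For fixed $r$ we have
\[
n(\theta+\epsilon)\equiv c_r+j\,q\epsilon \pmod 1 ,
\]
which is an arithmetic progression in $\T$ with small step $s=q\epsilon$. Its total length is $Ls\approx H\epsilon\geq K$, so it winds around $\T$ about $H\epsilon$ times. Each complete winding places $\alpha/s+\Oh(1)$ points in $I$. The final incomplete winding is off from its expected count by at most $\alpha/s+1$. Hence the count for residue $r$ is
\[
\alpha L+\Oh\!\left(H\epsilon+1+\frac{1}{q\epsilon}\right).
\]
Summing over the $q$ residues and dividing by $H$, the error is $\Oh(q\epsilon+q/H+1/(\epsilon H))$. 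The hypothesis $H\ge K/\epsilon$ gives $q/H\le q\epsilon$ and $1/(\epsilon H)\le 1/K$, so the error is $\Oh(q\epsilon+1/K)$, uniformly in $x$.

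\textbf{Second estimate (scale $M$).} Fix $x\in[H]$ and write $(x+m)(\theta+\epsilon)=(x+m)\frac pq+x\epsilon+m\epsilon$ for $m\in[M]$. Since $m\epsilon\le M\epsilon\le 1/K$, this differs from the purely rational orbit $(x+m)\frac pq+x\epsilon$ by at most $1/K$. The rational orbit lies in the rotated set $I-x\epsilon$ exactly when $(x+m)\frac pq$ lies in $I-x\epsilon$. That is a Bohr set for the rational frequency $\frac pq$, whose membership depends only on which points of $\frac1q\Z$ lie in $I-x\epsilon$. By \cref{lem: small discrepancy for rational Bohr set}, its relative density in the window is $c_x/q+\Oh(q/M)$ for some integer $c_x$. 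Since $c_x/q\in\frac1q\Z$, we get $|c_x/q-\alpha|\geq \|q\alpha\|_{\T}/q$.

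The two orbits can disagree about membership only at those $m$ for which $(x+m)(\theta+\epsilon)$ lies within $1/K$ of an endpoint of $I$. So for each $x$,
\[
\left|\frac{|\Bohr(\theta+\epsilon,I)\cap\{x+1,\ldots,x+M\}|}{M}-\alpha\right|\geq\frac{\|q\alpha\|_{\T}}{q}-\Oh\!\left(\frac qM\right)-\frac{|\{m\in[M]:\ (x+m)(\theta+\epsilon)\in U\}|}{M},
\]
where $U$ is the union of the two arcs of length $2/K$ centred at the endpoints of $I$.

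\textbf{Main obstacle.} The delicate step is controlling this last term on average over $x\in[H]$, rather than pointwise. A pointwise, residue-counting bound would lose a factor of $q$, giving $q/K$ instead of $1/K$. To avoid this, I will swap the averages: the average over $x\in[H]$ of the bad fraction is essentially the relative density of $\Bohr(\theta+\epsilon,U)$ in an interval of length about $H$. Applying the first estimate (whose proof works for any arc, here the two arcs forming $U$ of total length $\Oh(1/K)$) shows this density is $\Oh(1/K)+\Oh(q\epsilon+1/K)$. Combining this with the display above yields the claimed lower bound $\frac{\|q\alpha\|_{\T}}{q}-\Oh(q\epsilon+\frac1K+\frac qM)$.
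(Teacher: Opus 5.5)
Your proof is correct. For the second estimate you take a genuinely different route from the paper's.

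The paper works residue class by residue class. For each $r$ it rewrites the count in a window of length $M$ as the local average $\frac{|I\cap(a_r(x),a_r(x)+M\epsilon)|}{M\epsilon}$, where $a_r(x)=(x+r)(\theta+\epsilon)$; this is a piecewise-linear smoothing $F$ of $\1_I$. It then uses the first estimate, extended in a footnote to the Riemann-integrable function $t\mapsto\frac1q\sum_r F(r(\theta+\epsilon)+t)$, to replace the average over $x\in[H]$ by an integral over $t\in\T$. Next it moves $r(\theta+\epsilon)$ to $\frac{rp}{q}$ at cost $\Oh(q/M)$ and replaces $F$ by $\1_{(0,\alpha)}$ at an $L^1$ cost of at most $\frac1K$. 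It concludes because $\frac1q\sum_r\1_{(0,\alpha)}(t+\frac{rp}{q})\in\frac1q\Z$.

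You instead freeze the drift pointwise in $x$. The comparison orbit $(x+m)\frac pq+x\epsilon$ has window density in $\frac1q\Z$ up to $\Oh(q/M)$, by \cref{lem: small discrepancy for rational Bohr set} or simply by $q$-periodicity. The entire error is then absorbed into the boundary arcs $U$, which you control only after averaging over $x$ and swapping the sums. This is shorter, and it avoids both the smoothing function and the extension of the first estimate beyond indicators of arcs. The price is that you need the first estimate for arbitrary arcs, here of length $\Oh(1/K)$, and not just for $I$; your winding count supplies this directly.

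In the first estimate you also replace the paper's use of the \Erdos{}--\Turan{} inequality on each progression $\Bohr(q\epsilon,I-a_r(x))\cap[H/q]$ by an elementary winding count. Both give the error $\Oh(q\epsilon+\frac{1}{H\epsilon})$.

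What the paper's route buys is an approximation of the averaged discrepancy by the explicit integral $\int_0^1\bigl|\frac1q\sum_r\1_{(0,\alpha)}(t+\frac{rp}{q})-\alpha\bigr|\,dt$, which is more than the lower bound the lemma needs. Finally, when $q\epsilon$ is not small or $K$ is bounded, both inequalities hold trivially, so the implicit assumptions in your winding argument are harmless.
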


\begin{proof}
    For convenience in computations, we will assume $|I| = \alpha$.
    Fix $x \in \N$ and $L \in \N$.
    The number $|\Bohr(\theta+\epsilon,I) \cap \{x+1,\ldots,x+L\}|$ counts how many terms of the sequence $(x+n)(\theta+\epsilon)$, $n \in [L]$, land in $I$ mod $1$.
    Decomposing into residue classes mod $q$, we can write
    \begin{multline} \label{eq: decompose by residues}
        |\Bohr(\theta+\epsilon,I) \cap \{x+1,\ldots,x+L\}| = \sum_{r=0}^{q-1} \sum_{n=1}^{L/q} \1_I \left( \underbrace{(x+r)(\theta+\epsilon)}_{a_r(x)} + nq\epsilon \right) + \Oh(q) \\
        = \sum_{r=0}^{q-1} \left| \Bohr(q\epsilon, I - a_r(x)) \cap [L/q] \right| + \Oh(q).
    \end{multline}
    Taking $L = H \geq K\epsilon^{-1}$ and applying the Erd\H{o}s--Tur\'{a}n inequality,
    \begin{multline*}
        \left| \frac{\left| \Bohr(q\epsilon, I - a_r(x)) \cap [H/q] \right|}{H/q} - \alpha \right| \leq C_0 \left( 2q\epsilon + \frac{q}{H} \sum_{k=1}^{(2q\epsilon)^{-1}} \frac{1}{k} \underbrace{\left| \sum_{h=1}^{H/q} e(hq\epsilon k) \right|}_{\leq \frac{1}{2kq\epsilon}} \right) \\
         \leq C_0 \left( 2q\epsilon + \frac{1}{2H\epsilon} \cdot \frac{\pi^2}{6} \right)
         \ll q\epsilon + \frac{1}{H\epsilon} \leq q\epsilon + \frac{1}{K}.
    \end{multline*}
    Thus, averaging over $r \in \{0, 1, \ldots, q-1\}$, we have
    \begin{equation*}
        \sup_{x \in \N} \left| \frac{|\Bohr(\theta + \epsilon, I) \cap \{x+1,\ldots,x+H\}|}{H} - \alpha \right| \ll q \epsilon + \frac{1}{K} + \Oh \left( \frac{q}{H} \right).
    \end{equation*}
    Noting that $\frac{q}{H} \leq \epsilon \frac{q}{K} \leq q\epsilon$, this proves the first part of the lemma.

    \bigskip

    Now we wish to estimate
    \begin{equation*}
        \left| \frac{\left| \Bohr(q\epsilon, I - a_r(x)) \cap [M/q] \right|}{M/q} - \alpha \right|
    \end{equation*}
    for $x \in [H]$.
    Note that $mq\epsilon \in (0,1)$ for every $m \in [M/q]$, so
    \begin{equation*}
        \left| \Bohr(q\epsilon, I-a_r(x)) \cap [M/q] \right| = \sum_{m=1}^{M/q} \1_{I-a_r(x)}(mq\epsilon) = \frac{|(0,M\epsilon) \cap (I - a_r(x))|}{q\epsilon} + \Oh(1).
    \end{equation*}
    Hence,
    \begin{equation} \label{eq: approximate by intersection of intervals}
        \frac{\left| \Bohr(q\epsilon, I-a_r(x)) \cap [M/q] \right|}{M/q} = \frac{\left| I \cap (a_r(x), a_r(x)+M\epsilon) \right|}{M\epsilon} + \Oh \left( \frac{q}{M} \right).
    \end{equation}

    We first give a heuristic argument to show that
    \begin{equation*}
        \frac{1}{H} \sum_{x=1}^H \left| \frac{|\Bohr(\theta + \epsilon, I) \cap \{x+1,\ldots,x+M\}|}{M} - \alpha \right|
    \end{equation*}
    is large and then make it precise.
    Over the range $x \in [H]$, the value of $a_0(x)$ is nearly uniformly distributed in $\T$ by the first part of the lemma, so
    \begin{multline*}
        \frac{1}{H} \sum_{x=1}^H \left| \frac{|\Bohr(\theta + \epsilon, I) \cap \{x+1,\ldots,x+M\}|}{M} - \alpha \right| \\
        \approx \int_0^1 \left| \frac{1}{q} \sum_{r=0}^{q-1} \frac{\left| I \cap \left( t + \frac{rp}{q},t + \frac{rp}{q} +M\epsilon \right) \right|}{M\epsilon} - \alpha \right|~dt \\
        = \int_0^1 \left| \frac{1}{q} \sum_{r=0}^{q-1} \frac{\left| (0,\alpha) \cap \left( t + \frac{rp}{q},t + \frac{rp}{q} +M\epsilon \right) \right|}{M\epsilon} - \alpha \right|~dt.
    \end{multline*}
    Since $M\epsilon$ is small (recall $M\epsilon \leq K^{-1}$), the function $\frac{\left| (0,\alpha) \cap \left( t,t+M\epsilon \right) \right|}{M\epsilon}$ is approximately equal to $\1_{(0,\alpha)}(t)$, so we get the further approximation
    \begin{equation*}
        \int_0^1 \left| \frac{1}{q} \sum_{r=0}^{q-1} \1_{(0,\alpha)}\left( t + \frac{rp}{q} \right) - \alpha\right|~dt \geq \min_{0 \leq a \leq q} \left| \alpha - \frac{a}{q} \right| = \frac{\|q\alpha\|_{\T}}{q}.
    \end{equation*}

    Now let us track the errors introduced by each approximation.
    Define $F_r : [H] \to [0,1]$ by
    \begin{equation*}
        F_r(x) = \frac{\left| I \cap (a_r(x), a_r(x)+M\epsilon) \right|}{M\epsilon}.
    \end{equation*}
    Then let
    \begin{equation*}
        F(t) = \frac{\left| (0,\alpha) \cap (t, t+M\epsilon) \right|}{M\epsilon}
        = \begin{cases}
            1, & 0 \leq t < \alpha - M\epsilon; \\
            \frac{1}{M\epsilon}(\alpha - t), & \alpha - M\epsilon \leq t < \alpha; \\
            0, & \alpha \leq t < 1 - M\epsilon; \\
            \frac{1}{M\epsilon}(t-1+M\epsilon), & 1 - M\epsilon \leq t < 1
        \end{cases}
    \end{equation*}
    so that $F_r(x) = F(a_r(x)-c)$, where $c$ is the left endpoint of $I$.
    Using the estimate from the first part of the lemma,\footnote{Strictly speaking, the first part of the lemma bounds the discrepancy only for intervals.
    However, the function $t \mapsto \frac{1}{q} \sum_{r=0}^{q-1} F(r(\theta+\epsilon)+t)$ is $[0,1]$-valued and Riemann integrable, so the same estimates apply.}
    \begin{multline} \label{eq: approximate by integral}
        \frac{1}{q} \sum_{r=0}^{q-1} \frac{1}{H} \sum_{x=1}^{H} F_r(x) = \frac{1}{q} \sum_{r=0}^{q-1} \frac{1}{H} \sum_{x=1}^{H} F(-c+r(\theta+\epsilon) + x(\theta+\epsilon)) \\
        = \frac{1}{q} \sum_{r=0}^{q-1} \int_0^1 F(r(\theta+\epsilon) + t)~dt + \Oh\left( q\epsilon + \frac{1}{K} \right).
    \end{multline}
    The function $F$ is piecewise linear with maximal slope $\pm \frac{1}{M\epsilon}$.
    Hence, for $0 \leq r \leq q-1$,
    \begin{equation*}
        \max_{t \in \T} |F(t+r\epsilon) - F(t)| \leq \frac{r\epsilon}{M\epsilon} \leq \frac{q}{M},
    \end{equation*}
    so
    \begin{equation} \label{eq: integral over equally spaced points}
        \frac{1}{q} \sum_{r=0}^{q-1} \int_0^1 F(r(\theta+\epsilon) + t)~dt = \frac{1}{q} \sum_{r=0}^{q-1} \int_0^1 F \left( \frac{rp}{q} + t \right)~dt + \Oh \left( \frac{q}{M} \right).
    \end{equation}

    Combining \eqref{eq: decompose by residues}, \eqref{eq: approximate by intersection of intervals}, \eqref{eq: approximate by integral}, and \eqref{eq: integral over equally spaced points},
    \begin{equation*}
        \frac{|\Bohr(\theta+\epsilon,I) \cap \{x+1,\ldots,x+M\}|}{M} = \frac{1}{q} \sum_{r=0}^{q-1} \int_0^1 F \left( \frac{rp}{q} + t \right)~dt + \Oh \left( q\epsilon + \frac{1}{K} + \frac{q}{M} \right).
    \end{equation*}
    Finally,
    \begin{align*}
        \left| \frac{1}{q} \sum_{r=0}^{q-1} \int_0^1 F \left( \frac{rp}{q} + t \right)~dt \right. & \left. -~\frac{1}{q} \sum_{r=0}^{q-1} \int_0^1 1_{(0,\alpha)} \left( \frac{rp}{q} + t \right)~dt \right| \\
         & \leq \int_0^1 \left| F(t) - \1_{(0,\alpha)}(t) \right|~dt \\
         & = \int_{\alpha-M\epsilon}^{\alpha} \left( 1 - \frac{1}{M\epsilon}(\alpha-t) \right)~dt + \int_{1-M\epsilon}^1 \frac{1}{M\epsilon} (t-1+M\epsilon)~dt \\
         & = \frac{1}{M\epsilon} \leq \frac{1}{K}.
    \end{align*}
    This completes the proof.
\end{proof}

\begin{Corollary} \label{cor: local periodicity for nearly rational frequency}
    Let $\theta = \frac{p}{q}$ expressed in lowest terms and $\alpha \in (0,1)$ with $\{q\alpha\} = q\alpha - \lfloor q\alpha \rfloor \geq \delta > 0$.
    Let $\epsilon \in \left( 0, \frac{\delta}{q} \right)$, $K > 1$, and $KM \leq \epsilon^{-1} \leq K^{-1}H$.
    Let $I \subseteq \T$ be an interval of length $|I| = \alpha$.
    \begin{enumerate}[label=(\arabic*), leftmargin=*]
        \item There is a set $E \subseteq [H]$ with $|E| \geq \left( \delta - \Oh \left( q^2\epsilon + \frac{q}{K} \right) \right) |H|$ such that for every $x \in E$,
            \begin{equation*}
                \Bohr(\theta+\epsilon,I) \cap \{x+1,\ldots,x+M\} = \bigcup_{r \in R_x} (q\Z + r) \cap \{x+1,\ldots,x+M\}
            \end{equation*}
            for some set $R_x \subseteq \{0,1, \ldots, q-1\}$ with $|R_x| = \lceil q\alpha \rceil$.
        \item If $\alpha > \frac{1}{q}$ and $q\epsilon + \frac{1}{K} < \frac{\delta}{q}$, then for every $x \in \N$, there exists $r_x \in \{0, 1, \ldots, q-1\}$ such that
            \begin{equation*}
                (q\Z + r_x) \cap \{x+1,\ldots,x+M\} \subseteq \Bohr(\theta+\epsilon,I) \cap \{x+1,\ldots,x+M\}.
            \end{equation*}
    \end{enumerate}
\end{Corollary}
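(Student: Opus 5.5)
The plan is to work directly from the geometry of the orbit $n(\theta+\epsilon)$ on a window of length $M$, rather than through exponential sums. For $x\in\N$ and $n=x+r+mq$ with $r\in\{0,\ldots,q-1\}$ and $x+r+mq\le x+M$, I would write
\[
n(\theta+\epsilon)\equiv x(\theta+\epsilon)+\tfrac{rp}{q}+(r+mq)\epsilon \pmod 1 .
\]
Since $(r+mq)\epsilon\le (M+q)\epsilon\le \frac1K+q\epsilon$, along each residue class $q\Z+r'$ the orbit stays within distance $\eta:=\frac1K+q\epsilon$ of the fixed point $t_x+\frac{rp}{q}$, where $t_x=x(\theta+\epsilon)$. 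Because $\gcd(p,q)=1$, the points $t_x+\frac{rp}{q}$, $r=0,\ldots,q-1$, are exactly the coset $t_x+\frac1q\Z$: $q$ equally spaced points with gap $\frac1q$. Hence, whenever every grid point $t_x+\frac{j}{q}$ is at distance more than $\eta$ from both endpoints of $I$, each residue class mod $q$ meeting $\{x+1,\ldots,x+M\}$ lies entirely inside or entirely outside $\Bohr(\theta+\epsilon,I)$. The residues lying inside are those whose grid points lie in $I$.

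For part (1), write $qk=\lfloor q\alpha\rfloor$ and $\{q\alpha\}=q\alpha-k$. An interval of length $\alpha$ contains either $k$ or $k+1$ points of a coset of $\frac1q\Z$. The set of shifts $t\in\T$ for which it contains $k+1=\lceil q\alpha\rceil$ points is a union of $q$ arcs, each of length $\{q\alpha\}/q$, so it has total measure $\{q\alpha\}\ge\delta$. I would remove from these arcs the $\eta$-neighbourhoods of the at most $2q$ shifts at which a grid point hits an endpoint of $I$. This leaves a set $U\subseteq\T$ that is a union of at most $O(q)$ intervals, with $m(U)\ge\delta-O(q\eta)=\delta-O(q^2\epsilon+\frac qK)$. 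For $x\in E:=\{x\in[H]: t_x\in U\}$ the previous paragraph gives the claimed decomposition with $|R_x|=\lceil q\alpha\rceil$.

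It remains to lower-bound $|E|$. For this I would invoke the equidistribution estimate in the first part of Lemma \ref{lem: large discrepancy for rational Bohr set} (its proof is an \Erdos{}--\Turan{} bound on $x(\theta+\epsilon)$, $x\in[H]$, valid for any interval). Applied to each of the $O(q)$ intervals making up $U$, it gives
\[
\frac{|E|}{H}=m(U)+O\!\left(q\left(q\epsilon+\tfrac1K\right)\right)\ge \delta-O\!\left(q^2\epsilon+\tfrac qK\right).
\]
The main point of care is this bookkeeping: the number of intervals is $O(q)$, which is what produces the $q^2\epsilon$ and $q/K$ losses. I also need $\epsilon<\delta/q$ so that the arcs of length $\{q\alpha\}/q\ge\delta/q$ are not swallowed by the margins, although the stated error term already absorbs that.

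For part (2), the hypothesis $\alpha>\frac1q$ gives $k\ge1$, so $q\alpha\ge1+\delta$ and $|I|\ge\frac{1+\delta}{q}$. For any $x$, the grid $t_x+\frac1q\Z$ has gap $\frac1q$. I would then argue that some grid point lies in $I$ at distance at least $\frac{\delta}{2q}$ from both endpoints: otherwise every grid point in $I$ would be within $\frac{\delta}{2q}$ of an endpoint, which is impossible for an interval of length $\ge\frac{1+\delta}{q}$. I should double-check this count. The worst case is when the grid points nearest the two ends sit symmetrically, and there the slack of $\frac{\delta}{q}$ is split between the two ends, giving exactly $\frac{\delta}{2q}$. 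Call that residue $r_x$. Its orbit points stay within $\eta$ of the grid point, and $\eta=q\epsilon+\frac1K<\frac{\delta}{q}$. So I need $\eta<\frac{\delta}{2q}$, which I would secure by adjusting constants (or by noting that the drift is only one-sided, $\ge0$, so only the right endpoint matters and the full margin is available). Then $(q\Z+r_x)\cap\{x+1,\ldots,x+M\}\subseteq\Bohr(\theta+\epsilon,I)$.
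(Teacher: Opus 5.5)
This is essentially the paper's proof: in both parts you use that, along each residue class mod $q$, the points $n(\theta+\epsilon)$ with $n\in\{x+1,\ldots,x+M\}$ drift only rightward, by at most $M\epsilon\le\frac{1}{K}$, from a point of the coset $x(\theta+\epsilon)+\frac{1}{q}\Z$. In part (1) you then count the good $x$ by applying the first estimate of \cref{lem: large discrepancy for rational Bohr set} to $O(q)$ arcs, just as the paper does with its arcs $[c+\frac{s}{q},\,c+\frac{s}{q}+\frac{\delta}{q}-\frac{1}{K})$; your two-sided trimming only costs an extra $O(q\eta)$, which the error term absorbs.

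In part (2), however, ``adjusting constants'' is not an option, since the hypothesis $q\epsilon+\frac{1}{K}<\frac{\delta}{q}$ is fixed and does not give $\eta<\frac{\delta}{2q}$. You must use your one-sided alternative instead. Writing $I=[c,c+\alpha)$, the closed interval $[c,\,c+\alpha-\frac{\delta}{q}]$ has length at least $\frac{1}{q}$ (as $q\alpha\ge 1+\delta$), so it contains a grid point. That residue class stays in $I$ because its drift is at most $\frac{1}{K}<\frac{\delta}{q}$. This is exactly what the paper's choice $J=c+[0,\frac{1}{q}+q\epsilon)$ accomplishes.
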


\begin{proof}
    Write $I = [c,c+\alpha)$.

    \bigskip
    
    (1) Suppose $x(\theta+\epsilon) \in \left[ c + \frac{s}{q}, c + \frac{s}{q} + \frac{\delta}{q} - \frac{1}{K} \right)$ for some $s \in \{0, 1, \ldots, q-1\}$.
    Then for $m \in [M]$, we have
    \begin{equation} \label{eq: approximation of orbit by rationals}
        (x+m)(\theta+\epsilon) - \left( c + \frac{s}{q} + m\frac{p}{q} \right) = x(\theta+\epsilon) - c - \frac{s}{q} + m \epsilon \in \left[ \epsilon, \frac{\delta}{q} \right),
    \end{equation}
    since $M\epsilon \leq \frac{1}{K}$.
    We claim that
    \begin{multline} \label{eq: periodic expression for Bohr set}
        \Bohr(\theta+\epsilon,I) \cap \{x+1,\ldots,x+M\} \\
        = \left\{ n \in \{x+1,\ldots,x+M\} : \frac{s}{q} + (n-x) \frac{p}{q} \in \left[ 0,\alpha - \frac{\delta}{q} \right] \right\}.
    \end{multline}
    Indeed, if $m \in [M]$ and $\frac{s}{q} + m \frac{p}{q} \in \left[ 0,\alpha - \frac{\delta}{q} \right]$, then by \eqref{eq: approximation of orbit by rationals},
    \begin{equation*}
        (x+m)(\theta+\epsilon) \in c + \left[ 0,\alpha - \frac{\delta}{q} \right] + \left[ \epsilon, \frac{\delta}{q} \right) = \left[c + \epsilon, c+ \alpha \right) \subseteq I.
    \end{equation*}
    Conversely, if $m \in [M]$ and $(x+m)(\theta+\epsilon) \in I = [c,c+\alpha)$, then by \eqref{eq: approximation of orbit by rationals},
    \begin{equation*}
        \frac{s}{q} + m \frac{p}{q} \in [0, \alpha) - \left[ \epsilon, \frac{\delta}{q} \right) = \left( -\frac{\delta}{q}, \alpha - \epsilon \right).
    \end{equation*}
    But $\frac{s}{q} + m\frac{p}{q}$ is a rational number with denominator $q$, so it must lie in the smaller interval
    \begin{equation*}
        \frac{s}{q} + m\frac{p}{q} \in \left[0, \alpha - \frac{\delta}{q} \right].
    \end{equation*}

    From \eqref{eq: periodic expression for Bohr set}, we see that $\Bohr(\theta+\epsilon,I) \cap \{x+1,\ldots,x+M\}$ is a $q$-periodic set expressible in the form
    \begin{equation*}
        \bigcup_{r \in R_x} (q\Z + r) \cap \{x+1,\ldots,x+M\}
    \end{equation*}
    for
    \begin{equation*}
        R_x = \left\{ r \in \{0,1, \ldots, q-1\} : \frac{s}{q} + (r-x)\frac{p}{q} \in \left[ 0,\alpha - \frac{\delta}{q} \right] \right\},
    \end{equation*}
    which has $\lceil q \alpha \rceil$ elements.
    Let
    \begin{multline*}
        E_s = \left\{x \in [H] : x(\theta + \epsilon) \in \left[ c + \frac{s}{q}, c + \frac{s}{q} + \frac{\delta}{q} - \frac{1}{K} \right) \right\} \\
        = \Bohr \left( \theta+\epsilon, \left[ c + \frac{s}{q}, c + \frac{s}{q} + \frac{\delta}{q} - \frac{1}{K} \right) \right) \cap [H].
    \end{multline*}
    By Lemma \ref{lem: large discrepancy for rational Bohr set},
    \begin{equation*}
        |E_s| \geq \left( \frac{\delta}{q} - \Oh \left( q\epsilon + \frac{1}{K} \right) \right) |H|,
    \end{equation*}
    and we may take $E = \bigsqcup_{s=0}^{q-1} E_s$.

    \bigskip

    (2) Suppose now that $\alpha > \frac{1}{q}$ and $q\epsilon + \frac{1}{K} < \frac{\delta}{q}$.
    Then
    \begin{equation*}
        \alpha - \frac{1}{q} = \frac{q\alpha - 1}{q} \geq \frac{\{q\alpha\}}{q} \geq \frac{\delta}{q} > q\epsilon + \frac{1}{K}.
    \end{equation*}
    Let $J_0 = [0, \frac{1}{q} + q\epsilon)$ and $J = c + J_0 \subseteq I$.
    Then $J_0 - s(\theta+\epsilon) \supseteq \left[ -\frac{sp}{q}, -\frac{sp}{q} + \frac{1}{q} \right)$ for $s \in \{0, \ldots, q-1\}$, so $\bigcup_{s=0}^{q-1} (J - s(\theta+\epsilon)) = \T$.
    Hence, for every $x \in \N$, there exists $s_x \in \{0,1,\ldots,q-1\}$ such that $(x+s_x)(\theta+\epsilon) \in J$.
    Then for $m \in [M/q]$,
    \begin{equation*}
        (x+s_x+qm)(\theta+\epsilon) \in J + qm(\theta+\epsilon) = J + qm\epsilon \subseteq \left[ c, c + \frac{1}{q} + q\epsilon + \frac{1}{K} \right) \subseteq [c, c+\alpha) = I.
    \end{equation*}
    Thus,
    \begin{equation*}
        (q\Z + r_x) \cap \{x+1,\ldots,x+M\} \subseteq \Bohr(\theta+\epsilon,I) \cap \{x+1,\ldots,x+M\}
    \end{equation*}
    for $r_x = x + s_x \bmod q$.
\end{proof}

In the final case, when both $\theta$ and the interval length $|I|$ are approximately rational numbers with denominator $q$, the discrepancy is again small as soon as $M$ is large compared with $q$.

\begin{Lemma} \label{lem: rational frequency and interval length}
    Let $\theta = \frac{p}{q}$ with $\gcd(p,q) = 1$.
    If $\alpha \in (0,1)$ and $q\alpha \in \Z$, then for every $\epsilon \in \left( 0, \frac{1}{q^2} \right)$, every interval $I \subseteq \T$ such that $d(\Bohr(\theta+\epsilon,I)) = \alpha$, and every $M \in \N$,
    \begin{equation*}
        \sup_{x \in \N} \left| \frac{|\Bohr(\theta+\epsilon,I) \cap \{x+1,\ldots,x+M\}|}{M} - \alpha \right| \ll q \sqrt{\epsilon} + \frac{q}{M}.
    \end{equation*}
\end{Lemma}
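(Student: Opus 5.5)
Write $I=[c,c+\alpha)$ with $|I|=\alpha$ (as in the proof of \cref{lem: large discrepancy for rational Bohr set}) and $q\alpha=a\in\Z$. We follow the residue-class decomposition \eqref{eq: decompose by residues}. For $n=x+r+qm$ with $r\in\{0,\ldots,q-1\}$ we have
\[
n(\theta+\epsilon)=a_r(x)+qm\epsilon .
\]
So the count over $\{x+1,\ldots,x+M\}$ is $\sum_{r}\sum_{m\le M/q}\1_I(a_r(x)+qm\epsilon)+\Oh(q)$. The $\Oh(q)$ term already gives the $q/M$ error after dividing by $M$. Set $L=M/q$ and the scale $\Delta=q\epsilon L = M\epsilon$. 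We then split into two regimes according to $M\epsilon$ versus $\sqrt\epsilon$, i.e.\ $M\le \epsilon^{-1/2}$ or $M>\epsilon^{-1/2}$.

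\textbf{Short scale ($M\epsilon\le\sqrt{\epsilon}$).} Here the perturbation $qm\epsilon$ moves each point by at most $M\epsilon\le\sqrt\epsilon$. The points $a_r(x)$, $r=0,\ldots,q-1$, equal $a_0(x)+r(\theta+\epsilon)$, so up to shifts of size $\le q\epsilon$ they form a coset of $\frac1q\Z$. An interval of length exactly $a/q$ contains exactly $a$ points of any coset of $\frac1q\Z$, except that the boundary can be moved by the perturbations. Concretely, replacing each $n(\theta+\epsilon)$ by $a_0(x)+(n-x)\frac pq$ changes it by at most $M\epsilon\le\sqrt\epsilon<1/q$ (using $\epsilon<1/q^2$). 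So only points of the coset within distance $\sqrt\epsilon$ of an endpoint of $I$ can switch membership, and there are at most two such residues $r$. Hence each residue-class sum is exactly the unperturbed count, except for at most two residues. But those two residues are not harmless, since each contributes up to $L$ errors, i.e.\ a relative error of order $1/q$, which is not small.

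The fix is to refine the bookkeeping for the two bad residues. For a bad residue, the points $a_r(x)+qm\epsilon$ sweep monotonically across an endpoint. At the left endpoint $c$ they enter $I$; at the right endpoint $c+\alpha$ they leave. Because $|I|=a/q$ is a multiple of $1/q$, when the coset point near $c$ enters, the coset point near $c+\alpha$, which is offset by exactly $a/q$, leaves at almost the same $m$. The two offsets differ only by the drift $q\epsilon\cdot(\text{difference of residues})$, which is at most $q\cdot q\epsilon$ in position, i.e.\ at most $q$ steps of $m$. The gains and losses therefore cancel up to $\Oh(q)$ terms, giving total error $\Oh(q)$, hence $\Oh(q/M)$. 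This endpoint-cancellation step is the main obstacle. I would make it precise by writing the count as $\sum_{m}\#\{r:\ a_0(x)+r\frac pq+(r+qm)\epsilon\in I\}$. For each fixed $n$-window this set is a coset of $\frac1q\Z$ perturbed by a monotone drift of total size $\le q\epsilon\cdot q<1/q$ across one period, and an interval of length $a/q$ meets such a near-coset in $a$ points, or $a\pm1$ only when an endpoint falls in a gap of width $q^2\epsilon$. Summing the exceptional windows over $m$ then gives $\Oh(q)$ plus a term controlled as below.

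\textbf{Long scale ($M\epsilon>\sqrt\epsilon$).} Here we split $\{x+1,\ldots,x+M\}$ into consecutive blocks of length $M_0\approx\epsilon^{-1/2}$ and apply the short-scale bound on each block. Each block gives error $\Oh(q/M_0)=\Oh(q\sqrt\epsilon)$, and the leftover partial block costs $M_0/M\le \sqrt\epsilon$. This gives the stated bound $q\sqrt\epsilon+q/M$ uniformly in $x$. Alternatively, for $M\epsilon\ge1$ one can use Erd\H{o}s--Tur\'an as in \cref{lem: large discrepancy for rational Bohr set}, but the block argument avoids needing $K$. Throughout, the fact that $d(\Bohr(\theta+\epsilon,I))=\alpha$ lets us take $|I|=\alpha$, since $\theta+\epsilon$ is irrational or else rational with large denominator, and in either case we can choose $I$ with $|I|=\alpha$.
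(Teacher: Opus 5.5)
Your proposal is correct and is essentially the paper's proof: your ``precise'' per-$m$ count $\sum_m\#\{r:\ a_0(x)+r\frac pq+(r+qm)\epsilon\in I\}$ is exactly the paper's $q\sum_n F(t+nq\epsilon)$. That count equals $q\alpha$ except for $O(1)$ values of $m$ on short windows; note the within-block drift is $(q-1)\epsilon<1/q$, not $q^2\epsilon$, and $q^2\epsilon<1/q$ does not follow from $\epsilon<1/q^2$. The long range is handled, as in the paper, by chopping into blocks of length $\approx\epsilon^{-1/2}$.

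One slip: with your split at $M=\epsilon^{-1/2}$, the leftover block does not cost $M_0/M\le\sqrt\epsilon$ when $\epsilon^{-1/2}<M<\epsilon^{-1}$. Applying your short-window $O(q)$ count error to that block as well repairs this, as does splitting at $M=(q\epsilon)^{-1}$ as the paper does, so that $L/M\le q\sqrt\epsilon$.
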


\begin{proof}
    Assume $|I| = \alpha$.
    Since
    \begin{equation*}
        |\Bohr(\theta+\epsilon,I) \cap \{x+1,\ldots,x+M\}| = \sum_{m=1}^M \1_I((x+m)(\theta+\epsilon)),
    \end{equation*}
    we have the bound (taking $t = x(\theta+\epsilon)$)
    \begin{equation*}
        \sup_{x \in \N} \left| \frac{|\Bohr(\theta+\epsilon,I) \cap \{x+1,\ldots,x+M\}|}{M} - \alpha \right| \leq \sup_{t \in \T} \left| \frac{1}{M} \sum_{m=1}^M \1_I(t + m(\theta+\epsilon)) - \alpha \right|.
    \end{equation*}
    The right hand side is now independent of the choice of interval of length $\alpha$, so it suffices to estimate
    \begin{equation} \label{eq: discrepancy over [M]}
        \sup_{t \in \T} \left| \frac{1}{M} \sum_{m=1}^M \1_{[0,\alpha)}(t + m(\theta+\epsilon)) - \alpha \right|.
    \end{equation}

    As a preliminary step, let us estimate
    \begin{equation*}
        \sup_{t \in \T} \left| \frac{1}{L} \sum_{l=1}^L \1_{[0,\alpha)}(t + l(\theta+\epsilon)) - \alpha \right|
    \end{equation*}
    for $L \leq (q\epsilon)^{-1}$.
    Fix $t \in \T$.
    We can split $l$ into residue classes mod $q$ to obtain
    \begin{equation*}
        \sum_{l=1}^L \1_{[0,\alpha)}(t + l(\theta+\epsilon)) = \sum_{r=0}^{q-1} \sum_{n=1}^{L/q} \1_{[0,\alpha)}(t + r(\theta + \epsilon) + nq\epsilon) + \Oh(q),
    \end{equation*}
    so
    \begin{multline*}
        \left| \frac{1}{L} \sum_{l=1}^L \1_{[0,\alpha)}(t + l(\theta+\epsilon)) - \alpha \right| \\
        \leq \left| \frac{1}{q} \sum_{r=0}^{q-1} \frac{1}{L/q} \sum_{n=1}^{L/q} \1_{[0,\alpha)}(t + r\theta + (nq+r)\epsilon) - \alpha \right| + \Oh \left( \frac{q}{L} \right) \\
        \leq \left| \frac{1}{q} \sum_{s=0}^{q-1} \frac{1}{L/q} \sum_{n=1}^{L/q} \1_{[0,\alpha)} \left( t + \frac{s}{q} + (nq+r(s))\epsilon \right) - \alpha \right| + \Oh \left( \frac{q}{L} \right),
    \end{multline*}
    where in the last step we have reindexed by taking $s = rp \bmod{q}$ and $r(s) = p^{-1}s \bmod{q}$.
    Then
    \begin{equation*}
        \frac{1}{q} \sum_{s=0}^{q-1} \frac{1}{L/q} \sum_{n=1}^{L/q} \1_{[0,\alpha)} \left( t + \frac{s}{q} + (nq+r(s))\epsilon \right)
        = \frac{1}{L/q} \sum_{n=1}^{L/q} F(t+nq\epsilon),
    \end{equation*}
    where
    \begin{equation*}
        F(x) = \frac{1}{q} \sum_{s=0}^{q-1} \1_{[0,\alpha)} \left( x + \frac{s}{q} + r(s)\epsilon \right).
    \end{equation*}
    Note that $F(x) = \alpha$ for $\frac{a}{q} \leq x < \frac{a+1}{q} - (q-1)\epsilon$, $a \in \{0, 1, \ldots, q-1\}$, and $\alpha - \frac{1}{q} \leq F(x) \leq \alpha + \frac{1}{q}$ for all $x \in \T$.
    Therefore,
    \begin{equation*}
        \left| \frac{1}{L/q} \sum_{n=1}^{L/q} F(t+nq\epsilon) - \alpha \right|
         \leq \frac{1}{q} \sum_{a=0}^{q-1} \frac{1}{L/q} \sum_{n=1}^{L/q} \1_{\left[ \frac{a}{q} - (q-1)\epsilon, \frac{a}{q} \right)}(t+nq\epsilon).
    \end{equation*}
    The sequence $t + nq\epsilon$, $n \in [L/q]$, is contained in the interval $(t,t+L\epsilon]$ of length $L\epsilon \leq \frac{1}{q}$, so it can meet at most two of the intervals $\left[ \frac{a}{q} - (q-1)\epsilon, \frac{a}{q} \right)$.
    Moreover, since the sequence is $q\epsilon$-separated, at most one term can belong to a given interval of length $(q-1)\epsilon$, so
    \begin{equation*}
        \frac{1}{q} \sum_{a=0}^{q-1} \frac{1}{L/q} \sum_{n=1}^{L/q} \1_{\left[ \frac{a}{q} - (q-1)\epsilon, \frac{a}{q} \right)}(t+nq\epsilon) \leq \frac{2}{q} \cdot \frac{1}{L/q} = \frac{2}{L}.
    \end{equation*}
    Thus,
    \begin{equation} \label{eq: discrepancy over short intervals}
        \sup_{t \in \T} \left| \frac{1}{L} \sum_{l=1}^L \1_{[0,\alpha)}(t + l(\theta+\epsilon)) - \alpha \right| \ll \frac{1}{L} + \frac{q}{L} \ll \frac{q}{L}.
    \end{equation}

    Now we return to estimating \eqref{eq: discrepancy over [M]}.
    If $M \leq (q\epsilon)^{-1}$, then we take $L = M$ in \eqref{eq: discrepancy over short intervals}.
    Suppose $M > (q\epsilon)^{-1}$.
    Take $L = \lfloor \epsilon^{-1/2} \rfloor$.
    Since $q^2\epsilon < 1$ by assumption, we have
    \begin{equation*}
        L \leq \frac{1}{\sqrt{\epsilon}} < \frac{1}{\sqrt{\epsilon}} \cdot \frac{1}{q\sqrt{\epsilon}} = \frac{1}{q\epsilon}.
    \end{equation*}
    Let $t \in \T$.
    By decomposing the interval $[M]$ into intervals of length $L$ and applying \eqref{eq: discrepancy over short intervals}, we have
    \begin{multline*}
        \sum_{m=1}^M \1_{[0,\alpha)}(t + m(\theta+\epsilon)) = \sum_{k=1}^{M/L} \sum_{l=1}^L \1_{[0,\alpha)}(t + kL + l(\theta+\epsilon)) + \Oh \left( L \right) \\
         = \frac{M}{L} \left( L\alpha + \Oh \left( q \right) \right) + \Oh(L)
         = M\alpha + \Oh \left( \frac{Mq}{L}\right) + \Oh(L)
    \end{multline*}
    Therefore, dividing by $M$, we have
    \begin{equation*}
        \sup_{t \in \T} \left| \frac{1}{M} \sum_{m=0}^{M-1} \1_{[0,\alpha)}(t + m(\theta+\epsilon)) - \alpha \right| \ll \frac{q}{L} + \frac{L}{M} \ll q\sqrt{\epsilon}.
    \end{equation*}
\end{proof}

%SUBSECTION
\subsection{Multidimensional discrepancy estimates and a result on simultaneous approximation}

\begin{Lemma}
\label{lem_almost_denseness_condition}
For every $\epsilon>0$ and $d\in\N$ there exists $C_0=C_0(d,\epsilon)>0$ such that for all $H\in \N$ with $H\geq C_0$ the following holds: If $(\alpha_1, \ldots, \alpha_d) \in \T^d$ is such that for all $(w_1, \ldots, w_d) \in [-C_0,C_0]^d \cap \Z^d$ with $(w_1,\ldots,w_d)\neq (0,\ldots,0)$ we have
\begin{align}
\label{eqn_almost_denseness_condition}
    \left\| \sum_{i=1}^d w_i \alpha_i \right\|_{\T}  \geq \frac{C_0}{H},
\end{align}
then the sequence $\{(k\alpha_1,\ldots,k\alpha_d): k=0,1,\ldots,\lfloor \epsilon H\rfloor\}$ is $\epsilon$-dense in $\T^d$.
\end{Lemma}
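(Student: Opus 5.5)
Apply the Erd\H{o}s--Tur\'an--Koksma inequality (\cref{thm: Erdos-Turan-Koksma}) to the finite point set $\bm{t}_k=(k\alpha_1,\ldots,k\alpha_d)$, $k=0,1,\ldots,m-1$, where $m=\lfloor \epsilon H\rfloor+1$. The point set is $\epsilon$-dense as soon as every box of side length $\epsilon$ (volume $\epsilon^d$) contains at least one point. It therefore suffices to show that the discrepancy of this point set is strictly less than $\epsilon^d$. Indeed, a box $B$ with $|B|=\epsilon^d$ that contains no point would witness discrepancy at least $\epsilon^d$. Every point of $\T^d$ is the center of such a box, and a point in that box lies within sup-distance $\epsilon/2$ of the center.

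To bound the discrepancy, fix a truncation parameter $n$ with $6d^2 3^d/n<\epsilon^d/2$; this fixes $n$ in terms of $d$ and $\epsilon$ only. Take $C_0\ge n$. For each nonzero $\bm{h}\in\Z^d$ with $\|\bm{h}\|_\infty\le n$, the hypothesis \eqref{eqn_almost_denseness_condition} with $w=\bm{h}\in[-C_0,C_0]^d$ gives $\|\langle \bm{h},\bm{\alpha}\rangle\|_{\T}\ge C_0/H$. Summing the geometric series as in the proof of \cref{lem: discrepancy for irrational Bohr set}, we get
\[
\Bigl|\sum_{k=0}^{m-1} e(k\langle \bm{h},\bm{\alpha}\rangle)\Bigr|\le \frac{2}{4\|\langle\bm{h},\bm{\alpha}\rangle\|_{\T}}\le \frac{H}{2C_0}.
\]
The weighted sum over $\bm{h}$ has weight total $\sum_{0<\|\bm{h}\|_\infty\le n}\prod_i \max\{1,|h_i|\}^{-1}\le (3+2\log n)^d$. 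Hence the second term in the Erd\H{o}s--Tur\'an--Koksma bound is at most
\[
6d^23^d\,\frac{1}{m}(3+2\log n)^d\frac{H}{2C_0}\le 6d^23^d\,\frac{(3+2\log n)^d}{2\epsilon C_0}.
\]
Here we used $m\ge \epsilon H$. Choosing $C_0$ large in terms of $d,\epsilon,n$ makes this term less than $\epsilon^d/2$, and the total discrepancy is then below $\epsilon^d$. The requirement $H\ge C_0$ is not essential to this computation. It ensures that $\lfloor\epsilon H\rfloor\ge1$ and that the hypothesis is not vacuous in a degenerate way: if $C_0/H>1/2$ the hypothesis cannot hold, and the statement is then vacuous.

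The one point needing care is the reduction from discrepancy to $\epsilon$-density, in particular the convention for boxes in $\T^d$ that wrap around. Since the definition of discrepancy uses arbitrary intervals in $\T$, boxes centered at any point are allowed, and the reduction goes through. Otherwise the argument is a direct calculation; the main thing to check is that the order of choices ($n$ first, then $C_0$) is consistent with $C_0$ depending only on $d$ and $\epsilon$.
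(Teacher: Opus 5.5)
Your proposal is correct and follows essentially the same route as the paper's proof. Both apply the Erd\H{o}s--Tur\'an--Koksma inequality to the points $k\bm{\alpha}$, $0\le k\le\lfloor\epsilon H\rfloor$, with a truncation parameter depending only on $(d,\epsilon)$, bound each exponential sum by $H/(2C_0)$ using the hypothesis, and conclude that every small box contains a point. The differences are cosmetic: the paper uses boxes of side $\epsilon/\sqrt{d}$ (density in the Euclidean metric) rather than your side-$\epsilon$ boxes (density in the sup-metric, which is all that \cref{lem_finding_quantitative_almost_periods} uses), and it bounds the weights $\prod_i\max\{1,|h_i|\}^{-1}$ by $1$ instead of by your $(3+2\log n)^d$, yielding explicit constants.
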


\begin{proof}
Define $D=D(d,\epsilon)=2 d^{\frac{d}{2}+2} 3^{d+2}\epsilon^{-d}$ and set 
\[
C_0=C_0(d,\epsilon)=\frac{d^{\frac{d}{2}+2} 3^{d+2} (2D+1)^d}{\epsilon^{d+1}}.
\]
Let $B=[a_1,b_1)\times\ldots\times[a_d,b_d)\subset \T^d$ be a box whose sides all have length $\frac{\epsilon}{\sqrt{d}}$, that is, $b_i-a_i=\frac{\epsilon}{\sqrt{d}}$ for all $i=1,\ldots,d$.
Let $\mathcal{N}_{B}$ denote the number of points from the set $\{(k\alpha_1,\ldots,k\alpha_d): k=0,1,\ldots,\lfloor \epsilon H\rfloor\}$ that are contained in the box $B$.
If we can show that $\mathcal{N}_{B}>0$ for any such box $B$, then this proves that the sequence $\{(k\alpha_1,\ldots,k\alpha_d): k=0,1,\ldots,\lfloor \epsilon H\rfloor\}$ is $\epsilon$-dense in $\T^d$.

According to the \Erdos{}--\Turan{}--Koksma inequality (\cref{thm: Erdos-Turan-Koksma}),
we have
\begin{align*}
\bigg|\frac{\mathcal{N}_{B}}{\lfloor \epsilon H\rfloor+1}&-
\frac{\epsilon^d}{d^{d/2}}\bigg|
\\
& \leq 6d^23^d \left(\frac{1}{D} + \sum_{\substack{(w_1,\ldots,w_d)\in[-D,D]^d\cap\Z^d\\ (w_1,\ldots,w_d)\neq (0,\ldots,0)}} %\frac{1}{\prod_{i=1}^d \max\{1,|w_i|\}}
\Bigg|\frac{1}{\lfloor \epsilon H\rfloor+1}\sum_{k=0}^{\lfloor \epsilon H\rfloor}e(k(w_1\alpha_1+\ldots+w_d\alpha_d))\Bigg|\right),
\end{align*}
where we have used the bound $\frac{1}{\prod_{i=1}^d \max\{1,|w_i|\}} \leq 1$.
In view of \eqref{eqn_almost_denseness_condition}, we have for all $(w_1,\ldots,w_d)\in[-D,D]^d\cap\Z^d$ with $(w_1,\ldots,w_d)\neq (0,\ldots,0)$ that
\begin{align*}
\Bigg|\sum_{k=0}^{\lfloor \epsilon H\rfloor}e(k(w_1\alpha_1+\ldots+w_d\alpha_d))\Bigg|
%&\leq \frac{2}{|1-e(w_1\alpha_1+\ldots+w_d\alpha_d)|}
%\\
\leq \frac{1}{2\|w_1\alpha_1+\ldots+w_d\alpha_d\|_{\T}}
%\\
\leq \frac{H}{2C_0}.
\end{align*}
Together, this implies that
\begin{align*}
\bigg|\frac{\mathcal{N}_{B}}{\lfloor \epsilon H\rfloor+1}-\frac{\epsilon^{d}}{d^{d/2}}\bigg|
&\leq 6d^23^d \bigg(\frac{1}{D}+
\frac{(2D+1)^d}{2C_0\epsilon}\bigg)
\\
&\leq \frac{6d^23^d}{D} + \frac{d^2 3^{d+1} (2D+1)^d }{\epsilon C_0}
\\
&= \frac{\epsilon^{d}}{3d^{d/2}} + \frac{\epsilon^{d}}{3d^{d/2}}< \frac{\epsilon^{d}}{d^{d/2}},
\end{align*}
where the last line follows from the definitions of $D$ and $C_0$. This implies $\mathcal{N}_{B}>0$, completing the proof.
\end{proof}

\begin{Lemma}
\label{lem_finding_quantitative_almost_periods}
For every $d\in\N$ and $\epsilon>0$ there exists some $C=C(d,\epsilon)>0$ such that for all $H \in \N$ with $H\geq C$ the following holds:
If $(\alpha_1, \ldots, \alpha_d) \in \T^d$ and for all $(w_1, \ldots, w_d) \in [-C,C]^d \cap \Z^d$ either
\begin{align*}
    \left\| \sum_{i=1}^d w_i \alpha_i \right\|_{\T} & \leq \frac{1}{CH}
\intertext{or}
    \left\| \sum_{i=1}^d w_i \alpha_i \right\|_{\T} & \geq \frac{C}{H},
\end{align*}
then for every $x\in [\epsilon,1-\epsilon]$ there exists $m \in \N$ such that $(x - \epsilon) H \leq m \leq (x+\epsilon)H$ and $\|m\alpha_i\|_{\T} \leq \epsilon$ for every $i \in \{1, \ldots, d\}$.
\end{Lemma}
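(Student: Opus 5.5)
The plan is to split the frequencies into an ``approximately rational'' part, whose multiples stay near $0$ at scale $H$ along a sublattice, and a ``generic'' part, to which \cref{lem_almost_denseness_condition} applies on the quotient.

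\textbf{Step 1 (the lattice of near-relations).} Let
\[
\Lambda=\Bigl\{w\in[-C,C]^d\cap\Z^d:\Bigl\|\textstyle\sum_i w_i\alpha_i\Bigr\|_{\T}\leq \tfrac{1}{CH}\Bigr\},
\]
and let $L\subseteq\Z^d$ be the subgroup generated by $\Lambda$. The dichotomy hypothesis says that every small $w\notin\Lambda$ satisfies $\|\sum w_i\alpha_i\|_{\T}\geq C/H$. Choose a basis of $L$ (together with a completion of it) with entries bounded by a constant depending only on $d$ and $C_1$, where $C_1\ll C$ is an intermediate parameter.

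\textbf{Step 2 (change of coordinates).} Pick a unimodular matrix $U$ with bounded entries such that $U\Z^d$ has its first $r$ coordinates spanning $L$. Set $\beta=U^{T}\alpha$, so each $\beta_j$ is a bounded integer combination of the $\alpha_i$. Then:
\begin{itemize}
\item for $j\leq r$ we have $\|\beta_j\|_{\T}\leq 1/(CH)$, so $\|m\beta_j\|_{\T}\leq m/(CH)\leq 1/C$ for all $m\leq H$;
\item the remaining coordinates $\beta'=(\beta_{r+1},\dots,\beta_d)$ satisfy: every nonzero integer combination with coefficients in $[-C_1,C_1]$ corresponds to some $w\in[-C,C]^d$ outside $L$. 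By the dichotomy, such a combination has norm at least $C/H\geq C_1/H$, since any combination lying in $L$ would make the $\beta'$-part vanish.
\end{itemize}

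\textbf{Step 3 (finding $m$ near $xH$).} Apply \cref{lem_almost_denseness_condition} in dimension $d-r$, with $\epsilon'\ll\epsilon$ and with $\epsilon'$-density small relative to $\|U^{-1}\|$. It gives $k\in[0,\epsilon H]$ such that $k\beta'$ lies within $\epsilon'$ of the point $-\lfloor (x-\epsilon)H\rfloor\beta'$. Take
\[
m=\lfloor (x-\epsilon)H\rfloor+k\in[(x-\epsilon)H,(x+\epsilon)H].
\]
Then $\|m\beta_j\|_{\T}\leq\epsilon'$ for $j>r$, and $\|m\beta_j\|_{\T}\leq 1/C$ for $j\leq r$. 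Inverting $U$, whose entries are bounded, gives
\[
\|m\alpha_i\|_{\T}\ll_d\|U^{-1}\|\,(\epsilon'+1/C)\leq\epsilon.
\]
The condition $m\geq 1$ follows from $x\geq\epsilon$ and $H\geq C$ (adjusting by one if needed).

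\textbf{Main obstacle.} The delicate point is Step 1: producing a basis of $L$ with bounded entries, and controlling how $U$ distorts the coefficient boxes. The box condition in \cref{lem_almost_denseness_condition} must be inherited from the hypothesis on $[-C,C]^d$, and the constants must be nested so that $C\gg \|U\|\,\|U^{-1}\|\,C_0(d,\epsilon')$.

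I would handle this by choosing $C$ adaptively via pigeonhole over a tower of scales $C_1\ll C_2\ll\dots\ll C_{d+1}$. The rank of the lattice spanned by $\{w\in[-C_t,C_t]^d:\|\sum_i w_i\alpha_i\|_{\T}\leq 1/(C_tH)\}$ is monotone in $t$ and at most $d$, so it stabilizes between two consecutive scales. At such a pair, the saturation of the lattice has a basis bounded in terms of the smaller scale, which gives a genuine gap.

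The pigeonhole yields this gap only at some scale in the tower, not at a prescribed one. So the final $C$ is taken to be the largest scale, and the hypothesis at the top scale must imply the needed dichotomy at the intermediate ones. That works because the hypothesis is monotone: the lower threshold $1/(CH)$ shrinks and the upper threshold $C/H$ grows as $C$ increases.
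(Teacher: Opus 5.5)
\textbf{Gap: torsion in the lattice of near-relations.} Step~2 assumes that the lattice $L$ generated by $\Lambda$ is saturated in $\Z^d$. It need not be. Let $\bar L=\{v\in\Z^d: kv\in L \text{ for some } k\in\N\}$ be its saturation. Nontrivial $\bar L/L$ is exactly the case where some small integer combination of the $\alpha_i$ is close to a rational with denominator larger than $1$.

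Concretely, take $d=1$, $\alpha_1=\tfrac12$ and $H\ge 2C$. The hypothesis holds, since $\|w\alpha_1\|_\T$ is $0$ for even $w$ and $\tfrac12\ge C/H$ for odd $w$. Here $L=2\Z$.
\begin{itemize}
\item No unimodular $U$ has a first column spanning $2\Z$.
\item If you pass to the saturation $\bar L=\Z$, as your last paragraph suggests, then $\beta_1=\alpha_1=\tfrac12$, so your first bullet ($\|\beta_j\|_\T\le 1/(CH)$ for $j\le r$) is false.
\item Step~3 (here $r=d$, so the almost-denseness step is empty) outputs $m=\lfloor(x-\epsilon)H\rfloor+k$ with no parity constraint, and every odd $m$ has $\|m\alpha_1\|_\T=\tfrac12$.
\end{itemize}
The tower-of-scales pigeonhole does not repair this. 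Rank stabilization pins down $\bar L$ but not the index $[\bar L:L]$, which here equals $2$ at every scale $C_t\ge 2$.

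The missing idea is to restrict $m$ to multiples of a fixed integer. Let $u_1,\dots,u_d$ be a bounded basis of $\Z^d$ whose first $r$ vectors span $\bar L$. Let $q=[\bar L:L]$, which is bounded in terms of the scale at which the rank stabilizes. Then $qu_j\in L$, so $\|q\beta_j\|_\T\ll 1/(CH)$ for $j\le r$, with the implied constant depending only on the lower scale. Now take $m=q(m_0+k)$ with $m_0=\lceil xH/q\rceil$, and apply \cref{lem_almost_denseness_condition} to $q\beta'$ instead of $\beta'$. Its hypothesis still follows from your gap: a combination of the $q\beta_j$ ($j>r$) with coefficients in $[-C_0,C_0]$ comes from a vector outside $\bar L$ with entries $\ll qC_0\|U\|$. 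Choosing $\epsilon'\ll\epsilon/q$ keeps $m$ in the window.

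With this change your geometry-of-numbers route works, but it is much heavier than the paper's argument, which inducts on $d$. If some nonzero $w\in[-C_0,C_0]^d$ has $\|\sum_i w_i\alpha_i\|_\T\le 1/(CH)$, the paper takes $m=w_dm'$, with $m'$ given by the $(d-1)$-dimensional statement for $(\alpha_1,\dots,\alpha_{d-1})$. This makes $m$ automatically divisible by $w_d$, which is exactly what disposes of the torsion, and gives $\|m\alpha_d\|_\T\le\|m'\sum_{i<d}w_i\alpha_i\|_\T+m'\|\sum_i w_i\alpha_i\|_\T$. Otherwise the paper applies \cref{lem_almost_denseness_condition} directly to all of $\alpha$. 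You correctly noticed that the hypothesis is monotone in $C$; that monotonicity is what lets the paper use the induction hypothesis with the smaller constant $C(d-1,\epsilon')$.
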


\begin{proof}
Fix $\epsilon>0$. We proceed by induction on $d$, starting with the base case $d=1$. We define $C=C(1,\epsilon)=\epsilon^{-2}$.
We distinguish two cases.
First, assume there exists some $w\in [-\epsilon C,\epsilon C]\cap \Z$ with $w\neq 0$ such that
\[
\|w\alpha\|_{\T}\leq \frac{1}{CH}.
\]
By replacing $w$ with $-w$, we can assume without loss of generality that $w$ is positive.
Then we take $m= \lfloor \frac{xH}{w}\rfloor w$ and note that $\|m\alpha\|_{\T}\leq \frac{1}{C}\leq \epsilon$ and $|m- xH|\leq w\leq \epsilon C\leq \epsilon H$.

If we are not in the first case, then 
for all $w\in [-\epsilon C,\epsilon C]\cap \Z$ with $w\neq 0$ we have
\[
\|w\alpha\|_{\T}> \frac{1}{CH}.
\]
According to our assumptions, this implies that 
for all such $w$ we actually have 
\[
\|w\alpha\|_{\T}\geq \frac{C}{H}.
\]
Using Dirichlet's approximation theorem, we can find 
some $w\in [-\epsilon C,\epsilon C]\cap \Z$ with $w\neq 0$ and
\[
\|w\alpha\|_{\T}\leq \frac{1}{\epsilon C}\leq \epsilon.
\]
Together, this gives
\[
\frac{C}{H}\leq \|w \alpha\|_{\T}\leq \epsilon.
\]
We conclude that the sequence $0,w\alpha, 2w\alpha,\ldots, \lfloor \frac{H}{C}\rfloor w\alpha$ is $\epsilon$-dense in $\T$.
Translating the sequence by $\lfloor xH \rfloor$, we get that
$\lfloor xH\rfloor,(\lfloor xH\rfloor+w)\alpha, (\lfloor xH\rfloor+2w)\alpha,\ldots, (\lfloor xH\rfloor+\lfloor \epsilon H\rfloor w)\alpha$ is $\epsilon$-dense in $\T$. It follows that there exists $m\in\{\lfloor xH\rfloor,\lfloor xH\rfloor+w, \ldots, \lfloor xH\rfloor+\lfloor \frac{H}{C}\rfloor w\}$ such that
\[
\|m\alpha\|_{\T}\leq\epsilon.
\]
Noting that $(x-\epsilon)H\leq \lfloor xH\rfloor$ and  $\lfloor xH\rfloor+\lfloor \frac{H}{C}\rfloor w\leq (x+\epsilon)H$, we get $(x-\epsilon)H\leq m\leq (x+\epsilon)H$ and we are done.

Next, assume $d\geq 2$ and the claim has already been proved for $d-1$.
Let $C_0=C_0(d,\epsilon)$ be as guaranteed by \cref{lem_almost_denseness_condition}, let $\epsilon'=\frac{\epsilon}{2dC_0}$, let $C'=C(d-1,\epsilon')$ be as guaranteed by the induction hypothesis, and define $C=C(d,\epsilon)=\max\{C_0,C', 2\epsilon^{-1}\}$. 
Again, we distinguish two cases. First, assume there exists some $(w_1,\ldots,w_d)\in [-C_0,C_0]^d\cap \Z^d$ with $(w_1,\ldots,w_d)\neq (0,\ldots,0)$ and 
\begin{equation}
\label{eqn_c1_ann}
\|w_1\alpha_1+\cdots+w_d\alpha_d\|_{\T}\leq \frac{1}{CH}.
\end{equation}
By reordering and replacing $w_1,\ldots,w_d$ with $-w_1,\ldots,-w_d$ if necessary, we can assume without loss of generality that $w_d$ is positive.

Take $y=\frac{x}{w_d}$.
By the induction hypothesis applied to $(\alpha_1,\ldots,\alpha_{d-1})\in\T^{d-1}$, 
there exists $m' \in \N$ such that $  (y - \epsilon') H \leq m' \leq (y + \epsilon') H$ and $\|m'\alpha_i\|_{\T} \leq \epsilon'$ for every $i \in \{1, \ldots, d-1\}$.
If we now take $m=w_dm'$ and use $w_d\epsilon'\leq \epsilon$, then we have
\[
(x - \epsilon) H \leq m \leq (x + \epsilon) H
\]
and $\|m\alpha_i\|_{\T} \leq  \epsilon$ for every $i \in \{1, \ldots, d-1\}$. Finally, using \eqref{eqn_c1_ann}, we get
\begin{align*}
\|m\alpha_d\|_{\T}
&=\|m' w_d\alpha_d\|_{\T}
\\
&\leq \|m' (w_1\alpha_1+\ldots+w_{d-1}\alpha_{d-1}+)\|_{\T}+ \frac{1}{CH}
\\
&\leq \underbrace{dC_0 \epsilon'}_{\leq \frac{\epsilon}{2}} + \underbrace{\frac{1}{CH}}_{\leq \frac{\epsilon}{2}}\leq \epsilon
\end{align*}
as desired.

If we are in the second case then
for all $(w_1,\ldots,w_d)\in [-C_0,C_0]^d\cap \Z^d$ with $(w_1,\ldots,w_d)\neq (0,\ldots,0)$ we have
\[
\|w_1\alpha_1+\cdots+w_d\alpha_d\|_{\T}> \frac{1}{CH},
\]
which, in light of our assumption, implies
\[
\|w_1\alpha_1+\cdots+w_d\alpha_d\|_{\T}\geq \frac{C}{H}\geq \frac{C_0}{H}.
\]
From \cref{lem_almost_denseness_condition}, it now follows that the sequence $\{(k\alpha_1,\ldots,k\alpha_d): k=0,1,\ldots,\lfloor \epsilon H\rfloor\}$ is $\epsilon$-dense in $\T^d$.
To finish the proof, we can repeat the same argument already used in the proof of the base case $d=1$ to find $m\in\N$ with $(x-\epsilon)H\leq m\leq (x+\epsilon)H$ and 
$
\|m\alpha_i\|_{\T}\leq\epsilon
$ for all $i\in\{1,\ldots,d\}$.
\end{proof}

%SECTION
\section{Gowers norms and arithmetic regularity} \label{sec: Gowers norms}

The \emph{Gowers uniformity norms}, introduced by Gowers in his work on Szemer\'{e}di's theorem \cite{Gowers01}, are an indispensable tool in additive combinatorics and lie at the foundation of \emph{higher order Fourier analysis}.

We first define the norms in the context of finite abelian groups and then extend to $\N$.
Let $G$ be a finite abelian group.
For a function $f : G \to \C$, we define the \emph{Gowers uniformity norms} by\nomenclature{$\lVert\cdot\rVert_{U^k(G)}$}{Gowers norm on a group $G$}
\begin{align*}
    \|f\|_{U^1(G)} & = \left( \frac{1}{|G|^2} \sum_{x, h \in G} f(x) \overline{f(x+h)} \right)^{1/2} = \left| \frac{1}{|G|} \sum_{x \in G} f(x) \right|, \\
    \|f\|_{U^2(G)} & = \left( \frac{1}{|G|^3} \sum_{x,h_1,h_2 \in G} f(x) \overline{f(x+h_1)} \overline{f(x+h_2)} f(x+h_1+h_2) \right)^{1/4}, \\
     & \vdots \\
    \|f\|_{U^k(G)} & = \left( \frac{1}{|G|^{k+1}} \sum_{x \in G, \bm{h} \in G^k} \prod_{\bm{\omega} \in \{0,1\}^k} C^{|\bm{\omega}|} f(x+\bm{\omega}\cdot\bm{h}) \right)^{1/2^k},
\end{align*}
where $C$ is the complex conjugation map and $|\bm{\omega}| = |\{i\in [k] : \omega_i=1\}|$.
The $U^1$ Gowers norm is in fact only a seminorm, but the higher order norms are genuine norms.
Some useful properties of the Gowers norms are their recursive relationship and monotonicity:
\begin{equation*}
    \|f\|_{U^{k+1}(G)}^{2^{k+1}} = \frac{1}{|G|} \sum_{h \in G} \|\Delta_hf\|_{U^k(G)}^{2^k},
\end{equation*}
where $\Delta_hf(x) = f(x) \overline{f(x+h)}$, and
\begin{equation*}
    \|f\|_{U^1(G)} \leq \|f\|_{U^2(G)} \leq \ldots.
\end{equation*}
The $U^2$ Gowers norm is closely related to the Fourier transform.
Indeed,
\begin{equation*}
    \|f\|_{U^2(G)} = \left\| \hat{f} \right\|_{\ell^4}.
\end{equation*}

One can lift the definition of the Gowers norms from cyclic groups to finite discrete intervals in the positive integers.
Given a function $f\colon \N\to\C$, the $U^k$ Gowers seminorms over an interval $\{x+1,\ldots,x+N\} \subseteq \N$ are defined as\nomenclature{$\lVert\cdot\rVert_{U^k(\{x+1,\ldots,x+N\})}$}{Gowers norm on a discrete interval}
\begin{equation*}
\|f\|_{U^k(\{x+1,\ldots,x+N\})} = \frac{\big\|\tilde{f}\big\|_{U^k(\Z/\tilde{N}\Z)}}{\|1_{[N]}\|_{U^k(\Z/\tilde{N}\Z)}},
\end{equation*}
where $\tilde{f}\colon \Z/\tilde{N}\Z\to\C$ is the function given by $\tilde{f}(n)= \1_{[N]}(n) f(x+n)$ and $\tilde{N} \geq 2^kN$.

In this paper, we utilize only the $U^1$ and $U^2$ Gowers seminorms, so the remaining discussion focuses on these cases.
For a more comprehensive account, see \cite{Tao12}.

The $U^1$ Gowers seminorm has the simple expression
\begin{equation*}
    \|f\|_{U^1(\{x+1,\ldots,x+N\})} = \left| \frac{1}{N} \sum_{n=1}^{N} f(x+n) \right|,
\end{equation*}
so for example, $\lim_{N \to \infty} \|\1_A\|_{U^1([N])}$, if it exists, is equal to the density of the set $A$ for $A \subseteq \N$.

Closely related to the $U^2$ Gowers seminorm is the $u^2$ Fourier seminorm.
Given a function $f\colon \N\to\C$ and a discrete interval $\{x+1,\ldots,x+N\}\subset\N$, consider\nomenclature{$\lVert\cdot\rVert_{u^2(\{x+1,\ldots,x+N\})}$}{Fourier norm on a discrete interval}
\begin{equation}
\label{eqn_def_fourier_U2_norm_interval}
\|f\|_{u^2(\{x+1,\ldots,x+N\})}= \sup_{\alpha\in\R} \Bigg|\frac{1}{N} \sum_{n=1}^N f(x+n)e(\alpha n)\Bigg|.
\end{equation}
One can show (cf. \cite[Eq.~(11.9),~p.~422]{TV06}) that 
\begin{equation}
\label{eqn_U2-Gowers-Fourier_relation}    
C^{-1}\|f\|_{u^2(\{x+1,\ldots,x+N\})}
\leq \|f\|_{U^2(\{x+1,\ldots,x+N\})} \leq C \|f\|_{u^2(\{x+1,\ldots,x+N\})}^{\frac{1}{2}}
\end{equation}
for some universal constant $C\geq 1$, showing that the $U^2$ Gowers seminorms and the $u^2$ Fourier seminorms are closely intertwined.

Given $N\in\N$ and functions $f,g\colon[N]\to\C$, let us also define\nomenclature{$\innprod{\cdot}{\cdot}_{[N]}$}{inner product over $[N]$} \nomenclature{$\lVert\cdot\rVert_{2,[N]}$}{$L^2$ norm over $[N]$}
\begin{align*}
\innprod{f}{g}_{[N]}&=\frac{1}{N} \sum_{n=1}^{N} f(n)\overline{g(n)}
\intertext{and}
\| f \|_{2, [N]}&=\sqrt{\innprod{f}{f}_{[N]}}= \Bigg(\frac{1}{N} \sum_{n=1}^{N} |f(n)|^2\Bigg)^{\frac{1}{2}}.
\end{align*}

The following variant of the arithmetic regularity lemma follows from {\cite[Theorem~1.2.11]{Tao12}}.

\begin{Theorem}[Arithmetic regularity lemma -- a variant]
\label{thm_arithmetic_regularity_lemma}
For every $\epsilon>0$ there exists $d^\star=d^\star(\epsilon)\in\N$ such that for any $N\in\N$ and any
$
f \colon [N] \to [0,1],
$
we can decompose $f = f_{\mathrm{str}} + f_{\mathrm{psd}}$ such that:
\begin{enumerate}[label=(\roman{enumi}),ref=(\roman{enumi}),leftmargin=*]
    \item (Nonnegativity)
    $f_{\mathrm{str}}$ takes values in $[0,1]$, and $\frac{1}{N}\sum_{n=1}^N f_{\mathrm{psd}}(n)=0$;
    \item (Structure) There exist $c_1,\ldots,c_{d^\star}\in\C$ with $|c_i|\leq 1$, and $\alpha_1,\ldots,\alpha_{d^\star}\in\R$ such that if $\mathcal{P}^\star(n)=\sum_{i=1}^{d^\star} c_i e(n\alpha_i)$ then
    \[
    \| f_{\mathrm{str}}-\mathcal{P}^\star \|_{2, [N]} \leq \epsilon;
    \]
    \item (Pseudorandomness) $\|f_{\mathrm{psd}}\|_{u^2([N])}\leq \epsilon$;
    \item (Orthogonality) $\innprod{f_{\mathrm{str}}}{f_{\mathrm{psd}}}_{[N]}=0$.
\end{enumerate}    
\end{Theorem}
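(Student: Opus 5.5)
The plan is to obtain this variant from the energy-increment form of the arithmetic regularity lemma in \cite[Theorem~1.2.11]{Tao12}, specialised to the $U^2$ (linear Fourier) case on the interval $[N]$. That theorem produces a decomposition $f=f_{\str}+f_{\mathrm{sml}}+f_{\mathrm{psd}}$. In it, $f_{\str}$ is a conditional expectation of $f$ onto a factor (a $\sigma$-algebra of level sets of boundedly many linear phases), so it takes values in $[0,1]$. The term $f_{\mathrm{sml}}$ is small in $L^2$, and $f_{\mathrm{psd}}$ has $U^2$ norm at most $\mathcal{F}(M)^{-1}$ for an arbitrary growth function $\mathcal{F}$, where $M$ is the complexity of $f_{\str}$ and is bounded in terms of $\epsilon$ and $\mathcal{F}$.

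Rather than quoting this black box, I will run the standard energy increment directly, since that makes properties (i)--(iv) transparent. Set $\mathcal{B}_0$ trivial and $f_0=\E[f\mid\mathcal{B}_0]$, the mean of $f$ over $[N]$. Given a factor $\mathcal{B}_j$ generated by the phases $n\mapsto n\alpha_1,\ldots,n\alpha_j$, each discretised into $\lceil \epsilon^{-C}\rceil$ arcs, let $f_j=\E[f\mid\mathcal{B}_j]$. If $\|f-f_j\|_{u^2([N])}\le\epsilon$, stop. Otherwise pick $\alpha_{j+1}$ with $|\innprod{f-f_j}{e(-\alpha_{j+1}\cdot)}_{[N]}|>\epsilon$ and refine the factor by this phase. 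Because the new phase is nearly measurable in the refined factor, this correlation forces the energy increment $\|f_{j+1}\|_{2,[N]}^2\ge\|f_j\|_{2,[N]}^2+c\epsilon^2$. Energy is bounded by $1$, so the process halts after $O(\epsilon^{-2})$ steps.

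At the halting step, set $f_{\str}=f_j$ and $f_{\mathrm{psd}}=f-f_j$. Property (i) holds because conditional expectation preserves $[0,1]$-valuedness and means. Property (iv) holds because $f_{\str}$ is an orthogonal projection of $f$, so $f-f_{\str}$ is orthogonal to it. Property (iii) is the stopping criterion, and property (ii) remains to be checked.

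For (ii), $f_{\str}$ is a function of the vector $(n\alpha_1,\ldots,n\alpha_j)\in\T^j$ that is constant on boxes. I approximate it in $L^2([N])$ by a trigonometric polynomial in this vector as follows.
\begin{itemize}
\item Mollify the box indicators to Lipschitz functions on $\T^j$, and take Fej\'er partial sums of the resulting function $F$ on $\T^j$. This gives a polynomial $\sum_{\bm h}c_{\bm h}e(\langle\bm h,\cdot\rangle)$ with boundedly many terms, each coefficient bounded by $\|F\|_\infty\le1$, which is uniformly close to $F$ away from the box boundaries.
\item Pull back along $n\mapsto(n\alpha_i)_i$. Each term becomes $e(n\beta)$ with $\beta=\sum_i h_i\alpha_i$, so $\mathcal{P}^\star$ has $d^\star=d^\star(\epsilon)$ terms with $|c_i|\le1$.
\end{itemize}

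The main obstacle is controlling how many $n\in[N]$ have $(n\alpha_i)_i$ near a box boundary, where the mollification fails; the orbit need not equidistribute. I intend to handle this as in Tao's proof, by choosing the discretisation of each phase randomly (a random shift of the arc partition). Then the expected proportion of $n\in[N]$ within $\eta$ of a boundary is $O(j\eta\epsilon^{-C})$ whatever the $\alpha_i$ are, and a good shift can be fixed deterministically. Choosing $\eta$ small in terms of $\epsilon$ makes the $L^2$ error at most $\epsilon$. Because the shifted partitions might spoil the energy increment, I will fix the random shifts at each refinement step before computing the increment, so both properties hold simultaneously.
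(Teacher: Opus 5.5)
Your argument is correct, and it is more self-contained than what the paper does. The paper gives no proof. It cites \cite[Theorem~1.2.11]{Tao12} and says, in the remark after the statement, that $|c_i|\le 1$ and exact orthogonality can be read off from the proof there. That cited result is a strong, growth-function regularity lemma with an extra $L^2$-small term. To reach the present two-term statement one implicitly merges the structured and small parts into the conditional expectation onto the final factor.

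You notice instead that the weak version suffices, because the pseudorandomness threshold here is the same $\epsilon$ as the approximation threshold. You use at most $O(\epsilon^{-2})$ refinements by phases cut into arcs of length $\ll\epsilon$. Taking $f_{\str}$ to be the conditional expectation gives (i) and (iv) immediately. Fej\'er means of the step function $F$ on $\T^J$ give (ii), with coefficients bounded by $\|F\|_{L^1(\T^J)}\le 1$ and $d^\star$ depending only on $\epsilon$. The citation buys brevity, plus a much stronger pseudorandomness bound that is not needed here. Your route makes all four properties visible without relying on the internals of someone else's proof.

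One small correction: the shifts cannot spoil the energy increment. The increment only uses that each arc has length $\ll\epsilon$, which holds for every shift. So choose the shift for $\alpha_{j+1}$ after $\alpha_{j+1}$ is found, fix $\eta$ in advance in terms of the maximal number $O(\epsilon^{-2})$ of steps, and take a union bound over coordinates. The mollification step is also unnecessary. The Fej\'er kernel has mass $O(1/(K\eta))$ outside $[-\eta,\eta]$, so the Fej\'er mean of $F$ is already uniformly close to $F$ at points whose coordinates all stay at distance at least $\eta$ from the arc endpoints.
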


\begin{Remark}
The formulation of the arithmetic regularity lemma given in {\cite[Theorem~1.2.11]{Tao12}}, from which \cref{thm_arithmetic_regularity_lemma} is derived, does not mention that $|c_i|\leq 1$ or that $\innprod{f_{\mathrm{str}}}{f_{\mathrm{psd}}}_{[N]}=0$. Nonetheless, these extra properties follow from the proof given there, and we state them explicitly since they will be needed later.
\end{Remark}

%SECTION
\section{Ergodic theory}
\label{sec_ergodic_theory}

We recall some basic notions from ergodic theory that will be used in the proof of \cref{thm: main} and introduce some new tools for analyzing the ergodic decomposition of Furstenberg systems.

%SUBSECTION
\subsection{Measure-preserving systems, factors, and the ergodic decomposition}

A \emph{measure-preserving system} is a triple $(X, \mu, T)$, where $(X, \mu)$ is a probability space and $T : X \to X$ is a measurable map preserving the measure $\mu$, i.e. $\mu(T^{-1}E) = \mu(E)$ for every measurable $E \subseteq X$.
The map $T$ is called a \emph{measure-preserving transformation}.
For our purposes, we will always assume that $X$ is a compact metric space, $T$ is continuous, and $\mu$ is defined on the Borel $\sigma$-algebra.

Given a measure-preserving system $(X, \mu, T)$, the map $f \mapsto f \circ T$ on $L^2(\mu)$ is an isometry, called the \emph{Koopman operator}.
In a standard abuse of notation, we also denote the Koopman operator by $T$.
Thus, $Tf = f \circ T$.

A measure-preserving system $(Y, \nu, S)$ is a \emph{factor} of $(X, \mu, T)$ if there is a measurable map $\pi : X \to Y$ such that $\pi \circ T = S \circ \pi$ $\mu$-almost everywhere and $\pi_*\mu = \nu$.
We write $\mathcal{Y}$ for the $\sigma$-algebra generated by functions $f \circ \pi$ for measurable $f : Y \to \C$.
In this way, factors correspond to $T$-invariant sub-$\sigma$-algebras, and we will also refer to such $\sigma$-algebras $\mathcal{Y}$ as factors.
Pre-composing with the factor map $\pi$, $f \mapsto f \circ \pi$, provides an isomorphism between $L^2(\nu)$ and $L^2(\mu|_{\mathcal{Y}}) \subseteq L^2(\mu)$.
The \emph{conditional expectation} of a function $f \in L^2(\mu)$ refers to two closely related objects:\nomenclature{$\E[\cdot\mid\cdot]$}{conditional expectation}
\begin{itemize}
    \item $\E[f \mid \mathcal{Y}]$ is the $\mathcal{Y}$-measurable function on $X$ obtained as the orthogonal projection of $f$ onto $L^2(\mu|_{\mathcal{Y}})$, and
    \item $\E[f \mid Y]$ is the measurable function on $Y$ satisfying $\E[f \mid Y] \circ \pi = \E[f \mid \mathcal{Y}]$.
\end{itemize}

For a measure-preserving system $(X, \mu, T)$, we denote by $\mathcal{I}$ the $\sigma$-algebra of measurable sets $E \subseteq X$ such that $\mu(E \triangle T^{-1}E) = 0$.
A measure-preserving system is \emph{ergodic} if $\mathcal{I}$ is the trivial $\sigma$-algebra $\mathcal{I} = \{E : \mu(E) \in \{0,1\}\}$.
A general (potentially non-ergodic) measure-preserving system $(X, \mu, T)$ admits a decomposition into ergodic systems.

\begin{Definition} \label{def: ergodic decomp}
    Let $(X, \mu, T)$ be a measure-preserving system.
    A family of Borel probability measures $(\mu_x)_{x \in X}$ is an \emph{ergodic decomposition} of $(X, \mu, T)$ if
    \begin{itemize}
        \item $\int_X \mu_x~d\mu(x) = \mu$, and
        \item for every $f \in L^2(\mu)$,
            \begin{equation*}
                \E[f \mid \mathcal{I}](x) = \int_X f~d\mu_x
            \end{equation*}
            for $\mu$-almost every $x \in X$.
    \end{itemize}
\end{Definition}

\begin{Theorem}[Ergodic decomposition theorem]
    Let $(X, \mu, T)$ be a measure-preserving system, where $X$ is a compact metric space, $T$ is continuous, and $\mu$ is a Borel probability measure.
    Then $(X, \mu, T)$ admits an ergodic decomposition.
    Moreover, if $(\mu_x)_{x \in X}$ and $(\mu'_x)_{x \in X}$ are two ergodic decompositions, then $\mu_x = \mu'_x$ for $\mu$-almost every $x \in X$.
\end{Theorem}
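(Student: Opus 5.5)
We would construct the measures $\mu_x$ directly from conditional expectations, using the Riesz representation theorem. Uniqueness will then follow from separability of $C(X)$. Note that \cref{def: ergodic decomp} only asks for the two displayed properties, so there is no need to prove here that the $\mu_x$ are ergodic.

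\emph{Construction.} Since $X$ is a compact metric space, $C(X)$ is separable. Fix a countable set $\mathcal{D}\subset C(X)$ that is dense in the sup norm, contains $1$, and is a vector space over $\Q+i\Q$. For each $f\in\mathcal{D}$ fix a version of $\E[f\mid\mathcal{I}]$. Conditional expectation is linear, positive, fixes constants, and satisfies $|\E[f\mid\mathcal{I}]|\leq\|f\|_\infty$ almost everywhere. Each instance of these properties (linearity for a pair of elements of $\mathcal{D}$ and a pair of coefficients, positivity and the norm bound for each $f\in\mathcal{D}$) is a single almost-everywhere statement, and there are only countably many. Hence there is a $\mu$-null set $N$ such that for $x\notin N$ the map $\Lambda_x(f)=\E[f\mid\mathcal{I}](x)$ is a positive, $(\Q+i\Q)$-linear functional on $\mathcal{D}$ with $\Lambda_x(1)=1$ and $|\Lambda_x(f)|\leq\|f\|_\infty$.

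\emph{Riesz step and the two properties.} For $x\notin N$, $\Lambda_x$ extends uniquely by uniform continuity to a positive linear functional on $C(X)$ with $\Lambda_x(1)=1$. The Riesz representation theorem then gives a Borel probability measure $\mu_x$ with $\Lambda_x(f)=\int f\,d\mu_x$. For $x\in N$ we simply set $\mu_x=\mu$. For $f\in C(X)$, choose $f_k\in\mathcal{D}$ with $f_k\to f$ uniformly. Then $\int f_k\,d\mu_x\to\int f\,d\mu_x$ uniformly in $x$, while $\E[f_k\mid\mathcal{I}]\to\E[f\mid\mathcal{I}]$ in $L^1(\mu)$. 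Consequently $x\mapsto\int f\,d\mu_x$ is measurable and equals $\E[f\mid\mathcal{I}]$ almost everywhere. Integrating against $\mu$ gives $\int\!\int f\,d\mu_x\,d\mu(x)=\int f\,d\mu$ for all $f\in C(X)$. Hence $\int\mu_x\,d\mu=\mu$, which is the first property.

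\emph{Extension beyond continuous functions.} We expect this to be the main technical point. Let $\mathcal{M}$ be the collection of Borel sets $E$ for which $x\mapsto\mu_x(E)$ is measurable and $\mu_x(E)=\E[\1_E\mid\mathcal{I}](x)$ almost everywhere. We would show $\mathcal{M}$ contains all open sets, by approximating $\1_U$ from below by continuous functions and using monotone convergence on both sides. We would then show $\mathcal{M}$ is a Dynkin system, using the same monotone convergence argument for conditional expectations. By the $\pi$--$\lambda$ theorem, $\mathcal{M}$ is the whole Borel $\sigma$-algebra. Linearity and monotone convergence then extend the identity to bounded Borel functions and to nonnegative integrable ones. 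In particular $\int\!\int|g|\,d\mu_x\,d\mu(x)=\int|g|\,d\mu$ for every $g$ in $L^1(\mu)$, which gives two consequences.
\begin{itemize}
\item If $g=0$ $\mu$-a.e., then $\int|g|\,d\mu_x=0$ for a.e.\ $x$, so $\int f\,d\mu_x$ depends only on the $\mu$-class of $f$.
\item For $f\in L^2(\mu)\subset L^1(\mu)$, approximate by bounded Borel $f_n$ in $L^1(\mu)$. Then $\int|f-f_n|\,d\mu_x\to0$ in $L^1(\mu)$, and a subsequence yields the second property for $f$.
\end{itemize}

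\emph{Uniqueness.} Suppose $(\mu_x)$ and $(\mu'_x)$ are both ergodic decompositions. For each $f\in\mathcal{D}$ we have $\int f\,d\mu_x=\E[f\mid\mathcal{I}](x)=\int f\,d\mu'_x$ off a null set $N_f$. Off the countable union $\bigcup_{f\in\mathcal{D}}N_f$, the two measures agree on the dense set $\mathcal{D}$, hence on all of $C(X)$. Uniqueness in the Riesz representation theorem then gives $\mu_x=\mu'_x$ for $\mu$-almost every $x$.
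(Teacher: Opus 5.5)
The paper gives no proof of this theorem; it is quoted as a standard fact, so there is no argument of the paper's to compare with. Your proof is the standard construction of the disintegration of $\mu$ over $\mathcal{I}$: Riesz representation on a countable dense $\mathcal{D}\subset C(X)$, a $\pi$--$\lambda$ extension, and uniqueness by separability. It is correct for \cref{def: ergodic decomp} as literally written.

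Two small bookkeeping points.
\begin{itemize}
\item Positivity of the extended $\Lambda_x$ on $C(X)$ does not follow directly from positivity on $\mathcal{D}$, since a uniform approximant of a nonnegative function need not be nonnegative (or even real). It does follow from $\Lambda_x(1)=1$ and $|\Lambda_x(f)|\leq\|f\|_\infty$, because a unital linear functional of norm one on $C(X)$ is positive.
\item The identity $\int\mu_x\,d\mu=\mu$ of Borel measures needs measurability of $x\mapsto\mu_x(E)$ for Borel $E$. You only obtain this in the $\pi$--$\lambda$ step; before that you have the identity only against continuous functions.
\end{itemize}

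The point to push back on is your remark that ergodicity of the $\mu_x$ need not be proved. The definition omits it, but the paper relies on it. In \cref{sec_dyn_model} the components $\nu_{A,y}$ of the decomposition of $\nu_A$ are used as invariant ergodic measures (the full-measure set $G_3$). So you should add that $\mu_x$ is $T$-invariant and ergodic for $\mu$-a.e.\ $x$; this follows from your two properties.

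\emph{Invariance.} For $E\in\mathcal{I}$ we have $\1_E\circ T=\1_{T^{-1}E}=\1_E$ $\mu$-a.e. Together with invariance of $\mu$ this gives $\E[f\circ T\mid\mathcal{I}]=\E[f\mid\mathcal{I}]$. Hence $\int f\circ T\,d\mu_x=\int f\,d\mu_x$ for all $f\in\mathcal{D}$ off a single null set, i.e.\ $T_*\mu_x=\mu_x$.

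\emph{Ergodicity.} For $f\in\mathcal{D}$ let $g(x)=\int f\,d\mu_x$, a version of $\E[f\mid\mathcal{I}]$.
\begin{itemize}
\item Your extension to bounded Borel functions gives $\int g\,d\mu_x=g(x)$ and $\int|g|^2\,d\mu_x=|g(x)|^2$ for a.e.\ $x$. Hence $\int|g(y)-g(x)|^2\,d\mu_x(y)=0$.
\item By Birkhoff's theorem for $\mu$, $\frac1N\sum_{n=1}^N f(T^ny)\to g(y)$ off a Borel $\mu$-null set. By the first property, this set is $\mu_x$-null for a.e.\ $x$.
\item So for a.e.\ $x$ the ergodic averages of every $f\in\mathcal{D}$ converge $\mu_x$-a.e.\ to the constant $\int f\,d\mu_x$. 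By bounded convergence they also converge in $L^2(\mu_x)$.
\item By the mean ergodic theorem for $\mu_x$, the projection onto $T$-invariant functions in $L^2(\mu_x)$ sends $\mathcal{D}$ into the constants. Since $\mathcal{D}$ is dense in $L^2(\mu_x)$, it sends all of $L^2(\mu_x)$ into the constants.
\end{itemize}
Thus every invariant function is constant and $\mu_x$ is ergodic.
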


The mean ergodic theorem of von Neumann provides a relationship between the factor $\mathcal{I}$ of invariant sets and ergodic averages.

\begin{Theorem}[Mean ergodic theorem]
    Let $(X, \mu, T)$ be a measure-preserving system.
    For every $f \in L^2(\mu)$,
    \begin{equation*}
        \lim_{N - M \to \infty} \frac{1}{N-M} \sum_{n=M+1}^N T^nf = \E[f \mid \mathcal{I}]
    \end{equation*}
    in $L^2(\mu)$.
\end{Theorem}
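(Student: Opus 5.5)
The plan is to reduce to the invariant factor via the Hilbert space splitting of $L^2(\mu)$. Write $U$ for the Koopman operator of $T$. Since $U$ is an isometry, every $f\in L^2(\mu)$ decomposes as $f = f_{\inv} + f_0$. Here $f_{\inv}$ lies in the closed subspace $V=\{g : Ug = g\}$ of $U$-invariant functions. The component $f_0$ lies in $W=\overline{\{h - Uh : h\in L^2(\mu)\}}$. To justify this, I would first show that $V^\perp = W$. This uses the standard fact that for an isometry $Ug=g$ if and only if $U^*g = g$: if $U^*g=g$ then $\|Ug-g\|^2 = 2\|g\|^2 - 2\Re\langle Ug,g\rangle = 2\|g\|^2-2\Re\langle g,U^*g\rangle = 0$, and the converse is analogous. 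Then $g\perp (h-Uh)$ for all $h$ iff $U^*g=g$ iff $g\in V$. Finally, I would identify $V$ with $L^2(\mu|_{\mathcal I})$, so that the orthogonal projection onto $V$ is $\E[f\mid\mathcal I]$. If $g\circ T = g$ a.e., then each level set $\{g>c\}$ (real and imaginary parts) is invariant mod null sets, hence in $\mathcal I$. Conversely, indicators of sets in $\mathcal I$ are invariant in $L^2$, and the claim follows by density of simple functions.

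With the decomposition in hand, the averages split into two pieces. On $V$ the averages $\frac{1}{N-M}\sum_{n=M+1}^N U^n f_{\inv}$ are identically $f_{\inv}=\E[f\mid\mathcal I]$, so nothing needs to be done there. On a coboundary $h-Uh$ the sum telescopes to $U^{M+1}h - U^{N+1}h$. Since $U$ is an isometry, this has norm at most $2\|h\|$, so the average is bounded by $2\|h\|/(N-M)\to 0$ as $N-M\to\infty$, uniformly in $M$. For a general $f_0\in W$, I would pick a coboundary $h-Uh$ within $\epsilon$ of $f_0$. The averaging operators have norm at most $1$, so the average of $f_0$ has norm at most $\epsilon + 2\|h\|/(N-M)$, which is eventually below $2\epsilon$.

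The only genuinely delicate point is the uniformity in the starting point $M$, i.e.\ that we need $N-M\to\infty$ rather than just $N\to\infty$. It is handled automatically by the telescoping bound, which depends only on the length $N-M$ of the averaging window. The identification of $V$ with $L^2(\mu|_{\mathcal I})$ is routine. No invertibility of $T$ is needed, since only the isometry property of $U$ is used.
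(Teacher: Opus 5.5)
The paper gives no proof of this statement. It is quoted as von Neumann's classical mean ergodic theorem and used as a black box, for instance to define $\|\cdot\|_{U^1}$ and in the proof of \cref{thm: ergodic theorem}, where $f_{\inv}=\E[f\mid\mathcal{I}]$ plays exactly the role of your invariant component.

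Your argument is the standard Hilbert-space (Riesz/von Neumann) proof, and it is correct as written. The three steps all check out:
\begin{itemize}
\item the orthogonal splitting $L^2(\mu)=V\oplus\overline{\{h-Uh\}}$ via $\ker(U-I)=\ker(U^*-I)$ for an isometry;
\item the identification of $V$ with $L^2(\mu|_{\mathcal I})$, which correctly uses that $\mathcal I$ is defined modulo null sets, so the level sets of an a.e.-invariant function lie in $\mathcal I$;
\item the telescoping bound $2\|h\|/(N-M)$, which gives the uniformity in the starting point $M$ that the formulation with $N-M\to\infty$ requires.
\end{itemize}

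The one step you only gesture at, $Ug=g\Rightarrow U^*g=g$, follows immediately from $U^*U=I$, or from $\|U^*g-g\|^2=\|U^*g\|^2-\|g\|^2\le 0$. So there is no gap, and, as you note, invertibility of $T$ is never needed.
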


%SUBSECTION
\subsection{Topological dynamical systems and invariant measures}

Dropping measure-theoretic considerations for a moment, we call $(X, T)$ a \emph{topological dynamical system} if $X$ is a compact metric space and $T : X \to X$ is continuous.
We will typically assume that $T$ is invertible and hence a homeomorphism.

A point $x \in X$ is \emph{transitive} if its orbit $\{T^nx : n \in \Z\}$ is dense in $X$.
Given a $T$-invariant Borel probability measure $\mu$ on $X$, the triple $(X,\mu,T)$ becomes a measure-preserving system.
One way of generating invariant measures is by taking limits of averages along orbits.
In order for an averaging scheme to produce an invariant measure in the end, the method of averaging should possess an asymptotic invariance property.
A sequence $(\Phi_N)_{N \in \N}$ of finite subsets of $\Z$ is a \emph{F{\o}lner sequence} if for every $t \in \Z$,
\begin{equation*}
    \lim_{N \to \infty} \frac{|(\Phi_N +t) \cap \Phi_N|}{|\Phi_N|} = 1.
\end{equation*}
If $(\Phi_N)_{N \in \N}$ is a F{\o}lner sequence and $x \in X$, then the weak* limit points of the sequence $\frac{1}{|\Phi_N|} \sum_{n \in \Phi_N} \delta_{T^nx}$ (which exist by the Banach--Alaoglu theorem) are $T$-invariant measures.
Given a F{\o}lner sequence $\Phi = (\Phi_N)_{N \in \N}$, we say that a point $x \in X$ is \emph{generic} for $\mu$ along $\Phi$, written $x \in \textup{gen}(\mu,\Phi)$, if $\frac{1}{|\Phi_N|} \sum_{n \in \Phi_N} \delta_{T^nx}$ converges to $\mu$ in the weak* topology.
That is, for every continuous function $f \in C(X)$,
\begin{equation*}
    \lim_{N \to \infty} \frac{1}{|\Phi_N|} \sum_{n \in \Phi_N} f(T^nx) = \int_X f~d\mu.
\end{equation*}
The pointwise ergodic theorem implies that if $\mu$ is an ergodic measure, then $\mu$-almost every $x \in X$ is generic for $\mu$ along the F{\o}lner sequence $\Phi = ([N])_{N \in \N}$.
Moreover, if $\mu$ is ergodic and $x \in X$ is transitive, then there exists a F{\o}lner sequence $\Phi$ such that $x \in \textup{gen}(\mu,\Phi)$ (see \cite[Proposition 3.9]{Furstenberg81a}).

Within the class of topological dynamical systems, we are particularly interested in families of systems with additional structure.
A system $(X,T)$ is called \emph{distal} if for every pair of points $x,y \in X$ with $x\ne y$, one has $\inf_{n \in \Z} d_X(T^nx, T^ny) > 0$, where $d_X$ is the metric on $X$.
The main property of distal systems that will be convenient for us is the following lemma.

\begin{Lemma} \label{lem: extension}
	Let $(Y, \nu, S)$ be an ergodic system and $y_0 \in Y$ a transitive point.
	Assume that $(Z, \lambda, R)$ is a measurable factor of $(Y, \nu, S)$ that is topologically distal.
	Then there exists an ergodic system $(X, \mu, T)$, a transitive point $x_0 \in X$, and a continuous factor map $\rho : X \to Y$ such that $\rho(x_0) = y_0$, $\rho$ is a measurable isomorphism, and there is a continuous factor map $\pi : X \to Z$.
\end{Lemma}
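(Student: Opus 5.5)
The plan is to build $X$ as the graph of the measurable factor map $\pi_0\colon Y\to Z$ along the orbit of $y_0$, and to use distality of $Z$ to keep this orbit closure from picking up extra measure. Let $\pi_0\colon Y\to Z$ be the measurable factor map, so $\pi_0\circ S=R\circ\pi_0$ $\nu$-a.e. and $(\pi_0)_*\nu=\lambda$. On $Y\times Z$ consider the product map $S\times R$, which is a homeomorphism since both coordinates are. Let $\tilde\mu=(\mathrm{id}\times\pi_0)_*\nu$ be the graph joining; it is $(S\times R)$-invariant and isomorphic to $\nu$ via the first coordinate projection, so in particular it is ergodic. Set $X=\operatorname{supp}\tilde\mu$, a closed $(S\times R)$-invariant set, and $T=(S\times R)|_X$. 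Let $\rho$ and $\pi$ be the two coordinate projections. Both are continuous factor maps, and $\rho$ is a measurable isomorphism with inverse $y\mapsto(y,\pi_0(y))$ a.e. The remaining tasks are to choose $x_0\in X$ with $\rho(x_0)=y_0$ and to show that $x_0$ is transitive in $X$.

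First I show that $\rho(X)=Y$ and that the fibre over $y_0$ is nonempty. The image $\rho(X)$ is compact and contains $\operatorname{supp}\nu$. If $\nu$ does not have full support, I first replace $Y$ by $\operatorname{supp}\nu$. The transitive point $y_0$ need not lie in the support, and I expect this edge case to be handled by the ambient setting (Furstenberg systems with full-support measures). I will assume $\operatorname{supp}\nu=Y$, so $\rho(X)=Y$ and we can pick some $x_0=(y_0,z_0)\in X$.

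The main obstacle is transitivity: the orbit closure $W=\overline{\{T^nx_0\}}$ must be all of $X$. Here distality enters. I would use the Ellis-semigroup characterization: a distal system on $Z$ has an Ellis semigroup $E(Z)$ that is a group, and $Y\times Z\to Y$ is then a distal extension. So for any $x=(y,z)\in X$, take $n_k$ with $S^{n_k}y_0\to y$. Passing to an idempotent/ultrafilter limit $p$, we get $p\cdot(y_0,z_0)=(y,z')$, and $W$ meets every fibre $\rho^{-1}(y)$. Distality of the fibres implies that $W$ is a union of minimal pieces inside fibres, and more precisely that the relative orbit closure is "fibrewise homogeneous": for each $y$ the fibre $W_y$ is $E$-translate of $W_{y_0}$.

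To get $W\supseteq X$, I instead pick $z_0$ cleverly. By Fubini, $\tilde\mu$-a.e. point of $X$ is generic along some Følner sequence, and hence has dense orbit in $X=\operatorname{supp}\tilde\mu$. Take such a point $(y_1,z_1)$ with orbit dense in $X$. Since $y_0$ is transitive and the extension $X\to Y$ is distal, the orbit closures of points over $y_0$ and over $y_1$ are comparable. Concretely, choose $p\in E(Y\times Z)$ with $p y_1=y_0$, and set $x_0=p(y_1,z_1)$. Distality makes $p$ invertible in the Ellis group on the $Z$-coordinate, so there is $q$ with $q x_0=(y_1',z_1)$ where $y_1'=qy_0$. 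Choosing $q$ with $qy_0=y_1$ (possible since $y_0$ is transitive) then gives $x_1\in\overline{\mathcal{O}(x_0)}$, hence $W=X$. I expect the bookkeeping here, namely that the Ellis group acts jointly and that distality of only the $Z$-factor suffices for the product extension over $Y$, to be the delicate point. Finally, $\rho(x_0)=y_0$, ergodicity of $\mu=\tilde\mu$, and continuity of $\pi$ complete the proof.
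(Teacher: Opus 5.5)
\textbf{Gap 1: the choice of $X$.} You take $X=\operatorname{supp}\tilde\mu$ and assume $\operatorname{supp}\nu=Y$. The ambient setting does not absorb this case.

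\begin{itemize}
\item In the proof of Theorem~\ref{thm: Furstenberg system}, the lemma is applied to $(Y_A,\nu_{A,y},S_A)$, an ergodic component of the Furstenberg measure, with $y_0=a$ the transitive point of the whole Furstenberg space.
\item Since $\operatorname{supp}\nu_{A,y}$ is closed and invariant, $a\in\operatorname{supp}\nu_{A,y}$ would force full support. This already fails for Bohr sets, where $Y_A$ is not minimal and $\nu_{A,y}$ lives on a proper minimal subsystem.
\item When $y_0\notin\operatorname{supp}\nu$, the set $\operatorname{supp}\tilde\mu$ contains no point over $y_0$. So no $x_0\in X$ can satisfy $\rho(x_0)=y_0$.
\item Restricting $Y$ to $\operatorname{supp}\nu$ discards exactly the point the lemma must keep. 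That point is needed later, since $A=\{n:T_A^nx_A\in E_A\}$ relies on $\rho(x_A)=a$.
\end{itemize}

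The missing idea is that $X$ must be allowed to be strictly larger than the support of $\mu$. The paper takes $X=\overline{\{(S^ny_0,R^nz_0):n\in\Z\}}$. Here $z_0$, together with a F{\o}lner sequence along which $(y_0,z_0)$ is generic for the graph joining, comes from \cite[Proposition~6.1]{HK09}. Transitivity of $x_0=(y_0,z_0)$ is then automatic. The measure $\mu$, defined as the limit of the orbit averages of $x_0$, is the graph joining and sits inside $X$ without having full support.

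\textbf{Gap 2: the transitivity step.} As written it is circular. You need a single $q$ that both undoes $p$ on the $Z$-coordinate and sends $y_0$ to $y_1$. The existence of such a $q$ is precisely the claim $x_1\in\overline{\mathcal{O}(x_0)}$ that you are trying to prove.

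\textbf{A repair covering both gaps.} Run the argument starting from $y_0$, and use distality only on $Z$.
\begin{itemize}
\item Take $x_1=(y_1,z_1)$ with dense orbit in $\operatorname{supp}\tilde\mu$; this exists by ergodicity.
\item Pick $q\in\beta\Z$ with $qy_0=y_1$, using transitivity of $y_0$.
\item By Ellis's theorem, the enveloping semigroup of the distal system $(Z,R)$ is a group. So $q$ acts bijectively on $Z$, and there is $z_0$ with $qz_0=z_1$.
\item Then $q(y_0,z_0)=(y_1,z_1)$, so the orbit closure $X$ of $x_0=(y_0,z_0)$ contains $\operatorname{supp}\tilde\mu$.
\item Now $(X,\tilde\mu,(S\times R)|_X)$, with the two coordinate projections, gives the lemma with no full-support hypothesis.
\end{itemize}

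Repaired this way, your route replaces the appeal to Host--Kra by Ellis's theorem. Nothing is lost: a transitive point is automatically generic, along some F{\o}lner sequence, for an ergodic measure on its orbit closure.
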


\begin{proof}
	A proof of this statement is given in \cite[Lemma 5.8]{KMRR24a} in the case that $(Z,\lambda,R)$ is a pro-nilfactor.
	The proof is exactly the same for general distal factors, but we include a short argument for completeness.
	
	Denote by $\tau : Y \to Z$ the given measurable factor map.
	By \cite[Proposition 6.1]{HK09}, there is a point $z_0 \in Z$ and a F{\o}lner sequence $\Phi$ such that
	\begin{equation} \label{eq: convergence from (y_0,z_0)}
		\lim_{N \to \infty} \frac{1}{|\Phi_N|} \sum_{n \in \Phi_N} f(S^ny_0) g(R^nz_0) = \int_Y f \cdot (g \circ \tau)~d\nu
	\end{equation}
	for all $f \in C(Y)$ and $g \in C(Z)$.
	Let $X = \overline{\left\{ (S^ny_0, R^nz_0) : n \in \Z \right\}} \subseteq Y \times Z$, and let $T : X \to X$ be the tranformation $T = S \times R$.
	Put $x_0 = (y_0, z_0) \in X$, and note that $x_0$ is transitive by construction.
	We define a $T$-invariant measure $\mu$ on $X$ by
	\begin{equation*}
		\mu = \lim_{N \to \infty} \frac{1}{|\Phi_N|} \sum_{n \in \Phi_N} \delta_{T^nx_0} = \lim_{N \to \infty} \frac{1}{|\Phi_N|} \sum_{n \in \Phi_N} \delta_{(S^ny_0, R^nz_0)},
	\end{equation*}
	where the limit is taken in the weak* topology and exists by \eqref{eq: convergence from (y_0,z_0)}.
	Namely,
	\begin{equation*}
		\int_X f \otimes g~d\mu = \int_Y f \cdot (g \circ \tau)~d\nu
	\end{equation*}
	for $f \in C(Y)$ and $g \in C(Z)$.
	We define the factor map $\rho : X \to Y$ by $\rho(y,z) = y$ and the map $\pi : X \to Z$ by $\pi(y,z) = z$.
	It is easy to check that this are indeed factor maps, and they are continuous, since they are the coordinate projections.
	Finally, to see that $\rho$ is a measurable isomorphism, we note that $y \mapsto (y, \tau(y))$ is an almost sure inverse of $\rho$.
\end{proof}

%SUBSECTION
\subsection{Host--Kra uniformity seminorms}

As a consequence of the mean ergodic theorem, we may define a seminorm $\|\cdot\|_{U^1}$ on $L^{\infty}(\mu)$ by
\begin{equation*}
    \|f\|_{U^1} = \left( \lim_{H \to \infty} \frac{1}{H} \sum_{h=1}^H \int_X f \cdot T^h\overline{f}~d\mu \right)^{1/2},
\end{equation*}
which is in fact equal to $\left\| \E[f \mid \mathcal{I}] \right\|_{L^2}$.
The $U^1$-seminorm is part of a family of seminorms capturing higher-order structures in $(X, \mu, T)$.
For $k \in \N$, we define the \emph{Host--Kra uniformity seminorm} of order $k$ on $L^{\infty}(\mu)$ by\nomenclature{$\lVert\cdot\rVert_{U^k}$}{Host--Kra seminorm}
\begin{equation*}
    \|f\|_{U^k}^{2^k} = \lim_{H \to \infty} \frac{1}{H^k} \sum_{\bm{h} \in [H]^k} \int_X \prod_{\bm{\omega} \in \{0,1\}^k} T^{\bm{\omega} \cdot \bm{h}} C^{|\bm{\omega}|} f~d\mu,
\end{equation*}
where again $C$ is the complex conjugation map and $|\bm{\omega}| = |\{i \in [k] : \omega_i = 1\}|$.
The Host--Kra seminorms, introduced by Host and Kra in \cite{HK05a}, are ergodic theoretic analogues of the Gowers norms and satisfy similar relations:
\begin{equation*}
    \|f\|_{U^{k+1}}^{2^{k+1}} = \lim_{H \to \infty} \frac{1}{H} \sum_{h=1}^H \left\| f \cdot T^h \overline{f} \right\|_{U^k}^{2^k}
\end{equation*}
and
\begin{equation*}
    \|f\|_{U^1} \leq \|f\|_{U^2} \leq \ldots.
\end{equation*}
Host and Kra proved that for each $k \in \N$, there exists a factor $\mathcal{Z}_{k-1}$ such that $\|f\|_{U^k} = 0$ if and only if $\E[f \mid \mathcal{Z}_{k-1}] = 0$.
By the observation above that $\|f\|_{U^1} = \left\| \E[f \mid \mathcal{I}] \right\|_{L^2}$, we see that $\mathcal{Z}_0 = \mathcal{I}$.
Moreover, by monotonicity of the seminorms, the factors are nested: $\mathcal{I} = \mathcal{Z}_0 \subseteq \mathcal{Z}_1 \subseteq \ldots$.

The Host--Kra structure theorem provides a full description of the factors $\mathcal{Z}_k$, $k \geq 0$, in ergodic systems.
We will not use the full structure theorem so focus only on the factors $\mathcal{Z}_0$ (which we have already described) and $\mathcal{Z}_1$, which will be relevant for us later on.
For a full description of the Host--Kra structure theorem, see \cite{HK05a, HK18} for the ergodic case and \cite{JM26arXiv} for an extension to non-ergodic systems.

%SUBSUBSECTION
\subsubsection{The $U^1$ seminorm and the $\mathcal{Z}_0$ factor}

We have more or less fully addressed the factor $\mathcal{Z}_0 = \mathcal{I}$.
However, in order to compare with other results of the paper, we reinterpret the mean ergodic theorem as a decomposition result for the $U^1$-seminorm.

\begin{Theorem} \label{thm: ergodic theorem}
    Let $(X, \mu, T)$ be a measure-preserving system, and let $f : X \to [0,1]$ be measurable.
    Then there is a decomposition $f = f_{\inv} + f_{\erg}$ such that
    \begin{enumerate}[label=(\roman*),leftmargin=*]
        \item (Nonnegativity) $0 \leq f_{\inv} \leq 1$ almost everywhere, and $\int_X f_{\erg}~d\mu = 0$.
        \item (Structure) $f_{\inv}$ is $T$-invariant.
        \item (Uniformity) $\|f_{\erg}\|_{U^1} = 0$.
        \item (Orthogonality) $\innprod{f_{\inv}}{f_{erg}} = 0$.
    \end{enumerate}
\end{Theorem}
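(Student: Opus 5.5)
Set $f_{\inv} = \E[f \mid \mathcal{I}]$ and $f_{\erg} = f - \E[f \mid \mathcal{I}]$. Each of the four properties then follows from standard facts about conditional expectation together with the identity $\|g\|_{U^1} = \|\E[g\mid\mathcal{I}]\|_{L^2}$ noted above. The decomposition is just the splitting of $f$ by orthogonal projection onto the closed subspace $L^2(\mu|_{\mathcal{I}})$ of $T$-invariant functions.

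\emph{Nonnegativity.} Conditional expectation is positive and unital. Since $0 \leq f \leq 1$ almost everywhere, we get $0 \leq \E[f\mid\mathcal{I}] \leq 1$ almost everywhere. This can also be seen from the mean ergodic theorem: $f_{\inv}$ is an $L^2$-limit of averages $\frac1N\sum_{n=1}^N T^nf$, which take values in $[0,1]$, and $L^2$-limits preserve this constraint along an a.e.\ convergent subsequence. Moreover, since $\mathcal{I}$ contains $X$, we have $\int_X f_{\inv}\,d\mu = \int_X f\,d\mu$, hence $\int_X f_{\erg}\,d\mu = 0$.

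\emph{Structure.} The function $f_{\inv}$ is $\mathcal{I}$-measurable. Every $\mathcal{I}$-measurable function $g$ satisfies $Tg = g$ almost everywhere, because this holds for indicators of sets $E$ with $\mu(E\triangle T^{-1}E)=0$ and passes to simple functions and their $L^2$-limits. Alternatively, the mean ergodic limit is visibly $T$-invariant, since shifting the averaging window by one changes the average by at most $2/N$ in norm.

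\emph{Uniformity and orthogonality.} By idempotence of the projection, $\E[f_{\erg}\mid\mathcal{I}] = \E[f\mid\mathcal{I}] - \E[f\mid\mathcal{I}] = 0$, so $\|f_{\erg}\|_{U^1} = 0$. Orthogonality is immediate because $f_{\erg}$ lies in the orthogonal complement of $L^2(\mu|_{\mathcal{I}})$, which contains $f_{\inv}$.

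The only step requiring any care is the identity $\|g\|_{U^1}^2 = \|\E[g\mid\mathcal{I}]\|_2^2$ used in the uniformity step, which the text asserts. For completeness it follows directly: by the mean ergodic theorem,
\[
\frac1H\sum_{h=1}^H \int_X g\cdot T^h\overline{g}\,d\mu = \int_X g\cdot\overline{\tfrac1H\textstyle\sum_h T^h g}\,d\mu \to \langle g, \E[g\mid\mathcal{I}]\rangle = \|\E[g\mid\mathcal{I}]\|_2^2 .
\]
No genuine obstacle is expected.
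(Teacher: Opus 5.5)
Your proposal is correct and is the paper's argument: the paper also sets $f_{\inv} = \E[f\mid\mathcal{I}]$ and $f_{\erg} = f - f_{\inv}$, and reads off the four properties from positivity of the projection, invariance by construction, $\|f_{\erg}\|_{U^1} = \|\E[f_{\erg}\mid\mathcal{I}]\|_{L^2} = 0$, and orthogonality of $f_{\erg}$ to $\mathcal{I}$ (including the constant $\1$). Your extra verification of the identity $\|g\|_{U^1}^2 = \|\E[g\mid\mathcal{I}]\|_2^2$ via the mean ergodic theorem is a correct, harmless addition that the paper takes for granted.
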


\begin{proof}
    Take $f_{\inv} = \E[f\mid\mathcal{I}]$ and $f_{\erg} = f - f_{\inv}$.

    The projection of a $[0,1]$ valued function is again $[0,1]$-valued, and $f_{\inv}$ is $T$-invariant by construction.
    Moreover, since $f_{\inv}$ is obtained by means of an orthgonal projection, we have $\innprod{f_{\inv}}{f_{\erg}} = 0$.

    Now, since $f_{\erg}$ is orthogonal to $\mathcal{I}$, we have
    \begin{equation*}
        \|f_{\erg}\|_{U^1} = \|\E[f_{\erg} \mid \mathcal{I}]\|_{L^2} = 0
    \end{equation*}
    and
    \begin{equation*}
        \int_X f_{\erg}~d\mu = \innprod{f_{\erg}}{\1} = 0,
    \end{equation*}
    since the constant function $\1$ is $T$-invariant.
\end{proof}

%SUBSUBSECTION
\subsubsection{The $U^2$ seminorm and the $\mathcal{Z}_1$ factor}

We now move to the factor $\mathcal{Z}_1$.
If $(X, \mu, T)$ is ergodic, then $\mathcal{Z}_1$ is the \emph{Kronecker factor}, which has several equivalent characterizations:
\begin{itemize}
    \item $L^2(\mu|_{\mathcal{Z}_1})$ is the closed linear span of the eigenfunctions of $T$, i.e.
        \begin{equation*}
            L^2(\mu|_{\mathcal{Z}_1}) = \overline{\textup{span}\left\{ f \in L^2(\mu) : \exists \lambda \in \C, Tf = \lambda f\right\}}
        \end{equation*}
    \item $L^2(\mu|_{\mathcal{Z}_1})$ is the subspace of function with pre-compact orbit
        \begin{equation*}
            L^2(\mu|_{\mathcal{Z}_1}) = \left\{ f \in L^2(\mu) : \overline{\{T^nf:n\in \N\}}~\text{is compact} \right\}
        \end{equation*}
    \item As a measure-preserving system, $Z_1$ is the maximal factor of $(X, \mu, T)$ that is isomorphic to a rotation on a compact abelian group, i.e. a system of the form $(G, m, \theta)$, where $G$ is a compact abelian group, $m$ is the Haar measure on $G$, and $\theta \in G$ such that $\{n\theta : n \in \N\}$ is dense in $G$.
\end{itemize}

In the non-ergodic case, the factor $\mathcal{Z}_1$ takes on a more intricate form.
A prototypical example of a non-ergodic system that is isomorphic to its $Z_1$ factor is the skew-product $T(x,y) = (x, y+x)$ on $\T^2$.
Here, each ergodic component is isomorphic to a group rotation but the system as a whole is not.
A structure theorem of Frantzikinakis and Host \cite{FH18b} describes the $\mathcal{Z}_1$ factor of a general non-ergodic system in terms of ``relative'' eigenfunctions.

\begin{Definition}
    A \emph{relative orthonormal system} with respect to $\mathcal{I}$ is a countable family of functions $(\phi_j)_{j \in \N}$ in $L^2(\mu)$ such that
    \begin{itemize}
        \item $\E\left[ |\phi_j|^2 \mid \mathcal{I} \right] \in \{0,1\}$ almost everywhere for every $j \in \N$, and
        \item $\E \left[ \phi_j \overline{\phi}_k \mid \mathcal{I} \right] = 0$ almost everywhere for $j,k \in \N$, $j \ne k$.
    \end{itemize}
    The family $(\phi_j)_{j \in \N}$ is a \emph{relative orthonormal basis} if additionally
    \begin{equation*}
        L^2(\mu) = \overline{\textup{span} \left\{ \phi_j \psi : j \in \N, \psi \in L^{\infty}~\text{is}~\mathcal{I}\text{-measurable}\right\}}.
    \end{equation*}
\end{Definition}

\begin{Definition}
    A function $\phi \in L^{\infty}(\mu)$ is a \emph{relative eigenfunction} with respect to $\mathcal{I}$ if there exists a $T$-invariant function $\lambda \in L^{\infty}(\mu)$ such that
    \begin{itemize}
        \item $|\phi| \in \{0,1\}$ almost everywhere,
        \item $\lambda(x) = 0$ for almost every $x \in X$ such that $\phi(x) = 0$, and
        \item $T\phi = \lambda \phi$ almost everywhere.
    \end{itemize}
\end{Definition}

\begin{Theorem}[{\cite[Theorem 5.2]{FH18b}}] \label{thm: nonergodic Z_1}
    Let $(X, \mu, T)$ be a measure-preserving system.
    Then $L^2(\mu|_{\mathcal{Z}_1})$ admits a relative orthonormal basis of relative eigenfunctions.
\end{Theorem}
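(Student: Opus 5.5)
The plan is to reduce \cref{thm: nonergodic Z_1} to the classical ergodic description of the Kronecker factor, applied fiberwise through the ergodic decomposition $(\mu_x)_{x\in X}$, and then glue the fiberwise eigenfunctions together measurably.

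\textbf{Step 1: identify $\mathcal{Z}_1$ fiberwise.} Integrating the van der Corput/Fourier formula for the Host--Kra seminorm over the ergodic components gives
\[
\|f\|_{U^2(\mu)}^4=\int_X \|f\|_{U^2(\mu_x)}^4\,d\mu(x).
\]
In each ergodic component, $\|f\|_{U^2(\mu_x)}^4$ is the sum of the fourth powers of the masses of the atoms of the spectral measure $\sigma_{f,x}$ of $f$ in $L^2(\mu_x)$. Hence $f\perp L^2(\mu|_{\mathcal{Z}_1})$ if and only if, for $\mu$-a.e.\ $x$, the measure $\sigma_{f,x}$ is continuous, i.e.\ $f$ is orthogonal in $L^2(\mu_x)$ to the Kronecker factor of $\mu_x$. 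It therefore suffices to produce, measurably in $x$, orthonormal eigenfunctions of $(X,\mu_x,T)$ that span its Kronecker factor.

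\textbf{Step 2: a twisted ergodic theorem gives relative eigenfunctions.} For $t\in\T$, apply the mean ergodic theorem to the unitary operator $e(-t)T$ to get
\[
P_tf=\lim_{N\to\infty}\frac1N\sum_{n=1}^N e(-nt)\,T^nf\quad\text{in }L^2(\mu).
\]
Then $TP_tf=e(t)P_tf$, and fiberwise $P_tf$ is the projection of $f$ onto the $e(t)$-eigenspace of $\mu_x$. Moreover $\E[|P_tf|^2\mid\mathcal{I}](x)=\sigma_{f,x}(\{t\})$, and the map $(x,t)\mapsto\sigma_{f,x}(\{t\})$ is jointly measurable; I would obtain this as a limit of $\sigma_{f,x}$ applied to small arcs. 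The key step is a measurable selection of the atoms. For each $x$, list the atoms of $\sigma_{f,x}$ by decreasing mass, breaking ties by position in $[0,1)$, to get measurable functions $t_j(x)$, $j\in\N$. This can be done by taking countable suprema over dyadic arcs. Now define
\[
\phi_{f,j}=\frac{P_{t_j(\cdot)}f}{\E[|P_{t_j(\cdot)}f|^2\mid\mathcal{I}]^{1/2}},
\]
with the convention $0/0=0$, where the frequency $t_j(x)$ is chosen according to the ergodic component. To make sense of $P_{t(\cdot)}f$ for an $\mathcal{I}$-measurable frequency $t(\cdot)$, approximate $t$ by simple $\mathcal{I}$-measurable functions and use the fact that multiplication by $\mathcal{I}$-measurable functions commutes with $T$. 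Each $\phi_{f,j}$ is then a relative eigenfunction with $\lambda=e(t_j)$: it satisfies $|\phi_{f,j}|\in\{0,1\}$ after a further fiberwise normalization, which is possible because eigenfunctions of ergodic systems have constant modulus.

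\textbf{Step 3: relative Gram--Schmidt.} Take a countable dense family $(f_k)$ in $L^2(\mu)$. Collect all the $\phi_{f_k,j}$ and run a Gram--Schmidt procedure with $\E[\,\cdot\,\bar\cdot\mid\mathcal{I}]$ playing the role of the scalar inner product. Eigenfunctions with distinct eigenvalues are automatically relatively orthogonal. For equal eigenvalues, the fiberwise eigenspaces in ergodic components are one-dimensional, so on the set where two functions share an eigenvalue we keep one and set the other to $0$. By Step 1, the $\mathcal{I}$-module span of the resulting family is dense in $L^2(\mu|_{\mathcal{Z}_1})$.

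I expect the main obstacle to be the measurability in Step 2: defining $P_{t(x)}f$ with an $x$-dependent frequency and enumerating the fiberwise atoms measurably. I would handle it by disintegrating $L^2(\mu)$ as a measurable field of Hilbert spaces $L^2(\mu_x)$ and invoking standard measurable selection for point masses of a measurable family of measures on $\T$.
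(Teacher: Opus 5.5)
The paper does not prove this statement; it is quoted from Frantzikinakis--Host, so the only question is whether your argument stands on its own. The overall architecture is sound: the Step~1 reduction to a.e.\ atomlessness of the fiber spectral measures $\sigma_{f,x}$, the measurable enumeration of atoms, and the relative Gram--Schmidt step. There is a harmless slip in Step~1. In fact $\|f\|_{U^2(\mu_x)}^4=\sum_t\sigma_{f,x}(\{t\})^2$, i.e.\ squares of the atom masses (equivalently fourth powers of the eigen-projection norms), not fourth powers of the masses. Only the vanishing criterion is used, so nothing breaks.

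The step that fails as written is in Step~2: you construct $P_{t(\cdot)}f$ for an $\mathcal{I}$-measurable frequency by approximating $t(\cdot)$ with simple $\mathcal{I}$-measurable functions. Eigenprojections are not continuous in the frequency. $P_sf$ vanishes on every ergodic component where $e(s)$ is not exactly an eigenvalue, so the approximants carry no information about the limit. Concretely, take $T(x,y)=(x,y+x)$ on $\T^2$ and $f(x,y)=e(y)$. Then $Tf=e(x)f$, so $t_1(x)=x$ and the desired $P_{t_1(\cdot)}f$ is $f$ itself. But for any countably-valued $\mathcal{I}$-measurable $t'$, the set where $t'(x)=x$ is null, so $P_{t'(\cdot)}f=0$ for every approximant.

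The repair uses the fact you mention, with no approximation. Because $e(-t(\cdot))$ is $T$-invariant, $U=e(-t(\cdot))\,T$ is an isometry of $L^2(\mu)$ with $U^n=e(-nt(\cdot))\,T^n$. The mean ergodic theorem then gives
\[
P_{t(\cdot)}f=\lim_{N\to\infty}\frac1N\sum_{n=1}^N e(-nt(\cdot))\,T^nf\quad\text{in }L^2(\mu),
\]
which is the orthogonal projection onto $\{g:Tg=e(t(\cdot))g\}$. Since $\|g\|_{L^2(\mu)}^2=\int\|g\|_{L^2(\mu_x)}^2\,d\mu(x)$, a subsequence of these averages converges in $L^2(\mu_x)$ for a.e.\ $x$. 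Since $t(\cdot)$ is $\mu_x$-a.e.\ equal to the constant $t(x)$, the mean ergodic theorem on $L^2(\mu_x)$ identifies the limit with the projection of $f$ onto the $e(t(x))$-eigenspace of $\mu_x$.

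Two further simplifications follow. First, $|P_{t(\cdot)}f|$ is $T$-invariant, so dividing by $\E[|P_{t(\cdot)}f|^2\mid\mathcal{I}]^{1/2}$ already gives modulus in $\{0,1\}$; no further fiberwise normalization is needed. Second, for the final density claim, the cleanest route is this. The relatively orthogonal sum $\sum_jP_{t_j(\cdot)}f$ converges in $L^2(\mu)$, and $f-\sum_jP_{t_j(\cdot)}f$ has atomless fiber spectral measures. Each $P_{t_j(\cdot)}f$ is orthogonal to every function with atomless fiber spectral measures, so it lies in $L^2(\mu|_{\mathcal{Z}_1})$. Step~1 then gives $\E[f\mid\mathcal{Z}_1]=\sum_jP_{t_j(\cdot)}f$, which lies in the closed $\mathcal{I}$-module span of the $\phi_{f,j}$. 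Applying this to a countable dense family of bounded $f_k$ finishes the proof.
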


The upshot of Theorem \ref{thm: nonergodic Z_1} is that for $f \in L^2(\mu)$, the conditional expectation $\E[f \mid \mathcal{Z}_1]$ can be approximated by a linear combination of the form
\begin{equation*}
    \sum_{i=1}^d c_i \phi_i,
\end{equation*}
where $c_i$ is $T$-invariant and $\phi_i(Tx) = e(\alpha_i(x)) \phi_i(x)$ for some $T$-invariant function $\alpha_i : X \to \T$.
To emphasize the relationship with the arithmetic regularity lemma above (\cref{thm_arithmetic_regularity_lemma}), we may restate \cref{thm: nonergodic Z_1} in the following form.

\begin{Theorem} \label{thm: Z_1 structure theorem}
    Let $(X, \mu, T)$ be a measure-preserving system, and let $f : X \to [0,1]$ be measurable.
    Then there is a decomposition $f = f_{\str} + f_{\unf}$ such that
    \begin{enumerate}[label=(\roman*),leftmargin=*]
        \item (Nonnegativity) $0 \leq f_{\str} \leq 1$ almost everywhere, and $\int_X f_{\unf}~d\mu = 0$.
        \item (Structure) For every $\epsilon > 0$, there exists $d \in \N$, $T$-invariant functions $c_1, \ldots, c_d : X \to \C$ with $\|c_i\|_{\infty} \leq 1$, $T$-invariant functions $\alpha_1, \ldots, \alpha_d : X \to \T$, and relative eigenfunctions $\phi_1, \ldots, \phi_d$ satisfying $\phi_i(Tx) = e(\alpha_i(x))\phi_i(x)$ for almost every $x \in X$ such that if $\mathcal{P} = \sum_{i=1}^d c_i \phi_i$, then
            \begin{equation*}
                \left\| f_{\str} - \mathcal{P} \right\|_{L^2(\ u)} < \epsilon.
            \end{equation*}
        \item (Uniformity) $\|f_{\unf}\|_{U^2} = 0$.
        \item (Orthogonality) $\innprod{f_{\str}}{f_{\unf}} = 0$.
    \end{enumerate}
\end{Theorem}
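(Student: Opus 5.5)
We will derive \cref{thm: Z_1 structure theorem} directly from \cref{thm: nonergodic Z_1}, mirroring the proof of \cref{thm: ergodic theorem} with $\mathcal{Z}_1$ in place of $\mathcal{I}$. Set
\[
f_{\str} = \E[f \mid \mathcal{Z}_1] \qquad\text{and}\qquad f_{\unf} = f - f_{\str}.
\]
Nonnegativity follows because conditional expectation onto a sub-$\sigma$-algebra preserves the bound $0 \leq f \leq 1$ almost everywhere. Since the constant function $\1$ is $\mathcal{Z}_1$-measurable, we get $\int f_{\unf}\,d\mu = \innprod{f_{\unf}}{\1} = 0$. Orthogonality holds because $f_{\str}$ is the orthogonal projection of $f$ onto $L^2(\mu|_{\mathcal{Z}_1})$. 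For uniformity we have $\E[f_{\unf}\mid \mathcal{Z}_1] = 0$, and the Host--Kra characterization ($\|g\|_{U^2}=0$ if and only if $\E[g\mid\mathcal{Z}_1]=0$) then gives $\|f_{\unf}\|_{U^2}=0$.

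The substantive point is the structure property. By \cref{thm: nonergodic Z_1}, choose a relative orthonormal basis $(\phi_j)_{j\in\N}$ of $L^2(\mu|_{\mathcal{Z}_1})$ consisting of relative eigenfunctions, with $T\phi_j = \lambda_j\phi_j$ and $\lambda_j$ invariant. Where $\phi_j \neq 0$ we have $|\lambda_j| = 1$ (because $|\phi_j|\in\{0,1\}$ and $|T\phi_j| = |\phi_j|\circ T$), so we can write $\lambda_j = e(\alpha_j)$ for an invariant measurable $\alpha_j\colon X\to\T$, assigning $\alpha_j=0$ on $\{\phi_j=0\}$; the identity $\phi_j(Tx)=e(\alpha_j(x))\phi_j(x)$ remains valid there. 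By the definition of a relative basis, $f_{\str}$ is an $L^2$-limit of finite sums $\sum_i \psi_i\phi_i$ with $\psi_i\in L^\infty$ invariant. Fix $\epsilon > 0$ and choose such a sum $\mathcal{P}'$ with $\|f_{\str}-\mathcal{P}'\|_{L^2(\mu)}<\epsilon$.

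The main obstacle is the normalization $\|c_i\|_\infty\leq 1$. The plan is to replace the coefficients $\psi_i$ by the relative Fourier coefficients $c_i = \E[f_{\str}\overline{\phi_i}\mid\mathcal{I}]$. These are invariant, and $|c_i|\leq \E[|\phi_i|\mid\mathcal{I}]\leq 1$ because $0\leq f_{\str}\leq 1$ and $|\phi_i|\leq 1$. Relative orthonormality gives a conditional Pythagoras/Bessel identity: the sum $\sum_{i\leq d}c_i\phi_i$ is the conditional projection of $f_{\str}$ onto the $\mathcal{I}$-module spanned by $\phi_1,\ldots,\phi_d$. Concretely, $f_{\str}-\sum_{i\leq d} c_i\phi_i$ is conditionally orthogonal to every $\psi\phi_i$ with $i\leq d$. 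Hence, for the sum $\mathcal{P}'$ built on the same indices,
\[
\Bigl\|f_{\str}-\sum_{i\leq d}c_i\phi_i\Bigr\|_{L^2}\leq \|f_{\str}-\mathcal{P}'\|_{L^2}<\epsilon.
\]
Taking $\mathcal{P}=\sum_{i\leq d}c_i\phi_i$ completes the proof.

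A small technical point remains: on the set where $\E[|\phi_i|^2\mid\mathcal{I}]=0$ we have $\phi_i=0$ almost everywhere, so that term contributes nothing and the identity still holds.
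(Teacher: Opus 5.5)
Your proposal is correct and follows the paper's approach: the paper gives no separate proof and presents the statement as a restatement of \cref{thm: nonergodic Z_1} with $f_{\str}=\E[f\mid\mathcal{Z}_1]$ and $f_{\unf}=f-f_{\str}$, which is exactly your decomposition. Your relative Fourier coefficients $c_i=\E[f_{\str}\overline{\phi_i}\mid\mathcal{I}]$ and the conditional Bessel inequality supply the normalization $\|c_i\|_\infty\leq 1$ that the paper leaves implicit; the one compressed step is the claim $|\lambda_j|=1$ on $E=\{\phi_j\neq 0\}$, which needs measure preservation ($T^{-1}E\subseteq E$, hence $T^{-1}E=E$ a.e.) and not just $|\phi_j|\in\{0,1\}$.
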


%SUBSECTION
\subsection{Furstenberg systems} \label{sec: Furstenberg systems}

Suppose $u : \N \to \C$ is a bounded function.
One may associate to $u$ certain measure-preserving systems, called \emph{Furstenberg systems}, that capture statistical properties of $u$.

In order to define Furstenberg systems, we introduce additional notation.
Given a bounded function $u : \N \to \C$, let $\mathcal{A}_u \subseteq \ell^{\infty}(\Z)$ be the translation-invariant *-algebra generated by $u$.
That is, $\mathcal{A}_u$ consists of linear combinations of functions of the form
\begin{equation*}
    n \mapsto C^{\omega_1}u(n+h_1) \cdot \ldots \cdot C^{\omega_k}u(n+h_k)
\end{equation*}
for $k \in \N \cup \{0\}$, $h_1, \ldots, h_k \in \Z$, and $\omega_1, \ldots, \omega_k \in \{0,1\}$.
In order to make sense of such expressions for negative values of $h_i$, we extend $u$ to $\Z$ by defining $u(n) = 0$ for $n \leq 0$.
The construction in this section can be carried out within $\ell^{\infty}(\N)$, but we choose to work in $\ell^{\infty}(\Z)$ in order to produce invertible Furstenberg systems.

Suppose $\mathcal{A} \subseteq \ell^{\infty}(\Z)$ is a translation-invariant *-algebra (for example, $\mathcal{A}_u$ for some $u \in \ell^{\infty}(\N)$).
For a sequence $\mathbf{N} = (N_s)_{s \in \N}$ with $N_s \to \infty$, we say that $\mathcal{A}$ \emph{admits averages} along $\mathbf{N}$ if the limit\nomenclature{$\E_{n \in \mathbf{N}}$}{average along a sequence $\mathbf{N}$}
\begin{equation*}
    \E_{n \in \mathbf{N}} a(n) = \lim_{s \to \infty} \frac{1}{N_s} \sum_{n=1}^{N_s} a(n)
\end{equation*}
exists for every $a \in \mathcal{A}$.
A bounded function $u : \N \to \C$ \emph{admits correlations} along $\mathbf{N}$ if $\mathcal{A}_u$ admits averages along $\mathbf{N}$.
If $\mathcal{A}$ is separable, then a standard diagonalization argument shows that there exists some sequence $\mathbf{N}$ along which $\mathcal{A}$ admits averages.
In particular, for a bounded function $u : \N \to \C$, there exists a sequence $\mathbf{N}$ such that $u$ admits correlations.

Let $\mathcal{A} \subseteq \ell^{\infty}(\Z)$ be a separable translation-invariant *-algebra, and suppose $\mathcal{A}$ admits averages along a sequence $\mathbf{N}$.
The closure $\overline{\A}$ of $\A$ in $\ell^{\infty}(\Z)$ is a C*-algebra, so by the Gelfand--Naimark representation theorem, there is a compact metric space $X$ and an isomorphism $\Phi : \overline{\A} \to C(X)$.
(The space $X$, which can be identified with the space of C*-algebra homomorphisms from $\overline{\mathcal{A}}$ to $\C$, will be metrizable because of our assumption that $\A$ is separable.)
The isomorphism $\Phi$ induces a map $\tilde{T} : C(X) \to C(X)$ defined by $\Phi \circ \tau = \tilde{T} \circ \Phi$, where $(\tau a)(n) = a(n+1)$.
Let $T : X \to X$ then be the transformation defined by $\tilde{T}f = f \circ T$.
The map $a \mapsto \E_{n\in\mathbf{N}}a(n)$ is a positive linear functional on $\mathcal{A}$, so it induces a positive linear functional $L$ on $C(X)$.
Hence, by the Riesz--Markov--Kakutani representation theorem, there exists a Borel probability measure $\mu$ on $X$ such that
\begin{equation*}
    \int_X \Phi(a)~d\mu = L(\Phi(a)) = \E_{n \in \mathbf{N}} a(n)
\end{equation*}
for every $a \in \mathcal{A}$.
Since $\E_{n \in \mathbf{N}}a(n+1) = \E_{n \in \mathbf{N}}a(n)$, the measure $\mu$ is $T$-invariant.
That is, $(X, \mu, T)$ is a measure-preserving system, which we call the \emph{Furstenberg system} of $\mathcal{A}$ with respect to $\mathbf{N}$.
If $\mathcal{A} = \mathcal{A}_u$ for a bounded function $u : \N \to \C$, we will also refer to $(X, \mu, T)$ as the \emph{Furstenberg system} of $u$ with respect to $\mathbf{N}$.

%SUBSECTION
\subsection{Hilbert spaces of functions on $\N$}

Interestingly, the Hilbert space $L^2(\mu)$ for a Furstenberg system $(X, \mu, T)$ of $\mathcal{A}$ can be interpreted as a space of equivalence classes of functions defined on the integers.
Let $\A \subseteq \ell^{\infty}(\Z)$ be a separable translation-invariant *-algebra, and let $\mathbf{N} = (N_s)_{s \in \N}$ be a sequence such that $\A$ admits averages along $\mathbf{N}$.
Define\nomenclature{$\mathcal{L}^2(\mathcal{A},\mathbf{N})$}{$L^2$ space of functions associated to an algebra $\mathcal{A}$ along a sequence $\mathbf{N}$}
\begin{equation*}
    \mathcal{L}^2(\A, \mathbf{N}) = \left\{ f : \Z \to \C : \forall \epsilon > 0~\exists a \in \A, \limsup_{s \to \infty} \frac{1}{N_s} \sum_{n=1}^{N_s} |f(n) - a(n)|^2 < \epsilon \right\}/\sim_{\mathbf{N}},
\end{equation*}
where $f \sim_{\mathbf{N}} g$ if and only if $\E_{n \in \mathbf{N}} |f(n) - g(n)|^2 = 0$.

\begin{Theorem}[cf. {\cite[Theorem 2.1]{Farhangi}}]
\label{thm_farhangi_2.1}
    The space $\mathcal{L}^2(\A, \mathbf{N})$ is a Hilbert space with inner product\nomenclature{$\innprod{\cdot}{\cdot}_{\mathbf{N}}$}{inner product along a sequence $\mathbf{N}$}
    \begin{equation*}
        \innprod{f}{g}_{\mathbf{N}} = \E_{n \in \mathbf{N}} f(n)\overline{g(n)}.
    \end{equation*}
    Moreover, the map $\Phi : \A \to C(X)$ extends to an isometric isomorphism $\Phi : \mathcal{L}^2(\A, \mathbf{N}) \to L^2(X, \mu)$.
\end{Theorem}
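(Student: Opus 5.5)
We need to show that $\mathcal{L}^2(\A,\mathbf{N})$ is a Hilbert space with the inner product $\innprod{f}{g}_{\mathbf{N}}$, and that $\Phi$ extends to an isometric isomorphism onto $L^2(\mu)$. Our plan is to build the extension of $\Phi$ first, since completeness then transfers from $L^2(\mu)$.

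\textbf{Step 1: $\Phi$ is an isometry on $\A$.} For $a\in\A$, $|a|^2=a\bar a$ lies in $\A$, because $\A$ is a $*$-algebra. Since $\Phi$ is a $*$-isomorphism,
\[
\E_{n\in\mathbf{N}}|a(n)|^2=\int_X\Phi(|a|^2)\,d\mu=\int_X|\Phi(a)|^2\,d\mu .
\]
Polarization gives $\innprod{a}{b}_{\mathbf{N}}=\innprod{\Phi a}{\Phi b}_{L^2(\mu)}$. Hence $\Phi$ descends to an isometry from $\A/\!\sim_{\mathbf{N}}$ into $L^2(\mu)$. Its image $\Phi(\A)$ is dense in $C(X)$ in sup norm (it is dense in $\Phi(\overline{\A})=C(X)$), and $C(X)$ is dense in $L^2(\mu)$. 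So the image is dense in $L^2(\mu)$.

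\textbf{Step 2: well-definedness of the inner product.} Consider the seminorm $\|f\|_{\mathbf{N}}:=(\limsup_s \frac{1}{N_s}\sum_{n\le N_s}|f(n)|^2)^{1/2}$ on bounded-or-square-averaged functions. It satisfies the triangle inequality, since it is a limsup of $\ell^2$-type norms. For $f\in\mathcal{L}^2(\A,\mathbf{N})$, pick $a_k\in\A$ with $\|f-a_k\|_{\mathbf{N}}\to0$. The triangle inequality shows $(a_k)$ is Cauchy for $\|\cdot\|_{\mathbf{N}}$, and on $\A$ this seminorm is a genuine limit by Step 1. Then for $f,g$ with approximants $a_k,b_k$, we bound
\[
\Big|\tfrac{1}{N_s}\sum f\bar g-\tfrac{1}{N_s}\sum a_k\bar b_k\Big|
\]
by Cauchy--Schwarz in terms of $\|f-a_k\|$, $\|g-b_k\|$ and the finite norms. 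This shows the averages of $f\bar g$ form a Cauchy sequence in $s$, so $\E_{n\in\mathbf{N}}f\bar g$ exists and equals $\lim_k\innprod{a_k}{b_k}_{\mathbf{N}}$. Along the way, the identity $\|f\|_{\mathbf{N}}^2=\E_{n\in\mathbf{N}}|f|^2$ follows; this is the point needing care, since only a limsup is assumed. The relation $\sim_{\mathbf{N}}$ is then exactly the null space, so $\innprod{\cdot}{\cdot}_{\mathbf{N}}$ is an inner product on $\mathcal{L}^2(\A,\mathbf{N})$.

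\textbf{Step 3: extension and surjectivity.} Define $\Phi f=\lim_k\Phi(a_k)$ in $L^2(\mu)$. The limit exists because $(\Phi(a_k))$ is Cauchy by Step 1, and it is independent of the approximating sequence. By Step 2 the extension is isometric, hence injective on equivalence classes. For surjectivity, take $F\in L^2(\mu)$ and choose $a_k\in\A$ with $\|\Phi a_k-F\|_{L^2}<2^{-k}$; then $(a_k)$ is $\|\cdot\|_{\mathbf{N}}$-Cauchy.

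\textbf{Main obstacle.} We must produce a single function $f:\Z\to\C$ with $\|f-a_k\|_{\mathbf{N}}\to0$. We would use the standard diagonal gluing along $\mathbf{N}$. Choose increasing indices $s_k$ such that for all $s\ge s_k$ and $j\le k$ the finite averages of $|a_j-a_k|^2$ up to $N_s$ are at most $4^{-j}\cdot 4$. Set $f(n)=a_k(n)$ for $N_{s_k}<n\le N_{s_{k+1}}$, after also making $N_{s_{k+1}}$ so large that the earlier block is negligible.

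For $N_{s_k}<N_s\le N_{s_{k+1}}$ and fixed $j\le k$, split the sum $\frac1{N_s}\sum_{n\le N_s}|f-a_j|^2$ over the blocks. On the block where $f=a_i$, $i\ge j$, the contribution is controlled by $|a_i-a_j|^2$; the control is uniform because each $s\ge s_i$ satisfies the bounds for all $j\le i$. This gives $\|f-a_j\|_{\mathbf{N}}\ll 2^{-j}$, with the bookkeeping handled by a triangle inequality over consecutive $a_i-a_{i+1}$ with summable errors. Hence $f\in\mathcal{L}^2(\A,\mathbf{N})$ and $\Phi f=F$.

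Completeness of $\mathcal{L}^2(\A,\mathbf{N})$ then follows from the isometric isomorphism with the complete space $L^2(\mu)$.
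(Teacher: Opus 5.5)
Your proposal is correct and uses the same key device as the paper: diagonally gluing functions along blocks $(N_{s_k},N_{s_{k+1}}]$ of $\mathbf{N}$. The paper applies this gluing to an arbitrary Cauchy sequence in $\mathcal{L}^2(\A,\mathbf{N})$ to get completeness and then extends $\Phi$ from the dense subspaces $\A$ and $C(X)$, whereas you apply it to approximants in $\A$ to get surjectivity and deduce completeness; your Step 2 also makes explicit a point the paper treats as built into the definition, namely that $\E_{n\in\mathbf{N}}f\bar g$ exists for all $f,g\in\mathcal{L}^2(\A,\mathbf{N})$.
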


\begin{proof}
    Since the algebra $\A$ is a vector space, we immediately have that $\mathcal{L}^2(\A, \mathbf{N})$ is also a vector space.
    Sesquilinearity of $\innprod{\cdot}{\cdot}_{\mathbf{N}}$ is built into the definition, and $\innprod{\cdot}{\cdot}_{\mathbf{N}}$ is positive definite on $\mathcal{L}^2(\A, \mathbf{N})$, since we have identified functions related by $\sim_{\mathbf{N}}$.

    Let us check that $\mathcal{L}^2(\A, \mathbf{N})$ is complete.
    Suppose $(f_n)_{n \in \N}$ is a Cauchy sequence in $\mathcal{L}^2(\A, \mathbf{N})$.
    Let $\epsilon_i \to 0$ such that for every $j \geq i$,
    \begin{equation*}
        \E_{n \in \mathbf{N}}|f_j(n) - f_i(n)|^2 < \epsilon_i.
    \end{equation*}
    Construct a sequence $(S_j)_{j \in \N}$ inductively to satisfy:
    \begin{itemize}
        \item for $i \in \N$, $j \geq i$, and $s \geq S_j$, then
            \begin{equation*}
                \frac{1}{N_s} \sum_{n=1}^{N_s} |f_j(n) - f_i(n)|^2 < \epsilon_i;
            \end{equation*}
        \item for $i \in \N$ and $j \geq i$,
            \begin{equation*}
                \frac{1}{N_{S_j} - N_{S_{j-1}}} \sum_{n=N_{S_{j-1}}+1}^{N_{S_j}} |f_j(n) - f_i(n)|^2 < \epsilon_i;
            \end{equation*}
            and
        \item for $i \in \N$,
            \begin{equation*}
                \frac{1}{N_{S_i}} \sum_{j=1}^{i-1} \sum_{n=N_{S_{j-1}}+1}^{N_{S_j}} |f_j(n) - f_i(n)|^2 < \epsilon_i.
            \end{equation*}
    \end{itemize}
    Define $f(n) = f_i(n)$ for $N_{S_{i-1}} + 1 \leq n \leq N_{S_i}$.
    Then for $k > i$ and $S_{k-1} < s < S_k$, we have
    \begin{align*}
        \sum_{n=1}^{N_s} |f(n) - f_i(n)|^2 = &~ \sum_{j=1}^{i-1} \sum_{n=N_{S_{j-1}}+1}^{N_{S_j}} |f_j(n) - f_i(n)|^2 \\
         & + \sum_{j=i}^{k-1} \sum_{n=N_{S_{j-1}}+1}^{N_{S_j}} |f_j(n) - f_i(n)|^2 + \sum_{n=N_{S_{k-1}}+1}^{N_s} |f_{k-1}(n) - f_i(n)|^2 \\
         & < \epsilon_i N_{S_i} + \sum_{j=i}^{k-1} \epsilon_i (N_{S_j} - N_{S_{j-1}}) + \epsilon_i N_s \leq 2 \epsilon_i N_s.
    \end{align*}
    Therefore, $f_i \to f$ in $\mathcal{L}^2(\A, \mathbf{N})$.

    To see that $\Phi$ extends to an isometric isomorphism $\Phi : \mathcal{L}^2(\A, \mathbf{N}) \to L^2(X, \mu)$, it suffices to note that $\Phi$ is an isometric isomorphism between $\A$ and $C(X)$, which are dense subspaces of $\mathcal{L}^2(\A, \mathbf{N})$ and $L^2(X, \mu)$ respectively.
\end{proof}

We denote the norm on $\mathcal{L}^2(\A, \mathbf{N})$ by $\|f\|_{2,\mathbf{N}} = \left( \innprod{f}{f}_{\mathbf{N}} \right)^{1/2} = \left( \E_{n \in \mathbf{N}}|f(n)|^2 \right)^{1/2}$.\nomenclature{$\lVert\cdot\rVert_{2,\mathbf{N}}$}{$L^2$ norm along a sequence $\mathbf{N}$}

%SUBSECTION
\subsection{Host--Kra seminorms on $\N$} \label{sec: discrete ergodic thm}

Let $\A \subseteq \ell^{\infty}(\Z)$ be a separable translation-invariant *-algebra.
Since we may view $L^2(\mu)$ for a Furstenberg system $(X, \mu, T)$ of $\A$ as the space $\mathcal{L}^2(\A,\mathbf{N})$, we may also lift the Host--Kra seminorms to $\A$.
Define the \emph{Host--Kra $U^k$-seminorm} $\|f\|_{U^k(\mathbf{N})}$ by\nomenclature{$\lVert\cdot\rVert_{U^k(\mathbf{N})}$}{Host--Kra seminorm along a sequence $\mathbf{N}$}
\begin{equation*}
    \|f\|_{U^k(\mathbf{N})} = \|\Phi(f)\|_{U^k},
\end{equation*}
where $\Phi : \A \to C(X)$ is the Gelfand representation.
More explicitly,
\begin{equation*}
    \|f\|_{U^k(\mathbf{N})} = \left( \lim_{H \to \infty} \frac{1}{H^k} \sum_{\bm{h} \in [H]^k} \lim_{s \to \infty} \frac{1}{N_s} \sum_{n=1}^{N_s} \prod_{\bm{\omega}\in\{0,1\}^k} C^{|\omega|} f(n+\bm{\omega}\cdot\bm{h})\right)^{1/2^k}.
\end{equation*}

We will now interpret the structure theorems for the Host--Kra seminorms in the context of functions on $\N$ and describe the relationship between the $U^1(\mathbf{N})$-seminorm and the ergodic decomposition of the Furstenberg system $(X, \mu, T)$.

%SUBSUBSECTION
\subsubsection{The $U^1(\mathbf{N})$-seminorm and local ergodicity}

Call a bounded function $f : \N \to \C$ \emph{locally invariant} along $\mathbf{N}$ if
\begin{equation*}
    \limsup_{s \to \infty} \frac{1}{N_s} \sum_{n=1}^{N_s} |f(n+1) - f(n)| = 0.
\end{equation*}
As defined in \cref{def: locally ergodic}, a bounded function $f : \N \to \C$ is \emph{locally ergodic} along $\mathbf{N}$ if
\begin{equation*}
    \limsup_{H \to \infty} \limsup_{s \to \infty} \frac{1}{N_s} \sum_{n=1}^{N_s} \left| \frac{1}{H} \sum_{h=1}^H f(n+h)\right| = 0.
\end{equation*}
Note that if $f$ admits correlations along $\mathbf{N}$, then $f$ is locally ergodic if and only if $\|f\|_{U^1(\mathbf{N})} = 0$.
Reinterpreting \cref{thm: ergodic theorem}, we have the following decomposition theorem showing that local invariance and local ergodicity are complementary notions.

\begin{Theorem} \label{thm: discrete ergodic theorem}
    Let $f : \N \to [0,1]$, and suppose $\mathbf{N} = (N_s)_{s \in \N}$ is a sequence with $\lim_{s \to \infty} N_s = \infty$ such that $f$ admits correlations along $\mathbf{N}$.
    Then there exists a decomposition $f = f_{\inv} + f_{\erg}$ such that
    \begin{enumerate}[label=(\roman*),leftmargin=*]
        \item\label{itm disc ET i} (Nonnegativity) $0 \leq f_{\inv} \leq 1$ and $\E_{n \in \mathbf{N}} f_{\erg}(n) = 0$.
        \item\label{itm disc ET ii} (Structure) $f_{\inv}$ is locally invariant.
        \item\label{itm disc ET iii} (Uniformity) $f_{\erg}$ is locally ergodic.
        \item\label{itm disc ET iv} (Orthogonality) $\innprod{f_{\inv}}{f_{\erg}}_{\mathbf{N}} = 0$.
    \end{enumerate}
\end{Theorem}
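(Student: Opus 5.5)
We will transport \cref{thm: ergodic theorem} to $\N$ through the Furstenberg system and the isometry of \cref{thm_farhangi_2.1}. Let $\A = \A_f$, and let $(X,\mu,T)$ be the Furstenberg system of $f$ along $\mathbf{N}$. Let $\Phi\colon \mathcal{L}^2(\A,\mathbf{N})\to L^2(X,\mu)$ be the isometric isomorphism. Put $F=\Phi(f)$, which is $[0,1]$-valued because $\Phi$ is a C*-isomorphism on $\overline{\A}$ and $f$ is real with $0\le f\le 1$. Apply \cref{thm: ergodic theorem} to $F$ to get $F=F_{\inv}+F_{\erg}$ with $F_{\inv}=\E[F\mid\mathcal{I}]$. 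Then take $f_{\inv}$ to be a representative of $\Phi^{-1}(F_{\inv})$ and set $f_{\erg}=f-f_{\inv}$.

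The first step is to choose the representative with $0\le f_{\inv}\le 1$ pointwise. Any representative $g$ of $\Phi^{-1}(F_{\inv})$ can be replaced by its truncation $\min(\max(\Re g,0),1)$. This changes $g$ only on a set that is null in $\|\cdot\|_{2,\mathbf{N}}$, because $\Phi$ intertwines taking real parts and positivity; this holds first on $\A$ and then on the closure by approximation. Orthogonality \ref{itm disc ET iv} then follows directly from $\innprod{f}{g}_{\mathbf{N}}=\innprod{\Phi f}{\Phi g}_{L^2(\mu)}$. Likewise $\E_{n\in\mathbf{N}}f_{\erg}(n)=\innprod{f_{\erg}}{1}_{\mathbf{N}}=\int F_{\erg}\,d\mu=0$, using $\Phi(1)=1$.

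The second step is the two dynamical properties, and we need one more compatibility fact: $\Phi$ intertwines the shift $\tau$ with the Koopman operator $T$ on all of $\mathcal{L}^2(\A,\mathbf{N})$. The shift preserves $\|\cdot\|_{2,\mathbf{N}}$ on $\A$, since the averages are shift-invariant. It therefore extends continuously, so $\Phi(\tau g)=T\Phi(g)$ for every $g\in\mathcal{L}^2$.

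For local invariance \ref{itm disc ET ii}, Cauchy--Schwarz gives
\[
\limsup_s \frac{1}{N_s}\sum_{n\le N_s}|f_{\inv}(n+1)-f_{\inv}(n)| \le \|\tau f_{\inv}-f_{\inv}\|_{2,\mathbf{N}} = \|TF_{\inv}-F_{\inv}\|_{L^2}=0 .
\]
For local ergodicity \ref{itm disc ET iii}, Cauchy--Schwarz again gives
\[
\limsup_s\frac{1}{N_s}\sum_{n\le N_s}\Bigl|\frac1H\sum_{h=1}^H f_{\erg}(n+h)\Bigr| \le \Bigl\|\frac1H\sum_{h=1}^H T^hF_{\erg}\Bigr\|_{L^2}.
\]
By the mean ergodic theorem the right-hand side tends to $\|\E[F_{\erg}\mid\mathcal{I}]\|_{L^2}=0$ as $H\to\infty$.

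The main technical point is that $f_{\inv}$ is only an $\mathcal{L}^2$ limit and not an element of $\A$. The norms above are therefore $\limsup$ quantities, and every identity has to be justified by approximation. Concretely, take $a_k\in\A$ with $a_k\to f_{\inv}$ in $\|\cdot\|_{2,\mathbf{N}}$. The $\limsup$ seminorm is subadditive and satisfies $\|\tau g\|_{2,\mathbf{N}}=\|g\|_{2,\mathbf{N}}$ in the $\limsup$ sense, which holds because $N_s\to\infty$ makes boundary terms negligible. These two facts let every estimate pass from $a_k$ to $f_{\inv}$.
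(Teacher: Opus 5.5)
Your proposal is correct and is essentially the paper's first proof of this theorem. You pass to the Furstenberg system via the extended Gelfand map $\Phi$, apply \cref{thm: ergodic theorem} to $\Phi(f)$, and pull the decomposition back, while also supplying details the paper leaves implicit: a $[0,1]$-valued representative, the intertwining of $\tau$ with $T$, and Cauchy--Schwarz to reach the $\ell^1$-type averages in the definitions of local invariance and local ergodicity.

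One small correction: the shift is not well defined on $\sim_{\mathbf{N}}$-classes of unbounded functions. For example, with $N_s=2^{2^s}$, the function equal to $\sqrt{N_s}$ at $N_s+1$ and $0$ elsewhere is $\sim_{\mathbf{N}} 0$, but its shift is not. So $\Phi(\tau g)=T\Phi(g)$ should be asserted only for bounded $g$. That is all you actually use, since $f_{\inv}$ and $f_{\erg}$ are bounded after your truncation.
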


\begin{proof}
    Let $\A_f$ be the translation-invariant *-algebra generated by $f$.
    Let $(X, \mu, T)$ be the Furstenberg system of $f$ along $\mathbf{N}$ and $\Phi : \mathcal{L}^2(\A_f, \mathbf{N}) \to L^2(\mu)$ the extension of the Gelfand representation provided by \cref{thm_farhangi_2.1}.
    We apply \cref{thm: ergodic theorem} to $g = \Phi(f)$ to obtain a decomposition $g = g_{\inv} + g_{\erg}$.
    Taking $f_{\inv} = \Phi^{-1}(g_{\inv})$ and $f_{\erg} = \Phi^{-1}(g_{\erg})$ provides the desired decomposition of $f$.
\end{proof}

The proof of \cref{thm: discrete ergodic theorem} that we have just presented is rather abstract, relying on the Furstenberg correspondence principle and the mean ergodic theorem to produce $f_{\inv}$ as a projection of $f$ onto the subspace of locally invariant functions.
For some applications and for further refinements of \cref{thm: discrete ergodic theorem} such as \cref{thm:U^1 structure theorem} below, it is useful to give an alternative proof that gives a more concrete description of the function $f_{\inv}$.

The following technical lemma is key in our next proof of \cref{thm: discrete ergodic theorem}. Additionally, it will play an important role later in proving the structure theorems for the intermediate scale uniformity seminorms introduced in \cref{sec: intermediate scale seminorms}.

\begin{Lemma}
\label{lem_decomposition_in_the_limit}
Let $f\colon \N\to\C$ be a function, and let $\mathbf{N} = (N_s)_{s \in \N}$ be a sequence of natural numbers with $\lim_{s\to\infty}N_s=\infty$.
Suppose for every $k\in\N$ there exist $f_{k,1},f_{k,2}\colon \N\to\C$ such that
\begin{itemize}
\item $\|f_{k,1}\|_\infty,\|f_{k,2}\|_\infty\leq \|f\|_\infty$;
\item $f=f_{k,1}+f_{k,2}$;
\item $\innprod{f_{k,1}}{f_{\ell,1}}_{\mathbf{N}}$, $\innprod{f_{k,2}}{f_{\ell,2}}_{\mathbf{N}}$, and $\innprod{f_{k,1}}{f_{\ell,2}}_{\mathbf{N}}$ are well-defined for all $k,\ell\in\N$.
\item $\lim_{k\to\infty} \sup_{\ell\geq k} |\innprod{f_{k,1}}{f_{\ell,2}}_{\mathbf{N}}|=0$;
\end{itemize}
Then there exists an increasing sequence of natural numbers $(k_t)_{t\in\N}$ such that if
\[
f_1(n)=\sum_{t\in \N} \1_{(N_{t-1},N_t]}(n) f_{k_t,1}(n)
\qquad\text{and}\qquad
f_2(n)=\sum_{t\in \N} \1_{(N_{t-1},N_t]}(n) f_{k_t,2}(n),
\]
then we have:
\begin{itemize}
\item $f=f_1+f_2$;
\item $\innprod{f_{1}}{f_{2}}_{\mathbf{N}}=0$;
\item $\lim_{k\to\infty} \| f_{1} -  f_{k,1} \|_{2, \mathbf{N}}=
\lim_{k\to\infty} \| f_{2} -  f_{k,2} \|_{2, \mathbf{N}}=0$.
\end{itemize} 
\end{Lemma}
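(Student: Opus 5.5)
The plan is a diagonal construction that glues the decompositions $f_{k,1},f_{k,2}$ together along the blocks $(N_{t-1},N_t]$, with indices growing slowly enough that the glued functions inherit the limiting behaviour. Note that $f=f_1+f_2$ holds automatically for any choice of $(k_t)$, since $f=f_{k,1}+f_{k,2}$ pointwise and the blocks partition $\N$ (with $N_0=0$). The real content is the orthogonality and the convergence statements.

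\textbf{Step 1: $(f_{k,1})_k$ is Cauchy in $\|\cdot\|_{2,\mathbf{N}}$.} Put $\epsilon_k=\sup_{\ell\ge k}|\innprod{f_{k,1}}{f_{\ell,2}}_{\mathbf{N}}|\to 0$. For $\ell\ge k$ we have $g:=f_{k,1}-f_{\ell,1}=f_{\ell,2}-f_{k,2}$, so
\[
\|g\|_{2,\mathbf{N}}^2=\innprod{f_{k,1}}{f_{\ell,2}}_{\mathbf{N}}-\innprod{f_{k,1}}{f_{k,2}}_{\mathbf{N}}-\innprod{f_{\ell,1}}{f_{\ell,2}}_{\mathbf{N}}+\innprod{f_{\ell,1}}{f_{k,2}}_{\mathbf{N}}.
\]
The first three terms are bounded by $\epsilon_k$ or $\epsilon_\ell$. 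For the last term, write $f_{\ell,1}=f_{k,1}-g$ and $f_{k,2}=f_{\ell,2}+g$ and expand. Every piece is then controlled by $\epsilon_k$, except a $-\|g\|^2$ term that moves to the left-hand side. This gives $\|g\|^2\le C\epsilon_k$, up to a careful bookkeeping of signs. Hence $f_{k,1}$ and $f_{k,2}$ are both Cauchy, and $\innprod{f_{k,1}}{f_{k,2}}_{\mathbf{N}}\to 0$.

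\textbf{Step 2: choice of $(k_t)$.} All the quantities involved are limits along $s$ of finite averages. Choose $k_1<k_2<\cdots$ with $\|f_{k_j,1}-f_{k_{j+1},1}\|_{2,\mathbf{N}}^2<2^{-j}$. Then, inductively, choose thresholds $S_1<S_2<\cdots$ such that for $s\ge S_j$ the finite averages $\frac1{N_s}\sum_{n\le N_s}$ of $|f_{k_i,1}-f_{k_{i'},1}|^2$ and $f_{k_i,1}\overline{f_{k_i,2}}$, for $i,i'\le j+1$, are within $2^{-j}$ of their limits. Also require $N_{S_j}$ to be so large that the earlier blocks have negligible relative weight, exactly as in the proof of \cref{thm_farhangi_2.1}. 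Finally, let the glued function use $f_{k_j,\cdot}$ on $(N_{S_{j-1}},N_{S_j}]$; this is the precise meaning of $k_t$, repeating indices on consecutive blocks.

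\textbf{Step 3: estimates.} For $S_{m-1}\le s<S_m$ and $i\le m$, split $\frac1{N_s}\sum_{n\le N_s}|f_1-f_{k_i,1}|^2$ over the blocks. Blocks with $j<i$ have total weight $o(1)$, bounded using $\|f\|_\infty$. A block with $j\ge i$ contributes roughly $2^{-i}$ times its relative length, using the telescoping bound from the choice of $k_j$ and the fact that block averages are close to the full limits; this is the same three-bullet trick as in \cref{thm_farhangi_2.1}. This yields $\limsup_s\le C2^{-i/2}$, so $\|f_1-f_{k,1}\|_{2,\mathbf{N}}\to0$, and similarly for $f_2$. Orthogonality follows the same way: on the $j$-th block, $f_1\overline{f_2}=f_{k_j,1}\overline{f_{k_j,2}}$, whose averages are within $O(2^{-j})+|\innprod{f_{k_j,1}}{f_{k_j,2}}_{\mathbf{N}}|$ of zero. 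Hence $\innprod{f_1}{f_2}_{\mathbf{N}}$ exists and equals $0$.

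\textbf{Main obstacle.} The delicate part is the block bookkeeping in Step 3. We only control averages over initial segments $[N_s]$, not over blocks, so block averages must be obtained as differences of two initial-segment averages. The thresholds therefore have to be chosen so that $N_{S_{j-1}}/N_{S_j}$ is small. In particular, the partial block ending at $N_s$ must be handled using the closeness of averages at the scale $N_s$ itself.
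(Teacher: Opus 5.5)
Your Step~1 fails, and it (not the block bookkeeping) is the analytic heart of the lemma. From $g=f_{k,1}-f_{\ell,1}=f_{\ell,2}-f_{k,2}$ you get $f_{k,2}=f_{\ell,2}-g$, not $f_{\ell,2}+g$. With the correct sign,
\[
\innprod{f_{\ell,1}}{f_{k,2}}_{\mathbf N}=\innprod{f_{k,1}}{f_{\ell,2}}_{\mathbf N}-\innprod{f_{k,1}}{g}_{\mathbf N}-\innprod{g}{f_{\ell,2}}_{\mathbf N}+\|g\|_{2,\mathbf N}^2 .
\]
Here $\|g\|_{2,\mathbf N}^2$ enters with a plus sign and cancels the left-hand side, so your expansion reduces to the identity $0=0$.

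In fact no inequality $\|f_{k,1}-f_{\ell,1}\|_{2,\mathbf N}^2\le C\epsilon_k$ can hold. Take $f=\1_{2\N}$, $f_{1,1}=\frac12$, $f_{1,2}=f-\frac12$, and $f_{k,1}=f$, $f_{k,2}=0$ for $k\ge 2$. All hypotheses hold with $\epsilon_k=0$ for every $k$, yet $\|f_{1,1}-f_{2,1}\|_{2,\mathbf N}^2=\frac14$. The hypothesis gives only one-sided orthogonality (think of $f_{k,1}=P_kf$ for an increasing chain of projections $P_k$). So the Cauchy property is genuinely asymptotic and cannot be extracted from algebra on a single pair $(k,\ell)$.

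What is missing is a monotonicity argument that uses the bound $\|f_{k,1}\|_\infty\le\|f\|_\infty$. For $\ell\ge k$, writing $f=f_{k,1}+f_{k,2}=f_{\ell,1}+f_{\ell,2}$ gives
\[
\|f_{k,1}\|_{2,\mathbf N}^2=\innprod{f_{k,1}}{f}_{\mathbf N}-\innprod{f_{k,1}}{f_{k,2}}_{\mathbf N}=\innprod{f_{k,1}}{f_{\ell,1}}_{\mathbf N}+\Oh(\epsilon_k),
\]
and hence
\[
0\le\|f_{\ell,1}-f_{k,1}\|_{2,\mathbf N}^2=\|f_{\ell,1}\|_{2,\mathbf N}^2-\|f_{k,1}\|_{2,\mathbf N}^2+\Oh(\epsilon_k).
\]
Thus $a_k=\|f_{k,1}\|_{2,\mathbf N}^2$ is almost nondecreasing, in the sense that $a_\ell\ge a_k-\Oh(\epsilon_k)$ for $\ell\ge k$. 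It is also bounded by $\|f\|_\infty^2$, so it converges, and then the right-hand side tends to $0$ uniformly in $\ell\ge k$. This is how the paper obtains the Cauchy property.

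Once that is in place, your Steps~2--3 are the same diagonal gluing as in the paper and go through. For the $L^2$ estimate, it is simplest to bound block sums of the nonnegative $|f_1-f_{k,1}|^2$ by initial-segment sums. Orthogonality then follows from this convergence, $\innprod{f_{k,1}}{f_{k,2}}_{\mathbf N}\to 0$, and Cauchy--Schwarz, without tracking $f_{k,1}\overline{f_{k,2}}$ block by block.
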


\begin{proof}
By multiplying $f$ with a positive constant if necessary, we can assume without loss of generality that $\|f\|_\infty\leq 1$.
Using $f=f_{k,1}+f_{k,2}$ and $\sup_{\ell\geq k}|\innprod{f_{k,1}}{f_{\ell,2}}_{\mathbf{N}}|=\oh_{k\to\infty}(1)$, we observe that uniformly over all $\ell\geq k$,
\begin{align*}
\|f_{k,1}\|_{2, \mathbf{N}}^2
&=\innprod{f_{k,1}}{f_{k,1}}_{\mathbf{N}}
\\
&=\innprod{f_{k,1}}{f}_{\mathbf{N}}+\oh_{k\to\infty}(1)
\\
&=\innprod{f_{k,1}}{f_{\ell,1}}_{\mathbf{N}}+\oh_{k\to\infty}(1)
\\
&\leq \|f_{k,1}\|_{2, \mathbf{N}} \cdot \|f_{\ell,1}\|_{2, \mathbf{N}} +\oh_{k\to\infty}(1),
\end{align*}
where the last inequality follows from the Cauchy-Schwarz inequality.
If $0\leq z, z' \leq \|f\|_\infty$ are two limit points of the sequence $\|f_{k,1}\|_{2,\mathbf{N}}$, then this inequality yields $z^2 \leq zz'$ and $(z')^2 \leq zz'$.
The only way for both inequalities to be satisfied is if $z = z'$, so the limit $\lim_{k\to\infty} \|f_{k,1}\|_{2, \mathbf{N}}$ exists.
Then, also uniformly over all $\ell\geq k$, we get
\begin{align*}
\|f_{\ell,1}-f_{k,1}\|_{2, \mathbf{N}}^2
&= \|f_{\ell,1}\|_{2, \mathbf{N}}^2 +\|f_{k,1}\|_{2, \mathbf{N}}^2 - 2\underbrace{\Re\big(\innprod{f_{k,1}}{f_{\ell,1}}_{\mathbf{N}}\big)}_{\|f_{k,1}\|_{2, \mathbf{N}}^2+\oh_{k\to\infty}(1)}
\\
&= \|f_{\ell,1}\|_{2, \mathbf{N}}^2 -\|f_{k,1}\|_{2, \mathbf{N}}^2 +\oh_{k\to\infty}(1)
\\
&= \oh_{k\to\infty}(1).
\end{align*}
This shows that $f_{k,1}$ is a Cauchy sequence with respect to $\|.\|_{2,\mathbf{N}}$.
Let $\epsilon_k \to 0$ such that for every $\ell \geq k$,
    \begin{equation*}
        \lim_{s\to\infty}\frac{1}{N_s}\sum_{n =1}^{N_s}|f_{\ell,1}(n) - f_{k,1}(n)|^2 < \epsilon_k.
    \end{equation*}
    Construct a sequence $(S_j)_{j \in \N}$ inductively to satisfy:
    \begin{itemize}
        \item for $k \in \N$, $\ell \geq k$, and $s \geq S_{\ell-1}$,
            \begin{equation*}
                \frac{1}{N_s} \sum_{n=1}^{N_s} |f_{\ell,1}(n) - f_{k,1}(n)|^2 < \epsilon_k;
            \end{equation*}
        \item for $k \in \N$ and $\ell \geq k$,
            \begin{equation*}
                \frac{1}{N_{S_\ell} - N_{S_{\ell-1}}} \sum_{n=N_{S_{\ell-1}}+1}^{N_{S_\ell}} |f_{\ell,1}(n) - f_{k,1}(n)|^2 < \epsilon_k;
            \end{equation*}
        \item for $k \in \N$,
            \begin{equation*}
                \frac{1}{N_{S_k}} \sum_{j=1}^{k-1} \sum_{n=N_{S_{j-1}}+1}^{N_{S_j}} |f_{\ell,1}(n) - f_{k,1}(n)|^2 < \epsilon_k.
            \end{equation*}
    \end{itemize}
    Now define $k_t=\min\{m\in\N: t\leq S_m\}$
    and consider
    \[
    f_1(n)=\sum_{t\in \N} \1_{(N_{t-1},N_t]}(n) f_{k_t,1}(n)
\quad\text{and}\quad
f_2(n)=\sum_{t\in \N} \1_{(N_{t-1},N_t]}(n) f_{k_t,2}(n).
    \]
    The sequence $(k_t)_{t\in\N}$ has the property that $k_t=m$ for all $t\in\N$ with $S_{m-1}<t\leq S_m$, or equivalently, for all $t\in\N$ we have
    $S_{k_t-1}< t\leq S_{k_t}$.
    For every $k\in\N$ and every sufficiently large $s\in\N$ we thus have
    \begin{align*}
        \sum_{n=1}^{N_s} |f_1(n) - f_{k,1}(n)|^2 = &~ 
        \sum_{1\leq t\leq s} \sum_{n=N_{t-1}+1}^{N_t}
        |f_{k_t,1}(n) - f_{k,1}(n)|^2
        \\
        = &~
        \sum_{j=1}^{k-1} \sum_{n=N_{S_{j-1}}+1}^{N_{S_j}} |f_{\ell,1}(n) - f_{k,1}(n)|^2 \\
         & + \sum_{j=k}^{k_s-1} \sum_{n=N_{S_{j-1}}+1}^{N_{S_j}} |f_{\ell,1}(n) - f_{k,1}(n)|^2 + \sum_{n=N_{S_{k_s-1}}+1}^{N_s} |f_{k_s}(n) - f_{k,1}(n)|^2 \\
         & < \epsilon_k N_{S_k} + \sum_{j=k}^{k_s-1} \epsilon_k (N_{S_j} - N_{S_{j-1}}) + \epsilon_k N_s \leq 2 \epsilon_k N_s.
    \end{align*}
This proves that $\lim_{k\to\infty} \| f_{1} -  f_{k,1} \|_{2, \mathbf{N}}=0$. Likewise, we get $\lim_{k\to\infty} \| f_{2} -  f_{2,1} \|_{2, \mathbf{N}}=0$. Since $\lim_{k\to\infty} \innprod{f_{k,1}}{f_{k,2}}_{\mathbf{N}}=0$, we obtain that 
 $\innprod{f_1}{f_2}_{\mathbf{N}}=0$ as desired. 
\end{proof}

We now give a second proof of \cref{thm: discrete ergodic theorem}.

\begin{proof}[Proof of \cref{thm: discrete ergodic theorem}]
Define for every $k\in\N$ the function 
\[
f_{k,\inv}(n)=\sum_{t\in\N} \1_{[2^kt, 2^k(t+1))}(n) \Bigg(\frac{1}{2^k}\sum_{h=0}^{2^k-1} f(2^kt+h)\Bigg)
\]
and let $f_{k,\erg}=f-f_{k,\inv}$. Then we have
\[
\innprod{f_{k,\erg}}{f_{\ell,\inv}}_{\mathbf{N}}=0,\qquad\forall k, \ell\in\N~\text{with}~k\leq \ell.
\]
Moreover, since $f$ admits correlations along $\mathbf{N}$, we have that
$\innprod{f_{k,\inv}}{f_{\ell,\inv}}_{\mathbf{N}}$, $\innprod{f_{k,\inv}}{f_{\ell,\erg}}_{\mathbf{N}}$, and $\innprod{f_{k,\erg}}{f_{\ell,\erg}}_{\mathbf{N}}$ are well-defined for all $k,\ell\in\N$. 
We can thus apply \cref{lem_decomposition_in_the_limit} with $f_{k,1}=f_{k,\inv}$ and $f_{k,2}=f_{k,\erg}$ to find two functions $f_{\inv}\colon \N\to [0,1]$ and $f_{\erg}\colon \N\to[-1,1]$ such that $f=f_{\inv}+f_{\erg}$, $\innprod{f_{\inv}}{f_{\erg}}_{\mathbf{N}}=0$, and
$\lim_{k\to\infty} \| f_{\inv} -  f_{k,\inv} \|_{2, \mathbf{N}}=
\lim_{k\to\infty} \| f_{\erg} -  f_{k,\erg} \|_{2, \mathbf{N}}=0$.
Note that for all $k\in\N$, $f_{k,\erg}$ is locally ergodic along $\mathbf{N}$, and hence $f_{\erg}$ is locally ergodic along $\mathbf{N}$. Moreover, for all $k\in\N$,
\[
\lim_{s\to\infty}\frac{1}{N_s}\sum_{n=1}^{N_s}|f_{k,\inv}(n+1)-f_{k,\inv}(n)|=\Oh(2^{-k}),
\]
hence $f_{\inv}$ is locally invariant along $\mathbf{N}$.
\end{proof}

Our next result uses this decomposition theorem to relate locally ergodicity to the ergodic decomposition of an associated Furstenberg system.

\begin{Theorem} \label{thm: U^1 uniformity implies constant measure in ergodic components}
    Let $A \subseteq \N$, and suppose $\mathbf{N} = (N_s)_{s \in \N}$ is a sequence with $\lim_{s \to \infty} N_s = \infty$ such that $\1_A$ admits correlations along $\mathbf{N}$.
    Let $(X, \mu, T)$ be the Furstenberg system of $A$ along $\mathbf{N}$.
    Let $\A \subseteq \ell^{\infty}(\Z)$ be the translation-invariant *-algebra generated by $\1_A$, and let $\Phi : \A \to C(X)$ be the Gelfand representation.
    Let $E \subseteq X$ be the clopen set defined by $\1_E = \Phi(\1_A)$.
    Suppose $\mu = \int_X \mu_x~d\mu(x)$ is an ergodic decomposition of $\mu$.
    If $\1_A - d_{\mathbf{N}}(A)$ is locally ergodic, where $d_{\mathbf{N}}(A) = \E_{n \in \mathbf{N}} \1_A(n)$, then $\mu_x(E) = d_{\mathbf{N}}(A)$ for almost every $x \in X$.
\end{Theorem}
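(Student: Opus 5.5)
We transfer local ergodicity of $\1_A - \alpha$, where $\alpha = d_{\mathbf{N}}(A)$, to the Furstenberg system. There it becomes the statement that the conditional expectation of $\1_E$ onto the invariant $\sigma$-algebra $\mathcal{I}$ is constant. That conditional expectation is exactly $x\mapsto \mu_x(E)$.

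First, since $\1_A$ admits correlations along $\mathbf{N}$, so does $\1_A - \alpha$. By the remark following the definition of local ergodicity in \cref{sec: discrete ergodic thm}, local ergodicity of $\1_A-\alpha$ is then equivalent to $\|\1_A - \alpha\|_{U^1(\mathbf{N})} = 0$. By the definition of the Host--Kra seminorms on $\N$, this seminorm is $\|\Phi(\1_A - \alpha)\|_{U^1} = \|\1_E - \alpha\|_{U^1}$. The equality $\Phi(\1_A - \alpha) = \1_E - \alpha$ holds because $\Phi$ is an algebra isomorphism sending constants to constants. The one point to check is that the constant function $1$ lies in $\A$; it does, since $k=0$ is allowed in the description of $\A_u$, and $\Phi(1)=1$. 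Even without this, one can work directly with the explicit formula
\[
\|f\|_{U^1(\mathbf{N})}^2=\lim_{H\to\infty}\frac1H\sum_{h=1}^H\E_{n\in\mathbf{N}} f(n)\overline{f(n+h)},
\]
which for $f=\1_A-\alpha$ expands into correlations of $\1_A$. These correlations equal $\int \1_E\, T^h\1_E\,d\mu$ together with the terms $\alpha\mu(E)$ and $\alpha^2$, using $\mu(E)=\E_{n\in\mathbf{N}}\1_A(n)=\alpha$.

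Second, I would use the identity stated in the paper, $\|g\|_{U^1} = \|\E[g\mid\mathcal{I}]\|_{L^2(\mu)}$, which follows from the mean ergodic theorem. It gives $\E[\1_E - \alpha \mid \mathcal{I}] = 0$ in $L^2(\mu)$, that is, $\E[\1_E\mid\mathcal{I}] = \alpha$ $\mu$-almost everywhere.

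Third, by the definition of an ergodic decomposition (\cref{def: ergodic decomp}), $\E[\1_E\mid\mathcal{I}](x) = \int \1_E\,d\mu_x = \mu_x(E)$ for $\mu$-almost every $x$. Combining this with the second step gives $\mu_x(E)=\alpha = d_{\mathbf{N}}(A)$ almost everywhere. The decomposition is unique up to null sets, so the conclusion holds for whichever ergodic decomposition is given.

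The only step needing care is the first: identifying the discrete local-ergodicity condition with the vanishing of the $U^1$ seminorm of $\1_E-\alpha$ in $L^2(\mu)$. This requires that the limits in $H$ and $s$ commute appropriately, which is exactly the claimed equivalence. I would verify it directly via Cauchy--Schwarz. On one side,
\[
\E_{n\in\mathbf{N}}\Bigl|\frac1H\sum_{h=1}^H f(n+h)\Bigr|^2 = \Bigl\|\frac1H\sum_{h=1}^H T^h g\Bigr\|_{L^2}^2,
\]
where $g=\1_E-\alpha$, and this tends to $\|\E[g\mid\mathcal{I}]\|^2$ by the mean ergodic theorem. On the other side, the $L^1$ average appearing in the definition of local ergodicity is bounded above by the square root of this $L^2$ average. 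It is also bounded below by the $L^2$ average, since the averages are bounded by $1$. Hence the two conditions coincide.
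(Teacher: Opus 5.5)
Your proof is correct and is essentially the paper's argument. Both transfer local ergodicity of $\1_A-\alpha$ to the identity $\E[\1_E\mid\mathcal{I}]=\alpha$ in the Furstenberg system, and then read off $\mu_x(E)$ from the definition of the ergodic decomposition. The only difference is packaging. The paper works through the decomposition $f=f_{\inv}+f_{\erg}$ of \cref{thm: discrete ergodic theorem}: it asserts that local ergodicity forces $f_{\inv}=d_{\mathbf{N}}(A)$, so $\E[\1_E\mid\mathcal{I}]=\Phi(f_{\inv})$ is constant. You instead use $\|\1_E-\alpha\|_{U^1}=\|\E[\1_E-\alpha\mid\mathcal{I}]\|_{L^2}$ and check the needed implication directly, via the mean ergodic theorem and the bound $|x|^2\le|x|$. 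This makes explicit a step the paper leaves implicit.
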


\begin{proof}
    Let $f = \1_A$.
    The assumption that $\1_A - d_{\mathbf{N}}(A)$ is locally ergodic means that the decomposition $f = f_{\inv} + f_{\erg}$ takes the form $d_{\mathbf{N}} + (\1_A - d_{\mathbf{N}}(A))$.
    The Gelfand representation maps constants to constants, so
    \begin{equation*}
        \E[\1_E \mid \mathcal{I}] = \Phi(f_{\inv}) = \Phi(d_{\mathbf{N}}(A)) = d_{\mathbf{N}}(A)
    \end{equation*}
    almost everywhere.
    Moreover, by the definition of an ergodic decomposition (\cref{def: ergodic decomp}),
    \begin{equation*}
        \E[\1_E \mid \mathcal{I}](x) = \int_X \1_E~d\mu_x = \mu_x(E)
    \end{equation*}
    for almost every $x \in X$.
\end{proof}

Let $\mathcal{L}^{\infty}(\A,\mathbf{N})$ denote the space of bounded functions in $\mathcal{L}^2(\A, \mathbf{N})$, i.e. $\mathcal{L}^{\infty}(\A,\mathbf{N}) = \mathcal{L}^2(\A,\mathbf{N}) \cap \ell^{\infty}(\Z)$.
Note that $\A$ is $\mathcal{L}^2$-dense in $\mathcal{L}^{\infty}(\A, \mathbf{N})$.
Therefore, by continuity of $\Phi$ with respect to the $\mathcal{L}^2$ norm, the Gelfand representation $\Phi : \A \to C(X)$ extends to a C*-algebra isomorphism $\Phi : \mathcal{L}^{\infty}(\A, \mathbf{N}) \to L^{\infty}(\mu)$.
With this observation, we can prove the following theorem, which succinctly captures the sense in which the Furstenberg system encodes the statistical behavior of functions $f \in \A$.

\begin{Theorem} \label{thm: statistical representation}
    Let $\A \subseteq \ell^{\infty}(\Z)$ be a separable translation-invariant *-algebra that admits averages along a sequence $\mathbf{N}$.
    Let $(X, \mu, T)$ be the Furstenberg system of $\A$ along $\mathbf{N}$.
    Let $\Phi : \A \to C(X)$ be the Gelfand representation.
    Then for any $f \in \mathcal{L}^{\infty}(\A, \mathbf{N})$ and any continuous function $F : \C \to \C$,
    \begin{equation} \label{eq:ergodic decomp as random variable}
        \int_X F(\Phi(f))~d\mu = \E_{n \in \mathbf{N}}F(f(n)).
    \end{equation}

    In other words, if we view $f_s = f|_{\{1, \dots, N_s\}}$ as a random variable, where $\{1,\dots, N_s\}$ is given the uniform probability measure, then $f_s$ converges in distribution to the random variable $\Phi(f)$ as $s \to \infty$.
\end{Theorem}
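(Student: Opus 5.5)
The plan is to prove \eqref{eq:ergodic decomp as random variable} first for polynomials in $z$ and $\overline{z}$, then pass to general continuous $F$ by approximation, and finally extend from $f \in \A$ to $f \in \mathcal{L}^{\infty}(\A,\mathbf{N})$ by $\mathcal{L}^2$-density. Throughout, fix $f \in \mathcal{L}^{\infty}(\A,\mathbf{N})$ and set $R = \|f\|_\infty$. Since $\Phi \colon \mathcal{L}^{\infty}(\A,\mathbf{N}) \to L^{\infty}(\mu)$ is a C*-algebra isomorphism, it is isometric, so $\|\Phi(f)\|_{L^\infty(\mu)} \le R$. Hence both sides of \eqref{eq:ergodic decomp as random variable} only involve $F$ restricted to the closed disc $\overline{D}_R$ of radius $R$.

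\textbf{Step 1: polynomials, $f \in \A$.} Take first $f \in \A$ and $F(z) = \sum_{j,k} c_{jk} z^j \overline{z}^k$. Since $\A$ is a *-algebra, $F(f) = \sum c_{jk} f^j \overline{f}^k \in \A$. Because $\Phi$ is a *-homomorphism, $\Phi(F(f)) = F(\Phi(f))$. By the defining property of $\mu$,
\[
\int_X F(\Phi(f))\,d\mu = \E_{n\in\mathbf{N}} F(f)(n),
\]
and this limit exists because $\A$ admits averages along $\mathbf{N}$.

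\textbf{Step 2: polynomials, general $f$.} Now let $f \in \mathcal{L}^{\infty}(\A,\mathbf{N})$. The identity $\Phi(F(f)) = F(\Phi(f))$ still holds because the extended $\Phi$ is a C*-isomorphism. To identify $\int_X \Phi(g)\,d\mu$ with $\E_{n\in\mathbf{N}} g(n)$ for $g = F(f)$, approximate $g$ in $\|\cdot\|_{2,\mathbf{N}}$ by $a \in \A$. Then $\int \Phi(a)\,d\mu = \E_{n\in\mathbf{N}} a(n)$. By Cauchy--Schwarz,
\[
\limsup_{s\to\infty} \bigg| \frac{1}{N_s}\sum_{n=1}^{N_s} (g-a)(n) \bigg| \le \|g-a\|_{2,\mathbf{N}},
\]
and $\left|\int (\Phi(g)-\Phi(a))\,d\mu\right| \le \|\Phi(g-a)\|_{L^2(\mu)} = \|g-a\|_{2,\mathbf{N}}$ by the isometry of \cref{thm_farhangi_2.1}. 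It follows that $\E_{n\in\mathbf{N}} g(n)$ exists and equals $\int \Phi(g)\,d\mu$. (One could alternatively approximate $f$ itself by bounded elements of $\A$; that needs truncation to keep uniform bounds, so I will instead use that the bounded space $\mathcal{L}^{\infty}(\A,\mathbf{N})$ is an algebra on which $\Phi$ is already defined.)

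\textbf{Step 3: continuous $F$.} For a general continuous $F$, use Stone--Weierstrass to pick polynomials $P_m$ in $z,\overline{z}$ with $\sup_{\overline{D}_R}|F-P_m| \to 0$. Both $\frac{1}{N_s}\sum_{n\le N_s} F(f(n))$ and $\int F(\Phi(f))\,d\mu$ change by at most $\sup_{\overline{D}_R}|F-P_m|$ when $F$ is replaced by $P_m$, uniformly in $s$. Hence the limit defining $\E_{n\in\mathbf{N}}F(f(n))$ exists and equals $\int_X F(\Phi(f))\,d\mu$.

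\textbf{Distributional reformulation.} Taking $F$ to range over bounded continuous functions, Step 3 is exactly weak convergence of the law of $f_s$ under the uniform measure on $[N_s]$ to the law of $\Phi(f)$ under $\mu$. All laws are supported in $\overline{D}_R$, so testing against continuous functions on $\C$ suffices.

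The main obstacle I anticipate is justifying that the extended $\Phi$ on $\mathcal{L}^{\infty}(\A,\mathbf{N})$ is multiplicative, i.e. that $\Phi(gh) = \Phi(g)\Phi(h)$ for bounded $g,h$. I would prove this by approximating $g,h$ in $\mathcal{L}^2$ by uniformly bounded elements of $\A$, obtained by composing with a polynomial approximation of a radial truncation. The product error is then controlled by $\|g\|_\infty\|h-h'\|_2 + \|h'\|_\infty\|g-g'\|_2$, and the same bound applies on the $L^2(\mu)$ side.
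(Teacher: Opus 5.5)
Your proposal is correct and follows the same route as the paper: reduce via Stone--Weierstrass to polynomials in $z,\overline{z}$ on the disc of radius $\|f\|_\infty$, use that the extended $\Phi$ is a $*$-homomorphism so that $\Phi(F(f)) = F(\Phi(f))$, and then integrate against the constant $\1$ using the $\mathcal{L}^2$-isometry of \cref{thm_farhangi_2.1}. The only difference is that you spell out points the paper takes for granted: the multiplicativity of $\Phi$ on $\mathcal{L}^{\infty}(\A,\mathbf{N})$ via uniformly bounded approximants (your truncation argument works, since radial truncation is $1$-Lipschitz and fixes $f$), the bound $|\Phi(f)|\le\|f\|_\infty$ holding $\mu$-a.e., and the existence of the limit $\E_{n\in\mathbf{N}}F(f(n))$.
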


\begin{proof}
    It clearly suffices to prove \eqref{eq:ergodic decomp as random variable} for continuous functions defined on the disk $\{z \in \C : |z| \leq \|f\|_{\infty}\}$.
    Then by a standard approximation argument and the Stone--Weierstrass theorem, it is enough to check \eqref{eq:ergodic decomp as random variable} for functions $F$ of the form
    \begin{equation} \label{eq:conjugate polynomial}
        F(z) = \sum_{j,k=0}^d a_{j,k} z^j \overline{z}^k \in \C[z,\overline{z}].
    \end{equation}
    
    The map $\Phi$ is a *-algebra homomorphism on $\mathcal{L}^{\infty}(\A, \mathbf{N})$, so given $F$ of the form \eqref{eq:conjugate polynomial}, we have $\Phi(F(f)) = F(\Phi(f))$.
    Taking the inner product with the constant function $\1$ and using the fact that $\Phi$ is an isometric isomorphism from $\mathcal{L}^2(\A, \mathbf{N})$ to $L^2(\mu)$, we conclude
    \begin{align*}
        \int_X F(\Phi(f))~d\mu & = \innprod{F(\Phi(f))}{\1}_{L^2(\mu)} \\
        & = \innprod{\Phi(F(f))}{\Phi(\1)}_{L^2(\mu)} \\
        & = \innprod{F(f)}{\1}_{\mathbf{N}} \\
        & = \E_{n \in \mathbf{N}} F(f(n)).
    \end{align*}
\end{proof}

%SUBSUBSECTION
\subsubsection{The $U^2(\mathbf{N})$-seminorm}

Recall that the $U^2(\mathbf{N})$ seminorm is defined by
\begin{equation*}
    \|f\|_{U^2(\mathbf{N})} = \left( \lim_{H \to \infty} \frac{1}{H^2} \sum_{h_1,h_2=1}^H \lim_{s \to \infty} \frac{1}{N_s} \sum_{n=1}^{N_s} f(n) \overline{f(n+h_1)} \overline{f(n+h_2)} f(n+h_1+h_2) \right)^{1/4},
\end{equation*}
assuming that the limits exist.
Applying the $\mathcal{Z}_1$ structure theorem of Frantzikinakis and Host (Theorem \ref{thm: Z_1 structure theorem}), we obtain the following structure theorem for the $U^2(\mathbf{N})$-seminorm.

\begin{Theorem} \label{thm: U2 HK structure theorem}
    Let $f : \N \to [0,1]$, and suppose $\mathbf{N} = (N_s)_{s \in \N}$ is a sequence with $\lim_{s \to \infty} N_s = \infty$ such that $f$ admits correlations along $\mathbf{N}$.
    Then there exists a decomposition $f = f_{\str} + f_{\unf}$ such that
    \begin{enumerate}[label=(\roman*),leftmargin=*]
        \item (Nonnegativity) $0 \leq f_{\str} \leq 1$ and $\E_{n \in \mathbf{N}} f_{\unf}(n) = 0$.
        \item (Structure) For every $\epsilon > 0$, there exists $d \in \N$ and locally invariant functions $c_1, \ldots, c_d : \N \to \C$ with $\|c_i\|_{\infty} \leq 1$ and locally invariant functions $\alpha_1, \ldots, \alpha_d : \N \to \T$ such that if $\mathcal{P}(n) = \sum_{i=1}^d c_i(n) e(n\alpha_i(n))$, then
            \begin{equation*}
                \left\| f_{\str} - \mathcal{P} \right\|_{2,\mathbf{N}} < \epsilon.
            \end{equation*}
        \item (Uniformity) $\|f_{\unf}\|_{U^2(\mathbf{N})} = 0$.
        \item (Orthogonality) $\innprod{f_{\str}}{f_{\unf}}_{\mathbf{N}} = 0$.
    \end{enumerate}
\end{Theorem}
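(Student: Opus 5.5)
The plan is to transfer \cref{thm: Z_1 structure theorem} through the Furstenberg correspondence, in the same way \cref{thm: discrete ergodic theorem} was obtained from \cref{thm: ergodic theorem}. Let $\A_f$ be the translation-invariant *-algebra generated by $f$, let $(X,\mu,T)$ be the Furstenberg system of $f$ along $\mathbf{N}$, and let $\Phi\colon \mathcal{L}^2(\A_f,\mathbf{N})\to L^2(\mu)$ be the isometric isomorphism of \cref{thm_farhangi_2.1}. It restricts to a C*-isomorphism $\mathcal{L}^\infty(\A_f,\mathbf{N})\to L^\infty(\mu)$ intertwining the shift with $T$. Put $g=\Phi(f)$; it is $[0,1]$-valued because $\Phi$ preserves positivity and constants. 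Apply \cref{thm: Z_1 structure theorem} to get $g=g_{\str}+g_{\unf}$, and define $f_{\str}=\Phi^{-1}(g_{\str})$ and $f_{\unf}=\Phi^{-1}(g_{\unf})$.

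The properties then transfer one by one. For nonnegativity, $0\le g_{\str}\le 1$ pulls back, after modifying on a $\sim_{\mathbf{N}}$-null set, to a $[0,1]$-valued representative. The identity $\E_{n\in\mathbf{N}}f_{\unf}(n)=\innprod{g_{\unf}}{\1}=0$ follows from the isometry and $\Phi(\1)=\1$. Orthogonality follows from the isometry in the same way. For uniformity, $\|f_{\unf}\|_{U^2(\mathbf{N})}=\|g_{\unf}\|_{U^2}=0$ by definition of the $U^2(\mathbf{N})$ seminorm. This needs the $U^2(\mathbf{N})$ seminorm, defined via $\Phi$, to be meaningful on $\mathcal{L}^\infty(\A_f,\mathbf{N})$ and not only on $\A_f$. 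I handle this by approximating $g_{\unf}$ in $L^\infty$-bounded, $L^2$-convergent fashion by continuous functions, using that the Host--Kra seminorm is continuous along such sequences.

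The main obstacle is the structure statement. Given $\epsilon$, take $\mathcal{P}=\sum_i c_i\phi_i$ from \cref{thm: Z_1 structure theorem}, with $c_i$ and $\alpha_i$ $T$-invariant and $\phi_i\circ T=e(\alpha_i)\phi_i$. Each factor has to be realized as a function on $\N$ of the required shape.

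For the invariant functions, a $T$-invariant $L^\infty$ function $c$ pulls back to $\tilde c=\Phi^{-1}(c)$ with $\|\tilde c(\cdot+1)-\tilde c\|_{2,\mathbf{N}}=0$. By boundedness this gives $\limsup_s \frac1{N_s}\sum_{n\le N_s}|\tilde c(n+1)-\tilde c(n)|=0$, so $\tilde c$ is locally invariant, and $\|\tilde c\|_\infty\le 1$ can be arranged by truncation. The same applies to the $\T$-valued $\alpha_i$: pull back $e(\alpha_i)$, which is unimodular and invariant. To choose a locally invariant $\T$-valued lift I would first approximate $\alpha_i$ by an invariant function with finitely many values, then pull back the corresponding invariant indicator sets.

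For the relative eigenfunctions, $\psi_i=\Phi^{-1}(\phi_i)$ satisfies $\psi_i(n+1)\approx \tilde\lambda_i(n)\psi_i(n)$ in $\mathcal{L}^2$, where $\tilde\lambda_i=e(\tilde\alpha_i)$. The ratio $\psi_i(n)e(-n\tilde\alpha_i(n))$ is then locally invariant: its increment is controlled by the increments of $\tilde\alpha_i$ times $n$. To keep this under control, I would use the block construction from the second proof of \cref{thm: discrete ergodic theorem} combined with \cref{lem_decomposition_in_the_limit}. On blocks of length $2^k$ I freeze $\tilde\alpha_i$ to a constant. On each block the ratio $\psi_i(n)e(-n\tilde\alpha_i)$ is then nearly constant (up to an $\mathcal{L}^2$-small error), so the block-wise averaged version of it, times $\tilde c_i$, gives the locally invariant coefficient $c_i(n)$. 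Since the approximation is in $\|\cdot\|_{2,\mathbf{N}}$ with errors $\oh_k(1)$ and the coefficients are bounded, the result is $\|f_{\str}-\sum_i c_i(n)e(n\alpha_i(n))\|_{2,\mathbf{N}}<\epsilon$. This conversion of relative eigenfunctions into locally constant-frequency phases is the delicate step.
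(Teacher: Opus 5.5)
Your proposal is correct and follows the paper's route. The paper gives no separate argument: it obtains the theorem by transferring the Frantzikinakis--Host structure theorem (\cref{thm: Z_1 structure theorem}) through the Furstenberg system, in the same way as the first proof of \cref{thm: discrete ergodic theorem}. Your block-freezing step supplies the detail the paper leaves implicit, namely rewriting $\Phi^{-1}(c_i\phi_i)$ in the form $c_i(n)e(n\alpha_i(n))$ with locally invariant $c_i,\alpha_i$.

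One adjustment is needed. Do not replace $\alpha_i$ by a fixed finitely-valued approximation, because an approximation error $\eta$ becomes a phase drift of order $2^k\eta$ across a block of length $2^k$. Instead take $\tilde\alpha_i(n)$ to be the exact phase of a unimodular representative $\tilde\lambda_i$ of $\Phi^{-1}(e(\alpha_i))$. This is locally invariant since $\|\tilde\alpha_i(n+1)-\tilde\alpha_i(n)\|_{\T}\le\frac14|\tilde\lambda_i(n+1)-\tilde\lambda_i(n)|$. Then $\|\psi_i(\cdot+j)-\tilde\lambda_i^{\,j}\psi_i\|_{2,\mathbf{N}}=0$ for each fixed $j$, so the frozen approximation has zero error at every fixed block length, and letting $k\to\infty$ is needed only to make the coefficients locally invariant.
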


%SECTION
\section{The intermediate-scale seminorms} \label{sec: intermediate scale seminorms}

\subsection{Notation}

Let $\mathbf{N} = (N_s)_{s \in \N}$ and $\mathbf{H}=(H_s)_{s\in\N}$ be sequences in $\N$.
We write $1\prec \mathbf{H}$ if $\lim_{s\to\infty} 1/H_s=0$,
$\mathbf{H}\preceq \mathbf{N}$ if $H_s\leq N_s$ for all large $s\in\N$, and $\mathbf{H}\prec \mathbf{N}$ if $\lim_{s\to\infty} H_s/N_s=0$.\nomenclature{$\prec,\preceq$}{asymptotic ordering of non-decreasing sequences in $\N$}
Given two $k$-tuples of sequences $(\mathbf{N}_1,\ldots, \mathbf{N}_k)$ and $(\mathbf{N}_1',\ldots,\mathbf{N}_k')$, where $\mathbf{N}_i = (N_{i,s})_{s \in \N}$ and $\mathbf{N}_i' = (N_{i,s}')_{s \in \N}$ are sequences in $\N$, we say that $(\mathbf{N}_1',\ldots,\mathbf{N}_k')$ is a subsequence of $(\mathbf{N}_1,\ldots,\mathbf{N}_k)$ if there exists a strictly increasing function $t\colon \N\to\N$ such that $N_{i,s}'=N_{i,t(s)}$ for all $s\in\N$ and $i\in\{1,\ldots,k\}$.

\subsection{The $U^1(\mathbf{N},\mathbf{H})$ seminorm}
\label{sec: intermediate U1 seminorm}

If $\mathbf{N} = (N_s)_{s \in \N}$ and $\mathbf{H}=(H_s)_{s\in\N}$ are sequences in $\N$ with
$1\prec \mathbf{H}\preceq \mathbf{N}$,
and $f\colon \N\to\C$ is a bounded function, we define\nomenclature{$\lVert\cdot\rVert_{U^k(\mathbf{N},\mathbf{H})}$}{intermediate scale seminorm}
\begin{equation}
\label{eqn_intermediate_scale_U_1_norm_def_1}
\|f\|_{U^1(\mathbf{N},\mathbf{H})}=\lim_{s \to \infty} \frac{1}{N_s-H_s+1} \sum_{n=1}^{N_s-H_s+1}  \|f\|_{U^1(\{n,n+1,\ldots,n+H_{s}-1\})}
\end{equation}
whenever this limit exists, where $\|f\|_{U^1(\{n,n+1,\ldots,n+H_{s}-1\})}$ is the $U^1$ Gowers norm of $f$ on the interval $\{n,n+1,\ldots,n+H_s-1\}$ (see \cref{sec: Gowers norms}).
If this limit does not exist, then we say that $\|f\|_{U^1(\mathbf{N},\mathbf{H})}$ is not well defined.

Note that if $ \mathbf{H}\prec \mathbf{N}$ then
\begin{equation}
\label{eqn_intermediate_scale_U_1_norm_def_2} 
\|f\|_{U^1(\mathbf{N},\mathbf{H})}=
\lim_{s \to \infty} \frac{1}{N_s} \sum_{n=1}^{N_s}  
\Bigg|\frac{1}{H_s}\sum_{h=1}^{H_s} f(n+h)\Bigg|,
\end{equation}
where the limit in \eqref{eqn_intermediate_scale_U_1_norm_def_1} exists if and only if the one in \eqref{eqn_intermediate_scale_U_1_norm_def_2} does. On the other hand, if $ \mathbf{H}= \mathbf{N}$ then $\|f\|_{U^1(\mathbf{N},\mathbf{N})}$ coincides with the mean of $f$ along the sequence $\mathbf{N}=(N_s)_{s\in\N}$, that is,
\[
\|f\|_{U^1(\mathbf{N},\mathbf{N})}=\lim_{s \to \infty} \Bigg|\frac{1}{N_s}\sum_{n=1}^{N_s} f(n)\Bigg|.
\]

The $\|.\|_{U^1(\mathbf{N},\mathbf{H})}$ seminorm sandwiches between the Host--Kra seminorm $\|.\|_{U^1(\mathbf{N})}$ and the (asymptotic) Gowers seminorm $\lim_{s\to\infty}\|.\|_{U^1([N_s])}$.
Indeed, a straightforward application of the triangle inequality reveals that if
$1\prec\mathbf{H}_1\prec\mathbf{H}_2\preceq\mathbf{N}$ then
\begin{equation}
\label{eqn: U1 hierarchy}
\underbrace{\|f\|_{U^1(\mathbf{N})}}_{\text{Host--Kra seminorm}}\hspace{-1em}\geq \|f\|_{U^1(\mathbf{N},\mathbf{H}_1)}
\geq \|f\|_{U^1(\mathbf{N},\mathbf{H}_2)}\geq \lim_{s\to\infty} \underbrace{\|f\|_{U^1([N_s])}}_{\text{Gowers norm}},
\end{equation}
where each of the inequalities is understood to hold if the involved seminorms are well defined.

\begin{Definition}[$U^1$~good scale]
\label{def: U1 good scale}
Let $f\colon\N\to [0,1]$, and suppose $\mathbf{N} = (N_s)_{s \in \N}$ and $\mathbf{H}=(H_s)_{s\in\N}$ are sequences in $\N$ with
$1\prec \mathbf{H}\preceq \mathbf{N}$.
We say that $(\mathbf{N},\mathbf{H})$ is a \define{$U^1$~good scale} for $f$ if there exists a decomposition $f = f_{\str} + f_{\unf}$ such that:
\begin{enumerate}[label=(\roman{enumi}),ref=(\roman{enumi}),leftmargin=*]
\item\label{itm U^1 structure theorem i}
(Nonnegativity) 
$0 \leq f_{\str} \leq 1$ and $\E_{n \in \mathbf{N}} f_{\unf} = 0$.
\item\label{itm U^1 structure theorem ii}
(Structure)
If
\[
    \Gamma_s = \left\{ n\in [N_s-H_s+1]: f_{\str}(n+m_1)=f_{\str}(n+m_2)~\forall m_1,m_2\in [H_s] \right\},
\]
then
\[
\lim_{s\to\infty} \frac{|\Gamma_s|}{N_s-H_s+1}=1.
\]
\item\label{itm U^1 structure theorem iii}
(Uniformity)
There is $\mathbf{H}'=(H_s')_{s\in\N}$ with $1\prec \mathbf{H}'\prec \mathbf{H}$ and $\|f_{\unf}\|_{U^1(\mathbf{N},\mathbf{H}')}=0.$
\end{enumerate}
\end{Definition}

Whenever $(\mathbf{N},\mathbf{H})$ is a $U^1$ good scale, the $U^1$ seminorm of $f$ is determined entirely by its structured component, that is, $\|f\|_{U^1(\mathbf{N},\mathbf{H})}=\|f_{\str}\|_{U^1(\mathbf{N},\mathbf{H})}$.
Parts \ref{itm U^1 structure theorem ii} and \ref{itm U^1 structure theorem iii} of the definition further imply
\begin{enumerate}[label=(\roman{enumi}),ref=(\roman{enumi}),leftmargin=*]
    \setcounter{enumi}{3}
    \item
    (Orthogonality)
    $\innprod{f_{\str}}{f_{\unf}}_{\mathbf{N}} = 0$.
\end{enumerate}
These properties together imply that the decomposition is unique up to modifications on a set of zero density: if $f = f_{\str} + f_{\unf}$ and $f = f_{\str}' + f_{\unf}'$ are two decompositions satisfying \ref{itm U^1 structure theorem i}, \ref{itm U^1 structure theorem ii}, and \ref{itm U^1 structure theorem iii}, then $\|f_{\str}-f_{\str}'\|_{2,\mathbf{N}}=\|f_{\unf}-f_{\unf}'\|_{2,\mathbf{N}}=0$. Finally, the definition of a $U^1$ good scale is stable under small modifications of the scale parameter $\mathbf{H}$: if $(\mathbf{N},\mathbf{H})$ is $U^1$ good and $\mathbf{H}'$ is as in part \ref{itm U^1 structure theorem iii}, then for any intermediate scale $\mathbf{H}''$ satisfying $\mathbf{H}'\prec\mathbf{H}''\preceq \mathbf{H}$ the pair $(\mathbf{N},\mathbf{H}'')$ is also $U^1$ good. Thus, the existence of a single good scale automatically provides an entire range of compatible good scales.

The next theorem serves as the structure theorem for the $U^1(\mathbf{N},\mathbf{H})$-seminorms. It ensures the existence of many $U^1$-good scales, and by the very definition of a good scale this, in turn, guarantees the existence of a decomposition at scale $\mathbf{H}$ into an order-$1$ structured component and an order-$1$ uniform component.

\begin{Theorem}[Structure theorem for $U^1(\mathbf{N},\mathbf{H})$-seminorm] \label{thm:U^1 structure theorem}
Let $f\colon\N\to [0,1]$, and suppose $\mathbf{N} = (N_s)_{s \in \N}$, $\mathbf{K}^+=(K^+_s)_{s\in\N}$, and $\mathbf{K}^-=(K^-_s)_{s\in\N}$ are sequences in $\N$ satisfying
\[
1\prec \mathbf{K}^-\prec \mathbf{K}^+\preceq \mathbf{N}.
\]
After replacing $(\mathbf{N},\mathbf{K}^+,\mathbf{K}^-)$ by a subsequence if necessary, there exist sequences $\mathbf{H}^+ = (H_s^+)_{s \in \N}$ and $\mathbf{H}^- = (H_s^-)_{s \in \N}$ with
\[
\mathbf{K}^-\preceq \mathbf{H}^-\prec \mathbf{H}^+\preceq \mathbf{K}^+
\]
such that for any $\mathbf{H}=(H_s)_{s\in\N}$ with $\mathbf{H}^-\prec \mathbf{H}\prec \mathbf{H}^+$ the pair $(\mathbf{N},\mathbf{H})$ is a $U^1$~good scale for $f$.
\end{Theorem}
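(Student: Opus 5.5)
We would prove \cref{thm:U^1 structure theorem} by an energy-increment (pigeonhole) argument on the hierarchy \eqref{eqn: U1 hierarchy}, combined with \cref{lem_decomposition_in_the_limit} to glue block-averaged structured parts into a single decomposition.

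\textbf{Step 1: a monotone energy function.} For a scale $L=(L_s)$ with $1\prec L\preceq\mathbf{N}$, consider the block-average
\[
f_{L}(n)=\frac{1}{L_s}\sum_{h=0}^{L_s-1} f(L_s t+h)\quad\text{for } n\in[L_s t,L_s(t+1)),
\]
which is a finer-scale analogue of $f_{k,\inv}$ in the second proof of \cref{thm: discrete ergodic theorem}. Its energy is $E(L)=\|f_L\|_{2,\mathbf{N}}^2$. After passing to a subsequence (by diagonalization), we may assume that these limits exist along a countable family of scales interpolating between $\mathbf{K}^-$ and $\mathbf{K}^+$. A natural family is $L^{(j)}_s=\lfloor (K_s^-)^{1-j/J}(K_s^+)^{j/J}\rfloor$ for $j=0,\dots,J$, refined as $J\to\infty$. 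If $L\prec L'$ and $L'_s$ is a multiple of $L_s$, then $f_{L'}$ is the conditional expectation of $f_L$ onto coarser blocks. Hence $E$ is non-increasing, and in $[0,1]$. We should also arrange divisibility, for instance by rounding every scale to a power of $2$.

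\textbf{Step 2: pigeonhole for a stable window.} Among $J+1$ nested scales, some consecutive pair $L^{(j)}\prec L^{(j+1)}$ satisfies $E(L^{(j)})-E(L^{(j+1)})\le 1/J$. We then take a diagonal limit in $J$ along a further subsequence in $s$. This produces $\mathbf{H}^-\prec\mathbf{H}^+$ within $[\mathbf{K}^-,\mathbf{K}^+]$ with $E(\mathbf{H}^-)=E(\mathbf{H}^+)$ exactly in the limit, and in fact $E$ is constant on every scale in between.

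By orthogonality of the martingale differences, $\|f_{\mathbf{H}^-}-f_{\mathbf{H}}\|_{2,\mathbf{N}}=0$ for every $\mathbf{H}$ with $\mathbf{H}^-\prec\mathbf{H}\prec\mathbf{H}^+$. We set $f_{\str}=f_{\mathbf{H}}$ for such an $\mathbf{H}$, which lies in $[0,1]$, and $f_{\unf}=f-f_{\str}$. Since $\E_{n\in\mathbf N}f_{\unf}=0$ up to boundary errors, we need $\mathbf{H}\prec\mathbf{N}$, which holds because $\mathbf{H}\prec\mathbf{H}^+\preceq\mathbf{N}$.

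\textbf{Step 3: verifying the three properties.} Fix $\mathbf{H}$ strictly between $\mathbf{H}^-$ and $\mathbf{H}^+$.

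For the \emph{structure} property, $f_{\mathbf{H}}$ is constant on blocks of length $H_s$, not on every window $[n,n+H_s)$. We therefore choose $\mathbf{H}''$ with $\mathbf{H}\prec\mathbf{H}''\prec\mathbf{H}^+$ and define $f_{\str}=f_{\mathbf{H}''}$. Since $f_{\mathbf{H}''}\sim_{\mathbf N}f_{\mathbf{H}}$, this is the same function up to zero density. It is constant on windows of length $H_s$ except for those straddling block boundaries, a proportion $H_s/H''_s\to0$. This gives $|\Gamma_s|/(N_s-H_s+1)\to1$.

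For \emph{uniformity}, take $\mathbf{H}'$ with $\mathbf{H}^-\prec\mathbf{H}'\prec\mathbf{H}$. We have $f_{\unf}=(f-f_{\mathbf{H}'})+(f_{\mathbf{H}'}-f_{\mathbf{H}''})$, and the second term vanishes in $\|\cdot\|_{2,\mathbf N}$. The local averages of $f-f_{\mathbf{H}'}$ over windows of length $H'''_s$, with $\mathbf{H}'\prec\mathbf{H}'''$ small, are $o(1)$ except near block boundaries. This gives $\|f_{\unf}\|_{U^1(\mathbf N,\mathbf H''')}=0$ with $\mathbf{H}'''\prec\mathbf{H}$.

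\textbf{Step 4: gluing.} Since the scales are chosen along subsequences, we use \cref{lem_decomposition_in_the_limit} to assemble a single pair $f_{\str},f_{\unf}$ defined on all of $\N$.

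\textbf{Main obstacle.} The hardest step is Step 2: extracting, via diagonalization over $J$ and over subsequences in $s$, a genuine window $\mathbf{H}^-\prec\mathbf{H}^+$ on which the energy is \emph{exactly} constant, rather than merely $1/J$-close. The first approach to try is to choose $j(J)$ by pigeonhole for each $J$ and then select $s$ along the diagonal, so that the ratios $L^{(j+1)}_s/L^{(j)}_s$ still tend to infinity while the energy gap tends to zero.
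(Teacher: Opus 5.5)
Your argument is correct, but it finds the stable window by a different mechanism than the paper.

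The paper uses the infinite geometric chain $H_{k,s}=\lfloor (K_s^-)^{1/k}(K_s^+)^{1-1/k}\rfloor$, which increases toward $\mathbf{K}^+$ as $k\to\infty$, and forms block averages $f_{k,\str}$ on partitions of $(N_{s-1},N_s]$. It then invokes \cref{lem_decomposition_in_the_limit}, which rests on the same monotone-energy/Pythagoras observation you make, to show that $f_{k,\str}$ is Cauchy in $\|\cdot\|_{2,\mathbf{N}}$ and to glue a limit $f_{\str}$ equal to $f_{k_s,\str}$ on the $s$-th block. The window is placed above every $\mathbf{H}_k$ and below the diagonal scale $\mathbf{H}^*=(H_{k_s,s})_{s\in\N}$. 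Structure then comes from $f_{\str}$ being constant at scale $\mathbf{H}^*$. Uniformity comes from $\|f_{\unf}-f_{k,\unf}\|_{2,\mathbf{N}}\to0$, with each $f_{k,\unf}$ averaging to zero at scale $\mathbf{H}_k\prec\mathbf{H}^-$.

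In short, the paper passes to the limit of the bounded monotone energy along an infinite chain, while you pigeonhole one consecutive pair in a finite dyadic chain. Your window lands wherever the pigeonhole puts it, and your $f_{\str}$ is a single-scale block average. This makes your argument more elementary and essentially finitary. The paper's route instead parallels its $U^2$ structure theorem, where the structured parts come from the arithmetic regularity lemma and are not nested conditional expectations.

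Two minor remarks. First, your Step~4 does not need \cref{lem_decomposition_in_the_limit}: after passing to a subsequence with $N_{s-1}/N_s\to0$, you can simply use the $s$-th scale on $(N_{s-1},N_s]$. Second, the obstacle you flag is milder than you fear. The pigeonhole can be run at each finite $s$ with $J=J_s\to\infty$ slowly enough that $(K_s^+/K_s^-)^{1/J_s}\to\infty$, so no limiting energies or diagonal extraction are needed. You should discard the top scale, or keep all chosen scales $\prec\mathbf{N}$, so that the partial block at the end of $[N_s]$ cannot spoil monotonicity when $\mathbf{K}^+$ is comparable to $\mathbf{N}$.
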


\begin{proof}
By replacing $(\mathbf{N},\mathbf{K}^+,\mathbf{K}^-)$ with a subsequence of itself if necessary, we can assume without loss of generality that $\lim_{s\to\infty} N_s/N_{s+1}=0$.
For $k\in\N$, let $\mathbf{H}_k=(H_{k,s})_{s\in\N}$ be the sequence defined as
\[
H_{k,s}=\big\lfloor (K^-_s)^{\frac{1}{k}}(K^+_s)^{1-\frac{1}{k}}\big\rfloor,
\]
where $\lfloor .\rfloor$ denotes the floor function.
Clearly we have $\mathbf{K}^-= \mathbf{H}_1\prec \mathbf{H}_2\prec \mathbf{H}_3\prec \ldots\prec\mathbf{K}^+$.

Next, let $\mathcal{I}_{k,s}$ be a partition of $\{N_{s-1}+1,\ldots,N_s\}$ into intervals of length between $H_{k,s}/2$ and $2 H_{k,s}$. Since $\lim_{s\to\infty} H_s/(N_{s}-N_{s-1})=\lim_{s\to\infty} H_s/N_s=0$, we have that $\mathcal{I}_{k,s}$ is non-empty for all but at most finitely many $s\in\N$.
We now define
\begin{align*}
f_{k,\str}(n)&=\sum_{s\in\N} \sum_{I\in\mathcal{I}_{k,s}} \1_I(n) \Bigg(\frac{1}{|I|}\sum_{m\in I} f(m)\Bigg)
\\
f_{k,\unf}(n)&=\sum_{s\in\N} \sum_{I\in\mathcal{I}_{k,s}} \1_I(n) \Bigg(f(n)- \frac{1}{|I|}\sum_{m\in I} f(m)\Bigg).
\end{align*}
Replacing once more $(\mathbf{N},\mathbf{K}^+,\mathbf{K}^-)$ with a subsequence if necessary, we can assume that
$\innprod{f_{k,\str}}{f_{\ell,\str}}_{\mathbf{N}}$, $\innprod{f_{k,\str}}{f_{\ell,\unf}}_{\mathbf{N}}$, and $\innprod{f_{k,\unf}}{f_{\ell,\unf}}_{\mathbf{N}}$ are well defined for all $k,\ell\in\N$.
Moreover, by construction we have
\[
\innprod{f_{\ell,\str}}{f_{k,\unf}}_{\mathbf{N}}=0~~\text{whenever}~\ell\geq k.
\]
Invoking \cref{lem_decomposition_in_the_limit}, we can now find an increasing sequence of natural numbers $(k_t)_{t\in\N}$ such that if
\[
f_{\str}(n)=\sum_{t\in \N} \1_{(N_{t-1},N_t]}(n) f_{k_t,\str}(n)
\qquad\text{and}\qquad
f_{\unf}(n)=\sum_{t\in \N} \1_{(N_{t-1},N_t]}(n) f_{k_t,\unf}(n),
\]
then $f=f_{\str}+f_{\unf}$, $\innprod{f_{\str}}{f_{\unf}}_{\mathbf{N}}=0$, and $\lim_{k\to\infty} \| f_{\unf} -  f_{k,\unf} \|_{2, \mathbf{N}}=0$.

Now define $\mathbf{H}^*=(H^*_s)_{s\in\N}$ by
\[
H^*_s=H_{k_s,s},\qquad\forall s\in\N,
\]
and observe that $\mathbf{H}_k\prec \mathbf{H}^*\preceq \mathbf{K}^+$ holds for all $k\in\N$.
Finally, we let $\mathbf{H}^+=(H_s^+)_{s\in\N}$ and $\mathbf{H}^-=(H_s^-)_{s\in\N}$ be any sequences that grow faster than $\mathbf{H}_k$ for any $k\in\N$, but slower than $\mathbf{H}^*$. For example, we can take
\[
H^+_s= \bigg\lfloor\frac{H^*_s}{\log\log(K^-_s)}\bigg\rfloor, \quad
\text{and}\quad
H^-_s=\bigg\lfloor\frac{H^*_s}{\log(K^-_s)}\bigg\rfloor,
\]
as this gives $\mathbf{H}_k\prec \mathbf{H}^-\prec \mathbf{H}^+ \prec\mathbf{H}^*$ for all $k\in\N$ as desired.

By construction, the function $f_{\str}$ restricted to $\{N_{s-1}+1,\ldots,N_s\}$ is constant on intervals belonging to $\mathcal{I}_{k_s,s}$. Since intervals in $\mathcal{I}_{k_s,s}$ have length on the order of $H_s^*$, yet the ratio of $H_s^+$ to $H_s^*$ goes to zero as $s\to\infty$, we conclude that the set
$\Gamma_s=\{n\in [N_s-H_s^++1]: f_{\str}(n+m_1)=f_{\str}(n+m_2)~\forall m_1,m_2\in [H_s^+]
\},$
satisfies
\[
\lim_{s\to\infty} \frac{|\Gamma_s|}{N_s-H_s^++1}=1.
\]
This proves that condition \ref{itm U^1 structure theorem ii} of \cref{def: U1 good scale} is satisfied for the sequence $\mathbf{H}^+$. But if it holds for $\mathbf{H}^+$, then condition \ref{itm U^1 structure theorem ii} also holds for any sequence $\mathbf{H}=(H_s)_{s\in\N}$ with $1\prec \mathbf{H}\prec \mathbf{H}^+$.

Finally, notice that $f_{k,\unf}$ averages to $0$ over any interval in $\mathcal{I}_{k,s}$. Since intervals in $\mathcal{I}_{k,s}$ have length on the order of $H_{k,s}$, yet the ratio of $H_s^-$ to $H_{k,s}$ goes to $\infty$ as $s\to\infty$, we see that
\begin{align*}
\|f_{k,\unf}\|_{U^1(\mathbf{N},\mathbf{H}^-)}=
\lim_{s \to \infty} \frac{1}{N_s} \sum_{n=1}^{N_s}\Bigg|\frac{1}{H_s^-}\sum_{h=1}^{H_s^-} f_{k,\unf}(n+h)\Bigg|=0.
\end{align*}
Since $\lim_{k\to\infty} \| f_{\unf} -  f_{k,\unf} \|_{2, \mathbf{N}}=0$ and $\|f_{k,\unf}\|_{U^1(\mathbf{N},\mathbf{H}^-)}=0$ for all $k\in\N$, we conclude that $\|f_{\unf}\|_{U^1(\mathbf{N},\mathbf{H}^-)}=0$.
But if $\|f_{\unf}\|_{U^1(\mathbf{N},\mathbf{H}^-)}=0$ then 
by \eqref{eqn: U1 hierarchy} we get $\|f_{\unf}\|_{U^1(\mathbf{N},\mathbf{H})}=0$ for any sequence $\mathbf{H}=(H_s)_{s\in\N}$ with $\mathbf{H}^-\prec \mathbf{H}\preceq \mathbf{N}$. 
In particular, any $\mathbf{H}=(H_s)_{s\in\N}$ with $\mathbf{H}^-\prec \mathbf{H}\prec \mathbf{H}^+$ satisfies condition \ref{itm U^1 structure theorem iii} of \cref{def: U1 good scale}.
In conclusion, for any $\mathbf{H}=(H_s)_{s\in\N}$ with $\mathbf{H}^-\prec \mathbf{H}\prec \mathbf{H}^+$ the pair $(\mathbf{N},\mathbf{H})$ is a $U^1$~good scale for $f$.
\end{proof}

\cref{thm:U^1 structure theorem} shows the existence of $U^1$~good scales for a single function $f\colon\N\to[0,1]$.
Using a standard diagonalization argument, we can bootstrap this result to establish the existence of $U^1$-good scales that work simultaneously for every function in a given countable family; this is the content of the following corollary.

\begin{Corollary}
\label{cor:U^1 structure theorem}
Let $f_1,f_2,\ldots\colon\N\to [0,1]$ be a countable family of $[0,1]$-valued functions on $\N$, and suppose $\mathbf{N} = (N_s)_{s \in \N}$, $\mathbf{K}^+=(K^+_s)_{s\in\N}$, and $\mathbf{K}^-=(K^-_s)_{s\in\N}$ are sequences in $\N$ satisfying
\[
1\prec \mathbf{K}^-\prec \mathbf{K}^+\preceq \mathbf{N}.
\]
After replacing $(\mathbf{N},\mathbf{K}^+,\mathbf{K}^-)$ by a subsequence if necessary, there exist sequences $\mathbf{H}^+ = (H_s^+)_{s \in \N}$ and $\mathbf{H}^- = (H_s^-)_{s \in \N}$ with
\[
\mathbf{K}^-\preceq \mathbf{H}^-\prec \mathbf{H}^+\preceq \mathbf{K}^+
\]
such that for all $\mathbf{H}=(H_s)_{s\in\N}$ with $\mathbf{H}^-\prec \mathbf{H}\prec \mathbf{H}^+$ and all $k\in\N$ the pair $(\mathbf{N},\mathbf{H})$ is a $U^1$~good scale for $f_k$.
\end{Corollary}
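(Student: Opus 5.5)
We prove the corollary by a diagonal argument on top of \cref{thm:U^1 structure theorem}. The difficulty is that the theorem's output window $(\mathbf{H}^-,\mathbf{H}^+)$ depends on $f$. We handle this by applying the theorem once per function, each time inside the window produced for the previous function. This gives nested windows, and at the end we pass to a diagonal subsequence and choose a window lying strictly inside all of them.

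\textbf{Step 1: nested windows.} Set $\mathbf{K}^-_0=\mathbf{K}^-$ and $\mathbf{K}^+_0=\mathbf{K}^+$. Suppose that after $k-1$ steps we have a subsequence $(\mathbf{N}^{(k-1)},\mathbf{K}^{+}_{k-1},\mathbf{K}^{-}_{k-1})$ with $1\prec\mathbf{K}^-_{k-1}\prec\mathbf{K}^+_{k-1}\preceq\mathbf{N}^{(k-1)}$. Apply \cref{thm:U^1 structure theorem} to $f_k$ with this triple. After passing to a further subsequence, it yields $\mathbf{K}^-_{k-1}\preceq\mathbf{H}^-_k\prec\mathbf{H}^+_k\preceq\mathbf{K}^+_{k-1}$ such that every $\mathbf{H}$ with $\mathbf{H}^-_k\prec\mathbf{H}\prec\mathbf{H}^+_k$ is $U^1$ good for $f_k$. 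Next we shrink the window so it can serve as the input for the next step. Let $L_s=\log(H^+_{k,s}/H^-_{k,s})$, which tends to infinity, and define
\[
K^-_{k,s}=\lfloor H^-_{k,s}e^{L_s/3}\rfloor,\qquad K^+_{k,s}=\lfloor H^-_{k,s}e^{2L_s/3}\rfloor .
\]
Then $\mathbf{H}^-_k\prec\mathbf{K}^-_k\prec\mathbf{K}^+_k\prec\mathbf{H}^+_k$. Consequently, any window $\mathbf{K}^-_k\preceq\mathbf{H}^-\prec\mathbf{H}^+\preceq\mathbf{K}^+_k$ lies strictly inside $(\mathbf{H}^-_k,\mathbf{H}^+_k)$. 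Every later step also passes to a subsequence, so we must check that goodness for $f_k$ survives this. It does, because it is a statement about a range of scales along the sequence: restricting the witnesses $f_{\str},f_{\unf}$, the set $\Gamma_s$, and $\mathbf{H}'$ to a subsequence of indices preserves all three conditions of \cref{def: U1 good scale}.

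\textbf{Step 2: diagonalization.} Let $t_k\colon\N\to\N$ denote the composed subsequence maps. Form the diagonal subsequence by taking the $s$-th term from stage $s$. For each fixed $k$, the tail of the diagonal sequence is a subsequence of stage $k$. We now need a single window $(\mathbf{H}^-,\mathbf{H}^+)$ on the diagonal with $\mathbf{H}^-_k\prec\mathbf{H}^-\prec\mathbf{H}^+\prec\mathbf{H}^+_k$ for every $k$. This requires $H^-_s/H^-_{k,s}\to\infty$ for every fixed $k$, while staying below all $\mathbf{H}^+_k$. Since the windows are nested with strict gaps, on the diagonal we have
\[
\mathbf{H}^-_1\prec\mathbf{H}^-_2\prec\cdots\prec\mathbf{H}^+_2\prec\mathbf{H}^+_1 .
\]
The $s$-th terms come from stage $s$. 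Writing $G_s=H^-_{s,s}$ and $G'_s=H^+_{s,s}$, we get $H^-_{k,s}\ll_k G_s$ for $s\ge k$, and we can take the geometric midpoints
\[
H^-_s=\lfloor G_s(G'_s/G_s)^{1/3}\rfloor,\qquad H^+_s=\lfloor G_s(G'_s/G_s)^{2/3}\rfloor .
\]
The main obstacle is getting \emph{strict} domination $\prec$ uniformly in $k$, rather than mere $\preceq$. My first attempt would be to make each nesting quantitatively strict, by choosing stage $k+1$ so that $K^-_{k,s}/H^-_{k,s}\ge k$ and similar bounds hold. Failing that, I would pick the diagonal indices sparse enough that $H^-_{k,s}/H^-_{k+1,s}\le 1/s$ along the chosen terms. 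Once strictness holds, any $\mathbf{H}$ with $\mathbf{H}^-\prec\mathbf{H}\prec\mathbf{H}^+$ lies in $(\mathbf{H}^-_k,\mathbf{H}^+_k)$ for every $k$, so it is $U^1$ good for every $f_k$. Finally, $\mathbf{K}^-\preceq\mathbf{H}^-\prec\mathbf{H}^+\preceq\mathbf{K}^+$ follows from the first nesting.
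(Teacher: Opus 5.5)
Your proposal is correct and follows essentially the same route as the paper: apply \cref{thm:U^1 structure theorem} to $f_k$ inside the window obtained for $f_{k-1}$, then diagonalize, taking the $s$-th diagonal term late enough in stage $s$ that the finitely many separation conditions among the windows of stages $1,\ldots,s$ hold there (this is the role of the paper's index $u_k$). Either of your two fixes resolves the uniformity issue you flagged, since $H^-_{k,s}\ll_k G_s$ does not follow from the asymptotic nesting alone; you should also impose $H^+_{s,s}/H^-_{s,s}\geq s$ at the chosen index so that $G_s\prec G'_s$ on the diagonal.
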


\begin{proof}
We begin by finding for every $k,\ell=0,1,2,\ldots$ with $k\leq \ell$ an increasing sequence $t_\ell\colon \N\to\N$ and sequences $\mathbf{H}_{k,\ell}^-=(H_{k,t_\ell(s)}^-)_{s\in\N}$ and $\mathbf{H}_{k,\ell}^+=(H_{k,t_\ell(s)}^+)_{s\in\N}$ such that:
\begin{enumerate}[label=(\alph{enumi}),ref=(\alph{enumi}),leftmargin=*]
\item\label{itm_ctbl_U1_str_thm_a}
$(t_{\ell}(s))_{s\in\N}$ is a subsequence of $(t_{\ell-1}(s))_{s\in\N}$;
\item\label{itm_ctbl_U1_str_thm_b}
if $\mathbf{K}_\ell^+=(K_{t_\ell(s)}^+)_{s\in\N}$ and  $\mathbf{K}_\ell^-=(K_{t_\ell(s)}^-)_{s\in\N}$ then
\[
\mathbf{K}_\ell^-=\mathbf{H}_{0,\ell}^-\prec \mathbf{H}_{1,\ell}^-\prec \ldots\prec \mathbf{H}_{k,\ell}^-\prec \mathbf{H}_{k,\ell}^+\prec\ldots\prec \mathbf{H}_{1,\ell}^+\prec \mathbf{H}_{0,\ell}^+=\mathbf{K}_\ell^+;
\]
\item\label{itm_ctbl_U1_str_thm_c}
letting $\mathbf{N}_\ell=(N_{t_\ell(s)})_{s\in\N}$, then
for all $j\in\{1,\ldots,k\}$ and all 
$\mathbf{H}=(H_s)_{s\in\N}$ with $\mathbf{H}_{k,\ell}^-\prec \mathbf{H}\prec \mathbf{H}_{k,\ell}^+$ the pair $(\mathbf{N}_\ell,\mathbf{H})$ is a $U^1$~good scale for $f_j$.
\end{enumerate}
Set $t_0(s)=s$, $\mathbf{H}_{0,0}^-=\mathbf{K}_0^-=\mathbf{K}^-$, and $\mathbf{H}_{0,0}^+=\mathbf{K}_0^+=\mathbf{K}^+$.
Given $k\in\N$, if $t_0,\ldots,t_{k-1}$ and $\mathbf{H}_{0,0}^\pm,\mathbf{H}_{0,1}^\pm,\mathbf{H}_{1,1}^\pm,\ldots,\mathbf{H}_{k-1,k-1}^\pm$ have already been found, then we apply \cref{thm:U^1 structure theorem} to the triple $(\mathbf{N}_{k-1},\mathbf{H}_{k-1,k-1}^+,\mathbf{H}_{k-1,k-1}^-)$
to find a subsequence $(t_{k}(s))_{s\in\N}$ of $(t_{k-1}(s))_{s\in\N}$ and sequences $\mathbf{H}_{k,k}^+=(H_{k,t_k(s)}^+)_{s\in\N}$ and $\mathbf{H}_{k,k}^-=(H_{k,t_k(s)}^-)_{s\in\N}$ such that if $\mathbf{H}_{j,k}^\pm=(H_{j,t_k(s)}^\pm)_{s\in\N}$ for all $j=0,\ldots,k-1$ and $\mathbf{N}_k=(N_{t_k(s)})_{s\in\N}$ then
\[
\mathbf{H}_{k,k-1}^-\prec \mathbf{H}_{k,k}^-\prec \mathbf{H}_{k,k}^+\prec \mathbf{H}_{k,k-1}^+
\]
and all $\mathbf{H}=(H_s)_{s\in\N}$ with $\mathbf{H}_{k,k}^-\prec \mathbf{H}\prec \mathbf{H}_{k,k}^+$ have the property that the pair $(\mathbf{N}_k,\mathbf{H})$ is a $U^1$~good scale for $f_k$. By construction, any such $\mathbf{H}$ has the property that $(\mathbf{N}_k,\mathbf{H})$ is a $U^1$~good scale for the functions $f_1,\ldots,f_{k-1}$ too. Therefore \ref{itm_ctbl_U1_str_thm_a}, \ref{itm_ctbl_U1_str_thm_b}, and \ref{itm_ctbl_U1_str_thm_c} are satisfied.

Since $\mathbf{K}_k^-\prec \mathbf{H}_{k,k}^-\prec \mathbf{H}_{k,k}^+\prec \mathbf{K}_k^+$, there exists some $u_k\in\N$ such that
\[
H_{k,t_k(u_k)}^- \,-\, K_{t_k(u_k)}^- \geq k,
\quad
H_{k,t_k(u_k)}^+ \,-\, H_{k,t_k(u_k)}^- \geq k
\quad\text{and}\quad
K_{t_k(u_k)}^+ \,-\, H_{k,t_k(u_k)}^+ \geq k.
\]
Now define $\tilde{N}_s=N_{t_s(u_s)}$, $\tilde{K}_s^\pm=K_{t_s(u_s)}^\pm$, and $H_{s}^\pm=H_{s,t_s(u_s)}^\pm$. Then the sequences $\tilde{\mathbf{N}}=(\tilde{N}_s)_{s\in\N}$, $\tilde{\mathbf{K}}^\pm=(\tilde{K}_s^\pm)_{s\in\N}$, and $\mathbf{H}^\pm=(H_s^\pm)_{s\in\N}$ satisfy the properties that $(\tilde{\mathbf{N}},\tilde{\mathbf{K}}^+,\tilde{\mathbf{K}}^-)$ is a subsequence of $(\mathbf{N},\mathbf{K}^+,\mathbf{K}^-)$ and $\tilde{\mathbf{K}}^-\prec\mathbf{H}^-\prec\mathbf{H}^+\prec\tilde{\mathbf{K}}^+$.

Finally, if $\mathbf{H}=(H_s)_{s\in\N}$ is any sequence satisfying $\mathbf{H}^-\prec \mathbf{H}\prec \mathbf{H}^+$, then $(\tilde{\mathbf{N}},\mathbf{H})$  can be viewed as a subsequence of $(\tilde{\mathbf{N}},\mathbf{H}')$ for some sequence $\mathbf{H}'=(H_s')_{s\in\N}$ satisfying $\mathbf{H}_{k,k}^-\prec \mathbf{H}'\prec \mathbf{H}_{k,k}^+$. Since being a $U^1$~good scale is preserved under passing to subsequences, we see that 
$(\tilde{\mathbf{N}},\mathbf{H})$ is a $U^1$~good scale for $f_k$ as desired.
\end{proof}

The next result is a consequence of the preceding corollary. It shows that for any countable family of functions, one can find an entire range of scales along which their local averages exhibit especially regular behavior. This will be essential in what follows, as it guarantees that we can always choose a scale on which the relevant averages behave in an ``ergodic'' manner.

\begin{Corollary}
\label{cor:stable local averages for countable family}
Let $f_1,f_2,\cdots\colon\N\to\C$ be a countable collection of bounded functions on $\N$.
Suppose $\mathbf{N} = (N_s)_{s \in \N}$, $\mathbf{K}^+=(K^+_s)_{s\in\N}$, and $\mathbf{K}^-=(K^-_s)_{s\in\N}$ are sequences in $\N$ satisfying
\[
1\prec \mathbf{K}^-\prec \mathbf{K}^+\preceq \mathbf{N}.
\]
After replacing $(\mathbf{N},\mathbf{K}^+,\mathbf{K}^-)$ by a subsequence if necessary, there exist sequences $\mathbf{H}^+ = (H_s^+)_{s \in \N}$ and $\mathbf{H}^- = (H_s^-)_{s \in \N}$ with
$\mathbf{K}^-\preceq \mathbf{H}^-\prec \mathbf{H}^+\preceq \mathbf{K}^+$
and such that for any $\mathbf{H}=(H_s)_{s\in\N}$ with $\mathbf{H}^-\prec \mathbf{H}\prec \mathbf{H}^+$, and any $k,C\in\N$, if
\[
V_{s,k,C}=\Bigg\{n\in [N_s-H_s+1]: 
\Bigg|\frac{1}{CH_s}\sum_{h=1}^{CH_s} f_k(n+h)\,-\,\frac{1}{\lfloor H_s/C\rfloor }\sum_{h=1}^{\lfloor H_s/C\rfloor} f_k(n+h)\Bigg|<\frac{1}{C}\Bigg\}
\]
then
\[
\lim_{s\to\infty} \frac{|V_{s,k,C}|}{N_s-H_s+1}=1.
\]
\end{Corollary}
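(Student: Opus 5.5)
We will deduce the statement from \cref{cor:U^1 structure theorem}, after reducing to $[0,1]$-valued functions. Write each bounded $f_k$ as a linear combination of four functions with values in $[0,1]$: take real and imaginary parts, split each into positive and negative parts, and rescale by $\|f_k\|_\infty$. The averages appearing in $V_{s,k,C}$ are linear in $f_k$, so it suffices to prove the conclusion for a countable family $g_1,g_2,\ldots\colon\N\to[0,1]$; the tolerance $1/C$ then changes to $1/(4\|f_k\|_\infty C)$. Since the conclusion is required for every $C$, this loss is harmless: for the set attached to $(k,C)$ we use the statement for the $g_j$ with a larger constant $C'$ and compare. So we apply \cref{cor:U^1 structure theorem} to $(g_j)_j$ and obtain, after passing to a subsequence, scales $\mathbf{H}^\pm$ such that every $\mathbf{H}$ with $\mathbf{H}^-\prec\mathbf{H}\prec\mathbf{H}^+$ is a $U^1$ good scale for every $g_j$.

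Now fix such an $\mathbf{H}$, an index $j$, and a constant $C$. The scales $\mathbf{H}_{1}=(\lfloor H_s/C\rfloor)$ and $\mathbf{H}_{2}=(CH_s)$ still satisfy $\mathbf{H}^-\prec\mathbf{H}_i\prec\mathbf{H}^+$, since they differ from $\mathbf{H}$ by a bounded factor. Hence $(\mathbf{N},\mathbf{H}_2)$ is a $U^1$ good scale for $g_j$, and we obtain a decomposition $g_j=g_{\str}+g_{\unf}$ with two properties. First, on a proportion $1-\oh(1)$ of windows $\{n+1,\ldots,n+CH_s\}$, the function $g_{\str}$ is constant. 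Second, $\|g_{\unf}\|_{U^1(\mathbf{N},\mathbf{H}')}=0$ for some $1\prec\mathbf{H}'\prec\mathbf{H}_2$. The decomposition is unique up to $\sim_{\mathbf{N}}$, and \ref{itm U^1 structure theorem iii} can be arranged with $\mathbf{H}'\prec\mathbf{H}_1$ by taking $\mathbf{H}'$ to be the uniformity scale of the common good range. One clean way to do this is to note that the $\mathbf{H}^-$ produced in the proof of \cref{thm:U^1 structure theorem} already serves as this scale. Then the hierarchy \eqref{eqn: U1 hierarchy} gives $\|g_{\unf}\|_{U^1(\mathbf{N},\mathbf{H}_i)}=0$ for $i=1,2$.

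On a window where $g_{\str}$ is constant, both averages of $g_{\str}$ (over lengths $\lfloor H_s/C\rfloor$ and $CH_s$) equal that constant. The difference in $V_{s,k,C}$ is therefore bounded by the sum of the two averages of $|g_{\unf}|$-type quantities, namely $\bigl|\frac{1}{L}\sum_{h\le L}g_{\unf}(n+h)\bigr|$ for $L=\lfloor H_s/C\rfloor$ and $L=CH_s$. The vanishing of the $U^1(\mathbf{N},\mathbf{H}_i)$ seminorms says that these averages have mean $\oh(1)$ over $n\le N_s$. Markov's inequality then shows that the proportion of $n$ where either exceeds $1/(2C)$ tends to $0$. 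Normalizing by $N_s-H_s+1\sim N_s$ (using $\mathbf{H}\prec\mathbf{N}$) gives $|V_{s,k,C}|/(N_s-H_s+1)\to1$.

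The main obstacle is the bookkeeping in the middle step. One must make sure that a single decomposition of $g_j$ serves both scales $H_s/C$ and $CH_s$ simultaneously: structure must hold at the larger scale and uniformity at the smaller one. This requires the uniformity scale to be chosen uniformly below the whole good range, rather than as an $\mathbf{H}'$ depending on $\mathbf{H}$. A second, routine point is the well-definedness of the limits involved. If needed, we pass to a further subsequence and use $\limsup$ versions throughout, which suffices for Markov's inequality.
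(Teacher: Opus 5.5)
Your proposal is correct and is essentially the paper's argument: reduce to $[0,1]$-valued functions, apply \cref{cor:U^1 structure theorem}, let the structured part make the two averages equal, and control the uniform part by Markov's inequality. Your exact four-part splitting is cleaner than the paper's approximation argument, and the factor $4\|f_k\|_\infty$ is absorbed by running the Markov step with an arbitrary fixed threshold, not by changing $C$.

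The ``main obstacle'' you flag is illusory. For fixed $C$, any $\mathbf{H}'\prec(CH_s)_{s}$ automatically satisfies $\mathbf{H}'\prec(\lfloor H_s/C\rfloor)_{s}$. So the $\mathbf{H}'$ from the decomposition at scale $C\mathbf{H}$ already gives uniformity at both lengths, and you do not need to reach into the proof of \cref{thm:U^1 structure theorem}.
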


\begin{proof}
For any countable collection of bounded functions $f_1,f_2,\cdots\colon\N\to\C$ one can construct a countable collection of auxiliary functions $g_1,g_2,\cdots\colon\N\to[0,1]$ with the following approximation property:
for every $\epsilon>0$ and every function $f_i$ in the original family, there is a finite linear combination of the $g_j$’s that approximates $f_i$ uniformly up to an error of size $\epsilon$.
Consequently, if the conclusion of \cref{cor:stable local averages for countable family} is known to hold for the family $\{g_j\}_{j\in\N}$, then it must also hold for the original family $\{f_i\}_{i\in\N}$, simply because each $f_i$ can be uniformly approximated arbitrarily well by combinations of the $g_j$'s.
Therefore, we may assume without loss of generality from the outset that the functions under consideration take values in $[0,1]$.

Using \cref{cor:U^1 structure theorem}, we can now find sequences $\mathbf{H}^+ = (H_s^+)_{s \in \N}$ and $\mathbf{H}^- = (H_s^-)_{s \in \N}$ with
\[
\mathbf{K}^-\preceq \mathbf{H}^-\prec \mathbf{H}^+\preceq \mathbf{K}^+
\]
such that for all $\mathbf{H}=(H_s)_{s\in\N}$ with $\mathbf{H}^-\prec \mathbf{H}\prec \mathbf{H}^+$ and all $k\in\N$ the pair $(\mathbf{N},\mathbf{H})$ is a $U^1$~good scale for $f_k$. Hence, we can split $f_k=f_{k,\str}+f_{k,\unf}$ such that
properties \ref{itm U^1 structure theorem i}, \ref{itm U^1 structure theorem ii}, and \ref{itm U^1 structure theorem iii} from \cref{def: U1 good scale} are satisfied.
Note that by \ref{itm U^1 structure theorem iii} we have
\[
\lim_{s\to\infty}\frac{1}{N_s}\sum_{n=1}^{N_s}\Bigg|\frac{1}{CH_s}\sum_{h=1}^{CH_s} f_k(n+h) - \frac{1}{CH_s}\sum_{h=1}^{CH_s} f_{k,\str}(n+h)\Bigg|=0
\]
and 
\[
\lim_{s\to\infty}\frac{1}{N_s}\sum_{n=1}^{N_s}\Bigg|\frac{1}{\lfloor H_s/C\rfloor}\sum_{h=1}^{\lfloor H_s/C\rfloor} f_k(n+h) - \frac{1}{\lfloor H_s/C\rfloor}\sum_{h=1}^{\lfloor H_s/C\rfloor} f_{k,\str}(n+h)\Bigg|=0.
\]
So if we define
\[
V_{s,k,C}'=\Bigg\{n\in [N_s-H_s+1]: 
\Bigg|\frac{1}{CH_s}\sum_{h=1}^{CH_s} f_{k\str}(n+h)\,-\,\frac{1}{\lfloor H_s/C\rfloor }\sum_{h=1}^{\lfloor H_s/C\rfloor} f_{k,\str}(n+h)\Bigg|<\frac{1}{C}\Bigg\}
\]
then the difference between $V_{s,k,C}'$ and $V_{s,k,C}$ is a set of density zero.
The proof is completed by observing that \ref{itm U^1 structure theorem ii} implies
\[
\lim_{s\to\infty} \frac{|V_{s,k,C}'|}{N_s-H_s+1}=1.
\]
\end{proof}

Recall from \eqref{eqn: U1 hierarchy} that the intermediate scale seminorm $\|.\|_{U^1(\mathbf{N},\mathbf{H})}$ is bounded from below by the Host--Kra seminorm and from above by the (asymptotic) Gowers seminorm.
We conclude this subsection with a theorem that asserts that under natural regularity assumptions, if $\mathbf{H}$ grows sufficiently slowly then $\|.\|_{U^1(\mathbf{N},\mathbf{H})}$ coincides with the Host--Kra seminorm, whereas if $\mathbf{H}$ grows sufficiently fast then $\|.\|_{U^1(\mathbf{N},\mathbf{H})}$ coincides with the (asymptotic) Gowers seminorm.

\begin{Theorem}
\label{thm: existence of U1 G and HK scales}
Let $f\colon\N\to [0,1]$, and suppose $\mathbf{N}=(N_s)_{s\in\N}$ is a sequence of natural numbers satisfying $1\prec\mathbf{N}$.
\begin{enumerate}[label=(\roman{enumi}),ref=(\roman{enumi}),leftmargin=*]
\item 
\label{itm: existence of U1 HK scales}
(Existence of Host--Kra scale)
If $f$ admits correlations along $\mathbf{N}$ and $\lim_{s \to \infty} N_s/N_{s+1} = 0$, then there exists a sequence of natural numbers $\mathbf{H}_{\mathrm{HK}}=(H_{\mathrm{HK},s})_{s\in\N}$ with $1\prec\mathbf{H}_{\mathrm{HK}}\prec \mathbf{N}$ such that for all sequences $\mathbf{H}=(H_{s})_{s\in\N}$ with $1\prec \mathbf{H} \preceq \mathbf{H}_{\mathrm{HK}}$ the pair $(\mathbf{N},\mathbf{H})$ is a $U^1$~good scale for~$f$ with the property that the corresponding decomposition $f = f_{\str} + f_{\unf}$ satisfies
\begin{equation*}
    \|f_{\str} - f_{\inv}\|_{2,\mathbf{N}} = \|f_{\unf} - f_{\erg}\|_{2,\mathbf{N}} = 0
\end{equation*}
and
\begin{equation*}
    \|f\|_{U^1(\mathbf{N},\mathbf{H})} = \|f\|_{U^1(\mathbf{N})}.
\end{equation*}
\item 
\label{itm: existence of U1 G scales}
(Existence of Gowers scale)
If the mean of $f$ exits, i.e., the limit
\[
\lim_{N \to \infty} \frac{1}{N} \sum_{n=1}^N f(n)=\delta \quad\text{exists,}
\]
then there exists 
a sequence of natural numbers $\mathbf{H}_{\mathrm{G}}=(H_{\mathrm{G},s})_{s\in\N}$ with $1\prec\mathbf{H}_{\mathrm{G}}\prec \mathbf{N}$ such that for all sequences $\mathbf{H}=(H_{s})_{s\in\N}$ with $\mathbf{H}_{\mathrm{G}}\preceq\mathbf{H} \preceq \mathbf{N}$ we have
\[
\|f-\delta\|_{U^1(\mathbf{N},\mathbf{H})}=0.
\]
In particular, for any $\mathbf{H}=(H_{s})_{s\in\N}$ with $\mathbf{H}_{\mathrm{G}}\preceq\mathbf{H} \preceq \mathbf{N}$ the the pair $(\mathbf{N},\mathbf{H})$ is a $U^1$~good scale for~$f$, and the associated splitting $f=f_{\str}+f_{\unf}$ is given by
\[
f_{\str}=\delta,\qquad\text{and}\qquad f_{\unf}=f-\delta.
\]
\end{enumerate}
\end{Theorem}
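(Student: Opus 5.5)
For part \ref{itm: existence of U1 HK scales} I would build $\mathbf{H}_{\mathrm{HK}}$ by a diagonal argument from the concrete decomposition in the second proof of \cref{thm: discrete ergodic theorem}. There $f_{\inv}$ is obtained from the block averages $f_{k,\inv}$ on dyadic blocks $[2^kt,2^k(t+1))$, with $\|f_{k,\inv}-f_{\inv}\|_{2,\mathbf{N}}\to 0$. Since $f$ admits correlations along $\mathbf{N}$, for each fixed $H$ the limit
\[
a(H)=\lim_{s\to\infty}\frac{1}{N_s}\sum_{n=1}^{N_s}\Big|\frac1H\sum_{h=1}^H f_{\erg}(n+h)\Big|
\]
exists after approximating $f_{\erg}$ in $\mathcal{L}^2$ by elements of $\mathcal{A}_f$. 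By local ergodicity, $a(H)\to0$ as $H\to\infty$. The same holds for the local invariance defect $b(H)=\limsup_s \frac1{N_s}\sum_n \max_{m\le H}|f_{\inv}(n+m)-f_{\inv}(n)|$, which is at most $H$ times the one-step defect and hence zero for each fixed $H$.

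Since $N_s/N_{s+1}\to0$, I can change $f_{\inv}$ on each block $(N_{s-1},N_s]$ independently without affecting averages along $\mathbf{N}$. I would choose $H_{\mathrm{HK},s}\to\infty$ slowly enough that, uniformly for $H\le H_{\mathrm{HK},s}$, the finite-$s$ quantities are within $\epsilon_s\to0$ of their limits. I would then set $f_{\str}=f_{k_s,\inv}$ on $(N_{s-1},N_s]$, with $2^{k_s}\gg H_{\mathrm{HK},s}$ and $k_s$ growing slowly. This $f_{\str}$ is constant on dyadic blocks of length $2^{k_s}$, so for any $H\le H_{\mathrm{HK},s}$ most windows of length $H$ lie inside one block, which gives the structure condition. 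For uniformity, I would take $H'_s=\lfloor\sqrt{H_s}\rfloor$ and use $a(H'_s)\to0$ together with the $\mathcal{L}^2$ closeness of $f-f_{\str}$ to $f_{\erg}$.

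The identification $\|f\|_{U^1(\mathbf{N},\mathbf{H})}=\|f\|_{U^1(\mathbf{N})}$ then follows by the triangle inequality. Both sides equal $\|f_{\inv}\|$, which is $\E_{n\in\mathbf{N}}|f_{\inv}(n)|$ on the local side. On the Host--Kra side the definition is read through \eqref{eqn: U1 hierarchy}, and the two agree because $\mathbf{H}$ is slow enough that windowed averages of $f_{\inv}$ are approximately pointwise values.

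\textbf{Main obstacle.} The delicate point is choosing $k_s$ and $H_{\mathrm{HK},s}$ simultaneously. The constructed $f_{\str}$ must agree with $f_{\inv}$ in $\|\cdot\|_{2,\mathbf{N}}$, which requires $k_s\to\infty$, while $H_{\mathrm{HK},s}$ must stay below the scale where the limits in $s$ have stabilized. I would follow the inductive subsequence bookkeeping of \cref{lem_decomposition_in_the_limit}.

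Part \ref{itm: existence of U1 G scales} is easier. Existence of the mean $\delta$ gives
\[
\sum_{n\le x}(f(n)-\delta)=\oh(x).
\]
For $n\in[N_s]$ and $H\ge H_{\mathrm{G},s}$, the window sum $\sum_{h=1}^H (f(n+h)-\delta)$ is a difference of two partial sums at $n+H$ and $n$, so its modulus is at most $\eta(N_s+H)\,(N_s+H)$, where $\eta(x)=\sup_{y\ge x^{1/2}}|y^{-1}\sum_{m\le y}(f(m)-\delta)|$ plus a term from $n\le\sqrt{N_s}$. Choosing $H_{\mathrm{G},s}=\lceil N_s\sqrt{\eta'(N_s)}\rceil$, with $\eta'\to0$ a suitable majorant, makes the normalized window average $\oh(1)$ uniformly in $n$. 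Hence $\|f-\delta\|_{U^1(\mathbf{N},\mathbf{H})}=0$ for all $\mathbf{H}_{\mathrm{G}}\preceq\mathbf{H}\preceq\mathbf{N}$; the case $\mathbf{H}$ comparable to $\mathbf{N}$ is covered by definition \eqref{eqn_intermediate_scale_U_1_norm_def_1}. The good-scale conclusion then follows with $f_{\str}=\delta$, which trivially satisfies the structure condition, and with $\mathbf{H}'=\mathbf{H}_{\mathrm{G}}$ after shrinking $\mathbf{H}_{\mathrm{G}}$ slightly so that $\mathbf{H}'\prec\mathbf{H}$.
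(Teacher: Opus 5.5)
Most of your proposal is sound, but the step where you identify $\|f\|_{U^1(\mathbf{N},\mathbf{H})}$ with $\|f\|_{U^1(\mathbf{N})}$ fails.

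The local side is an $L^1$ quantity. For slow $\mathbf{H}$ it equals $\E_{n\in\mathbf{N}}|f_{\inv}(n)|$, as you say. Since $f\ge 0$, it is in fact just $\E_{n\in\mathbf{N}}f(n)$ for every $\mathbf{H}\prec\mathbf{N}$.

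The Host--Kra side is, by definition, $\|\Phi(f)\|_{U^1}=\|\E[\Phi(f)\mid\mathcal{I}]\|_{L^2(\mu)}=\|f_{\inv}\|_{2,\mathbf{N}}$. The hierarchy \eqref{eqn: U1 hierarchy} only gives $\|f\|_{U^1(\mathbf{N})}\ge\|f\|_{U^1(\mathbf{N},\mathbf{H})}$. By Cauchy--Schwarz the two sides agree only when $|f_{\inv}|$ is constant up to a $\|\cdot\|_{2,\mathbf{N}}$-null function. This does happen, for example, when $f-\E_{n\in\mathbf{N}}f(n)$ is locally ergodic, but not in general.

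Here is a concrete counterexample. Take $f=\1_A$ with $A=\{n:\lfloor\sqrt{n}\rfloor\text{ even}\}$. It admits correlations along every $\mathbf{N}$ and is locally invariant, so $f_{\inv}=f$ and $f_{\erg}=0$. Yet $\|f\|_{U^1(\mathbf{N},\mathbf{H})}=\tfrac12$ while $\|f\|_{U^1(\mathbf{N})}=2^{-1/2}$.

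So this clause of (i) cannot be proved under the paper's definitions. The paper's own proof never addresses it: it stops after verifying (i)--(iii) of \cref{def: U1 good scale} and $\|f_{\str}-f_{\inv}\|_{2,\mathbf{N}}=\|f_{\unf}-f_{\erg}\|_{2,\mathbf{N}}=0$. What your argument does establish is agreement with the $L^1$ analogue $\lim_{H\to\infty}\lim_{s\to\infty}\frac{1}{N_s}\sum_{n=1}^{N_s}\bigl|\frac1H\sum_{h=1}^H f(n+h)\bigr|=\E_{n\in\mathbf{N}}|f_{\inv}(n)|$.

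Apart from this, your construction of the good-scale decomposition in (i) is a valid alternative to the paper's. The paper takes block averages of $f_{\inv}$ itself over consecutive blocks of length $H^+_s$ in $(N_{s-1},N_s]$. It reads $H^+_s$ off from the rate of local invariance of $f_{\inv}$ (maximal shift defect below $2^{-H}$ for $s\ge s_H$). Then $\|f_{\str}-f_{\inv}\|_{2,\mathbf{N}}=0$ follows directly from local invariance. Uniformity comes from $\|f_{\erg}\|_{U^1(\mathbf{N},\mathbf{H}')}\le\|f_{\erg}\|_{U^1(\mathbf{N})}=0$, which holds for every $1\prec\mathbf{H}'\prec\mathbf{N}$, so no stabilization-in-$s$ bookkeeping is needed.

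You instead use the dyadic block averages $f_{k_s,\inv}$ of $f$. This needs the convergence $f_{k,\inv}\to f_{\inv}$ from \cref{lem_decomposition_in_the_limit} plus a diagonal choice of $k_s$, and it works. The obstacle you flag is not a real conflict, because both constraints are upper bounds: choose $k_s$ first, then any $H_{\mathrm{HK},s}\to\infty$ with $H_{\mathrm{HK},s}=\oh(2^{k_s})$.

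For (ii) you use the same idea as the paper: a window sum is a difference of two partial sums, so the error is uniform in $n\in[N_s]$. You use an explicit rate where the paper uses fixed fractions $N_s/k$ followed by diagonalization in $k$. Your "shrinking" step corresponds to the paper's choice $\mathbf{H}'=\mathbf{H}^-\prec\mathbf{H}_{\mathrm{G}}$.
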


\begin{proof}
    We first establish the existence of a Host--Kra scale.
    Assume $f$ admits correlations along $\mathbf{N}$ and $\lim_{s \to \infty} N_s/N_{s+1} = 0$.
    Let $f = f_{\inv} + f_{\erg}$ be the decomposition provided by \cref{thm: discrete ergodic theorem}.
    By property \ref{itm disc ET ii} in \cref{thm: discrete ergodic theorem}, we have
    \begin{equation*}
        \lim_{s \to \infty} \frac{1}{N_s} \sum_{n=1}^{N_s} |f_{\inv}(n+1)-f_{\inv}(n)| = 0.
    \end{equation*}
    Hence,
    \begin{equation*}
        \lim_{s \to \infty} \frac{1}{N_s} \sum_{n=1}^{N_s} |f_{\inv}(n+h)-f_{\inv}(n)| = 0
    \end{equation*}
    for every $h \in \N$.
    Pick $s_1 \leq s_2 \leq \ldots$ such that if $s \geq s_H$, then
    \begin{equation*}
        \frac{1}{N_s} \sum_{n=1}^{N_s} \max_{h \in [H]} \left| f_{\inv}(n+h)-f_{\inv}(n) \right| < 2^{-H}.
    \end{equation*}
    Put $H^+_s = \max\{H \in [\sqrt{N_s}] : s_H \leq s\}$.
    Then we have $1 \prec \mathbf{H}^+ \prec \mathbf{N}$ and
    \begin{equation} \label{eq: H^+ invariance}
        \lim_{s \to \infty} \frac{1}{N_s} \sum_{n=1}^{N_s} \max_{h \in [H^+_s]} \left| f_{\inv}(n+h)-f_{\inv}(n) \right| = 0.
    \end{equation}
    Define
    \begin{equation*}
        f_{\str}(n) = \frac{1}{H^+_s} \sum_{h=1}^{H^+_s} f_{\inv} \left( N_{s-1} + \left\lfloor \frac{n-(N_{s-1}+1)}{H^+_s} \right\rfloor H^+_s + h \right)
    \end{equation*}
    for $n \in \{N_{s-1}+1,\ldots,N_s\}$.
    Then
    \begin{multline*}
        \frac{1}{N_s} \sum_{n=1}^{N_s} |f_{\inv}(n) - f_{\str}(n)| \\
         = \frac{1}{N_s} \sum_{n=N_{s-1}+1}^{N_s} \left| f_{\inv}(n) - \frac{1}{H^+_s} \sum_{h=1}^{H^+_s} f_{\inv} \left( \underbrace{N_{s-1} + \left\lfloor \frac{n-(N_{s-1}+1)}{H^+_s} \right\rfloor H^+_s + h}_{\in \{n-H^+_s,\ldots,n+H^+_s\}} \right)\right|+ \oh(1) \\
         \leq \frac{1}{N_s} \sum_{n=N_{s-1}+1}^{N_s} \max_{-H^+_s \leq h \leq H^+_s} \left| f_{\inv}(n) - f_{\inv}(n+h) \right| + \oh(1),
    \end{multline*}
    so $\|f_{\inv} - f_{\str}\|_{2,\mathbf{N}} = 0$ by \eqref{eq: H^+ invariance}.
    Let $f_{\unf} = f - f_{\str}$.
    Then we also have
    \begin{equation*}
        \|f_{\erg} - f_{\unf}\|_{2,\mathbf{N}} = \|(f-f_{\inv}) + (f-f_{\str})\|_{2,\mathbf{N}} = \|f_{\inv} - f_{\str}\|_{2,\mathbf{N}} = 0.
    \end{equation*}

    Let $\mathbf{H}_{HK} = (H_{HK,s})_{s \in \N}$ be a sequence satisfying $1 \prec \mathbf{H}_{HK} \prec \mathbf{H}^+$.
    For example, we may take $H_{HK,s} = \lfloor \sqrt{H^+_s} \rfloor$.
    Suppose $\mathbf{H} = (H_s)_{s \in \N}$ and $1 \prec \mathbf{H} \preceq \mathbf{H}_{HK}$.
    We want to show that $f = f_{\str} + f_{\unf}$ as defined above satisfies conditions \ref{itm U^1 structure theorem i}, \ref{itm U^1 structure theorem ii}, and \ref{itm U^1 structure theorem iii} from \cref{def: U1 good scale}.

    Property \ref{itm U^1 structure theorem i} follows from \cref{thm: discrete ergodic theorem}\ref{itm disc ET i} and the observation from above that $\|f_{\inv} - f_{\str}\|_{2,\mathbf{N}} = \|f_{\erg} - f_{\unf}\| = 0$.

    By construction, $f_{\str}$ is constant on the intervals $\{N_{s-1}+mH^+_s+1,\ldots,N_{s-1}+(m+1)H^+_s\}$ of length $H^+_s$.
    Therefore,
    \begin{multline*}
        \Gamma_s = \left\{ n\in [N_s-H_s+1]: f_{\str}(n+m_1)=f_{\str}(n+m_2)~\forall m_1,m_2\in [H_s] \right\} \\
        \supseteq \bigcup_{m=0}^{(N_s-H_s-N_{s-1})/H^+_s} (N_{s-1} + mH^+_s + [H^+_s-H_s]),
    \end{multline*}
    whence
    \begin{equation*}
        |\Gamma_s| \geq \left\lfloor \frac{N_s-H_s-N_{s-1}}{H^+_s} \right\rfloor (H_s^+ - H_s) = \left( 1 - \oh(1) \right)N_s.
    \end{equation*}
    
    Finally, for any $\mathbf{H}' = (H'_s)_{s \in \N}$ with $1 \prec \mathbf{H}' \prec \mathbf{H}$, we have
    \begin{equation*}
        \|f_{\unf}\|_{U^1(\mathbf{N},\mathbf{H'})} = \|f_{\erg}\|_{U^1(\mathbf{N},\mathbf{H}')} \leq \|f_{\erg}\|_{U^1(\mathbf{N})} = 0
    \end{equation*}
    by \eqref{eqn: U1 hierarchy} and \cref{thm: discrete ergodic theorem}\ref{itm disc ET iii}.

    \bigskip

    Now we turn to producing a Gowers scale.
    We claim that for every $k \in \N$,
    \begin{equation} \label{eq:uniform convergence to mean}
        \lim_{s \to \infty} \max_{n \in [N_s]} \left| \frac{k}{N_s} \sum_{h=1}^{N_s/k} f(n+h) - \delta \right| = 0.
    \end{equation}
    Let $\epsilon > 0$.
    Fix $1 \leq n \leq N_s$, and let $\alpha = \frac{n}{N_s}$.
    If $\alpha < \epsilon$, then
    \begin{equation*}
        \frac{k}{N_s} \sum_{h=1}^{N_s/k} f(n+h) = \frac{k}{N_s} \sum_{h=1}^{N_s/k} f(h) + O \left( \epsilon \right) = \delta + o(1) + O(\epsilon).
    \end{equation*}
    On the other hand, if $\alpha \geq \epsilon$, then
    \begin{equation*}
        \frac{k}{N_s} \sum_{h=1}^{N_s/k} f(n+h) = \frac{k}{N_s} \left( \underbrace{\sum_{m=1}^{(\alpha+1/k)N_s} f(m)}_{(\alpha+1/k)N_s\delta+o(N_s)} - \underbrace{\sum_{m=1}^{\alpha N_s} f(m)}_{\alpha N_s \delta+o(N_s)} \right) = \delta + o(1).
    \end{equation*}
    Letting $\epsilon \to 0$ proves the claim.

    By \eqref{eq:uniform convergence to mean}, let $s_1 \leq s_2 \leq \ldots$ such that if $s \geq s_K$, then
    \begin{equation*}
        \max_{n \in [N_s]} \max_{k \in [K]} \left| \frac{k}{N_s} \sum_{h=1}^{N_s/k} f(n+h) - \delta \right| < 2^{-K}.
    \end{equation*}
    Let $K_s = \max\{K \in [\sqrt{N_s}] : s_K \leq s\}$.
    Then $1 \prec \mathbf{K} \prec \mathbf{N}$ and
    \begin{equation*}
        \lim_{s \to \infty} \max_{n \in [N_s]} \max_{k \in [K_s]} \left| \frac{k}{N_s} \sum_{h=1}^{N_s/k} f(n+h) - \delta \right| = 0.
    \end{equation*}
    Let $H^-_s = \lfloor \frac{N_s}{K_s} \rfloor$, and note that
    \begin{equation} \label{eq: H^- ergodicity}
        \lim_{s \to \infty} \max_{n \in [N_s]} \left| \frac{1}{H^-_s} \sum_{h=1}^{H^-_s} f(n+h) - \delta \right| = 0.
    \end{equation}

    We now let $\mathbf{H}_G = (H_{G,s})_{s \in \N}$ be an arbitrary sequence with $\mathbf{H}^- \prec \mathbf{H}_G \prec \mathbf{N}$.
    For example, we can put $H_{G,s} = \lfloor \sqrt{H^-_s N_s} \rfloor$.
    We want to show that the decomposition $f = \delta + (f-\delta)$ satisfies conditions \ref{itm U^1 structure theorem i}, \ref{itm U^1 structure theorem ii}, and \ref{itm U^1 structure theorem iii} in \cref{def: U1 good scale} for any sequence $\mathbf{H} = (H_s)_{s \in \N}$ with $\mathbf{H}_G \preceq \mathbf{H} \preceq \mathbf{N}$.
    Properties \ref{itm U^1 structure theorem i} and \ref{itm U^1 structure theorem ii} are trivially satisfied.
    Moreover, \eqref{eq: H^- ergodicity} shows that $\|f-\delta\|_{U^1(\mathbf{N},\mathbf{H}^-)} = 0$, so property \ref{itm U^1 structure theorem iii} is also satisfied by taking $\mathbf{H}' = \mathbf{H}^-$.
\end{proof}

\subsection{The $U^2(\mathbf{N},\mathbf{H})$ seminorm}
\label{sec: intermediate U2 seminorm}

In this section, we extend the intermediate-scale uniformity seminorms from order $1$ (introduced in \cref{sec: intermediate U1 seminorm}) to order $2$, and establish the corresponding structure theorem. While the definition of the  $U^2(\mathbf{N},\mathbf{H})$ seminorm parallels that of the $U^1(\mathbf{N},\mathbf{H})$ seminorm, the ensuing structure theorem, in particular the description of the structured component, is considerably more intricate than its order 1 counterpart (\cref{thm:U^1 structure theorem}). Nonetheless, the theorem provides a useful framework to study the behavior of sumsets $A+B$ for arbitrary subsets $A,B\subset\N$.

If $\mathbf{N} = (N_s)_{s \in \N}$ and $\mathbf{H}=(H_s)_{s\in\N}$ are sequences in $\N$ with
\[
1\prec \mathbf{H}\preceq \mathbf{N},
\]
we define the $U^2(\mathbf{N},\mathbf{H})$ seminorm of a bounded function $f\colon\N\to\C$ as
\begin{align}
\label{eqn_intermediate_scale_U_2_norm_def_1}
\|f\|_{U^2(\mathbf{N},\mathbf{H})}&=\lim_{s \to \infty} \frac{1}{N_s-H_s+1} \sum_{n=1}^{N_s-H_s+1}  \|f\|_{U^2(\{n,n+1,\ldots,n+H_{s}-1\})},
\end{align}
whenever this limit exists, where $\|.\|_{U^2(\{n,n+1,\ldots,n+H_{s}-1\})}$ is the Gowers norm on the interval $\{n,n+1,\ldots,n+H_{s}-1\}$.
If either $ \mathbf{H}\prec \mathbf{N}$ or $ \mathbf{H}= \mathbf{N}$ then \eqref{eqn_intermediate_scale_U_2_norm_def_1} simplifies and we have
\begin{equation}
\label{eqn_intermediate_scale_U_2_norm_def_2} 
\|f\|_{U^2(\mathbf{N},\mathbf{H})}=
\begin{cases}
\lim_{s \to \infty} \frac{1}{N_s} \sum_{n=1}^{N_s}  \|f\|_{U^2(\{n,n+1,\ldots,n+H_{s}-1\})},&\text{if}~\mathbf{H}\prec \mathbf{N},
\\
\lim_{s \to \infty} \|f\|_{U^2([N_s])},&\text{if}~ \mathbf{H}= \mathbf{N}.
\end{cases}
\end{equation}

We also define the $u^2(\mathbf{N},\mathbf{H})$ seminorm as
\begin{align}
\label{eqn_intermediate_scale_u_2_norm_def_2}
\|f\|_{u^2(\mathbf{N},\mathbf{H})}&=\lim_{s \to \infty} \frac{1}{N_s-H_s+1} \sum_{n=1}^{N_s-H_s+1}  \|f\|_{u^2(\{n,n+1,\ldots,n+H_{s}-1\})},
\end{align}
whenever this limit exists, where $\|.\|_{u^2(\{n,n+1,\ldots,n+H_{s}-1\})}$ is as given in \eqref{eqn_def_fourier_U2_norm_interval}. It follows from \eqref{eqn_U2-Gowers-Fourier_relation} and Jensen's inequality that
\begin{equation}
\label{eqn_U2-uniformity_iff_pseudorandom}    
C^{-1}\|f\|_{u^2(\mathbf{N},\mathbf{H})}\leq \|f\|_{U^2(\mathbf{N},\mathbf{H})}\leq C \|f\|_{u^2(\mathbf{N},\mathbf{H})}^{\frac{1}{2}},
\end{equation}
where $C$ is an absolute constant. In particular, it follows from \eqref{eqn_U2-uniformity_iff_pseudorandom} that
\begin{equation} \label{eq: U^2(N,H) uniformity}
\|f\|_{U^2(\mathbf{N},\mathbf{H})}=0 ~\iff~
\lim_{s \to \infty} \frac{1}{N_s-H_s+1} \sum_{n=1}^{N_s-H_s+1}
\sup_{\alpha\in\R} \Bigg|\frac{1}{H_s}\sum_{h=0}^{H_s-1} f(n+h)e(h\alpha)\Bigg|=0.
\end{equation}
Also, it follows from the triangle inequality that if
$\mathbf{H}_1=(H_{1,s})_{s\in\N}$ and $\mathbf{H}_2=(H_{_1s})_{s\in\N}$ are sequences of natural numbers with $1\prec\mathbf{H}_1\prec\mathbf{H}_2\preceq\mathbf{N}$ and such that $\|f\|_{u^2(\mathbf{H}_1,\mathbf{N})}$ and $\|f\|_{u^2(\mathbf{H}_2,\mathbf{N})}$ are well defined then
\begin{equation}
\label{eqn: u2 hierarchy}
\|f\|_{u^2(\mathbf{N},\mathbf{H}_1)}
\geq \|f\|_{u^2(\mathbf{N},\mathbf{H}_2)}.
\end{equation} 
An analogous inequality for the $U^2(\mathbf{N},\mathbf{H})$ seminorm instead of the $u^2(\mathbf{N},\mathbf{H})$ will not be needed in the forthcoming, but if true then it seems more difficult to derive.

We now introduce the order-2 analogue of a $U^1$-good scale from \cref{def: U1 good scale}. 

\begin{Definition}[$U^2$~good scale]
\label{def: U2 good scale}
Let $f\colon\N\to [0,1]$, and suppose $\mathbf{N} = (N_s)_{s \in \N}$ and $\mathbf{H}=(H_s)_{s\in\N}$ are sequences in $\N$ with
$1\prec \mathbf{H}\preceq \mathbf{N}$.
We say that $(\mathbf{N},\mathbf{H})$ is a \define{$U^2$~good scale} for $f$ if there exists a decomposition $f = f_{\str} + f_{\unf}$ such that:
\begin{enumerate}[label=(\roman{enumi}),ref=(\roman{enumi}),leftmargin=*]
\item\label{itm U^2 structure theorem i}
(Nonnegativity) 
$f_{\str}$ takes values in $[0,1]$ and $\E_{n \in \mathbf{N}} f_{\unf} = 0$.
\item\label{itm U^2 structure theorem ii}
(Structure)
For every $\epsilon>0$ there exist $d\in\N$, $c_1,\ldots,c_d\colon \N\to \C$ with $\|c_i\|_\infty\leq 1$, and $\alpha_1,\ldots,\alpha_d\colon \N\to\R$ such that if $\mathcal{P}(n)=\sum_{i=1}^d c_i(n) e(n \alpha_i(n))$ then
    \[
    \| \mathcal{P} -  f_{\str} \|_{2, \mathbf{N}}\leq \epsilon.
    \]
Moreover, given any $C\geq 1$, if we consider
\begin{align*}
\Gamma_s^{(1)}&=\{n\in [N_s-H_s+1]: \alpha_i(n+m_1)=\alpha_i(n+m_2)~\forall i\in [d],~\forall m_1,m_2\in [H_s]\},
\\
\Gamma_s^{(2)}&=\{n\in [N_s-H_s+1]: c_i(n+m_1)=c_i(n+m_2)~\forall i\in [d],~\forall m_1,m_2\in [H_s]\},
\\
W_{s,C}&=\Big\{n\in [N_s-H_s+1]: \text{for all}~(w_1, \ldots, w_d) \in [-C,C]^d\cap \Z^d,~\text{either}
\\
\|w_1&\alpha_1(n) +\ldots+w_d\alpha_d(n)\|_{\T}\leq \tfrac{1}{C H_s}
~\text{or}~\|w_1\alpha_1(n)+\ldots+w_d\alpha_d(n)\|_{\T}\geq \tfrac{C}{H_s}\Big\},
\end{align*}
then
\[
\lim_{s\to\infty} \frac{|\Gamma_s^{(1)}|}{N_s-H_s+1} = 
\lim_{s\to\infty} \frac{|\Gamma_s^{(2)}|}{N_s-H_s+1} =
\lim_{s\to\infty} \frac{|W_{s,C}|}{N_s-H_s+1}=1.
\]
\item\label{itm U^2 structure theorem iii}
(Uniformity)
There is $\mathbf{H}'=(H_s')_{s\in\N}$ with $1\prec \mathbf{H}'\prec \mathbf{H}$ and $\|f_{\unf}\|_{U^2(\mathbf{N},\mathbf{H}')}=0.$
\end{enumerate}
\end{Definition}

As was the case with $U^1$ good scales, the properties \ref{itm U^2 structure theorem i}, \ref{itm U^2 structure theorem ii}, and \ref{itm U^2 structure theorem iii} in \cref{def: U2 good scale} imply that
\begin{enumerate}[label=(\roman{enumi}),ref=(\roman{enumi}),leftmargin=*]
    \setcounter{enumi}{3}
    \item
    (Orthogonality)
    $\innprod{f_{\str}}{f_{\unf}}_{\mathbf{N}} = 0$
\end{enumerate}
also holds.
To see this, we first observe that is suffices to prove $\innprod{\mathcal{P}}{f_{\unf}}_{2,\mathbf{N}} = 0$ for the functions $\mathcal{P}$ appearing in \ref{itm U^2 structure theorem ii}, which then further reduces to establishing $\E_{n \in \mathbf{N}} e(n\alpha(n)) f_{\unf}(n) = 0$ whenever $\alpha : \N \to \T$ is a function satisfying that for every $s \in \N$ and all but $\oh_{s \to \infty}(N_s - H_s + 1)$ many $n \in [N_s - H_s + 1]$, one has
\begin{equation*}
    \alpha(n+m_1) = \alpha(n+m_2) \qquad (\forall m_1, m_2 \in [H_s]).
\end{equation*}
Applying \ref{itm U^2 structure theorem iii} and \eqref{eq: U^2(N,H) uniformity} then establishes the desired orthogonality.

Next we state the structure theorem for the $U^2(\mathbf{N},\mathbf{H})$ seminorm. The statement of the theorem is essentially the same as in the order-1 case (\cref{thm:U^1 structure theorem}), with the only change being that the notion of a $U^1$ good scale is replaced by that of a $U^2$ good scale. The real difference lies in what it means for a scale to be $U^2$~good. The theorem ensures the existence of such good scales, but this should really be understood as guaranteeing a decomposition $f = f_{\str} + f_{\unf}$ with the properties specified in \cref{def: U2 good scale}.

\begin{Theorem}[Structure theorem for $U^2(\mathbf{N},\mathbf{H})$-seminorm]
\label{thm:U^2 structure theorem}
Let $f\colon\N\to [0,1]$, and suppose $\mathbf{N} = (N_s)_{s \in \N}$, $\mathbf{K}^+=(K^+_s)_{s\in\N}$, and $\mathbf{K}^-=(K^-_s)_{s\in\N}$ are sequences in $\N$ satisfying
\[
1\prec \mathbf{K}^-\prec \mathbf{K}^+\preceq \mathbf{N}.
\]
After replacing $(\mathbf{N},\mathbf{K}^+,\mathbf{K}^-)$ by a subsequence if necessary, there exist sequences $\mathbf{H}^+ = (H_s^+)_{s \in \N}$ and $\mathbf{H}^- = (H_s^-)_{s \in \N}$ with
\[
\mathbf{K}^-\preceq \mathbf{H}^-\prec \mathbf{H}^+\preceq \mathbf{K}^+
\]
such that for any $\mathbf{H}=(H_s)_{s\in\N}$ with $\mathbf{H}^-\prec \mathbf{H}\prec \mathbf{H}^+$ the pair $(\mathbf{N},\mathbf{H})$ is a $U^2$~good scale for $f$.
\end{Theorem}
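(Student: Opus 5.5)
The plan is to mirror the proof of \cref{thm:U^1 structure theorem}, replacing interval averages by the arithmetic regularity lemma applied blockwise. First I will pass to a subsequence with $\lim_{s\to\infty} N_s/N_{s+1}=0$. I will also introduce the intermediate scales $H_{k,s}=\lfloor (K^-_s)^{1/k}(K^+_s)^{1-1/k}\rfloor$, so that $\mathbf{K}^-=\mathbf{H}_1\prec\mathbf{H}_2\prec\cdots\prec\mathbf{K}^+$.

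For each $k$ and $s$, I partition $\{N_{s-1}+1,\ldots,N_s\}$ into intervals $I\in\mathcal{I}_{k,s}$ of length comparable to $H_{k,s}$. On each such $I$ I apply \cref{thm_arithmetic_regularity_lemma} to $f|_I$ with $\epsilon_k=1/k$, obtaining $f|_I=f^I_{\str}+f^I_{\mathrm{psd}}$. Here $f^I_{\str}$ is $\epsilon_k$-close in $L^2(I)$ to $\sum_{i\le d^\star(\epsilon_k)}c_i^I e(n\alpha_i^I)$. Gluing these pieces gives $f_{k,\str}$ and $f_{k,\unf}$, together with functions $c_i(n)=c_i^I$ and $\alpha_i(n)=\alpha_i^I$ for $n\in I$, which are piecewise constant on the intervals of $\mathcal{I}_{k,s}$.

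To handle the Diophantine condition defining $W_{s,C}$, I will modify the frequencies before gluing. On a block of length $\approx H_{k,s}$, the frequencies $\alpha_i^I$ can be replaced by nearby values without changing the structured part much. For a fixed finite set of integer combinations, a pigeonhole argument over a geometric range of thresholds finds a threshold $C_k$ such that every combination $\sum w_i\alpha_i^I$ with $|w|\le C_k$ is either $\le \frac{1}{C_kH}$ or $\ge \frac{C_k}{H}$ from $\Z$. This works for all $H$ in a window of scales strictly between $\mathbf{H}_{k}$ and $\mathbf{H}_{k+1}$. The reason is that there are only boundedly many combinations ($(2C+1)^{d}$), and each one can fall into a "bad" annulus $[\frac{1}{CH},\frac{C}{H}]$ for only boundedly many of a sequence of well-separated scales. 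Equivalently, I can slightly perturb the $\alpha_i^I$ to make near-resonant combinations exactly rational with small denominator, at cost $O(1/k)$ in $L^2$. I expect this step to be the main obstacle: the condition must hold for every fixed $C$ and for a whole range of $\mathbf{H}$. I would arrange it by letting $C_k\to\infty$ slowly and choosing the final window $\mathbf{H}^\pm$ inside the range where the dichotomy holds for $C_k$.

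Next, after passing to a subsequence so that all relevant inner products $\innprod{f_{k,\str}}{f_{\ell,\unf}}_{\mathbf{N}}$ exist, I need $\sup_{\ell\ge k}|\innprod{f_{k,\str}}{f_{\ell,\unf}}_{\mathbf{N}}|\to 0$. For $\ell\geq k$ the blocks of $f_{\ell,\unf}$ are shorter, so each block of $f_{k,\str}$ contains many blocks of $f_{\ell,\unf}$. Since $f_{k,\str}$ is within $1/k$ of a trigonometric polynomial with bounded coefficients, the pairing is controlled by the $u^2$ smallness $\epsilon_\ell$ on the shorter blocks, via orthogonality in \cref{thm_arithmetic_regularity_lemma}, plus an error of size $1/k$. \cref{lem_decomposition_in_the_limit} then yields $k_t$ and the glued decomposition $f=f_{\str}+f_{\unf}$, with $f_{\str}\in[0,1]$ and $\E f_{\unf}=0$ because each block has mean zero.

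Finally, set $H^*_s=H_{k_s,s}$ and choose $\mathbf{H}^\pm$ with $\mathbf{H}_k\prec\mathbf{H}^-\prec\mathbf{H}^+\prec\mathbf{H}^*$ for all $k$, as in the $U^1$ case. Structure follows because $c_i$ and $\alpha_i$ are constant on blocks of length $\asymp H^*_s\gg H^+_s$, so $\Gamma^{(1)}_s$, $\Gamma^{(2)}_s$ and $W_{s,C}$ have density one. Uniformity follows because $\|f_{k,\unf}\|_{u^2(\mathbf{N},\mathbf{H}^-)}\lesssim \epsilon_k$: a window of length $H^-_s\gg H_{k,s}$ is essentially a union of $k$-blocks, on each of which the Fourier coefficients are small. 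Combining \eqref{eqn: u2 hierarchy}, the $L^2$ convergence $f_{k,\unf}\to f_{\unf}$, and \eqref{eqn_U2-uniformity_iff_pseudorandom} then gives $\|f_{\unf}\|_{U^2(\mathbf{N},\mathbf{H}')}=0$ with $\mathbf{H}'=\mathbf{H}^-$ for every $\mathbf{H}$ in the window.
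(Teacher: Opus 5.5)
Your proposal has a genuine gap, at exactly the point where the $U^2$ case departs from the $U^1$ case.

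\textbf{The cross-term estimate.} With $H_{k,s}=\lfloor (K^-_s)^{1/k}(K^+_s)^{1-1/k}\rfloor$ the scales \emph{increase} in $k$. So for $\ell\ge k$ the blocks of $f_{\ell,\unf}$ are longer, not shorter, than those of $f_{k,\str}$.
\begin{itemize}
\item Pseudorandomness only passes from fine to coarse scales (\eqref{eqn: u2 hierarchy}). Small Fourier coefficients of $f_{\ell,\unf}$ on long blocks give no control of its correlation with a trigonometric polynomial whose frequencies change from one short $k$-block to the next. Hence $\sup_{\ell\ge k}|\innprod{f_{k,\str}}{f_{\ell,\unf}}_{\mathbf{N}}|\to 0$ is unjustified.
\item The reverse pairing, which drives the monotonicity in the $U^1$ proof, only gives $|\innprod{f_{\ell,\str}}{f_{k,\unf}}_{\mathbf{N}}|\lesssim\epsilon_\ell+d^\star(\epsilon_\ell)\epsilon_k$. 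This does not tend to $0$ as $\ell\to\infty$ with $k$ fixed.
\item Even for correctly oriented pairs, $\epsilon_k=1/k$ decays too slowly; you need $d^\star(\epsilon_k)\epsilon_{k+1}\le\epsilon_k$.
\end{itemize}

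\textbf{The structure step.} This fails even if you grant \cref{lem_decomposition_in_the_limit}. On the $s$-th range, $f_{\str}=f_{k_s,\str}$ is approximated by a polynomial with $d^\star(\epsilon_{k_s})\to\infty$ terms. But \cref{def: U2 good scale} requires, for each fixed $\epsilon$, a fixed $d$. The only bounded-complexity approximants available are those of $f_{k,\str}$ for fixed $k$. Their $c_i,\alpha_i$ are constant only on blocks of length $\asymp H_{k,s}$, and you placed the window above every $\mathbf{H}_k$. So $\Gamma^{(1)}_s$ and $\Gamma^{(2)}_s$ need not have density $1$. In the $U^1$ proof this problem is invisible, because structure there is just local constancy of $f_{\str}$ itself, with no complexity bound.

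\textbf{What is actually needed.} Structure must be witnessed by bounded-complexity data that are constant on a scale above the window, while uniformity is witnessed at a scale below it. So the structured parts at a fine and a coarse scale must coincide.

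The paper gets this by decoupling accuracy from scale:
\begin{itemize}
\item For every scale $\mathbf{H}_{\lambda,\eta}$ in an uncountable, lexicographically ordered family, it first sends the accuracy to $0$ at that fixed scale. With $d^\star(\epsilon_k)\epsilon_{k+1}\le\epsilon_k$, the same-scale cross terms are at most $2\epsilon_k$, and the limit gives $f_{\lambda,\eta,\str}$.
\item Coarse structured parts are orthogonal to fine uniform parts, so $\|f_{\lambda,\eta,\str}\|_{2,\mathbf{N}}$ is non-increasing along the family.
\item \cref{lem_decreasing_seq_on_unit_square} then yields $\lambda$ with $\|f_{\lambda,0,\str}-f_{\lambda,1,\str}\|_{2,\mathbf{N}}=0$.
\item Structure is read at $\mathbf{H}^+=\mathbf{H}_{\lambda,1}$ and uniformity at $\mathbf{H}^-=\mathbf{H}_{\lambda,0}$.
\end{itemize}

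Your one-parameter scheme can also be repaired by letting the scales \emph{decrease} in $k$. Then your cross-term estimate becomes valid. Structure comes from the fixed-$k$ approximants, for a window below every $\mathbf{H}_k$. Uniformity must be read directly from $f_{\unf}=f_{k_t,\unf}$ on the $t$-th range, for a window above the diagonal scale $(H_{k_t,t})_t$. This is the reverse of the placement in your last paragraph.

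\textbf{The set $W_{s,C}$.} Your plan here is not yet an argument. For the frequencies attached to each fixed $\epsilon$, the dichotomy is needed for \emph{every} $C$, for most $n$ simultaneously, and for all $\mathbf{H}$ in one window. A single threshold $C_k$ per stage does not provide this. Perturbations that are harmless on blocks longer than $H_s$ have size $o(1/H_s)$. That is too small to move $\|\sum_i w_i\alpha_i\|_{\T}$ out of the annulus $(\frac{1}{CH_s},\frac{C}{H_s})$. The paper instead applies \cref{cor:stable local averages for countable family} to the countable family $n\mapsto e(n\sum_i w_i\alpha_{k,i}(n))$ and narrows the window accordingly.
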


For the proof of \cref{thm:U^2 structure theorem}, we require a short technical lemma.
We endow the unit square $[0,1]^2$ with the standard lexigographic order:
given two points $(\lambda,\eta),(\lambda',\eta')\in [0,1]^2$, we write $(\lambda,\eta)\prec (\lambda',\eta')$ if either $\lambda<\lambda'$ or if $\lambda=\lambda'$ and $\eta<\eta'$. This yields a total ordering of $[0,1]^2$. 

\begin{Lemma}
\label{lem_decreasing_seq_on_unit_square}
Suppose $a\colon [0,1]^2\to [0,1]$ satisfies 
\begin{equation} \label{eq: decreasing on square}
    (\lambda,\eta)\prec (\lambda',\eta')~\implies~ a(\lambda,\eta)\geq a(\lambda',\eta').
\end{equation}
Then there exists $\lambda\in (0,1)$ such that
$a(\lambda,\eta)= a(\lambda,\eta')$ for all $\eta,\eta'\in[0,1]$.
\end{Lemma}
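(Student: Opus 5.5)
The idea is to show that only countably many vertical slices $\{\lambda\}\times[0,1]$ can carry any drop of $a$, while $(0,1)$ is uncountable. For each $\lambda\in[0,1]$ set
\[
I_\lambda = \bigl(a(\lambda,1),\, a(\lambda,0)\bigr) \subseteq [0,1].
\]
By \eqref{eq: decreasing on square}, the map $\eta\mapsto a(\lambda,\eta)$ is non-increasing for each fixed $\lambda$. So $a(\lambda,\cdot)$ is constant on $[0,1]$ if and only if $a(\lambda,0)=a(\lambda,1)$, which happens exactly when $I_\lambda=\emptyset$. It therefore suffices to find $\lambda\in(0,1)$ with $I_\lambda$ empty.

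Next I would show that the intervals $I_\lambda$ are pairwise disjoint. Take $\lambda<\lambda'$. Then $(\lambda,1)\prec(\lambda',0)$, so \eqref{eq: decreasing on square} gives $a(\lambda,1)\geq a(\lambda',0)$. Hence every point of $I_{\lambda'}$ is less than $a(\lambda',0)\leq a(\lambda,1)$, and every point of $I_\lambda$ is greater than $a(\lambda,1)$, so the two open intervals cannot meet.

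Now count. Each nonempty $I_\lambda$ is a nondegenerate open subinterval of $[0,1]$ and therefore contains a rational number. Since the $I_\lambda$ are disjoint, these rationals are distinct, which gives an injection from $\{\lambda : I_\lambda\neq\emptyset\}$ into $\Q\cap[0,1]$. Alternatively, the lengths of the $I_\lambda$ sum to at most $1$, which again forces only countably many of them to be nonempty. Because $(0,1)$ is uncountable, some $\lambda\in(0,1)$ has $I_\lambda=\emptyset$, and then $a(\lambda,\eta)=a(\lambda,\eta')$ for all $\eta,\eta'\in[0,1]$.

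I do not expect any real obstacle. The only points needing care are comparing the right endpoints (the top of one slice against the bottom of the next) and noting that the lexicographic order makes $a$ monotone in $\eta$ within each slice.
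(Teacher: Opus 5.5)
Your proof is correct and is essentially the paper's argument. The paper telescopes, using $a(\lambda_j,1)\geq a(\lambda_{j+1},0)$, to bound every finite sum of drops $a(\lambda,0)-a(\lambda,1)$ by $a(0,0)-a(1,1)$, which is exactly your ``lengths sum to at most $1$'' variant of the disjoint-intervals observation; your explicit remark that $a(\lambda,\cdot)$ is non-increasing, so $a(\lambda,0)=a(\lambda,1)$ forces constancy on the whole slice, spells out the step the paper leaves implicit.
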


\begin{proof}
    Given $0 \leq \lambda_1 < \lambda_2 < \ldots \leq \lambda_k \leq 1$, we use \eqref{eq: decreasing on square} and telescoping to obtain
    \begin{align*}
        \sum_{j=1}^k (a(\lambda_j,0) - a(\lambda_j,1)) & = \sum_{j=1}^{k-1} (a(\lambda_j,0) - a(\lambda_j,1)) + a(\lambda_k,0) - a(\lambda_k,1) \\
         &\leq \sum_{j=1}^{k-1} (a(\lambda_j,0) - a(\lambda_{j+1},0)) + a(\lambda_k,0) - a(\lambda_k,1) \\
         & = a(\lambda_1,0) - a(\lambda_k,1) \leq a(0,0) - a(1,1).
    \end{align*}
    Therefore,
    \begin{equation*}
        \sum_{\lambda \in [0,1]} \left( a(\lambda,0)-a(\lambda,1) \right) = \sup_{F \subseteq [0,1]~\text{finite}} \sum_{\lambda \in F} (a(\lambda,0) - a(\lambda,1)) \leq a(0,0) - a(1,1) < \infty.
    \end{equation*}
    An uncountable sum of non-negative real numbers can only be finite if all but countably many summands are zero. 
    Hence there exists some $\lambda\in (0,1)$ such that $a(\lambda,0)-a(\lambda,1)=0$, and the claim follows.
\end{proof}

\begin{proof}[Proof of \cref{thm:U^2 structure theorem}]
Let $(\epsilon_k)_{k\in\N}$ be any sequence of positive real numbers such that $\lim_{k\to\infty} \epsilon_k =0$ and $d^\star(\epsilon_{k}) \epsilon_{k+1}\leq \epsilon_{k}$ for all $k$, where $d^\star$ is as in  \cref{thm_arithmetic_regularity_lemma}.
After passing to a subsequence of $(\mathbf{N},\mathbf{K}^+,\mathbf{K}^-)$ if needed, we may assume without loss of generality that $\lim_{s\to\infty} N_s / N_{s+1} = 0$.
Given $\lambda,\eta\in[0,1]$, define $\mathbf{H}_{\lambda,\eta}=(H_{\lambda,\eta,s})_{s\in\N}$ as 
\[
H_{\lambda,\eta,s}= \big\lfloor(K_{s}^-)^{1-\lambda} \cdot (K_{s}^+)^{\lambda}\cdot (\log(K_s^+))^\eta\big\rfloor. 
\]
Note that for any $(\lambda,\eta),(\lambda',\eta')\in [0,1]^2$,
\begin{equation}
\label{eqn_lexico_ordering_1}
(\lambda,\eta)\prec (\lambda',\eta')~\implies~
\mathbf{H}_{\lambda,\eta}\prec \mathbf{H}_{\lambda',\eta'}.
\end{equation}

Let $\mathcal{I}_{\lambda,\eta,s}$ be an arbitrary partition of $\{N_{s-1}+1,\ldots,N_s\}$ into intervals of length between $H_{\lambda,\eta,s}/2$ and $2H_{\lambda,\eta,s}$. For a fixed pair $(\lambda,\eta)\in[0,1]^2$, this is possible for all but finitely many $s\in\N$ because $\lim_{s\to\infty} N_s / N_{s+1} = 0$ and $\mathbf{H}_{\lambda,\eta}\prec \mathbf{N}$; this finite set of exceptions for which $\mathcal{I}_{\lambda,\eta,s}$ is not well defined will not affect the forthcoming argument.
Let $d_k^\star=d^\star(\epsilon_k)$. Using \cref{thm_arithmetic_regularity_lemma}, for every $\lambda,\eta\in[0,1]$, $k\in\N$, all sufficiently large $s\in\N$, and all $I\in \mathcal{I}_{\lambda,\eta,s}$,
we can find $g_{I,k,\mathrm{str}}\colon I\to [0,1]$, $g_{I,k,\mathrm{psd}}\colon I\to [-1,1]$, $c_{I,k,1},\ldots,c_{I,k,d_k^\star}\in\C$ with $|c_{I,k,i}|\leq 1$, and $\alpha_{I,k,1},\ldots,\alpha_{I,k,d_k^\star}\in\R$ such that if
\[
\mathcal{P}_{I,k}^\star=\sum_{i=1}^{d_k^\star} c_{I,k,i}\, e(n \alpha_{I,k,i})
\]
then we have
\begin{align}
\label{eqn_inf_arith_reg_lem_pr_1}
\frac{1}{|I|}\sum_{n\in I}\bigg|g_{I,k,\mathrm{str}}(n)-\mathcal{P}_{I,k}^\star(n)\bigg|^2\leq \epsilon_k^2,
\\
\label{eqn_inf_arith_reg_lem_pr_2}
\|g_{I,k,\mathrm{psd}}\|_{u^2(I)}\leq \epsilon_k.
\end{align}
Note that the choice of $d_k^\star$ depends only on $\epsilon_k$.
By the conclusion of \cref{thm_arithmetic_regularity_lemma}, we also have
\begin{equation}
\label{eqn_inf_arith_reg_lem_pr_3}
\bigg|\frac{1}{|I|}\sum_{n\in I} g_{I,k,\mathrm{str}}(n)g_{I,k,\mathrm{psd}}(n)\bigg|=0,
\end{equation}
and combining \eqref{eqn_inf_arith_reg_lem_pr_1} and \eqref{eqn_inf_arith_reg_lem_pr_2} with the assumption $d_k^\star \epsilon_{\ell}\leq d_k^\star \epsilon_{k+1}\leq \epsilon_{k}$, we get for all $\ell> k$ that
\begin{equation}
\label{eqn_inf_arith_reg_lem_pr_4}
\bigg|\frac{1}{|I|}\sum_{n\in I} g_{I,k,\mathrm{str}}(n)g_{I,\ell,\mathrm{psd}}(n)\bigg|\leq 2\epsilon_{k}.
\end{equation}
We now define
\begin{align*}
f_{\lambda,\eta,k,\mathrm{str}}(n)&=\sum_{s\in\N}~\sum_{I\in\mathcal{I}_{\lambda,\eta,s}} \1_{I}(n) g_{I,k,\mathrm{str}}(n),
\\
f_{\lambda,\eta,k,\mathrm{psd}}(n)&=\sum_{s\in\N}~\sum_{I\in\mathcal{I}_{\lambda,\eta,s}} \1_{I}(n) g_{I,k,\mathrm{psd}}(n),
\\
c_{\lambda,\eta,k,i}(n)&= \sum_{s\in\N}~\sum_{I\in\mathcal{I}_{\lambda,\eta,s}} \1_{I}(n) c_{I,k,i},
\\
\alpha_{\lambda,\eta,k,i}(n)&= \sum_{s\in\N}~\sum_{I\in\mathcal{I}_{\lambda,\eta,s}} \1_{I}(n) \alpha_{I,k,i},
\\
\mathcal{P}_{\lambda,\eta,k}^\star(n)&= \sum_{s\in\N}~\sum_{I\in\mathcal{I}_{\lambda,\eta,s}} \1_{I}(n) \mathcal{P}_{I,k}^\star(n)=\sum_{i=1}^{d_k^\star} c_{\lambda,\eta,k,i}\, e(n \alpha_{\lambda,\eta,k,i}).
\end{align*}
From \eqref{eqn_inf_arith_reg_lem_pr_1} we conclude
\begin{equation}
\label{eqn_inf_arith_reg_lem_pr_5}
\|f_{\lambda,\eta,k,\str}-\mathcal{P}_{\lambda,\eta,k}^\star\|_{2, \mathbf{N}}\leq \epsilon_k,
\end{equation}
from \eqref{eqn_inf_arith_reg_lem_pr_3} we get
\begin{equation}
\label{eqn_inf_arith_reg_lem_pr_8}
\innprod{f_{\lambda,\eta,k,\mathrm{str}}}{f_{\lambda,\eta,k,\mathrm{psd}}}_{\mathbf{N}}=0,\qquad\forall k\in\N,
\end{equation}
and from
\eqref{eqn_inf_arith_reg_lem_pr_4} we see that
\begin{equation}
\label{eqn_inf_arith_reg_lem_pr_9}
\innprod{f_{\lambda,\eta,k,\mathrm{str}}}{f_{\lambda,\eta,\ell,\mathrm{psd}}}_{\mathbf{N}}\leq 2\epsilon_k,\qquad\forall k<\ell\in\N.
\end{equation}
Furthermore, from \eqref{eqn: u2 hierarchy} and \eqref{eqn_inf_arith_reg_lem_pr_2} it follows that
\begin{equation}
\label{eqn_inf_arith_reg_lem_pr_7}
\|f_{\lambda,\eta,\ell,\mathrm{psd}}\|_{u^2(\mathbf{N},\mathbf{H})}\leq \epsilon_\ell,\quad \text{whenever}~\mathbf{H}_{\lambda,\eta}\prec \mathbf{H}\prec \mathbf{N}    
\end{equation}
which implies that
\begin{equation*}
%\label{eqn_inf_arith_reg_lem_pr_7}
\|f_{\lambda,\eta,\ell,\mathrm{psd}}\|_{u^2(\mathbf{N},\mathbf{H}_{\lambda',\eta'})}\leq \epsilon_\ell,\quad\text{whenever}~(\lambda,\eta)\prec (\lambda',\eta').
\end{equation*}
This gives
\begin{equation}
\label{eqn_inf_arith_reg_lem_pr_10}
\innprod{f_{\lambda',\eta',k,\mathrm{str}}}{f_{\lambda,\eta,\ell,\mathrm{psd}}}_{\mathbf{N}}\leq \epsilon_k+\epsilon_\ell d_k^\star\leq 2\epsilon_k,\quad\text{whenever}~(\lambda,\eta)\prec (\lambda',\eta').
\end{equation}

In light of \eqref{eqn_inf_arith_reg_lem_pr_5} and \eqref{eqn_inf_arith_reg_lem_pr_8}, we can invoke \cref{lem_decomposition_in_the_limit} to find $f_{\lambda,\eta,\mathrm{str}}\colon\N\to [0,1]$ and $f_{\lambda,\eta,\mathrm{unf}}\colon\N\to [-1,1]$ such that
$f=f_{\lambda,\eta,\mathrm{str}}+f_{\lambda,\eta,\mathrm{unf}}$, $\innprod{f_{\lambda,\eta,\mathrm{str}}}{f_{\lambda,\eta,\mathrm{unf}}}_{\mathbf{N}}=0$, $\lim_{k\to\infty} \| f_{\lambda,\eta,\mathrm{str}} -  f_{\lambda,\eta,k,\mathrm{str}} \|_{2, \mathbf{N}}=0$, and $\lim_{k\to\infty} \| f_{\lambda,\eta,\mathrm{unf}} -  f_{\lambda,\eta,k,\mathrm{psd}} \|_{2, \mathbf{N}}=0$.
By \eqref{eqn_inf_arith_reg_lem_pr_10}, we have
\[
\innprod{f_{\lambda',\eta',\mathrm{str}}}{f_{\lambda,\eta,\mathrm{unf}}}_{\mathbf{N}}=0,\quad\text{whenever}~(\lambda,\eta)\prec (\lambda',\eta').
\]
This implies
\begin{align*}
\|f_{\lambda',\eta',\mathrm{str}}\|_{2, \mathbf{N}}^2
&=\innprod{f_{\lambda',\eta',\mathrm{str}}}{f_{\lambda',\eta',\mathrm{str}}}_{\mathbf{N}}
\\
&=\innprod{f_{\lambda',\eta',\mathrm{str}}}{f}_{\mathbf{N}}
\\
&=\innprod{f_{\lambda',\eta',\mathrm{str}}}{f_{\lambda,\eta,\mathrm{str}}}_{\mathbf{N}}
\\
&\leq \|f_{\lambda,\eta,\mathrm{str}}\|_{2, \mathbf{N}} \cdot \|f_{\lambda',\eta',\mathrm{str}}\|_{2, \mathbf{N}},
\end{align*}
and hence
\[
(\lambda,\eta)\prec (\lambda',\eta')\implies \|f_{\lambda,\eta,\mathrm{str}}\|_{2, \mathbf{N}}\geq \|f_{\lambda',\eta',\mathrm{str}}\|_{2, \mathbf{N}}.
\]
According to \cref{lem_decreasing_seq_on_unit_square} 
there exists $\lambda\in(0,1)$ such that
$\|f_{\lambda,\eta,\mathrm{str}}\|_{2, \mathbf{N}}= \|f_{\lambda,\eta',\mathrm{str}}\|_{2, \mathbf{N}}$ for all $\eta,\eta'\in[0,1]$. In particular, we have for all $\eta,\eta'\in[0,1]$ that
\[
\|f_{\lambda,\eta,\mathrm{str}}-f_{\lambda,\eta',\mathrm{str}}\|_{2, \mathbf{N}}=\|f_{\lambda,\eta,\mathrm{unf}}-f_{\lambda,\eta',\mathrm{unf}}\|_{2, \mathbf{N}}=0.
\]
Now define $\mathbf{H}^-=\mathbf{H}_{\lambda,0}$ and $\mathbf{H}^+=\mathbf{H}_{\lambda,1}$, as well as
\[
f_{\str}=f_{\lambda,0,\mathrm{str}} \qquad\text{and}\qquad f_{\unf}=f_{\lambda,0,\mathrm{unf}}.
\]
Moreover, define
\begin{align*}
c_{k,i}(n)&= c_{\lambda,1,k,i}(n),
\\
\alpha_{k,i}(n)&= \alpha_{\lambda,1,k,i}(n),
\\
\mathcal{P}_k&=\mathcal{P}^\star_{\lambda,1,k}
=\sum_{i=1}^{d_k^\star} c_{k,i}\, e(n \alpha_{k,i}),
\end{align*}
and note that since $\| \mathcal{P}_k -  f_{\lambda,1,k,\mathrm{str}} \|_{2, \mathbf{N}}\leq \epsilon_k$, $\lim_{k\to\infty}\| f_{\lambda,1,\mathrm{str}} -  f_{\lambda,1,k,\mathrm{str}} \|_{2, \mathbf{N}}=0$, and $\| f_{\mathrm{str}} -  f_{\lambda,1,\mathrm{str}} \|_{2, \mathbf{N}}=0$, we have
\begin{equation}
\label{eqn_inf_arith_reg_lem_pr_11}
\lim_{k\to\infty}\| f_{\mathrm{str}} -  \mathcal{P}_k\|_{2, \mathbf{N}}=0.
\end{equation}
By construction, for all but finitely many $s$ we have that
\begin{equation}
\label{eqn_inf_arith_reg_lem_pr_12}
c_{k,i}(n)~\text{and}~\alpha_{k,i}(n)~\text{are constant on subintervals longer than $\geq H_{s}^+/2$ in $\{N_{s-1}+1,\ldots,N_s\}$.}
\end{equation}
Also, using \cref{cor:stable local averages for countable family}, after replacing $\mathbf{H}^-$ and $\mathbf{H}^+$ with a narrower pair of sequences if needed, we get that for all $C\in\N$ and all $(w_1, \ldots, w_{d_k^\star}) \in [-C,C]^{d_k^\star}\cap \Z^{d_k^\star}$,
if $\mathbf{H}^-\prec\mathbf{H}\prec\mathbf{H}^+$ and
\begin{align*}
V_{s,w_1, \ldots, w_{d_k^\star},C}=\Bigg\{n\in  [N_s&- H_s+1]:
\Bigg|\frac{1}{C^2H_s}\sum_{h=1}^{C^2H_s} e(h(w_1\alpha_{k,1}(n)+\ldots+w_{d_k^\star}\alpha_{k,d_k^\star}(n)))\,
\\
&-\,\frac{1}{\lfloor H_s/C^2\rfloor }\sum_{h=1}^{\lfloor H_s/C^2\rfloor} e(h(w_1\alpha_{k,1}(n)+\ldots+w_{d_k^\star}\alpha_{k,d_k^\star}(n)))\Bigg|<\frac{1}{C^2}\Bigg\}
\end{align*}
then
\begin{equation} \label{eq: V sets are large}
    \lim_{s\to\infty} \frac{|V_{s,w_1, \ldots, w_d,C}|}{N_s-H_s+1}=1.
\end{equation}

We claim that if we consider
\begin{align*}
&W_{s,k,C}=\Big\{n\in [N_s-H_s+1]: \text{for all}~(w_1, \ldots, w_{d_k^\star}) \in [-C,C]^{d_k^\star}\cap \Z^{d_k^\star},~\text{either}
\\
&\|w_1\alpha_{k,1}(n) +\ldots+w_{d_k^\star}\alpha_{k,d_k^\star}(n)\|_{\T}\leq \tfrac{1}{C H_s}
~\text{or}~\|w_1\alpha_{k,1}(n) +\ldots+w_{d_k^\star}\alpha_{k,d_k^\star}(n)\|_{\T}\geq \tfrac{C}{H_s}\Big\},
\end{align*}
then
\begin{equation}
\label{eqn_inf_arith_reg_lem_pr_13}
\lim_{s\to\infty} \frac{|W_{s,k,C}|}{N_s-H_s+1}=1.
\end{equation}
The sets $W_{s,k,C}$ satisfy $W_{s,k,1} \supseteq W_{s,k,2} \supseteq \ldots$, so it suffices to prove the claim for large $C$.
We will show that
\begin{equation*}
    W_{s,k,C} \supseteq \bigcap_{(w_1, \ldots, w_{d_k^\star}) \in [-C,C]^{d_k^\star} \cap \Z} V_{s,w_1,\ldots,w_{d_k^\star},C}
\end{equation*}
for $C \geq 4$ and $s$ sufficiently large (depending on $C$), from which the claim the follows by applying \eqref{eq: V sets are large}.
Fix $n \in V_{s,w_1,\ldots,w_{d_k^\star},C}$, and let $\beta = \sum_{i=1}^{d_k^{\star}} w_i \alpha_{k,i}$.
We want to show (assuming $s$ is sufficiently large) that either $\|\beta\|_{\T} \leq \frac{1}{CH_s}$ or $\|\beta\|_{\T} \geq \frac{C}{H_s}$.
Suppose for contradiction that $\frac{1}{CH_s} < \|\beta\|_{\T} < \frac{C}{H_s}$.
Then
\begin{equation*}
    \left| \frac{1}{C^2H_s} \sum_{h=1}^{C^2H_s} e(h\beta) \right|
    = \frac{|e(C^2H_s\beta) - 1|}{C^2 H_s |e(\beta)-1|}
    \leq \frac{2}{C^2 H_s \cdot 4\|\beta\|_{\T}}
    < \frac{1}{2C},
\end{equation*}
while
\begin{multline*}
    \left| \frac{1}{\lfloor H_s/C^2 \rfloor} \sum_{h=1}^{\lfloor H_s/C^2 \rfloor} e(h\beta) \right|
     = \frac{|e(\lfloor H_s/C^2 \rfloor\beta) - 1|}{\lfloor H_s/C^2 \rfloor |e(\beta)-1|}
     \geq \frac{4\|\lfloor H_s/C^2 \rfloor\beta\|_{\T}}{(H_s/C^2) \cdot 2\pi \|\beta\|_{\T}} \\
     > \frac{2 \left( \frac{H_s}{C^2} -1 \right) \frac{1}{CH_s}}{\pi \frac{H_s}{C^2} \frac{C}{H_s}}
     = \frac{2}{\pi} \frac{H_s - C}{C^2H_s} = \frac{2}{\pi} \cdot \frac{1}{C^2} - \oh_{s \to \infty}(1).
\end{multline*}
Therefore,
\begin{equation*}
    \left| \frac{1}{C^2H_s} \sum_{h=1}^{C^2H_s} e(h\beta) - \frac{1}{\lfloor H_s/C^2 \rfloor} \sum_{h=1}^{\lfloor H_s/C^2 \rfloor} e(h\beta) \right| \geq \frac{1}{2C} - \frac{2}{\pi} \frac{1}{C^2} - \oh_{s \to \infty}(1)
\end{equation*}
For $C \geq 4$, this contradicts the assumption $n \in V_{s,w_1,\ldots,w_{d_k^\star},C}$.
This proves the claim.

Now let $\epsilon>0$ and $\mathbf{H}=(H_s)_{s\in\N}$ with $\mathbf{H}^-\prec\mathbf{H}\prec\mathbf{H}^+$ be arbitrary.
Let $k$ be sufficiently large so that $\| f_{\mathrm{str}} -  f_{\lambda,1,k,\mathrm{str}} \|_{2, \mathbf{N}}\leq \epsilon/2$ and $\epsilon_k\leq \epsilon/2$, and define $d=d_k^\star$ and $\mathcal{P}=\mathcal{P}_k$.
From \eqref{eqn_inf_arith_reg_lem_pr_13}, \eqref{eqn_inf_arith_reg_lem_pr_13}, and \eqref{eqn_inf_arith_reg_lem_pr_13} it follows that with this choice of $d$ and $\mathcal{P}$ part~\ref{itm U^2 structure theorem ii} of \cref{def: U2 good scale} is satisfied.
Since $\mathbf{H}_{\lambda,0}=\mathbf{H}^-\prec \mathbf{H}$, it follows from \eqref{eqn_inf_arith_reg_lem_pr_7} that
\[
\lim_{k\to\infty}\|f_{\lambda,0,k,\mathrm{psd}}\|_{u^2(\mathbf{N},\mathbf{H})}=0.
\]
Combined with $\lim_{k\to\infty} \| f_{\lambda,0,\mathrm{unf}} -  f_{\lambda,0,k,\mathrm{unf}} \|_{2, \mathbf{N}}=0$, and since $f_{\unf}=f_{\lambda,0,\mathrm{unf}}$, we conclude that
\[
\|f_{\unf}\|_{u^2(\mathbf{N},\mathbf{H})}=0.
\]
It thus follows from \eqref{eqn_U2-uniformity_iff_pseudorandom} that $\|f_{\unf}\|_{U^2(\mathbf{N},\mathbf{H})}=0$ and so part~\ref{itm U^2 structure theorem iii} of \cref{def: U2 good scale} holds too.
\end{proof}

We showed in \cref{thm: existence of U1 G and HK scales}\ref{itm: existence of U1 HK scales} that if a function $f : \N \to [0,1]$ admits correlations along a sequence $\mathbf{N}$ and $\mathbf{H}$ is a sufficiently slowly growing scale, then the $U^1(\mathbf{N},\mathbf{H})$ seminorm of $f$ agrees with the Host--Kra $U^1$ seminorm of $f$ along $\mathbf{N}$.
The next lemma, which is needed for technical arguments in \cref{sec: local ergodicity} below, establishes a similar result for the $U^2$ seminorms.

\begin{Lemma} \label{lem: slow scales are U2 good}
    Let $f : \N \to [0,1]$ and suppose $\mathbf{N} = (N_s)_{s \in \N}$ is a sequence of natural numbers such that $\lim_{s \to \infty} N_s = \infty$ and $\lim_{s \to \infty} N_s/N_{s+1} = 0$.
    If $f$ admits correlations along $\mathbf{N}$, then there exists a sequence of natural numbers $\mathbf{H}_{HK,U^2} = (H_{HK,U^2,s})_{s \in \N}$ with $1 \prec \mathbf{H}_{HK,U^2} \prec \mathbf{N}$ such that for all sequences $\mathbf{H} = (H_s)_{s \in \N}$ with $1 \prec \mathbf{H} \preceq \mathbf{H}_{HK,U^2}$, the pair $(\mathbf{N},\mathbf{H})$ is a $U^2$ good scale for $f$ with the property that the corresponding decomposition $f = f_{\str} + f_{\unf}$ given by \cref{def: U2 good scale} for the $U^2(\mathbf{N},\mathbf{H})$ seminorm satisfies
    \begin{equation*}
        \|f_{\str} - f_1\|_{2,\mathbf{N}} = \|f_{\unf} - f_2\|_{2,\mathbf{N}} = 0,
    \end{equation*}
    where $f = f_1 + f_2$ is the decomposition given by \cref{thm: U2 HK structure theorem} for the $U^2(\mathbf{N})$ Host--Kra seminorm.
\end{Lemma}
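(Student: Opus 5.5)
We follow the strategy of the proof of \cref{thm: existence of U1 G and HK scales}\ref{itm: existence of U1 HK scales}, replacing the invariant/ergodic decomposition by the Host--Kra $U^2$ decomposition. Let $f=f_1+f_2$ be the decomposition of \cref{thm: U2 HK structure theorem}.

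\textbf{Step 1: turn local invariance of the coefficients into invariance on a scale.} For each $k$, fix an approximant $\mathcal{P}_k(n)=\sum_{i=1}^{d_k} c_{k,i}(n)e(n\alpha_{k,i}(n))$ with $\|f_1-\mathcal{P}_k\|_{2,\mathbf{N}}<2^{-k}$. The $c_{k,i}$ and $\alpha_{k,i}$ are locally invariant. Exactly as in \eqref{eq: H^+ invariance}, we choose $s_1\le s_2\le\cdots$ so that for $s\ge s_K$ and all $k\le K$ the averages
\[
\frac{1}{N_s}\sum_{n\le N_s}\max_{h\in[K]}\Big(|c_{k,i}(n+h)-c_{k,i}(n)|+\|\alpha_{k,i}(n+h)-\alpha_{k,i}(n)\|_{\T}\Big)
\]
are $<2^{-K}$. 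We then set $H^+_s=\max\{K\le\sqrt{N_s}: s_K\le s\}$.

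\textbf{Step 2: freeze the coefficients.} On $\{N_{s-1}+1,\dots,N_s\}$, partition into blocks of length $H^+_s$. We replace $c_{k,i},\alpha_{k,i}$ by their values at the left endpoint of each block. We do this only for $k\le k_s$, where $k_s\to\infty$ slowly, and use a diagonal construction in the spirit of \cref{lem_decomposition_in_the_limit}.

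The coefficient $c$ is harmless: on most blocks it changes by $\oh(1)$. The phase $\alpha$ is more delicate. Freezing changes $e(n\alpha(n))$ by an amount of order $n\cdot|\Delta\alpha|$, which can be large. The remedy is to absorb $e(n\alpha)$ into a modified locally constant coefficient $c'(n)=c(n)e(n(\alpha(n)-\alpha(n_0)))$, where $n_0$ is the left endpoint of the block.

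The key point is that we may take $\alpha_{k,i}$ exactly constant on long intervals. The relative eigenvalues are $T$-invariant functions, so $\Phi^{-1}(\alpha_i)$ is locally invariant. We then choose a quantized version of $\alpha$ taking finitely many values, which can be made genuinely constant on long intervals. We set $f_{\str}=f_1$, modified only by a zero-$\|\cdot\|_{2,\mathbf{N}}$ change, and $f_{\unf}=f_2$. With $\mathbf{H}_{HK,U^2}=\lfloor\sqrt{H^+}\rfloor$, any $\mathbf{H}\preceq\mathbf{H}_{HK,U^2}$ then makes $\Gamma^{(1)}_s$ and $\Gamma^{(2)}_s$ of full density.

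\textbf{Step 3: the sets $W_{s,C}$.} Since the $\alpha_i$ are frozen, $\beta(n)=\sum_i w_i\alpha_i(n)$ is a finite-valued locally constant function with values in a countable set. Each value is either exactly $0$ or bounded away from $0$ uniformly in $s$. The quantization step must be chosen so that nonzero values of $\|\beta\|_{\T}$ exceed $C/H_s$ as $H_s\to\infty$. Taking $H_s$ to grow slowly enough relative to the quantization level guarantees this, so $W_{s,C}$ has full density.

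\textbf{Step 4: uniformity.} We need $\|f_2\|_{U^2(\mathbf{N},\mathbf{H}')}=0$ for some $1\prec\mathbf{H}'\prec\mathbf{H}$. Here
\[
\|f_2\|_{U^2(\mathbf{N})}^4=\lim_{H}\frac{1}{H^2}\sum_{h_1,h_2\le H}\E_{n\in\mathbf{N}}\Delta_{h_1,h_2}f_2(n)=0 .
\]
Local Gowers averages over windows of fixed length $H$, averaged over $n$ along $\mathbf{N}$, converge to the corresponding box averages. A diagonal choice $H'_s\to\infty$ slowly therefore yields
\[
\E_{n\le N_s}\|f_2\|^4_{U^2(\{n,\dots,n+H'_s-1\})}\to0 ,
\]
and Jensen's inequality gives $\|f_2\|_{U^2(\mathbf{N},\mathbf{H}')}=0$.

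Nonnegativity follows from \cref{thm: U2 HK structure theorem}. Uniqueness up to zero density, as noted after \cref{def: U2 good scale}, identifies the decomposition with $(f_1,f_2)$.

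\textbf{Main obstacle.} The main difficulty lies in Steps 2--3: producing frequencies $\alpha_i(n)$ that are exactly constant on windows and satisfy the dichotomy defining $W_{s,C}$. The first approach to try is quantizing the $T$-invariant eigenvalue functions to finitely many values, absorbing the error into $c_i$, and choosing $\mathbf{H}$ slow relative to the quantization level.
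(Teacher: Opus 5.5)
You correctly spotted that freezing $\alpha$ inside $e(n\alpha(n))$ is dangerous, and Steps 1 and 4 are fine. Step 4, the F{\o}lner/diagonal argument giving $\|f_2\|_{U^2(\mathbf N,\mathbf H')}=0$, is in fact more explicit than the paper, which only cites \cref{thm: U2 HK structure theorem} for uniformity. But the quantization remedy in Steps 2--3 does not work. Suppose you replace $\alpha$ by a quantized $\alpha_q$ with error $\gamma=\alpha-\alpha_q\neq 0$. Then $e(n\alpha(n))=e(n\gamma(n))\,e(n\alpha_q(n))$, and on stretches where $\gamma\equiv\gamma_0\neq0$ the factor $e(n\gamma(n))$ equidistributes, so it is not close to $1$ in $\|\cdot\|_{2,\mathbf N}$. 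If you absorb it into the coefficient, then on a block starting at $n_0$ the coefficient rotates like $e((n-n_0)\gamma_0)$. It is then exactly constant on windows of length $H_s$ only if $\gamma_0=0$, and approximately constant only if $H_s|\gamma_0|\to 0$. Step 3 needs the opposite: nonzero values of $\|w\cdot\alpha_q\|_{\T}$ must exceed $C/H_s$, which forces the mesh to be $\gtrsim C/H_s$. So a fixed quantization gives the uniform separation you claim but destroys both $\|f_1-\mathcal P\|_{2,\mathbf N}<\epsilon$ and $\Gamma^{(2)}$. A quantization fine enough to preserve those gives no separation at all. The absorption $c'(n)=c(n)e(n(\alpha(n)-\alpha(n_0)))$ in Step 2 is also circular: it is locally constant only if $\alpha$ is already constant on the block.

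Two ideas are missing.

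\emph{Exact block-constancy.} Do not perturb $\alpha$ in the absolute phase $e(n\alpha(n))$. Instead, freeze the phase relative to the block start using the relative-eigenfunction identity $T^h\phi_i=e(h\alpha_i)\phi_i$. On a block beginning at $n_0$ (with the offset chosen by averaging), set $\tilde\alpha_i\equiv\alpha_i(n_0)$ and $\tilde c_i\equiv c_i(n_0)\psi_i(n_0)e(-n_0\alpha_i(n_0))$, where $\psi_i=\Phi^{-1}(\phi_i)$. Then $\tilde c_i(n)e(n\tilde\alpha_i(n))=c_i(n_0)\psi_i(n_0)e((n-n_0)\alpha_i(n_0))$, so only $n-n_0$, which is bounded by the block length, multiplies the frequency. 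The paper handles this step by replacing $c_{k,i},\alpha_{k,i}$ with their block-constant $U^1$-structured versions from \cref{thm: existence of U1 G and HK scales}(i) for each $k$, and then taking $H_{HK,U^2,s}=\min_{k\le K_s}H_{HK,k,s}$.

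\emph{The sets $W_{s,C}$.} No quantization is needed here, because the dichotomy is a slow-scale phenomenon. The paper obtains the sets $V_{s,k,C}$ from the $U^1$-structure of the frozen frequencies at these slow scales, and then deduces $W_{s,k,C}$ exactly as in the proof of \cref{thm:U^2 structure theorem}. Alternatively, use countable additivity directly. For fixed $w$ and $C$, the density of $\{n: 1/(CH)<\|w\cdot\alpha(n)\|_{\T}<C/H\}$ is asymptotically at most $\mu\{0<\|w\cdot\alpha\|_{\T}\le C/H\}$. This tends to $0$ as $H\to\infty$, so a diagonal choice of slowly growing $H_s$ works.
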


\begin{proof}
    By \cref{thm: U2 HK structure theorem}, there exist locally invariant functions $c_{k,1}, \ldots, c_{k,d_k} : \N \to \C$ with $\|c_{k,i}\|_{\infty} \leq 1$ and locally invariant functions $\alpha_{k,1}, \ldots, \alpha_{k,d_k} : \N \to \T$ such that if $\mathcal{P}_k(n) = \sum_{i=1}^{d_k} c_{k,i}(n) e(n\alpha_{k,i}(n))$, then
    \begin{equation*}
        \|f_1 - \mathcal{P}_k\|_{2,\mathbf{N}} < \frac{1}{k}.
    \end{equation*}
    Then by \cref{thm: existence of U1 G and HK scales}, for each $k \in \N$, there exists $\mathbf{H}_{HK,k} = (H_{HK,k,s})_{s \in \N}$ with $1 \prec \mathbf{H}_{HK,k} \prec \mathbf{N}$ such that if $\mathbf{H} = (H_s)_{s \in \N}$ and $1 \prec \mathbf{H} \preceq \mathbf{H}_{HK,k}$, then the functions $c_{k,i}$ and $\alpha_{k,i}$ can be chosen to be $U^1$-structured in the sense of \cref{def: U1 good scale} for $i \in [d_k]$.
    We may pick a sequence $K_s \to \infty$ such that $\lim_{s \to \infty} \min_{k \in [K_s]} H_{HK,k,s} = \infty$ and define $H_{HK,U^2,s} = \min_{k \in [K_s]} H_{HK,k,s}$.
    Then $1 \prec \mathbf{H}_{HK,U^2}$ and $\mathbf{H}_{HK,U^2} \preceq \mathbf{H}_{HK,k}$ for every $k \in \N$.

    Let us check that $\mathbf{H}_{HK,U^2}$ has the desired properties.
    Suppose $\mathbf{H} = (H_s)_{s \in \N}$ and $1 \prec \mathbf{H} \preceq \mathbf{H}_{HK,U^2}$.
    We want to show that the decomposition $f = f_1 + f_2$ satisfies the nonnegativity, structure, and uniformity properties in \cref{def: U2 good scale}.
    Nonnegativity (property \ref{itm U^2 structure theorem i}) and orthogonality (property \ref{itm U^2 structure theorem iii}) follow from nonnegativity and orthogonality in \cref{thm: U2 HK structure theorem}.
    Given $\epsilon > 0$, we may pick $k \in \N$ such that $\frac{1}{k} \leq \epsilon$.
    Then
    \begin{equation*}
        \|f_1 - \mathcal{P}_k\|_{2,\mathbf{N}} < \frac{1}{k} \leq \epsilon,
    \end{equation*}
    and the sets
    \begin{align*}
        \Gamma_{s,k}^{(1)} & = \left\{ n \in [N_s - H_s + 1] : \alpha_{k,i}(n+m_1) = \alpha_{k,i}(n+m_2) ~\forall i \in [d_k],~\forall m_1, m_2 \in [H_s] \right\}
        \intertext{and}
        \Gamma_{s,k}^{(2)} & = \left\{ n \in [N_s - H_s + 1] : c_{k,i}(n+m_1) = c_{k,i}(n+m_2) ~\forall i \in [d_k],~\forall m_1, m_2 \in [H_s] \right\}
    \end{align*}
    satisfy
    \begin{equation*}
        \lim_{s \to \infty} \frac{|\Gamma_{s,k}^{(1)}|}{N_s-H_s+1} = \lim_{s \to \infty} \frac{|\Gamma_{s,k}^{(2)}|}{N_s - H_s + 1} = 1
    \end{equation*}
    since $\mathbf{H} \preceq \mathbf{H}_{HK,k}$.
    Moreover, if $C \in \N$ and $(w_1,\ldots,w_{d_k}) \in [-C,C]^{d_k} \cap \Z^{d_k}$, then the function $g(n) = e(w_1\alpha_{k,1}(n) + \ldots + w_{d_k} \alpha_{k,d_k}(n))$ is $U^1$ structured, so
    \begin{align*}
        V_{s,k,C} = \Bigg\{n \in [N_s&-H_s+1] : \Bigg|\frac{1}{C^2H_s}\sum_{h=1}^{C^2H_s} e(h(w_1\alpha_{k,1}(n)+\ldots+w_{d_k}\alpha_{k,d_k}(n)))\,
\\
&-\,\frac{1}{\lfloor H_s/C^2\rfloor }\sum_{h=1}^{\lfloor H_s/C^2\rfloor} e(h(w_1\alpha_{k,1}(n)+\ldots+w_{d_k}\alpha_{k,d_k}(n)))\Bigg|<\frac{1}{C^2}\Bigg\}
    \end{align*}
    satisfies
    \begin{equation*}
        \lim_{s \to \infty} \frac{|V_{s,k,C}|}{N_s-H_s+1} = 1.
    \end{equation*}
    Then arguing as in the proof of \cref{thm:U^2 structure theorem}, we have that the set
    \begin{align*}
&W_{s,k,C}=\Big\{n\in [N_s-H_s+1]: \text{for all}~(w_1, \ldots, w_{d_k}) \in [-C,C]^{d_k}\cap \Z^{d_k},~\text{either}
\\
&\|w_1\alpha_{k,1}(n) +\ldots+w_{d_k}\alpha_{k,d_k}(n)\|_{\T}\leq \tfrac{1}{C H_s}
~\text{or}~\|w_1\alpha_{k,1}(n) +\ldots+w_{d_k}\alpha_{k,d_k}(n)\|_{\T}\geq \tfrac{C}{H_s}\Big\},
\end{align*}
also satisfies
\begin{equation*}
\lim_{s\to\infty} \frac{|W_{s,k,C}|}{N_s-H_s+1}=1.
\end{equation*}
Thus, the decomposition $f = f_1 + f_2$ also satisfies \ref{itm U^2 structure theorem ii} from \cref{def: U2 good scale}.
\end{proof}

\section{Sumsets and $\delta$-popular sumsets in the integers at intermediate scales} \label{sec: intermediate scale sums and delta-sums}

In this section, we apply the structure theorems for the intermediate scale seminorms from \cref{sec: intermediate scale seminorms} to sumsets in $\N$.
Given $Q \in \N$ and two sets $E, F \subseteq \{0,1,\ldots,Q-1\}$, we write $E +_{\bmod{Q}} F$ to denote the sumset reduced mod $Q$:\nomenclature{$+_{\bmod{Q}}$}{sumset reduced mod $Q$}
\begin{align*}
    E +_{\bmod{Q}} F & = \{x + y \bmod{Q} : x \in E, y \in F\} \\
     & = \{x+y : x \in E, y \in F, x+y < Q\} \cup \{x+y-Q : x \in E, y \in F, x+y \geq Q\}.
\end{align*}

The main result of this section is the following.

\begin{Theorem} \label{thm: local sumset}
Let $A \subseteq \N$ such that $d(A) > 0$, and let $\mathbf{N} = (N_s)_{s \in \N}$ and $\mathbf{H}=(H_s)_{s\in\N}$ be sequences in $\N$ with
$1\prec \mathbf{H}\prec \mathbf{N}$ and $\lim_{s\to\infty} N_s/N_{s+1}=0$.
If $(\mathbf{N},\mathbf{H})$ is a $U^2$~good scale for $A$, then there exists a sequence $\epsilon_s \to 0$ such that for every $n \in \{N_{s-1}+1,\ldots, N_s\}$, there exists $Q_n\in\N$ with $H_s\leq Q_n\leq (1+\epsilon_s)H_s$ such that $Q_n$ is highly divisible in the sense that 
\[
\lim_{n \to \infty} \min\{m \in \N : m \nmid Q_n\} = \infty,
\]
and for any set $\tilde{B}_s \subseteq [H_s]$, for all but $\oh(N_s)$ many $n \in \{N_{s-1}+1,\ldots, N_s\}$, there exist sets $A_n\subset (A-n)\cap \{0,1,\ldots,Q_n-1\}$ and $B_n\subset \tilde{B}_s$ such that:
\begin{enumerate}[label=(\Roman{enumi}),ref=(\Roman{enumi}),leftmargin=*]
        \item\label{itm local sumset I} $|(A-n) \cap \{0,1,\ldots,Q_n-1\} \setminus A_n| \leq \epsilon_s Q_n$,
        \item\label{itm local sumset II} $|\tilde{B}_s \setminus B_n| \leq \epsilon_s Q_n$,
        \item\label{itm local sumset III} $|(A_n +_{\bmod{Q_n}} B_n) \setminus ((A+\tilde{B}_s-n)\cap \{0,1,\ldots,Q_n-1\})| \leq \epsilon_s Q_n$.
\end{enumerate}
Moreover, if $\tilde{B}_s = B \cap [H_s]$ for each $s \in \N$ and $B$ meets every residue class in $\N$, then there exists a sequence $D_s \to \infty$ such that each $B_n$ may be chosen to satisfy
\begin{enumerate}[label=(\Roman{enumi}),ref=(\Roman{enumi}),leftmargin=*]
    \setcounter{enumi}{3}
    \item\label{itm local sumset IV}
    $B_n$ meets every residue class mod $m \leq D_s$.
\end{enumerate}
\end{Theorem}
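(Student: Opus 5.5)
The plan is to use the $U^2$ good scale decomposition $\1_A = f_{\str} + f_{\unf}$ from \cref{def: U2 good scale}. The structured part is locally a trigonometric polynomial with frequencies that are constant on windows of length $H_s$. The uniform part is Fourier-negligible on such windows. The point is that on a window $n+\{0,\ldots,Q_n-1\}$, the frequencies $\alpha_i(n)$ of $\mathcal{P}$ can be replaced by nearby multiples of $1/Q_n$. Then $\1_{A-n}$ restricted to the window behaves, up to small $L^2$ error and a $u^2$-small remainder, like a function on $\Z/Q_n\Z$. Once that is in place, \cref{thm: delta-sum almost contains a sum} applied in $\Z/Q_n\Z$ converts a statement about popular sums into conditions \ref{itm local sumset I}--\ref{itm local sumset III}.

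The first step is the choice of $Q_n$. Take $Q_n$ to be a multiple of $\mathrm{lcm}(1,\ldots,M_s)$ in $[H_s,(1+\epsilon_s)H_s]$, with $M_s\to\infty$ slowly enough that $\mathrm{lcm}(1,\ldots,M_s)\leq \epsilon_s H_s$. This gives high divisibility. We may even take $Q_n=Q_s$ independent of $n$ within the block $\{N_{s-1}+1,\ldots,N_s\}$.

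The second step is the periodization estimate. Fix $\epsilon$ and take $\mathcal{P}$ with $d$ terms as in \cref{def: U2 good scale}\ref{itm U^2 structure theorem ii}. For $n$ in $\Gamma^{(1)}_s\cap\Gamma^{(2)}_s\cap W_{s,C}$, which is a proportion $1-\oh(1)$ of $n$, the coefficients and frequencies are constant on the window. For each $i$, the dichotomy in $W_{s,C}$ says that either $\alpha_i(n)$ is within $1/(CH_s)$ of a lattice-type value, or the relevant linear combinations are $C/H_s$-separated. In the first case, replace $\alpha_i(n)$ by the nearest element of $\frac{1}{Q_n}\Z$. This costs only $\Oh(1/C)$ in sup norm on the window. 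In the second case, the exponential $e(h\alpha_i)$ has small average over the window. Moreover, its contributions to convolution counts are controlled through \cref{lem_finding_quantitative_almost_periods}, and the cleanest option is again to round it to $\frac{1}{Q_n}\Z$, with an error that is small in $L^2$ and not merely pointwise. The uniform part contributes $\Oh(\|f_{\unf}\|_{u^2})$ to every count of the form $\sum_x \1_{A-n}(x)\1_{\tilde B_s}(y-x)$, because these counts are bilinear and one factor is Fourier-small (\eqref{eq: U^2(N,H) uniformity}). The wraparound issue is handled the same way: the sums $x+y\geq Q_n$ counted mod $Q_n$ correspond to genuine sums in $A+\tilde B_s - n$ shifted by $Q_n$. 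Since the structured part of $A$ is approximately $Q_n$-periodic on the double window $n+[0,2Q_n)$, which holds for most $n$ because $\mathbf H\prec\mathbf H^+$, the popular sums mod $Q_n$ are popular sums in the integers up to $\epsilon Q_n$ exceptions.

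The third step concludes the argument. With $\delta$ from \cref{thm: delta-sum almost contains a sum} for $\epsilon$, every element of $(A-n)\cap[0,Q_n) +_{\delta} \tilde B_s$ (computed mod $Q_n$) lies, outside $\epsilon Q_n$ exceptions, in $(A+\tilde B_s-n)\cap[0,Q_n)$. Theorem \ref{thm: delta-sum almost contains a sum} then produces the sets $A_n,B_n$ with properties \ref{itm local sumset I}--\ref{itm local sumset III}. A diagonal choice of $\epsilon=\epsilon_s\to0$ finishes this part.

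For \ref{itm local sumset IV}, we use that $B$ meets every residue class. For each $m\leq D_s$ and each residue $r$, fix an element of $B\cap(m\N+r)$ below some bound $L(D_s)$, and choose $D_s\to\infty$ slowly so that $L(D_s)\leq H_s$ and $D_s^2 \ll \epsilon_s H_s$. Add these at most $D_s^2$ points back into $B_n$. This changes $A_n+_{\bmod Q_n}B_n$ by at most $D_s^2|A_n|$ elements, which is too many. To avoid this, we add the points to $\tilde B_s$ beforehand and instead ensure that removal never deletes them. Since the removal lemma removes fewer than $\epsilon|G|$ elements from arbitrary positions, that guarantee is not automatic. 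The remedy is to apply \cref{lem: arithmetic removal} with $B$ replaced by $B\setminus F$, where $F$ is the set of protected points. We then handle $A_n+F$ separately: each $a+f$ with $f\in F$ is a genuine element of $A+B-n$, because $f\in B$, so these sums are never bad.

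The main obstacle is the second step: making the passage from the interval window to $\Z/Q_n\Z$ rigorous when some frequencies are neither near $\frac{1}{Q_n}\Z$ nor separated. This is exactly what the $W_{s,C}$ condition is designed to exclude.
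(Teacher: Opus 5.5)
\paragraph{Verdict: there is a genuine gap in your Steps 1 and 2.}
The number $Q_n$ cannot be an arbitrary highly divisible integer in $[H_s,(1+\epsilon_s)H_s]$, and the ``rounding'' you use to compensate does not work.

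\emph{Why the rounding fails.} If $\|\alpha_i(n)\|_{\T}\geq C/H_s$, replacing $\alpha_i(n)$ by the nearest point of $\frac{1}{Q_n}\Z$ moves it by up to $\frac{1}{2Q_n}$. So $e(h\alpha_i(n))$ and the rounded character drift apart by a phase of up to $\frac12$ as $h$ runs over $\{0,\ldots,Q_n-1\}$, an error of order $1$ in both sup and $L^2$. The small window-average of $e(h\alpha_i(n))$ does not help either. What enters the popular sumset is the convolution of $\1_{A-n}$ with $\1_{\tilde B_s}$, and $\tilde B_s$ may correlate strongly with that same frequency.

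\emph{The conclusion is actually false for such $Q$.} Take $A=\Bohr(\theta,I)$ and $B=\Bohr(\theta,J)$ with $\theta$ irrational and $|I|+|J|<1$, and let $Q$ be highly divisible with $\|Q\theta\|_{\T}\geq c>0$; nothing in your choice excludes this. A wraparound sum $y=x+b-Q$ satisfies $(n+y)\theta\in I+J-Q\theta$. Hence for a positive proportion of $y\in[0,Q/2]$ you get $\gg Q$ representations modulo $Q$, while $(n+y)\theta\notin I+J$, so $y\notin A+\tilde B_s-n$. Deleting $\epsilon Q$ elements from each summand kills at most $2\epsilon Q$ representations of each such $y$. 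So (III) fails at every $n$ once $s$ is large. Moreover, the local frequencies $\alpha_i(n)$ generally vary across $\{N_{s-1}+1,\ldots,N_s\}$, so $Q_n$ must depend on $n$, contrary to your remark that one may take $Q_n=Q_s$.

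\paragraph{The missing idea: choose $Q_n$ as a simultaneous almost period of the local structured part.}
This is what the $W_{s,C}$ dichotomy is for. It does not exclude frequencies far from $\frac{1}{Q_n}\Z$; it is exactly the hypothesis of \cref{lem_finding_quantitative_almost_periods}. That lemma gives $Q_n'\in[H_s/M!,(1+\epsilon)H_s/M!]$ with $\|Q_n'\alpha_i(n)\|_{\T}$ small for every $i$. Setting $Q_n=M!\,Q_n'$ makes $Q_n$ highly divisible. Since $c_i,\alpha_i$ are constant on $n+[0,3H_s)$, $\mathcal{P}$ is then approximately $Q_n$-periodic there, and hence so is $f_{\str}$ in mean (\cref{thm_almost-period_of_structured_component}).

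The uniform part is needed only in mean square, not for ``every count'' as you claim. Indeed, $\1_A*_{(\mathbf N,\mathbf H)}g$ is $L^2$-close to $f_{\str}*_{(\mathbf N,\mathbf H)}g$ uniformly in $g$ (\cref{thm_convolution_at_good_scale_1}), so it is approximately $Q_n$-periodic (\cref{cor_convolution_at_good_scale_2}). Now suppose $h$ is $\delta$-popular modulo $Q_n$ but not a genuine sum. Then all its popularity comes from wraparound, which forces $|\1_A*_{(\mathbf N,\mathbf H)}\1_{\tilde B_s}(n+h)-\1_A*_{(\mathbf N,\mathbf H)}\1_{\tilde B_s}(n+Q_n+h)|\geq\delta-\oh(1)$. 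Markov's inequality bounds the number of such $h$ (\cref{thm: local popular sumset}).

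\paragraph{What is fine as written.}
With that in place, your Step 3 (applying \cref{thm: delta-sum almost contains a sum}) works. Your treatment of (IV) is also fine, though simpler than you make it: just adjoin $B\cap[J_s]$ for slowly growing $J_s$. The only possibly non-genuine new sums are wraparounds involving the top $J_s$ elements of the window, which number $\Oh(J_s^2)=\oh(Q_n)$. So the detour through the removal lemma with protected points is unnecessary.
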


\subsection{A new proof of a corollary of Kneser's theorem}

One immediate application of \cref{thm: local sumset} is a new proof of the following corollary of Kneser's theorem (see \cref{thm_Kneser_for_integers}).

\begin{Corollary} \label{cor: Kneser without local obstructions}
    Let $A, B \subseteq \N$, and suppose $d(A), d(B) > 0$ (in particular, the density exists).
    Suppose moreover that $B$ meets every residue class in $\N$.
    Then
    \begin{equation*}
        \underline{d}(A+B) \geq \min\{1, d(A) + d(B)\}.
    \end{equation*}
\end{Corollary}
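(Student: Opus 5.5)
The plan is to argue by contradiction and reduce to the finite cyclic statement \cref{thm: cyclic group Kneser}. Set $\alpha=d(A)$ and $\beta=d(B)$, and suppose $\underline{d}(A+B)<\min\{1,\alpha+\beta\}$. Choose a sequence $\mathbf{N}=(N_s)$ with $d_{\mathbf{N}}(A+B)=\underline{d}(A+B)$, and pass to a subsequence so that $N_s/N_{s+1}\to0$. Fix $\epsilon>0$ so small that $d_{\mathbf{N}}(A+B)<\min\{1-\epsilon,\alpha+\beta-3\epsilon\}-\epsilon$. By \cref{thm:U^2 structure theorem} (with any $1\prec\mathbf{K}^-\prec\mathbf{K}^+\prec\mathbf{N}$) I can pass to a further subsequence and obtain a scale $\mathbf{H}$ with $1\prec\mathbf{H}\prec\mathbf{N}$ such that $(\mathbf{N},\mathbf{H})$ is a $U^2$ good scale for $\1_A$.

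Next I apply \cref{thm: local sumset} with $\tilde B_s=B\cap[H_s]$, including part~\ref{itm local sumset IV}. For all but $\oh(N_s)$ many $n\in\{N_{s-1}+1,\ldots,N_s\}$ this gives $Q_n\in[H_s,(1+\epsilon_s)H_s]$ and sets $A_n$, $B_n$ satisfying \ref{itm local sumset I}--\ref{itm local sumset IV}. Since $B$ has density $\beta$ and $H_s\to\infty$, we get $|B_n|\ge(\beta-\oh(1))Q_n$. Viewing $A_n,B_n$ as subsets of $\Z/Q_n\Z$, part~\ref{itm local sumset IV} says that $B_n$ meets every coset of every subgroup of index at most $D_s$. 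Since $D_s\to\infty$, this exceeds $\epsilon^{-1}$ for large $s$. \cref{thm: cyclic group Kneser} then gives
\[
|A_n+_{\bmod Q_n}B_n|\ge\min\{(1-\epsilon)Q_n,\;|A_n|+|B_n|-\epsilon Q_n\}.
\]
By \ref{itm local sumset III}, the same lower bound holds, up to an additional $\epsilon_sQ_n$, for $|(A+B-n)\cap\{0,\ldots,Q_n-1\}|$, because $A+\tilde B_s\subseteq A+B$.

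To sum over $n$, I use the identity
\[
\frac1{N_s}\sum_{n\le N_s}\frac{|(A+B-n)\cap[0,Q_n)|}{Q_n}=\frac{|(A+B)\cap[N_s]|}{N_s}+\oh(1).
\]
This holds because $Q_n=(1+\oh(1))H_s$ and $H_s\prec N_s$, by the same double counting as in the outline of \cref{sec: proof outline}, and the range $n\le N_{s-1}$ contributes $\oh(1)$. Similarly, $\frac1{N_s}\sum_n|A_n|/Q_n\ge\alpha-\oh(1)$, using \ref{itm local sumset I} together with the fact that $A$ has density $\alpha$. Since $\min$ is concave, I would handle it by splitting the $n$'s according to which term of the minimum is active. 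If a positive proportion $p$ of the $n$'s has $|A_n|+|B_n|\ge(1-\oh(1))Q_n$, those $n$ contribute at least $1-\epsilon$ each. The remaining $n$ contribute at least $|A_n|/Q_n+\beta-\epsilon$. In both groups, then, each $n$ contributes at least $\min\{1-\epsilon,|A_n|/Q_n+\beta-\epsilon\}$.

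The main obstacle is that averaging this pointwise bound only yields the average of $\min\{1,a_n+\beta\}$, where $a_n=|A_n|/Q_n$, and not $\min\{1,\alpha+\beta\}$. The local densities $a_n$ may fluctuate, and the cap at $1$ then loses mass. I would address this with Jensen's inequality in the right direction. The function $x\mapsto\min\{1,x+\beta\}$ is concave, so averaging alone gives the wrong inequality. One observation helps: the average of $\min\{1,a_n+\beta\}$ is at least $\min\{1,\alpha+\beta\}$ minus the average of $(a_n+\beta-1)^+$, so the loss comes entirely from windows where $A$ is locally dense. To recover that loss, I would first apply \cref{lem: subinterval with Schnirelmann density} and \cref{cor: Schnirelmann up to N} on blocks of length $H_s$. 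The aim is to show that when $A$ is locally denser than average, the extra elements still contribute to $A+B$ in neighboring windows, and that treating a longer window at scale $\mathbf{N}$ directly recovers the global bound. This is the step I expect to require the most care.

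If it fails, the fallback is the case where $\alpha+\beta<1$, which is the case of interest elsewhere in the paper. In that case I would choose the scale $\mathbf{H}$ at or above the Gowers scale of \cref{thm: existence of U1 G and HK scales}\ref{itm: existence of U1 G scales} for $\1_A$, so that $a_n\to\alpha$ for almost all $n$. This eliminates the fluctuation entirely, and averaging gives $d_{\mathbf{N}}(A+B)\ge\min\{1,\alpha+\beta\}-3\epsilon$, which contradicts the choice of $\epsilon$.
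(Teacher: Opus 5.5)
Your general argument has a genuine gap. The step meant to recover the concavity loss from windows where $a_n=|A_n|/Q_n$ deviates from $\alpha$ is only a hope. You do not explain how surplus elements of $A$ in one window of length $H_s$ would produce elements of $A+B$ in another. Moreover, \cref{thm: local sumset} with $\tilde B_s=B\cap[H_s]$ only controls $A+(B\cap[H_s])$ inside each window, so it gives you no access to such cross-window interactions. Since you restrict your fallback to $\alpha+\beta<1$, the case $\alpha+\beta\ge 1$ of the statement is, as written, unproved.

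The missing observation is that your fallback is not restricted to $\alpha+\beta<1$, and it makes the whole detour unnecessary. First take a Gowers scale $\mathbf{H}_G$ for $\1_A$ from \cref{thm: existence of U1 G and HK scales}\ref{itm: existence of U1 G scales}; this needs only that $d(A)$ exists. Then apply \cref{thm:U^2 structure theorem} with $\mathbf{K}^-=\mathbf{H}_G$ and $\mathbf{K}^+=\mathbf{N}$ to obtain a $U^2$ good scale $\mathbf{H}$ with $\mathbf{H}_G\prec\mathbf{H}\prec\mathbf{N}$; both properties pass to subsequences. Now $a_n=\alpha+\oh(1)$ for all but $\oh(N_s)$ many $n$. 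Apply \cref{thm: cyclic group Kneser} with $\epsilon=D_s^{-1}\to0$, which is legitimate by part~\ref{itm local sumset IV}. Your pointwise bound then reads $|(A+B-n)\cap\{0,\ldots,Q_n-1\}|\ge(\min\{1,\alpha+\beta\}-\oh(1))Q_n$, uniformly over those $n$. No Jensen-type loss can occur, because the quantity inside the minimum no longer fluctuates. The bound is $1-\oh(1)$ when $\alpha+\beta\ge1$ and $\alpha+\beta-\oh(1)$ when $\alpha+\beta<1$. This is exactly the paper's proof, whose Cases~1 and~2 both rest on $|A_n|=(\alpha+\oh(1))Q_n$. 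Averaging over $n$ as you describe then finishes the argument directly, with no contradiction setup and no fixed $\epsilon$ needed.
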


\begin{proof}
    Let $\alpha = d(A)$ and $\beta = d(B)$.
    Let $\mathbf{N} = (N_s)_{s \in \N}$ such that $\underline{d}(A+B) = d_{\mathbf{N}}(A+B)$ and $\lim_{s \to \infty} N_s/N_{s+1} = 0 $.
    Let $\mathbf{H}_G = (H_{G,s})_{s \in \N}$ be a Gowers scale for $A$ with $1 \prec \mathbf{H}_G \prec \mathbf{N}$ as given by \cref{thm: existence of U1 G and HK scales}\ref{itm: existence of U1 G scales}.
    Then by \cref{thm:U^2 structure theorem}, let $\mathbf{H} = (H_s)_{s \in \N}$ be a $U^2$ good scale for $A$ with $\mathbf{H}_G \prec \mathbf{H} \prec \mathbf{N}$.
    
    By \cref{thm: local sumset}, let $\epsilon_s \to 0$, $D_s \to \infty$, and $G_s \subseteq \{N_{s-1}+1,\ldots,N_s\}$ with $|G_s| = (1 - \oh_{s \to \infty}(1))N_s$ such that if $n \in G_s$, then there exists $Q_n \in \N$ with $H_s \leq Q_n \leq (1+\epsilon_s)H_s$ and sets $A_n\subset (A-n)\cap \{0,1,\ldots,Q_n-1\}$ and $B_n\subset B \cap \{0,1,\ldots,Q_n-1\}$ such that:
    \begin{itemize}
            \item $|(A-n) \cap \{0,1,\ldots,Q_n-1\} \setminus A_n| \leq \epsilon_s Q_n$,
            \item $|B \cap \{0,1,\ldots,Q_n-1\} \setminus B_n| \leq \epsilon_s Q_n$,
            \item $B_n$ meets every residue class mod $m$ for $m \leq D_s$, and
            \item $|(A_n +_{\bmod{Q_n}} B_n) \setminus ((A+B-n)\cap\{0,1,\ldots,Q_n-1\})| \leq \epsilon_s Q_n$.
    \end{itemize}
    Since $\mathbf{H} \succ \mathbf{H}_G$, we may additionally assume that $|(A-n) \cap \{0,1,\ldots,Q_n-1\}| = (\alpha + \oh_{s \to \infty}(1)) Q_n$ for $n \in G_s$.
    Therefore, the first two bullet points yield $|A_n| = (\alpha + \oh_{s\to\infty}(1))Q_n$ and $|B_n| = (\beta + \oh_{s\to\infty}(1))Q_n$.

    We now proceed by splitting the proof into cases.

    \bigskip

    \textbf{Case 1. $\alpha + \beta \geq 1$.}

    Let $n \in G_s$.
    By \cref{thm: cyclic group Kneser}, either $A_n +_{\bmod{Q_n}} B_n = \{0,1,\ldots,Q_n-1\}$ or
    \begin{multline*}
        |(A+B-n)\cap\{0,1,\ldots,Q_n-1\}| \geq |A_n +_{\bmod{Q_n}} B_n| - \epsilon_s Q_n \\
        \geq |A_n| + |B_n| - D_s^{-1}Q_n - \epsilon_s Q_n
        = \left( \alpha + \beta - \oh_{s \to \infty}(1) \right) Q_n \geq (1 - \oh_{s \to \infty}(1))Q_n.
    \end{multline*}
    Thus, for every $n \in G_s$,
    \begin{equation*}
        |(A+B-n)\cap\{0,1,\ldots,Q_n-1\}| = (1 - \oh_{s \to \infty}(1))Q_n.
    \end{equation*}
    Averaging over $n \leq N_s$,
    \begin{equation*}
        \underline{d}(A+B) = d_{\mathbf{N}}(A+B) = \lim_{s \to \infty} \frac{1}{N_s} \sum_{n=1}^{N_s} \frac{|(A+B-n)\cap\{0,1,\ldots,Q_n-1\}|}{Q_n} = 1.
    \end{equation*}

    \bigskip

    \textbf{Case 2. $\alpha + \beta < 1$.}

    In this case, for all large enough $s \in \N$, if $n \in G_s$, then $|A_n| + |B_n| < Q_n$.
    Therefore, by \cref{thm: cyclic group Kneser},
    \begin{align*}
        |(A+B-n)\cap\{0,1,\ldots,Q_n-1\}| & \geq |A_n +_{\bmod{Q_n}} B_n| - \epsilon_s Q_n \\
        & \geq |A_n| + |B_n| - D_s^{-1}Q_n - \epsilon_s Q_n \\
        & = \left( \alpha + \beta - \oh_{s \to \infty}(1) \right) Q_n.
    \end{align*}
    Averaging over $n \leq N_s$,
    \begin{equation*}
        \underline{d}(A+B) = d_{\mathbf{N}}(A+B) = \lim_{s \to \infty} \frac{1}{N_s} \sum_{n=1}^{N_s} \frac{|(A+B-n)\cap\{0,1,\ldots,Q_n-1\}|}{Q_n} \geq \alpha + \beta.
    \end{equation*}
\end{proof}

\subsection{Convolution at intermediate scales}

Given sequences $\mathbf{N} = (N_s)_{s \in \N}$ and $\mathbf{H}=(H_s)_{s\in\N}$ with $1\prec \mathbf{H}\prec \mathbf{N}$ and bounded functions $f,g\colon \N\to\C$,
we define the convolution of $f$ with $g$ at scale $(\mathbf{N},\mathbf{H})$ as\nomenclature{$*_{(\mathbf{N},\mathbf{H})}$}{convolution at scale $(\mathbf{N},\mathbf{H})$}
\[
f*_{(\mathbf{N},\mathbf{H})}g(n) = \sum_{t\in\N} \1_{(N_{t-1},N_t]}(n) \Bigg(\frac{1}{H_s}\sum_{m=1}^{H_s} f(n-m)g(m)\Bigg),
\]
with the convention that $N_0=0$.
This form of convolution is non-commutative. Its purpose is to capture the additive interaction between the values of $f(n)$ for $n\in [N_s]$ and those of $g(h)$ for $h\in [H_s]$.

The main result of this subsection is the following:

\begin{Theorem}
\label{thm_convolution_at_good_scale_1}
Let $f\colon \N\to [0,1]$, and suppose $\mathbf{N} = (N_s)_{s \in \N}$ and $\mathbf{H}=(H_s)_{s\in\N}$ are sequences in $\N$ with
$1\prec \mathbf{H}\prec \mathbf{N}$ and $\lim_{s\to\infty} N_s/N_{s+1}=0$. Suppose $(\mathbf{N},\mathbf{H})$ is a $U^2$~good scale for $f$, and let $f = f_{\str} + f_{\unf}$ be the associated splitting. Then
\[
\lim_{s \to \infty} \sup_{g : \N \to [0,1]} \|f*_{(\mathbf{N},\mathbf{H})}g\,-\,f_{\str}*_{(\mathbf{N},\mathbf{H})}g\|_{2,[N_s]}=0.
\]
\end{Theorem}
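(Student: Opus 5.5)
By linearity, $f*_{(\mathbf{N},\mathbf{H})}g-f_{\str}*_{(\mathbf{N},\mathbf{H})}g=f_{\unf}*_{(\mathbf{N},\mathbf{H})}g$. So it suffices to show
\[
\lim_{s\to\infty}\sup_{g\colon\N\to[0,1]}\frac{1}{N_s}\sum_{n=1}^{N_s}\Bigg|\frac{1}{H_s}\sum_{m=1}^{H_s}f_{\unf}(n-m)g(m)\Bigg|^2=0 .
\]
Since $\lim_s N_s/N_{s+1}=0$, the range $n\le N_{s-1}$ contributes $\oh(1)$. Only $n\in(N_{s-1},N_s]$ matters, and on that range the inner average uses the scale $H_s$.

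The plan is to convert the square of the local correlation into a $U^2$-type quantity. This is done through the usual Cauchy--Schwarz argument, which bounds a bilinear average against an arbitrary $[0,1]$-valued $g$ by a local $U^2$ (Gowers) norm. Expand the square:
\[
\frac{1}{N_s}\sum_n \frac{1}{H_s^2}\sum_{m,m'\in[H_s]} f_{\unf}(n-m)\overline{f_{\unf}(n-m')}\,g(m)g(m') .
\]
Substitute $n-m=x$ and write $m'=m+h$ with $|h|<H_s$. The $x$-sum becomes a local autocorrelation $\frac{1}{N_s}\sum_x f_{\unf}(x)\overline{f_{\unf}(x-h)}$, up to boundary errors of size $O(H_s/N_s)$. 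There is a catch: this $x$-sum is global, whereas the uniformity we are given is local. So I would not push $g$ out entirely. Instead, partition $(N_{s-1},N_s]$ into blocks $J$ of length $L_s$, where $\mathbf{H}\prec\mathbf{L}\prec\mathbf{N}$. On each block, $n\mapsto \frac{1}{H_s}\sum_m f_{\unf}(n-m)g(m)$ is a convolution of $f_{\unf}|_{J-[H_s]}$ with a function supported on $[H_s]$.

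Next embed the block into $\Z/\tilde L\Z$ with $\tilde L\ge 4L_s$ and use Fourier analysis. We have $\|F*G\|_2^2=\sum_\xi|\hat F(\xi)|^2|\hat G(\xi)|^2$, together with $\sum_\xi|\hat G(\xi)|^2\ll H_s/\tilde L$. This gives
\[
\frac{1}{L_s}\sum_{n\in J}|f_{\unf}*g|^2\ll \sup_\xi\Big|\widehat{f_{\unf}\1_{J}}(\xi)\Big|^2\cdot\frac{\tilde L}{H_s}\cdot\frac{H_s}{\tilde L}\cdot(\text{normalisations}).
\]
Bounded this way, everything is controlled by the $u^2$ norm on blocks of length $L_s$, which is a longer scale than $H_s$. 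That is not what we want, since the hypothesis gives $u^2$-smallness only at scales $\mathbf H'\prec\mathbf H$, i.e.\ shorter than $H_s$. This is the main obstacle.

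To overcome it, I would split the kernel instead. Decompose $[H_s]$ into consecutive intervals $P$ of length $H'_s$, where $\mathbf H'$ is from part~\ref{itm U^2 structure theorem iii} of \cref{def: U2 good scale}. Then
\[
f_{\unf}*g(n)=\sum_P\frac{H'_s}{H_s}\cdot\frac{1}{H'_s}\sum_{m\in P}f_{\unf}(n-m)g(m).
\]
By Cauchy--Schwarz (Jensen over $P$), it suffices to show
\[
\frac{1}{N_s}\sum_n\sup_{g}\Big|\frac{1}{H'_s}\sum_{m\in P}f_{\unf}(n-m)g(m)\Big|^2\to0
\]
uniformly in $P$. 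The supremum over $g$ is now inside the sum, which is worse. To handle it, run the Fourier argument above on windows of length $H'_s$, which is exactly the scale where \eqref{eq: U^2(N,H) uniformity} applies: for fixed $g$ on $P$, the averaged-over-$n$ square is bounded by a constant times $\frac{1}{N_s}\sum_n\|f_{\unf}\|_{u^2(\{n,\dots,n+2H'_s\})}$, which tends to $0$ by \eqref{eqn_U2-uniformity_iff_pseudorandom}. Concretely, after expanding the square and using the shift in $n$ for each $P$, the bound reduces to a sum of $|\hat f_{\unf}|^2|\hat g|^2$ over local windows, which is at most $\sup_\xi|\hat f_{\unf}|^2\cdot\|g\|_2^2$. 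All bounds are uniform in $g$ because $0\le g\le1$, which gives the claimed uniform limit.

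The step I expect to need the most care is making the Fourier bound genuinely local. Each $n$ must see $f_{\unf}$ only on a window of length $O(H'_s)$ near $n-P$, while a single Parseval estimate is still applied per window, with the $u^2$ norm of that window averaged over $n$. Tracking the boundary and wraparound errors is routine but fiddly; these are $O(H'_s/H_s)\to0$.
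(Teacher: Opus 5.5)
Your final argument is correct, but it takes a genuinely different route from the paper.

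\textbf{The paper's route.} The paper expands the square, shifts $n\mapsto n+m_1$, and writes $\|f_{\unf}*_{(\mathbf N,\mathbf H)}g\|_{2,[N_s]}^2$ as an average over $m_1,m_2$ of the correlations $\frac{1}{N_s}\sum_n f_{\unf}(n)f_{\unf}(n+m_1+m_2)$, weighted by $g_1(m_1)g_2(m_2)$. It then controls such weighted two-parameter averages in physical space by the Gowers $U^2$ norm. This goes through two Cauchy--Schwarz steps on a cyclic group (\cref{lem_weighted_cubic_averages_bound}), a transfer to intervals at the cost of a factor $\epsilon^{-4}$ (\cref{lem_weighted_cubic_averages_bound_2}), and chopping into windows at the uniformity scale (\cref{prop_relation_between_U2-norms}).

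\textbf{Your route.} Your final argument never expands the square. You localize $n$ to blocks, embed each block in a cyclic group, and use Parseval in the form $\sum_n|F*G(n)|^2\le\sup_\xi|\widehat F(\xi)|^2\sum_m|G(m)|^2$. The $L^2$ norm of the convolution is then bounded by an absolute constant times an average of squared local $u^2$ norms on windows of length $\asymp H'_s$. This is shorter and loses no $\epsilon^{-4}$.

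\textbf{What each buys.} The paper's route yields the more general \cref{prop_relation_between_U2-norms}, with arbitrary bounded $g_1,g_2,g_3$ and weights, and with the supremum \emph{inside} the $n$-average. That is possible there because, after expanding, each fixed $n$ sees a bilinear form in the weights.

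Two claims in your write-up are wrong, although the argument you finally run does not use them.

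\textbf{The ``main obstacle'' is misdiagnosed.} By \eqref{eqn: u2 hierarchy}, whose triangle-inequality proof is finitary, $\|f_{\unf}\|_{u^2(\mathbf N,\mathbf H')}=0$ already gives $u^2$-smallness at every scale $\mathbf K$ with $\mathbf H'\prec\mathbf K\prec\mathbf N$. What actually stops blocks of length $L_s\succ H_s$ from working is normalisation. Parseval gives $\frac{1}{L_s}\sum_{n\in J}|f_{\unf}*g(n)|^2\ll\frac{L_s}{H_s}\|f_{\unf}\|_{u^2(J-[H_s])}^2$, and $L_s/H_s\to\infty$. Blocks of length $H_s$ make this factor $O(1)$ and finish the proof immediately. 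So splitting the kernel into $H'_s$-pieces is harmless but unnecessary.

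\textbf{The reduction with $\sup_g$ inside the $n$-average is false.} Take $f=\1_A$ with $A$ a random set of density $\frac12$, so that $f_{\str}=\frac12$ and $f_{\unf}=\1_A-\frac12$. Choosing $g(m)=\1_A(n-m)$ separately for each $n$ makes the inner average about $\frac14$. Jensen over $P$ only requires $\sup_P\sup_g\frac{1}{N_s}\sum_n|\cdot|^2\to0$. Your block-Parseval bound delivers exactly this, since it is uniform in $g$. To pass from block-aligned windows to the average over all $n$ appearing in \eqref{eq: U^2(N,H) uniformity}, average the bound over the $H'_s$ offsets of the block partition.
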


The main ingredient in the proof of \cref{thm_convolution_at_good_scale_1} is the following proposition.

\begin{Proposition}
\label{prop_relation_between_U2-norms}
For all sequences $\mathbf{N} = (N_s)_{s \in \N}$, $\mathbf{H} = (H_s)_{s \in \N}$, $\mathbf{H}_1 = (H_{1,s})_{s \in \N}$, and $\mathbf{H}_2 = (H_{2,s})_{s \in \N}$ with
\[
1\prec \mathbf{H}\prec \mathbf{H}_1, \mathbf{H}_2\prec \mathbf{N}
\]
and any bounded $f\colon \N\to\C$ we have that $\|f\|_{U^2(\mathbf{N},\mathbf{H})}=0$ implies
\[
\begin{split}
\lim_{s\to\infty} 
\frac{1}{N_s}\sum_{n=1}^{N_s} \sup_{u_1,u_2,g_1,g_2,g_3} \Bigg|
 \frac{1}{H_{1,s}}\sum_{h_1=-H_{1,s}}^{H_{1,s}} \frac{1}{H_{2,s}}\sum_{h_2=-H_{2,s}}^{H_{2,s}} u_1( &h_1) u_2(h_2)  g_1(n) g_2(n+h_1)
\\
&\cdot g_3(n+h_2)\overline{f(n+h_1+h_2)}\Bigg|=0,
\end{split}
\]
where the supremum is taken over all $1$-bounded functions $u_1,u_2,g_1,g_2,g_3\colon\Z\to\C$.
\end{Proposition}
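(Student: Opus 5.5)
The plan is to reduce the two-parameter average to a statement about $f$ on intervals of length $H_s$ and conclude with \eqref{eq: U^2(N,H) uniformity}. Fix $n$ and write $\Lambda_n$ for the inner quantity. It is a four-term correlation over the box $[-H_{1,s},H_{1,s}]\times[-H_{2,s},H_{2,s}]$ in which $f$ enters only through $\overline{f(n+h_1+h_2)}$, while the other factors depend on $h_1$ alone, on $h_2$ alone, or on neither. The factor $g_1(n)$ is a unimodular-bounded constant for fixed $n$ and can be dropped. Absorbing $u_1(h_1)g_2(n+h_1)$ into a single $1$-bounded function $a(h_1)$, and $u_2(h_2)g_3(n+h_2)$ into $b(h_2)$, leaves
\[
\Lambda_n=\frac{1}{H_{1,s}H_{2,s}}\sum_{h_1,h_2}a(h_1)b(h_2)\overline{f(n+h_1+h_2)}.
\]
This is a bilinear form in $a,b$ whose kernel is a translate of $f$.

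The first main step is to apply Cauchy--Schwarz twice, once in $h_1$ and once in $h_2$, to remove $a$ and $b$. This gives $|\Lambda_n|^4\ll$ a normalized count of
\[
\sum f(n+x)\overline{f(n+x+k)}\,\overline{f(n+x+l)}f(n+x+k+l),
\]
with $x$ ranging over an interval of length $\asymp H_{1,s}+H_{2,s}$ around $n$ and $k,l$ over the difference sets. This is a local $U^2$ norm of $f$ on the window $[n-2H_{1,s}-2H_{2,s},\,n+2H_{1,s}+2H_{2,s}]$, with normalization of order one. The bound is uniform in the suprema because $a,b$ have been eliminated. Since $H_{1,s}\neq H_{2,s}$ in general, I would first enlarge both ranges to a common $L_s=\max(H_{1,s},H_{2,s})$, paying a factor $L_s^2/(H_{1,s}H_{2,s})$. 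That factor could be unbounded. A cleaner route is to Fourier expand: for fixed $n$,
\[
\Lambda_n=\int \widehat{a}(\theta)\widehat{b}(\theta)\,\overline{\widehat{F_n}(\theta)}\,d\theta
\]
with suitable normalizations. Hölder and Parseval then give $|\Lambda_n|\le\|\widehat{a}\|_2\|\widehat{b}\|_2\sup_\theta|\widehat{F_n}|$, where $F_n$ is $f$ restricted to a window of length $\asymp H_{1,s}+H_{2,s}$, and each factor is normalized correctly. So $\sup|\Lambda_n|\ll\|f\|_{u^2(\text{window}_n)}$, which is the step I would actually carry out.

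The second main step passes from windows of length $\asymp H_{1,s}+H_{2,s}$ to windows of length $H_s$. On a window of length $L$, the supremum over $\theta$ of $L^{-1}\bigl|\sum f(x)e(x\theta)\bigr|$ is at most the average over subintervals $J$ of length $H_s$ tiling the window of $H_s^{-1}\bigl|\sum_J f e(\cdot\,\theta)\bigr|$. The boundary error is $O(H_s/L)=o(1)$ because $\mathbf{H}\prec\mathbf{H}_1,\mathbf{H}_2$, and each term is bounded by $\|f\|_{u^2(J)}$. This is the triangle-inequality monotonicity behind \eqref{eqn: u2 hierarchy}. Averaging over $n\le N_s$, each position $m$ is covered by windows with essentially uniform multiplicity, and the boundary effects are negligible since $\mathbf{H}_1,\mathbf{H}_2\prec\mathbf{N}$. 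The whole expression is therefore bounded by $\frac{1}{N_s}\sum_m\|f\|_{u^2(\{m,\dots,m+H_s-1\})}+o(1)$. By \eqref{eqn_U2-uniformity_iff_pseudorandom}, this tends to $0$ when $\|f\|_{U^2(\mathbf{N},\mathbf{H})}=0$.

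I expect the main obstacle to be the first step: getting a bound that holds uniformly over all $1$-bounded $u_i,g_i$ with the correct normalization when $H_{1,s}$ and $H_{2,s}$ are of very different sizes. The Fourier/Parseval formulation should handle this, because $\|\widehat a\|_2^2\asymp H_{1,s}$ and $\|\widehat b\|_2^2\asymp H_{2,s}$, which exactly cancels the $1/(H_{1,s}H_{2,s})$ prefactor.
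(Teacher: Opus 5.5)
There is a genuine gap in your first step, and it sits exactly where you flagged the main obstacle. The Fourier/Parseval bound does not cancel the prefactor. Write $\widehat{a}(\theta)=\sum_{h_1}a(h_1)e(h_1\theta)$, and similarly for $b$ and $F_n$. H\"older and Parseval give
\[
|\Lambda_n|\le\frac{\|a\|_{\ell^2}\,\|b\|_{\ell^2}\,\sup_{\theta}|\widehat{F_n}(\theta)|}{H_{1,s}H_{2,s}}.
\]
Here $\|a\|_{\ell^2}\|b\|_{\ell^2}\asymp\sqrt{H_{1,s}H_{2,s}}$; only the \emph{squares} are $\asymp H_{1,s}$ and $\asymp H_{2,s}$. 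The unnormalized $\sup_\theta|\widehat{F_n}(\theta)|$ equals the window length $\asymp H_{1,s}+H_{2,s}$ times $\|f\|_{u^2(\mathrm{window}_n)}$. So all you get is
\[
|\Lambda_n|\ll\frac{H_{1,s}+H_{2,s}}{\sqrt{H_{1,s}H_{2,s}}}\,\|f\|_{u^2(\mathrm{window}_n)}.
\]
This is the same imbalance you found for Cauchy--Schwarz, only square-rooted, and no normalization of the transforms removes it. Since $\|f\|_{U^2(\mathbf{N},\mathbf{H})}=0$ gives no rate, the bound is useless when $H_{1,s}/H_{2,s}\to\infty$. In that regime your claimed estimate $\sup|\Lambda_n|\ll\|f\|_{u^2(\mathrm{window}_n)}$ is actually false. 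Take independent random signs on the window and $H_{2,s}\approx H_{1,s}^{1/2}$, with $b\equiv1$ and $a$ the conjugate phases of the inner sums. Then $\sup|\Lambda_n|\gg H_{2,s}^{-1/2}$, while $\|f\|_{u^2(\mathrm{window}_n)}\ll(\log H_{1,s}/H_{1,s})^{1/2}$ with high probability.

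The fix is to shrink the ranges rather than enlarge them, which is what the paper does. Choose $\ell_s\in\N$ with $H_s/\ell_s\to0$ and $\ell_s/\min\{H_{1,s},H_{2,s}\}\to0$. Cut both the $h_1$-range and the $h_2$-range into blocks of length $\ell_s$, and apply the triangle inequality inside the supremum. Each block $(j_1,j_2)$ is then a square problem of side $\ell_s$ at the shifted point $n+(j_1+j_2)\ell_s$; the arbitrary functions $u_i,g_i$ are simply re-indexed. The incomplete blocks carry relative weight $O(\ell_s/\min\{H_{1,s},H_{2,s}\})=o(1)$, and the shifts are $o(N_s)$. On a square your Fourier bound is correctly normalized and gives $\ll\|f\|_{u^2}$ on a window of length $\asymp\ell_s$. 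Your second step then applies verbatim, because $H_s/\ell_s\to0$.

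The paper instead uses blocks of length $\lfloor\epsilon H_s\rfloor$ and bounds the square pieces with the Cauchy--Schwarz estimate on $\Z/4N\Z$ from \cref{lem_weighted_cubic_averages_bound_2}. That costs a factor $64\epsilon^{-4}$ plus an error $2\epsilon$, after which it lets $\epsilon\to0$. Once repaired, your Fourier route avoids this $\epsilon$-bookkeeping and controls the supremum separately for each $n$.
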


Let us see how \cref{prop_relation_between_U2-norms} is used to prove \cref{thm_convolution_at_good_scale_1}.

\begin{proof}[Proof of \cref{thm_convolution_at_good_scale_1}]
Since $\lim_{s\to\infty} N_s/N_{s+1}=0$ and $f - f_{\str} = f_{\unf}$, we have for $g : \N \to [0,1]$
\begin{align*}
\|f*_{(\mathbf{N},\mathbf{H})} & g\,-\,f_{\str}*_{(\mathbf{N},\mathbf{H})}g\|_{2,[N_s]}^2
\\
&=\frac{1}{N_s}\sum_{n=1}^{N_s}
\Bigg|\Bigg(\frac{1}{H_s}\sum_{m=1}^{H_s} f(n-m)g(m)\Bigg)-\Bigg(\frac{1}{H_s}\sum_{m=1}^{H_s} f_\str(n-m)g(m)\Bigg)\Bigg|^2 + \oh(1)
\\
&=\frac{1}{N_s}\sum_{n=1}^{N_s}
\Bigg|\frac{1}{H_s}\sum_{m=1}^{H_s} f_\unf(n-m)g(m)\Bigg|^2 + \oh(1)
\\
&=\frac{1}{H_s}\sum_{m_1,m_2=1}^{H_s}\frac{1}{N_s}\sum_{n=1}^{N_s} g(m_1) g(m_2)f_\unf(n-m_1)f_\unf(n-m_2) + \oh(1)
\\
&=\frac{1}{H_s}\sum_{m_1,m_2=1}^{H_s}\frac{1}{N_s}\sum_{n=1}^{N_s} g(m_1) g(m_2)f_\unf(n)f_\unf(n+m_1-m_2) + \oh(1)
\\
&=\frac{1}{H_s}\sum_{m_1,m_2=-H_s}^{H_s}\frac{1}{N_s}\sum_{n=1}^{N_s} g_1(m_1)g_2(m_2)f_\unf(n)f_\unf(n+m_1+m_2) + \oh(1),
\end{align*}
where
\[
g_1(m)=
\begin{cases}
g(m),&\text{if}~m\geq 1,
\\
0,&\text{if}~m\leq 0,
\end{cases}
\qquad\text{and}\qquad
g_2(m)=
\begin{cases}
0,&\text{if}~m\geq 0,
\\
g(-m),&\text{if}~m\leq -1.
\end{cases}
\]
The claim now follows directly from \cref{prop_relation_between_U2-norms}.
\end{proof}

It remains to prove \cref{prop_relation_between_U2-norms}, for which we use the following lemmas.

\begin{Lemma}
\label{lem_weighted_cubic_averages_bound}
For any $f_1,f_2,f_3,f_4,u_1,u_2\colon \Z/N\Z\to \C$ with $\|f_i\|_\infty,\|u_j\|_\infty\leq 1$ we have
\[
\Bigg|\frac{1}{N^3} \sum_{n,h_1,h_2\in \Z/N\Z} u_1(h_1)u_2(h_2)f_1(n)\overline{f_2(n+h_1)}\overline{f_3(n+h_2)}f_4(n+h_1+h_2)\Bigg|\leq \|f_4\|_{U^2(\Z/N\Z)}.
\]
\end{Lemma}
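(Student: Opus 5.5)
The plan is to apply the Cauchy--Schwarz inequality twice, once to eliminate each weight $u_j$ together with the functions that do not depend on the corresponding shift. Throughout, write $\E_{x}$ for the normalized average over $\Z/N\Z$, and substitute $a = n$, $b = n+h_1$, $c = n+h_2$, so that $n+h_1+h_2 = b+c-a$. The left-hand side then becomes
\[
\Lambda=\E_{a,b,c}\, u_1(b-a)u_2(c-a)f_1(a)\overline{f_2(b)}\,\overline{f_3(c)}f_4(b+c-a).
\]

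First, fix $a$ and $c$ and group everything independent of $b$ into a single $1$-bounded function $F(a,c)=u_2(c-a)f_1(a)\overline{f_3(c)}$. This gives $\Lambda=\E_{a,c}F(a,c)\,\E_b\, u_1(b-a)\overline{f_2(b)}f_4(b+c-a)$. Cauchy--Schwarz in $(a,c)$ yields
\[
|\Lambda|^2\le \E_{a,c}\Big|\E_b\, u_1(b-a)\overline{f_2(b)}f_4(b+c-a)\Big|^2
=\E_{a,c,b,b'} u_1(b-a)\overline{u_1(b'-a)}\,\overline{f_2(b)}f_2(b')f_4(b+c-a)\overline{f_4(b'+c-a)}.
\]
Next, reparametrize with $x=c-a$ (replacing $c$) and $b'=b+k$. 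The expression becomes
\[
\E_{a,b,k}\, u_1(b-a)\overline{u_1(b+k-a)}\,\overline{f_2(b)}f_2(b+k)\,\E_x f_4(b+x)\overline{f_4(b+k+x)}.
\]
The inner average over $x$ equals $\E_y f_4(y)\overline{f_4(y+k)}$, which depends only on $k$; call it $r(k)$. All remaining factors are $1$-bounded, so $|\Lambda|^2\le \E_k |r(k)|$.

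Finally, apply Cauchy--Schwarz again: $\E_k|r(k)|\le (\E_k|r(k)|^2)^{1/2}$. Since $\E_k|r(k)|^2=\E_{y,k,h} f_4(y)\overline{f_4(y+k)}\,\overline{f_4(y+h)}f_4(y+h+k)=\|f_4\|_{U^2}^4$, we get $|\Lambda|^2\le \|f_4\|_{U^2}^2$, which is the claim.

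The only delicate point is choosing the grouping in the first Cauchy--Schwarz step so that $u_2$, $f_1$, $f_3$ all disappear, and then noticing that after the change of variables the $f_4$-correlation decouples from $u_1$ and $f_2$. Once the variables are chosen as above, the rest is routine.
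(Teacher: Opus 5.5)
Your proof is correct and follows essentially the paper's argument: a first Cauchy--Schwarz step strips off one weight together with the functions that do not depend on the inner averaged variable, and a second Cauchy--Schwarz then leaves exactly $\|f_4\|_{U^2}^4$ under a fourth root. The differences are only presentational: you use the substitution $(a,b,c)$ where the paper introduces a dummy shift $h_2\mapsto h_2+h_3$, and you spell out the second step by noting that the $f_4$-correlation averages to $r(k)$, whereas the paper just says to repeat the first step.
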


\begin{proof}
% Let us assume without loss of generality that $\|f\|_\infty\leq 1$.
Using the Cauchy-Schwarz inequality, we have
\begin{align*}
&\Bigg|\frac{1}{N^3} \sum_{n,h_1,h_2\in \Z/N\Z} u_1(h_1)u_2(h_2)f_1(n)\overline{f_2(n+h_1) f_3(n+h_2)} f_4(n+h_1+h_2)\Bigg|
\\
&=\Bigg|\frac{1}{N^4} \sum_{n,h_1,h_2,h_3\in \Z/N\Z} u_1(h_1)u_2(h_2+h_3)f_1(n)\overline{f_2(n+h_1)}
\\
&\qquad \qquad \qquad \qquad \qquad \qquad \qquad \qquad \qquad \qquad \cdot \overline{f_3(n+h_2+h_3)} f_4(n+h_1+h_2+h_3)\Bigg|
\\
&\leq
\Bigg(\frac{1}{N^3} \sum_{n,h_1,h_2\in \Z/N\Z} \Bigg|\frac{1}{N} \sum_{h_3\in \Z/N\Z} u_2(h_2+h_3)\overline{f_3(n+h_2+h_3)}f_4(n+h_1+h_2+h_3)\Bigg|^2\Bigg)^{\frac{1}{2}}
\\
&=
\Bigg(\frac{1}{N^4} \sum_{n,h_1,h_2,h_3\in \Z/N\Z}  u_2(h_2)\overline{f_3(n+h_2)} f_4(n+h_1+h_2)
\overline{u_2(h_2+h_3)}f_3(n+h_2+h_3)
\\
&
\qquad \qquad \qquad \qquad \qquad \qquad \qquad \qquad \qquad \qquad \qquad \qquad \qquad \cdot \overline{f_4(n+h_1+h_2+h_3)}\Bigg)^{\frac{1}{2}}
\\
&=
\Bigg(\frac{1}{N^3} \sum_{n,h_1,h_3\in \Z/N\Z}  u(h_3)\overline{f_3(n)} f_4(n+h_1)
f_3(n+h_3)
\overline{f_4(n+h_1+h_3)}\Bigg)^{\frac{1}{2}},
\end{align*}
where $u(h_3)=\frac{1}{N}\sum_{h_2\in \Z/N\Z} u_2(h_2)\overline{u_2(h_2+h_3)}$. Repeating the same argument once more to remove the term $u(h_3)$, the conclusion follows.
\end{proof}

\begin{Lemma}
\label{lem_weighted_cubic_averages_bound_2}
Let $H,N\in\N$ and $\epsilon>0$ with $\epsilon^2 N \leq H \leq \epsilon N$. Then for any functions $f_1,f_2,f_3,f_4,u_1,u_2\colon \Z\to \C$ with $\|f_i\|_\infty,\|u_j\|_\infty\leq 1$ we have
\begin{align*}
\Bigg|\frac{1}{H^2} \sum_{h_1,h_2=1}^H\Bigg(\frac{1}{N} \sum_{n=1}^N  u_1(h_1)u_2(h_2)f_1(n) \overline{f_2(n+h_1)}\overline{f_3(n+h_2)} & f_4(n+h_1+h_2)\Bigg)\Bigg|
\\
&\leq 64\epsilon^{-4}\|f_4\|_{U^2([N])}+2\epsilon.
\end{align*}
\end{Lemma}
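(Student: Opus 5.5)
We will reduce \cref{lem_weighted_cubic_averages_bound_2} to the cyclic estimate \cref{lem_weighted_cubic_averages_bound}. The plan is to embed the problem in $\Z/\tilde N\Z$ with $\tilde N = 4N$, and to pay for the mismatch between the normalizations (by $H^2N$ versus $\tilde N^3$) with a factor $\epsilon^{-4}$. The additive error $2\epsilon$ will absorb boundary effects coming from the range $n+h_1+h_2$ leaving $[N]$.

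\textbf{Step 1 (truncation).} Since $h_1,h_2\le H\le \epsilon N$, the terms with $n > N-2H$ contribute at most $2H/N\le 2\epsilon$ after normalization. So up to an error $2\epsilon$ we may replace the inner sum by one over $n\in[N]$ in which all four of $n, n+h_1, n+h_2, n+h_1+h_2$ lie in $[N]$. Equivalently, we replace each $f_i$ by $\tilde f_i = \1_{[N]} f_i$, viewed as a function on $\Z/\tilde N\Z$, and keep $n$ ranging over $[N]$. Because $\tilde N\ge 4N$, no wrap-around occurs.

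\textbf{Step 2 (rewrite as a cyclic average).} Define $\tilde u_j = \1_{[H]}u_j$ on $\Z/\tilde N\Z$. Insert $\1_{[N]}(n)$ into $\tilde f_1$, so the sum over $n$ may run over all of $\Z/\tilde N\Z$. The truncated quantity then equals
\[
\frac{\tilde N^3}{H^2N}\cdot \frac{1}{\tilde N^3}\sum_{n,h_1,h_2\in\Z/\tilde N\Z}\tilde u_1(h_1)\tilde u_2(h_2)\tilde f_1(n)\overline{\tilde f_2(n+h_1)}\,\overline{\tilde f_3(n+h_2)}\,\tilde f_4(n+h_1+h_2).
\]
All the functions here are $1$-bounded. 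Applying \cref{lem_weighted_cubic_averages_bound} bounds the absolute value by $\frac{\tilde N^3}{H^2N}\|\tilde f_4\|_{U^2(\Z/\tilde N\Z)}$. Since $H\ge\epsilon^2N$, the prefactor is at most $64N^3/(\epsilon^4N^3)=64\epsilon^{-4}$.

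\textbf{Step 3 (identify the norm).} By definition, $\|f_4\|_{U^2([N])} = \|\tilde f_4\|_{U^2(\Z/\tilde N\Z)}/\|\1_{[N]}\|_{U^2(\Z/\tilde N\Z)}$ with $\tilde N\ge 4N$. The denominator is at most $1$, so $\|\tilde f_4\|_{U^2(\Z/\tilde N\Z)}\le\|f_4\|_{U^2([N])}$. Combining this with Steps 1 and 2 gives the bound $64\epsilon^{-4}\|f_4\|_{U^2([N])}+2\epsilon$.

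\textbf{Main obstacle.} The only delicate point is the bookkeeping in Step 1. We must check that the boundary error is genuinely at most $2\epsilon$, namely $(2H/N)$ times a $1$-bounded average, and that the cutoffs $\1_{[N]}$ on the middle functions do not destroy the structure required by \cref{lem_weighted_cubic_averages_bound}. They do not, since the lemma allows arbitrary $1$-bounded $f_1,f_2,f_3$. If the constant $64$ does not come out exactly, choosing $\tilde N=4N$ precisely is what makes $\tilde N^3/N^3=64$.
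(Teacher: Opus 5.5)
Your proposal is correct and follows essentially the same route as the paper: extend by zero to $\Z/4N\Z$ at a boundary cost of $2\epsilon$, rescale by $(4N)^3/(H^2N)=64N^2/H^2\le 64\epsilon^{-4}$, apply \cref{lem_weighted_cubic_averages_bound} with the cut-off weights $\1_{[H]}u_j$, and finish with $\|\tilde f_4\|_{U^2(\Z/4N\Z)}\le\|f_4\|_{U^2([N])}$. (Your no-wrap-around claim needs $N+2H<4N$, which holds once $\epsilon<1/2$; for $\epsilon\ge 1/2$ the statement is trivial, since the left side is at most $1$.)
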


\begin{proof}
Let $\tilde{f}_i\colon \Z/4N\Z\to\C$ denote the function $\tilde{f}_i(n)=f_i(n)$ when $n\in[N]$ and $\tilde{f}_i(n)=0$ otherwise. Since $H\leq \epsilon N$, we have uniformly over all $1\leq h_1,h_2\leq H$, 
\begin{align*}
\Bigg|\frac{1}{N} \sum_{n=1}^N  u_1(h_1) & u_2(h_2)f_1(n) \overline{f_2(n+h_1)}\overline{f_3(n+h_2)}f_4(n+h_1+h_2)
\\
-
&\frac{1}{N} \sum_{n\in \Z/4N\Z}  u_1(h_1)u_2(h_2)\tilde{f}_1(n)  \overline{\tilde{f}_2(n+h_1)}\overline{\tilde{f}_3(n+h_2)}\tilde{f}_4(n+h_1+h_2)\Bigg|\leq 2\epsilon.
\end{align*}
Hence
\begin{align*}
\Bigg| & \frac{1}{H^2}  \sum_{h_1,h_2=1}^H\Bigg(\frac{1}{N} \sum_{n=1}^N  u_1(h_1)u_2(h_2)f_1(n)  \overline{f_2(n+h_1)}\overline{f_3(n+h_2)}f_4(n+h_1+h_2)\Bigg)\Bigg|
\\
&\leq 
\Bigg|\frac{1}{H^2} \sum_{h_1,h_2=1}^H\Bigg(\frac{1}{N} \sum_{n\in \Z/4N\Z}  u_1(h_1)u_2(h_2)\tilde{f}_1(n)  \overline{\tilde{f}_2(n+h_1)}\overline{\tilde{f}_3(n+h_2)}\tilde{f}_4(n+h_1+h_2)\Bigg)\Bigg|
+2\epsilon
\\
&=
\frac{64 N^2}{H^2} \Bigg|\frac{1}{(4N)^3} \sum_{n,h_1,h_2\in \Z/4N\Z}  1_{[H]}(h_1)u_1(h_1)1_{[H]}(h_2)u_2(h_2)\tilde{f}_1(n)
\\
&\qquad \qquad \qquad \qquad \qquad \qquad \qquad \qquad \cdot \overline{\tilde{f}_2(n+h_1)}\overline{\tilde{f}_3(n+h_2)}\tilde{f}_4(n+h_1+h_2)\Bigg|
+2\epsilon
\\
&\leq
\frac{64 N^2}{H^2} \big\|\tilde{f}_4\big\|_{U^2(\Z/4N\Z)}+2\epsilon,
\end{align*}
where the last inequality follows from \cref{lem_weighted_cubic_averages_bound}.
Using the assumption $\epsilon^2 N \leq H $ and the fact $\big\|\tilde{f}_4\big\|_{U^2(\Z/4N\Z)}\leq \big\|f_4\big\|_{U^2([N])}$, we obtain the desired bound and the proof is finished.
\end{proof}

\begin{proof}[Proof of \cref{prop_relation_between_U2-norms}]
Let $\epsilon>0$ and 
define the sequence $\mathbf{H}'=(H_s')_{s\in\N}$ via
\[
H_s'=\lfloor \epsilon H_s\rfloor,\qquad\forall s\in\N.
\]
Since $\mathbf{H}'\prec \mathbf{H}_1, \mathbf{H}_2$, by dividing the averages of length $2H_{1,s}+1$ and $2H_{2,s}+1$ into averages of length $H_s'$ and using the triangle inequality, we get
\[
\begin{split}
\limsup_{s\to\infty} 
\frac{1}{N_s}\sum_{n=1}^{N_s} \sup_{u_1,u_2,g_1,g_2,g_3} \Bigg|
 \frac{1}{H_{1,s}}\sum_{h_1=-H_{1,s}}^{H_{1,s}} \frac{1}{H_{2,s}}\sum_{h_2=-H_{2,s}}^{H_{2,s}} u_1( &h_1) u_2(h_2)  g_1(n) g_2(n+h_1)
\\
&\cdot g_3(n+h_2)\overline{f(n+h_1+h_2)}\Bigg|
\\
\leq \limsup_{s\to\infty} 
\frac{1}{N_s}\sum_{n=1}^{N_s} \sup_{u_1,u_2,g_1,g_2,g_3} \Bigg|
 \frac{1}{(H_{s}')^2}\sum_{h_1,h_2=1}^{H_{s}'} u_1( &h_1) u_2(h_2) g_1(n) g_2(n+h_1)
\\
&\cdot g_3(n+h_2)\overline{f(n+h_1+h_2)}\Bigg|.
\end{split}
\]
Finally, dividing the average of length $N_s$ into averages of length $H_s$ and using \cref{lem_weighted_cubic_averages_bound_2} (applied with $H_s$ in place of $N$ and $H_s'$ in place of $H$), the last expression is bounded by
\[
\leq  64\epsilon^{-4}\Bigg( \limsup_{s\to\infty} 
\frac{1}{N_s}\sum_{n=1}^{N_s}  \|f\|_{U^2(\{n,n+1,\ldots,n+H_s-1\})}\Bigg)+2\epsilon.
\]
Since $\epsilon$ was arbitrary, we obtain the desired conclusion.
\end{proof}

\subsection{Almost-periods for $U^2(\mathbf{N},\mathbf{H})$-structured functions}

\begin{Theorem}
\label{thm_almost-period_of_structured_component}
Let $f\colon \N\to [0,1]$, and suppose $\mathbf{N} = (N_s)_{s \in \N}$ and $\mathbf{H}=(H_s)_{s\in\N}$ are sequences in $\N$ with
$1\prec \mathbf{H}\prec \mathbf{N}$ and $\lim_{s\to\infty} N_s/N_{s+1}=0$.
Suppose $(\mathbf{N},\mathbf{H})$ is a $U^2$~good scale for $f$ and $f = f_{\str} + f_{\unf}$ is the associated splitting.
There exists a sequence $\epsilon_s \to 0$ such that for every $s\in\N$ and every $n\in \{N_{s-1}+1,\ldots,N_s\}$ we can find $Q_n\in\N$ with $H_s\leq Q_n\leq (1+\epsilon_s)H_s$ such that $Q_n$ is highly divisible in the sense that
\[
\lim_{n \to \infty} \min\{m \in \N : m \nmid Q_n\} = \infty,
\]
and
\[
\lim_{s\to\infty}\frac{1}{N_s}\sum_{n=1}^{N_s}\frac{1}{H_s}\sum_{h=0}^{H_s-1} |f_{\str}(n+h)-f_{\str}(n+h+Q_n)|=0.
\]
\end{Theorem}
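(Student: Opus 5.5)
The plan is to reduce the statement to the trigonometric polynomial approximations furnished by the structure property of \cref{def: U2 good scale}, and then to find a single almost-period $Q_n$ for all frequencies at once using \cref{lem_finding_quantitative_almost_periods}.

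Fix $\epsilon>0$. By property \ref{itm U^2 structure theorem ii}, choose $d$, $c_i$, $\alpha_i$ with $\|f_{\str}-\mathcal{P}\|_{2,\mathbf{N}}\leq\epsilon$, where $\mathcal{P}(n)=\sum_{i\le d}c_i(n)e(n\alpha_i(n))$. Let $C=C(d,\epsilon')$ be the constant from \cref{lem_finding_quantitative_almost_periods}, with $\epsilon'$ to be chosen small depending on $\epsilon$ and $d$. Then for $n$ in $\Gamma^{(1)}_s\cap\Gamma^{(2)}_s\cap W_{s,C}$, a set containing all but $\oh(N_s)$ elements of $[N_s]$, the $c_i$ and $\alpha_i$ are constant on $[n,n+H_s]$. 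Moreover, the tuple $(\alpha_1(n),\dots,\alpha_d(n))$ satisfies the dichotomy hypothesis of \cref{lem_finding_quantitative_almost_periods} with $H=H_s$.

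The key step is to apply that lemma at a location $x$ well inside the range. I would not take $x$ near $1$. Instead I would look for $m$ in a window around $xH_s$ with $\|m\alpha_i(n)\|_{\T}\le\epsilon'$ for all $i$, and then add a small multiple of it so that the result lands just above $H_s$. The more robust version is to apply the lemma with $H=\lceil \epsilon'^{-1}\rceil H_s$ as the scale parameter, so that the target $x$ corresponds to $\approx H_s$. This changes the thresholds $1/(CH)$ and $C/H$ only by constants. The mismatch would be absorbed by taking the constant $C$ in $W_{s,C}$ larger, which is allowed since property \ref{itm U^2 structure theorem ii} holds for every $C$. This produces $Q'_n\in[H_s,(1+\epsilon')H_s]$ with $\|Q'_n\alpha_i(n)\|_{\T}\le\epsilon'$ for all $i$.

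To make $Q_n$ highly divisible, I would run the same argument with $(L!\,\alpha_i(n))_i$ in place of $(\alpha_i(n))_i$, where $L$ grows slowly with $s$. This gives $m$ with $L!\,m\in[H_s,(1+\epsilon')H_s]$, and I set $Q_n=L!\,m$. This needs $L!\le\epsilon' H_s$ and the dichotomy hypothesis for the multiplied frequencies. The dichotomy transfers, since $w\cdot L!\alpha=(L!w)\cdot\alpha$, provided we use $W_{s,C L!}$; this is why $L$, and therefore $C$, must grow slowly. I expect this bookkeeping to be the main obstacle, namely making $L$, $\epsilon'$ and $C$ tend to their limits along $s$ by a diagonal argument.

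Given such $Q_n$, for $n$ in the good set and $0\le h<H_s$, constancy of $c_i$ and $\alpha_i$ on $[n,n+2H_s]$ gives
\[
|\mathcal{P}(n+h)-\mathcal{P}(n+h+Q_n)|\le\sum_i|e(Q_n\alpha_i(n))-1|\le 2\pi d\epsilon'.
\]
Here I would use $W$ and $\Gamma$ at scale $2H_s$ or $3H_s$, which is harmless since $C H_s$ lies in the admissible range of good scales after narrowing. For $n$ outside the good set, set $Q_n=\lceil H_s\rceil$ rounded to a multiple of $L!$.

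Averaging over $n$ and $h$, and using $\|f_{\str}-\mathcal{P}\|_{2,\mathbf{N}}\le\epsilon$ together with Cauchy--Schwarz on the shifted copies, gives
\[
\limsup_{s\to\infty}\frac{1}{N_s}\sum_{n}\frac{1}{H_s}\sum_h|f_{\str}(n+h)-f_{\str}(n+h+Q_n)|\ll\epsilon+d\epsilon'.
\]
The shifts by at most $(2+\epsilon)H_s$ cost only $\oh(1)$, because $\mathbf{H}\prec\mathbf{N}$ and $N_s/N_{s+1}\to0$. Choosing $\epsilon'=\epsilon/d$ and letting $\epsilon\to0$ along $s$ diagonally yields the sequence $\epsilon_s\to0$ and the conclusion.
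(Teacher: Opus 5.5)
Your overall strategy matches the paper's. You approximate $f_{\str}$ by $\mathcal{P}$, restrict to the density-one set of $n$ where the $c_i,\alpha_i$ are locally constant and satisfy the dichotomy, and extract a simultaneous almost-period from \cref{lem_finding_quantitative_almost_periods}. You then force divisibility with a factorial, return to $f_{\str}$ via the $L^2$ approximation, and diagonalize. Your divisibility device is a harmless variant of the paper's: you apply the lemma to $L!\,\alpha_i(n)$ at a scale of order $H_s/L!$, with the dichotomy supplied by $W_{s,CL!}$. The paper instead applies the lemma to the original $\alpha_i(n)$ at scale $3H_s$ with precision $\epsilon/(3d\,M!)$ and target $H_s/M!$, and then multiplies by $M!$.

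However, the step that is supposed to produce $Q'_n\in[H_s,(1+\epsilon')H_s]$ does not work as you set it up. With precision $\epsilon'$ at scale $H$, the lemma only places $m$ in $[(x-\epsilon')H,(x+\epsilon')H]$, a window of half-width $\epsilon' H$. For $H=\lceil\epsilon'^{-1}\rceil H_s$ this half-width is about $H_s$. The target $xH\approx H_s$ then forces $x\approx\epsilon'$, the left end of the allowed range $[\epsilon',1-\epsilon']$, so you learn essentially only $0\le m\le 2H_s$. Enlarging the scale makes the relative precision at the target worse, not better.

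The repair is to keep the scale comparable to $H_s$. Take $H=3H_s$ and $x=\tfrac13+\eta$ with lemma precision $\eta=\epsilon'/6$. This gives $m\in[H_s,(1+\epsilon')H_s]$ and $\|m\alpha_i(n)\|_{\T}\le\eta$. The dichotomy needed at scale $3H_s$ follows from membership in $W_{s,3C}$, as you correctly observed. Alternatively, keep your scale but use lemma precision of order $\epsilon'^2$. The same correction applies in your divisibility step, where the target is $H_s/L!$.

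One smaller point: the hypothesis only says that the single scale $\mathbf{H}$ is $U^2$ good. So you cannot invoke an admissible range of good scales to get constancy of the $c_i,\alpha_i$ on windows of length $3H_s$. Derive it instead from the density-one sets at scale $H_s$. If $n+\lfloor kH_s/2\rfloor\in\Gamma^{(1)}_s\cap\Gamma^{(2)}_s$ for $k=0,\ldots,4$, the overlapping windows give constancy on $\{n+1,\ldots,n+3H_s\}$. This condition holds for all but $\oh(N_s)$ values of $n$.

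With these adjustments your argument goes through.
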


\begin{proof}[Proof of \cref{thm_almost-period_of_structured_component}]
We shall prove that for every fixed $\epsilon>0$ and $M \in \N$, we can find for every $n\in \{N_{s-1}+1,\ldots,N_s\}$ a number $Q_n\in\N$ with $H_s\leq Q_n\leq (1+\epsilon)H_s$ such that
\[\lim_{n \to \infty} \min\{m \in \N : m \nmid Q_n\} \geq M,
\]
and
\[
\lim_{s\to\infty}\frac{1}{N_s}\sum_{n=1}^{N_s}\frac{1}{H_s}\sum_{h=0}^{H_s-1} |f_{\str}(n+h)-f_{\str}(n+h+Q_n)|\leq \epsilon.
\]
It then follows that we may replace the fixed $\epsilon$ and $M$ by sequences $\epsilon_s \to 0$ that tends to zero sufficiently slowly and $M_s \to \infty$ that tends to infinity sufficiently slowly such that the desired conclusion still holds.

As $(\mathbf{N},\mathbf{H})$ is a $U^2$~good scale for $f$, there exist $d\in\N$, $c_1,\ldots,c_d\colon \N\to \C$ with $\|c_i\|_\infty\leq 1$, and $\alpha_1,\ldots,\alpha_d\colon \N\to\R$ such that if $\mathcal{P}(n)=\sum_{i=1}^d c_i(n) e(n \alpha_i(n))$ then
\begin{equation}
\label{eqn_almost-period_of_structured_component_1}
    \| \mathcal{P} -  f_{\str} \|_{2, \mathbf{N}}\leq \frac{\epsilon}{3},
\end{equation}
and for all $C\geq 1$ if
\begin{align*}
\Gamma_s^{(1)}&=\{n\in [N_s-H_s+1]: \alpha_i(n+m_1)=\alpha_i(n+m_2)~\forall i\in [d],~\forall m_1,m_2\in [H_s]\},
\\
\Gamma_s^{(2)}&=\{n\in [N_s-H_s+1]: c_i(n+m_1)=c_i(n+m_2)~\forall i\in [d],~\forall m_1,m_2\in [H_s]\},
\\
W_{s,C}&=\Big\{n\in [N_s-H_s+1]: \text{for all}~(w_1, \ldots, w_d) \in [-C,C]^d\cap \Z^d,~\text{either}
\\
\|w_1&\alpha_1(n) +\ldots+w_d\alpha_d(n)\|_{\T}\leq \tfrac{1}{C H_s}
~\text{or}~\|w_1\alpha_1(n)+\ldots+w_d\alpha_d(n)\|_{\T}\geq \tfrac{C}{H_s}\Big\},
\end{align*}
then
\[
\lim_{s\to\infty} \frac{|\Gamma_s^{(1)}|}{N_s-H_s+1} = 
\lim_{s\to\infty} \frac{|\Gamma_s^{(2)}|}{N_s-H_s+1} =
\lim_{s\to\infty} \frac{|W_{s,C}|}{N_s-H_s+1}=1.
\]
Since $\lim_{s\to\infty} N_s/N_{s+1}=\lim_{s\to\infty} H_s/N_s=0$, we can slightly modify the sets $\Gamma_s^{(1)}$, $\Gamma_s^{(2)}$ and $W_{s,C}$ by replacing $[N_s-H_s+1]$ with $\{N_{s-1}+1,\ldots,N_s\}$ and $H_s$ with $3H_s$, so that if
\begin{align*}
\tilde{\Gamma}_s^{(1)}&=\{n\in \{N_{s-1}+1,\ldots,N_s\}: \alpha_i(n+m_1)=\alpha_i(n+m_2)~\forall i\in [d],~\forall m_1,m_2\in [3H_s]\},
\\
\tilde{\Gamma}_s^{(2)}&=\{n\in \{N_{s-1}+1,\ldots,N_s\}: c_i(n+m_1)=c_i(n+m_2)~\forall i\in [d],~\forall m_1,m_2\in [3H_s]\},
\\
\tilde{W}_{s,C}&=\Big\{n\in \{N_{s-1}+1,\ldots,N_s\}: \text{for all}~(w_1, \ldots, w_d) \in [-C,C]^d\cap \Z^d,~\text{either}
\\
\|w_1&\alpha_1(n) +\ldots+w_d\alpha_d(n)\|_{\T}\leq \tfrac{1}{3C H_s}
~\text{or}~\|w_1\alpha_1(n)+\ldots+w_d\alpha_d(n)\|_{\T}\geq \tfrac{C}{3H_s}\Big\},
\end{align*}
then we still have
\[
\lim_{s\to\infty} \frac{|\tilde{\Gamma}_s^{(1)}|}{N_s} = 
\lim_{s\to\infty} \frac{|\tilde{\Gamma}_s^{(2)}|}{N_s} =
\lim_{s\to\infty} \frac{|\tilde{W}_{s,C}|}{N_s}=1.
\]
Now take $C= C(d,\frac{\epsilon}{3d(M!)})$ from \cref{lem_finding_quantitative_almost_periods}.
If $n\in \tilde{\Gamma}_s^{(1)}\cap \tilde{\Gamma}_s^{(2)}\cap \tilde{W}_{s,C}$ then it follows from the conclusion of \cref{lem_finding_quantitative_almost_periods}, applied to $x=\frac{1}{3(M!)}+\frac{\epsilon}{3d(M!)}$, that there exists some $Q'_n\in \N$ with $\frac{H_s}{M!}\leq Q'_n\leq (1+\epsilon) \frac{H_s}{M!}$ such that
\[
\|Q'_n \alpha_{i}(n)\|_{\T}\leq \frac{\epsilon}{3d(M!)},\qquad\forall i=1,\ldots,d.
\]
Let $Q_n = M! \cdot Q'_n$.
Then $H_s \leq Q_n \leq (1+\epsilon)H_s$, $M! \mid Q_n$, and
\[
\|Q_n \alpha_{i}(n)\|_{\T}\leq \frac{\epsilon}{3d},\qquad\forall i=1,\ldots,d.
\]
Combined with the fact that $c_{i}(n+h)=c_{i}(n)$ and $\alpha_{i}(n+h)=\alpha_{i}(n)$ for all $h\in[3H_s]$, we conclude that
\begin{equation}
\label{eqn_almost-period_of_structured_component_2}
\max_{h\in [H_s]} |\mathcal{P}(n+h)-\mathcal{P}(n+h+Q_n)|\leq \frac{\epsilon}{3}.
\end{equation}
If $n$ does not belong to $\tilde{\Gamma}_s^{(1)}\cap \tilde{\Gamma}_s^{(2)}\cap \tilde{W}_{s,C}$ for some $s$, then it does not matter what $Q_n$ is, as the contribution of such $n$ is negligible. For simplicity, let us take $Q_n=M! \cdot \lceil \frac{H_s}{M!} \rceil$ in this case. 
To finish the proof, we combine \eqref{eqn_almost-period_of_structured_component_1} and \eqref{eqn_almost-period_of_structured_component_2} to deduce that
\begin{align*}
\lim_{s\to\infty}\frac{1}{N_s}\sum_{n=1}^{N_s}\frac{1}{H_s} & \sum_{h=0}^{H_s-1} |f_{\str}(n+h)-f_{\str}(n+h+Q_n)|
\\
&\leq
\lim_{s\to\infty}\frac{1}{N_s}\sum_{n=1}^{N_s}\frac{1}{H_s}\sum_{h=0}^{H_s-1} |\mathcal{P}(n+h)-\mathcal{P}(n+h+Q_n)| + \frac{2\epsilon}{3}
\\
&\leq \epsilon,
\end{align*}
arriving at the desired conclusion.
\end{proof}

\begin{Corollary}
\label{cor_convolution_at_good_scale_2}
Let $f\colon \N\to [0,1]$, and suppose $\mathbf{N} = (N_s)_{s \in \N}$ and $\mathbf{H}=(H_s)_{s\in\N}$ are sequences in $\N$ with
$1\prec \mathbf{H}\prec \mathbf{N}$ and $\lim_{s\to\infty} N_s/N_{s+1}=0$.
If $(\mathbf{N},\mathbf{H})$ is a $U^2$~good scale for $f$ then 
then there exists a sequence $\epsilon_s \to 0$ such that for all but $o(N_s)$ many $n\in \{N_{s-1}+1,\ldots,N_s\}$, we can find $Q_n\in\N$ with $H_s\leq Q_n\leq (1+\epsilon_s)H_s$ such that
\[
\lim_{n \to \infty} \min\{m \in \N : m \nmid Q_n\} = \infty
\]
and
\[
\sup_{g : \N \to [0,1]} \frac{1}{H_s}\sum_{h=0}^{H_s-1} |f*_{(\mathbf{N},\mathbf{H})}g(n+h)-f*_{(\mathbf{N},\mathbf{H})}g(n+h+Q_n)| \leq \epsilon_s.
\]
\end{Corollary}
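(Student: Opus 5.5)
The plan is to combine \cref{thm_convolution_at_good_scale_1} with \cref{thm_almost-period_of_structured_component}. Take the numbers $Q_n$ and the sequence $\epsilon_s'\to 0$ given by \cref{thm_almost-period_of_structured_component}. These $Q_n$ are highly divisible and satisfy $H_s\leq Q_n\leq (1+\epsilon_s')H_s$, together with
\[
\lim_{s\to\infty}\frac{1}{N_s}\sum_{n=1}^{N_s}\frac{1}{H_s}\sum_{h=0}^{H_s-1}|f_{\str}(n+h)-f_{\str}(n+h+Q_n)|=0 .
\]
Write $f*g$ for $f*_{(\mathbf{N},\mathbf{H})}g$ and split
\[
f*g = f_{\str}*g + f_{\unf}*g .
\]
We then treat the two pieces separately, keeping every bound uniform in $g\colon\N\to[0,1]$.

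\textbf{Structured piece.} For $n$ and $n+h+Q_n$ in the same block $(N_{s-1},N_s]$, the triangle inequality gives
\[
|f_{\str}*g(n+h)-f_{\str}*g(n+h+Q_n)|\leq \frac{1}{H_s}\sum_{m=1}^{H_s}|f_{\str}(n+h-m)-f_{\str}(n+h-m+Q_n)|,
\]
and this bound is independent of $g$. Averaging over $h\in[0,H_s)$ turns the right side into an average of $|f_{\str}(x)-f_{\str}(x+Q_n)|$ over $x$ in a window of length $2H_s$ around $n$. The difficulty is that $Q_n$ is attached to $n$, not to $x-H_s$. To handle this, I expect to use the fact, visible in the proof of \cref{thm_almost-period_of_structured_component}, that $Q_n$ is a genuine almost period for $\mathcal{P}$ on the whole window $[n,n+3H_s]$: there $c_i$ and $\alpha_i$ are constant and $\|Q_n\alpha_i\|_\T$ is small. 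So I would rerun that argument with the window shifted, defining $Q_n$ from the data at $n-H_s$, which sits in $\tilde\Gamma_s^{(1)}\cap\tilde\Gamma_s^{(2)}\cap \tilde W_{s,C}$ for most $n$. This makes $Q_n$ an almost period of $\mathcal{P}$ on $[n-H_s,n+2H_s]$. Combining with $\|\mathcal P-f_{\str}\|_{2,\mathbf N}\leq\epsilon/3$ then gives
\[
\frac1{N_s}\sum_n \sup_g \frac1{H_s}\sum_h |f_{\str}*g(n+h)-f_{\str}*g(n+h+Q_n)| = \oh(1).
\]
The pairs $n$ whose window crosses a block boundary $N_{s-1}$ number $\Oh(H_s)=\oh(N_s)$ and can be discarded.

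\textbf{Uniform piece.} This is where I expect the main obstacle, because the supremum over $g$ sits inside the average over $n$. \cref{thm_convolution_at_good_scale_1} gives $\sup_g\|f_{\unf}*g\|_{2,[N_s]}\to0$, but with the supremum outside the $n$-average. To bring it inside, I would go back to \cref{prop_relation_between_U2-norms}, which already carries a supremum over all 1-bounded weights inside the $n$-sum. Expanding $\frac1{H_s}\sum_h|f_{\unf}*g(n+h)|^2$ gives exactly such a local cubic average in $(m_1,m_2)$, with $g$ absorbed into $u_1,u_2$ and a local shift $h$ averaged over. Since $\|f_{\unf}\|_{U^2(\mathbf N,\mathbf H')}=0$ for some $\mathbf H'\prec\mathbf H$, this yields
\[
\frac1{N_s}\sum_n\sup_g\frac1{H_s}\sum_{h}|f_{\unf}*g(n+h)|=\oh(1),
\]
and the same holds with $h$ replaced by $h+Q_n$, because $Q_n\leq 2H_s$.

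\textbf{Conclusion.} Adding the two pieces, the average over $n$ of
\[
\sup_g\frac1{H_s}\sum_h|f*g(n+h)-f*g(n+h+Q_n)|
\]
tends to $0$. Markov's inequality then shows this quantity is at most $\epsilon_s$ for all but $\oh(N_s)$ values of $n$, where $\epsilon_s\to0$ slowly; finally enlarge $\epsilon_s$ to dominate $\epsilon_s'$.
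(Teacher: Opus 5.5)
Your proposal is correct. It uses the same skeleton as the paper: split $f=f_{\str}+f_{\unf}$, take $Q_n$ from \cref{thm_almost-period_of_structured_component}, bound the structured part by the triangle inequality uniformly in $g$, and finish with Markov. It departs from the paper at the two places where the paper is terse, and both departures matter.

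For $f_{\unf}$, the paper invokes \cref{thm_convolution_at_good_scale_1}, whose supremum over $g$ sits \emph{outside} the $n$-average. After Markov this only yields, for each fixed $g$, an exceptional set of $\oh(N_s)$ indices $n$ that depends on $g$. That would suffice for the quantifier order in \cref{thm: local sumset}, where $\tilde B_s$ is fixed first, but it is weaker than the displayed statement, which has $\sup_g$ inside. Your return to \cref{prop_relation_between_U2-norms}, whose supremum lies inside the $n$-sum, is what delivers the statement as written.

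For $f_{\str}$, the paper passes from the window of length $2H_s$ starting at $n-H_s$ back to $\{n,\ldots,n+H_s-1\}$ by a change of variables that ignores the dependence of $Q_n$ on $n$. Your re-basing of the construction at $n-H_s$ supplies the missing justification: it uses that the $c_i,\alpha_i$ are constant on a window of a few multiples of $H_s$. The cost is that you must reopen the proof of \cref{thm_almost-period_of_structured_component} rather than use its statement.

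One caution in your uniform step: the expansion must place the two copies of $f_{\unf}$ at \emph{opposite} corners of the parallelogram. Suppose you take $n'=n+h$ as base point with $(h_1,h_2)=(-m_1,-m_2)$. The $f_{\unf}$ factors then sit at $n'+h_1$ and $n'+h_2$, the corner $n'+h_1+h_2$ carries the constant $1$, and the proposition gives nothing. Indeed $\sup_g|f_{\unf}*g(n')|$ need not be small. The clean route is to expand only once, using the $1$-bounded weight $w(h)=f_{\unf}*g(n+h)$:
\[
\frac1{H_s}\sum_{h=0}^{H_s-1}|f_{\unf}*g(n+h)|^2=\frac1{H_s^2}\sum_{h=0}^{H_s-1}\sum_{m=1}^{H_s}\overline{w(h)}\,g(m)\,f_{\unf}(n+h-m).
\]
This is exactly the form in \cref{prop_relation_between_U2-norms} at base point $n$, with:
\begin{itemize}
\item $g_1=g_2=g_3=1$;
\item $u_1=\overline{w}$ supported on $[0,H_s)$ and $u_2(k)=g(-k)$ supported on $[-H_s,-1]$, both allowed to depend on $n$ and $g$;
\item the scale $\mathbf H'\prec\mathbf H$ from \cref{def: U2 good scale} in the role of the small scale, and $\mathbf H_1=\mathbf H_2=\mathbf H$.
\end{itemize}
Cauchy--Schwarz then gives your $\ell^1$ bound with the supremum over $g$ inside the $n$-average.
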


\begin{proof}
First, we use \cref{thm_almost-period_of_structured_component} to find a sequence $\epsilon_s \to 0$ such that for every $s\in\N$ and every $n\in \{N_{s-1}+1,\ldots,N_s\}$ we can find $Q_n\in\N$ with $H_s\leq Q_n\leq (1+\epsilon_s)H_s$ such that
\[
\lim_{n \to \infty} \min\{m \in \N : m \nmid Q_n\} = \infty
\]
and
\begin{align}
\label{eqn_convolution_at_good_scale_2_1}
\lim_{s\to\infty}\frac{1}{N_s}\sum_{n=1}^{N_s}\frac{1}{H_s}\sum_{h=0}^{H_s-1} |f_{\str}(n+h)-f_{\str}(n+h+Q_n)|=0.
\end{align}
In light of \cref{thm_convolution_at_good_scale_1}, for every $g : \N \to [0,1]$, we have
\begin{align}
\label{eqn_convolution_at_good_scale_2_2}
\begin{split}
\frac{1}{N_s}\sum_{n=1}^{N_s} & \frac{1}{H_s} \sum_{h=0}^{H_s-1} |f*_{(\mathbf{N},\mathbf{H})}g(n+h)-f*_{(\mathbf{N},\mathbf{H})}g(n+h+Q_n)|
\\
&=
\frac{1}{N_s}\sum_{n=1}^{N_s} \frac{1}{H_s} \sum_{h=0}^{H_s-1} |f_{\str}*_{(\mathbf{N},\mathbf{H})}g(n+h)-f_\str*_{(\mathbf{N},\mathbf{H})}g(n+h+Q_n)|+\oh_{s\to\infty}(1),
\end{split}
\end{align}
where the $\oh_{s\to\infty}(1)$ error term is independent of $g$.
Using the definition of the convolution of $f$ with $g$ at scale $(\mathbf{N},\mathbf{H})$, we get for $n\in \{N_{s-1}+1,\ldots, N_s-3H_s\}$,
\begin{align*}
&\frac{1}{H_s} \sum_{h=0}^{H_s-1} |f_{\str}*_{(\mathbf{N},\mathbf{H})}g(n+h)-f_{\str}*_{(\mathbf{N},\mathbf{H})}g(n+h+Q_n)|
\\
&=\frac{1}{H_s}\sum_{h=0}^{H_s-1} \Bigg|\Bigg(\frac{1}{H_s}\sum_{m=1}^{H_s} f_{\str}(n+h-m)g(m)\Bigg)-\Bigg(\frac{1}{H_s}\sum_{m=1}^{H_s} f_{\str}(n+h+Q_n-m)g(m)\Bigg)\Bigg| 
\\
&\leq \frac{1}{H_s^2}\sum_{h=0}^{H_s-1}\sum_{m=1}^{H_s} \big|f_{\str}(n+h-m)-f_{\str}(n+h+Q_n-m)\big|.
\end{align*}
Since $\lim_{s\to\infty} N_s/N_{s+1}=\lim_{s\to\infty} H_s/N_s=0$, we therefore have
\begin{align*}
\frac{1}{N_s}\sum_{n=1}^{N_s} & \frac{1}{H_s} \sum_{h=0}^{H_s-1} |f_{\str}*_{(\mathbf{N},\mathbf{H})}g(n+h)-f_{\str}*_{(\mathbf{N},\mathbf{H})}g(n+h+Q_n)|
\\
&\leq \frac{1}{N_s}\sum_{n=1}^{N_s}\frac{1}{H_s^2}\sum_{h=0}^{H_s-1}\sum_{m=1}^{H_s}  \big|f_{\str}(n+h-m)-f_{\str}(n+h+Q_n-m)\big| +\oh_{s\to\infty}(1)
\\
&=\frac{1}{N_s}\sum_{n=1}^{N_s}\frac{1}{H_s}\sum_{h=0}^{H_s-1} \big|f_{\str}(n+h)-f_{\str}(n+h+Q_n)\big| +\oh_{s\to\infty}(1)
\end{align*}
Combined with \eqref{eqn_convolution_at_good_scale_2_1} and \eqref{eqn_convolution_at_good_scale_2_2}, this now gives
\[
\frac{1}{N_s}\sum_{n=1}^{N_s} \frac{1}{H_s} \sum_{h=0}^{H_s-1} |f*_{(\mathbf{N},\mathbf{H})}g(n+h)-f*_{(\mathbf{N},\mathbf{H})}g(n+h+Q_n)|=\oh_{s\to\infty}(1).
\]
Replacing $\epsilon_s$ with a more slowly decaying sequence if necessary, we get
\[
\frac{1}{N_s}\sum_{n=1}^{N_s} \frac{1}{H_s} \sum_{h=0}^{H_s-1} |f*_{(\mathbf{N},\mathbf{H})}g(n+h)-f*_{(\mathbf{N},\mathbf{H})}g(n+h+Q_n)|\leq\epsilon_s.
\]
In other words, for all but $o(N_s)$ many $n\in \{N_{s-1}+1,\ldots,N_s\}$ we have
\[
\frac{1}{H_s}\sum_{h=0}^{H_s-1} |f*_{(\mathbf{N},\mathbf{H})}g(n+h)-f*_{(\mathbf{N},\mathbf{H})}g(n+h+Q_n)|\leq \epsilon_s.
\]
as desired.
\end{proof}

\subsection{Modulated $\delta$-popular sumsets at intermediate scales}

Given $Q\in\N$, $E,F\subset\{0,1,\ldots,Q-1\}$, and $\delta>0$, we define $E +_{\delta,\bmod{Q}} F$ to be the $\delta$-popular sumset (as defined in \eqref{def_popular_sumset}) of $E$ and $F$ viewed as subsets of $\Z/Q\Z$ with addition taken modulo $Q$.\nomenclature{$+_{\delta,\bmod{Q}}$}{$\delta$-popular sumset mod $Q$}
Observe that 
\begin{align*}
E +_{\delta,\bmod{Q}} F = \bigg\{h\in \{0,1,\ldots,Q-1\}: \frac{1}{Q} &\sum_{m=0}^h \1_E(h-m)\1_F(m) 
\\
&~+ \frac{1}{Q}\sum_{m=h+1}^{Q-1} \1_E(h+Q-m)\1_F(m)>\delta\bigg\}.
\end{align*}

\begin{Theorem}\label{thm: local popular sumset}
Let $A \subseteq \N$ such that $d(A) > 0$, and let $\mathbf{N} = (N_s)_{s \in \N}$ and $\mathbf{H}=(H_s)_{s\in\N}$ be sequences in $\N$ with
$1\prec \mathbf{H}\prec \mathbf{N}$ and $\lim_{s\to\infty} N_s/N_{s+1}=0$.
If $(\mathbf{N},\mathbf{H})$ is a $U^2$~good scale for $A$ then there exist sequences $\epsilon_s \to 0$ and $\delta_s \to 0$ such that for all but $o(N_s)$ many $n\in \{N_{s-1}+1,\ldots,N_s\}$, there exists $Q_n\in\N$ with $H_s\leq Q_n\leq (1+\epsilon_s)H_s$ such that $Q_n$ is highly divisible in the sense that
\[
\lim_{n \to \infty} \min\{m \in \N : m \nmid Q_n\} = \infty,
\]
and for every $\tilde{B}_s \subseteq [H_s]$,
    \begin{multline*}
        \left| \Big( \big( (A-n)\cap \{0,1,\ldots,Q_n-1\} \big) +_{\delta_s,\bmod{Q_n}} \tilde{B}_s \Big) \right.
        \\ \left. \setminus \Big( (A+\tilde{B}_s-n)\cap\{0,1,\ldots,Q_n-1\} \Big) \right| \leq \epsilon_s Q_n.
    \end{multline*}
\end{Theorem}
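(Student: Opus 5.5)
The plan is to derive \cref{thm: local popular sumset} from the almost-periodicity of the scale-$(\mathbf{N},\mathbf{H})$ convolution in \cref{cor_convolution_at_good_scale_2}, applied to $f=\1_A$ and $g=\1_{\tilde{B}_s}$, extended by zero outside $[H_s]$. Take $\epsilon_s\to 0$ and $Q_n$ as in that corollary; these choices do not depend on $g$, which is what lets $\tilde{B}_s$ be arbitrary. For $n$ outside the exceptional $o(N_s)$ set we then have, uniformly in $\tilde{B}_s$,
\[
\frac{1}{H_s}\sum_{h=0}^{H_s-1}\big|F(n+h)-F(n+h+Q_n)\big|\leq \epsilon_s,
\]
where $F=\1_A*_{(\mathbf{N},\mathbf{H})}\1_{\tilde{B}_s}$. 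Concretely, $F(x)=\frac{1}{H_s}\sum_{m=1}^{H_s}\1_A(x-m)\1_{\tilde{B}_s}(m)$, and $F(x)>0$ exactly when $x\in A+\tilde{B}_s$. Shifting $n$ by $O(H_s)$ costs only $o(N_s)$ further exceptions, so we may also assume the inequality holds with $n$ replaced by $n+H_s$ and by $n+Q_n$.

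The key step compares the mod-$Q_n$ convolution with $F$. Fix $x\in\{0,\dots,Q_n-1\}$ and write $E=(A-n)\cap\{0,\dots,Q_n-1\}$. The cyclic count $\frac{1}{Q_n}\big|\{(a,b)\in E\times\tilde{B}_s : a+b\equiv x \bmod Q_n\}\big|$ splits into two parts. The terms with $m\leq x$ are the non-wrapped ones, and they are bounded by $\frac{H_s}{Q_n}F(n+x)$. The terms with $m>x$ are the wrapped ones, where $a=x+Q_n-m$; these are bounded by $\frac{H_s}{Q_n}F(n+x+Q_n)$. We have used $\tilde{B}_s\subseteq[H_s]$ and $Q_n\geq H_s$ throughout. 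Hence if $x\in E+_{\delta_s,\bmod Q_n}\tilde{B}_s$ but $n+x\notin A+\tilde{B}_s$, then $F(n+x)=0$ and $F(n+x+Q_n)>\delta_s$. By almost periodicity,
\[
\big|\{x<Q_n : F(n+x)=0,\ F(n+x+Q_n)>\delta_s\}\big|\leq \delta_s^{-1}\sum_{x<Q_n}\big|F(n+x)-F(n+x+Q_n)\big|\leq 3\epsilon_s\delta_s^{-1}H_s.
\]
To get this bound we cover $[0,Q_n)\subseteq[0,2H_s)$ by two windows of length $H_s$ and apply the almost-period estimate at $n$ and at $n+H_s$. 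Here we use that $Q_n\le(1+\epsilon_s)H_s$ is also an almost period at the shifted base point; this step needs care, since the corollary gives $Q_n$ attached to $n$. If the shift is problematic, the fix is to replace $\epsilon_s$ by $\sqrt{\epsilon_s}$ and rerun the averaging argument of \cref{cor_convolution_at_good_scale_2} with windows of length $2H_s$, which \cref{thm_almost-period_of_structured_component} permits because the $\tilde\Gamma$-sets there already use $3H_s$.

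Finally choose $\delta_s=\sqrt{\epsilon_s}$ (or slower), so that $\epsilon_s/\delta_s\to0$, and rename the error term to obtain the stated $\epsilon_s Q_n$ bound. The main obstacle is the bookkeeping of the wrapped terms: the wrap-around contribution must be controlled by $F$ at $n+x+Q_n$ and not at some other point, and the exceptional sets must stay $o(N_s)$ uniformly over all $\tilde{B}_s$. Both are handled by the supremum over $g$ in \cref{cor_convolution_at_good_scale_2}. Everything else is routine.
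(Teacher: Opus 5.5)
Your proposal is correct and is essentially the paper's proof. You apply \cref{cor_convolution_at_good_scale_2} with $g=\1_{\tilde{B}_s}$, note that the unwrapped part of the cyclic count vanishes when $n+x\notin A+\tilde{B}_s$ while the wrapped part is at most $\frac{H_s}{Q_n}F(n+x+Q_n)$, and conclude by Markov's inequality with $\delta_s\approx\sqrt{\epsilon_s}$. The one thing to drop is the two-window covering. The claim that $Q_n$ remains an almost period at the base point $n+H_s$ is not supplied by the corollary, but it is not needed: since $Q_n-H_s\le\epsilon_sH_s$ and $0\le F\le 1$, the terms with $H_s\le x<Q_n$ add at most $\epsilon_sH_s$ to $\sum_{x<Q_n}|F(n+x)-F(n+x+Q_n)|$, which is exactly how the paper passes from the $H_s$-average to the $Q_n$-average.
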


\begin{proof}
Using \cref{cor_convolution_at_good_scale_2}, we can find a sequence $\epsilon_s' \to 0$ and sets $G_s\subset \{N_{s-1}+1,\ldots,N_s\}$ with $|G_s|\geq 1-\oh(N_s)$ such that for every $n\in G_s$ there exists some highly divisible $Q_n\in\N$ with $H_s\leq Q_n\leq (1+\epsilon_s')H_s$ and
\[
\sup_{B \subseteq \N} \frac{1}{H_s}\sum_{h=0}^{H_s-1}|\1_A*_{(\mathbf{N},\mathbf{H})}\1_B(n+h)-\1_A*_{(\mathbf{N},\mathbf{H})}\1_B(n+Q_n+h)|\leq \epsilon_s'.
\]
This implies that
\[
\sup_{B \subseteq \N} \frac{1}{Q_n}\sum_{h=0}^{Q_n-1}|\1_A*_{(\mathbf{N},\mathbf{H})}\1_B(n+h)-\1_A*_{(\mathbf{N},\mathbf{H})}\1_B(n+Q_n+h)|\leq 4\epsilon_s'.
\]

Fix a set $\tilde{B}_s \subseteq  [H_s]$, and let $D_n$ denote the set of all $h\in \{0,1,\ldots,Q_n-1\}$ such that
\[
|\1_A*_{(\mathbf{N},\mathbf{H})}\1_{\tilde{B}_s}(n+h)-\1_A*_{(\mathbf{N},\mathbf{H})}\1_{\tilde{B}_s}(n+Q_n+h)|\geq \sqrt{\epsilon_s'}/4.
\]
By Markov's inequality we have $|D_n|\leq 16 \sqrt{\epsilon_s'} Q_n$.
We claim that for appropriately chosen $\delta_s > 0$ (which we will choose independently of the set $\tilde{B}_s$),
\begin{multline*}
    \Big( \big( (A-n)\cap \{0,1,\ldots,Q_n-1\} \big) +_{\delta_s,\bmod{Q_n}} \tilde{B}_s \Big)
     \\ \setminus \Big( (A+\tilde{B}_s-n)\cap\{0,1,\ldots,Q_n-1\} \Big)\subseteq D_n.
\end{multline*}
Indeed, if $h\in \big( (A-n)\cap \{0,1,\ldots,Q_n-1\} \big) +_{\delta_s,\bmod{Q_n}} \tilde{B}_s$ then
\begin{multline*}
    \frac{1}{Q_n}\sum_{m=0}^h \1_{(A-n)\cap\{0,1,\ldots,Q_n-1\}}(h-m)\1_{\tilde{B}_s}(m) \\
    + \frac{1}{Q_n}\sum_{m=h+1}^{Q_n-1} \1_{(A-n)\cap\{0,1,\ldots,Q_n-1\}}(h+Q_n-m)\1_{\tilde{B}_s}(m)>\delta_s.
\end{multline*}
We can replace $(A-n)\cap\{0,1,\ldots,Q_n-1\}$ with $(A-n)$, since the ranges of the sums guarantee that we are in $\{0,1,\ldots,Q_n-1\}$.
Therefore,
\[
\frac{1}{Q_n}\sum_{m=0}^h \1_{(A-n)}(h-m)\1_{\tilde{B}_s}(m) + \frac{1}{Q_n}\sum_{m=h+1}^{Q_n-1} \1_{(A-n)}(h+Q_n-m)\1_{\tilde{B}_s}(m)>\delta_s.
\]
If additionally, $h\notin (A+\tilde{B}_s-n)\cap \{0,1,\ldots,Q_n-1\}$ then we have
\[
\frac{1}{Q_n}\sum_{m=0}^h \1_{(A-n)}(h-m)\1_{\tilde{B}_s}(m)=0
\]
and hence
\[
\frac{1}{Q_n}\sum_{m=h+1}^{Q_n-1} \1_{(A-n)}(h+Q_n-m)\1_{\tilde{B}_s}(m)>\delta_s,
\]
which implies
\[
\frac{1}{Q_n}\sum_{m=1}^{Q_n} \1_{(A-n)}(h+Q_n-m)\1_{\tilde{B}_s}(m)>\delta_s.
\]
Using the assumption $H_s\leq Q_n\leq (1+\epsilon_s')H_s$ once more gives
\[
\frac{1}{H_s}\sum_{m=1}^{H_s} \1_{(A-n)}(h+Q_n-m)\1_{\tilde{B}_s}(m)>\delta_s-3\epsilon_s'.
\]
Using the definition of the convolution at scale $(\mathbf{N},\mathbf{H})$, we can rewrite the last inequality as
\[
\1_A*_{(\mathbf{N},\mathbf{H})}\1_{\tilde{B}_s}(n+Q_n+h)>\delta_s-3\epsilon_s'.
\]
Since $h\notin (A+\tilde{B}_s-n)\cap \{0,1,\ldots,Q_n-1\}$ implies $h\notin (A+\tilde{B}_s-n)\cap \{0,1,\ldots,H_s-1\}$ and therefore $\1_A*_{(\mathbf{N},\mathbf{H})}\1_{\tilde{B}_s}(n+h)=0$, we conclude that
\[
|\1_A*_{(\mathbf{N},\mathbf{H})}\1_{\tilde{B}_s}(n+h)-\1_A*_{(\mathbf{N},\mathbf{H})}\1_{\tilde{B}_s}(n+Q_n+h)|>\delta_s-3\epsilon_s'.
\]
Taking $\delta_s=\sqrt{\epsilon_s'}/4+3\epsilon_s'$, we get that 
$h\in D_n$ as claimed. Finally, we pick $\epsilon_s=16\sqrt{\epsilon_s'}$ then $|D_n|\leq \epsilon_s Q_n$, completing the proof.
\end{proof}

\subsection{Modulated sumsets at intermediate scales}

We are now in position to prove \cref{thm: local sumset}.
The proof consists primarily of taking \cref{thm: local popular sumset} and then applying \cref{thm: delta-sum almost contains a sum} to eliminate the usage of $\delta_s$ appearing in \cref{thm: local popular sumset}.

\begin{proof}[Proof of \cref{thm: local sumset}]
Suppose $A \subseteq \N$ with $d(A) > 0$, and let $\mathbf{N} = (N_s)_{s \in \N}$ and $\mathbf{H}=(H_s)_{s\in\N}$ be sequences with $1\prec \mathbf{H}\prec \mathbf{N}$ and $\lim_{s\to\infty} N_s/N_{s+1}=0$.
Assume also that $(\mathbf{N},\mathbf{H})$ is a $U^2$~good scale for $A$.
In light of \cref{thm: local popular sumset}, there exist sequences $\epsilon_{1,s} \to 0$ and $\delta_s \to 0$ such that for all but $o(N_s)$ many $n\in \{N_{s-1}+1,\ldots,N_s\}$, there exists highly divisible $Q_n\in\N$ with $H_s\leq Q_n\leq (1+\epsilon_{1,s})H_s$ such that for every $\tilde{B}_s \subseteq [H_s]$,
\begin{multline}
\label{eqn_proof_local_sumset_1} 
    \left| \Big( \big( (A-n)\cap \{0,1,\ldots,Q_n-1\} \big) +_{\delta_s,\bmod{Q_n}} \tilde{B}_s \Big) \right.
     \\ \left. \setminus \Big( (A+\tilde{B}_s-n)\cap\{0,1,\ldots,Q_n-1\} \Big) \right| \leq \epsilon_{1,s} Q_n.
\end{multline}

Now choose a sequence $\epsilon_{2,s} \to 0$ such that $\delta'_s = \delta(\epsilon_{2,s})$ as given by \cref{thm: delta-sum almost contains a sum} satisfies $\delta'_s \geq \delta_s$.
For each such $n$, we then apply \cref{thm: delta-sum almost contains a sum} to the finite group $\Z/Q_n\Z$ in order to find sets $A_n \subset (A-n)\cap \{0,1,\ldots,Q_n-1\}$ and $B_n \subset \tilde{B}_s$ such that:
\[
|(A-n) \cap \{0,1,\ldots,Q_n-1\} \setminus A_n| \leq \epsilon_{2,s} Q_n,
\qquad
|\tilde{B}_s \setminus B_n| \leq \epsilon_{2,s} Q_n,
\]
and
\[
\Big|(A_n +_{\bmod{Q_n}} B_n) \setminus \Big( \big( (A-n)\cap \{0,1,\ldots,Q_n-1\} \big) +_{\delta_s,\bmod{Q_n}} \tilde{B}_s \Big) \Big|
\leq \epsilon_{2,s} Q_n.
\]
Combining the last part with \eqref{eqn_proof_local_sumset_1}, we get 
\begin{equation*}
\left| \big( A_n +_{\bmod{Q_n}} B_n \big) \setminus \big( (A+\tilde{B}_s-n)\cap\{0,1,\ldots,Q_n-1\} \big) \right| \leq (\epsilon_{1,s} + \epsilon_{2,s}) Q_n.
\end{equation*}
Putting $\epsilon_s = 2(\epsilon_{1,s} + \epsilon_{2,s}) \to 0$, we have
\begin{equation}\label{eqn_proof_local_sumset_2}
    \left| \big( A_n +_{\bmod{Q_n}} B_n \big) \setminus \big( (A+\tilde{B}_s-n)\cap\{0,1,\ldots,Q_n-1\} \big) \right| \leq \frac{\epsilon_s}{2} Q_n.
\end{equation}

Finally, if $\tilde{B}_s = B \cap [H_s]$, we can replace $B_n$ with $B_n\cup (B\cap [J_s])$ for some non-decreasing slow-growing sequence $J_s\to\infty$.
If the sequence $(J_s)_{s\in\mathbb{N}}$ grows sufficiently slowly, then the error introduced by this modification can be absorbed by replacing $\tfrac{\epsilon_s}{2}$ with $\epsilon_s$ in \eqref{eqn_proof_local_sumset_2}.
Consequently, if $B$ meets every residue class in $\N$, then, since the sets $B_n$ contain increasingly large initial segments of $B$, we conclude that there exists a slowly growing sequence $D_s \to \infty$ such that each $B_n$ can be chosen to have non-empty intersection with every residue class modulo $m$ for all $m \leq D_s$.
\end{proof}

%SECTION
\section{A local inverse theorem for sumsets of sets of positive density} \label{sec: local inverse theorem}

The aim of this section is to prove a ``local'' version of \cref{thm: main}, extracting a local structural description of sets $A$ and $B$ for which $\underline{d}(A+B) = d(A) + d(B)$.
The strategy is to use \cref{thm: local sumset} to relate the local properties of the sumset $A+B$ to a sumset in a cyclic group and then apply the inverse theorem for sumsets in cyclic groups expressed in \cref{thm: cyclic sum without obstructions}.

\begin{Theorem} \label{thm: local inverse theorem}
    Let $A, B \subseteq \N$ with $d(A) = \alpha > 0$ and $d(B) = \beta > 0$ such that $\alpha + \beta < 1$.
    Suppose $B$ meets every residue class in $\N$.
    Let $\mathbf{N} = (N_s)_{s \in \N}$ be an increasing sequence with $\lim_{s \to \infty} N_s/N_{s+1} = 0$ such that $d_{\mathbf{N}}(A+B) = \alpha + \beta$.
    Let $\mathbf{H}_G = (H_{G,s})_{s \in \N}$ with $1 \prec \mathbf{H}_G \prec \mathbf{N}$ be a Gowers scale for $A$ as guaranteed by \cref{thm: existence of U1 G and HK scales}\ref{itm: existence of U1 G scales}.
    Let $\mathbf{H} = (H_s)_{s \in \N}$ be a $U^2$~good scale for $A$ with $\mathbf{H}_G \prec \mathbf{H} \prec \mathbf{N}$.
    Then, after passing to a subsequence of $(\mathbf{N},\mathbf{H})$, there exists $H = h\Z$ for some $h \in \N$ and a sequences $k_s, C_s \to \infty$ such that for all $s \in \N$ and all but $\oh(N_s)$ many $n \in \{N_{s-1}+1,\ldots,N_s\}$, there exists a decomposition
    \[
        (A - n) \cap \{0,1,\ldots,H_s-1\} = (A_n \cup E_n) - a_n
    \]
    and
    \[
        B \cap \{0,1,\ldots,H_s-1\} = (B_{s,0} \cup B_{s,1} \cup F_s) - b_s
    \]
    such that $A_n, B_{s,0} \subseteq H$, $a_n, b_s \in \{0,1,\ldots,h-1\}$, $B_{s,1} \subseteq \N \setminus H$ with $|B_{s,1}| = \left( 1 - \frac{1}{h} - \oh(1) \right)H_s$, $|E_n| = \oh(H_s)$,  $|F_s| = \oh(H_s)$, and one of the following two conditions is satisfied:
    \begin{enumerate}[label=(\arabic*), leftmargin=*]
        \item\label{itm local inverse theorem 1}
            There exists $\theta_s \in \T$ with $\min_{q \in [k_s]} \|q\theta_s\|_{\T} \geq \frac{C_s}{H_s}$ and closed intervals $I_n, J_s \subseteq \T$ such that if $\phi_s : H \to \T$ is the map $\phi_s(m) = m\theta_s$ for $m \in H$, then
            \[
                A_n \subseteq \phi_s^{-1}(I_n), \quad B_{s,0} \subseteq \phi_s^{-1}(J_s),
            \]
            and
            \[
                |\phi_s^{-1}(I_n) \cap \{0,1,\ldots,H_s-1\} \setminus A_n|, |\phi_s^{-1}(J_s) \cap \{0,1,\ldots,H_s-1\} \setminus B_{s,0}| = \oh(H_s).
            \]
        \item\label{itm local inverse theorem 2}
            $|B_{s,0}| = \oh(H_s)$ and $|A_n + B_{s,0}| = |A_n| + \oh(H_s)$.
    \end{enumerate}
\end{Theorem}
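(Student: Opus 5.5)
The plan is to combine \cref{thm: local sumset}, applied with $\tilde{B}_s = B\cap[H_s]$, with the cyclic inverse theorem \cref{thm: cyclic sum without obstructions}, applied to $A_n,B_n\subseteq \Z/Q_n\Z$ for most $n$.

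\textbf{Step 1: reduce to a cyclic problem for most $n$.} By \cref{thm: local sumset} there are $\epsilon_s\to0$, $D_s\to\infty$ and, for all but $\oh(N_s)$ many $n\in\{N_{s-1}+1,\ldots,N_s\}$, sets $A_n$, $B_n$ with the following properties:
\begin{itemize}
\item they approximate $(A-n)\cap\{0,\ldots,Q_n-1\}$ and $B\cap[H_s]$ up to $\epsilon_sQ_n$;
\item $B_n$ meets every residue class mod $m\le D_s$;
\item $A_n+_{\bmod Q_n}B_n$ is essentially contained in $(A+B-n)\cap\{0,\ldots,Q_n-1\}$.
\end{itemize}
Since $\mathbf H\succ\mathbf H_G$, we have $|A_n|=(\alpha+\oh(1))Q_n$ for most $n$. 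Because $B\cap[H_s]$ has density $\beta+\oh(1)$, also $|B_n|=(\beta+\oh(1))Q_n$. Averaging over $n$ gives
\[\frac1{N_s}\sum_n \frac{|A_n+_{\bmod Q_n}B_n|}{Q_n}\le \alpha+\beta+\oh(1),\]
as in the outline of \cref{sec: proof outline}. By Kneser (\cref{thm: cyclic group Kneser}), each summand is at least $\alpha+\beta-\oh(1)$. Hence by Markov, for all but $\oh(N_s)$ many $n$ we have $|A_n+_{\bmod Q_n}B_n|\le(\alpha+\beta+\oh(1))Q_n$.

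\textbf{Step 2: apply the cyclic inverse theorem and stabilize parameters.} Fix parameters $(\alpha',\beta',\epsilon,\eta,k)$ with $\alpha'<\alpha$, $\beta'<\beta$, and $\epsilon<1-\alpha-\beta$. Apply \cref{thm: cyclic sum without obstructions} to obtain $\delta$ and $D$. For $s$ large we have $D_s\ge D$ and the sumset has size below $(\alpha+\beta+\delta)Q_n$. This rules out \ref{itm cyclic sum without obstructions i} and \ref{itm cyclic sum without obstructions ii}, so case \ref{itm cyclic sum without obstructions iii} or \ref{itm cyclic sum without obstructions iv} holds with a subgroup $hQ_n/h\cdot$ of index $h\le D$.

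Since $Q_n$ is highly divisible, the subgroups of index $h$ correspond to $h\Z$. Only finitely many $h$ occur, so by pigeonhole, and after passing to a subsequence, one value of $h$ and one case occur for a positive proportion of $n$. The index $h$ is in fact forced for all typical $n$: the set $B_{s,1}$ must cover all nonzero residues mod $h$, so $\beta=1-\frac1h+|B_0|/Q_n$. Moreover $b_s$ is determined by $B\cap[H_s]$, which does not depend on $n$. This lets me take $H=h\Z$, $b_s$ and $B_{s,0},B_{s,1}$ independent of $n$.

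Then I let $\epsilon,\eta\to0$ and $k\to\infty$ slowly along $s$ by diagonalization, producing $k_s,C_s\to\infty$. Lifting from $\Z/Q_n\Z$ back to $\{0,\ldots,H_s-1\}$ costs only $(Q_n-H_s)=\oh(H_s)$ elements, which go into $E_n,F_s$.

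\textbf{Step 3: translate the two cases.}
\begin{itemize}
\item In case \ref{itm cyclic sum without obstructions iv}, $|B_0|<\eta|H|$. I then need $|A_n+B_{s,0}|=|A_n|+\oh(H_s)$. This follows because
\[|A_n+B_n|\ge |A_n+B_{s,0}|+|B_{s,1}\text{-part}|\approx |A_n+B_{s,0}|+(1-\tfrac1h)Q_n,\]
while the total is at most $(\alpha+\beta+\oh(1))Q_n = |A_n|+(1-\frac1h)Q_n+\oh(Q_n)$.
\item In case \ref{itm cyclic sum without obstructions iii}, a surjection $\phi\colon h\Z/Q_n\Z\to\Z/N\Z$ with $N>k$ has the form $m\mapsto m\theta$ with $\theta=a/N$ and $N\mid Q_n/h$. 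Intervals pull back to Bohr sets $\phi_s^{-1}(I)$, and I set $\theta_s=\theta$.
\end{itemize}
The frequency must be the same for $A_n$ and for $B_{s,0}$. Since $B_{s,0}$ does not depend on $n$, a Bohr-interval structure of positive density in it determines $\theta_s$ up to $\oh(1)$ ambiguity (via \cref{lem: discrepancy for irrational Bohr set}-type uniqueness). So I can fix $\theta_s$ per $s$ and let only $I_n$ vary.

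\textbf{Main obstacle.} The hardest part is the Diophantine condition $\min_{q\le k_s}\|q\theta_s\|_\T\ge C_s/H_s$. Knowing only $N>k$ does not exclude $\theta=a/N$ with $N$ large but $a/N$ near a rational of small denominator. My first attempt would be a case split:
\begin{itemize}
\item If $\|q\theta_s\|$ is tiny for some small $q$, then by \cref{cor: local periodicity for nearly rational frequency} and \cref{lem: rational frequency and interval length} the Bohr set is locally a union of residue classes mod $q$.
\item This would put $A_n$ inside a coset of a proper subgroup $qh\Z$, contradicting the maximality of the index $h$. Alternatively, I would absorb $q$ into $h$ by re-running Step 2 with a larger index. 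This terminates since $h\le D<\frac1{1-\beta}$.
\end{itemize}
Making this absorption uniform across $n$, and compatible with the diagonal choice of $k_s,C_s$, is where I expect the real work to lie.
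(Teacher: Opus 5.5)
Your Steps 1--3 follow the paper's proof: \cref{thm: local sumset}, then \cref{thm: cyclic sum without obstructions}, ruling out (i)/(ii) by averaging against $d_{\mathbf N}(A+B)=\alpha+\beta$ and Kneser, pigeonholing $h$, and diagonalizing $\epsilon,\eta,k$. The step you flag as the main obstacle, however, is a genuine gap, and your proposed remedy addresses a regime that never occurs while failing in the one that does.

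In case (iii) the frequency is $\theta_n=t_n/(h_nN_n)$ with $\gcd(t_n,N_n)=1$, $N_n>k$ and $h_nN_n\mid Q_n$. For $q\le k$ one has $N_n\nmid qt_n$, so $q\theta_n\notin\Z$. Hence, for free, $\|q\theta_n\|_{\T}\ge 1/(h_nN_n)\ge 1/Q_n\ge 1/(2H_s)$. So $\theta_n$ is never within $o(1/H_s)$ of a rational with denominator $\le k$, and that is what you would need for the Bohr set to be, up to $o(H_s)$ elements, a union of residue classes mod $q$ on the whole window.

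What actually remains is the intermediate range $1/(2H_s)\le\|q\theta_n\|_{\T}\le C/H_s$. There \cref{cor: local periodicity for nearly rational frequency} only gives periodicity on subwindows of length $\ll|\epsilon|^{-1}\asymp H_s$, and the occupied classes drift across the window. Concretely, take $h=1$, $4\mid Q_n$ and $\theta_n=\frac12+\frac1{Q_n}$; this is a legitimate output of (iii), with $N=Q_n$. A Bohr interval of density $\alpha<\frac12$ for this frequency consists of the even numbers in one subinterval of $\{0,\ldots,Q_n-1\}$ and the odd numbers in a disjoint subinterval. It lies in no coset of $2\Z$, so nothing can be absorbed into $h$. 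Nor does the cyclic theorem give any maximality of $h$ to contradict.

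The missing idea is to use the hypothesis that $(\mathbf N,\mathbf H)$ is a $U^2$ good scale for $A$ beyond its role in \cref{thm: local sumset}. The sets $W_{s,C}$ in \cref{def: U2 good scale} are designed for exactly this. For typical $n$, every small integer combination of the frequencies of the structured part of $\1_A$ near $n$ has $\|\cdot\|_{\T}\le 1/(CH_s)$ or $\ge C/H_s$. Since $A_n$ is essentially a Bohr interval with frequency $\theta_n$, this dichotomy passes to $q\theta_n$: a Bohr interval whose phase rotates by a bounded nonzero amount across the window cannot be captured by boundedly many frequencies avoiding $(1/(CH_s),C/H_s)$. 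Combined with the free bound $\|q\theta_n\|_{\T}\ge 1/(2H_s)$, this gives $\|q\theta_n\|_{\T}\ge C/H_s$ for large $s$, and diagonalizing in $C$ gives $C_s\to\infty$. This is exactly how the paper upgrades the lower bound.

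The same input is needed for your claim that $h$ is forced. The relation $\beta=1-\frac1h+|B_0|/Q_n$ only yields $h<\frac1{1-\beta}$ when $|B_0|$ has positive proportion, so in case (iii) it does not pin down $h$. Incompatibility of different values of $h$ comes from $B_{s,0}$ being equidistributed among residue classes to small moduli, and that again rests on the Diophantine bound.
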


\begin{proof}
    We combine \cref{thm: local sumset} and \cref{thm: cyclic sum without obstructions}.

    By \cref{thm: local sumset}, there exist sequences $\epsilon_s \to 0$ and $D_s \to \infty$ such that for all but $\oh(N_s)$ many $n \in \{N_{s-1}+1,\ldots,N_s\}$, we can find $Q_n \in \N$ with $H_s \leq Q_n \leq (1 + \epsilon_s)H_s$, $A_n \subseteq (A-n) \cap \{0,1,\ldots,Q_n-1\}$ and $B_n \subseteq B \cap \{0,1,\ldots,Q_n-1\}$ such that
    \begin{enumerate}[label=(\Roman*),leftmargin=*]
        \item\label{itm local inverse theorem I}
            $|(A-n) \cap \{0,1,\ldots,Q_n-1\} \setminus A_n| \leq \epsilon_s Q_n$,
        \item\label{itm local inverse theorem II}
            $|B \cap \{0,1,\ldots,Q_n-1\} \setminus B_n| \leq \epsilon_s Q_n$,
        \item\label{itm local inverse theorem III}
            $|(A_n +_{\bmod{Q_n}} B_n) \setminus ((A+B-n)\cap \{0,1,\ldots,Q_n-1\})| \leq \epsilon_s Q_n$, and
        \item\label{itm local inverse theorem IV}
            $B_n$ intersects every residue class mod $m \leq D_s$.
    \end{enumerate}
    Since $\mathbf{H} \succ \mathbf{H}_G$, we also have that for all but $\oh(N_s)$ many $n \in \{N_{s-1}+1,\ldots,N_s\}$,
    \begin{align*}
        |(A-n) \cap \{0,1,\ldots,Q_n-1\}| & = (\alpha+\oh_{s \to \infty}(1))Q_n
        \intertext{and}
        |B \cap \{0,1,\ldots,Q_n-1\}| & = (\beta+\oh_{s\to\infty}(1))Q_n,
    \end{align*}
    so $|A_n| = (\alpha+\oh_{s\to\infty}(1))Q_n$ and $|B_n|=(\beta+\oh_{s\to\infty}(1))Q_n$.

    Let $\epsilon \in (0, 1-\alpha-\beta)$ and $k \in \N$ be given.
    Put $\epsilon' = \eta = \frac{\epsilon}{2}$, and let $\delta = \delta(\alpha, \beta, \epsilon',\eta,k) > 0$ and $D = D(\alpha,\beta,\epsilon',\eta,k) \in \N$ be given by \cref{thm: cyclic sum without obstructions} for each $k \in \N$.
    Note that since $D_s \to \infty$, we have $D_s \geq D$ for all large $s$, so \cref{thm: cyclic sum without obstructions} applies to the pair $(A_n,B_n)$.
    We consider several cases.

    \bigskip

    We first rule out the possibility that conditions \ref{itm cyclic sum without obstructions i} or \ref{itm cyclic sum without obstructions ii} from \cref{thm: cyclic sum without obstructions} hold for a positive proportion of $n \in \{N_{s-1}+1,\ldots,N_s\}$.
    Suppose for contradiction that there exists $c > 0$ such that conclusion \ref{itm cyclic sum without obstructions i} or \ref{itm cyclic sum without obstructions ii} from \cref{thm: cyclic sum without obstructions} holds for infinitely many $s \in \N$ and at least $cN_s$ many $n \in \{N_{s-1}+1,\ldots,N_s\}$.
    We pass to a subsequence to upgrade from ``infinitely many'' $s \in \N$ to ``every'' $s \in \N$.
    Then for every $s \in \N$, there is a set $L_s \subseteq \{N_{s-1}+1,\ldots,N_s\}$ with $|L_s| \geq c N_s$ with the property that if $n \in L_s$, then
    \begin{equation*}
        |A_n +_{\bmod{Q_n}} B_n| \geq \min \left\{ (1-\epsilon)Q_n, (\alpha + \beta + \delta)Q_n \right\} = (\alpha + \beta + \lambda)Q_n,
    \end{equation*}
    where $\lambda = \min\{\delta, 1 - \epsilon - \alpha - \beta\} > 0$.
    Note that by \cref{thm: cyclic group Kneser} and \ref{itm local inverse theorem IV}, we also have
    \begin{equation*}
        |A_n +_{\bmod{Q_n}} B_n| \geq |A_n| + |B_n| - D_s^{-1} Q_n
    \end{equation*}
    for all but $\oh(N_s)$ many $n \in \{N_{s-1}+1,\ldots,N_s\}$.
    Therefore, applying \ref{itm local inverse theorem III}, we have
    \begin{align*}
        d_{\mathbf{N}}(A+B) 
        & = \lim_{s \to \infty} \frac{1}{N_s} \sum_{n=1}^{N_s} \frac{|A_n +_{\bmod{Q_n}} B_n|}{Q_n} \\
        & = \lim_{s \to \infty} \frac{1}{N_s} \left( \sum_{n \in L_s} \frac{|A_n +_{\bmod{Q_n}} B_n|}{Q_n}
        + \sum_{n \notin L_s} \frac{|A_n +_{\bmod{Q_n}} B_n|}{Q_n} \right) \\
        & \geq \lim_{s \to \infty} \frac{1}{N_s} \left( |L_s|(\alpha+\beta+\lambda) + (N_s-|L_s|)(\alpha+\beta-D_s^{-1}) \right) \\
        & \geq c (\alpha+\beta+\lambda) + (1-c)(\alpha+\beta) \\
        & = \alpha + \beta + c\lambda > \alpha + \beta.
    \end{align*}
    This is a contradiction, so the set of $n$ for which conclusion \ref{itm cyclic sum without obstructions i} or \ref{itm cyclic sum without obstructions ii} holds has size $\oh(N_s)$.
    In other words, for all but $\oh(N_s)$ many $n \in \{N_{s-1}+1,\ldots,N_s\}$, either conclusion \ref{itm cyclic sum without obstructions iii} or conclusion \ref{itm cyclic sum without obstructions iv} holds.

    \bigskip

    We now analyze the consequences of \ref{itm cyclic sum without obstructions iii} and \ref{itm cyclic sum without obstructions iv}.
    Suppose $n \in \{N_{s-1}+1,\ldots,N_s\}$ and \ref{itm cyclic sum without obstructions iii} holds.
    That is, there exists $h_n \leq D$ with $h_n \mid Q_n$, $a_n, b_n \in \{0,1,\ldots,h_n-1\}$, and a decomposition $A_n = A'_n - a_n$ and $B_n = (B'_{n,0} \cup B'_{n,1}) - b_n$ such that
    \begin{enumerate}[label=(iii.\alph*), leftmargin=*]
        \item\label{itm local inverse theorem iii.a} $A'_n, B'_{n,0} \subseteq h_n\Z \cap \{0,1,\ldots,Q_n-1\}$,
        \item\label{itm local inverse theorem iii.b} $B'_{n,1} \subseteq \{0,1,\ldots,Q_n-1\} \setminus h_n\N$ and $|B'_{n,1}| > \left( 1 - \frac{1}{h_n} - \frac{\epsilon}{2h_n} \right) Q_n$, and
        \item\label{itm local inverse theorem iii.c} there exists $t_n, N_n \in \N$ with $N_n > k$, $\gcd(t_n,N_n) = 1$, and $N_n \mid \frac{Q_n}{h_n}$, and there exist intervals $I_n, J_n \subseteq \Z/N_n\Z$ such that if $\phi_n : \left\{ 0,h_n,2h_n,\ldots,Q_n - h_n \right\} \to \Z/N_n\Z$ is given by $\phi_n(h_nm) = mt_n$, then
        \begin{equation*}
            A'_n \subseteq \phi_n^{-1}(I_n), \quad B'_{n,0} \subseteq \phi_n^{-1}(J_n), \quad \text{and} \quad |\phi_n^{-1}(I_n) \setminus A'_n|, |\phi_n^{-1}(J_n) \setminus B'_{n,0}| < \frac{\epsilon}{2} Q_n.
        \end{equation*}
    \end{enumerate}
    Embedding $\Z/N_n\Z$ as the subgroup $\{0,\frac{1}{N_n},\ldots,\frac{N_n-1}{N_n}\} \subseteq \T$ and letting $\theta_n = \frac{t_n}{h_nN_n} \in \T$, we can find intervals $\tilde{I}_n, \tilde{J}_n \subseteq \T$ such that if $\tilde{\phi}_n : \{0,h_n,2h_n,\ldots,Q_n-h_n\} \to \T$ is given by $\tilde{\phi}_n(m) = m\theta_n$, then
    \begin{equation*}
            A'_n \subseteq \tilde{\phi}_n^{-1}(\tilde{I}_n), \quad B'_{n,0} \subseteq \tilde{\phi}_n^{-1}(\tilde{J}_n), \quad \text{and} \quad \left| \tilde{\phi}_n^{-1}(\tilde{I}_n) \setminus A'_n \right|, \left| \tilde{\phi}_n^{-1}(\tilde{J}_n) \setminus B'_{n,0} \right| < \frac{\epsilon}{2} Q_n.
    \end{equation*}
    Since $k < N_n \leq Q_n$, we also have $\|q\theta\|_{\T} \geq \frac{1}{N_n} \geq \frac{1}{Q_n}$ for $q \in [k]$.
    
    Note that by \ref{itm local inverse theorem iii.b},
    \begin{equation*}
        |B_n| \geq |B'_{n,1}| \geq Q_n \left( 1 - \frac{1}{h_n} - \frac{\epsilon}{2h_n} \right),
    \end{equation*}
    so
    \begin{equation} \label{eq: h bounded}
        h_n \leq \frac{1 + \frac{\epsilon}{2}}{1 - \beta - \oh_{s\to\infty}(1)} \leq \frac{2}{1-\beta}
    \end{equation}
    for all large enough $s$.
    
    Combining \ref{itm local inverse theorem iii.c} with \ref{itm local inverse theorem I} and applying the triangle inequality, we have
    \begin{multline*}
        \left| \left( (A-n) \cap \{0,1,\ldots,Q_n-1\} \right) \triangle \left( \tilde{\phi}_n^{-1}(\tilde{I}_n) - a_n \right) \right| \\
        \leq \left| \left( (A-n) \cap \{0,1,\ldots,Q_n-1\} \right) \triangle A_n \right| + \left| A'_n \triangle \tilde{\phi}_n^{-1}(\tilde{I}_n)\right|
        \leq \left( \epsilon_s + \frac{\epsilon}{2} \right) Q_n.
    \end{multline*}
    
    Let $E_n = \left( (A-n+a_n) \cap \{0,1,\ldots,H_s-1\} \right) \triangle \left( \tilde{\phi}_n^{-1}(\tilde{I}_n) \cap \{0,1,\ldots,H_s-1\}\right)$ and $F_n = B \cap \{0,1,\ldots,H_s-1\} \setminus B_n$.
    Put $\tilde{A}_n = A'_n \cap \{0,1,\ldots,H_s-1\}$ and $\tilde{B}_{n,i} = B'_{n,i} \cap \{0,1,\ldots,H_s-1\}$ for $i \in \{0,1\}$.
    This provides a decomposition
    \begin{equation*}
        (A - n) \cap \{0,1,\ldots,H_s-1\} = (\tilde{A}_n \cup E_n) - a_n
    \end{equation*}
    and
    \begin{equation*}
        B \cap \{0,1,\ldots,H_s-1\} = (\tilde{B}_{n,0} \cup \tilde{B}_{n,1} \cup F_n) - b_n
    \end{equation*}
    with the property that (as long as $s \in \N$ is sufficiently large)
    \begin{enumerate}[label=(\alph*), leftmargin=*]
        \item\label{itm local decomposition 1.a} $|\tilde{B}_{n,1}| \geq Q_n \left( 1 - \frac{1}{h_n} - \frac{\epsilon}{2h_n} \right) - (Q_n-H_s) > H_s \left( 1 - \frac{1}{h_n} - \epsilon \right)$,
        \item\label{itm local decomposition 1.b} $|E_n| \leq \left( \epsilon_s + \frac{\epsilon}{2} \right) Q_n < \epsilon H_s$,
        \item\label{itm local decomposition 1.c} $|F_n| \leq \epsilon_s Q_n < \epsilon H_s$, and
        \item\label{itm local decomposition 1.d} there exists $\theta_n \in \T$ with $\min_{q \in [k]} \|q\theta_n\|_{\T} \geq \frac{1}{Q_n} \geq \frac{1}{2H_s}$ and intervals $\tilde{I}_n, \tilde{J}_n \subseteq \T$ such that if $\tilde{\phi}_n : h_n\Z \to \T$ is given by $\tilde{\phi}_n(m) = m\theta_n$, then
        \begin{equation*}
            \tilde{A}_n \subseteq \tilde{\phi}_n^{-1}(\tilde{I}_n),
            \quad \text{and} \quad
            \tilde{B}_{n,0} \subseteq \tilde{\phi}_n^{-1}(\tilde{J}_n),
        \end{equation*}
        \begin{align*}
            \left|\tilde{\phi}_n^{-1}(\tilde{I}_n) \cap \{0,1,\ldots,H_s-1\} \setminus \tilde{A}_n \right| & \leq \frac{\epsilon}{2} Q_n < \epsilon H_s \\
        \intertext{and}
            \left|\tilde{\phi}_n^{-1}(\tilde{J}_n) \cap \{0,1,\ldots,H_s-1\} \setminus \tilde{B}_{n,0} \right| & \leq \frac{\epsilon}{2} Q_n < \epsilon H_s.
        \end{align*}
    \end{enumerate}
    Let us observe that since $(\mathbf{N},\mathbf{H})$ is a $U^2$ good scale for $A$ and the frequency $\theta_n$ satisfies $\min_{q \in [k]} \|q\theta_n\|_{\T} \geq \frac{1}{2H_s}$, the dichotomy provided by condition \ref{itm U^2 structure theorem ii} in \cref{def: U2 good scale} (see the definition of the set $W_{s,C}$) implies that for a given $C \geq 1$, we have $\min_{q \in [k]} \|q\theta_n\|_{\T} \geq \frac{C}{H_s}$ if $s$ is sufficiently large in terms of $C$ and $k$.
    Hence, we in fact have
    \begin{enumerate}[label=(\alph*'), leftmargin=*]
        \setcounter{enumi}{3}
        \item\label{itm local decomposition 1.d'} $\min_{q \in [k]} \|q\theta_n\|_{\T} \geq \frac{C_s}{H_s}$ for some sequence $C_s \to \infty$.
    \end{enumerate}
    When $(A - n) \cap \{0,1,\ldots,H_s-1\}$ and $B \cap \{0,1,\ldots,H_s-1\}$ admit decompositions satisfying \ref{itm local decomposition 1.a}--\ref{itm local decomposition 1.d} and \ref{itm local decomposition 1.d'}, we will say that $(A,B)$ satisfies property $(1_{h_n,\epsilon,k})$ at $n$.

    \bigskip

    Now suppose $n \in \{N_{s-1}+1,\ldots,N_s\}$ and \ref{itm cyclic sum without obstructions iv} holds.
    Then there exists $h_n \leq D$ with $h_n \mid D$, elements $a_n, b_n \in \{0,1,\ldots,h-1\}$, and a decomposition $A_n = A'_n - a_n$ and $B_n = (B'_{n,0} \cup B'_{n,1}) - b_n$ such that
    
    \begin{enumerate}[label=(iv.\alph*), leftmargin=*]
        \item\label{itm local inverse theorem iv.a}
            $A'_n, B'_{n,0} \subseteq h_n\Z \cap \{0,1,\ldots,Q_n-1\}$,
        \item\label{itm local inverse theorem iv.b}
            $B'_{n,1} \subseteq \{0,1,\ldots,Q_n-1\} \setminus h\N$ and $|B'_{n,1}| > \left( 1 - \frac{1}{h_n} - \frac{\epsilon}{2h_n} \right) Q_n$, and
        \item\label{itm local inverse theorem iv.c}
            $|B'_{n,0}| < \frac{\epsilon}{2h_n} Q_n$.
    \end{enumerate}
    From condition \ref{itm local inverse theorem iv.b}, we conclude $h_n \leq \frac{2}{1-\beta}$ as in the previous case.
    Also as above, we may define $\tilde{A}_n = A'_n \cap \{0,1,\ldots,H_s-1\}$, $\tilde{B}_{n,i} = B'_{n,i} \cap \{0,1,\ldots,H_s-1\}$ for $i \in \{0,1\}$, $E_n = (A-n+a_n) \cap \{0,1,\ldots,H_s-1\} \setminus \tilde{A}_n$, and $F_n = B \cap \{0,1,\ldots,H_s-1\} \setminus B_n$.
    Then
    \begin{equation*}
        (A-n) \cap \{0,1,\ldots,H_s-1\} = (\tilde{A}_n \cup E_n) - a_n
    \end{equation*}
    and
    \begin{equation*}
        B \cap \{0,1,\ldots,H_s-1\} = (\tilde{B}_{n,0} \cup \tilde{B}_{n,1} \cup F_n) - b_n,
    \end{equation*}
    and this decomposition satisfies (for large enough $s$)
    \begin{enumerate}[label=(\alph*), leftmargin=*]
        \item $\tilde{A}_n, \tilde{B}_{n,0} \subseteq h_n\Z \cap \{0,1,\ldots,H_s-1\}$,
        \item $\tilde{B}_{n,1} \subseteq \{0,1,\ldots,H_s-1\} \setminus h_n\Z$ and
            \begin{equation*}
                |\tilde{B}_{n,1}| \geq \left( 1 - \frac{1}{h_n} - \frac{\epsilon}{2h_n} \right)Q_n - (Q_n - H_s) > \left( 1 - \frac{1}{h_n} - \epsilon \right) H_s,
            \end{equation*}
        \item $|E_n| \leq \epsilon_s Q_n < \epsilon H_s$,
        \item $|F_n| \leq \epsilon_s Q_n < \epsilon H_s$, and
        \item $|\tilde{B}_{n,0}| \leq \frac{\epsilon}{2h_n} Q_n < \epsilon H_s$.
    \end{enumerate}
    When we have such a decomposition, we say that $(A,B)$ satisfies property $(2_{h_n,\epsilon})$ at $n$.

    \bigskip

    We have shown that for every $\epsilon > 0$, every $k \in \N$, every $s \in \N$, and all but $\oh(N_s)$ many $n \in \{N_{s-1}+1,\ldots,N_s\}$, the pair $(A,B)$ satisfies either property $(1_{h_n,\epsilon,k})$ or $(2_{h_n,\epsilon})$ for some $h_n \leq \frac{2}{1-\beta}$.
    Now taking $\epsilon = \epsilon'_s$ with $\epsilon'_s \to 0$ and $k = k_s$ with $k_s \to \infty$ sufficiently slowly, we have that for every $s \in \N$ and all but $\oh(N_s)$ many $n \in \{N_{s-1}+1,\ldots,N_s\}$, the pair $(A,B)$ satisfies either property $(1_{h_n,\epsilon'_s,k_s})$ or $(2_{h_n,\epsilon'_s})$ for some $h_n \leq \frac{2}{1-\beta}$.
    
    Both of the properties $(1_{h,\epsilon'_s,k_s})$ and $(2_{h,\epsilon'_s})$ provide decompositions of $B \cap \{0,1,\ldots,H_s-1\}$.
    The properties corresponding to different values of $h$ are mutually incompatible, so there can be at most one value of $h$ for each $s \in \N$.
    By the pigeonhole principle, we may pick $h \in \N$ and pass to a subsequence such that for every $s \in \N$ and all but $\oh(N_s)$ many $n \in \{N_{s-1}+1,\ldots,N_s\}$, the pair $(A,B)$ satisfies either property $(1_{h,\epsilon'_s,k_s})$ or $(2_{h,\epsilon'_s})$ at $n$.

    Suppose that for arbitrarily large $s \in \N$, there exists $n_s \in \{N_{s-1}+1,\ldots,N_s\}$ such that $(A,B)$ satisfies property $(2_{h,\epsilon'_s})$ at $n_s$.
    By passing to a subsequence, we may assume $n_s$ exists for every $s \in \N$.
    Then necessarily $\beta = 1 - \frac{1}{h}$, since $|B \cap \{0,1,\ldots,H_s-1\}| = (\beta + \oh(1))H_s$.
    For $n \in \{N_{s-1}+1,\ldots,N_s\}$ for which $(A,B)$ satisfies property $(1_{h,\epsilon'_s,k_s})$, we then also have that $(A,B)$ satisfies property $(2_{h,\epsilon'_s})$: the decomposition of $B$ at $n_s$ using property $(2_{h,\epsilon'_s})$ combined with the decomposition of $A$ at $n$ using property $(1_{h,\epsilon'_s,k_s})$ provides a decomposition of $(A,B)$ at $n$ satisfying property $(2_{h,\epsilon'_s})$.
    Thus, property \ref{itm local inverse theorem 2} holds by taking $B_{s,0}, B_{s,1}, F_s$, and $b_s$ according to the decomposition of $B$ at $n_s$.

    If the hypothesis of the previous paragraph fails, then for every $s \in \N$ and all but $\oh(N_s)$ many $n \in \{N_{s-1}+1,\ldots,N_s\}$, we have that $(A,B)$ satisfies property $(1_{h,\epsilon'_s,k_s})$ at $n$.
    This verifies property \ref{itm local inverse theorem 1}, with the caveat that property \ref{itm local inverse theorem 1} requires a decomposition of $B$ and values of $\theta_s$ depending only on $s$ and not on $n$.
    However, since all of the decompositions of $B$ for different values of $n \in \{N_{s-1}+1,\ldots,N_s\}$ are representing the same set, we may adjust the decompositions with only negligible changes in order to make the decomposition depend only on $s$.
\end{proof}

%SECTION
\section{From local residue classes to global residue classes}\label{sec_loc_to_glob_residue_classes}

The goal of this section is to prove the following proposition, which takes the local information about sets $A$ and $B$ at scale $\mathbf{H}$ as given by conclusion \ref{itm local inverse theorem 2} in \cref{thm: local inverse theorem} and converts it into global information at scale $\mathbf{N}$ to derive condition \ref{itm_main_thm_2} in \cref{thm: main}.

\begin{Proposition} \label{prop: local to global residues}
    Let $A, B \subseteq \N$ with $d(A) = \alpha > 0$ and $d(B) = \beta > 0$ such that $\alpha + \beta < 1$.
    Suppose $B$ meets every residue class in $\N$.
    Let $\mathbf{N} = (N_s)_{s \in \N}$ be an increasing sequence with $\lim_{s \to \infty} N_s/N_{s+1} = 0$ such that $d_{\mathbf{N}}(A+B) = \alpha + \beta$.
    Let $\mathbf{H}_{G,A} = (H_{G,A,s})_{s \in \N}$ with $1 \prec \mathbf{H}_{G,A} \prec \mathbf{N}$ be a Gowers scale for $A$ as guaranteed by \cref{thm: existence of U1 G and HK scales}\ref{itm: existence of U1 G scales} and $\mathbf{H}_{G,B} = (H_{G,B,s})_{s \in \N}$ with $1 \prec \mathbf{H}_{G,B} \prec \mathbf{N}$ a Gowers scale for $B$.
    Let $\mathbf{H} = (H_s)_{s \in \N}$ be a $U^2$~good scale for $A$ and $B$ with $\mathbf{H}_{G,A}, \mathbf{H}_{G,B} \prec \mathbf{H} \prec \mathbf{N}$.
    Suppose there exist $h \in \N$ and $b_0 \in \{0,1,\ldots,h-1\}$ such that
    \begin{itemize}
        \item for all but $\oh(N_s)$ many $n \in \{N_{s-1}+1, \ldots, N_s\}$, there exists $a_n \in \{0,1,\ldots,h-1\}$ such that $(A-n) \cap \{0,1,\ldots,H_s-1\} = (A_n \cup E_n) - a_n$ for some $A_n \subseteq h\Z$ and $|E_n| = \oh(H_s)$, and
        \item $B \sim_{\mathbf{H}} (\N \setminus h\N) - b_0$.
    \end{itemize}
    Then there exists $a_0 \in \{0,1,\ldots,h-1\}$ such that $A \subseteq h\N - a_0$, $A + h \sim_{\mathbf{N}} A$, and $B \sim_{\mathbf{N}} (\N \setminus h\N) - b_0$.
\end{Proposition}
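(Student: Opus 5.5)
The plan is to first pin down a single residue class containing $A$ at scale $\mathbf{N}$, then count $A+B$ one residue class mod $h$ at a time, and use the equality $d_{\mathbf{N}}(A+B)=\alpha+\beta$ to force both the invariance $A+h\sim_{\mathbf{N}}A$ and the global structure of $B$.

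\textbf{Step 1: a single global residue class for $A$.} The first hypothesis says that for most $n$ the window $(A-n)\cap\{0,\ldots,H_s-1\}$ lies, up to $\oh(H_s)$ elements, in $h\Z-a_n$. Equivalently, $A\cap[n,n+H_s)$ lies essentially in the class $n-a_n \bmod h$.

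For two good $n,n'$ with $|n-n'|\le H_s/2$, the windows overlap in $\gg H_s$ points. Since $\mathbf{H}\succ\mathbf{H}_{G,A}$, on that overlap $A$ has $(\alpha+\oh(1))$-proportion of points. Hence $n-a_n\equiv n'-a_{n'}\pmod h$. Chaining across $\{N_{s-1}+1,\ldots,N_s\}$ (the good $n$ are $(1-\oh(1))$-dense, so consecutive good $n$ are close), this class is constant in $s$. A further comparison across the transition $N_{s-1}$ makes it constant globally; call it $-a_0$. This gives $A\setminus(h\N-a_0)$ of zero density along $\mathbf{N}$.

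I would postpone upgrading this to the literal containment $A\subseteq h\N-a_0$ until after Step 3. The idea there is that a single $a'\in A$ in another class makes $a'+B$ contribute, through the classes where $B$ is essentially full, density about $1/h$ in the class $c:=-a_0-b_0$. This pushes $d_{\mathbf{N}}(A+B)$ above $(1-\tfrac1h)+\alpha$.

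\textbf{Step 2: counting by residue classes.} Set $c=-a_0-b_0$ and $B_0=B\cap(h\N-b_0)$, which is nonempty because $B$ meets every residue class. For most $n$, the local sumset $(A-n)+(B\cap[H_s])$ covers essentially all of the $h-1$ classes $\not\equiv c$ inside $[n+H_s,n+2H_s)$. This follows from the second hypothesis $B\sim_{\mathbf{H}}(\N\setminus h\N)-b_0$ together with $A\ne\emptyset$ locally. Averaging over $n$ gives
\[
d_{\mathbf{N}}\big((A+B)\setminus(h\N+c)\big)\ge 1-\tfrac1h .
\]
The class $c$ part contains $A+b$ for every $b\in B_0$, so its density is at least $\alpha$. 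Moreover, the hypothesis $B\sim_{\mathbf{H}}(\N\setminus h\N)-b_0$ combined with the Gowers scale for $B$ should give $\beta=1-\tfrac1h$. Equality $d_{\mathbf{N}}(A+B)=\alpha+\beta$ then forces
\[
d_{\mathbf{N}}\Big(\bigcup_{b\in B_0}(A+b)\Big)=\alpha .
\]
In particular $A+b\sim_{\mathbf{N}}A+b'$ for all $b,b'\in B_0$.

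\textbf{Step 3: invariance and the global shape of $B$.} Let $T=\{t\in\Z: A+t\sim_{\mathbf{N}}A\}$. This is a subgroup of $\Z$, and by Step 2 it contains $B_0-B_0\subseteq h\Z$. Since $B$ meets every residue class, for each $m$ the set $B_0$ hits every class mod $hm$ inside $h\N-b_0$. Hence $T\cap h\Z$ surjects onto $h\Z/hm\Z$ for every $m$. A subgroup $k\Z$ with this property for all $m$ must satisfy $k=h$, so $T\supseteq h\Z$, and therefore $A+h\sim_{\mathbf{N}}A$.

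Finally, to get $B\sim_{\mathbf{N}}(\N\setminus h\N)-b_0$, I will argue by contradiction using the same class-by-class count. Suppose $B_0$ had positive density $\gamma$ along $\mathbf{N}$. Then, since $d(B)=1-\tfrac1h$, some class $r\not\equiv -b_0$ would be missing density $\ge\gamma/(h-1)$ of $B$. One then exploits the $h$-invariance of $A$ at scales intermediate between $\mathbf{H}$ and $\mathbf{N}$, using windows of length in $(\mathbf{H},\mathbf{N})$ and again \cref{thm: local sumset}-type local sumset control. The aim is to show that the points of $B_0$ far from the origin still produce, via $A+B_0$, an excess of density at least $c'\gamma$ in classes other than $c$, or else violate the Step 2 equality.

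\textbf{Main obstacle.} I expect this last point to be the hardest step. The hypothesis only controls $B$ on initial segments $[H_s]$, whereas the conclusion is about $B$ up to $N_s$. The first thing I would try is to rerun Step 2 with $B\cap[H'_s]$ for a range of scales $\mathbf{H}\preceq\mathbf{H}'\prec\mathbf{N}$, each of which is again a $U^2$ good scale by the range provided in \cref{thm:U^2 structure theorem}, and pass to the limit.
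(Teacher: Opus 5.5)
There are two genuine gaps, one at the start and one at the end.

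\textbf{Step~1.} Its argument does not work, and its conclusion cannot be obtained this way. Knowing that all but $\oh(N_s)$ of the $n\in\{N_{s-1}+1,\ldots,N_s\}$ are good does not put consecutive good $n$ within $H_s/2$ of each other. A single run of $H_s$ bad $n$ is already $\oh(N_s)$. The bad set may also contain all of $\{N_{s-1}+1,\ldots,\epsilon_sN_s\}$ for some $\epsilon_s\to0$, so there is nothing to compare across $N_{s-1}$.

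In fact the first hypothesis alone is satisfied by (with $h=2$) a random subset of the even numbers on the first half of each block and of the odd numbers on the second half. Only the $\Oh(H_s)$ windows straddling the switch points are bad. So the fact that $A$ lies in one class must come from the sumset hypothesis combined with information about $B$ \emph{away from the origin}, which you do not have at that point. Your postponed argument for $A\subseteq h\N-a_0$ needs $B$ essentially full in $h-1$ classes throughout $[N_s]$. In your plan that comes from the unproved last step, which itself uses the single class for $A$, so the argument is circular.

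The paper breaks this circle by running \cref{thm: local sumset} with the roles of $A$ and $B$ swapped:
\begin{itemize}
\item fix one good window $n_s=\oh(N_s)$ of $A$ (one class, locally $h$-invariant) and pair it with the moving windows $(B-m)\cap\{0,\ldots,Q_m-1\}$;
\item split $\tilde B_m$ by residues mod $h$ and apply \cref{thm: cyclic group Kneser} class by class (local $h$-invariance makes $\tilde A_m/h$ meet every coset of small index);
\item use \cref{lem: density calculuation mod h} with $q=1$ to get that in almost every window $[m,m+H_s)$, $B$ is, up to $\oh(H_s)$, a union of $h-1$ full classes mod $h$.
\end{itemize}
Two elements of $A$ in different classes would then give $d_{\mathbf N}(\{a_1,a_2\}+B)=1$. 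Your invariance argument (the group $T\supseteq B_0-B_0$ must contain $h\Z$) is the paper's and survives, but only if Step~2 is run window by window with the varying $a_n$, not with a global class $c$.

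\textbf{The last part of Step~3.} The claim $B\sim_{\mathbf N}(\N\setminus h\N)-b_0$ is not proved, and the routes you sketch cannot prove it. Rerunning Step~2 at a larger scale $\mathbf H'$ is circular: Step~2 takes $B\sim_{\mathbf H'}(\N\setminus h\N)-b_0$ as input, and $U^2$-goodness of $\mathbf H'$ says nothing about $B\cap[H'_s]$. Moreover every $\mathbf H'\prec\mathbf N$ sees a vanishing fraction of $[N_s]$, so no limit over such scales reaches scale $\mathbf N$. Nor can you find an excess in the classes other than $c$: those are already saturated, with density $1-\tfrac1h$, by $A+(B\cap[H_s])$, whatever $B$ does further out. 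The excess must be found inside class $c$.

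Once $A\subseteq h\N-a_0$, the sets $A+B_0$ and $A+B_1$ are disjoint and $\underline{d}_{\mathbf N}(A+B_1)\ge 1-\tfrac1h$, so $d_{\mathbf N}(A+B_0)=\alpha$. Suppose $d_{\mathbf N}(B_0)=\delta>0$. The paper passes to $A'=(A+a_0)/h$ and $B'=B_0/h$ and then:
\begin{itemize}
\item uses \cref{lem: subinterval with Schnirelmann density} to find a terminal interval of $[N_s/h]$, of relative length $\ge h\delta/2$, on which $B'$ has Schnirelmann density $\ge h\delta/2$;
\item uses \cref{lem: Schnirelmann density at Gowers scales} together with $A'+1\sim A'$ to find $n_s\in A'$, $n_s=\oh(N_s)$, from which $A'$ has Schnirelmann density $h\alpha-\oh(1)$;
\item applies \cref{cor: Schnirelmann up to N} to get $d_{\mathbf N/h}(A'+B')\ge h\alpha+\tfrac{h^2\delta^2}{4}(1-h\alpha)>h\alpha$.
\end{itemize}
This contradicts $d_{\mathbf N/h}(A'+B')=h\alpha$, since $h\alpha<1$. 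A finite-interval addition theorem of this Schnirelmann type is the missing ingredient. Kneser's theorem for lower density does not help directly, because $B_0$ has positive density only along $\mathbf N$.
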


In the proof, we will use the following combinatorial lemma.

\begin{Lemma} \label{lem: density calculuation mod h}
    Let $h, q \in \N$.
    Let $\alpha \in \left( 0, \frac{1}{qh} \right)$, and let $\beta_0, \beta_1, \ldots, \beta_{h-1} \in [0,1]$ such that $\frac{1}{h} \sum_{j=0}^{h-1} \beta_j = 1 - \frac{1}{qh}$.
    If $\min_{0 \leq j \leq h-1} \beta_j \geq 1 - \frac{1}{q} + \eta$ for some $\eta > 0$, then
    \begin{equation*}
        \frac{1}{h} \sum_{j=0}^{h-1} \min\{h\alpha+\beta_j,1\} \geq \min \left\{ 1, 2\alpha + 1 - \frac{1}{qh}, \alpha + 1 - \frac{1}{qh} + \frac{\eta}{h} \right\}.
    \end{equation*}
\end{Lemma}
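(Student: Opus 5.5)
The plan is to rewrite the left-hand side by tracking the coordinates where the truncation at $1$ is active. Call this set $S=\{j: h\alpha+\beta_j\geq 1\}$ and put $m=h-|S|$. For $j\in S$ define the excess $e_j=h\alpha+\beta_j-1\geq 0$. Since $\frac1h\sum_j\beta_j=1-\frac{1}{qh}$, we get the exact identity
\[
\frac1h\sum_{j=0}^{h-1}\min\{h\alpha+\beta_j,1\}=h\alpha+1-\frac{1}{qh}-\frac1h\sum_{j\in S}e_j .
\]
The whole proof then reduces to bounding the total excess $\sum_{j\in S}e_j$ from above. I will split into cases according to $m$.

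First, note that $h\geq 2$, so the statement is not vacuous in a degenerate way. Indeed, if $h=1$ then $\beta_0=1-\frac1q<1-\frac1q+\eta$, which contradicts the hypothesis. The cases are as follows.
\begin{enumerate}
\item If $m=0$, every term is truncated, so the left-hand side equals $1$.
\item If $m\geq 2$, use the trivial bound $e_j\leq h\alpha$, which holds because $\beta_j\leq 1$. This gives $\frac1h\sum_{j\in S}e_j\leq (h-m)\alpha\leq (h-2)\alpha$. Hence the left-hand side is at least $2\alpha+1-\frac{1}{qh}$.
\item If $m=1$, the trivial bound is too weak. Here I exploit the lower bound $\beta_j\geq 1-\frac1q+\eta$ for the single untruncated index $j_0$. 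Since $\sum_j\beta_j=h-\frac1q$, we have $\sum_{j\in S}\beta_j= h-\frac1q-\beta_{j_0}\leq h-1-\eta$. Therefore
\[
\sum_{j\in S}e_j\leq (h-1-\eta)-(h-1)(1-h\alpha)=(h-1)h\alpha-\eta .
\]
Plugging this into the identity gives a left-hand side of at least $\alpha+1-\frac{1}{qh}+\frac{\eta}{h}$.
\end{enumerate}
The three cases produce the three terms of the minimum, so this completes the proof.

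The only step needing care is the case $m=1$. There one must use the sum constraint together with the lower bound on the unsaturated $\beta_{j_0}$, rather than the crude bound $e_j\leq h\alpha$. The remaining steps are routine bookkeeping. The hypothesis $\alpha<\frac{1}{qh}$ is not needed beyond ensuring that the quantities are in the intended range.
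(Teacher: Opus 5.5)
Your proof is correct and follows the paper's argument: your $m$ is the paper's $|J|$, the number of untruncated indices, and your cases $m=0$, $m\geq 2$, $m=1$ are exactly the paper's, with the key bound $\sum_{j\in S}\beta_j\leq h-1-\eta$ in the case $m=1$. The only difference is cosmetic bookkeeping: you track the excesses $e_j=h\alpha+\beta_j-1$, while the paper tracks the deficits $1-\beta_j$ in the identity $|J|\alpha+1-\frac{1}{qh}+\frac1h\sum_{j\notin J}(1-\beta_j)$.
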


\begin{proof}
    Let $J = \{0 \leq j \leq h-1 : h\alpha + \beta_j \leq 1\}$.
    Then
    \begin{equation*}
        \frac{1}{h} \sum_{j=0}^{h-1} \min\{h\alpha+\beta_j,1\} = \frac{1}{h} \left( \sum_{j \in J} (h\alpha + \beta_j) + \sum_{j \notin J} 1 \right) = |J|\alpha + 1 - \frac{1}{qh} + \frac{1}{h} \sum_{j \notin J} (1 - \beta_j).
    \end{equation*}
    If $|J| \geq 2$, then
    \begin{equation*}
        \frac{1}{h} \sum_{j=0}^{h-1} \min\{h\alpha+\beta_j,1\} \geq 2\alpha + 1 - \frac{1}{qh},
    \end{equation*}
    and we are done.

    We check the remaining two cases.
    First, if $J = \emptyset$, then
    \begin{equation*}
        \frac{1}{h} \sum_{j=0}^{h-1} \min\{h\alpha+\beta_j,1\} = 1,
    \end{equation*}
    in which case we are again done.

    Finally, suppose $|J| = 1$.
    By symmetry, we may assume $J = \{0\}$.
    Now,
    \begin{equation*}
        \sum_{j = 1}^{h-1} \beta_j = h - \frac{1}{q} - \beta_0 \leq h - 1 - \eta,
    \end{equation*}
    so
    \begin{equation*}
        \frac{1}{h} \sum_{j\notin J} (1 - \beta_j) \geq \frac{h-1}{h} - \frac{1}{h}\sum_{j=1}^{h-1} \beta_j \geq \frac{\eta}{h}.
    \end{equation*}
\end{proof}

We will also make use of the following variant of \cref{lem: subinterval with Schnirelmann density}.

\begin{Lemma} \label{lem: Schnirelmann density at Gowers scales}
    Let $A \subseteq \N$.
    If $\mathbf{N}=(N_s)_{s \in \N}$ is a sequence of natural numbers such that $N_s \to \infty$ and there exists another sequence $\mathbf{H}=(H_s)_{s\in\N}$ with $1 \prec \mathbf{H} \prec \mathbf{N}$ such that
    \begin{equation*}
        d(A,[N_s]) \geq \alpha + \oh_{s\to\infty}(1)
    \end{equation*}
    and
    \begin{equation*}
        \frac{1}{N_s} \sum_{n=1}^{N_s} |d(A,[N_s]) - d\left( A,\{n+1,\ldots,n+H_s\} \right)| = \oh_{s\to\infty}(1),
    \end{equation*}
    then there exists a sequence of natural numbers $(n_s)_{s\in\N}$ with $\lim_{s\to\infty} n_s/N_s=0$ and such that
    \begin{equation*}
        \sigma(A,\{n_s+1,\ldots,N_s\}) \geq \alpha + \oh_{s\to\infty}(1).
    \end{equation*}
\end{Lemma}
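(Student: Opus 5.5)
The plan is to mimic the proof of \cref{lem: subinterval with Schnirelmann density}, but to use the second hypothesis to show that the "bad" initial portion found there has length $\oh(N_s)$. Write $d_s = d(A,[N_s])$, so $d_s \geq \alpha + \oh(1)$. Fix $\epsilon>0$ for now. At the end we diagonalize: choose $\epsilon = \epsilon_s \to 0$ slowly, which is legitimate because every error term below tends to zero for each fixed $\epsilon$.

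\textbf{Step 1 (a set of good starting points).} The averaged hypothesis and Markov's inequality show that the set $G_s=\{n\in[N_s]: |d(A,\{n+1,\ldots,n+H_s\})-d_s|\le\epsilon\}$ has $|[N_s]\setminus G_s| = \oh(N_s)$. Consequently, every interval $\{x+1,\ldots,y\}\subseteq[N_s]$ of length at least $\epsilon N_s$ can be tiled, up to $\oh(N_s)+\Oh(H_s)$ elements, by consecutive blocks of length $H_s$ whose starting points lie in $G_s$. A greedy choice of blocks gives this, since $H_s=\oh(N_s)$.

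\textbf{Step 2 (long intervals have density close to $d_s$).} From Step 1, every such interval satisfies
\[
|A\cap\{x+1,\ldots,y\}| \geq (d_s-\epsilon)(y-x) - \oh(N_s),
\]
uniformly in $x<y$ with $y-x\ge \epsilon N_s$. This is the key uniform-in-position lower bound.

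\textbf{Step 3 (rerun the Schnirelmann argument).} Set
\[
x_s=\max\bigl(\{0\}\cup\{M\in[N_s]: |A\cap[M]|\le (d_s-2\epsilon)M\}\bigr).
\]
By maximality of $x_s$, exactly as in \cref{lem: subinterval with Schnirelmann density}, every $y\in\{x_s+1,\ldots,N_s\}$ satisfies
\[
|A\cap\{x_s+1,\ldots,y\}| > (d_s-2\epsilon)(y-x_s).
\]
Hence $\sigma(A,\{x_s+1,\ldots,N_s\}) \ge d_s-2\epsilon$. It remains to show $x_s=\oh(N_s)$, after enlarging the constant $2\epsilon$ by a harmless multiple of $\epsilon$ if needed.

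\textbf{Step 4 (the main obstacle: $x_s = \oh(N_s)$).} Suppose instead that $x_s\ge \epsilon N_s$ along a subsequence. Apply Step 2 to the interval $[x_s]=\{1,\ldots,x_s\}$, which has length at least $\epsilon N_s$. This gives
\[
|A\cap[x_s]|\ge (d_s-\epsilon)x_s-\oh(N_s) > (d_s-2\epsilon)x_s
\]
for large $s$, because $\epsilon x_s \ge \epsilon^2 N_s$ dominates the $\oh(N_s)$ error. That contradicts the definition of $x_s$. So $x_s<\epsilon N_s$ for all large $s$. Taking $\epsilon=\epsilon_s\to0$ slowly enough that the $\oh(N_s)$ errors stay below $\epsilon_s^2N_s$, we set $n_s=x_s$. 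Then $n_s/N_s\to0$ and $\sigma(A,\{n_s+1,\ldots,N_s\})\ge d_s-2\epsilon_s\ge\alpha+\oh(1)$.

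The delicate point is the uniformity in Step 2. We need the $\oh(N_s)$ error to be independent of the interval $\{x+1,\ldots,y\}$, including intervals starting at $1$. This holds because the bound on the exceptional set $[N_s]\setminus G_s$ is global, and the tiling loses at most that set plus $\Oh(H_s)$ elements.
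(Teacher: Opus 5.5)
Your proof is correct, but it is organized differently from the paper's.

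\emph{The paper's route.} It introduces an intermediate scale $M_s=\max\{\sqrt{\delta_s}N_s,\sqrt{H_sN_s}\}=\oh(N_s)$, where $\delta_s$ is the averaged deviation in the hypothesis. It then proceeds in three steps:
\begin{itemize}
\item it applies \cref{lem: subinterval with Schnirelmann density} as a black box inside $[M_s]$;
\item it checks separately that $d(A,[M_s])\ge d(A,[N_s])-\oh(1)$;
\item it handles endpoints $y>M_s$ by summing the window densities over $M_s<n\le y$, absorbing the total deviation $\delta_sN_s$ because $y-x\ge\epsilon_sM_s$.
\end{itemize}

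\emph{Your route.} You run the maximal-bad-point argument once, at the full scale $[N_s]$. You use the hypothesis only to show that no initial segment of length at least $\epsilon N_s$ can be bad. This removes the intermediate scale and the two-range case split. The price is a diagonalization in $\epsilon$; with your Markov bound, $\epsilon_s=2\max\{\delta_s^{1/3},(H_s/N_s)^{1/2}\}$ works.

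\emph{A simplification.} Your Markov-plus-greedy-tiling in Steps~1--2 is valid. However, the averaging identity the paper uses for $d(A,[M_s])$ gives the uniform bound more cheaply: for every $M\le N_s$,
\[
|A\cap[M]|\ \ge\ \sum_{n=1}^{M}d(A,\{n+1,\ldots,n+H_s\})-\Oh(H_s)\ \ge\ d_sM-\delta_sN_s-\Oh(H_s).
\]
This needs no exceptional set and avoids the extra factor $\epsilon^{-1}$ from Markov. With it, the threshold $d_s-\epsilon$ in place of $d_s-2\epsilon$ already suffices.
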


\begin{proof}
    Let $\alpha_s = d(A, [N_s]) \geq \alpha + \oh(1)$, and let
    \begin{equation*}
        \delta_s = \frac{1}{N_s} \sum_{n=1}^{N_s} \left| d\left( A,\{n+1,\ldots,n+H_s\} \right) - \alpha_s \right| = \oh(1).
    \end{equation*}
    Let $M_s \leq N_s$ and $\epsilon_s > 0$ be parameters to be chosen later.
    Put $\gamma_s = d(A,[M_s])$.
    By Lemma \ref{lem: subinterval with Schnirelmann density}, there exists $x \leq (1-\epsilon_s)M_s$ such that $\sigma(A, \{x+1, \ldots, M_s\}) \geq \gamma_s - \epsilon_s$.
    Let $y \in \{M_s+1,\ldots,N_s\}$.
    Then
    \begin{align*}
        |A \cap \{x+1,\ldots,y\}| & = |A \cap \{x+1,\ldots,M_s\}| + |A \cap \{M_s+1,\ldots,y\}| \\
        & \geq (\gamma_s - \epsilon_s)(M_s - x) + \sum_{n=M_s+1}^y d(A, \{n+1,\ldots,n+H_s\}) + \Oh(H_s) \\
        & \geq (\gamma_s - \epsilon_s) (M_s - x) + \alpha_s(y-M_s) \\
        & \quad - \sum_{n=M_s+1}^y \left| d(A,\{n+1,\ldots,n+H_s\}) - \alpha_s \right| + \Oh(H_s) \\
        & \geq (\gamma_s - \epsilon_s) (M_s - x) + \alpha_s(y-M_s) - \delta_s N_s + \Oh(H_s).
    \end{align*}
    
    We now select the parameters to achieve the desired outcome.
    In order to overcome the $\delta_s N_s$ term, we should choose $M_s$ large compared with $\delta_s N_s$ and so that $\gamma_s \geq \alpha_s + \oh(1)$.
    On the other hand, we want $n_s =x = \oh(N_s)$, so $M_s$ cannot be too large.
    Put $M_s = \max\{\sqrt{\delta_s} N_s, \sqrt{H_sN_s}\}$.
    Note that
    \begin{multline*}
        \gamma_s = d(A, [M_s]) = \frac{1}{M_s} \sum_{n=1}^{M_s} d(A, \{n+1,\ldots,n+H_s\}) + \Oh \left( \frac{H_s}{M_s} \right) \\
        \geq \alpha_s - \frac{\delta_s N_s}{M_s} + \Oh \left( \frac{H_s}{M_s} \right)
        \geq \alpha_s - \sqrt{\delta_s} + \Oh \left( \sqrt{\frac{H_s}{N_s}} \right) = \alpha_s + \oh(1).
    \end{multline*}
    Thus, for $y \in \{M_s+1,\ldots,N_s\}$,
    \begin{align*}
        d(A, \{x+1,\ldots,y\}) & \geq \alpha_s - \epsilon_s + \oh(1) - \frac{\delta_s N_s}{y-x} + \Oh \left( \frac{H_s}{y-x} \right) \\
        & \geq \alpha_s - \epsilon_s - \frac{\delta_s N_s}{\epsilon_sM_s} + \Oh \left( \frac{H_s}{\epsilon_s M_s} \right) + \oh(1) \\
        & \geq \alpha_s - \epsilon_s - \epsilon_s^{-1} \sqrt{\delta_s} + \Oh \left(  \epsilon_s^{-1} \sqrt{\frac{H_s}{N_s}} \right) + \oh(1).
    \end{align*}
    The quantities $\sqrt{\delta_s}$ and $\sqrt{\frac{H_s}{N_s}}$ both tend to zero as $s \to \infty$, so we can choose $\epsilon_s = \oh(1)$ going to zero sufficiently slowly so that
    \begin{equation*}
        \epsilon_s^{-1} \sqrt{\delta_s} = \oh(1)
    \end{equation*}
    and
    \begin{equation*}
        \epsilon_s^{-1}\sqrt{\frac{H_s}{N_s}} = \oh(1).
    \end{equation*}
    For example, we can take
    \begin{equation*}
        \epsilon_s = \max \left\{ \delta_s^{1/4}, \left( \frac{H_s}{N_s} \right)^{1/4} \right\}.
    \end{equation*}

    Putting everything together and taking $n_s = x$, we have $n_s \leq M_s = \oh(N_s)$ and
    \begin{multline*}
        \sigma(A, \{n_s+1,\ldots,N_s\}) \geq \min \left\{ \sigma(A, \{n_s+1,\ldots,M_s\}), \min_{M_s < y \leq N_s} d(A, \{n_s+1,\ldots,y\}) \right\} \\
        \geq \alpha_s + \oh(1) \geq \alpha + \oh(1).
    \end{multline*}
\end{proof}

\begin{proof}[Proof of Proposition \ref{prop: local to global residues}]
    As a preliminary remark, we note that the condition $B \sim_{\mathbf{H}} (\N \setminus h\N) - b_0$ implies that $\beta = d(B) = d_{\mathbf{H}}(B) = 1 - \frac{1}{h}$. 

    We will deduce the global information about $A$ and $B$ in stages.
    First we show that $A + h \sim_{\mathbf{N}} A$.
    Then we show that in almost every subinterval of length $H_s$ in $\{N_{s-1}+1,\ldots,N_s\}$, $B$ is approximately the union of $h-1$ residue classes mod $h$.
    This local description of $B$ allows us to conclude globally that $A$ belongs to a single residue class mod $h$.
    Hence, taking $B_0 = (B+b_0) \cap h\N$ and $B_1 = (B+b_0) \setminus h\N$, we can write $A+B$ as a disjoint union of $A+B_0$ and $A+B_1$.
    Finally, we leverage this disjointness to show that the $d_{\mathbf{N}}(B_0) = 0$.

    \bigskip
    
    Let $B_0 = (B+b_0) \cap h\N$, and let $b_1, b_2 \in B_0$ with $b_1 < b_2$.
    Then applying \cref{thm: local sumset}, for all but $\oh(N_s)$ many $n \in \{N_{s-1}+1,\ldots,N_s\}$, there exists $Q_n \in \N$ with $H_s \leq Q_n \leq (1 + \oh(1))H_s$ and sets $\tilde{A}_n \subseteq (A-n) \cap \{0,1,\ldots,Q_n-1\}$ and $\tilde{B}_n \subseteq \{0,1,\ldots,Q_n-1\}$ such that
    \begin{enumerate}[label=(\Roman{enumi}),ref=(\Roman{enumi}),leftmargin=*]
        \item\label{itm local sumset I - proof of local to global} $|(A-n) \cap \{0,1,\ldots,Q_n-1\} \setminus \tilde{A}_n| = \oh(Q_n)$,
        \item\label{itm local sumset II - proof of local to global} $|B \cap \{0,1,\ldots,Q_n-1\} \setminus \tilde{B}_n| = \oh(Q_n)$, and
        \item\label{itm local sumset III - proof of local to global} $|(\tilde{A}_n +_{\bmod{Q_n}} \tilde{B}_n) \setminus ((A+B-n)\cap \{0,1,\ldots,Q_n-1\})| = \oh(Q_n)$.
    \end{enumerate}
    Now, by \ref{itm local sumset II - proof of local to global} and the assumption $B \sim_{\mathbf{H}} (\N \setminus h\N) - b_0$, we have that $B'_n = (\tilde{B}_n+b_0) \setminus h\N$ satisfies $|B'_n| = \left( 1 - \frac{1}{h} - \oh(1) \right) H_s$.
    Let $A'_n = (\tilde{A}_n + a_n) \cap h\N$.
    Then by our assumption on $A$, we have $|A'_n| = (\alpha + \oh(1))H_s$.
    Therefore,
    \begin{multline*}
        (\tilde{A}_n +_{\bmod{Q_n}} \tilde{B}_n) \cup (\tilde{A}_n + \{b_1-b_0,b_2-b_0\}) \\
        \supseteq \underbrace{((A'_n - a_n) +_{\bmod{Q_n}} (B'_n-b_0))}_{X} \cup \underbrace{((A'_n-a_n) + \{b_1-b_0,b_2-b_0\})}_{Y},
    \end{multline*}
    where $X$ is all but $\oh(H_s)$ of $\{0,1,\ldots,Q_n-1\} \setminus (h\N - a_n - b_0)$ and $Y \subseteq h\N - a_n - b_0$ has
    \begin{equation*}
        |Y| = |A'_n| + |(A'_n + (b_2-b_1)) \setminus A'_n| = \left( \alpha + \frac{|(A'_n + (b_2-b_1)) \setminus A'_n|}{H_s} + \oh(1) \right) H_s.
    \end{equation*}
    Since $b_1-b_0, b_2-b_0 \in B$ and $X \cap Y = \emptyset$, we have
    \begin{multline*}
        \frac{|(A+B-n) \cap \{0,1,\ldots,H_s-1\}|}{H_s} \geq \frac{|X| + |Y|}{H_s} - \oh(1) \\
         = \left( \alpha + \beta + \frac{\left| \left( (A+b_2-b_1-n) \setminus A \right) \cap \{0,1,\ldots,H_s-1\} \right|}{H_s} - \oh(1) \right).
    \end{multline*}
    Averaging over $n \leq N_s$ and taking a limit as $s \to \infty$, we conclude that
    \begin{align*}
        \alpha + \beta & = d_{\mathbf{N}}(A+B) = \lim_{s \to \infty} \frac{1}{N_s} \sum_{n=1}^{N_s} \frac{|(A+B-n) \cap \{0,1,\ldots,H_s-1\}|}{H_s} \\
        & \geq \alpha + \beta + \limsup_{s \to \infty} \frac{1}{N_s} \sum_{n=1}^{N_s} \frac{\left| \left( (A+b_2-b_1-n) \setminus A \right) \cap \{0,1,\ldots,H_s-1\} \right|}{H_s} \\
        & = \alpha + \beta + \overline{d}_{\mathbf{N}} \left( (A+b_2-b_1) \setminus A \right).
    \end{align*}
    Therefore, $A+b_2-b_1 \sim_{\mathbf{N}} A$.
    Hence, $P_A = \{n \in \Z : A+n \sim_{\mathbf{N}} A\} \supseteq B_0 - B_0$.
    But it is easily checked that $P_A$ is a group, and the assumption that $B$ meets every residue class in $\N$ shows that $B_0 - B_0$ is not contained in any proper subgroup of $h\Z$, so $P_A \supseteq h\Z$.
    That is, $A + h \sim_{\mathbf{N}} A$.

    \bigskip
    
    We will now show that for all but $\oh(N_s)$ many $m \in \{N_{s-1}+1,\ldots,N_s\}$, there exists $b_m \in \{0,1,\ldots,h-1\}$ such that $(B-m) \cap \{0,1,\ldots,h-1\}$ differs from $(\{0,1,\ldots,h-1\} \setminus h\N) - b_m$ by $\oh(H_s)$ many elements.
    To this end, we apply \cref{thm: local sumset} again with the roles of $A$ and $B$ reversed.
    For each $s \in \N$, we may pick $n_s \in \{N_{s-1}+1,\ldots,N_s\}$ with $n_s = \oh(N_s)$ such that
    \begin{itemize}
        \item $|(A-n_s) \cap \{0,1,\ldots,H_s-1\}| = (\alpha + \oh(1))H_s$,
        \item $|(A-n_s+a_{n_s}) \cap \{0,1,\ldots,H_s-1\} \setminus h\N| = \oh(H_s)$, and
        \item $\left| \left( (A - n_s + a_{n_s} + h) \setminus (A-n_s+a_{n_s}) \right) \cap \{0,1,\ldots,H_s-1\}\right| = \oh(H_s)$.
    \end{itemize}
    Therefore, for all but $\oh(N_s)$ many $m \in \{N_{s-1}+1,\ldots,N_s\}$, \cref{thm: local sumset} provides $Q_m \in h\N$ with $H_s \leq Q_m \leq (1 + \oh(1))H_s$ and sets $\tilde{A}_m \subseteq (A-n_s+a_{n_s}) \cap \{0,1,\ldots,Q_m-1\}$ and $\tilde{B}_m \subseteq (B-m) \cap \{0,1,\ldots,Q_m-1\}$ such that
    \begin{enumerate}[label=(\Roman{enumi}),ref=(\Roman{enumi}),leftmargin=*]
        \item $|(A-n_s+a_{n_s}) \cap \{0,1,\ldots,Q_m-1\} \setminus \tilde{A}_m| = \oh(Q_m)$,
        \item $|(B-m) \cap \{0,1,\ldots,Q_m-1\} \setminus \tilde{B}_m| = \oh(Q_m)$,
        \item $|(\tilde{A}_m +_{\bmod{Q_m}} \tilde{B}_m) \setminus ((A+B-n_s+a_{n_s}-m)\cap \{0,1,\ldots,Q_m-1\})| = \oh(Q_m)$,
        \item $\tilde{A}_m \subseteq h\Z \cap \{0,1,\ldots,Q_m-1\}$, and
        \item $|(\tilde{A}_m + h) \setminus \tilde{A}_m| = \oh(Q_m)$.
    \end{enumerate}
    
    We may write
    \begin{equation*}
        \tilde{A}_m +_{\bmod{Q_m}} \tilde{B}_m = \bigcup_{j=0}^{h-1} (\tilde{A}_m +_{\bmod{Q_m}}\tilde{B}_{m,j}),
    \end{equation*}
    where $\tilde{B}_{m,j} = \{x \in \tilde{B}_m : x \equiv j \pmod{h}\}$.
    Since $\tilde{A}_m \subseteq h\Z$, this union is a disjoint one.
    Letting $A'_m = \frac{\tilde{A}_m}{h}$ and $B'_{m,j} = \frac{\tilde{B}_{m,j}-j}{h}$, we have
    \begin{equation*}
        \left| \tilde{A}_m +_{\bmod{Q_m}} \tilde{B}_m \right| = \sum_{j=0}^{h-1} \left| A'_m +_{\bmod{\frac{Q_m}{h}}} B'_{m,j} \right|.
    \end{equation*}
    Now, $|(A'_m + 1) \setminus A'_m| = \oh(Q_m/h)$, so $A'_m$ meets every residue class modulo at most $D_s$ for some $D_s \to \infty$.
    Hence, by Theorem \ref{thm: cyclic group Kneser}, if $B'_{m,j} \ne \emptyset$, then
    \begin{equation*}
        \left| A'_m +_{\bmod{\frac{Q_m}{h}}} B'_{m,j} \right| \geq \min \left\{ |A'_m| + |B'_{m,j}| - \frac{Q_m}{hD_s}, \frac{Q_m}{h} \right\}.
    \end{equation*}
    Now, $|A'_m| = |\tilde{A}_m| = (\alpha - \oh_{s \to \infty}(1))Q_m$, so if $|B'_{m,j}| = \beta_{m,j} \frac{Q_m}{h}$, then
    \begin{equation*}
        \frac{\left| A'_m +_{\bmod{\frac{Q_m}{h}}} B'_{m,j} \right|}{Q_m/h} \geq \min\{h\alpha+\beta_{m,j}, 1\} - \oh_{s \to \infty}(1).
    \end{equation*}
    By Lemma \ref{lem: density calculuation mod h} (with $q = 1$), we conclude that if
    \begin{equation} \label{eq: local density of A+B not too big}
        |(A + B - n_s+a_{n_s}-m) \cap \{0,1,\ldots,H_s-1\}| \leq (\alpha + \beta + \oh_{s \to \infty}(1)) H_s,
    \end{equation}
    then $\min_{0 \leq j \leq h-1} \beta_{m,j} = \oh_{s \to \infty}(1)$.
    In this case, taking $b_m$ such that $\beta_{m,b_m} = \oh(1)$, it follows that $B$ has a local decomposition $(B-m) \cap \{0,1,\ldots,H_s-1\} = (B_{m,0} \cup B_{m,1}) - b_m$ with $|B_{m,0}| = \oh(H_s)$ and $B_{m,1} \subseteq \{0,1,\ldots,H_s-1\} \setminus h\Z$ with $|B_{m,1}| = \left( 1 - \frac{1}{h} - \oh(1) \right)H_s$.
    But \eqref{eq: local density of A+B not too big} holds for all but $\oh(N_s)$ many $m \in \{N_{s-1}+1,\ldots,N_s\}$ (see the argument in Case 2 of the proof of \cref{cor: Kneser without local obstructions}), so we have the desired local structure of $B$ for all but $\oh(N_s)$ many $m \in \{N_{s-1}+1, \ldots, N_s\}$.

    \bigskip

    To summarize our progress so far, we have shown that locally on intervals of length $H_s$, $A$ is nearly contained in a single residue class mod $h$, and $B$ looks like the union of $h-1$ residue classes.
    However, so far we have allowed for the possibility that the specific residue classes involved vary with the choice of interval of length $H_s$.
    We now seek to rule out this possibility, first for $A$ and then for $B$.
    
    Suppose for contradiction that $A$ contains two elements $a_1, a_2 \in A$ with $a_1 \not\equiv a_2 \pmod{h}$.
    Then from the local description of the set $B$, we have
    \begin{equation*}
        d_{\mathbf{N}}(A+B) \geq d_{\mathbf{N}}(\{a_1,a_2\}+B) = 1,
    \end{equation*}
    which contradicts the assumption $d_{\mathbf{N}}(A+B) = \alpha + \beta < 1$.
    Thus, $A \subseteq h\N - a_0$ for some $a_0 \in \{0,\ldots,h-1\}$.
    
    Recall that $B_0 = (B+b_0) \cap h\N$.
    Let $B_1 = (B+b_0) \setminus h\N$ so that $B = (B_0 \cup B_1) - b_0$.
    First we claim that $\underline{d}_{\mathbf{N}}(A+B_1) \geq 1 - \frac{1}{h} = \beta$.
    To see this, note that
    \begin{equation*}
        \underline{d}_{\mathbf{N}}(A+B_1) \geq \liminf_{s \to \infty} \frac{\left| (A \cap [N_s]) + (B \cap [H_s]) \right|}{N_s}.
    \end{equation*}
    Since $\mathbf{H} \succ \mathbf{H}_{G,A}$ and $d(A) = \alpha > 0$, we have that for all but $\oh(N_s)$ many $n \in [N_s]$, there exists $t_n \in \N$ with $t_n = \oh(H_s)$ such that $n + t_n \in A$.
    Adding the elements $n + t_n$ to $B \cap [H_s]$, we have
    \begin{equation*}
        \liminf_{s \to \infty} \frac{\left| (A \cap [N_s]) + (B \cap [H_s]) \right|}{N_s} \geq \beta.
    \end{equation*}
    Since $A + B_0$ is disjoint from $A + B_1$, we deduce that $\overline{d}_{\mathbf{N}}(A+B_0) = d_{\mathbf{N}}(A+B) - \underline{d}_{\mathbf{N}}(A+B_1) \leq \alpha$.
    But $A+B_0$ contains a shifted copy of $A$, so $d_{\mathbf{N}}(A+B_0) = \alpha$.
    We will use this observation to show $d_{\mathbf{N}}(B_0) = 0$, whence $B \sim_{\mathbf{N}} (\N \setminus h\N) - b_0$.

    Suppose for contradiction that $d_{\mathbf{N}}(B_0) = \delta > 0$.
    Let $A' = (A+a_0)/h$ and $B' = B_0/h$.
    Note that $A+B_0 = h(A'+B') - a_0$, so $d_{\mathbf{N}/h}(A'+B') = h \cdot d_{\mathbf{N}}(A+B_0) = h\alpha$.
    Similarly, $d_{\mathbf{N}/h}(A') = h\alpha$ and $d_{\mathbf{N}/h}(B') = h\delta$.
    Then for each $s \in \N$, we may apply Lemma \ref{lem: subinterval with Schnirelmann density} to find $m_s \leq (1 - \frac{h\delta}{2}) N_s/h$ such that $B'$ has Schnirelmann density at least $\frac{h\delta}{2} + \oh(1)$ on the interval $\{m_s+1,\ldots,\lfloor N_s/h \rfloor\}$.
    By Lemma~\ref{lem: Schnirelmann density at Gowers scales}, there exists $n_s = \oh(N_s)$ such that $A'$ has Schnirelmann density $h\alpha + \oh(1)$ on $\{n_s+1,\ldots, n_s + N_s/h - m_s\}$.
    Moreover, since $A' + 1 \sim_{\mathbf{N}/h} A'$, we may pick $n_s$ to itself be an element of $A'$.
    Observe that for a fixed $b' \in B'$,
    \begin{align*}
        (A'+B') & \cap [N_s/h] \\
        \supseteq &~\underbrace{\left( b' + (A' \cap [N_s/h-b']) \right)}_{\sim A' \cap [N_s/h]} \\
        & \cup \underbrace{\left( B' \cap \{m_s+1, \ldots,N_s/h\} + A' \cap \{n_s, n_s+1,\ldots,n_s + \lfloor N_s/h \rfloor - m_s\} \right) \cap [N_s/h]}_{\text{density}~\geq h\alpha + \frac{h\delta}{2}(1 - h\alpha) - \oh(1)~\text{in}~\{n_s+m_s+1,\ldots,n_s+\lfloor N_s/h \rfloor\}~\text{by \cref{cor: Schnirelmann up to N}}},
    \end{align*}
    so $d_{\mathbf{N}/h}(A'+B') \geq h\alpha + \frac{h^2\delta^2}{4}(1 - h\alpha)$.
    But $\alpha + \beta < 1$, so $h\alpha < 1$ and we obtain a strict inequality $d_{\mathbf{N}/h}(A'+B') > h\alpha$, which is a contradiction.
\end{proof}

\begin{Corollary} \label{cor: local inverse theorem upgraded}
    Let $A, B \subseteq \N$ with $d(A) = \alpha > 0$ and $d(B) = \beta > 0$ such that $\alpha + \beta < 1$.
    Suppose $B$ meets every residue class in $\N$.
    Let $\mathbf{N} = (N_s)_{s \in \N}$ be an increasing sequence with $\lim_{s \to \infty} N_s/N_{s+1} = 0$ such that $d_{\mathbf{N}}(A+B) = \alpha + \beta$.
    Let $\mathbf{H}_{G,A} = (H_{G,A,s})_{s \in \N}$ with $1 \prec \mathbf{H}_{G,A} \prec \mathbf{N}$ be a Gowers scale for $A$ as guaranteed by \cref{thm: existence of U1 G and HK scales}\ref{itm: existence of U1 G scales} and $\mathbf{H}_{G,B} = (H_{G,B,s})_{s \in \N}$ with $1 \prec \mathbf{H}_{G,B} \prec \mathbf{N}$ a Gowers scale for $B$.
    Let $\mathbf{H} = (H_s)_{s \in \N}$ be a $U^2$~good scale for $A$ and $B$ with $\mathbf{H}_{G,A}, \mathbf{H}_{G,B} \prec \mathbf{H} \prec \mathbf{N}$.
    Then, after passing to a subsequence of $(\mathbf{N},\mathbf{H})$, there exists $h \in \N$ and $b_0 \in \{0,1,\ldots,h-1\}$ such that one of the following holds:
    \begin{enumerate}[label=(\arabic*'), leftmargin=*]
        \item\label{itm local inverse theorem - upgraded 1}
            There exists $\theta \in \T$ and sequences $k_s, C_s \to \infty$ and $\epsilon_s \in (-1/2,1/2]$ with $\epsilon_s \to 0$ and $\min_{q \in [k_s]} \|q(\theta+\epsilon_s)\|_{\T} \geq \frac{C_s}{H_s}$ such that for all $s \in \N$ and all but $\oh(N_s)$ many $n \in \{N_{s-1}+1,\ldots,N_s\}$, there exists a decomposition
            \begin{equation*}
                (A - n) \cap \{0,1,\ldots,H_s-1\} = (A_n \cup E_n) - a_n
            \end{equation*}
            and
            \begin{equation*}
                B \cap \{0,1,\ldots,H_s-1\} = (B_{s,0} \cup B_{s,1} \cup F_s) - b_0
            \end{equation*}
            such that $A_n, B_{s,0} \subseteq H$, $a_n \in \{0,1,\ldots,h-1\}$, $B_{s,1} \subseteq \N \setminus H$ with $|B_{s,1}| = \left( 1 - \frac{1}{h} - \oh(1) \right)H_s$, $|E_n| = \oh(H_s)$,  $|F_s| = \oh(H_s)$, and there are closed intervals $I_n, J_s \subseteq \T$ such that if $\phi_s : h\N \to \T$ is the map $\phi_s(m) = m(\theta+\epsilon_s)$ for $m \in H$, then
            \[
                A_n \subseteq \phi_s^{-1}(I_n), \quad B_{s,0} \subseteq \phi_s^{-1}(J_s),
            \]
            and
            \[
                |\phi_s^{-1}(I_n) \cap \{0,1,\ldots,H_s-1\} \setminus A_n|, |\phi_s^{-1}(J_s) \cap \{0,1,\ldots,H_s-1\} \setminus B_{s,0}| = \oh(H_s).
            \]
            Moreover, $\alpha = \frac{\alpha_0}{h}$ and $\beta = 1 - \frac{1}{h} + \frac{\beta_0}{h}$ for some $\alpha_0, \beta_0 \in (0,1)$ with $\alpha_0 + \beta_0 < 1$, and $|I_n| = \alpha_0 + \oh(1)$ and $|J_s| = \beta_0 + \oh(1)$.
        \item\label{itm local inverse theorem - upgraded 2}
            There exist $a_0 \in \{0,1,\ldots,h-1\}$ such that $A \subseteq h\N - a_0$, $A + h \sim_{\mathbf{N}} A$, and $B \sim_{\mathbf{N}} (\N \setminus h\N) - b_0$.
    \end{enumerate}
\end{Corollary}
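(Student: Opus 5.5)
We start from \cref{thm: local inverse theorem}, applied with the $U^2$~good scale $\mathbf{H}$. After passing to a subsequence it gives $h\in\N$ and sequences $k_s,C_s\to\infty$, and one of its two alternatives holds for all but $\oh(N_s)$ many $n$. Since $b_s$ takes only the $h$ values in $\{0,1,\ldots,h-1\}$, we pass to a further subsequence on which $b_s=b_0$ is constant. The two alternatives are then handled separately.

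\textbf{Case \ref{itm local inverse theorem 2}.} Here $|B_{s,0}|=\oh(H_s)$. Together with $|B_{s,1}|=(1-\frac1h-\oh(1))H_s$ and $|F_s|=\oh(H_s)$, this gives $B\cap\{0,\ldots,H_s-1\}$ differing from $(\{0,\ldots,H_s-1\}\setminus h\N)-b_0$ in $\oh(H_s)$ elements, i.e.\ $B\sim_{\mathbf{H}}(\N\setminus h\N)-b_0$. The local description of $A$ is exactly the first hypothesis of \cref{prop: local to global residues}. That proposition then yields \ref{itm local inverse theorem - upgraded 2} directly.

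\textbf{Case \ref{itm local inverse theorem 1}.} The frequencies $\theta_s\in\T$ lie in a compact set, so we pass to a subsequence with $\theta_s\to\theta$ and write $\epsilon_s=\theta_s-\theta\in(-1/2,1/2]$, so that $\epsilon_s\to0$. The Diophantine condition $\min_{q\in[k_s]}\|q\theta_s\|_\T\geq C_s/H_s$ is inherited verbatim, and all the structural assertions are just a restatement of \ref{itm local inverse theorem 1}.

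It remains to prove the density identities. Write $m_I$ and $m_J$ for the local densities $|\phi_s^{-1}(I_n)\cap\{0,\ldots,H_s-1\}|/H_s$ and $|\phi_s^{-1}(J_s)\cap\{0,\ldots,H_s-1\}|/H_s$.
\begin{itemize}
\item Because $\mathbf{H}\succ\mathbf{H}_{G,A},\mathbf{H}_{G,B}$, for typical $n$ we have $|A\cap[n,n+H_s)|=(\alpha+\oh(1))H_s$ and $|B\cap[0,H_s)|=(\beta+\oh(1))H_s$.
\item Since $k_s\to\infty$ and $\min_{q\le k_s}\|q\theta_s\|_\T\ge C_s/H_s$, \cref{lem: discrepancy for irrational Bohr set}, applied to the sequence $m\mapsto hm\theta_s$ on $h\N$, shows that a Bohr interval $\phi_s^{-1}(I)$ has density $(|I|+\oh(1))/h$ in $\{0,\ldots,H_s-1\}$. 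This needs $\|qh\theta_s\|$ to be large for $q\le k_s/h$, which follows from the hypothesis because $h$ is fixed. Hence $m_I=(|I_n|+\oh(1))/h$ and $m_J=(|J_s|+\oh(1))/h$.
\item Combining the two bullets with the decomposition gives $\alpha=|I_n|/h+\oh(1)$ and $\beta=1-\frac1h+|J_s|/h+\oh(1)$. We then set $\alpha_0=h\alpha$ and $\beta_0=h\beta-(h-1)$, so that $|I_n|=\alpha_0+\oh(1)$ and $|J_s|=\beta_0+\oh(1)$.
\end{itemize}

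From $\alpha+\beta<1$ we get $\alpha_0+\beta_0<1$. Positivity $\alpha_0>0$ comes from $\alpha>0$. The main obstacle is ruling out $\beta_0=0$: when $\beta_0=0$ the set $B_{s,0}$ is $\oh(H_s)$, which is really alternative \ref{itm local inverse theorem 2}. In that event we redirect to the previous case and apply \cref{prop: local to global residues}, obtaining \ref{itm local inverse theorem - upgraded 2} instead. With this adjustment \ref{itm local inverse theorem - upgraded 1} holds whenever \ref{itm local inverse theorem - upgraded 2} is not already established.
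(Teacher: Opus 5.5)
Your proposal is correct and follows the paper's proof essentially step for step. You make $b_0$ constant by pigeonholing, feed alternative (2) into \cref{prop: local to global residues}, take a convergent subsequence $\theta_s\to\theta$ in alternative (1), and use \cref{lem: discrepancy for irrational Bohr set} for the interval lengths; you also observe, as the paper does, that $\beta_0=0$ is exactly $|B_{s,0}|=\oh(H_s)$, which again lands in \cref{prop: local to global residues}.

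The only thing to tighten is the choice of $Q$ in that discrepancy lemma. Its error term is $\ll 1/Q + h\log Q/C_s$, so $Q=k_s/h$ need not work when $k_s$ grows much faster than $e^{C_s}$. Take instead $Q=\min\{\lfloor k_s/h\rfloor,\lfloor e^{\sqrt{C_s}}\rfloor\}$, which is what the paper does with $k_s'$.
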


\begin{proof}
    We apply \cref{thm: local inverse theorem}.
    For every $s \in \N$, either \ref{itm local inverse theorem 1} or \ref{itm local inverse theorem 2} from \cref{thm: local inverse theorem} holds.
    We may pass to a subsequence such that $b_s$ is a constant $b_0$ and either \ref{itm local inverse theorem 1} holds for every $s \in \N$ or \ref{itm local inverse theorem 2} holds for every $s \in \N$.

    Suppose \ref{itm local inverse theorem 2} holds for every $s \in \N$.
    Then by \cref{prop: local to global residues}, we get the upgraded conclusion \ref{itm local inverse theorem - upgraded 2}.

    Suppose \ref{itm local inverse theorem 1} holds for every $s \in \N$.
    We may assume that \ref{itm local inverse theorem 2} does not hold.
    That is, $\lim_{s \to \infty} \frac{|B_{s,0}|}{H_s} \ne 0$.
    Therefore, $\beta_0 = 1 - h(1-\beta) > 0$.
    We now pass to a further subsequence so that the limit $\theta = \lim_{s \to \infty} \theta_s$ exists and put $\epsilon_s = \theta_s-\theta$.
    The property \ref{itm local inverse theorem - upgraded 1}, save the final sentence beginning with ``moreover,'' then follows from \ref{itm local inverse theorem 1}.
    For the ``moreover'' statement, we have already shown $\beta_0 \in (0,1)$.
    Letting $\alpha_0 = h\alpha$, we have $\alpha_0 > 0$ and, since $\alpha + \beta < 1$,
    \begin{equation*}
        \alpha_0 + \beta_0 = h\alpha + 1 - h(1-\beta) = 1 - h(1 - \alpha - \beta) < 1.
    \end{equation*}
    It remains only to show that we may take the intervals $I_n$ and $J_s$ to satisfy $|I_n| = \alpha_0 + \oh(1)$ and $|J_s| = \beta_0 + \oh(1)$.
    We will show $|I_n| = \alpha_0 + \oh(1)$.
    The condition $|J_s| = \beta_0 + \oh(1)$ follows by a similar argument.
    Since $\mathbf{H} \succ \mathbf{H}_{G,A}$, we have that $|\left( (A - n) \cap \{0,1,\ldots,H_s-1\} \right)| = \left( \alpha + \oh(1) \right) H_s$ for all but $\oh(N_s)$ many $n \in \{N_{s-1}+1,\ldots,N_s\}$, so
    \begin{equation*}
        \left| \phi_s^{-1}(I_n) \cap \{0,1,\ldots,H_s-1\} \right| = (\alpha+\oh(1))H_s.
    \end{equation*}
    Letting $\tilde{\phi}_s(m) = \phi_s(hm) = hm\theta_s$, we thus have
    \begin{equation*}
        \left| \tilde{\phi}_s^{-1}(I_n) \cap \{0,1,\ldots,\lfloor (H_s-1)/h \rfloor\} \right| = (\alpha_0 + \oh(1)) \frac{H_s}{h}.
    \end{equation*}
    Note that $\tilde{\phi}_s^{-1}(I_n) = \Bohr(h\theta_s, I_n)$, so we may assume $|I_n| = \delta_n$ for $\delta_n = d(\Bohr(h\theta_s, I_n))$.
    Let $k'_s = \min \left\{ k_s, \lfloor e^{\sqrt{C_s}} \rfloor \right\}$ so that $\lim_{s \to \infty} k'_s = \infty$ and $\lim_{s \to \infty} \frac{\log k'_s}{C_s} = 0$.
    Applying \cref{lem: discrepancy for irrational Bohr set} with $Q = k'_s/h$, we have
    \begin{align*}
        |\alpha_0 - \delta_n| & = \left| \frac{\left| \tilde{\phi}_s^{-1}(I_n) \cap \{0,1,\ldots,\lfloor (H_s-1)/h \rfloor\} \right|}{\lfloor (H_s-1)/h \rfloor +1} - \delta_n \right| + \oh(1) \\
        & \ll  \frac{h}{k'_s} + \frac{\log(k'_s/h)}{\left( \lfloor (H_s-1)/h \rfloor +1 \right) C_s/H_s} + \oh(1) \\
        & = \frac{h}{k'_s} + h \frac{\log(k'_s)}{C_s} + \oh(1) = \oh(1),
    \end{align*}
    so $|I_n| = \delta_n = \alpha_0 + \oh(1)$ as desired.
\end{proof}

%SECTION
\section{Eliminating rational frequencies}
\label{sec_eliminating_rational_frequencies}

In this section, we refine the conclusion of \cref{cor: local inverse theorem upgraded}, showing that in case \ref{itm local inverse theorem - upgraded 1}, the frequency $\theta$ must be irrational.

\begin{Proposition} \label{prop: no rational frequencies}
    Let $A, B \subseteq \N$ with $d(A) = \alpha > 0$ and $d(B) = \beta > 0$ such that $\alpha + \beta < 1$.
    Suppose $B$ meets every residue class in $\N$.
    Let $\mathbf{N} = (N_s)_{s \in \N}$ be an increasing sequence with $\lim_{s \to \infty} N_s/N_{s+1} = 0$ such that $d_{\mathbf{N}}(A+B) = \alpha + \beta$.
    Let $\mathbf{H}_{G,A} = (H_{G,A,s})_{s \in \N}$ with $1 \prec \mathbf{H}_{G,A} \prec \mathbf{N}$ be a Gowers scale for $A$ as guaranteed by \cref{thm: existence of U1 G and HK scales}\ref{itm: existence of U1 G scales} and $\mathbf{H}_{G,B} = (H_{G,B,s})_{s \in \N}$ with $1 \prec \mathbf{H}_{G,B} \prec \mathbf{N}$ a Gowers scale for $B$.
    Let $\mathbf{H} = (H_s)_{s \in \N}$ be a $U^2$~good scale for $A$ and $B$ with $\mathbf{H}_{G,A}, \mathbf{H}_{G,B} \prec \mathbf{H} \prec \mathbf{N}$.
    If $A$ and $B$ satisfy \ref{itm local inverse theorem - upgraded 1} from \cref{cor: local inverse theorem upgraded}, then $\theta \notin \Q$.
\end{Proposition}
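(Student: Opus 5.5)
The plan is to argue by contradiction and use only the local description of $B$ from \cref{cor: local inverse theorem upgraded}\ref{itm local inverse theorem - upgraded 1}, comparing it at two very different stages $s<s'$. The point is that $B$ is a fixed set, so the two descriptions of $B\cap\{0,\dots,H_s-1\}$ must agree up to $\oh(H_s)$ elements. I expect them to be incompatible when $\theta$ is rational.

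Suppose $\theta=p/q$ in lowest terms. Then $\theta_s:=\theta+\epsilon_s$ satisfies $\|q\theta_s\|_\T=\|q\epsilon_s\|_\T\ge C_s/H_s$ once $k_s\ge q$. Since $\epsilon_s\to0$, this gives $|\epsilon_s|\ge C_s/(qH_s)$, so in particular $\epsilon_s\neq 0$ and $|\epsilon_s| H_s\to\infty$.

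Because $\epsilon_{s'}\to 0$ while $H_s$ is fixed, I can pass to a subsequence of $s$ so that for each $s$ there is a later index $s'=s'(s)$ with $|\epsilon_{s'}|H_s\to 0$. Restricting the stage-$s'$ decomposition to $\{0,\dots,H_s-1\}$ gives
\[
B\cap\{0,\dots,H_s-1\}\approx \big(\Bohr(\theta+\epsilon_{s'},J_{s'})\cap h\N\big)\cup B_{s',1}\ \text{up to a shift by } b_0 .
\]
This holds up to $\oh(H_{s'})$ errors, and here is a first technical point: these errors must be $\oh(H_s)$ when restricted to $[0,H_s)$. I plan to secure this by choosing $s'$ so that, in addition, the stage-$s'$ error sets $E,F_{s'}$ have few elements in $[0,H_s)$. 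If that fails, I will instead apply \cref{thm: local sumset} with $\tilde B=B\cap[H_s]$ directly, since the local statements only concern windows of length $H_s$.

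On $\{0,\dots,H_s-1\}$ the rotation $m\mapsto m\epsilon_{s'}$ moves by only $\oh(1)$. Hence $\Bohr(\theta+\epsilon_{s'},J_{s'})\cap h\N\cap[0,H_s)$ agrees, up to $\oh(H_s)$ points, with a set invariant under translation by $hq$. Concretely it is a union of residue classes mod $hq$ inside $h\N$, as in \cref{lem: small discrepancy for rational Bohr set}. Also $B_{s,1}$ and $B_{s',1}$ both essentially fill $\N\setminus h\N$. It follows that $B_{s,0}$ is, up to $\oh(H_s)$ elements, a set $P_s$ satisfying $P_s+hq\ell\sim P_s$ within $[0,H_s)$ for every $\ell$ with $hq\ell\le H_s/2$.

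The competing stage-$s$ description says $B_{s,0}\approx\Bohr(\theta_s,J_s)\cap h\N$ with $|J_s|=\beta_0+\oh(1)$ and $\beta_0\in(0,1)$. I will choose $\ell_s\approx c/(hq|\epsilon_s|)$ for a small fixed $c>0$. This is admissible because $hq\ell_s\le H_s/2$ as soon as $c\,hq\cdot qH_s/C_s\le H_s/2$, which holds for large $s$ since $C_s\to\infty$. Translation by $hq\ell_s$ fixes the rational part $p/q$ and rotates the phase by $hq\ell_s\epsilon_s\approx \pm c$. So, up to $\oh(H_s)$, the set $(B_{s,0}+hq\ell_s)\triangle B_{s,0}$ inside the window corresponds to $\Bohr(\theta_s, J_s\triangle(J_s\pm c))\cap h\N$.

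The density of this set is estimated by equidistribution of $m\mapsto hm\theta_s$ on windows of length $H_s$; the first part of \cref{lem: large discrepancy for rational Bohr set} applies with $K=C_s/q\to\infty$. It is $\tfrac1h|J_s\triangle(J_s\pm c)|+\oh(1)=\tfrac{2}{h}\min\{c,\beta_0,1-\beta_0\}+\oh(1)$, which is bounded away from $0$. This contradicts the near-invariance of $P_s$ under $hq\ell_s$, so $\theta\notin\Q$.

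The main obstacle is the bookkeeping in the comparison step. The stage-$s'$ decomposition is only asserted up to $\oh(H_{s'})$ errors and for ``most'' windows, so I must make sure its restriction to the much shorter initial segment $[0,H_s)$ is still accurate to $\oh(H_s)$. I will handle this by a diagonal choice of the subsequence, forcing $|\epsilon_{s'}|H_s\to0$ together with control of the error sets on $[0,H_s)$. The remaining steps are routine discrepancy estimates of the type already proved in \cref{sec: discrepancy}.
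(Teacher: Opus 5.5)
Your proposal has a genuine gap, and it sits in the step you call bookkeeping. The stage-$s'$ decomposition controls $B\cap\{0,\dots,H_{s'}-1\}$ only up to $\oh(H_{s'})$ elements. Since $H_s=\oh(H_{s'})$, the whole initial segment $\{0,\dots,H_s-1\}$ may lie inside the stage-$s'$ error set, so that decomposition says nothing about it.

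``Choosing $s'$ so that the error sets have few elements in $[0,H_s)$'' therefore cannot be arranged by any choice. It is equivalent to $B\cap[0,H_s)$ being nearly $hq$-periodic, which is exactly what the stage-$s$ description denies, so the step is circular.

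In fact, no argument that uses only the $B$-side data you invoke can work. That data is the decompositions of $B\cap[0,H_s)$, $\epsilon_s\to0$, and $\min_{q\le k_s}\|q(\theta+\epsilon_s)\|_\T\ge C_s/H_s$. Take $h=1$, $\theta=0$, $\epsilon_s=\log H_s/H_s$, and an interval $J$ with $|J|=\beta_0$. Let $H_s$ grow so that $H_{s-1}=\oh(H_s)$ but $H_s/\log H_s=\oh(H_{s-1})$, and put $B=\bigcup_s \Bohr(\epsilon_s,J)\cap[H_{s-1},H_s)$. This $B$ has density $\beta_0$ and meets every residue class. For every $s$, $B\cap[0,H_s)$ differs from $\Bohr(\epsilon_s,J)\cap[0,H_s)$ in at most $H_{s-1}=\oh(H_s)$ points, and the diophantine condition holds with $C_s=\log H_s$. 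Yet $\theta$ is rational, so your two-stage comparison would ``refute'' a consistent configuration.

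Your fallback of applying \cref{thm: local sumset} with the roles of $A$ and $B$ reversed does not help. It yields window-by-window information on $B$ in $\{N_{s-1}+1,\dots,N_s\}$ and, separately, in $\{N_{s'-1}+1,\dots,N_{s'}\}$. A set can follow a $\Bohr(\theta+\epsilon_s,\cdot)$ model on each block independently, so there is again nothing to compare.

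The missing idea is that the contradiction must come from $A$ and the hypothesis $d_{\mathbf N}(A+B)=\alpha+\beta<1$. The paper writes $h\theta=p/q$ and proceeds in four steps.
\begin{enumerate}
\item It shows $\alpha_0<1/q$. Otherwise $A$ locally contains a full residue class mod $qh$ in every short interval, and $A+(B\cap[T])$ has density $1$.
\item It shows $\beta_0\ge 1-1/q$. On a $qh\alpha$-proportion of each window, $A$ is a single class mod $qh$ on long subintervals, which $A+(B\cap[T])$ fills. Elsewhere $A+B$ still has density at least $\beta$.
\item It shows $\beta_0=1-1/q$. This is the only cross-scale argument about $B$ alone. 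It works through the Gowers scale of $B$: if $\|q\beta_0\|_\T>0$, \cref{lem: large discrepancy for rational Bohr set} forces fluctuating local densities of $B$ at an intermediate scale. When $q\beta_0\in\Z$, \cref{lem: rational frequency and interval length} shows $B$'s local densities are uninformative, so $A$ is indispensable.
\item Reversing roles in \cref{thm: local sumset} and applying the cyclic-group results, it shows that in most windows $B$ is nearly full on $h-1$ classes mod $h$ and is a Bohr set with interval of length $1-1/q$ on the remaining class. Choosing $x,y\in A$ with $x\equiv y\pmod h$ and $x\not\equiv y\pmod{qh}$ then gives $d_{\mathbf N}(\{x,y\}+B)=1$, a contradiction.
\end{enumerate}
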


\begin{proof}
    We prove \cref{prop: no rational frequencies} in several steps.
    Let us fix the notation for the proof.
    In addition to the notation in the statement of \cref{prop: no rational frequencies}, we adopt the notation from \cref{cor: local inverse theorem upgraded}\ref{itm local inverse theorem - upgraded 1}.
    In summary, we have the following:
    \begin{itemize}
        \item sets $A, B \subseteq \N$ with $d(A) = \alpha > 0$, $d(B) = \beta > 0$ and $\alpha + \beta < 1$, and $B$ meets every residue class in $\N$,
        \item an increasing sequence $\mathbf{N} = (N_s)_{s \in \N}$ with $\lim_{s \to \infty} N_s/N_{s+1} = 0$ such that $d_{\mathbf{N}}(A+B) = \alpha + \beta$,
        \item Gowers scales $\mathbf{H}_{G,A}$ and $\mathbf{H}_{G,B}$ for $A$ and $B$ respectively as given by \cref{thm: existence of U1 G and HK scales}\ref{itm: existence of U1 G scales},
        \item a $U^2$ good scale $\mathbf{H}$ for $A$ and $B$ with $\mathbf{H}_{G,A}, \mathbf{H}_{G,B} \prec \mathbf{H} \prec \mathbf{N}$,
        \item a natural number $h \in \N$ with $h < \frac{1}{1-\beta} < \frac{1}{\alpha}$,
        \item an element $b_0 \in \{0,1,\ldots,h-1\}$
        \item positive real numbers $\alpha_0, \beta_0 \in (0,1)$ satisfying $\alpha_0 = h\alpha$, $\beta_0 = 1 - h(1-\beta)$, and $\alpha_0 + \beta_0 < 1$,
        \item an element $\theta \in \T$,
        \item sequences $(k_s)_{s \in \N}, (C_s)_{s \in \N}$ with $\lim_{s \to \infty} k_s = \lim_{s \to \infty} C_s = \infty$,
        \item a sequence $\epsilon_s \in (-1/2,1/2]$ with $\epsilon_s \to 0$ such that $\min_{q\in[k_s]} \|q(\theta+\epsilon_s)\|_{\T} \geq \frac{C_s}{H_s}$,
        \item for $s \in \N$, a decomposition $B \cap \{0,1,\ldots,H_s-1\} = (B_{s,0} \cup B_{s,1} \cup F_s) - b_0$ such that $|F_s| = \oh(H_s)$, $B_{s,1} \subseteq \N \setminus h\N$ with $|B_{s,1}| = \left( 1 - \frac{1}{h} - \oh(1) \right)H_s$, and taking $\phi_s(m) = m(\theta+\epsilon_s)$, there exists a closed interval $J_s \subseteq \T$ of length $|J_s| = \beta_0 + \oh(1)$ such that $B_{s,0} \subseteq \phi_s^{-1}(J_s)$ and
        \begin{equation*}
            \left| \left( \phi_s^{-1}(J_s) \cap h\Z \cap \{0,1,\ldots,H_s-1\} \right) \setminus B_{s,0} \right| = \oh(H_s),
        \end{equation*}
        and
        \item for $s \in \N$ and all but $\oh(N_s)$ many $n \in \{N_{s-1}+1,\ldots,N_s\}$, an element $a_n \in \{0,1,\ldots,h-1\}$ and an interval $I_n \subseteq \T$ of length $|I_n| = \alpha_0 + \oh(1)$ such that
        \begin{equation*}
            \left| \left( (A - n + a_n) \cap \{0,1,\ldots,H_s-1\} \right) \triangle \left( \phi_s^{-1}(I_n) \cap h\Z \cap \{0,1,\ldots,H_s-1\} \right) \right| = \oh(H_s).
        \end{equation*}
    \end{itemize}
    
    Note that for an interval $I \subseteq \T$, we may write $\phi_s^{-1}(I) \cap h\Z = h \cdot \Bohr(h(\theta+\epsilon_s), I)$, which will be useful for applying estimates from \cref{sec: discrepancy}.
    We will write
    \begin{equation*}
        A_n = \left( h \cdot \Bohr(h(\theta+\epsilon_s),I_n) - a_n \right) \cap \{0,1,\ldots,H_s-1\}
    \end{equation*}
    for the set approximating $(A-n) \cap \{0,1,\ldots,H_s-1\}$.

    \bigskip

    Let us suppose for contradiction that $\theta$ is rational.
    Write $h\theta$ in reduced terms as $h\theta = \frac{p}{q}$.
    Observe that $\|q\epsilon_s\|_{\T} = \|q(\theta+\epsilon_s)\|_{\T} \geq \frac{C_s}{H_s}$, so $|\epsilon_s| = \|\epsilon_s\|_{\R/\Q} \geq \frac{C_s}{qH_s}$.
    In particular, $\lim_{s \to \infty} |\epsilon_s| H_s = \infty$.

    \bigskip
    
    \textbf{Claim 1. $\alpha_0 < \frac{1}{q}$.}

    We prove the claim by contradiction, considering the two cases $\alpha_0 > \frac{1}{q}$ and $\alpha_0 = \frac{1}{q}$ separately.
    
    Suppose $\alpha_0 > \frac{1}{q}$.
    Choose $T \in \N$ such that $B \cap [T]$ meets every residue class mod $qh$.
    By Corollary \ref{cor: local periodicity for nearly rational frequency}, for all large $s$ (large enough so that $(q+2T)h|\epsilon_s| < \alpha_0$), the set $\Bohr(h(\theta+\epsilon_s),I_n)$ contains a full residue class mod $q$ in every interval of length at most $2T$.
    Therefore, $A_n = h \cdot \Bohr(h(\theta+\epsilon_s),I_n) - a_n$ contains a full residue class mod $qh$ in every interval of length at most $2hT$.
    Therefore,
    \begin{equation*}
        \left( A_n \cap \{x,x+1,\ldots,x+2T\} \right) + (B \cap [T]) \supseteq \{x+T,x+T+1,\ldots,x+2T\}
    \end{equation*}
    for every $x \in [H_s - 2T]$, so $A_n + (B \cap [T]) \supseteq \{T, T+1,\ldots, H_s\}$.
    It follows that $d_{\mathbf{N}}(A+(B\cap[T])) = 1$, which contradicts the assumption $d_{\mathbf{N}}(A+B) = \alpha + \beta < 1$.

    If $\alpha_0 = \frac{1}{q}$, a similar argument applies but with a small modification.
    Since $|I_n| = \frac{1}{q} + \oh(1)$, the set $\Bohr(h(\theta+\epsilon_s),I_n)$ can be partitioned into intervals of length on the order of $|\epsilon_s|^{-1}$ corresponding to levels sets of $m \mapsto \lfloor mq\epsilon_s - \min(I_n) \rfloor$ such that on each interval, $\Bohr(h(\theta+\epsilon_s),I_n)$ is equal to a single residue class mod $q$ (up to boundedly many elements).
    Adding $B \cap [T]$ to $A_n$ therefore produces a set with density $1 - \Oh(T|\epsilon_s|)$ in every interval of length at least $|\epsilon_s|^{-1}$, so we again conclude $d_{\mathbf{N}}(A+(B\cap[T])) = 1$, which is a contradiction.

    This proves Claim 1.

    \bigskip

    \textbf{Claim 2. $\beta_0 \geq 1 - \frac{1}{q}$.}

    Let $K_s = \min\left\{ \lfloor |\epsilon_s|^{-1/2} \rfloor, \lfloor C_s^{1/2} \rfloor \right\}$.
    Then
    \begin{equation*}
        \lim_{s \to \infty} K_s = \infty \quad \text{and} \quad
        \lim_{s \to \infty} |\epsilon_s| K_s = \lim_{s \to \infty} \frac{K_s}{|\epsilon_s| H_s} = 0.
    \end{equation*}
    Put $M_s = \lfloor (|\epsilon_s|K_s)^{-1} \rfloor$ and $L_s = 4h\lceil |\epsilon_s|^{-1}K_s \rceil$.
    Subdivide $[H_s/h] = J_{s,1} \cup \ldots \cup J_{s,\ell_s}$ into intervals of lengths between $\frac{L_s}{4h}$ and $\frac{L_s}{2h}$.
    By construction,
    \begin{equation*}
        K_s M_s \leq |\epsilon_s|^{-1} \leq K_s^{-1} \frac{L_s}{4h} \leq K_s^{-1} |J_{s,i}|
    \end{equation*}
    and
    \begin{equation*}
        q^2 |\epsilon_s| + \frac{q}{K_s} = \oh(1),
    \end{equation*}
    so by Corollary \ref{cor: local periodicity for nearly rational frequency}, for a $q\alpha_0 - \oh_{s \to \infty}(1)$ proportion of points $x \in E_{n,i} \subseteq J_{s,i}$, the set $\Bohr(h(\theta+\epsilon_s),I_n) \cap \{x+1,\ldots,x+M_s\}$ consists of a single residue class mod $q$ for each $i \in [\ell_s]$.
    Let $E_n = \bigcup_{i=1}^{\ell_s} E_{n,i} \subseteq [H_s/h]$.
    Then $|E_n| = (q\alpha_0 + \oh(1))H_s$ and $A_n$ consists of a single residue class mod $qh$ in each of the intervals $\{y+1,\ldots,y+hM_s\}$ for $y \in E'_n = \{hx-a_n + r : x \in E_n, r \in \{0,1,\ldots,h-1\}\}$.
    Let $A'_n = \bigcup_{y \in E'_n} (A_n \cap \{y+1,\ldots,y+hM_s\})$.
    Note that
    \begin{align*}
        \frac{|A'_n|}{H_s} & = \frac{1}{H_s} \sum_{x=1}^{H_s} \frac{|A'_n \cap \{x+1,\ldots,x+hM_s\}|}{hM_s} + \oh_{s \to \infty}(1) \\
         & \geq \frac{1}{H_s} \sum_{y \in E'_n} \frac{1}{qh} + \oh_{s \to \infty}(1) \\
         & = \alpha + \oh_{s \to \infty}(1).
    \end{align*}
    We conclude that $|A_n \setminus A'_n| = \oh_{s \to \infty}(H_s)$.

    Now let $T \in \N$ such that $B \cap [T]$ meets every residue class mod $qh$.
    Then
    \begin{equation*}
        \left| \big( (n + A'_n) + (B \cap [T]) \big) \setminus \big( \left( A + B \right) \cap \{n+1,\ldots,n+H_s\} \big) \right| = \oh(H_s),
    \end{equation*}
    and
    \begin{align*}
        A'_n + (B \cap [T])
        & = \bigcup_{y \in E'_n} \left( (A_n \cap \{y+1,\ldots,y+hM_s\}) + (B \cap [T]) \right) \\
        & \supseteq \bigcup_{y \in E'_n} \{y+T+1,\ldots,y+hM_s\} \\
        & = T + E'_n + [hM_s-T].
    \end{align*}
    Let $X_n = T + E'_n + [hM_s-T]$, and let $Y_n = [H_s] \setminus X_n$.
    Note that
    \begin{align*}
        \frac{|X_n|}{H_s} & = \frac{1}{H_s} \sum_{x=1}^{H_s} \frac{|X_n \cap \{x+1,\ldots,x+hM_s\}|}{hM_s} + \oh(1) \\
         & \geq \frac{1}{H_s} \sum_{y \in E'_n} \frac{hM_s-T}{hM_s} + \oh(1) \\
         & = qh\alpha + \oh(1).
    \end{align*}
    
    We will now show that $A+B - n$ has density at least $\beta + \oh(1)$ in $Y_n$ for all but $\oh(N_s)$ many $n \in \{N_{s-1}+1,\ldots,N_s\}$.
    Since $\lim_{s \to \infty} L_s/H_s = 0$, we have
    \begin{equation*}
        \left| \big( (n + A_n) + (B \cap [L_s]) \big) \setminus \big( \left( A + B \right) \cap \{n+1,\ldots,n+H_s\} \big) \right| = \oh(H_s),
    \end{equation*}
    so it suffices to compute the density of $A_n + (B \cap [L_s])$ in $Y_n$.
    Let $W \in \N$ be large.
    We may write $Y_n$ as a union of maximal intervals and obtain a decomposition $Y_n = Y_{n,1} \cup Y_{n,2}$, where $Y_{n,1}$ consists of intervals of length less than $W$ in $Y_n$ and $Y_{n,2}$ consists of intervals of length at least $W$ in $Y_n$.
    Since the complement of $Y_n$ is the set $X_n$, which is a union of intervals of length $hM_s-T$, we have
    \begin{equation*}
        \frac{|Y_{n,1}|}{H_s} \leq \frac{W}{hM_s-T+W} = \oh(1),
    \end{equation*}
    so the set $Y_{n,1}$ is negligible.
    Now, by construction, $E'_n$ contains a point in every interval of length $L_s$ in $[H_s]$, so the intervals making up $Y_{n,2}$ have lengths between $W$ and $L_s$.
    Suppose $\{u+1, \ldots, u+S\} \subseteq Y_{n,2}$ is a maximal interval in $Y_{n,2}$.
    Then $u \in X_n \subseteq A'_n + (B \cap [T])$, so there exists $v \in A'_n \subseteq A_n$ with $u-T \leq v \leq u$.
    Therefore,
    \begin{align*}
        \left| \left( A_n + (B\cap [L_s]) \right) \cap \right. & \left. \{u+1,\ldots,u+S\} \right| \\
         & \geq \left| \left( v + (B\cap [L_s])\right) \cap \{u+1,\ldots,u+S\} \right| \\
         & = |B \cap [L_s] \cap \{u-v+1,\ldots,u-v+S\}| \\
         & \geq \min_{\substack{x \in \Z, \\ 0 \leq x \leq T}} |B \cap \{x+1,\ldots,x+S\}| - T.
    \end{align*}
    It follows that the density of $A_n + (B\cap [L_s])$ in $Y_{n,2}$ is at least
    \begin{equation*}
        \frac{\left| \left( A_n + (B\cap [L_s]) \right) \cap Y_{n,2} \right|}{|Y_{n,2}|}
        \geq \min_{\substack{x \in \Z, \\ 0 \leq x \leq T}} \min_{\substack{S \in \Z \\ W \leq S \leq L_s}} \frac{|B \cap \{x+1,\ldots,x+S\}|}{S} - \frac{T}{W}
    \end{equation*}
    Replacing $W$ by a slowly growing sequence $W_s \to \infty$ (for example, $W_s = \lfloor \sqrt{M_s} \rfloor$), we can retain the property $|Y_{n,1}| = \oh(H_s)$, while also having
    \begin{align*}
        \frac{\left| \left( A_n + (B\cap [L_s]) \right) \cap Y_{n,2} \right|}{|Y_{n,2}|}
        & \geq \min_{\substack{x \in \Z, \\ 0 \leq x \leq T}} \min_{\substack{S \in \Z \\ W_s \leq S \leq L_s}} \frac{|B \cap \{x+1,\ldots,x+S\}|}{S} - \oh(1)\\
        & \geq \min_{\substack{x \in \Z, \\ 0 \leq x \leq T}} \underline{d}(B-x) - \oh(1) \\
        & = \beta - \oh(1).
    \end{align*}
    Therefore, $\underline{d}_{\mathbf{N}}(A+B) \geq qh\alpha + \beta(1-qh\alpha) = \beta + qh\alpha(1-\beta)$.
    Since $\underline{d}_{\mathbf{N}}(A+B) = \alpha + \beta$ by assumption, we conclude $qh\alpha(1-\beta) \leq \alpha$.
    That is, $\beta \geq 1 - \frac{1}{qh}$.
    Hence, $\beta_0 = 1 - h(1-\beta) \geq 1 - \frac{1}{q}$ as claimed.

    \bigskip

    \textbf{Claim 3. $\beta_0 = 1 - \frac{1}{q}$.}

    Suppose for contradiction that $\beta_0 > 1 - \frac{1}{q}$.
    We have
    \begin{multline*}
        \left| \phi_s^{-1}(J_s) \cap h\Z \cap \{x+1,\ldots,x+m\} \right| \\
        = \left| \Bohr(h(\theta+\epsilon_s), J_s) \cap \{\lfloor x/h \rfloor + 1, \ldots, \lfloor x/h \rfloor + \lfloor m/h \rfloor\}\right| + \Oh(1)
    \end{multline*}
    Therefore, by \cref{lem: large discrepancy for rational Bohr set}, if $\mathbf{M}' = (M'_s)_{s \in \N}$ and $\mathbf{H}' = (H'_s)_{s \in \N}$ satisfy
    \begin{equation*}
        \lim_{s \to \infty} |\epsilon_s| M'_s = 0 \quad \text{and} \quad \lim_{s \to \infty} M'_s = \lim_{s \to \infty} |\epsilon_s| H'_s = \infty,
    \end{equation*}
    then
    \begin{equation*}
        \sup_{x \in \N} \left| \frac{|\phi_s^{-1}(J_s) \cap h\Z \cap \{x+1,\ldots,x+H'_s\}|}{H'_s} - \frac{\beta_0}{h} \right| = \oh(1)
    \end{equation*}
    and
    \begin{equation} \label{eq: non-Gowers scale}
        \frac{1}{H'_s} \sum_{x=1}^{H'_s} \left| \frac{|\phi_s^{-1}(J_s) \cap h\Z \cap \{x+1,\ldots,x+M'_s\}|}{M'_s} - \frac{\beta_0}{h} \right| \geq \frac{\|q\beta_0\|_{\T}}{qh} - \oh(1).
    \end{equation}
    Now, by \cref{thm: existence of U1 G and HK scales}\ref{itm: existence of U1 G scales} applied to the function $f = \1_B - \beta$, let $(V_N)_{N \in \N}$ with $\lim_{N \to \infty} V_N = \lim_{N \to \infty} \frac{N}{V_N} = \infty$ such that if $(W_N)_{N \in \N}$ satisfies $\lim_{N \to \infty} \frac{W_N}{V_N} = \lim_{N \to \infty} \frac{N}{W_N} = \infty$, then
    \begin{equation} \label{eq: Gowers scale}
        \lim_{N \to \infty} \frac{1}{N} \sum_{n=1}^N \left| \frac{|B \cap \{n+1,\ldots,n+w_N\}|}{W_N} - \beta \right| = 0.
    \end{equation}
    We adjust the scales $\mathbf{M}'$ and $\mathbf{H}'$ above to satisfy the additional property
    \begin{equation*}
        \lim_{s \to \infty} \frac{M'_s}{V_{H'_s}} = \infty.
    \end{equation*}
    From the description of the set $A$ and the property $\lim_{s \to \infty} |\epsilon_s|H'_s = \infty$, we see that $\mathbf{H}'$ is a Gowers scale for $A$, so \cref{thm: local inverse theorem} still applies at the scale $\mathbf{H}'$ to give a decomposition of $B \cap \{0,1,\ldots,H'_s-1\}$.
    Thus, by \eqref{eq: non-Gowers scale},
    \begin{multline*}
        \lim_{s \to \infty} \frac{1}{H'_s} \sum_{h=1}^{H'_s} \left| \frac{|B \cap \{x+1,\ldots,x+M'_s\}|}{M'_s} - \beta \right| \\
        = \lim_{s \to \infty} \frac{1}{H'_s} \sum_{h=1}^{H'_s} \left| \frac{|B_{s,0} \cap \{x+1,\ldots,x+M'_s\}|}{M'_s} - \frac{\beta_0}{h} \right|
        \geq \frac{\|q\beta_0\|_{\T}}{qh} > 0.
    \end{multline*}
    This contradicts \eqref{eq: Gowers scale}.

    \bigskip

    A consequence of Claim 3 is that $q \geq 2$ (since $\beta_0 > 0$).

    Now, as in the proof of \cref{prop: local to global residues}, we may find $n_s \in \{N_{s-1}+1,\ldots,N_s\}$ with $n_s = \oh(N_s)$ such that $|((A-n_s) \cap \{0,1,\ldots,H_s-1\}) \triangle A_n| = \oh(H_s)$ and then apply \cref{thm: local sumset} to find, for all but $\oh(N_s)$ many $m \in \{N_{s-1}+1,\ldots,N_s\}$, a number $Q_m \in h\N$ with $H_s \leq Q_m \leq (1+\oh(1))H_s$ and sets $\tilde{A}_m \subseteq (A-n_s+a_{n_s}) \cap \{0,1,\ldots,Q_m-1\}$ and $\tilde{B}_m \subseteq (B-m) \cap \{0,1,\ldots,Q_m-1\}$ such that
    \begin{enumerate}[label=(\Roman{enumi}),ref=(\Roman{enumi}),leftmargin=*]
        \item\label{itm no rationals I} $|(A-n_s+a_{n_s}) \cap \{0,1,\ldots,Q_m-1\} \setminus \tilde{A}_m| = \oh(Q_m)$,
        \item\label{itm no rationals II} $|(B-m) \cap \{0,1,\ldots,Q_m-1\} \setminus \tilde{B}_m| = \oh(Q_m)$,
        \item\label{itm no rationals III} $|(\tilde{A}_m +_{\bmod{Q_m}} \tilde{B}_n) \setminus ((A+B-n_s+a_{n_s}-m)\cap \{0,1,\ldots,Q_m-1\})| = \oh(Q_m)$, and
        \item\label{itm no rationals IV} $\tilde{A}_m \subseteq h\Z \cap \{0,1,\ldots,Q_m-1\}$.
    \end{enumerate}
    
    Note that by \ref{itm no rationals I},
    \begin{equation*}
        |\tilde{A}_m \triangle (A_{n_s}+a_{n_s})| = \oh(Q_m).
    \end{equation*}
    The set $\frac{A_{n_s} + a_{n_s}}{h} = \Bohr(h(\theta+\epsilon_s),I_n)$ has positive density in every residue class mod $d \leq D_s$ for some $D_s \to \infty$.
    Indeed, for $d \in \N$ and $r \in \{0,1,\ldots,d-1\}$, we may write
    \begin{equation*}
        \Bohr(h(\theta+\epsilon_s),I_n) \cap (d\Z + r) = d \cdot \Bohr(dh(\theta+\epsilon_s),I_n) + r,
    \end{equation*}
    and this set has positive density in every interval of length large compared with $d|\epsilon_s|^{-1}$ by \cref{lem: large discrepancy for rational Bohr set}.
    Thus, we can take, for example, $D_s = \lfloor \sqrt{|\epsilon_s|H_s} \rfloor$.
    Since the intersection $\Bohr(h(\theta+\epsilon_s),I_n) \cap \{0,1,\ldots,\lfloor H_s/h \rfloor-1\} \cap (d\Z+r)$ has positive density in $\{0,1,\ldots,\lfloor H_s/h \rfloor-1\}$, we conclude that $\tilde{A}_m \cap (hd\Z+hr) \ne \emptyset$ for every $d \leq D_s$ and $r \in \{0,1,\ldots,d-1\}$.

    By \ref{itm no rationals III} and the assumption $d_{\mathbf{N}}(A+B) = \alpha + \beta$, we must have
    \begin{equation*}
        |\tilde{A}_m +_{\bmod{Q_m}} \tilde{B}_m| \leq (\alpha + \beta + \oh(1)) Q_m
    \end{equation*}
    for all but $\oh(1)$ many $m \in \{N_{s-1}+1,\ldots,N_s\}$.
    We can therefore apply the inverse theorem for sumsets in finite cyclic groups to deduce structural information about $\tilde{A}_m$ and $\tilde{B}_m$.
    
    As preparation for applying the inverse theorem, we decompose $\tilde{B}_m = \bigcup_{j=0}^{h-1} \tilde{B}_{m,j}$ with $\tilde{B}_{m,j} = \{x \in \tilde{B}_m : x \equiv j \pmod{h}\}$.
    Using \ref{itm no rationals II}, we have $|\tilde{B}_{m,j}| \geq (\frac{\beta_0}{h} - \oh(1))Q_m = \frac{1}{h} \left( 1 - \frac{1}{q} - \oh(1) \right) Q_m$ for each $j \in \{0,1,\ldots,h-1\}$.
    Let $A'_m = \frac{\tilde{A}_m}{h}$ and $B'_{m,j} = \frac{\tilde{B}_{m,j}-j}{h}$.
    We then have
    \begin{equation*}
        |\tilde{A}_m +_{\bmod{Q_m}} \tilde{B}_m| = \sum_{j=0}^{h-1} \left| A'_m +_{\bmod{\frac{Q_m}{h}}} B'_{m,j} \right|.
    \end{equation*}
    But $A'_m$ has nonempty intersection with every subgroup of index at most $D_s/h$, so by \cref{thm: cyclic group Kneser},
    \begin{equation*}
        \frac{|\tilde{A}_m +_{\bmod{Q_m}} \tilde{B}_m|}{Q_m} \geq \frac{1}{h} \sum_{j=0}^{h-1} \min\{\alpha_0+ \beta_{m,j},1\} - \oh(1),
    \end{equation*}
    where $\beta_{m,j} = \frac{B'_{m,j}}{Q_m/h}$.
    Combined with the inequality $\frac{|\tilde{A}_m +_{\bmod{Q_m}} \tilde{B}_m|}{Q_m} \leq \alpha+\beta + \oh(1)$, \cref{lem: density calculuation mod h} implies that $\min_{0 \leq j \leq h-1} \beta_{m,j} = 1 - \frac{1}{q} + \oh(1)$.
    Let $j_m \in \{0,1,\ldots,h-1\}$ such that $\beta_{m,j_m} = 1 - \frac{1}{q} + \oh(1)$.
    For $j \ne j_m$, we then have $\beta_{m,j} = 1 - \oh(1)$ since $\frac{1}{h} \sum_{j=0}^{h-1} \beta_{m,j} = \beta + \oh(1) = 1 - \frac{1}{qh} + \oh(1)$.
    We can now update our bound on
    \begin{equation*}
        \left| A'_m +_{\bmod{\frac{Q_m}{h}}} B'_{m,j} \right|
    \end{equation*}
    by
    \begin{equation*}
        \left| A'_m +_{\bmod{\frac{Q_m}{h}}} B'_{m,j} \right| = (1 - \oh(1)) \frac{Q_m}{h}
    \end{equation*}
    for $j \ne j_m$ and
    \begin{equation*}
        \left| A'_m +_{\bmod{\frac{Q_m}{h}}} B'_{m,j_m} \right| = (\alpha_0 + \beta_{m,j_m} + \oh(1)) \frac{Q_m}{h}.
    \end{equation*}
    (The lower bound comes from \cref{thm: cyclic group Kneser} and the upper bound is then imposed by  the condition $|\tilde{A}_m +_{\bmod{Q_m}} \tilde{B}_m| \leq (\alpha+\beta+\oh(1))Q_m$.)

    We may now apply \cref{thm: cyclic sum without obstructions} to conclude that
    \begin{equation*}
        |B'_{m,j_m} \triangle (\Bohr(h(\theta+\epsilon_s,J'_m)) \cap \{0,1,\ldots,Q_m-1\})| = \oh(Q_m)
    \end{equation*}
    for some interval $J'_m \subseteq \T$ of length $1 - \frac{1}{q}$.
    (The given structure of $A'_m$ as agreeing up to zero density with a Bohr set precludes the other possibilities enumerated in \cref{thm: cyclic sum without obstructions}.)

    We now finish by arguing as in Claim 1.
    As we have shown, the set $(B - m) \cap \{0,1,\ldots,H_s-1\}$ has density $1 - \oh(1)$ in $h-1$ residue classes mod $h$.
    In the remaining residue class, $(B - m) \cap \{0,1,\ldots,H_s-1\}$ is approximately equal to a translate of $h \cdot \Bohr(h(\theta+\epsilon_s),J'_m)$.
    Using the description of $A$, we may choose $x,y \in A$ such that $x \equiv y \pmod{h}$ and $x \not\equiv y \pmod{qh}$.
    (It is important here that $q \geq 2$.)
    Since the interval $J'_m$ has length $|J'_m| = 1 - \frac{1}{q} - \oh(1)$, the set
    \begin{equation*}
        \{x,y\} + h \cdot \Bohr(h(\theta+\epsilon_s),J'_m)
    \end{equation*}
    has density $1 - \oh(1)$ in the multiples of $h$ in every interval whose length is long compared with $|\epsilon_s|^{-1}$.
    Since $|\epsilon_s|H_s \to \infty$, we thus have
    \begin{equation*}
        |\{x,y\} + ((B-m) \cap \{0,1,\ldots,H_s-1\})| = (1 - \oh(1))H_s,
    \end{equation*}
    so $d_{\mathbf{N}}(A+B) \geq d_{\mathbf{N}}(\{x,y\} + B) = 1$.
    This is a contradiction, and the proof is complete.
\end{proof}

%SECTION
\section{Local ergodicity} \label{sec: local ergodicity}

We will now use the description of the sets $A$ and $B$ in case \ref{itm local inverse theorem - upgraded 1} of \cref{cor: local inverse theorem upgraded} (with $\theta \notin \Q$, as provided by \cref{prop: no rational frequencies}) to deduce that $\1_A - \alpha$ and $\1_B - \beta$ are locally ergodic.
Our argument relies on the estimates from \cref{sec: discrepancy}.

Throughout this section, we fix sets $A$ and $B$ satisfying \ref{itm local inverse theorem - upgraded 1} in \cref{cor: local inverse theorem upgraded} for a given frequency $\theta$.

\begin{Proposition} \label{prop: local ergodicity of A irrational frequency}
    If $\theta \notin \Q$, then $\1_A - \alpha$ is locally ergodic along $\mathbf{N}$.
    Moreover, if $q \in \Q \setminus \frac{\Z}{h}$, then $n \mapsto \1_A(n+t_1) \ldots \1_A(n +t_k) e(nq)$ is locally ergodic along $\mathbf{N}$ for every $k \in \N$ and $t_1, \ldots, t_k \in \N$.
\end{Proposition}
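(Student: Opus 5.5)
The plan is to use the local description from \cref{cor: local inverse theorem upgraded}\ref{itm local inverse theorem - upgraded 1}. For all but $\oh(N_s)$ many $n \in \{N_{s-1}+1,\ldots,N_s\}$, the set $(A-n)\cap\{0,\ldots,H_s-1\}$ agrees up to $\oh(H_s)$ elements with
\[
A_n = \big(h\cdot\Bohr(h(\theta+\epsilon_s),I_n) - a_n\big)\cap\{0,\ldots,H_s-1\},
\]
where $|I_n| = \alpha_0+\oh(1)$. Local ergodicity asks for an average over windows of a fixed large length $H$, with $H$ sent to infinity only after $s$. So we must transfer equidistribution from windows of length $H_s$ down to windows of bounded length $H$. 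Since $\theta\notin\Q$, the frequency $\theta+\epsilon_s$ is, at bounded scales, indistinguishable from $\theta$, because $H\epsilon_s\to 0$.

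\textbf{Step 1.} Fix $H$. Cut each window $\{n,\ldots,n+H_s-1\}$ into blocks of length $H$. Averaging over $n$ and then over the blocks, it suffices to bound the average over $x\in\{0,\ldots,H_s-H\}$ of
\[
\Big|\frac1H\sum_{j=1}^H \1_{A_n}(x+j)-\alpha\Big|.
\]
The $\oh(H_s)$ exceptional elements contribute $\oh(1)$, since each element lies in at most $H$ blocks and we divide by $H_s$.

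\textbf{Step 2.} Inside a block, $\1_{A_n}(x+j)$ is nonzero only when $x+j\equiv -a_n \pmod h$. In that case it equals $\1_{I_n}\big(h(\theta+\epsilon_s)m\big)$ with $m=(x+j+a_n)/h$. Now apply \cref{lem: discrepancy for irrational Bohr set} to the frequency $h(\theta+\epsilon_s)$. Because $\theta$ is irrational, for fixed $Q$ we have $\min_{q\le Q}\|qh\theta\|_\T=\delta_Q>0$, and this bound persists for $\theta+\epsilon_s$ once $s$ is large. The lemma then bounds the block discrepancy uniformly in $x$ and $n$ by
\[
C\Big(\frac1Q+\frac{h\log Q}{H\delta_Q}\Big) + \oh_{H}(1).
\]
The resulting block density is $\tfrac1h|I_n| = \alpha+\oh(1)$. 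Letting $s\to\infty$, then $H\to\infty$, then $Q\to\infty$ gives $\|\1_A-\alpha\|$ locally ergodic.

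\textbf{Step 3.} Next treat the correlations. Write $q=a/b$ and consider $F(m)=\1_A(m+t_1)\cdots\1_A(m+t_k)e(mq)$. On a good window, $\1_A(m+t_i)$ is nonzero only when $m\equiv -a_n-t_i \pmod h$. Hence the product vanishes unless all $t_i$ are congruent mod $h$. In that case it is supported on one class $m\equiv c\pmod h$, where it equals $\prod_i \1_{I_n - t_i(\theta+\epsilon_s)}(m(\theta+\epsilon_s))$, up to a constant phase shift. This is a Bohr-interval indicator $G(m\theta')$ with $G=\1_{I'}$, where $I'$ is an intersection of intervals. Writing $m = c+h\ell$, the summand becomes $e(cq)\,G\big((c+h\ell)\theta'\big)\,e(h\ell q)$, and $hq\notin\Z$ because $q\notin \Z/h$.

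So we need the averages
\[
\frac1{H'}\sum_{\ell} G(\ell h\theta' + u)\,e(\ell\, hq)
\]
to be small for all $u$, for $H'$ large, and uniformly for $\theta'$ near $\theta$. To get this, approximate $G$ in $L^1$ by a trigonometric polynomial $\sum_{|r|\le R}\hat G(r)e(r\,\cdot)$. Each term then gives a geometric sum with frequency $rh\theta' + hq$. This frequency stays bounded away from $\Z$: for $r\neq 0$ this follows from irrationality of $\theta$ together with rationality of $hq$, and for $r=0$ from $hq\notin\Z$. The $L^1$ approximation error is controlled by equidistribution of $\ell h\theta'$ on blocks, via Step 2 with a regularized $G$ (sandwich $G$ between smooth functions). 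Summing the blocks gives local ergodicity of $F$ along $\mathbf{N}$. Note that $\theta'$ shifts with $s$, but the bounds above are uniform in a neighborhood of $\theta$.

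The main obstacle is the order of limits: the description is only valid at scale $H_s$, while the fixed-$H$ averages need uniform discrepancy control. Lemma \cref{lem: discrepancy for irrational Bohr set}, with $\delta_Q$ fixed by irrationality and independent of $s$, supplies exactly this uniformity. A second point to check is that the good windows cover all but $\oh(N_s)$ of $[N_s]$, so the bad $n$ contribute negligibly. This uses the condition $\lim_{s\to\infty} N_s/N_{s+1}=0$.
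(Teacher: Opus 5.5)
Your proposal is correct. Steps 1--2 are essentially the paper's argument for $\1_A-\alpha$. You pass to sliding windows of fixed length inside the good $H_s$-windows, replace $A-n$ by $h\cdot\Bohr(h(\theta+\epsilon_s),I_n)-a_n$, and apply \cref{lem: discrepancy for irrational Bohr set}. The key input is a lower bound on $\min_{q\le Q}\|qh(\theta+\epsilon_s)\|_{\T}$ that depends only on $Q$, and hence is uniform in $s$, $n$, $x$ and $I_n$.

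For the twisted correlations you take a genuinely different route. The paper splits each window into residue classes modulo the denominator $d$ of $q$, on which $e(mq)$ is constant. It then applies the discrepancy lemma to the frequency $dh'(\theta+\epsilon_s)$, with $h'=h/\gcd(d,h)$, to show that the class averages $\mathcal{E}_r$ of the Bohr product are asymptotically equal for all admissible $r$. The twisted average therefore collapses to a constant times the complete sum $\sum_{r'=0}^{d'-1}e(r'\gcd(d,h)q)=0$, where $d'=d/\gcd(d,h)\geq 2$ because $q\notin\frac{\Z}{h}$.

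You instead restrict first to the single surviving class mod $h$, reducing to averages $\frac1L\sum_{\ell}G(u+\ell h\theta')e(\ell hq)$. You then kill these by approximating $G$ with a trigonometric polynomial and summing geometric series, using that $rh\theta+hq\notin\Z$ for all $|r|\le R$. This works provided the approximation is made uniform:
\begin{itemize}
\item the sandwich functions and their Fej\'er approximants need degree and coefficient bounds depending only on the smoothing width, not on the position of $I'_n$, which moves with $n$ and $s$;
\item $\theta'=\theta+\epsilon_s$ must lie in an $R$-dependent neighbourhood of $\theta$, which holds for large $s$;
\item $I'_n$ may be a union of up to $k$ arcs when $|I_n|>\frac12$, which only changes constants.
\end{itemize}

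The paper's route needs nothing beyond the discrepancy lemma, at the cost of the $\gcd(d,h)$ bookkeeping. Yours trades that bookkeeping for an analytic smoothing step. In exchange it isolates the actual mechanism, namely that the twist $hq$ does not resonate with the spectrum $\Z h\theta$. As a result it applies verbatim to any $q$ with $hq\notin\Z+\Z h\theta$, irrational $q$ included.
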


\begin{proof}
    For $s \in \N$ and $n \in \{N_{s-1}+1,\ldots,N_s\}$ for which we have local structural information about $A$ at $n$, let $A_n = (\phi_s^{-1}(I_n) \cap h\Z) - a_n = h \cdot \Bohr(h(\theta + \epsilon_s), I_n) - a_n$.
    
    Fix $Q \in \N$.
    Since $\theta \notin \Q$, we have
    \begin{equation*}
        \delta(Q) = \min_{1 \leq q \leq Q} \left\| q\theta \right\| > 0
    \end{equation*}
    and
    \begin{equation*}
        \min_{1 \leq q \leq Q} \left\| q(\theta+\epsilon_s) \right\| \geq \frac{\delta(Q)}{2}
    \end{equation*}
    for all sufficiently large $s$.
    Hence, if $M \geq \frac{Q \log{Q}}{\delta(Q)}$, then by Lemma \ref{lem: discrepancy for irrational Bohr set} and the estimate $|I_n| = \alpha_0 + \oh(1) = h\alpha + \oh(1)$ from above, we have
    \begin{align*}
        \sup_{x \in \N} \Bigg| \frac{1}{M} \sum_{m=1}^M & \1_{A_n}(x+m) - \alpha \Bigg| \\
        & = \sup_{x \in \N} \left| \frac{|(h \cdot \Bohr(h(\theta+\epsilon_s),I_n)) \cap \{x+1,\ldots,x+M\}|}{M} - \alpha \right| \\
        & = \sup_{y \in \N} \left| \frac{|\Bohr(h(\theta+\epsilon_s),I_n) \cap \{y + 1, \ldots, y + \lfloor M/h \rfloor\}|}{M} - \alpha \right| + \Oh \left( \frac{1}{M} \right) \\
        & = \frac{1}{h} \cdot \sup_{y \in \N} \left| \frac{|\Bohr(h(\theta+\epsilon_s),I_n) \cap \{y + 1, \ldots, y + \lfloor M/h \rfloor\}|}{\lfloor M/h \rfloor} - h\alpha \right| + \Oh \left( \frac{1}{M} \right) \\
        & \ll \frac{1}{Q} + \frac{\log{Q}}{\lfloor M/h \rfloor \delta(Q)/2} + \frac{1}{M} + \oh(1) \\
        & \ll \frac{1}{Q} + \oh(1).
    \end{align*}
    Therefore,
    \begin{multline*}
        \frac{1}{N_s} \sum_{n=1}^{N_s} \left| \frac{1}{M} \sum_{m=1}^{M} \1_A(n+m) - \alpha \right|
         = \frac{1}{H_s} \sum_{x=1}^{H_s} \frac{1}{N_s} \sum_{n=1}^{N_s} \left| \frac{1}{M} \sum_{m=1}^{M} \1_A(n+m+x) - \alpha \right| + \Oh \left( \frac{H_s}{N_s} \right) \\
         = \frac{1}{H_s} \sum_{x=1}^{H_s} \frac{1}{N_s} \sum_{n=1}^{N_s} \left| \frac{1}{M} \sum_{m=1}^{M} \1_{A_n}(x+m) - \alpha \right| + \oh(1) \ll \frac{1}{Q} + \oh(1).
    \end{multline*}
    Taking a limit first as $s \to \infty$ and then as $Q \to \infty$, we have
    \begin{equation*}
        \limsup_{M \to \infty} \limsup_{s \to \infty} \frac{1}{N_s} \sum_{n=1}^{N_s} \left| \frac{1}{M} \sum_{m=1}^{M} \1_A(n+m) - \alpha \right| = 0.
    \end{equation*}
    That is, $\1_A - \alpha$ is locally ergodic along $\mathbf{N}$.

    Fix $k \in \N$, $t_1, \ldots, t_k \in \N$, and $q \in \Q \setminus \frac{\Z}{h}$.
    Similarly to the above, we may approximate
    \begin{equation*}
        f(m) = \1_A(m+t_1) \ldots \1_A(m+t_k) e(mq)
    \end{equation*}
    locally (for $m \in \{n, n+1,\ldots,n+H_s-1\}$ with $n \in \{N_{s-1}+1,\ldots,N_s\}$) by
    \begin{equation*}
        f_n(m) = \1_{A_n}(m-n+t_1) \ldots \1_{A_n}(m-n+t_k) e((m-n)q) e(nq).
    \end{equation*}
    We can write
    \begin{equation*}
        f_n(n-a_n+x) = e((n-a_n)q) e(xq)\prod_{j=1}^k \1_{h\N}(x+t_j) \1_{I_n}((x+t_j)(\theta+\epsilon_s)).
    \end{equation*}
    Let $d \in \N$ be minimal such that $dq \in \Z$.
    By assumption, $d \nmid h$.
    Given $M \in \N$, we may decompose $[M]$ into residue classes mod $d$ to estimate
    \begin{align*}
        \left| \frac{1}{M} \sum_{m=1}^M f_n(n-a_n+x+m) \right|
         & = \left| \frac{1}{d} \sum_{r=0}^{d-1} \frac{1}{M/d} \sum_{m=1}^{M/d} f_n(n-a_n+x+md+r) \right| + \Oh \left( \frac{d}{M} \right) \\
         & = \left|  \frac{1}{d} \sum_{r=0}^{d-1} e(rq) \mathcal{E}_r(x,M/d) \right| + \Oh \left( \frac{d}{M} \right)
    \end{align*}
    for $x \in \{a_n,a_n+1,\ldots,a_n+H_s-M-1\}$,
    where
    \begin{equation*}
        \mathcal{E}_r(M/d) = \frac{1}{M/d} \sum_{m=1}^{M/d} \prod_{j=1}^k \1_{h\N}(x+md+r+t_j) \1_{I_n}((x+md+r+t_j)(\theta+\epsilon_s)).
    \end{equation*}
    If there exist $j_1, j_2$ such that $t_{j_1} \not\equiv t_{j_2} \pmod{h}$, then
    \begin{equation*}
        \1_{h\N}(x+md+r+t_{j_1}) \1_{h\N}(x+md+r+t_{j_2}) = 0
    \end{equation*}
    for all $m$, so $\mathcal{E}_r(x,M/d) = 0$.
    Suppose $t_j \equiv t \pmod{h}$ for every $j \in \{1,\ldots,k\}$.
    Then we can write
    \begin{equation*}
        \prod_{j=1}^k \1_{h\N}(x+md+r+t_j) = \1_{h\N-r-t-x}(md)
    \end{equation*}
    Let $\ell = \gcd(d,h)$ and $d' = \frac{d}{\ell}$ and $h' = \frac{h}{\ell}$.
    Since $d \nmid h$, we have $d' \geq 2$.
    Also,
    \begin{equation*}
        \1_{h\N-r-t-x}(md) = \begin{cases}
            \1_{h'\N-\frac{r-t-x}{\ell}}(md'), & \text{if}~\ell \mid r-t; \\
            0, & \text{if}~\ell \nmid r-t
        \end{cases}
    \end{equation*}
    Therefore, if $\ell \nmid r-t-x$, we have $\mathcal{E}_r(x,M/d) = 0$.
    Let $r \in \{0,1,\ldots,d-1\}$ such that $\ell \mid r-t-x$.
    Then taking $r' \equiv \frac{r-t-x}{\ell} (d')^{-1} \pmod{h'}$, we have
    \begin{align*}
        \mathcal{E}_r(M/d) & = \frac{1}{M/d} \sum_{m=1}^{M/d} \1_{h'\N-\frac{r-t-x}{\ell}}(md') \prod_{j=1}^k \1_{I_n}((x+md+r+t_j)(\theta+\epsilon_s)) \\
         & = \frac{1}{M/d} \sum_{m=1}^{M/d} \1_{h'\N-r'}(m) \prod_{j=1}^k \1_{I_n}((x+md+r+t_j)(\theta+\epsilon_s)) \\
         & = \frac{1}{M/d} \sum_{m=1}^{M/(dh')} \prod_{j=1}^k \1_{I_n}((x+mdh' - dr' + r + t_j)(\theta+\epsilon_s)) + \Oh \left( \frac{dh'}{M} \right) \\
         & = \frac{1}{M/d} \sum_{m=1}^{M/(dh')} \1_{I_{n,r}}(x+mdh'(\theta+\epsilon_s)) + \Oh \left( \frac{dh'}{M} \right),
    \end{align*}
    where
    \begin{equation*}
        I_{n,r} = \bigcap_{j=1}^k \left( I_n +(dr'-r-t_j)(\theta+\epsilon_s) \right).
    \end{equation*}
    Note that if we let
    \begin{equation*}
        I'_n = \bigcap_{j=1}^k (I_n - t_j(\theta+\epsilon_s)),
    \end{equation*}
    then $I_{n,r} = I'_n + (dr'-r)(\theta+\epsilon_s)$, so the length $|I_{n,r}| = |I'_n|$ does not depend on $r$.
    Applying \cref{lem: discrepancy for irrational Bohr set} as above with $M \geq \frac{Q\log{Q}}{\delta(Q)}$, we have
    \begin{equation*}
        \left| \mathcal{E}_r(x,M/d) - h'|I'_n| \right| \ll \frac{1}{Q}.
    \end{equation*}
    Putting everything together,
    \begin{align*}
        \left| \frac{1}{M} \sum_{m=1}^M f_n(n-a_n+m) \right|
         & \ll \left| \frac{1}{d} \sum_{r=0}^{d-1} e(rq) \mathcal{E}_r(M/d) \right| + \frac{1}{M} \\
         & = \frac{1}{\ell} \left| \frac{1}{d'} \sum_{r'=0}^{d'-1} e(r'\ell q) \mathcal{E}_{r'\ell+t}(M/d) \right| + \frac{1}{M} \\
         & \ll \frac{h'|I'_n|}{\ell} \left| \frac{1}{d'} \sum_{r'=0}^{d'-1} e(r'\ell q) \right| + \frac{1}{Q}.
    \end{align*}
    But $\ell q$ is a rational number with denominator $d' \geq 2$, so $\sum_{r'=0}^{d'-1} e(r'\ell q) = 0$.
    Thus,
    \begin{equation*}
        \lim_{s \to \infty} \frac{1}{N_s} \sum_{n=1}^{N_s} \left| \frac{1}{M} \sum_{m=1}^{M} f(n+m) \right|
         = \lim_{s \to \infty} \frac{1}{H_s} \sum_{x=1}^{H_s} \frac{1}{N_s} \sum_{n=1}^{N_s} \left| \frac{1}{M} \sum_{m=1}^{M} f_n(n-a_n+x+m) \right|
         \ll \frac{1}{Q},
    \end{equation*}
    so $f$ is locally ergodic along $\mathbf{N}$.
\end{proof}

\begin{Proposition} \label{prop: local ergodicity of B irrational frequency}
    If $\theta \notin \Q$, then $\1_B - \beta$ is locally ergodic along $\mathbf{N}$.
\end{Proposition}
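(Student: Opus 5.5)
We follow the first half of the proof of \cref{prop: local ergodicity of A irrational frequency}. We again use the local description of $B$ from \cref{cor: local inverse theorem upgraded}\ref{itm local inverse theorem - upgraded 1}. The difference is that this description is given only at the origin, on $B\cap\{0,1,\ldots,H_s-1\}$, and not at a typical point $n\in\{N_{s-1}+1,\ldots,N_s\}$. So the first step is to transport the structure of $B$ to almost every window $\{m,\ldots,m+H_s-1\}$. We do this exactly as in the second half of the proof of \cref{prop: local to global residues} and at the end of the proof of \cref{prop: no rational frequencies}, with the roles of $A$ and $B$ reversed.

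First we fix a point $n_s=\oh(N_s)$ at which $A$ has the Bohr description $A_{n_s}$. We then apply \cref{thm: local sumset} (with $\mathbf{H}$ a $U^2$ good scale for $B$) to $B-m$ together with the fixed set $A_{n_s}$. Splitting $\tilde B_m$ into residue classes mod $h$ and using $d_{\mathbf{N}}(A+B)=\alpha+\beta$, we apply \cref{thm: cyclic group Kneser} and \cref{lem: density calculuation mod h} (with $q=1$). This shows that for all but $\oh(N_s)$ many $m$, the set $(B-m)\cap\{0,\ldots,H_s-1\}$ has density $1-\oh(1)$ in $h-1$ residue classes mod $h$, and relative density $\beta_0+\oh(1)$ in the remaining class $j_m$. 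On that class the sumset $A'_m+B'_{m,j_m}$ is nearly critical. Since $A'_m$ is, up to $\oh(Q_m)$, a Bohr interval with frequency $h(\theta+\epsilon_s)$, case \ref{itm cyclic sum without obstructions iii} of \cref{thm: cyclic sum without obstructions} applies; it must apply with the same frequency, since parallel Bohr intervals are forced. It gives
\[
|B'_{m,j_m}\triangle \Bohr(h(\theta+\epsilon_s),J'_m)|=\oh(Q_m),\qquad |J'_m|=\beta_0+\oh(1).
\]

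\textbf{Main obstacle: matching the frequency.} We expect this identification step to be the hardest. The cyclic inverse theorem only produces some homomorphism to $\Z/N\Z$. To identify it with $\theta+\epsilon_s$ we argue as follows. A near-equality $|A'+B'|\approx|A'|+|B'|$ with $A'$ a long Bohr interval forces $B'$ to be a Bohr interval of the same homomorphism, because the image of $A'$ under any other character is nearly uniformly distributed. If needed, we use the relation $\min_{q\le k_s}\|q(\theta+\epsilon_s)\|\ge C_s/H_s$ together with the dichotomy in $W_{s,C}$ from \cref{def: U2 good scale} to rule out nearby competing frequencies.

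With the local structure in hand, local ergodicity is a discrepancy computation. Fix $Q$ and set $\delta(Q)=\min_{q\le Q}\|q\theta\|>0$. Then $\min_{q\le Q}\|qh(\theta+\epsilon_s)\|\ge \delta(Q)/2$ for large $s$. For $M\ge Q\log Q/\delta(Q)$ and every $x$, \cref{lem: discrepancy for irrational Bohr set} bounds each window as follows. The $h-1$ full classes contribute $\frac{h-1}{h}M+\oh(M)$. The Bohr class contributes $\frac{\beta_0}{h}M+\Oh(M/Q)+\oh(M)$. Together these give
\[
\Bigl|\tfrac1M\textstyle\sum_{i=1}^M\1_{B}(m+x+i)-\beta\Bigr|\ll \tfrac1Q+\oh(1)
\]
for all but $\oh(N_s)$ many $m$ and all $x\le H_s-M$. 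We average over $x\in[H_s]$ and $m\le N_s$, exactly as in the displayed chain of the previous proof, using $H_s/N_s\to0$ and $\lim_s N_s/N_{s+1}=0$ so that the exceptional sets are negligible. Letting $s\to\infty$ and then $Q\to\infty$ gives
\[
\limsup_{M}\limsup_s \frac1{N_s}\sum_{n\le N_s}\Bigl|\frac1M\sum_{i\le M}\1_B(n+i)-\beta\Bigr|=0,
\]
which is the claim.
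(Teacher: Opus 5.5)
Your route works in outline, but it is genuinely different from the paper's. The paper never transports the structure of $B$ to other windows. It uses the just-proved local ergodicity of $\1_A-\alpha$ to make $\mathbf N$ itself a Gowers scale for $A$ along a sparse subsequence $\mathbf N'=(N_{K_s})$. It then makes $(\mathbf N',\mathbf N)$ a $U^2$ good scale via \cref{lem: slow scales are U2 good} and reapplies \cref{thm: local inverse theorem} at that scale. The ``origin window'' is then all of $[N_s]$, so $B\cap[N_s]=(B_{s,0}\cup B_{s,1}\cup F_s)-b_0$ is read off directly. Local ergodicity splits into the $A$-type discrepancy bound for $B_{s,0}$ and a triviality for $B_{s,1}$.

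That argument is much shorter. However, it needs the frequency produced at the new scale to stay away from rationals of small denominator at fixed scales, whereas the theorem only gives $\min_{q\le k_s}\|q\theta_s\|\ge C_s/N_s$. The paper covers this by appeal to ``the same argument using $\theta\notin\Q$''. Your approach stays at scale $\mathbf H$ and redoes the end of the proof of \cref{prop: no rational frequencies} with the roles reversed. It costs a reversed-role use of \cref{thm: local sumset}, Kneser in each residue class, the cyclic inverse theorem and a frequency-matching step. In exchange, it pins the frequency of $B$'s Bohr part to $\pm h(\theta+\epsilon_s)$, so irrationality is inherited rather than re-argued.

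Two steps need repair.

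\textbf{The residue-class step.} \cref{lem: density calculuation mod h} with $q=1$ does not apply. Its hypothesis $\frac1h\sum_j\beta_j=1-\frac1h$ forces $\beta_0=0$, whereas in case (1$'$) of \cref{cor: local inverse theorem upgraded} you have $\beta_0>0$. Its conclusion, $\min_j\beta_{m,j}=\oh(1)$, is also the opposite of what you want. Argue directly instead. Each $\beta_{m,j}\ge h\beta-(h-1)-\oh(1)=\beta_0-\oh(1)>0$, so Kneser applies in every class. With $J=\{j:\alpha_0+\beta_{m,j}\le 1\}$ one has
\[
\frac1h\sum_{j}\min\{\alpha_0+\beta_{m,j},1\}-(\alpha+\beta)=\frac1h\Bigl((|J|-1)\alpha_0+\sum_{j\notin J}(1-\beta_{m,j})\Bigr).
\]
This is at least $\alpha_0/h$ if $|J|\ge 2$, and equals $(1-\alpha_0-\beta_0)/h$ if $J=\emptyset$. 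Hence $|J|=1$, $\beta_{m,j}=1-\oh(1)$ for $j\ne j_m$, and $\beta_{m,j_m}=\beta_0+\oh(1)$.

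\textbf{The frequency-matching step.} Your stated reason, that ``the image of $A'$ under any other character is nearly uniformly distributed'', is false. Characters within $\Oh(1/Q')$ of $k\gamma$, with $|k|$ bounded, also see structure. A correct version runs as follows.
\begin{itemize}
\item Write $\gamma=h(\theta+\epsilon_s)$. Then $\1_{A'_m}$ has a Fourier coefficient $\gg\sin(\pi\alpha_0)$ at the character of $\Z/Q'\Z$ nearest $\gamma$.
\item The cyclic Bohr interval from case (iii) that approximates $A'_m$ is a union of cosets of $\ker\phi$. Its Fourier transform is supported on multiples $kt/N$, with large coefficients only for bounded $k$. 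So $\gamma$ is within $\Oh(1/Q')$ of $kt/N$.
\item Symmetrically, $t/N$ lies within $\Oh(1/Q')$ of $k'\gamma$ with $k'$ bounded. Hence $\|(kk'-1)\gamma\|\ll 1/Q'$.
\item The bound $\min_{q\le k_s}\|q(\theta+\epsilon_s)\|\ge C_s/H_s$ then forces $kk'=1$.
\end{itemize}
This gives only $t/N=\pm\gamma+\Oh(1/H_s)$, but that is all the discrepancy step uses. The same equidistribution of $A'_m$ in cosets of bounded index is what rules out case (iv) and proper subgroups in case (iii); you should say so explicitly.

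Finally, your bound ``for all $x\le H_s-M$'' holds for the structured approximant, not for $\1_B$ itself. For $B$ it holds only on average over $x$, which is what you actually use.
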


\begin{proof}
    By \cref{prop: local ergodicity of A irrational frequency}, every $U^1$ good scale for $A$ satisfies $\|\1_A - \alpha\|_{U^1(\mathbf{N},\mathbf{H})} = 0$, so we may freely adjust the scale $\mathbf{H}$.
    By taking $\mathbf{K} = (K_s)_{s \in \N}$ to be sufficiently quickly growing and letting $\mathbf{N}' = \mathbf{N}_{\mathbf{K}}$, the pair $(\mathbf{N}',\mathbf{N})$ is a $U^2$~good scale for $A$ by \cref{lem: slow scales are U2 good}.
    Letting this scale play the role of $(\mathbf{N},\mathbf{H})$, we consider the resulting decomposition
    \begin{equation*}
        B \cap [N_s] = (B_{s,0} \cup B_{s,1} \cup F_s) - b_0
    \end{equation*}
    as provided by \cref{thm: local inverse theorem}.
    (The pair $(\mathbf{N},\mathbf{H})$ is not necessarily a Gowers scale for $B$ \emph{a priori}, so we cannot apply \cref{cor: local inverse theorem upgraded} and instead use \cref{thm: local inverse theorem}.)
    We have
    \begin{multline*}
        \limsup_{M \to \infty} \limsup_{s \to \infty} \frac{1}{N_s} \sum_{n=1}^{N_s} \left| \frac{1}{M} \sum_{m=1}^{M} \1_B(n+m) - \beta \right| \\
         \leq \underbrace{\limsup_{M \to \infty} \limsup_{s \to \infty} \frac{1}{N_s} \sum_{n=1}^{N_s} \left| \frac{1}{M} \sum_{m=1}^{M} \1_{B_{s,0}}(n+m) - \frac{\beta_0}{h} \right|}_{[1]} \\
         + \underbrace{\limsup_{M \to \infty} \limsup_{s \to \infty} \frac{1}{N_s} \sum_{n=1}^{N_s} \left| \frac{1}{M} \sum_{m=1}^{M} \1_{B_{s,1}}(n+m) - \left( 1 - \frac{1}{h} \right) \right|}_{[2]}.
    \end{multline*}
    The term [1] is equal to zero by the same argument as in the proof of \cref{prop: local ergodicity of A irrational frequency} using $\theta \notin \Q$, and [2] is also zero because $|([N_s] \setminus h\N) \setminus B_{s,1}| = \oh(N_s)$.
\end{proof}

%SECTION
\section{Completing the proof with ergodic theory}

Combining the results of the preceding sections, we can prove \cref{thm: ergodic or h}, reproduced below.

\begin{named}{\cref{thm: ergodic or h}}{}
Let $A, B \subseteq \N$ with $d(A) = \alpha > 0$ and $d(B) = \beta > 0$ such that $\alpha + \beta < 1$.
Suppose $B$ meets every residue class in $\N$.
Let $\mathbf{N} = (N_s)_{s \in \N}$ be an increasing sequence with $\lim_{s \to \infty} N_s = \infty$ such that $d_{\mathbf{N}}(A+B) = \alpha + \beta$.
Then there exists a subsequence $\mathbf{N}'$ of $\mathbf{N}$ such that either
\begin{enumerate}[label=(\arabic*), leftmargin=*]
    \item\label{itm ergodic or h 1 - second appearance} $\1_A - \alpha$ and $\1_B - \beta$ are locally ergodic along $\mathbf{N}'$ and there exists $h \in \N$ with $h < \frac{1}{1-\beta}$ such that $n \mapsto \1_A(n+t_1) \ldots \1_A(n+t_k) e(nq)$ is locally ergodic along $\mathbf{N}'$ for every $q \in \Q \setminus \frac{\Z}{h}$, every $k \in \N$, and every $t_1, \ldots, t_k \in \N$, or
    \item\label{itm ergodic or h 2 - second appearance} there exist %a subsequence $\mathbf{N}'$ of $\mathbf{N}$, and 
    integers $h \geq 2$ and $a_0, b_0 \in \{0,1,\ldots,h-1\}$ such that $A \subseteq h\N - a_0$, $A + h \sim_{\mathbf{N}'} A$, and $B \sim_{\mathbf{N}'} (\N \setminus h\N) - b_0$.
\end{enumerate}
\end{named}

\begin{proof}
    By \cref{thm: existence of U1 G and HK scales}, let $\mathbf{H}_{G,A}$ and $\mathbf{H}_{G,B}$ be Gowers scales for $A$ and $B$ respectively.
    Define $\mathbf{H}_G = (H_{G,s})_{s \in \N}$ by $H_{G,s} = \max\{H_{G,A,s}, H_{G,B,s}\}$.
    Then by \cref{thm:U^2 structure theorem}, there is a subsequence $(\mathbf{N}',\mathbf{H}_G')$ of $(\mathbf{N},\mathbf{H}_G)$ such that there exist scales $\mathbf{H}^-, \mathbf{H}^+$ such that $\mathbf{H}_G' \preceq \mathbf{H}^- \prec \mathbf{H}^+ \preceq \mathbf{N}'$ and for every $\mathbf{H}$ with $\mathbf{H}^- \prec \mathbf{H} \prec \mathbf{H}^+$, the pair $(\mathbf{N}',\mathbf{H})$ is a $U^2$ good scale for $A$.
    Applying \cref{thm:U^2 structure theorem} again, after replacing $(\mathbf{N}', \mathbf{H}^+,\mathbf{H}^-)$ by a subsequence, there exists $\mathbf{H}$ with $\mathbf{H}^- \prec \mathbf{H} \prec \mathbf{H}^+$ such that $(\mathbf{N}',\mathbf{H})$ is a $U^2$ good scale for $B$.
    Hence, $(\mathbf{N}',\mathbf{H})$ is a $U^2$ good scale and a Gowers scale for both $A$ and $B$.

    By passing to a further subsequence, we may assume $\lim_{s \to \infty} N'_s/N'_{s+1} = 0$.
    Now by \cref{cor: local inverse theorem upgraded}, there exists $h \in \N$ such that either \ref{itm local inverse theorem - upgraded 1} or \ref{itm local inverse theorem - upgraded 2} holds.
    The property \ref{itm local inverse theorem - upgraded 2} gives conclusion \ref{itm ergodic or h 2 - second appearance}.
    Suppose \ref{itm local inverse theorem - upgraded 1} holds.
    Then by \cref{prop: no rational frequencies}, the frequency $\theta$ is irrational.
    \cref{prop: local ergodicity of A irrational frequency} and \cref{prop: local ergodicity of B irrational frequency} then imply that \ref{itm ergodic or h 1 - second appearance} holds.
\end{proof}

In order to prove \cref{thm: main}, it remains only to prove the following, which is a restatement of \cref{thm: 1_A is locally ergodic}.

\begin{Theorem} \label{thm: locally ergodic irrational Bohr}
    Let $A, B \subseteq \N$ with $d(A) = \alpha > 0$ and $d(B) = \beta > 0$ such that $\alpha + \beta < 1$.
    Suppose $B$ meets every residue class in $\N$.
    Let $\mathbf{N} = (N_s)_{s \in \N}$ be an increasing sequence with $\lim_{s \to \infty} N_s/N_{s+1} = 0$ such that $d_{\mathbf{N}}(A+B) = \alpha + \beta$.
    Suppose $\1_A - \alpha$ and $\1_B - \beta$ are locally ergodic along $\mathbf{N}$ and there exists $h \in \N$ with $h < \frac{1}{1-\beta}$ such that $n \mapsto \1_A(n+t_1) \ldots \1_A(n+t_k) e(nq)$ is locally ergodic along $\mathbf{N}$ for every $q \in \Q \setminus \frac{\Z}{h}$, every $k \in \N$, and every $t_1, \ldots, t_k \in \N$.
    Then there exist $h \in \N$, $a_0, b_0 \in \{0,1,\ldots,h-1\}$, an irrational $\theta \in \T$, and closed intervals $I, J \subseteq \T$ such that if
    \begin{equation*}
        A_0 = A + a_0, \quad B_0 = (B + b_0) \cap h\N, \quad \text{and} \quad B_1 = (B+b_0) \setminus h\N,
    \end{equation*}
    then $A_0 \subseteq h\N$, $B_1 \sim \N \setminus h\N$, $A_0 \sim \{n \in h\N : n\theta \in I\}$, and $B_0 \sim \{n \in h\N : n\theta \in J\}$.
\end{Theorem}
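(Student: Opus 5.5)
We follow the dynamical route sketched in \cref{sec: proof outline}.

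\textbf{Step 1: the dynamical model.} We first pass to a subsequence of $\mathbf{N}$ along which the translation-invariant $*$-algebra generated by $\1_A$, $\1_B$ and the characters $e(n\gamma)$ admits averages. Let $(X_A,\mu_A,T_A)$ and $(X_B,\mu_B,T_B)$ be the Furstenberg systems of $\1_A$ and $\1_B$, with clopen sets $E_A,E_B$ satisfying $\Phi(\1_A)=\1_{E_A}$ and $\Phi(\1_B)=\1_{E_B}$. By \cref{thm: U^1 uniformity implies constant measure in ergodic components}, local ergodicity of $\1_A-\alpha$ gives $\mu_{A,x}(E_A)=\alpha$ for almost every ergodic component. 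Applied to the products $\1_A(n+t_1)\cdots\1_A(n+t_k)$, it shows that every correlation of $\1_A$ is a.e.\ constant on ergodic components. Hence the ergodic components are all isomorphic, and we may replace $\mu_A$ by a single ergodic measure. The same holds for $B$.

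Next we compare eigenvalues. The hypothesis that $\1_A(n+t_1)\cdots\1_A(n+t_k)e(nq)$ is locally ergodic for every $q\in\Q\setminus\frac{\Z}{h}$ says that no rational eigenvalue outside $\frac{\Z}{h}$ appears. Consequently the maximal common rotational factor $(G,m,\theta)$ has at most $h$ connected components, and $C_G\le h<\frac{1}{1-\beta}$. Using \cref{lem: extension}, we make the factor maps $\pi_A,\pi_B$ continuous, with $\pi_A(x_A)=\pi_B(x_B)=0$ at transitive points.

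\textbf{Step 2: sumset bounds and reduction to $G$.} We show $\mu_A(B+E_A)\le d_{\mathbf{N}}(A+B)=\alpha+\beta$. The idea is to approximate $B$ by the finite sets $B\cap[K]$. For each such finite set, $\bigcup_{b\le K,\,b\in B}T^bE_A$ is clopen, and its measure is the $\mathbf{N}$-density of $(B\cap[K])+A$, which is at most $d_{\mathbf{N}}(A+B)$. The symmetric bound for $\mu_B(A+E_B)$ requires a transitive-point argument.

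Set $\varphi_A=\E[\1_{E_A}\mid G]$ and $\varphi_B=\E[\1_{E_B}\mid G]$. We then prove that $S=\{\varphi_A*\varphi_B>0\}$ satisfies $m(S)\le\alpha+\beta$. The mechanism is that the Kronecker factor is characteristic for the relevant averages, so $\mu_A(T^{-n}E_A\cap F)$ for the dynamical sumset is governed by $\varphi_A$. Concretely, a.e.\ $x$ with $\varphi_A*\varphi_B(\pi(x))>0$ lies in $B+E_A$ up to null sets.

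\textbf{Step 3: the equality case in $G$, and the main obstacle.} Let $C_0=\{\varphi_A>0\}$ and $D_0=\{\varphi_B>0\}$, so that $m(C_0)\ge\alpha$, $m(D_0)\ge\beta$ and $C_0+D_0\subseteq S$ up to null sets. Kneser's \cref{thm_Kneser_for_compact_groups} applies; any finite-index obstruction $H$ in $G$ would force a periodicity incompatible with $C_G<\frac{1}{1-\beta}$ together with $B$ meeting every residue class. It therefore forces $m(C_0)=\alpha$ and $m(D_0)=\beta$, and hence $\varphi_A=\1_{C_0}$ and $\varphi_B=\1_{D_0}$ a.e. The latter means that $E_A=\pi_A^{-1}(C_0)$ and $E_B=\pi_B^{-1}(D_0)$ mod null sets.

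Now \cref{thm_inverse_theorem_sumsets_compact_groups} gives a finite-index subgroup $H\le G$ and a decomposition of $C_0,D_0$. Case (ii) would require $m(D_0)=1-\frac1{h'}$ with $h'$ at most $C_G$, which is excluded by $C_G<\frac{1}{1-\beta}$. So case (i) holds: there is a character $\phi\colon H\to\T$, and $C_0,D_0$ are lifted parallel Bohr intervals. Pulling back along $n\mapsto n\theta\in G$ (irrational, since $G$ is infinite) and $n\mapsto T^n x_A$, and using that intervals have null boundary so genericity transfers, yields $A_0\sim\{n\in h\N:n\theta\in I\}$, $B_0\sim\{n\in h\N: n\theta\in J\}$ and $B_1\sim\N\setminus h\N$.

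The main obstacle is Step 2. Proving that the rotational factor is characteristic for the one-sided dynamical sumset, and obtaining $\mu_B(A+E_B)\le\alpha+\beta$ even though $A$ (not just finite pieces of it) acts, will require care with genericity of $x_A,x_B$ along $\mathbf{N}$.
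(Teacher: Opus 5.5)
Your outline has the same architecture as the paper's proof: Furstenberg model, restricted rational spectrum giving $C_G\le h<\frac{1}{1-\beta}$, $m(S)\le\alpha+\beta$ through the Kronecker factor, Kneser on $G$ with the index bound, then \cref{thm_inverse_theorem_sumsets_compact_groups}. However, two steps fail as written.

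\emph{The reduction to an ergodic measure.} This step is unjustified. The twisted hypothesis excludes $q=0\in\frac{\Z}{h}$, so nothing says that $\1_A(n+t_1)\cdots\1_A(n+t_k)$ minus its mean is locally ergodic. The Furstenberg system of $A$ therefore need not be ergodic a priori, and for $B$ you have no product hypothesis at all. The paper instead picks one ergodic component $\nu_{A,y}$ with $\nu_{A,y}(F_A)=\alpha$, with $\nu_{A,y}(B+F_A)\le\alpha+\beta$ (obtained by averaging the bound over the ergodic decomposition), and with rational spectrum inside $\frac{\Z}{h}$ (\cref{lem: rational spectrum}). For that measure $x_A$ is only transitive, not generic.

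\emph{The final pull-back.} This is the more serious failure: it cannot produce the stated conclusion. Even granting genericity, transferring $E_A=_{\mu_A}\pi_A^{-1}(C_0)$ along the orbit of $x_A$ gives at best $A\sim_{\mathbf{N}}$ (a Bohr set), because $\mu_A$ only records $A$ along $\mathbf{N}$. The theorem asserts equivalence in natural density and the \emph{exact} containment $A_0\subseteq h\N$. The latter is invisible to $\mu_A$ altogether: adding to $A$ a zero-density set of elements in a wrong residue class does not change $\mu_A$.

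The missing idea is the paper's enlargement $C=C_0\cup A\theta$, $D=D_0\cup B\theta$.
\begin{itemize}
\item Once Kneser forces equality, you have $A+E_B=_{\mu_B}\pi_B^{-1}(C_0+D_0)$ and $B+E_A=_{\mu_A}\pi_A^{-1}(C_0+D_0)$.
\item Since $T_B^aE_B=_{\mu_B}\pi_B^{-1}(a\theta+D_0)$, this gives $a\theta+D_0\subseteq_m C_0+D_0$ for every single $a\in A$, and likewise $C_0+b\theta\subseteq_m C_0+D_0$ for every $b\in B$.
\item Hence $C+D=_m C_0+D_0$, and still $m(C+D)=m(C)+m(D)$.
\item The inverse theorem applied to $C,D$ gives exact containments such as $C\subseteq x+\phi^{-1}(I)$. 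So every element of $A+a_0$ lies in a Bohr set $\{n\in k\N: n\tilde\theta\in\tilde I\}$, with $k=[G:H]$, whose natural density equals $m(C)=\alpha=d(A)$.
\item This yields both $A_0\subseteq k\N$ and $A_0\sim$ Bohr set. The sets $B_0,B_1$ are handled by the analogous containments plus complementary upper/lower density bounds.
\end{itemize}
This is exactly where your otherwise unused bound $\mu_B(A+E_B)\le\alpha+\beta$ is needed. It is proved just like the one for $\mu_A$; no transitive-point argument is involved.

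Two smaller repairs are also needed.
\begin{itemize}
\item The claim $C_0+D_0\subseteq S$ modulo null sets fails for arbitrary representatives of $\{\varphi_A>0\}$ and $\{\varphi_B>0\}$: one stray point of $C_0$ adds a whole translate of $D_0$. You need \cref{lem: replace convolution by sumset} to choose good representatives.
\item The characteristic-factor step is not a matter of genericity along $\mathbf{N}$. What is needed is a F{\o}lner sequence $\Psi$ with $\frac{1}{|\Psi_N|}\sum_{n\in\Psi_N}\1_{E_B}(T_B^nx_B)\1_{E_A}(T_A^{-n}x)\to(\varphi_A*\varphi_B)(\pi_A(x))$ for a.e.\ $x$. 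The paper obtains this from van der Corput (\cref{lem: average equals convolution}) and transitivity of $x_B$ (\cref{prop: average equals convolution at given point}).
\end{itemize}
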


The local ergodicity assumption on $\1_A - \alpha$ and $\1_B - \beta$ enables us to transfer the problem into a dynamical setting, where we apply tools from ergodic theory.
For an overview of this strategy, see \cref{sec: proof outline}.

%SUBSECTION
\subsection{Dynamical model}
\label{sec_dyn_model}

As a first step in the dynamical approach to proving \cref{thm: locally ergodic irrational Bohr}, we produce dynamical systems modeling the sets $A$, $B$, and $A+B$.

Given an invertible measure-preserving system $(X, \mu, T)$, a measurable subset $E \subseteq X$, and a set $D \subseteq \N$, we define the sumset $D + E$ by
\begin{equation*}
	D + E = \bigcup_{d \in D} T^dE.
\end{equation*}

\begin{Theorem} \label{thm: Furstenberg system}
	There exist ergodic invertible measure-preserving systems $(X_A, \mu_A, T_A)$ and $(X_B, \mu_B, T_B)$, transitive points $x_A \in X_A$ and $x_B \in X_B$, clopen sets $E_A \subseteq X_A$ and $E_B \subseteq X_B$, and continuous factor maps $\pi_A : X_A \to G$ and $\pi_B : X_B \to G$, where $(G, m, \theta)$ is the maximal common rotational factor, such that:
	\begin{enumerate}[label=(\arabic*),leftmargin=*]
		\item	$\pi_A(x_A) = \pi_B(x_B) = 0$,
		\item	$A = \{n \in \N : T_A^n x_A \in E_A\}$ and $B = \{n \in \N : T_B^n x_B \in E_B\}$,
		\item	$\mu_A(E_A) = \alpha$ and $\mu_B(E_B) = \beta$,
		\item	$\max\{\mu_A(B + E_A), \mu_B(A + E_B)\} \leq \alpha + \beta$, and
        \item the number $C_G$ of connected components of $G$ satisfies $C_G < \frac{1}{1-\beta}$.
	\end{enumerate}
\end{Theorem}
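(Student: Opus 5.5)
We will build the two systems separately as Furstenberg systems and then couple them through their Kronecker factors.

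Since $d_{\mathbf{N}}$ is only a limit along $\mathbf{N}$, we first pass to a subsequence along which the algebra generated by $\1_A$ and $\1_{A+B}$, and separately the algebra generated by $\1_B$ and $\1_{A+B}$, admit averages. This uses the diagonal argument of \cref{sec: Furstenberg systems}. Let $(Y_A,\nu_A,S_A)$ be the Furstenberg system of $\mathcal{A}_{\1_A}$ along $\mathbf{N}$, and let $y_A\in Y_A$ be the point corresponding to the evaluation character at $0$. Its orbit is dense, since $Y_A$ is the spectrum of the algebra generated by translates of $\1_A$. Let $E_A$ be the clopen set with $\1_{E_A}=\Phi(\1_A)$; then $A=\{n:S_A^ny_A\in E_A\}$ and $\nu_A(E_A)=d_{\mathbf{N}}(A)=\alpha$.

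Ergodicity of $\nu_A$ comes from local ergodicity. By \cref{thm: U^1 uniformity implies constant measure in ergodic components}, $\mu_x(E_A)=\alpha$ for almost every ergodic component, and the same holds for every element of $\mathcal{A}_{\1_A}$ after centering. Applied to products $\1_A(n+t_1)\cdots\1_A(n+t_k)$ minus their means, this says that every $\Phi(a)$ has constant conditional expectation on $\mathcal{I}$. Such functions are dense, so $\mathcal{I}$ is trivial. Local ergodicity of these products is not stated directly in the hypothesis. For the case $q=0\in\frac{\Z}{h}$ we would derive it from the Bohr structure obtained in \cref{prop: local ergodicity of A irrational frequency}. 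If that is not available, we instead take an ergodic component $\mu_{x}$ for which a generic point along a F\o lner sequence reproduces the needed statistics. Either way we obtain an ergodic $(Y_A,\nu_A,S_A)$, and the same argument gives $(Y_B,\nu_B,S_B)$.

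Next we let $(G,m,\theta)$ be the maximal common rotational factor: the rotation on the dual of the intersection of the discrete spectra of the two systems. It is distal, so \cref{lem: extension} applies to each system. This yields ergodic extensions $(X_A,\mu_A,T_A)$ and $(X_B,\mu_B,T_B)$ that are measurably isomorphic to $Y_A$ and $Y_B$, with transitive points $x_A\mapsto y_A$, $x_B\mapsto y_B$ and continuous factor maps $\pi_A,\pi_B$ onto $G$. Pulling back $E_A,E_B$ gives clopen sets with properties (2) and (3). Translating $G$ by $-\pi_A(x_A)$ gives $\pi_A(x_A)=0$. The normalization $\pi_B(x_B)=0$ also needs care, because $\pi_B$ must be compatible with the same rotation. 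Since $\pi_B$ is built from eigenfunctions, which are continuous on $X_B$ by construction, we can compose with a rotation of $G$. This commutes with $\theta$, so property (1) follows.

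For property (4), fix a finite $D\subset B$. The set $\bigcup_{d\in D}T_A^{-d}E_A$ (with the sign convention matching $A+d$) is clopen. Its $\mu_A$-measure equals $d_{\mathbf{N}}\bigl(\bigcup_{d\in D}(A+d)\bigr)\le \overline{d}_{\mathbf{N}}(A+B)=\alpha+\beta$, since the indicator of $A+D$ lies in the algebra. Letting $D\uparrow B$ and using continuity of measure gives $\mu_A(B+E_A)\le\alpha+\beta$. The bound $\mu_B(A+E_B)\le\alpha+\beta$ follows symmetrically. Here we need $\1_{B+D'}$ to have a limit along $\mathbf{N}$ for finite $D'\subset A$, which holds after passing to a subsequence once more.

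Property (5) is the main obstacle. If $G$ had $C_G$ components, then $e(n\cdot j/C_G)$ would be an eigenvalue of $X_A$, so some $\1_{E_A}\cdot$(eigenfunction) would correlate with a character of finite order. Concretely, $n\mapsto\1_A(n)e(nj/C_G)$ would fail to be locally ergodic, or more precisely some product $\1_A(n+t_1)\cdots\1_A(n+t_k)e(nq)$ would fail, since spectral measures of functions in $L^2$ are generated by the algebra. By hypothesis this forces $q=j/C_G\in\frac{\Z}{h}$, hence $C_G\mid h$ and $C_G\le h<\frac1{1-\beta}$. The delicate step is passing from "$q$ is an eigenvalue of $X_A$" to "some algebra element twisted by $e(nq)$ has nonzero $U^1(\mathbf{N})$ seminorm." We would handle this by approximating the eigenfunction in $L^2$ by elements of $\mathcal{A}_{\1_A}$, and by expanding polynomial expressions in translates of $\1_A$ so that they reduce to the product form in the hypothesis.
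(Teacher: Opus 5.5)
\textbf{The gap is the ergodicity step, and it propagates to (3)--(5).} Your argument that $\mathcal{I}$ is trivial needs every centered element of $\mathcal{A}_{\1_A}$ to be locally ergodic along $\mathbf{N}$. In particular it needs the untwisted products $\1_A(n+t_1)\cdots\1_A(n+t_k)$ with $k\geq 2$. The hypotheses in force here (those of \cref{thm: locally ergodic irrational Bohr}) give local ergodicity only for two things: $\1_A-\alpha$, and products twisted by $e(nq)$ with $q\in\Q\setminus\frac{\Z}{h}$.

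These local-ergodicity hypotheses alone do not force ergodicity, and your argument uses nothing else. Consider a set of density $\alpha$ that looks random on long blocks filling a positive proportion of $[N_s]$ and looks like $\{n:n\theta\in[0,\alpha)\}$ ($\theta$ irrational) on the remaining blocks. It satisfies all of them with $h=1$. Yet for suitable $\theta$ the local averages of $\1_A(n)\1_A(n+1)$ take two different values on the two kinds of blocks, so its Furstenberg system is not ergodic.

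Appealing to the Bohr structure behind \cref{prop: local ergodicity of A irrational frequency} uses information that is not among the hypotheses here. That proposition also does not treat untwisted products with $k\geq 2$, so you would have to prove a new version of it.

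\textbf{The ergodic-component fallback is the paper's route, but your proofs of (3)--(5) do not transfer to it.} Each of them uses that $\mu_A$ is the Furstenberg measure itself:
\begin{itemize}
\item In (3) and in your finite-$D$ proof of (4), you identify $\mu_A$-measures with $\mathbf{N}$-averages.
\item In (5), you read an $L^2(\mu_A)$-approximation of an eigenfunction by $\Phi(u)$ as a $\|\cdot\|_{2,\mathbf{N}}$-approximation.
\end{itemize}
Switching to a generic point of the component does not help either. Property (2) forces you to keep $y_A$. Moreover, densities of $A+B$ along a F{\o}lner sequence of shifted intervals are not controlled by $d_{\mathbf{N}}(A+B)$.

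\textbf{What the paper does instead.} It keeps $y_A$, which is still transitive in $Y_A$, and changes only the measure:
\begin{itemize}
\item For (3): $\E[\1_{F_A}\mid\mathcal{I}]=\alpha$, which you already noted via \cref{thm: U^1 uniformity implies constant measure in ergodic components}, gives $\nu_{A,y}(F_A)=\alpha$ for a.e.\ component.
\item For (4): $\int\nu_{A,y}(B+F_A)\,d\nu_A(y)=\nu_A(B+F_A)\leq\alpha+\beta$ gives a set of components of \emph{positive} measure with $\nu_{A,y}(B+F_A)\leq\alpha+\beta$.
\item For (5): \cref{lem: rational spectrum} shows that a.e.\ component satisfies $\sigma_y\cap\Q\subseteq\frac{1}{h}\Z/\Z$. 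Its proof adjoins $n\mapsto e(-nq)$ to the algebra, passes to the larger Furstenberg system, and compares ergodic decompositions. This is needed precisely because an eigenfunction of a single component cannot be approximated in $\|\cdot\|_{2,\mathbf{N}}$.
\end{itemize}
One then picks $y$ in the intersection of these sets and applies \cref{lem: extension} to $(Y_A,\nu_{A,y},S_A)$ with the point $y_A$, and likewise for $B$.

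Your extension step and the normalization of $\pi_A,\pi_B$ are fine. Your finite-$D$ argument for (4) and your eigenfunction-approximation argument for (5), which ends in $C_G\mid h$, are correct for $\nu_A$ itself; the latter is the core of \cref{lem: rational spectrum}. The missing work is transferring them to a single ergodic component, and as written the proposal does not establish the theorem.
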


We prove \cref{thm: Furstenberg system} through a sequence of lemmas.
To begin, we have the following version of the Furstenberg correspondence principle.

\begin{Lemma} \label{lem: properties of Furstenberg measures}
	Let $C \subseteq \N$ be a set with positive natural density $d(C) = \gamma > 0$.
	Let $\mathbf{N} = (N_s)_{s \in \N}$ be a sequence along which $C$ admits correlations.
    Suppose $\1_C - \gamma$ is locally ergodic along $\mathbf{N}$.
	Then there exists a topological dynamical system $(Y, S)$, a clopen subset $F \subseteq Y$, a transitive point $c \in Y$, and an $S$-invariant measure $\nu$ such that $c \in \textup{gen}(\nu, \mathbf{N})$ with the following properties:
	\begin{enumerate}[label=(\arabic*),leftmargin=*]
		\item\label{itm Furstenberg measure 1}	$C = \{n \in \N : S^nc \in F\}$.
		\item\label{itm Furstenberg measure 2}	$\nu(F) = \gamma$.
		\item\label{itm Furstenberg measure 3}	for any set $D \subseteq \N$,
			\begin{equation*}
				 \nu(D+F) \leq \underline{d}_{\mathbf{N}}(D+C).
			\end{equation*}
		\item\label{itm Furstenberg measure 4}	$\mathbb{E} \left[ \1_F \mid \mathcal{I} \right] = \gamma$.
	\end{enumerate}
\end{Lemma}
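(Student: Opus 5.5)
We take the Furstenberg system of $C$ along $\mathbf{N}$ from \cref{sec: Furstenberg systems}, with the dynamics restricted to the orbit closure of a distinguished point. Let $\A = \A_{\1_C} \subseteq \ell^\infty(\Z)$ be the translation-invariant $*$-algebra generated by $\1_C$, extended by zero to $n \le 0$. Since $\1_C$ admits correlations along $\mathbf{N}$, we obtain the Gelfand space $Y$, the homeomorphism $S$ induced by the shift, and the $S$-invariant measure $\nu$ with $\int \Phi(a)\,d\nu = \E_{n\in\mathbf{N}} a(n)$.

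\textbf{Step 1: the point, the set, and property (1).} Concretely, $\overline{\A}$ is generated by a single $\{0,1\}$-valued function and its shifts. So we may identify $Y$ with the orbit closure of $\1_C$ in $\{0,1\}^{\Z}$ under the shift, and we take $c=\1_C$ itself. The point $c$ is transitive by construction. Put $F=\{y: y(0)=1\}$, which is clopen and satisfies $\Phi(\1_C)=\1_F$. Then $S^n c\in F$ if and only if $\1_C(n)=1$, which gives (1). Genericity $c\in\textup{gen}(\nu,\mathbf{N})$ is exactly the defining identity of $\nu$, applied to cylinder functions (which lie in $\A$) and then extended by density.

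\textbf{Step 2: properties (2) and (4).} Property (2) is $\nu(F)=\E_{n\in\mathbf{N}}\1_C(n)=d(C)=\gamma$. Property (4) is exactly \cref{thm: U^1 uniformity implies constant measure in ergodic components}, applied with the local ergodicity hypothesis: $\E[\1_F\mid\mathcal I]=\Phi(f_{\inv})=\gamma$.

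\textbf{Step 3: property (3).} This is the main step. Write $D+F=\bigcup_{d\in D}S^dF$, an open set. By inner regularity it suffices to bound $\nu(U_K)$, where $U_K=\bigcup_{d\in D\cap[K]}S^dF$ for each finite $K$. Each $U_K$ is clopen, so $\1_{U_K}$ is continuous. Genericity of $c$ along $\mathbf{N}$ then gives
\[
\nu(U_K)=\lim_{s\to\infty}\frac{1}{N_s}\sum_{n=1}^{N_s}\1_{U_K}(S^nc).
\]
The orientation of the shift needs care. With the convention $(\tau a)(n)=a(n+1)$, we have $S^n c\in S^dF$ if and only if $n-d\in C$. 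So $\1_{U_K}(S^nc)=1$ means $n\in C+(D\cap[K])\subseteq C+D$. Hence $\nu(U_K)\le \underline d_{\mathbf{N}}(C+D)$; the limit exists, so the lower density along $\mathbf{N}$ bounds it. Letting $K\to\infty$ and using continuity of measure from below gives (3).

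If the sign convention makes $S^n c \in S^d F$ correspond instead to $n+d\in C$, we replace $T$ by $T^{-1}$ (equivalently, define $D+F$ via $S^{-d}$). This is harmless, since invertibility is available.

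The only delicate points are this sign convention in Step 3 and the need to verify that cylinder indicators belong to $\A$. The latter holds because products of shifts of $\1_C$ and $1-\1_C$ generate all cylinders. No invariance or ergodicity of $\nu$ beyond $S$-invariance is needed for (3).
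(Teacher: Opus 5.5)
Your proposal is correct and follows the paper's proof essentially step by step. You use the same Furstenberg system, with $c$ the evaluation-at-$0$ point (your $\1_C$ in the orbit-closure picture) and $\1_F=\Phi(\1_C)$; you get (4) from \cref{thm: U^1 uniformity implies constant measure in ergodic components}; and for (3) you use $D+C=\{n:S^nc\in D+F\}$ together with genericity, proving the portmanteau inequality for the open set $D+F$ by hand via the clopen exhaustion $U_K$ where the paper simply cites it. Your orientation computation ($S^nc\in S^dF$ iff $n-d\in C$) is already correct under the paper's convention $(\tau a)(n)=a(n+1)$, so the closing hedge about passing to the inverse map is unnecessary and should be dropped, since inverting the map would destroy property (1).
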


\begin{proof}
    We let $(Y, \nu, S)$ be the Furstenberg system of $C$ along $\mathbf{N}$ (see \cref{sec: Furstenberg systems}).
    Let $\Phi : \mathcal{A}_C \to C(Y)$ be the Gelfand representation, where $\mathcal{A}_C \subseteq \ell^{\infty}(\Z)$ is the translation-invariant *-algebra generated by $\1_C$, and let $F$ be the clopen set defined by $\1_F = \Phi(\1_C)$.
    Let $c \in Y$ be the point such that $\Phi(f)(c) = f(0)$ for $f \in \mathcal{A}_C$.
    Then by the definition of $\nu$, we have for every $f \in C(Y)$,
    \begin{equation*}
        \int_Y f~d\nu = \E_{n \in \mathbf{N}} (\Phi^{-1}(f))(n) = \E_{n \in \mathbf{N}} f(S^nc),
    \end{equation*}
    so $\nu = \E_{n \in \mathbf{N}} \delta_{S^nc}$.
    That is, $c \in \textup{gen}(\nu,\mathbf{N})$.
	Let us check that properties \ref{itm Furstenberg measure 1}--\ref{itm Furstenberg measure 4} hold.
	
	\ref{itm Furstenberg measure 1} By the choice of the set $F$ and the point $c$, we have
	\begin{equation*}
		\{n \in \N : S^nc \in F\} = \{n \in \N : \1_F(S^nc) = 1\} = \{n \in \N : \1_C(n) = 1\} = C.
	\end{equation*}
	
	\ref{itm Furstenberg measure 2} By the definition of the measure $\nu$ and the identity $\1_F = \Phi(\1_C)$, we may compute
	\begin{equation*}
		\nu(F) = \int_Y \Phi(\1_C)~d\nu = \E_{n \in \mathbf{N}} \1_C(n) = d_{\mathbf{N}}(C) = d(C) = \gamma.
	\end{equation*}
	
	\ref{itm Furstenberg measure 3} Note that
	\begin{equation*}
		D + C = \bigcup_{d \in D} \{d + n : S^n c \in F\} = \bigcup_{d \in D} \{n : S^n c \in S^dF\} = \{n : S^n c \in D + F\}.
	\end{equation*}
	The set $D + F$ is open (it is a union of open sets), so by the portmanteau lemma,
	\begin{equation*}
		\underline{d}_{\mathbf{N}}(D+C) = \liminf_{s \to \infty} \frac{1}{N_s} \sum_{n=1}^{N_s} \1_{D+F}(S^nc) \geq \nu(D+F).
	\end{equation*}
	
	\ref{itm Furstenberg measure 4} This follows from Theorem \ref{thm: U^1 uniformity implies constant measure in ergodic components} and the assumption that $\1_C - \gamma$ is locally ergodic.
\end{proof}

We apply Lemma \ref{lem: properties of Furstenberg measures} to obtain systems $(Y_A, \nu_A, S_A)$ and $(Y_B, \nu_B, S_B)$, points $a \in Y_A$ and $b \in Y_B$, and sets $F_A \subseteq Y_A$ and $F_B \subseteq Y_B$ corresponding to $A$ and $B$ respectively.
Applying property \ref{itm Furstenberg measure 3} with $D \in \{A,B\}$, we have
\begin{equation} \label{eq: measures bounded by alpha + beta}
	\max\{\nu_A(B+F_A), \nu_B(A+F_B)\} \leq d_{\mathbf{N}}(A+B) = \alpha + \beta.
\end{equation}

We now consider the ergodic decomposition of the measure $\nu_A$, which we may write as
\begin{equation*}
	\nu_A = \int_{Y_A} \nu_{A,y}~d\nu_A(y),
\end{equation*}
where $\nu_{A,y}$ is $\sigma$-invariant and ergodic for $\nu_A$-almost every $y \in Y_A$, and
\begin{equation*}
	\mathbb{E}_{\nu_A} \left[ f \mid \mathcal{I} \right](y) = \int_{Y_A} f~d\nu_{A,y}
\end{equation*}
for every $f \in L^1(\nu_A)$ and $\nu_A$-almost every $y \in Y_A$.
By \eqref{eq: measures bounded by alpha + beta},
\begin{equation*}
	\int_{Y_A} \nu_{A,y}(B+F_A)~d\nu_A(y) = \nu_A(B+F_A) \leq \alpha + \beta,
\end{equation*}
so the set
\begin{equation*}
	G_1 = \left\{ y : \nu_{A,y}(B+F_A) \leq \alpha + \beta \right\}
\end{equation*}
satisfies $\nu_A(G_1) > 0$.
Moreover, by property \ref{itm Furstenberg measure 4} from Lemma \ref{lem: properties of Furstenberg measures},
\begin{equation*}
	\nu_{A,y}(F_A) = \int_{Y_A} \1_{F_A}~d\nu_{A,y} = \mathbb{E}_{\nu_A} \left[ \1_{F_A} \mid \mathcal{I} \right](y) = \alpha
\end{equation*}
for $\nu_A$-almost every $y \in Y_A$.
That is, the set
\begin{equation*}
	G_2 = \left\{ y : \nu_{A,y}(F_A) = \alpha \right\}
\end{equation*}
has full measure.
Finally, let $G_3$ be the full measure set of $y \in Y_A$ such that $\nu_{A,y}$ is ergodic.
Choose $y \in G_1 \cap G_2 \cap G_3$.
Then $\nu_{A,y}$ is an ergodic $S_A$-invariant measure with $\nu_{A,y}(F_A) = \alpha$ and $\nu_{A,y}(B+F_A) \leq \alpha + \beta$.
We can repeat the same argument to find an $S_B$-invariant ergodic measure $\nu_{B,y'}$ such that $\nu_{B,y'}(F_B) = \beta$ and $\nu_{B,y'}(A+F_B) \leq \alpha + \beta$.

\bigskip

We will now use Lemma \ref{lem: extension} to replace $(Y_A, \nu_{A,y}, S_A)$ and $(Y_B, \nu_{B,y'}, S_B)$ by more convenient extensions.
Let $(G, m, \theta)$ be the maximal common rotational factor of the system $(Y_A, \nu_{A,y}, S_A)$ and $(Y_B, \nu_{B,y'}, S_B)$.
By Lemma \ref{lem: extension}, let $(X_A, \mu_A, T_A)$ be an ergodic system, $x_A \in X_A$ a transitive point, and $\rho : X_A \to Y_A$ a continuous factor map such that $\rho(x_A) = a$, $\rho$ is a measurable isomorphism, and there exists a continuous factor map $\pi_A : X_A \to G$.
By conjugating the factor map $\pi_A$, we may assume for convenience that $\pi_A(x_A) = 0$.
We then let $E_A = \rho^{-1}(F_A)$.
Define an extension $(X_B, \mu_B, T_B)$ of $(Y_B, \nu_{B,y'}, S_B)$ similarly.
To complete the proof of \cref{thm: Furstenberg system}, it remains to show that the number $C_G$ of connected components of $G$ satisfies $C_G < \frac{1}{1-\beta}$.

\begin{Lemma} \label{lem: rational spectrum}
    For $\nu_A$-almost every $y \in Y_A$, the set of eigenvalues
    \begin{equation*}
        \sigma_y = \{t \in \T : \exists f \in L^2(\nu_{A,y})~\text{such that}~|f|=1~\text{and}~S_Af = e(t)f\}
    \end{equation*}
    satisfies $\sigma_y \cap \Q \subseteq \left\{0,\frac{1}{h},\ldots,\frac{h-1}{h} \right\}$.
\end{Lemma}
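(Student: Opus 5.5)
The plan is to use the hypothesis of \cref{thm: locally ergodic irrational Bohr}: the functions $n\mapsto \1_A(n+t_1)\cdots\1_A(n+t_k)e(nq)$ are locally ergodic along $\mathbf{N}$ for every $q\in\Q\setminus\frac{\Z}{h}$. Translated into the Furstenberg system, this hypothesis says that rational eigenvalues outside $\frac{\Z}{h}$ cannot appear in almost every ergodic component.

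First, fix $q\in\Q\setminus\frac{\Z}{h}$ and a function $g$ in the algebra $\mathcal{A}_A$; by linearity it suffices to take $g$ to be a product of shifts $\1_A(\cdot+t_j)$. Let $\chi(n)=e(nq)$, which is periodic. To transfer the local ergodicity statement, I would work in the Furstenberg system of the enlarged algebra generated by $\1_A$ and $\chi$. This system is a joining of $(Y_A,\nu_A,S_A)$ with the rotation on $\Z/d\Z$, where $d$ is the denominator of $q$. Since local ergodicity along $\mathbf{N}$ means vanishing of the $U^1(\mathbf{N})$ seminorm, the hypothesis gives
\[
\E\bigl[\Phi(g)\cdot \Phi(\chi)\mid\mathcal{I}\bigr]=0
\]
in that system.

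Second, I want to convert this into a statement about each ergodic component $\nu_{A,y}$: almost surely, $\int \Phi(g)\,\overline{f}\,d\nu_{A,y}=0$ for every eigenfunction $f$ with eigenvalue $e(q)$. The route is through the mean ergodic theorem. For $\nu_A$-a.e.\ $y$,
\[
\lim_{H\to\infty}\frac1H\sum_{h'=1}^H e(-h'q)\,S_A^{h'}\Phi(g)
\]
converges in $L^2(\nu_{A,y})$ to the projection of $\Phi(g)$ onto the $e(q)$-eigenspace of $\nu_{A,y}$. The local ergodicity hypothesis, applied with the average over $n$ taken against $\nu_A=\int\nu_{A,y}\,d\nu_A(y)$, shows that
\[
\int \Bigl\| \frac1H\sum_{h'=1}^H e(h'q)\,S_A^{h'}\Phi(g)\Bigr\|_{L^1(\nu_{A,y})}\,d\nu_A(y)\longrightarrow 0
\]
as $H\to\infty$. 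The phase $e(nq)$ at the base point is a unimodular constant on each residue class, so it only contributes a bounded factor $e(nq)$ that is absorbed in the absolute value. Combining the two limits, the eigenprojection vanishes for a.e.\ $y$.

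The main obstacle is this transfer step, specifically the order of quantifiers. The set of $y$ where the conclusion holds must be of full measure simultaneously for all $g$ and all $q$. Since $\mathcal{A}_A$ has a countable dense set of such products and $\Q$ is countable, a countable intersection of full-measure sets handles this.

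Finally, the functions $\Phi(g)$ with $g$ ranging over these products generate $L^2(\nu_{A,y})$, because they generate the full Borel $\sigma$-algebra of $Y_A$. So any eigenfunction with eigenvalue $e(q)$ is orthogonal to all of them, hence zero. Therefore $e(q)\notin\sigma_y$ for every $q\in\Q\setminus\frac{\Z}{h}$, which is the claim.
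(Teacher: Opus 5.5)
Your argument is correct, and it takes a cleaner route than the paper's, although the mechanism is the same: local ergodicity of the twisted function forbids the eigenvalue $e(q)$ on almost every ergodic component.

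The paper argues by contradiction. It approximates a unimodular $e(q)$-eigenfunction, on a positive-measure set of components, by some $\Phi(u)$ with $u\in\mathcal{A}_A$. It then passes to the Furstenberg system of the enlarged algebra generated by $\mathcal{A}_A$ and $n\mapsto e(-nq)$. Using $\pi_*\tilde\nu_{\tilde y}=\nu_{A,\pi(\tilde y)}$, it shows that $\tilde u(n)=u(n)e(-nq)$ is nearly invariant on a positive-measure set of components. This contradicts $\tilde u_{\erg}=\tilde u$, via \cref{thm: discrete ergodic theorem}.

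You instead notice that
$|\frac1H\sum_{h'}g(n+h')e((n+h')q)|=|\frac1H\sum_{h'}e(h'q)(\tau^{h'}g)(n)|$.
So local ergodicity of $g\,e(\cdot\, q)$ is really a statement about the element $\frac1H\sum_{h'}e(h'q)\tau^{h'}g$ of $\mathcal{A}_A$ itself. By \cref{thm: statistical representation} with $F=|\cdot|$, it becomes
$\int\|\frac1H\sum_{h'}e(h'q)S_A^{h'}\Phi(g)\|_{L^1(\nu_{A,y})}\,d\nu_A(y)\to0$.
The mean ergodic theorem for the unitary $e(-q)S_A$ on each invariant $\nu_{A,y}$, together with dominated convergence in $y$, then kills the eigenprojection directly.

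This buys three things:
\begin{itemize}
\item No enlarged Furstenberg system is needed, so your first paragraph can simply be dropped; it is never used.
\item You avoid the paper's tacit assumption that the enlarged algebra admits averages along $\mathbf{N}$. That is not automatic when some multiple $jq$ lies in $\frac{\Z}{h}$.
\item There is no need to lift ergodic decompositions through a factor map.
\end{itemize}

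Three small points to tidy up:
\begin{itemize}
\item Averages weighted by $e(-h'q)$ project onto the $e(q)$-eigenspace, but your hypothesis display uses $e(h'q)$. Apply the hypothesis with $-q$, which also lies in $\Q\setminus\frac{\Z}{h}$.
\item Justify the final density step by the Gelfand isomorphism, not by generating the Borel $\sigma$-algebra. The span of the monomials is $\Phi(\mathcal{A}_A)$, which is sup-norm dense in $C(Y_A)$ and hence dense in $L^2(\nu_{A,y})$.
\item Monomials with negative shifts, and the empty product, are covered. Local ergodicity is shift invariant, and $e(\cdot\, q)$ alone is locally ergodic for $q\notin\Z$.
\end{itemize}
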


\begin{proof}
    Let $q \in \Q \setminus \frac{\Z}{h}$.
    By assumption, the function $n \mapsto \1_A(n+t_1) \ldots \1_A(n+t_k) e(-nq)$ is locally ergodic along $\mathbf{N}$ for every $k \in \N$ and $t_1, \ldots, t_k \in \N$.
    Hence, for every $u \in \mathcal{A}_A$, we have that $n \mapsto u(n) e(-nq)$ is locally ergodic.

    Suppose for contradiction that $\nu_A(\{y \in Y_A : q \in \sigma_y\}) > 0$.
    Fix a countable dense set $D \subseteq \mathcal{A}_A$.
    Let $f \in L^2(\nu_{A,y})$ with $|f| = 1$ such that $S_Af = e(q)f$.
    The space $C(Y_A)$ is dense in $L^2(\nu_{A,y})$, so for every $\epsilon > 0$, there exists $u \in D$ such that $\|f-\Phi(u)\|_{L^2(\nu_{A,y})} < \epsilon$.
    Therefore,
    \begin{equation*}
        \|\Phi(\tau^mu) - \Phi(e(mq)u)\|_{L^2(\nu_{A,y})} < 2\epsilon
    \end{equation*}
    for every $m \in \N$.
    By countable additivity of the measure $\nu_A$, there exists $u \in D$ such that the set
    \begin{equation*}
        S_u = \{y \in Y_A : \|\Phi(u)\|_{L^2(\nu_{A,y})} > 1- \epsilon~\text{and}~\|\Phi(\tau^m u) - \Phi(e(mq)u)\|_{L^2(\nu_{A,y})} < 2\epsilon~\text{for all}~m \in \N\}
    \end{equation*}
    has $\nu_A(S_u) > 0$.
    
    Let $\tilde{u}(n) = u(n)e(-nq)$.
    The function $\tilde{u}$ may not belong to the algebra $\mathcal{A}_A$, so let $\tilde{\mathcal{A}}$ be the translation-invariant *-algebra generated by $\mathcal{A}_A$ and $n \mapsto e(-nq)$.
    Let $(\tilde{Y}, \tilde{\nu}, \tilde{S})$ be the Furstenberg system of $\tilde{A}$ along $\mathbf{N}$, and let $\tilde{\Phi} : \tilde{\mathcal{A}} \to C(\tilde{Y})$ be the Gelfand representation.
    There is a factor map $\pi : \tilde{Y} \to Y$ given by $\pi(\tilde{y}) = \tilde{y}|_{\overline{\mathcal{A}}_A}$ when viewing points $\tilde{y} \in \tilde{Y}$ as C*-algebra homomorphisms $\tilde{y} : \overline{\tilde{\mathcal{A}}} \to \C$.
    
    For convenience, let $e_q(n) = e(-nq)$ so that $\tilde{u} = ue_q$.
    We have
    \begin{equation*}
        (\tau^m \tilde{u})(n) = \tilde{u}(n+m) = e(-mq) (\tau^mu)(n) e(-nq) = e(-mq) (e_q\tau^m u)(n),
    \end{equation*}
    so
    \begin{equation*}
        \|\tilde{\Phi}(\tau^m \tilde{u}) - \tilde{\Phi}(\tilde{u})\|_{L^2(\rho)} < 2\epsilon
    \end{equation*}
    for $y \in S_u$ and any measure $\rho$ on $\tilde{Y}$ with $\pi_*\rho = \nu_{A,y}$.
    If we let $(\tilde{\nu}_{\tilde{y}})_{\tilde{y} \in \tilde{Y}}$ be an ergodic decomposition of $\tilde{\nu}$, then $\pi_*\tilde{\nu}_{\tilde{y}} = \nu_{A,\pi(\tilde{y})}$ for $\tilde{\nu}$-almost every $\tilde{y} \in \tilde{Y}$.
    Therefore,
    \begin{equation*}
        \tilde{S}_u = \left\{ \tilde{y} \in \tilde{Y} : \|\tilde{\Phi}(\tilde{u})\|_{L^2(\tilde{\nu}_{\tilde{y}})} > 1- \epsilon~\text{and}~\|\tilde{\Phi}(\tau^m \tilde{u}) - \tilde{\Phi}(\tilde{u})\|_{L^2(\tilde{\nu}_{\tilde{y}})} < 2\epsilon~\text{for all}~m \in \N \right\}
    \end{equation*}
    satisfies $\tilde{\nu}(\pi^{-1}(S_u) \setminus \tilde{S}_u) = 0$, so $\tilde{\nu}(S_u) > 0$.
    By the discrete mean ergodic theorem (\cref{thm: discrete ergodic theorem}),
    \begin{equation*}
        \lim_{M \to \infty} \left\| \frac{1}{M} \sum_{m=1}^M \tau^m \tilde{u} - \tilde{u}_{\inv} \right\|_{2,\mathbf{N}} = 0.
    \end{equation*}
    Hence, if $\tilde{y} \in \tilde{S}_u$, we have
    \begin{equation*}
        \|\tilde{\Phi}(\tilde{u}_{\erg})\|_{L^2(\tilde{\nu}_{\tilde{y}})} = \|\tilde{\Phi}(\tilde{u}) - \tilde{\Phi}(\tilde{u}_{\inv})\|_{L^2(\tilde{\nu}_{\tilde{y}})} \leq 2 \epsilon.
    \end{equation*}
    On the other hand, $\tilde{u}$ is locally ergodic by assumption, so
    \begin{equation*}
        \|\tilde{\Phi}(\tilde{u}_{\erg})\|_{L^2(\tilde{\nu}_{\tilde{y}})} = \|\tilde{\Phi}(\tilde{u})\|_{L^2(\tilde{\nu}_{\tilde{y}})} > 1 - \epsilon
    \end{equation*}
    for $\tilde{\nu}$-almost every $\tilde{y} \in \tilde{S}_u$.
    If $\epsilon \leq \frac{1}{3}$, this gives a contradiction.
\end{proof}

We can now combine the results of this section to prove \cref{thm: Furstenberg system}.

\begin{proof}[Proof of \cref{thm: Furstenberg system}]
    Let $(X_A, \mu_A, T_A)$ and $(X_B, \mu_B, T_B)$ be the systems defined above.
    By construction, we have transitive points $x_A \in X_A$ and $x_B \in X_B$ and continuous factor maps $\pi_A : X_A \to G$ and $\pi_B : X_B \to G$ satisfying $\pi_A(x_A) = \pi_B(x_B) = 0$.

    By \cref{lem: properties of Furstenberg measures},
    \begin{equation*}
        A = \{n \in \N : S_A^ny_A \in F_A\} = \{n \in \N : T_A^nx_A \in E_A\}
    \end{equation*}
    and similarly $B = \{n \in \N : T_B^nx_B \in E_B\}$.

    The ergodic measures $\mu_A$ and $\mu_B$ were chosen so that $\mu_A(E_A) = \alpha$, $\mu_B(E_B) = \beta$, $\mu_A(B+E_A) \leq \alpha + \beta$ and $\mu_B(A+E_B) \leq \alpha + \beta$.

    Finally, by \cref{lem: rational spectrum} we may choose $\mu_A$ so that
    \begin{equation*}
        \sigma_A = \{t \in \T : \exists f \in L^2(\mu_A)~\text{such that}~|f|=1~\text{and}~T_Af = e(t)f\}
    \end{equation*}
    satisfies $\sigma_A \cap \Q \subseteq \left\{ 0, \frac{1}{h}, \ldots, \frac{h-1}{h} \right\}$.
    By the Halmos--von Neumann theorem (see \cite[p. 48]{Halmos60}), the Kronecker factor $(Z_A, m_{Z_A}, \theta_A)$ of $(X_A, \mu_A, T_A)$ is (isomorphic to) a rotation on the compact dual group of $\sigma_A$ when viewing $\sigma_A$ as a discrete group.
    The fact that all torsion elements of $\sigma_A$ are of the form $\frac{k}{h}$ for some $k \in \{0,1,\ldots,h-1\}$ implies that $Z_A = \hat{\sigma}_A$ has at most $h$ connected components.
    Since $(G,m,\theta)$ is a factor of $(Z_A, m_{Z_A},\theta_A)$, we conclude that $C_G \leq h < \frac{1}{1-\beta}$.
\end{proof}

%SUBSECTION
\subsection{Reducing to the rotational factor}
\label{sec_reducing_to_rotationl_factor}

Our next step is to replace the dynamical sumsets $B+E_A$ and $A+E_B$ with convolutions of functions on the compact group $G$, which will allow us to reduce \cref{thm: locally ergodic irrational Bohr} to \cref{thm_inverse_theorem_sumsets_compact_groups}.

Let $\varphi_A = \E_{\mu_A}[\1_{E_A} \mid G]$ and $\varphi_B = \E_{\mu_B}[\1_{E_B} \mid G]$ be the projections of $\1_{E_A}$ and $\1_{E_B}$ onto $G$.
Consider the set $S = \{x \in G : (\varphi_A * \varphi_B)(x) > 0\}$.
The following proposition describes the relationship between $S$ and the sets $A + E_B$ and $B + E_A$.
The notation $U \subseteq_{\mu} V$ means $\mu(U \setminus V) = 0$.\nomenclature{$\subseteq_{\mu},\supseteq_{\mu},=_{\mu}$}{set relations up to $\mu$-null sets}

\begin{Proposition} \label{prop: correspondence with convolution}
	The following properties hold:
	\begin{enumerate}[label=(\arabic*),leftmargin=*]
		\item\label{itm correspondence with convolution 1}	$\int_G \varphi_A~dm = \mu_A(E_A) = \alpha$ and $\int_G \varphi_B~dm = \mu_B(E_B) = \beta$,
		\item\label{itm correspondence with convolution 2}	$E_A \subseteq_{\mu_A} \pi_A^{-1} \left( \{\varphi_A > 0\} \right)$ and $E_B \subseteq_{\mu_B} \pi_B^{-1} \left( \{\varphi_B > 0\} \right)$,
		\item\label{itm correspondence with convolution 3}	$B + E_A \supseteq_{\mu_A} \pi_A^{-1}(S)$ and $A + E_B \supseteq_{\mu_B} \pi_B^{-1}(S)$, and
		\item\label{itm correspondence with convolution 4}	$m(S) \leq \alpha + \beta$.
	\end{enumerate}
\end{Proposition}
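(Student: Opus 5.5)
Parts (1) and (2) are formal consequences of conditional expectation, while (3) and (4) rest on the fact that $G$ is characteristic for the dynamical sumsets.

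\emph{Part (1).} Since $\pi_A$ pushes $\mu_A$ forward to $m$, we have $\int_G \varphi_A\,dm = \int_{X_A} \1_{E_A}\,d\mu_A = \alpha$, and similarly for $B$.

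\emph{Part (2).} On the set $Z = \pi_A^{-1}(\{\varphi_A = 0\})$, which is measurable with respect to the factor $G$, we have
\[
\mu_A(E_A \cap Z) = \int_Z \varphi_A\circ\pi_A \, d\mu_A = 0 .
\]
This gives $E_A \subseteq_{\mu_A} \pi_A^{-1}(\{\varphi_A > 0\})$, and the same argument works for $B$.

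\emph{Part (3).} I would argue only for $B+E_A$; the other inclusion is symmetric with the roles swapped. The key is to express $B+E_A = \bigcup_{b\in B} T_A^b E_A$ through the joining that links $B$ to the group $G$. Let $U = B + E_A$. For each $b \in B$ we have $\1_U \geq \1_{T_A^b E_A}$, so for any finitely supported weights $w$ on $\N$ concentrated on $B$,
\[
\1_U \geq \sum_n w(n)\,\1_{E_A}\circ T_A^{-n}.
\]
I would choose the weights $w(n) = \1_B(n)\, g(T_B^n x_B)$ averaged along a Følner sequence for which $x_B$ is generic, with $g \in C(X_B)$ approximating $\1_{E_B}$. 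Projecting onto $G$ and using that $G$ is a common factor of both systems, the limit of these averages is governed by the Kronecker (characteristic) factor. Concretely, for $\mu_A$-a.e.\ $x$,
\[
\lim_N \frac{1}{|\Phi_N|}\sum_{n\in\Phi_N} \1_{E_B}(T_B^n x_B)\,\1_{E_A}(T_A^{-n}x) = (\varphi_B * \varphi_A)(\pi_A x).
\]
This should follow from the fact that the $U^2$-uniform part of $\1_{E_A}$ contributes nothing to averages weighted by a sequence that is independent of it, with the pointwise statement obtained via Wiener--Wintner or the standard joining argument through the diagonal of $X_A\times X_B$. The positivity of the right-hand side forces $x \in U$ up to null sets, which yields $\pi_A^{-1}(S) \subseteq_{\mu_A} U$. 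I expect this step to be the main obstacle: it needs care about the orientation of the convolution, the use of the transitive points with $\pi_A(x_A)=\pi_B(x_B)=0$ to align the phases, and whether a mean version is enough. If the mean version is not enough, I would argue in $L^2$ and test against $\1_{X_A\setminus U}$, which is $0$ because every term vanishes there.

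\emph{Part (4).} Combining (3) with property (4) of \cref{thm: Furstenberg system}, and using that $\pi_A$ is measure-preserving, gives
\[
m(S) = \mu_A(\pi_A^{-1}(S)) \leq \mu_A(B+E_A) \leq \alpha+\beta .
\]
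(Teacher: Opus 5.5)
Parts (1), (2) and (4) are fine and match the paper. The gap is in (3). The identity you display for $\mu_A$-a.e.\ $x$ is the entire content of (3), and you assert it rather than prove it; the justification you sketch does not deliver it.

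Choosing $\Phi$ only so that $x_B$ is generic for $\mu_B$ controls the single orbit of $x_B$. It says nothing pointwise about $\mu_A$-a.e.\ $x$, because averages along an arbitrary F{\o}lner sequence of shifted intervals need not converge almost everywhere. Wiener--Wintner does not fill this in either: it concerns exponential weights at almost every point, not the fixed weight $\1_B(n)=\1_{E_B}(T_B^nx_B)$ attached to one specific point.

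The paper closes exactly this gap in two steps. First, \cref{lem: average equals convolution} proves the identity for $(\mu_A\times\mu_B)$-a.e.\ pair of points: van der Corput makes the Kronecker factors characteristic for $T_A^nf\otimes T_B^{-n}g$, and a computation on characters, using maximality of $G$, identifies the limit. Second, \cref{prop: average equals convolution at given point} transfers this to the transitive point. It builds a special F{\o}lner sequence $\Psi_m=k_m+[N_m]$ that moves $x_B$ close to a good point. The ingredients are continuity of $\1_{E_A},\1_{E_B}$ (clopenness), continuity and $(T_A\times T_B^{-1})$-invariance of $(x_1,x_2)\mapsto(\tilde f*\tilde g)(\pi_Ax_1+\pi_Bx_2)$, and $L^1$ errors summable in $m$, which yield a.e.\ convergence along $\Psi$.

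Your fallback is the better idea and would give a genuinely different proof. Every term $\1_B(n)\,\1_{E_A}\circ T_A^{-n}$ vanishes off $U=B+E_A$. So it suffices to show $\frac{1}{|\Phi_N|}\sum_{n\in\Phi_N}\1_B(n)\,\1_{E_A}\circ T_A^{-n}\to(\varphi_A*\varphi_B)\circ\pi_A$ weakly in $L^2(\mu_A)$; pairing with $\1_{X_A\setminus U}$ then gives (3) with no pointwise convergence and no special F{\o}lner sequence.

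But the key ingredient is still absent. The component of $\1_{E_A}$ orthogonal to the Kronecker factor $Z_A$ is harmless for any bounded weights: its correlations with a fixed $h$ are Fourier coefficients of a continuous measure, so their absolute values average to $0$ along any F{\o}lner sequence. What you must prove is that, for every character $\chi$ of $Z_A$, the twisted averages $\frac{1}{|\Phi_N|}\sum_{n\in\Phi_N}\1_{E_B}(T_B^nx_B)\,\overline{\chi(n\theta_A)}$ converge to $\int_G\varphi_B\overline{\lambda}\,dm$ if $\chi=\lambda\circ\pi^{Z_A}_G$ for some $\lambda\in\hat G$, and to $0$ otherwise.

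This does follow from genericity of $x_B$ along $\Phi$ (such $\Phi$ exists since $x_B$ is transitive and $\mu_B$ is ergodic), but through a joining argument you have not given. Every weak$^*$ limit of $\frac{1}{|\Phi_N|}\sum_{n\in\Phi_N}\delta_{(T_B^nx_B,\,n\theta_A)}$ is a joining of $\mu_B$ and $m_{Z_A}$ supported on the closed set $\{(x,t):\pi_B(x)=\pi^{Z_A}_G(t)\}$; this uses continuity of $\pi_B$ and $\pi_B(x_B)=0$. Such a joining is unique. Indeed, $\E[\chi\mid X_B]$ is then an eigenfunction of $X_B$ with eigenvalue $\chi(\theta_A)$, so it vanishes unless $\chi=\lambda\circ\pi^{Z_A}_G$ (maximality of $G$), in which case it equals $\lambda\circ\pi_B$.

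Note also that approximating $\1_{E_B}$ by a continuous $g$ is neither needed nor enough here. Evaluating along the single orbit of $x_B$ works only because $\1_{E_B}$ is itself continuous.
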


We carry out the proof in stages.
First, we recall a fundamental tool in the analysis of multiple ergodic averages: the van der Corput lemma.

\begin{Lemma}[{\cite[Theorem 1.5]{Bergelson87PET}}] \label{lem: van der Corput}
    Let $(u_n)_{n \in \N}$ be a bounded sequence in a Hilbert space.
    If the limit
    \begin{equation*}
        \lim_{N \to \infty} \frac{1}{N} \sum_{n=1}^N \innprod{u_{n+h}}{u_n}
    \end{equation*}
    exists for every $h \in \N$ and
    \begin{equation*}
        \limsup_{H \to \infty} \frac{1}{H} \sum_{h=1}^H \left| \lim_{N \to \infty} \frac{1}{N} \sum_{n=1}^N \innprod{u_{n+h}}{u_n} \right| = 0,
    \end{equation*}
    then
    \begin{equation*}
        \lim_{N \to \infty} \left\| \frac{1}{N} \sum_{n=1}^N u_n \right\| = 0.
    \end{equation*}
\end{Lemma}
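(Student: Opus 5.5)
This is the classical van der Corput inequality in Hilbert space. The plan is to prove a finite-$N$ estimate and then pass to limits. First we set $M=\sup_n\|u_n\|<\infty$. For fixed $H\in\N$ and large $N$, the average $\frac1N\sum_{n=1}^N u_n$ differs from
\[
\frac1N\sum_{n=1}^N\frac1H\sum_{h=1}^H u_{n+h}
\]
by at most $2HM/N$ in norm. This holds because shifting the index range by $h\le H$ changes only $O(H)$ terms, each of norm at most $M$.

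Next we apply the triangle inequality and then Cauchy--Schwarz (convexity of $\|\cdot\|^2$) to the outer average:
\[
\Big\|\frac1N\sum_{n=1}^N\frac1H\sum_{h=1}^H u_{n+h}\Big\|^2\le \frac1N\sum_{n=1}^N\Big\|\frac1H\sum_{h=1}^H u_{n+h}\Big\|^2 .
\]
Expanding the square gives
\[
\frac1{H^2}\sum_{h_1,h_2=1}^H\frac1N\sum_{n=1}^N\innprod{u_{n+h_1}}{u_{n+h_2}} .
\]
For each pair $(h_1,h_2)$ we reindex $n\mapsto n-\min(h_1,h_2)$, at the cost of $O(HM^2/N)$. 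This turns the inner average into $\frac1N\sum_n\innprod{u_{n+|h_1-h_2|}}{u_n}$, or its complex conjugate.

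Now we let $N\to\infty$ with $H$ fixed. The hypothesis that each limit $\gamma(h)=\lim_N\frac1N\sum_n\innprod{u_{n+h}}{u_n}$ exists gives
\[
\limsup_{N\to\infty}\Big\|\frac1N\sum_{n=1}^N u_n\Big\|^2\le \frac1{H^2}\sum_{h_1,h_2=1}^H|\gamma(|h_1-h_2|)|,
\]
where $\gamma(0)$ is bounded by $M^2$. Counting the pairs with $|h_1-h_2|=d$ bounds the right-hand side by
\[
\frac{M^2}{H}+\frac{2}{H}\sum_{d=1}^{H-1}|\gamma(d)| .
\]

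Finally we take $\liminf_{H\to\infty}$. The second term tends to $0$ by the hypothesis $\limsup_H\frac1H\sum_{h\le H}|\gamma(h)|=0$, and the first term is $M^2/H\to0$. Hence $\lim_N\|\frac1N\sum_{n=1}^N u_n\|=0$.

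The only delicate point is the bookkeeping of boundary errors. These are $O(H/N)$ for fixed $H$, so they vanish before $H$ is sent to infinity. No step presents a real obstacle; the order of limits ($N$ first, then $H$) is what must be respected.
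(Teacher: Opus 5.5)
Your proof is correct. It is the standard van der Corput argument: average over shifts, apply Cauchy--Schwarz, expand the square, and let $N\to\infty$ before $H\to\infty$. The paper gives no proof of this statement and simply cites \cite[Theorem 1.5]{Bergelson87PET}.

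One cosmetic point: $\gamma(0)$ need not exist as a limit. The diagonal terms should therefore be written as $\limsup_{N\to\infty}\frac1N\sum_{n=1}^N\|u_n\|^2\le M^2$, which is the bound your estimate actually uses.
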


Now, we utilize the van der Corput lemma to show that certain bilinear averages can be reduced to convolutions on $G$.

\begin{Lemma} \label{lem: average equals convolution}
	Let $f \in L^2(\mu_A)$ and $g \in L^2(\mu_B)$.
	Then for $(\mu_A \times \mu_B)$-a.e. $(x_1, x_2) \in X_A \times X_B$,
	\begin{equation*}
		\lim_{N \to \infty} \frac{1}{N} \sum_{n=1}^N f(T^nx_1) g(T^{-n}x_2) = (\tilde{f} * \tilde{g})(\pi_A(x_1) + \pi_B(x_2)),
	\end{equation*}
	where $\tilde{f} = \E_{\mu_A}[f \mid G]$ and $\tilde{g} = \E_{\mu_B}[g \mid G]$ are the projections of $f$ and $g$ onto $G$.
\end{Lemma}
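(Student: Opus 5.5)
The idea is to apply the van der Corput lemma (\cref{lem: van der Corput}) on the product system, after splitting $f$ and $g$ into their Kronecker parts and orthogonal remainders. Work on $(X_A\times X_B,\mu_A\times\mu_B)$ with the transformation $R=T_A\times T_B^{-1}$. The average in question is then $\frac1N\sum_n (f\otimes g)(R^n(x_1,x_2))$. By the pointwise ergodic theorem it converges almost everywhere to $\E[f\otimes g\mid\mathcal{I}_R]$. It therefore suffices to identify this invariant projection with $(\tilde f*\tilde g)(\pi_A(x_1)+\pi_B(x_2))$.

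First I treat the structured parts. Write $f=\tilde f\circ\pi_A+f'$ and $g=\tilde g\circ\pi_B+g'$, where $f'\perp L^2(G)\circ\pi_A$, and similarly for $g'$. For the term $(\tilde f\circ\pi_A)\otimes(\tilde g\circ\pi_B)$, compute directly on $G$ with $u=\pi_A(x_1)$ and $v=\pi_B(x_2)$:
\[
\frac1N\sum_n \tilde f(u+n\theta)\tilde g(v-n\theta).
\]
By unique ergodicity of the rotation $\theta$ on $G$ (or on the closed orbit), applied first to continuous $\tilde f,\tilde g$ and then extended by $L^2$ density, this converges to $\int_G\tilde f(u+t)\tilde g(v-t)\,dm(t)=(\tilde f*\tilde g)(u+v)$. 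The density step is handled through the mean ergodic theorem in $L^2(\mu_A\times\mu_B)$, since the pointwise and $L^2$ limits coincide almost everywhere.

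Next I show the cross terms vanish; I expect this to be the main obstacle. Consider, say, $f'\otimes g$ with $f'$ orthogonal to the $G$-factor. I apply \cref{lem: van der Corput} to $u_n=R^n(f'\otimes g)$ in $L^2$:
\[
\innprod{u_{n+k}}{u_n}=\innprod{T_A^k f'}{f'}\,\overline{\innprod{T_B^{k}g}{g}}\quad(\text{up to conjugation and sign of }k).
\]
The product $\langle T_A^kf',f'\rangle\langle T_B^{-k}g,g\rangle$ is a product of Fourier coefficients of the spectral measures $\sigma_{f'}$ and $\sigma_g$. Its Cesàro average of absolute values vanishes iff the measures $\sigma_{f'}$ and $\sigma_{g}$ (with reflection) share no atoms. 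The atoms of $\sigma_{f'}$ lie among the eigenvalues of $T_A$. Since $f'$ is orthogonal to the common factor $G$, those atoms avoid the eigenvalues common to both systems. Thus the atomic parts do not overlap, and by Wiener's lemma the averages vanish.

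The subtle point is that the common-factor eigenvalues must be exactly the common eigenvalue group. This holds because $G$ is the maximal common rotational factor. Moreover, $f'$ may still carry eigenvalues of $T_A$ that $T_B$ lacks, but those contribute nothing against $g$. Hence $\frac1N\sum u_n\to0$ in $L^2$, the almost-everywhere limit of these terms is zero, and combining the pieces gives the claim.
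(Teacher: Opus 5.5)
Your treatment of the $G$-measurable part is fine: unique ergodicity of the minimal rotation on $G$, followed by $L^2$ approximation, works. So is your use of maximality of $G$ to identify the eigenvalues carried by $L^2(G)\circ\pi_A$ with the common eigenvalue group.

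The gap is in the cross-term step. Write $\gamma_k=\innprod{T_A^kf'}{f'}\innprod{T_B^{-k}g}{g}$. It is not true that $\frac1K\sum_{k\le K}|\gamma_k|\to 0$ whenever $\sigma_{f'}$ and $\sigma_g$ have no common atom; taking absolute values destroys exactly the cancellation you need. Your $f'$ is only orthogonal to $L^2(G)\circ\pi_A$, not to the Kronecker factor $\mathcal{Z}_A$. So it may be a $T_A$-eigenfunction $\psi$ whose eigenvalue $e(a)$ is not an eigenvalue of $T_B$. With $g\equiv 1$ you then get $|\gamma_k|=\|\psi\|_2^2$ for every $k$. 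Hence the hypothesis of \cref{lem: van der Corput}, which is stated with absolute values, fails, even though the average $\psi(x_1)\frac1N\sum_n e(na)$ does tend to $0$. Wiener's lemma computes $\lim\frac1K\sum_k|\hat\sigma(k)|^2$ for a single measure. For two measures, it is the \emph{signed} average $\frac1K\sum_k\gamma_k$ that converges to $\sum_t\sigma_{f'}(\{t\})\sigma_g(\{t\})$, not the average of $|\gamma_k|$.

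The repair is short. Bypass van der Corput and use the spectral theorem directly, normalizing $\innprod{T^kf}{f}=\int e(ks)\,d\sigma_f(s)$:
\[
\Bigl\|\frac1N\sum_{n=1}^N R^n(f'\otimes g)\Bigr\|_2^2=\iint\Bigl|\frac1N\sum_{n=1}^N e\bigl(n(s-t)\bigr)\Bigr|^2\,d\sigma_{f'}(s)\,d\sigma_g(t)\longrightarrow\sum_{t\in\T}\sigma_{f'}(\{t\})\,\sigma_g(\{t\}),
\]
by dominated convergence. Your (correct) atom argument then shows the right-hand side is $0$.

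Alternatively, follow the paper and split relative to the full Kronecker factors $\mathcal{Z}_A,\mathcal{Z}_B$ rather than relative to $G$. The parts orthogonal to the Kronecker factors have continuous spectral measure (zero $U^2$-seminorm), so the absolute-value van der Corput lemma genuinely applies to them. The discrete-spectrum parts are then expanded in characters. Eigenvalues not shared by both systems are killed by cancellation in $\frac1N\sum_n(\chi_A(\theta_A)/\chi_B(\theta_B))^n$, and shared ones factor through a common character of $G$ by maximality.
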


\begin{proof}
	The average on the left hand side converges almost everywhere by the pointwise ergodic theorem.
	For ease of computation, we will establish equality with the right hand side in $L^2$.
	Let $\mathcal{Z}_A$ and $\mathcal{Z}_B$ be the Kronecker factors of $(X_A, \mu_A, T_A)$ and $(X_B, \mu, T_B)$ respectively.
	We break the proof into two main steps.
	First, we show that
	\begin{equation} \label{eq: characteristic factor}
		\lim_{N \to \infty} \frac{1}{N} \sum_{n=1}^N \left( T^nf \otimes T^{-n}g - T^n \mathbb{E}_{\mu_A}[f \mid \mathcal{Z}_A] \otimes T^{-n} \mathbb{E}_{\mu_B}[g \mid \mathcal{Z}_B] \right) = 0.
	\end{equation}
	Then we prove
	\begin{equation} \label{eq: limit formula}
		\lim_{N \to \infty} \frac{1}{N} \sum_{n=1}^N \mathbb{E}_{\mu_A}[f \mid \mathcal{Z}_A](T^nx_1) \cdot \mathbb{E}_{\mu_B}[g \mid \mathcal{Z}_B](T^{-n}x_2) = (\tilde{f} * \tilde{g})(\pi_A(x_1) + \pi_B(x_2)).
	\end{equation}
	
	For the first step, we may expand
	\begin{multline*} \label{eq: characteristic factor}
		T^nf \otimes T^{-n}g - T^n \mathbb{E}_{\mu_A}[f \mid \mathcal{Z}_A] \otimes T^{-n} \mathbb{E}_{\mu_B}[g \mid \mathcal{Z}_B] \\
		 = T^n \left( f - \mathbb{E}_{\mu_A}[f \mid \mathcal{Z}_A] \right) \otimes T^{-n}g + T^n \mathbb{E}_{\mu_A}[f \mid \mathcal{Z}_A] \otimes T^{-n} \left( g - \mathbb{E}_{\mu_B}[g \mid \mathcal{Z}_B] \right),
	\end{multline*}
	and it suffices to show that the average of each term converges to 0.
	In both cases, one of the functions is orthogonal to the Kronecker factor of the corresponding system.
	Thus, it suffices to prove: if $f$ is orthogonal to $\mathcal{Z}_A$ or $g$ is orthogonal to $\mathcal{Z}_B$, then
	\begin{equation} \label{eq: average converges to 0}
		\lim_{N \to \infty} \frac{1}{N} \sum_{n=1}^N T^nf \otimes T^{-n}g = 0.
	\end{equation}
	To this end, we use the van der Corput lemma.
	Let $u_n = T^nf \otimes T^{-n}g$.
	Then
	\begin{equation*}
		\langle u_{n+h}, u_n \rangle = \int_{X \times X} \left( T^{n+h}f \otimes T^{-n-h}g \right) \left( T^n \overline{f} \otimes T^{-n} \overline{g} \right)~d(\mu \times \mu).
	\end{equation*}
	Applying $(T \times T^{-1})$-invariance of $\mu \times \mu$, we have
	\begin{equation*}
		\langle u_{n+h}, u_n \rangle = \left( \int_X \overline{f} T^h f~d\mu \right) \left( \int_X \overline{g} T^{-h}g~d\mu \right).
	\end{equation*}
	Thus,
	\begin{equation*}
		\limsup_{H \to \infty} \frac{1}{H} \sum_{h=1}^H \left| \lim_{N \to \infty} \frac{1}{N} \sum_{n=1}^N \langle u_{n+h}, u_n \rangle \right| \leq \|g\|_2^2 \cdot \limsup_{H \to \infty} \frac{1}{H} \sum_{h=1}^H \left| \int_X \overline{f} T^h f~d\mu \right| \leq \|g\|_2^2 \cdot \|f\|_{U^2}^2
	\end{equation*}
	and similarly with the roles of $f$ and $g$ reversed.
    Therefore, by the van der Corput lemma, \eqref{eq: average converges to 0} holds.
	This proves \eqref{eq: characteristic factor}.
	
	Now let us prove \eqref{eq: limit formula}.
    We can realize the Kronecker factors as ergodic group rotations $(Z_A, m_{Z_A}, \theta_A)$ and $(Z_B, m_{Z_B}, \theta_B)$, and the factor maps to $(G,m,\theta)$ can be obtained as surjective group homomorphisms $\pi^{Z_A}_G : Z_A \to G$ and $\pi^{Z_B}_G : Z_B \to G$ such that $\pi^{Z_A}_G(\theta_A) = \theta$ and $\pi^{Z_B}_G(\theta_B) = \theta$.
    Taking $f' = \E_{\mu_A}[f \mid Z_A] \in L^2(Z_A)$ and $g' = \E_{\mu_B}[g \mid Z_B] \in L^2(Z_B)$, \eqref{eq: limit formula} reduces to showing that
    \begin{equation} \label{eq: limit formula - group rotations}
        \lim_{N \to \infty} \frac{1}{N} \sum_{n=1}^N f'(u + n\theta_A) g'(v - n\theta_B) = (\tilde{f}' * \tilde{g}')(\pi^{Z_A}_G(u) + \pi^{Z_B}_G(v))
    \end{equation}
    in $L^2(m_A \times m_B)$.
	Both sides of \eqref{eq: limit formula - group rotations} are bilinear in the functions $f'$ and $g'$.
    Since $L^2(Z_A)$ and $L^2(Z_B)$ are spanned by group characters on $Z_A$ and $Z_B$ respectively, it suffices to prove that \eqref{eq: limit formula - group rotations} holds in the case that $f'$ and $g'$ are characters.
    If $\chi \in \hat{Z}_A$, then by orthogonality of characters,
    \begin{equation*}
        \tilde{\chi} = \begin{cases}
            \lambda, & \text{if}~\chi = \lambda \circ \pi^{Z_A}_G, \lambda \in \hat{G}, \\
            0, & \text{otherwise}.
        \end{cases}
    \end{equation*}
    A similar argument applies to characters on $Z_B$.
    Now, for the convolution of characters on $G$, the orthogonality relations give for $\lambda_1, \lambda_2 \in \hat{G}$,
    \begin{equation*}
        (\lambda_1 * \lambda_2)(x) = \begin{cases}
            \lambda(x), & \text{if}~\lambda_1 = \lambda_2 = \lambda, \\
            0, & \text{if}~\lambda_1 \ne \lambda_2.
        \end{cases}
    \end{equation*}
    Thus, it suffices to show that for $\chi_A \in \hat{Z}_A$ and $\chi_B \in \hat{Z}_B$,
    \begin{equation*}
        \lim_{N \to \infty} \frac{1}{N} \sum_{n=1}^N \chi_A(n\theta_A) \chi_B(-n\theta_B) = \begin{cases}
            1, & \text{if}~\chi_A = \lambda \circ \pi^{Z_A}_G~\text{and}~\chi_B = \lambda \circ \pi^{Z_B}_G~\text{for some}~\lambda \in \hat{G}, \\
            0, & \text{otherwise}.
        \end{cases}
    \end{equation*}
    We may write
    \begin{equation*}
        \chi_A(n\theta_A)\chi_B(-n\theta_B) = \left( \frac{\chi_A(\theta_A)}{\chi_B(\theta_B)} \right)^n,
    \end{equation*}
    so we want to show $\chi_A(\theta_A) = \chi_B(\theta_B)$ if and only if there exists $\lambda \in \hat{G}$ such that $\chi_A = \lambda \circ \pi^{Z_A}_G$ and $\chi_B = \lambda \circ \pi^{Z_B}_G$.
    If $\lambda \in \hat{G}$ and $\chi_A = \lambda \circ \pi^{Z_A}_G$ and $\chi_B = \lambda \circ \pi^{Z_B}_G$, then
    \begin{equation*}
        \chi_A(\theta_A) = \lambda(\theta) = \chi_B(\theta_B).
    \end{equation*}
    Conversely, if $\chi_A(\theta_A) = \chi_B(\theta_B)$, then $\chi_A$ and $\chi_B$ are eigenfunctions of $(Z_A, m_{Z_A}, \theta_A)$ and $(Z_B, m_{Z_B}, \theta_B)$ respectively with the same eigenvalue $\chi_A(\theta_A) = \chi_B(\theta_B)$.
    Since $(G,m,\theta)$ is maximal among the common rotational factors of these two systems, we can find eigenfunctions $\psi_A, \psi_B$ of $(G,m,\theta)$ with eigenvalue $\chi_A(\theta_A) = \chi_B(\theta_A)$ such that $\chi_A = \psi_A \circ \pi^{Z_A}_G$ and $\chi_B = \psi_B \circ \pi^{Z_B}_G$.
    The eigenfunctions of $(G,m,\theta)$ are constant multiples of characters, and ergodicity of the system implies that each eigenspace is one-dimensional, so we can write $\psi_A = c_A \lambda$ and $\psi_B = c_B \lambda$ for some $\lambda \in \hat{G}$ and constants $c_A, c_B \in \C$.
    Finally, $c_A = c_A\lambda(0) = \psi_A(0) = \chi_A(0) = 1$ and similarly $c_B = 1$.
\end{proof}

\begin{Proposition} \label{prop: average equals convolution at given point}
	There exist F{\o}lner sequences $\Phi, \Psi$ such that for any $f \in C(X_A)$ and $g \in C(X_B)$, we have the following properties:
	\begin{itemize}
		\item	for almost every $x \in X_B$,
			\begin{equation*}
				\lim_{N \to \infty} \frac{1}{|\Phi_N|} \sum_{n \in \Phi_N} f(T_A^n x_A) g(T_B^{-n}x) = (\tilde{f} * \tilde{g})(\pi_B(x))
			\end{equation*}
		\item	for almost every $x \in X_A$,
			\begin{equation*}
				\lim_{N \to \infty} \frac{1}{|\Psi_N|} \sum_{n \in \Psi_N} f(T_A^{-n}x) g(T_B^nx_B) = (\tilde{f} * \tilde{g})(\pi_A(x)).
			\end{equation*}
	\end{itemize}
\end{Proposition}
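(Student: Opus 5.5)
Since $x_A$ is transitive but not necessarily generic for $\mu_A$ along the standard averages, the plan is to combine two facts. First, \cref{lem: average equals convolution} gives an almost-everywhere identity on $X_A\times X_B$. Second, we need a Følner sequence along which the particular point $(x_A,x)$ behaves like a $\mu_A\times\mu_B$-typical point, at least for typical $x\in X_B$. The approach mirrors the proof of \cref{lem: extension}, and I treat only the first bullet; the second is symmetric, with the roles of $A$ and $B$ swapped and $x_B$ in place of $x_A$.

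\emph{Step 1.} Fix a countable family $\mathcal{F}_A\subseteq C(X_A)$ and $\mathcal{F}_B\subseteq C(X_B)$, each dense in the uniform norm. Because $\mu_A$ is ergodic and $x_A$ is transitive, \cite[Proposition 3.9]{Furstenberg81a} provides a Følner sequence $\Phi=(\Phi_N)$ with $x_A\in\textup{gen}(\mu_A,\Phi)$. Next consider the product system $(X_A\times X_B,\,T_A\times T_B^{-1})$ together with a $T_A\times T_B^{-1}$-invariant measure $\lambda$ with $\lambda$-a.e.\ behaviour in mind. The cleanest choice of $\lambda$ is the ergodic decomposition of $\mu_A\times\mu_B$ with respect to $T_A\times T_B^{-1}$.

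\emph{Step 2.} For each pair $f\in\mathcal{F}_A$, $g\in\mathcal{F}_B$, apply \cref{lem: average equals convolution}. It shows that for $\mu_A\times\mu_B$-a.e.\ $(x_1,x_2)$ the averages of $f(T_A^nx_1)g(T_B^{-n}x_2)$ converge to $(\tilde f*\tilde g)(\pi_A(x_1)+\pi_B(x_2))$. The key observation is that, viewed as a function of $(x_1,x_2)$, this limit is the conditional expectation of $f\otimes g$ onto the invariant $\sigma$-algebra of $T_A\times T_B^{-1}$. That invariant $\sigma$-algebra is therefore generated by the continuous map $(x_1,x_2)\mapsto\pi_A(x_1)+\pi_B(x_2)\in G$. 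Consequently the ergodic components of $\mu_A\times\mu_B$ are indexed by $c\in G$, with $\lambda_c$ supported on $\{\pi_A(x_1)+\pi_B(x_2)=c\}$. Each $\lambda_c$ is obtained from $\lambda_0$ by translating the second coordinate via the $\pi_B$-fibre structure; more concretely, we can disintegrate $\mu_B$ over $\pi_B$.

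\emph{Step 3.} Use transitivity and genericity, as in \cite[Proposition 6.1]{HK09}, to pass from typical points to the specific point $x_A$. For $\mu_B$-a.e.\ $x$, the point $(x_A,x)$ should be generic along a Følner sequence for the component $\lambda_{\pi_B(x)}$, where $\pi_A(x_A)=0$. I would first find a single Følner sequence $\Phi$ and a set of $x$ of full measure. The tool is a pointwise ergodic theorem along a tempered subsequence of $\Phi$ (Lindenstrauss), applied to the functions $x\mapsto f(T_A^nx_A)g(T_B^{-n}x)$. In more detail, consider the joining $\mu_A^{x_A}$ obtained as a weak$^*$ limit of $\frac1{|\Phi_N|}\sum\delta_{T_A^nx_A}\otimes\mu_B\circ T_B^{n}$. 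Its invariance and its marginals $\mu_A,\mu_B$ force it, via Step 2, to be $\lambda_0$ transported along $\pi_B$; this is the relatively independent joining over $G$ on the coset $\pi_A+\pi_B=\pi_B(x)$. Then the $L^2(\mu_B)$ limit equals $(\tilde f*\tilde g)(\pi_B(x))$. Finally, pass to a further subsequence, which remains Følner, so that the convergence holds $\mu_B$-a.e.\ simultaneously for all countably many pairs in $\mathcal{F}_A\times\mathcal{F}_B$. Density extends the conclusion to all continuous $f,g$, because both sides are continuous in $(f,g)$ in sup norm: the right side is bounded by $\|f\|_\infty\|g\|_\infty$.

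The main obstacle is Step 3: identifying the limit along $x_A$ as the convolution. The plan here is to show that any weak$^*$ limit of $\frac1{|\Phi_N|}\sum_{n\in\Phi_N}\delta_{T_A^nx_A}\otimes (T_B^{-n})_*\mu_B$ is a $T_A\times T_B^{-1}$-invariant self-joining of $\mu_A$ and $\mu_B$. Its projection to $G\times G$ must be the measure $(u,v)\mapsto$ Haar measure on the antidiagonal $\{u+v\in\pi_B(\cdot)\}$, since $\pi_A(T_A^nx_A)=n\theta$ exactly. Characteristic-factor reasoning (the van der Corput step in \cref{lem: average equals convolution}) then reduces the computation to $G$, where it is explicit.
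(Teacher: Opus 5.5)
There is a genuine gap, and it is exactly where you place the main obstacle: Step 3 never gives a valid argument that identifies the limit at the pinned point $x_A$.

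The measure you propose to study, $\frac1{|\Phi_N|}\sum_{n\in\Phi_N}\delta_{T_A^nx_A}\otimes (T_B^{-n})_*\mu_B$, is just $\bigl(\frac1{|\Phi_N|}\sum_{n\in\Phi_N}\delta_{T_A^nx_A}\bigr)\otimes\mu_B$, because $\mu_B$ is $T_B$-invariant. When $x_A$ is generic along $\Phi$, it converges to the product $\mu_A\otimes\mu_B$, whose image in $G\times G$ is $m\otimes m$, not Haar measure on an antidiagonal. The constraint $\pi_A+\pi_B=\pi_B(x)$ exists only for a fixed $x$, and integrating over $x$ erases it.

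More fundamentally, any measure obtained by integrating over $x$ is a first-moment object. It computes quantities such as $\int\mathcal A_N h\,d\mu_B$, but it cannot determine the $L^2(\mu_B)$ behaviour of $\mathcal A_N(x)=\frac1{|\Phi_N|}\sum_{n\in\Phi_N}f(T_A^nx_A)g(T_B^{-n}x)$. So your sentence ``then the $L^2(\mu_B)$ limit equals $(\tilde f*\tilde g)(\pi_B(x))$'' does not follow.

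Your other tools do not close the gap either:
\begin{itemize}
\item Lindenstrauss' theorem concerns $\frac1{|\Phi_N|}\sum_{n\in\Phi_N}h(S^ny)$ for $\lambda$-typical $y$. Here the first coordinate is pinned at $x_A$, and $\{x_A\}\times X_B$ is in general a null set, so the theorem says nothing about averages with the deterministic weights $f(T_A^nx_A)$.
\item The characteristic-factor reduction from \cref{lem: average equals convolution} cannot be run on the $f$-side at a single point. The function $\E[f\mid\mathcal Z_A]$ is not continuous and cannot be evaluated along the orbit of $x_A$.
\end{itemize}

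The missing idea is a mechanism that transfers $\mu_A\times\mu_B$-typical behaviour to $x_A$. The paper does this directly:
\begin{itemize}
\item By \cref{lem: average equals convolution} and Fubini, fix $x_0\in X_A$ such that, for $\mu_B$-a.e.\ $x$, the Ces\`aro averages at $(x_0,x)$ converge to $F_{i,j}(x_0,x)$ for all pairs from countable dense families. Here $F_{i,j}(x_1,x_2)=(\tilde f_i*\tilde g_j)(\pi_A(x_1)+\pi_B(x_2))$.
\item For each $m$, choose a length $N_m$ whose $L^1(\mu_B)$ error at $x_0$ is at most $2^{-m}$ for $i,j\le m$.
\item Use transitivity and continuity to find $k_m$ with $T_A^{k_m}x_A$ so close to $x_0$ that replacing $x_0$ by $T_A^{k_m}x_A$ costs at most $2\cdot2^{-m}$.
\item Set $\Phi_m=\{k_m+1,\dots,k_m+N_m\}$ and substitute $x_2\mapsto T_B^{-k_m}x_2$. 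The $(T_A\times T_B^{-1})$-invariance of the continuous function $F_{i,j}$ and the $T_B$-invariance of $\mu_B$ give an $L^1(\mu_B)$ bound of $3\cdot2^{-m}$ on the error of the averages over $\Phi_m$ against $F_{i,j}(x_A,\cdot)=(\tilde f_i*\tilde g_j)\circ\pi_B$.
\item Summability of these bounds then gives a.e.\ convergence.
\end{itemize}

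Your genericity-based route can also be repaired, but only through a genuine second-moment computation:
\[
\int|\mathcal A_N|^2\,d\mu_B=\int_{\T}\Bigl|\frac1{|\Phi_N|}\sum_{n\in\Phi_N}f(T_A^nx_A)e(-nt)\Bigr|^2\,d\sigma_g(t),
\]
where $\sigma_g$ is the spectral measure of $g$. One then needs three facts:
\begin{itemize}
\item A joining argument shows that the twisted averages tend to $0$ for every $t$ outside the point spectrum of $T_A$.
\item For $t$ in the spectrum of $G$, they converge to Fourier coefficients of $\tilde f$, because $\pi_A(T_A^nx_A)=n\theta$ exactly.
\item The remaining eigenvalues of $T_A$ form a countable $\sigma_g$-null set.
\end{itemize}
Dominated convergence, together with the analogous cross term, then yields $\|\mathcal A_N-(\tilde f*\tilde g)\circ\pi_B\|_{L^2(\mu_B)}\to0$. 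After that, your subsequence and density steps go through. None of this second-moment argument is in the proposal as written.
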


\begin{proof}
	We will construct the F{\o}lner sequence $\Phi$.
	The construction of $\Psi$ is analogous.
	
	We follow the argument in \cite[Lemma 3.12]{KMRR24b}.
	By Lemma \ref{lem: average equals convolution} and Fubini's theorem, let $x_0 \in X_A$ such that for almost every $x \in X_B$,
	\begin{equation*}
		\lim_{N \to \infty} \frac{1}{N} \sum_{n=1}^N f(T_A^nx_0) g(T_B^{-n}x) = (\tilde{f} * \tilde{g})(\pi_A(x_0) + \pi_B(x))
	\end{equation*}
	for $f \in C(X_A)$ and $g \in C(X_B)$.
	
	Let $(f_i)_{i \in \N}$ and $(g_j)_{j \in \N}$ be countable dense subsets of $C(X_A)$ and $C(X_B)$ respectively.
	For $i, j \in \N$, let
	\begin{equation*}
		F_{i,j}(x_1, x_2) = (\tilde{f}_i * \tilde{g}_j)(\pi_A(x_1) + \pi_B(x_2)).
	\end{equation*}
	Then $F_{i,j} \in C(X_A \times X_B)$ is $(T_A \times T_B^{-1})$-invariant.
	
	The point $x_A \in X_A$ is transitive, so there exists a sequence $(k_m)_{m \in \N}$ with $T_A^{k_m} x_A \to x_0$.
	By refining the sequence $(k_m)_{m \in \N}$ so that the convergence occurs very quickly, we can ensure that
	\begin{equation*}
		\max_{1 \leq i, j \leq m} \|g_j\| \cdot \|f_i(x_0) - f_i(T_A^{k_m}x_A)\| \leq 2^{-m}
	\end{equation*}
	and
	\begin{equation*}
		\max_{1 \leq i, j \leq m} \sup_{x_2 \in X_B} \|F_{i,j}(x_0, x_2) - F_{i,j}(T_A^{k_m} x_A, x_2) \| \leq 2^{-m}.
	\end{equation*}
	By the choice of the point $x_0$, there exists $N_m$ such that
	\begin{equation*}
		H_m(x_2) = \max_{1 \leq i,j \leq m} \left| \frac{1}{N_m} \sum_{n=1}^{N_m} f_i(T_A^n x_0) g_j(T_B^{-n} x_2) - F_{i,j}(x_0, x_2) \right|
	\end{equation*}
	satisfies $\|H_m\|_{L^1(\mu_B)} \leq 2^{-m}$.
	By the triangle inequality, we then have that
	\begin{equation*}
		\tilde{H}_m(x_2) = \max_{1 \leq i,j \leq m} \left| \frac{1}{N_m} \sum_{n=1}^{N_m} f_i(T_A^{n+k_m} x_A) g_j(T_B^{-n} x_2) - F_{i,j}(T_A^{k_m} x_A, x_2) \right|
	\end{equation*}
	satisfies $\|\tilde{H}_m\|_{L^1(\mu_B)} \leq 3 \cdot 2^{-m}$.
	
	Let $\Phi_m = \{k_m+1, \ldots, k_m+N_m\}$.
	Then since $F_{i,j}$ is $(T_A \times T_B^{-1})$-invariant, we have
	\begin{align*}
		H'_m(x_2) & = \tilde{H}_m(T_B^{-k_m}x_2) \\
        & = \max_{1 \leq i,j \leq m} \left| \frac{1}{|\Phi_m|} \sum_{n \in \Phi_m} f_i(T_A^n x_A) g_j(T_B^{-n} x_2) - F_{i,j}(T_A^{k_m} x_A, T_B^{-k_m} x_2) \right| \\
		 &= \max_{1 \leq i,j \leq m} \left| \frac{1}{|\Phi_m|} \sum_{n \in \Phi_m} f_i(T_A^n x_A) g_j(T_B^{-n} x_2) - F_{i,j}(x_A, x_2) \right|.
	\end{align*}
	Moreover, since $\mu_B$ is $T_B$-invariant, we have $\|H'_m\|_{L^1(\mu_B)} = \|\tilde{H}_m\|_{L^1(\mu_B)} \leq 3 \cdot 2^{-m}$.
	
	Let $H = \sum_{m=1}^{\infty} H'_m$.
	Then $\|H\|_{L^1(\mu_B)} \leq \sum_{m=1}^{\infty} \|H'_m\|_{L^1(\mu_B)} < \infty$, so the set $X_0 = \{x \in X_B : H(x) < \infty\}$ has full measure.
	Suppose $x \in X_0$.
	Then $H'_m(x) \to 0$ as $m \to \infty$, so
	\begin{equation*}
		\lim_{m \to \infty} \frac{1}{|\Phi_m|} \sum_{n \in \Phi_m} f_i(T_A^n x_A) g_j(T_B^{-n} x) = F_{i,j}(x_A, x)
	\end{equation*}
	for every $i, j \in \N$.
	By density of the families $(f_i)_{i \in \N}$ and $(g_j)_{j \in \N}$, this completes the proof.
\end{proof}

We can now complete the proof of Proposition \ref{prop: correspondence with convolution}.

\begin{proof}[Proof of \cref{prop: correspondence with convolution}]
	Property \ref{itm correspondence with convolution 1} is immediate by basic properties of conditional expectation.

    \bigskip
	
	\ref{itm correspondence with convolution 2} We will prove $E_A \subseteq_{\mu_A} \pi_A^{-1} \left( \{\varphi_A > 0\} \right)$.
	The corresponding statement for $B$ follows by the same argument.
    Let $F = \{\varphi_A = 0\} \subseteq G$.
    Then by the definition of the conditional expectation,
    \begin{equation*}
        \mu_A(E_A \cap \pi_A^{-1}(F)) = \int_{\pi_A^{-1}(F)} \1_{E_A}~d\mu_A = \int_F \varphi_A~dm = 0. 
    \end{equation*}
    But $\pi_A^{-1}(\{\varphi_A > 0\}) = X \setminus \pi_A^{-1}(F)$, so we conclude that $E_A \subseteq_{\mu_A} \pi_A^{-1}(\{\varphi_A > 0\})$.

    \bigskip
	
	\ref{itm correspondence with convolution 3} We prove $B + E_A \supseteq_{\mu_A} \pi_A^{-1}(S)$.
	The other part of \ref{itm correspondence with convolution 3} holds by swapping the roles of $A$ and $B$.
	
	Let $\Psi$ be the F{\o}lner sequence given by Proposition \ref{prop: average equals convolution at given point}.
	Then for almost every $x \in X_A$,
	\begin{align*}
		\lim_{N \to \infty} \frac{1}{|\Psi_N|} \sum_{n \in \Psi_N} \1_B(n) \1_{E_A}(T_A^{-n}x) 
        & = \lim_{N \to \infty} \frac{1}{|\Psi_N|} \sum_{n \in \Psi_N} \1_{E_B}(T_B^n x_B) \1_{E_A}(T_A^{-n}x) \\
        & = (\varphi_A * \varphi_B)(\pi_A(x)).
	\end{align*}
	Thus,
	\begin{align*}
		\pi_A^{-1}(S) & = \{x \in X_A : (\varphi_A * \varphi_B)(\pi_A(x)) > 0\} \\
         & =_{\mu_A} \left\{ x \in X_A : \limsup_{N \to \infty} \frac{1}{|\Psi_N|} \sum_{n \in \Psi_N} \1_B(n) \1_{E_A}(T_A^{-n} x) > 0 \right\}.
	\end{align*}
	We want to show that the set on the right hand side is contained in $B + E_A$.
	Suppose $x \in X_A$ and
	\begin{equation*}
		\limsup_{N \to \infty} \frac{1}{|\Psi_N|} \sum_{n \in \Psi_N} \1_B(n) \1_{E_A}(T_A^{-n}x) > 0.
	\end{equation*}
	Then there exist many values of $n \in B$ such that $T_A^{-n}x \in E_A$.
	Hence, $x \in \bigcup_{t \in B} T^tE_A = B + E_A$.

    \bigskip
	
	\ref{itm correspondence with convolution 4} This is a consequence of \ref{itm correspondence with convolution 3} combined with \cref{thm: Furstenberg system}:
	\begin{equation*}
		m(S) = \mu_A(\pi_A^{-1}(S)) \leq \mu_A(B + E_A) \leq \alpha + \beta.
	\end{equation*}
\end{proof}

%SUBSECTION
\subsection{Finishing the proof}
\label{sec_finishing_the_proof}

We now want to take the set $S$ defined by convolution of functions and replace it with a sumset of measurable subsets of $G$.
This process is enabled by the following lemma.

\begin{Lemma}[{\cite[Lemma 3.15]{Griesmer26}}] \label{lem: replace convolution by sumset}
	Let $f, g : G \to [0,1]$ be measurable functions.
	Let $C = \{f > 0\}$, $D = \{g > 0\}$, and $E = \{f * g > 0\}$.
	Then there exist sets $C'$ and $D'$ such that $C' =_m C$, $D' =_m D$, and the sumset $C'+D'$ is a measurable subset of $E$.
\end{Lemma}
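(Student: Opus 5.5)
The approach is to pick good representatives of $C$ and $D$ using Lebesgue density points with respect to Haar measure, so that sums of density points land in $E$ automatically. Fix a metrizable compact abelian group $G$. Since it is second countable, it has a neighborhood basis $(U_k)$ at $0$ of symmetric open sets with $m(U_k)>0$. I would work with a differentiation basis for which a Lebesgue density theorem holds. In general compact groups this is delicate, so the first step is a reduction. Write $G$ as an inverse limit of compact Lie groups $G/K_j$, with $K_j$ closed subgroups decreasing to $\{0\}$. Then use the martingale convergence theorem in place of the density theorem: for $\phi\in L^1(G)$, the conditional expectations $\E[\phi\mid\mathcal{B}_j]$ onto the $\sigma$-algebras of $K_j$-invariant sets converge to $\phi$ almost everywhere. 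This gives the following definition. A point $x$ is a \emph{density point} of a set $C$ if $m((x+V)\cap C)/m(V)\to 1$ along a suitable basis of sets $V$: cosets of $K_j$ intersected with small balls in the Lie quotient, where the Lie quotient does have a Lebesgue differentiation theorem.

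With density points available, I would define $C'=\{x\in C: x \text{ is a density point of } C\}$ and $D'$ analogously. By the density theorem, $C'=_m C$ and $D'=_m D$.

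The key step is to show $C'+D'\subseteq E$. Take $x\in C'$ and $y\in D'$. For a small symmetric neighborhood $V$ in the basis, density gives $m((x+V)\cap C)>\tfrac23 m(V)$ and $m((y+V)\cap D)>\tfrac23 m(V)$. Then
\[
(f*g)(x+y)\;\ge\;\int \1_{C\cap(x+V)}(u)\,f(u)\,g(x+y-u)\,dm(u).
\]
For $u\in x+V$ we have $x+y-u\in y-V=y+V$. The set of $u\in x+V$ with $u\in C$ and $x+y-u\in D$ has measure greater than $\tfrac13 m(V)>0$, since reflection $u\mapsto x+y-u$ maps $x+V$ to $y+V$ and preserves measure. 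On this positive-measure set $f(u)g(x+y-u)>0$, so $(f*g)(x+y)>0$ and $x+y\in E$. Because $f*g$ is a genuine pointwise-defined function, continuous since $f,g\in L^2$, the set $E$ is open and no null-set ambiguity arises.

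It remains to check measurability of $C'+D'$. Density points of a set form a Borel set ($G_\delta$-type limits), so $C',D'$ are Borel. Their sum $C'+D'$ is then analytic, hence $m$-measurable. One could alternatively shrink $C',D'$ to $\sigma$-compact subsets of full measure, making $C'+D'$ $\sigma$-compact and hence Borel. This is the cleaner choice, and I would take it.

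The main obstacle is the density theorem in a general compact metrizable abelian group, where balls need not satisfy a doubling condition. My first attempt is to avoid metric balls entirely and use the basis $V=\pi_j^{-1}(W)$. Here $\pi_j:G\to G/K_j$ and $W$ ranges over small symmetric balls in the Lie group $G/K_j$. The argument above only needs, for each fixed point, one symmetric $V$ of positive measure with density above $2/3$ for both sets simultaneously. This follows from a diagonal use of martingale convergence in $j$ together with Lebesgue differentiation in the finite-dimensional quotient, applied to $\E[\1_C\mid\mathcal{B}_j]$. If that proves messy, the fallback is a Steinhaus-type convolution argument: the functions $\1_C*\1_{V}/m(V)$ converge to $\1_C$ in $L^1$ along the basis, and passing to an a.e. convergent subsequence gives a single sequence $V_k$ working for almost every point.
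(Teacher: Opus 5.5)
The paper does not prove this lemma; it cites Griesmer, so there is no in-text argument to compare against. Your proposal gives a self-contained proof, and it is correct once you commit to what you call the fallback and discard the inverse-limit and martingale detour.

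For a symmetric open neighbourhood $V$ of $0$ one has $(\1_C*\1_V)(x)/m(V)=m(C\cap(x+V))/m(V)$. Moreover $\bigl\|\1_C*\1_V/m(V)-\1_C\bigr\|_1\le\sup_{v\in V}\|\1_C(\cdot-v)-\1_C\|_1$, and the right-hand side is small for small $V$ by continuity of translation in $L^1(G)$.

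Hence you can pick one sequence $(V_k)$ of symmetric open sets whose $L^1$ errors are summable for $C$ and for $D$ simultaneously. Borel--Cantelli then gives almost everywhere convergence of both density ratios along this same sequence.

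Take $C'$, $D'$ to be the points of $C$, $D$ where the ratio tends to $1$. Your overlap computation then gives $(f*g)(x+y)>0$: inside $x+V_k$ there are two subsets of relative measure above $\tfrac23$, the second being the preimage of $D\cap(y+V_k)$ under the measure-preserving bijection $u\mapsto x+y-u$ from $x+V_k$ onto $y+V_k$. Shrinking $C'$ and $D'$ to $\sigma$-compact subsets of full measure makes $C'+D'$ $\sigma$-compact, hence measurable.

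Compared with citing Griesmer, this costs a paragraph. In exchange it is elementary and needs no Lie quotients, no differentiation theorem, and no doubling structure on $G$.

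Three things in your write-up should be fixed.

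First, the overlap step needs a single $V$ that works for $x$ relative to $C$ and for $y$ relative to $D$, for every pair $(x,y)\in C'\times D'$. Your sentence asking, ``for each fixed point, one symmetric $V$ \dots\ for both sets simultaneously'' misstates this. Any point-dependent choice of $V$ would break the argument, so you must make the point-independence of $(V_k)$ explicit.

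Second, your first description of the basis (cosets of $K_j$ intersected with small balls) consists of Haar-null sets when $K_j$ has infinite index, so the density ratio is undefined there. Only the thickened sets $\pi_j^{-1}(W)$ make sense. Even then you would have to fix a diagonal sequence of radii independent of the point, which is the fallback again.

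Third, your analytic-set remark presupposes that $f$ and $g$ are Borel. For merely Haar-measurable $f,g$, it is the $\sigma$-compact shrinking that delivers measurability of the sumset, so that is the right choice.
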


We apply \cref{lem: replace convolution by sumset} to the set $S$ from \cref{prop: correspondence with convolution} to obtain measurable sets $C_0, D_0 \subseteq G$ such that $C_0 =_m \{\varphi_A > 0\}$, $D_0 =_m \{\varphi_B > 0\}$, the sumet $C_0+D_0$ is measurable, and $C_0+D_0 \subseteq S$.
The sets $C_0$ and $D_0$ have the following properties:

\begin{itemize}
	\item	$m(C_0) \geq \int_G \varphi_A~dm = \alpha$ and $m(D_0) \geq \int_G \varphi_B~dm = \beta$.
	\item	$m(C_0+D_0) \leq m(S) \leq \alpha+\beta$.
	\item	$E _A \subseteq_{\mu_A} \pi_A^{-1}(C_0)$ and $E_B \subseteq_{\mu_B} \pi_B^{-1}(D_0)$.
	\item	$B + E_A \supseteq_{\mu_A} \pi_A^{-1}(C_0 + D_0)$ and $A + E_B \supseteq_{\mu_B} \pi_B^{-1}(C_0 + D_0)$.
\end{itemize}

Combining the inequalities for the measures of $C_0$, $D_0$, and $C_0+D_0$, we have $m(C_0+D_0) \leq \alpha+\beta \leq m(C_0)+m(D_0)$.
If we have a strict inequality $m(C_0+D_0) < m(C_0) + m(D_0)$, then by Kneser's theorem for compact abelian groups (\cref{thm_Kneser_for_compact_groups}), the sumset $C_0+D_0$ is contained in a finite union of cosets of a compact open subgroup $H < G$ and $m(C_0+D_0) = m(C_0+H) + m(D_0+H) - m(H)$.
The cosets of $H$ partition $G$ into disjoint open sets, so we must have $[G:H] \leq C_G < \frac{1}{1-\beta}$.
The set $D_0$ satisfies $m(D_0) \geq \beta > 1 - \frac{1}{[G:H]}$, so $D_0$ has non-empty intersection with every coset of $H$, i.e. $D_0 + H = G$.
Thus,
\begin{equation*}
    m(C_0+D_0) = m(C_0+H) + m(D_0+H) - m(H) \geq m(H) + 1 - m(H) = 1.
\end{equation*}
This contradicts the inequality $m(C_0+D_0) \leq \alpha + \beta < 1$.
Therefore, $m(C_0+D_0) = m(C_0) + m(D_0)$, which forces several inequalities to become equalities.
First, from the string of inequalities
\begin{equation*}
	\alpha + \beta \leq m(C_0) + m(D_0) = m(C_0+D_0) \leq \alpha + \beta,
\end{equation*}
we conclude $m(C_0) = \alpha$, $m(D_0) = \beta$, and $m(C_0 + D_0) = \alpha + \beta$.
We can then deduce
\begin{equation*}
	\mu_A(\pi_A^{-1}(C_0) \setminus E_A) = m(C_0) - \mu_A(E_A) = \alpha - \alpha = 0,
\end{equation*}
and similarly, $\mu_B(\pi_B^{-1}(D_0) \setminus E_B) = 0$, so we in fact have almost equalities of sets $E_A =_{\mu_A} \pi_A^{-1}(C_0)$ and $E_B =_{\mu_B} \pi_B^{-1}(D_0)$.
Moreover,
\begin{equation*}
	\mu_B\left( (A + E_B) \setminus \pi_B^{-1}(C_0+D_0) \right) = \mu_B(A+E_B) - m(C_0+D_0) \leq \alpha + \beta - (\alpha+\beta) = 0.
\end{equation*}
Similarly, $\mu_A\left( (B + E_A) \setminus \pi_A^{-1}(C_0+D_0) \right) = 0$, so $A + E_B =_{\mu_B} \pi_B^{-1}(C_0+D_0)$ and $B + E_A =_{\mu_A} \pi_A^{-1}(C_0+D_0)$.

To summarize, we have shown:
\begin{itemize}
	\item	$m(C_0) = \alpha$ and $m(D_0) = \beta$.
	\item	$m(C_0+D_0) = \alpha + \beta$.
	\item	$E _A =_{\mu_A} \pi_A^{-1}(C_0)$ and $E_B =_{\mu_B} \pi_B^{-1}(D_0)$.
	\item	$A + E_B =_{\mu_B} \pi_B^{-1}(C_0 + D_0)$ and $B + E_A =_{\mu_A} \pi_A^{-1}(C_0 + D_0)$.
\end{itemize}
Let $C = C_0 \cup (A\theta)$ and $D = D_0 \cup (B\theta)$.
Since $A$ and $B$ are countable sets, we have $C_0 =_m C$ and $D_0 =_m D$.
Moreover,
\begin{multline*}
	C + D = C_0 + D_0 \cup (A\theta + D_0) \cup (C_0 + B\theta) \cup (A\theta + B\theta) \\
    =_m (C_0 + D_0) \cup (\underbrace{A+D_0}_{=_m C_0+D_0}) \cup (\underbrace{B+C_0}_{=_m C_0+D_0}) =_m C_0+D_0.
\end{multline*}
So $C$ and $D$ satisfy:
\begin{itemize}
	\item	$m(C) = \alpha$ and $m(D) = \beta$.
	\item	$m(C+D) = m(C) + m(D)$.
	\item	$A \subseteq \{n \in \N : n\theta \in C\}$ and $B \subseteq \{n \in \N : n\theta \in D\}$.
\end{itemize}
Now we apply \cref{thm_inverse_theorem_sumsets_compact_groups} to obtain, for some finite index subgroup $H \leq G$, a decomposition
\begin{equation*}
    C = C' + x \quad \text{and} \quad D = (D'\cup D'') + y
\end{equation*}
with $C',D' \subseteq H$, $x,y \in G$, $D'' \subseteq G \setminus H$ with $m((G\setminus H) \setminus D'') = 0$ such that there exists a surjective homomorphism $\phi : H \to \T$ and closed intervals $I, J \subseteq \T$ such that
\begin{equation*}
    C' \subseteq \phi^{-1}(I), \quad D' \subseteq \phi^{-1}(J), \quad \text{and} \quad m(\phi^{-1}(I) \setminus C') = m(\phi^{-1}(J) \setminus D') = 0.
\end{equation*}

Let $k = [G:H] \le C_G < \frac{1}{1-\beta}$.
Since $(G,m,\theta)$ is ergodic, there exists $a_0, b_0 \in \{0,1,\ldots,k-1\}$ such that $a_0\theta  + x \equiv b_0\theta + y \equiv 0 \pmod{H}$.
Let $\tilde{\theta} \in \T$ such that $k\tilde{\theta}= \phi(k\theta)$.
We may then write $A = A_0 - a_0$ with
\begin{multline*}
    A_0 = A + a_0
    \subseteq \{n \in \N : (n-a_0)\theta \in C\}
    = \{n\in \N : (n-a_0)\theta \in C'+x\} \\
    \subseteq \{n \in \N : n \theta \in \underbrace{C' + x + a_0\theta}_{\subseteq H} \}
    \subseteq \{n \in k\N : n\tilde{\theta} \in I + \phi(x) + a_0 \tilde{\theta}\}.
\end{multline*}
Now,
\begin{equation*}
    B + b_0 = \{n \in \N : (n-b_0)\theta \in D\} = \{n \in \N : n\theta \in (D' \cup D'') + \underbrace{(y+b_0)}_{\in H} \theta\},
\end{equation*}
so we may write
\begin{equation*}
    B_0 = (B+b_0) \cap \{n \in \N : n\theta \in D' + (y+b_0)\theta\}
\end{equation*}
and
\begin{equation*}
    B_1 = (B+b_0) \cap \{n \in \N : n\theta \in D'' + (y+b_0)\theta\}
\end{equation*}
to get a decomposition $B = (B_0 \cup B_1) - b_0$ with
\begin{equation} \label{eq: B_0 containment}
    B_0 \subseteq \{n \in k\N : n\tilde{\theta} \in J + \phi(y) + b_0 \tilde{\theta}\}
\end{equation}
and
\begin{equation} \label{eq: B_1 containment}
    B_1 \subseteq \{n \in \N : n\theta \notin H\} = \N \setminus k\N.
\end{equation}

Let $\tilde{I} = I + \phi(x) + a_0\tilde{\theta}$ and $\tilde{J} = J + \phi(y) + b_0\tilde{\theta}$.
Then $|\tilde{I}| = |I| = m(C') = m(C) = \alpha$ and $|\tilde{J}| = |J| = m(D') = m(D) - m(G\setminus H) = \beta - \left( 1 - \frac{1}{k} \right)$.
Let $\tilde{\phi} : k\N \to \T$ be the map $\tilde{\phi}(n) = n\tilde{\theta} = \phi(n\theta)$.
Then
\begin{equation*}
    d(\tilde{\phi}^{-1}(\tilde{I}) \setminus A_0) = |\tilde{I}| - d(A) = \alpha - \alpha = 0.
\end{equation*}
From \eqref{eq: B_0 containment} and \eqref{eq: B_1 containment}, the sets $B_0$ and $B_1$ have upper density bounded by
\begin{equation*}
    \overline{d}(B_0) \leq |\tilde{J}| = \beta - \left( 1 - \frac{1}{k} \right)
\end{equation*}
and
\begin{equation*}
    \overline{d}(B_1) \leq 1 - \frac{1}{k}.
\end{equation*}
Taking complements, we then have
\begin{equation*}
    \underline{d}(B_0) = \underline{d}((B+b_0)\setminus B_1) = \beta - \overline{d}(B_1) \geq \beta - \left( 1 - \frac{1}{k} \right)
\end{equation*}
and
\begin{equation*}
    \underline{d}(B_1) = \underline{d}((B+b_0) \setminus B_0) = \beta - \overline{d}(B_0) \geq 1 - \frac{1}{k}.
\end{equation*}
Thus, $B_0$ and $B_1$ have density, and
\begin{equation*}
    d(\tilde{\phi}^{-1}(\tilde{J}) \setminus B_0) = |\tilde{J}| - d(B_0) = \beta - \left( 1 - \frac{1}{k} \right) - \left( \beta - \left( 1 - \frac{1}{k} \right) \right) = 0
\end{equation*}
and
\begin{equation*}
    d((\N \setminus k\N) \setminus B_1) = 1 - \frac{1}{k} - d(B_1) = 0.
\end{equation*}
This proves \cref{thm: locally ergodic irrational Bohr}.

%SECTION
\section{Relevant examples} \label{sec: examples}

In this section, we present instructive examples to help clarify features of our main result that may not be immediately clear otherwise.

\subsection{Examples of sets satisfying $d(A+B)=d(A)$}

In light of case~\ref{itm_main_thm_2} of \cref{thm: main}, it is natural to wonder what type of sets $A,B\subset\N$ have the property that $d(A)>0$ and $d(A+B)=d(A)$. The following proposition guarantees the existence of such pairs even under the additional restriction that $B$ meets every residue class in $\N$.

\begin{Proposition}
\label{prop_ctmn}
Let $\alpha\in(0,1)$. There exist a set $A\subset \N$ with $d(A)=\alpha$ and a set $B\subset \N$ that meets every residue class in $\N$ such that $d(A+B)=d(A)$.
\end{Proposition}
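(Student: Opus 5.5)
We construct $A$ and $B$ explicitly. The guiding idea, suggested by case~\ref{itm_main_thm_2} of \cref{thm: main}, is as follows. $A$ should be a union of long intervals, so that $A+t\sim A$ for every fixed $t$. $B$ should be sparse enough that adding it to $A$ only thickens those intervals by a negligible amount. At the same time, $B$ must contain an element of every residue class.

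Concretely, fix a rapidly increasing sequence of scales $L_1<L_2<\cdots$ with $L_{j+1}/L_j\to\infty$. Set
\[
A=\bigcup_{k\in\N}\big(\{kL_j+1,\ldots,kL_j+\lfloor\alpha L_j\rfloor\}\text{ for } k\in[L_{j+1}/L_j,\,L_{j+2}/L_j)\big),
\]
that is, in the range $[L_{j+1},L_{j+2})$ the set $A$ is a periodic pattern of blocks of length $\alpha L_j$ with period $L_j$. Enumerate all pairs $(a,b)$ with $a,b\in\N$ as $(a_j,b_j)_{j\in\N}$. Let $B=\{c_1,c_2,\ldots\}$, where $c_j\equiv b_j\pmod{a_j}$ is chosen with $c_j\le a_j+b_j$. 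Then $B$ meets every residue class; it is essentially an arbitrary sparse set of small elements. The difficulty is that $B$ is infinite, so the $c_j$ must not disturb the blocks.

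To handle this, we instead choose $c_j$ very large but with controlled size. In the range $[L_{j+1},L_{j+2})$, only elements $c\in B$ with $c\le \epsilon_j L_j$ should interact nontrivially with the blocks. Elements $c\le \epsilon_jL_j$ shift each block by at most $\epsilon_jL_j$, so $A+\{c\le\epsilon_jL_j\}$ stays within the $\epsilon_j L_j$-neighborhood of the blocks, of density at most $\alpha+\epsilon_j$. We therefore choose the $c_j$ and $L_j$ inductively so that $c_j\in(L_{j+3},2L_{j+3})$ and $c_j\equiv b_j \pmod{a_j}$. We also require $L_{j+3}$ to be a multiple of $L_{j'}$ for all $j'\le j+2$. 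With this divisibility, translating by a multiple of $L_i$ maps the scale-$L_i$ periodic pattern onto itself.

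The main obstacle is controlling the large elements $c\in B$ with $c>\epsilon_iL_i$ when $n$ lies in range $i$. For $n<2L_{i+2}$, any $c$ with $c\ge L_{i+3}$ satisfies $c\ge n$, and since $A+c\subseteq(c,\infty)$, its translates are not counted below $n$. The only remaining contributions come from the finitely many $c_j$ with $j\le i-1$, each of size roughly $L_{j+3}\le L_{i+2}$. For these we exploit exact periodicity: we fine-tune $c_j$ to be $\equiv b_j\pmod{a_j}$ and also $\equiv 0$ or a tiny residue modulo $L_{j+2}$. Compatibility follows from the Chinese remainder theorem after taking $L$'s coprime to nothing in particular; if needed, we first pick $c_j$ and then let $L_{j+3}$ be a multiple of $a_j$. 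Under this choice, $A+c_j$ restricted to ranges $i\ge j+1$ coincides with $A$ up to a shift of at most $\epsilon$ times the block period. The number of such bad $c$ is only $i$, which grows slowly, so we take $\epsilon_i=1/i^2$, and the union of boundary errors has density $O(i\cdot i^{-2})\to0$.

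The remaining task is a bookkeeping check that $d(A)=\alpha$ and $\overline d(A+B)\le\alpha$. The first holds because $A$ has density $\alpha$ on each range, with negligible transitions since $L_{j+1}/L_j\to\infty$. The second follows from the neighborhood estimates above. Together with $A+c_1\supseteq A$-shift, which gives $\underline d(A+B)\ge\alpha$, these yield $d(A+B)=\alpha=d(A)$.
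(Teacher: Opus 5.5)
Your construction does not give $d(A+B)=\alpha$. The key claim that ``$A+c_j$ restricted to ranges $i\ge j+1$ coincides with $A$ up to a shift of at most $\epsilon$ times the block period'' is false. The translate $A+c_j$ is a copy of \emph{all} of $A$, including its parts at earlier scales.

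You put $c_j\in(L_{j+3},2L_{j+3})$, right at the start of range $j+2=[L_{j+3},L_{j+4})$, where $A$ has period $L_{j+2}$. Then the piece $c_j+\big(A\cap[L_{j+2},L_{j+3})\big)$, which carries the period-$L_{j+1}$ pattern of range $j+1$, lands on the window $W_j=[c_j+L_{j+2},\,c_j+L_{j+3})\subseteq[L_{j+3},L_{j+4})$. Your congruence $c_j\equiv r_j\pmod{L_{j+2}}$ aligns $A+c_j$ with $A$ only at those $n$ for which $n-c_j$ also lies in range $j+2$. On $W_j$ it only says that $A+c_j$ is a shift of the period-$L_{j+1}$ pattern, which has density about $\alpha$ inside every gap of the period-$L_{j+2}$ blocks.

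Now compare with $A+c_1$, which on $W_j$ is the period-$L_{j+2}$ pattern shifted by the bounded amount $c_1$. For $N=c_j+L_{j+3}<3L_{j+3}$ you get $|(A+B)\cap[N]|\ge\alpha N+\alpha(1-\alpha)(L_{j+3}-L_{j+2})-\oh(N)$. Hence $\overline{d}(A+B)\ge\alpha+\alpha(1-\alpha)/3$.

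A second problem has the same origin. In range $j+3$ the period is $L_{j+3}$, and $c_j\bmod L_{j+3}=c_j-L_{j+3}$ is not controlled. Unless $c_j$ happens to lie within $\oh(L_{j+3})$ of $L_{j+3}$ or $2L_{j+3}$, every block of range $j+3$ is thickened by a positive proportion. Your ``$\Oh(i\cdot i^{-2})$'' bookkeeping misses both effects, because the issue is not how many large elements of $B$ there are but how large each misaligned window is relative to $N$.

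What is missing is a separation of scales. In the paper the range boundaries $K_i$ and the periods $t_i$ are decoupled: $t_i/K_i\to0$, $i^2K_i/b_i\to0$, $b_i/t_{i+1}\to0$, and $t_i\mid b_i$. These conditions do three things:
\begin{itemize}
\item $b_i\gg i^2K_i$ makes the misaligned image of $A\cap[1,K_i)$ a vanishing fraction of everything below $2b_i$;
\item $t_i\mid b_i$ makes $A+b_i$ agree with $A$ inside range $i$;
\item $b_i\ll t_{i+1}$ makes the shift negligible at the next scale.
\end{itemize}
In your indexing the start of range $j+2$ and the period of range $j+3$ are the same number $L_{j+3}$, so $c_j$ cannot be both much larger than the former and much smaller than the latter. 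To repair this you would take, e.g., $j^2L_{j+3}\le c_j\le L_{j+4}/j$ and impose the congruence modulo $L_{j+3}$ rather than $L_{j+2}$, keeping your divisibility chain. For instance, $c_j=r_j+m\,a_jL_{j+3}$ with $r_j<a_j$ and $r_j\equiv b_j\pmod{a_j}$, which requires $L_{j+4}\gg j^3a_jL_{j+3}$. With that change, meeting all residue classes with single CRT-chosen elements is a legitimate substitute for the paper's intervals $b_i+[b_{i-2}]$.
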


\begin{proof}
Given $t\in\N$ and $M<N\in\N$, consider the set
\[
C(t,M,N)=(t\Z+\{1,\ldots,\lfloor \alpha t\rfloor\})\cap [M,N).
\]
It is straightforward to check that
\begin{align}
\label{eqn_ctmn_p1}
 \big|C(t,M,N)\big|&= (N-M) \frac{\lfloor \alpha t\rfloor}{t}+ \Oh(t),
\\
\label{eqn_ctmn_p3}
\big|(C(t,M,N)+\{1,\ldots,b\})\triangle C(t,M,N)\big|&=\Oh\bigg(\frac{b N}{t}\bigg),\qquad\forall b,t\in\N.
\end{align}
Let $(b_i)_{i\in\N}$, $(t_i)_{i\in\N}$, and $(K_i)_{i\in\N}$ be any sequences of positive integers such that
\begin{align}
\label{eqn_ctmn_con_1}
\lim_{i\to\infty} \frac{t_{i}}{K_{i}}=0,
\\
\label{eqn_ctmn_con_2}
\lim_{i\to\infty} \frac{i^2K_i}{b_i}=0,
\\
\label{eqn_ctmn_con_3}
\lim_{i\to\infty} \frac{b_i}{t_{i+1}}=0,
\\
\label{eqn_ctmn_con_4}
b_i~\text{is a multiple of}~t_i.
\end{align}
We claim that if
\[
A=\bigcup_{i\in\N} C(t_i,K_{i}, K_{i+1})\qquad\text{and}\qquad B=\bigcup_{i\in\N} (b_i+[b_{i-2}])
\]
then $d(A)=\alpha$ and $d(A+B)=d(A)$.

From \eqref{eqn_ctmn_p1} and \eqref{eqn_ctmn_con_1} it is straightforward to conclude that the density of $A$ exists and equals $\alpha$. Also, from \eqref{eqn_ctmn_con_1}, \eqref{eqn_ctmn_con_2}, and \eqref{eqn_ctmn_con_3} we see that $d(B)=0$. Since $B$ contains arbitrarily long intervals, we also conclude that $B$ meets every residue class.

It remains to show that the density of $A+B$ exists and equals $\alpha$.
To verify this claim, let $N\in\N$ be arbitrary, and let $i\in\N$ be such that $N\in [K_i, K_{i+1})$.
%Since $b_i\in [K_i, K_{i+1})$, we either have $N\leq b_i$ or $N> b_i$. Let us first deal with the case $N\leq b_i$. 
In this case, 
\begin{equation}
\label{eqn_ctmn_m_1}
\begin{aligned}
\big|\big((A+B) \cap [N]\big) & \setminus (A\cap [N])\big|
\\
&\leq \sum_{s\leq i} \big|\big(\big(C(t_s,K_s,K_{s+1})+B\big)\cap[N]\big)\setminus (C(t_s,K_s,K_{s+1})\cap[N]) \big|.
\end{aligned}
\end{equation}
Note that $b_{i+1}>N$. So we have
\begin{equation}
\label{eqn_ctmn_m_2}
\begin{aligned}
\big|\big(\big(C(t_s,K_s,K_{s+1})+B\big)\cap[N]\big)\setminus (C( & t_s,K_s,K_{s+1})\cap[N])\big|
\\
&\leq \big|\big(C(t_s,K_s,K_{s+1}) +B\big)\cap[N]\big|
\\
&\leq \sum_{t\leq i}\big|\big(C(t_s,K_s,K_{s+1}) +b_t+[b_{t-2}]\big)\big|
\\
&\leq \sum_{t\leq i} (K_{s+1}+b_{t+2}-K_s)
\\
&=\Oh(iK_{s+1}).
\end{aligned}
\end{equation}
Since $b_i\in [K_i, K_{i+1})$, we either have $N\leq b_i$ or $N> b_i$. Let us first deal with the case $N> b_i$. In this case, we get from \eqref{eqn_ctmn_m_1} and \eqref{eqn_ctmn_m_2} that
\begin{align*}
\big|\big( & (A+B) \cap [N]\big)  \setminus (A\cap [N])\big|
\\
&\leq \big|\big(\big(C(t_i,K_i,K_{i+1})+B\big)\cap[N]\big)\setminus (C(t_i,K_i,K_{i+1})\cap[N])\big| + \Oh(i^2 K_{i}).
\end{align*}
Since $B\cap [N]\subset \{b_1,\ldots,b_i\}+[b_{i-2}]$ and $C(t_i,K_i,K_{i+1})\cap[N]=C(t_i,K_i,N)$ we get
\begin{align*}
\big|\big( & (A+B) \cap [N]\big)  \setminus (A\cap [N])\big|
\\
&\leq \big|\big(\big(C(t_i,K_i,K_{i+1})+\{b_1,\ldots,b_i\}+[b_{i-2}]\big)\cap[N]\big)\setminus (C(t_i,K_i,N))\big| + \Oh(i^2 K_{i}).
\end{align*}
Since $b_i$ is a multiple of $t_i$, we have
\begin{align*}
\big(C(t_i,K_i,K_{i+1})+\{b_1,\ldots,b_i\} & +[b_{i-2}]\big)\cap[N]
\\
&=
\big(C(t_i,K_i,K_{i+1})+\{b_1,\ldots,b_{i-1}\}+[b_{i-2}]\big)\cap[N]
\\
&\subset
\big(C(t_i,K_i,K_{i+1})+[b_{i-1}]\big)\cap[N].
\end{align*}
We obtain
\begin{align*}
\big|\big( & (A+B) \cap [N]\big)  \setminus (A\cap [N])\big|
\\
&\leq \big|\big(\big(C(t_i,K_i,K_{i+1})+[b_{i-1}]\big)\cap[N]\big)\setminus (C(t_i,K_i,N))\big| + \Oh(i^2 K_{i})
\\
&= \big|\big(C(t_i,K_i,N)+[b_{i-1}]\big)\setminus (C(t_i,K_i,N))\big| + \Oh(i^2 K_{i}+b_{i-1})
\\
&= \Oh\bigg(\frac{b_{i-1} N}{t_i}+ i^2 K_{i}+b_{i-1}\bigg).
\end{align*}
Using \eqref{eqn_ctmn_con_2} and \eqref{eqn_ctmn_con_3} and $N>b_i$, we see that
\[
\Oh\bigg(\frac{b_{i-1} N}{t_i}+ i^2 K_{i}+b_{i-1}\bigg)=\oh(N)
\]
and hence
\[
\big|\big( (A+B) \cap [N]\big)  \setminus (A\cap [N])\big|=\oh(N).
\]

It remains to deal with the case $N\leq b_i$. In this case, we get from \eqref{eqn_ctmn_m_1} and \eqref{eqn_ctmn_m_2} that
\begin{align*}
\big|\big( & (A+B) \cap [N]\big)  \setminus (A\cap [N])\big|
\\
&\leq 
\big|\big(\big(C(t_{i-1},K_{i-1},K_i)+B\big)\cap[N]\big)\setminus (C(t_{i-1},K_{i-1},K_i)\cap[N])\big| 
\\
&\qquad +\big|\big(\big(C(t_i,K_i,K_{i+1})+B\big)\cap[N]\big)\setminus (C(t_i,K_i,K_{i+1})\cap[N])\big| + \Oh(i^2 K_{i-1}).
\end{align*}
The second term in the sum can be estimated as in the previous case to get that
\[
\big|\big(\big(C(t_i,K_i,K_{i+1})+B\big)\cap[N]\big)\setminus (C(t_i,K_i,K_{i+1})\cap[N])\big|=
\Oh\bigg(\frac{b_{i-1} N}{t_i}+ b_{i-1}\bigg).
\]
For the first term we use $b_{i}\geq N$ and have
\begin{align*}
\big|\big(\big(C(t_{i-1}, & K_{i-1},K_i)+B\big)\cap[N]\big)\setminus (C(t_{i-1},K_{i-1},K_i)\cap[N])\big| 
\\
&=
\big|\big(\big(C(t_{i-1},  K_{i-1},K_i)+\{b_1,\ldots,b_{i-1}\}+[b_{i-3}]\big)\cap[N]\big)\setminus (C(t_{i-1},K_{i-1},K_i)\cap[N])\big|
\\
&=
\big|\big(C(t_{i-1},  K_{i-1},K_i)+\{b_1,\ldots,b_{i-1}\}+[b_{i-3}]\big)\setminus C(t_{i-1},K_{i-1},K_i)\big| +\Oh(b_{i-1}).
\end{align*}
Using that $b_{i-1}$ is a multiple of $t_{i-1}$ we get
\begin{align*}
\big|C(t_{i-1},  K_{i-1},K_i)+b_{i-1}\setminus C(t_{i-1},K_{i-1},K_i)\big|
=\Oh(b_{i-1})
\end{align*}
and hence
\begin{align*}
\big|\big(\big(C(t_{i-1}, & K_{i-1},K_i)+B\big)\cap[N]\big)\setminus (C(t_{i-1},K_{i-1},K_i)\cap[N])\big| 
\\
&=
\big|\big(C(t_{i-1},  K_{i-1},K_i)+\{b_1,\ldots,b_{i-2}\}+[b_{i-3}]\big)\setminus C(t_{i-1},K_{i-1},K_i)\big| +\Oh(b_{i-1}).
\end{align*}
Now we can again argue as before to find that
\[
\big|\big(C(t_{i-1},  K_{i-1},K_i)+\{b_1,\ldots,b_{i-2}\}+[b_{i-3}]\big)\setminus C(t_{i-1},K_{i-1},K_i)\big| =
\Oh\bigg(\frac{b_{i-2} N}{t_{i-1}}+ b_{i-2}\bigg).
\]
Overall, this implies that
\begin{align*}
\big|\big( (A+B) \cap [N]\big)  \setminus (A\cap [N])\big|
= \Oh\bigg(\frac{b_{i-1} N}{t_i}+\frac{b_{i-2} N}{t_{i-1}}+ b_{i-2}+b_{i-1}+i^2 K_{i-1}\bigg)=\oh(N).
\end{align*}
This finishes the proof.
\end{proof}

\subsection{Examples of sets satisfying $\underline{d}(A+B)=d(A)+d(B)$ and $\overline{d}(A+B)>d(A)+d(B)$}
\label{sec_non-existence_of_density_of_sumset}

Note that it follows from \cref{thm_Kneser_for_integers} that if $\underline{d}(A+B)<d(A)+d(B)$ then the density $d(A+B)$ exists. A natural question arising from case~\ref{itm_main_thm_2} of \cref{thm: main} is whether it can happen that
$\underline{d}(A+B)=d(A)+d(B)$ while $\overline{d}(A+B)>d(A)+d(B)$.
The following proposition implies that this phenomenon can indeed occur.

\begin{Proposition}
\label{prop_dens_sumset_DNE}
Suppose that $\mathbf{N} = (N_s)_{s \in \N}$ and $\mathbf{M} = (M_s)_{s \in \N}$ are sequences in $\N$ with $\lim_{s\to\infty} N_{s-1}/M_{s}=\lim_{s\to\infty} M_s/N_{s}=0$. Assume $\alpha,\beta\in(0,1)$ with $\alpha<\frac{1}{h}$ and $\beta=1-\frac{1}{h}$ for some $h\in\N$.
Let $A^*\subset h\N$ and $B^*\subset\N$ be sets with  $d(A^*)=\alpha$ and $d(B^*)=\beta$.
Then there exist sets $A\subset h\N$ and $B\subset \N$ with:
\begin{enumerate}[label=(\roman{enumi}),ref=(\roman{enumi}),leftmargin=*]
\item
\label{itm_dens_sumset_DNE_i}
$d(A)=\alpha$ and $d(B)=\beta$;
\item
\label{itm_dens_sumset_DNE_ii}
$(A+h)\sim_{\mathbf{N}} A$ and $B\sim_{\mathbf{N}} (\N\setminus h\N)$;
\item
\label{itm_dens_sumset_DNE_iii}
$\underline{d}(A+B)=d_{\mathbf{N}}(A+B)=\alpha+\beta$;
\item 
\label{itm_dens_sumset_DNE_iv}
$A\sim_{\mathbf{M}} A^*$ and $B\sim_{\mathbf{M}} B^*$.
\end{enumerate}
\end{Proposition}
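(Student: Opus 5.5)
The plan is to build $A$ and $B$ by splicing: use the ``periodic'' model near the scales $N_s$ and the given sets $A^*,B^*$ near the scales $M_s$. Since $N_{s-1}\ll M_s\ll N_s$, the block $(N_{s-1},N_s]$ contains the long window $(N_{s-1},M_s]$, which is essentially all of $[M_s]$, followed by $(M_s,N_s]$, which is essentially all of $[N_s]$.

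The construction is as follows. Fix a reference set $A^\circ\subset h\N$ with $d(A^\circ)=\alpha$ and $A^\circ+h\sim A^\circ$. One choice is the Beatty-type set $A^\circ=\{hn: \{n h\alpha\}<h\alpha\}$, whose gaps are at most $2h$ since $h\alpha<1$. Put $B^\circ=\N\setminus h\N$. Next choose an intermediate sequence $L_s$ with $M_s\ll L_s\ll N_s$, for instance $L_s=\lfloor\sqrt{M_sN_s}\rfloor$, and define
\[
A\cap(N_{s-1},N_s]=\big(A^*\cap(N_{s-1},L_s]\big)\cup\big(A^\circ\cap(L_s,N_s]\big),
\]
and similarly for $B$ with $B^*,B^\circ$. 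Both pieces of $A$ lie in $h\N$.

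Properties (i), (ii) and (iv) are then density bookkeeping. For (i), every initial segment $[N]$ is covered by pieces of sets of density $\alpha$ (respectively $\beta$) over long intervals. The endpoints $N_{s-1},L_s$ are boundaries of windows whose lengths tend to infinity relative to what precedes them, and a set of density $\alpha$ has asymptotic proportion $\alpha$ on any interval $(xN,N]$ with $x$ bounded away from $1$. Hence $|A\cap[N]|=\alpha N+\oh(N)$ uniformly, and likewise for $B$. For (ii), $[N_s]$ differs from $(L_s,N_s]$ by $\oh(N_s)$ elements, and on $(L_s,N_s]$ we have $A=A^\circ$ and $B=B^\circ$; since $A^\circ+h\sim A^\circ$, this gives $A+h\sim_{\mathbf N}A$ and $B\sim_{\mathbf N}\N\setminus h\N$. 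For (iv), $[M_s]$ differs from $(N_{s-1},M_s]\subset(N_{s-1},L_s]$ by $O(N_{s-1})=\oh(M_s)$ elements, and there $A=A^*$ and $B=B^*$.

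For (iii), the lower bound $\underline d(A+B)\ge\alpha+\beta$ comes directly from Theorem~\ref{thm_Kneser_for_integers}. The alternative there would require $A+B$ to be eventually periodic with density below $\alpha+\beta<1$. But $B\supseteq B^\circ\cap(L_s,N_s]$ contains long runs of $\N\setminus h\N$, so $A+B$ contains every nonmultiple of $h$ in long windows once shifted by any $a\in A$. This forces any such period to be compatible with $d(A+B)\ge\beta+\alpha$ anyway. It would be cleaner to invoke Corollary~\ref{cor: Kneser without local obstructions}, but $B$ need not meet every class, so I would instead argue directly as in Proposition~\ref{prop: local to global residues}. It remains to show $d_{\mathbf N}(A+B)\le\alpha+\beta$.

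This upper bound is the main obstacle, because of cross terms between the pieces. Split $A+B$ within $[N_s]$ into four kinds of contributions.
\begin{itemize}
\item Both summands from the old part $[L_s]$: these land in $[2L_s]=\oh(N_s)$.
\item $A^\circ$-part plus $B^\circ$-part: this is contained in $(A^\circ+B^\circ)\cup\ldots$, and $A^\circ+(\N\setminus h\N)\subseteq \N\setminus h\N$ up to boundary effects. So these sums add nothing in $h\N$, and within $(L_s,N_s]$ they contribute at most density $\beta$ off $h\N$.
\item $A^*$-small plus $B^\circ$: these also land in $\N\setminus h\N$, since $A^*\subset h\N$.
\item Sums landing in $h\N\cap(L_s,N_s]$: these come from $a+b$ with $b\in h\N$, which forces $b\le L_s$, that is $b\in B^*\cap h\N\cap[L_s]$. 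Here is the difficulty. Such $b$ can be as large as $L_s$, so $A^\circ+b$ could fill $h\N$ beyond $A^\circ$ unless $A^\circ$ is nearly invariant under these shifts.
\end{itemize}
I would fix this by choosing $A^\circ$ $h$-invariant on long blocks. Take $A^\circ=\bigcup_j\big(h\N\cap[R_j,R_j+\alpha hR_j')\big)$-type interval blocks of length $\gg L_s$ but $\ll N_s$, with the block length growing in $s$. Then $A^\circ+[L_s]$ differs from $A^\circ$ by $\oh(N_s)$ on $(L_s,N_s]$. With this choice, the $h\N$-part of $(A+B)\cap[N_s]$ has size $\alpha N_s+\oh(N_s)$ and the rest at most $\beta N_s$, which gives $d_{\mathbf N}(A+B)\le\alpha+\beta$. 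The resulting $A^\circ$ still has density $\alpha$ along every $[N]$ provided the blocks are arranged with period much smaller than $N_s$; checking this is a routine growth-rate choice in the spirit of Proposition~\ref{prop_ctmn}.
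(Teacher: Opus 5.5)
Your four-way split of the cross terms is the right idea. However, the final construction has a genuine gap: with a \emph{single} cutoff $L_s$ for both $A$ and $B$, the two requirements on the block period $t_s$ of $A^\circ$ contradict each other. As you observe, killing the term $A^\circ+(B\cap h\N\cap[N_s])$ forces $t_s\gg L_s$, because $B\cap h\N\cap[N_s]\subseteq B^*\cap[L_s]$ can contain elements of size comparable to $L_s$. But then (i) fails. The first complete off-run of $A^\circ$ after $L_s$ starts at some $u\le L_s+t_s$ and has length about $(1-\alpha h)t_s$. So $|A\cap[N]|$ is constant for $N\in(u,u+(1-\alpha h)t_s]$. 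Since $(1-\alpha h)t_s/(L_s+t_s)\to 1-\alpha h>0$, the ratio $|A\cap[N]|/N$ drops by a factor bounded away from $1$ across this run, which is impossible for a set of density $\alpha>0$. Your claim that a period $\ll N_s$ suffices for (i) is therefore wrong. The period must be small compared with the point where $A$ \emph{starts} following the block set, and with one cutoff that point is $L_s\ll t_s$.

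The repair is to decouple the two cutoffs, which is what the paper does. $B$ switches from $B^*$ to $B_s''=(\N\setminus h\N)\cap(M_s,N_s]$ at $M_s$. $A$ switches from $A^*$ to a block set $A_s''\subseteq h\N$ of period $t_s$ at a later point $K_s$, with $M_s\ll t_s\ll K_s\ll N_s$. Then:
\begin{itemize}
\item the sum of the two old parts lies in $[K_s+M_s]$ and is negligible;
\item every sum involving $B_s''$ lies in $\N\setminus h\N$;
\item $A_s''+(B\cap h\N\cap[M_s])$ exceeds $A_s''$ by only $\Oh(M_sN_s/t_s)$;
\item $t_s\ll K_s$ keeps $d(A)=\alpha$.
\end{itemize}
With this change the rest of your bookkeeping goes through.

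Your lower bound in (iii) also needs repair. In the exceptional case of Theorem~\ref{thm_Kneser_for_integers}, $A+B$ is eventually periodic, so $\underline d(A+B)=d_{\mathbf N}(A+B)$. You then need the matching bound $d_{\mathbf N}(A+B)\ge\alpha+\beta$, not a compatibility remark. That bound comes from two disjoint sets: $A+b\subseteq h\N$ for a fixed $b\in B\cap h\N$, and $a+B_s''\supseteq(\N\setminus h\N)\cap(M_s+a,N_s]$ for a fixed $a\in A$. This argument needs $B\cap h\N\neq\emptyset$. If $B^*\subseteq\N\setminus h\N$, you must insert a multiple of $h$ into $B$ by hand; otherwise $A+B\subseteq\N\setminus h\N$ and (iii) fails.
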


Before we provide a proof of \cref{prop_dens_sumset_DNE}, let us show how it can be used to deduce the existence of sets with the property that $\underline{d}(A+B)=d(A)+d(B)$ and $\overline{d}(A+B)>d(A)+d(B)$.

Let $A^*$ and $B^*$ be any sets with 
$d(A^*)=\alpha$, $d(B^*)=\beta$, and 
\[
\lim_{K_2\to\infty}\lim_{K_1\to\infty} \frac{(A^*\cap [K_1])+(B^*\cap [K_2])}{K_1+K_2} =1.
\]
Such sets are easy to construct. For example, one can take $A^*$ to be a random subset of $\mathbb{N}$ obtained by including $n\in\N$ in $A^*$ independently with probability $\alpha$, and $B^*$ to be the set constructed analogously with probability $\beta$ instead of $\alpha$. Using \cref{prop_dens_sumset_DNE}, we can then find sets $A,B\subset\N$ satisfying properties \ref{itm_dens_sumset_DNE_i}--\ref{itm_dens_sumset_DNE_iv}. It follows that
\[
\underline{d}(A+B)=d_{\mathbf{N}}(A+B)=\alpha+\beta\qquad\text{and}\qquad \overline{d}(A+B)=d_{\mathbf{M}}(A+B)=d_{\mathbf{M}}(A^*+B^*)=1. 
\]

\begin{proof}[Proof of \cref{prop_dens_sumset_DNE}]
Let $(K_s)_{s\in\N}$ and $(t_s)_{s\in\N}$ be any sequences such that
\[
\lim_{s\to\infty} \frac{M_s}{t_s}=
\lim_{s\to\infty} \frac{t_s}{K_s}=
\lim_{s\to\infty} \frac{K_s}{N_s}=0.
\]
Let
\begin{align*}
A_s'&=A^*\cap (N_{s-1}, K_{s}],
\\
A_s''&=(t_s\N+\{1,\ldots,\lfloor \alpha h t_s\rfloor\})\cap (K_{s}, N_s]\cap h\N, 
\end{align*}
and
\begin{align*}
B_s'&= B^*\cap (N_{s-1}, M_s],
\\
B_s''&=(N\setminus h\N)\cap (M_s, N_s],
\end{align*}
Taking
\[
A=\bigcup_{s\in\N} A_s'\cup A_s''\qquad\text{and}
\qquad B=\bigcup_{s\in\N} B_s'\cup B_s''
\]
it is straightforward to check that \ref{itm_dens_sumset_DNE_i}, \ref{itm_dens_sumset_DNE_ii}, and \ref{itm_dens_sumset_DNE_iv} are satisfied.

It remains to verify \ref{itm_dens_sumset_DNE_iii}.
We have
\begin{align*}
\big| (A+B)\cap [1,N_s] \big|
&= \big| (A+B)\cap (2K_s,N_s] \big| + \Oh(K_s)
\\
&= \Big| \Big(\big((A\setminus A_s'')\cup A_s''\big) + \big((B\setminus B_s'')\cup B_s''\big)\Big) \cap (2K_s,N_s]\Big| +\Oh(K_s).
\end{align*}
Note that $(A\setminus A_s'')\cap [1,N_s]\subset [1,K_s]$ and $(B\setminus B_s'')\cap [1,N_s]\subset [1,M_s]$.
Hence
\[
\big((A\setminus A_s'') +(B\setminus B_s'')\big)\cap (2K_s,N_s]=\emptyset.
\]
It follows that
\begin{align*}
\big| (A & +B)\cap [1,N_s] \big|
\\
&=
\Big|\Big(
\big((A\setminus A_s'') +B_s''\big)
\cup
\big(A_s'' +(B\setminus B_s'')\big)
\cup
\big(A_s''+B_s''\big)\Big)
\cap (2K_s,N_s] \Big| + \Oh(K_s)
\\
&=
\Big|\big(
(A +B_s'')
\cup
(A_s'' +(B\setminus B_s''))\big)
\cap (2K_s,N_s] \Big| + \Oh(K_s).
\end{align*}
Since $A\subset h\N$ and $B_s''=(N\setminus h\N)\cap (M_s, N_s]$, we see that $(A +B_s'')\cap (2K_s,N_s] = (\N\setminus h\N)\cap (2K_s,N_s]$. This yields
\begin{align*}
\big| (A & +B)\cap [1,N_s] \big|
=
\Big|\big(
(\N\setminus h\N)
\cup
(A_s'' +(B\setminus B_s''))\big)
\cap (2K_s,N_s] \Big| + \Oh(K_s).
\end{align*}
Finally, observe that since $A_s''\subset h\N$ we have
\[
(\N\setminus h\N)
\cup
(A_s'' +(B\setminus B_s''))
=
(\N\setminus h\N)
\cup
(A_s'' +((B\setminus B_s'')\cap h\N))
\]
and
\[
\big|(A_s'' +((B\setminus B_s'')\cap h\N))\cap (2K_s,N_s]\big| = \big|A_s'' \cap (2K_s,N_s]\big| + \Oh\Bigg(\frac{M_s N_s}{t_s}\Bigg).
\]
In conclusion, we have
\begin{align*}
\frac{\big|(A  +B)\cap [1,N_s] \big|}{N_s}
&= \frac{\big|(\N\setminus h\N)\cap (2K_s,N_s]\big|}{N_s}+
\frac{|A_s''\cap (2K_s,N_s]|}{N_s}+
\Oh\Bigg(\frac{K_s}{N_s}+\frac{M_s}{t_s}\Bigg).
\\
&= \beta+\alpha+\oh_{s\to\infty}(1).
\end{align*}
\end{proof}

%SECTION
\section{Further explorations} \label{sec: questions}

We conclude this paper with an assortment of observations, questions and conjectures concerning sumsets in the integers and other discrete groups.

\subsection{More on direct theorems for sumsets in the integers}

Consider the \define{lower logarithmic density} %and \define{upper logarithmic density} defined respectively as
defined as
\[
\underline{\delta}(A)=\liminf_{N\to\infty}\frac{1}{\log N}\sum_{n\in A\cap [N]} \frac{1}{n}. % 
\]
Just as Kneser's theorem (\cref{thm_Kneser_for_integers}) characterizes all cases when $\underline{d}(A+B)<\underline{d}(A)+\underline{d}(B)$ occurs,  we can ask for a similar characterization in the case of logarithmic density. 
\begin{Question}[Kneser's theorem for logarithmic density]
\label{q_logarithmic_density_Kneser}
Can one classify all instances in which $\underline{\delta}(A+B)<\underline{\delta}(A)+\underline{\delta}(B)$ holds?
\end{Question}

The following example provides a pair of sets $A,B\subset\N$ satisfying $\underline{\delta}(A+B)<\underline{\delta}(A)+\underline{\delta}(B)$, but $\underline{d}(A+B)>\underline{d}(A)+\underline{d}(B)$.
This illustrates that the use of lower asymptotic density in Kneser's theorem is essential for the conclusion to hold, and an answer to \cref{q_logarithmic_density_Kneser} involves new phenomena not present in Kneser's theorem.

\begin{Example}
    Fix a base $b \geq 7$.
    Let $C = \bigcup_{n \geq 0}[b^n, 3b^n) \cap \N$ be the set of positive integers whose base $b$ expansion has leading digit 1 or 2.
    Then $\delta(C) = \log_b(3)$ and
    \begin{equation*}
        \delta(C + C) = \log_b(6) = \log_b(3) + \log_b(2) < 2 \delta(C).
    \end{equation*}
    On the other hand, $\underline{d}(C)=\frac{2}{b-1}$ and $\underline{d}(C+C)=\frac{5}{b-1}$. 
\end{Example}

A recent result of Griesmer \cite[Theorem 2.7]{Griesmer26arXiv} provides information about sets satisfying $m(A+B) < m(A) + m(B)$ for an arbitrary \emph{invariant mean} $m$.
Applying this result to the logarithmic density produces a partial answer to \cref{q_logarithmic_density_Kneser} but falls short of a full characterization.

It is natural to wonder if there exists a quantitative version of \cref{thm_Kneser_for_integers}. We attempt a possible statement below.

\begin{Question}[Finitary form of Kneser's theorem]
Is it true that for every $\epsilon>0$ and $N_0\in\N$ there exist $D=D(\epsilon,N_0)>0$ and $N_1=N_1(\epsilon,N_0)\in\N$ such that the following holds: 
For any $\alpha,\beta>0$, any $N\geq N_1$, and any sets $A,B\subset [N]$ such that 
\[
\min_{N_0< n\leq N} \frac{|A\cap [n]|}{n}\geq \alpha\qquad\text{and}\qquad \min_{N_0< n\leq N} \frac{|B\cap [n]|}{n}\geq \beta,
\]
and $B$ intersects every residue class mod $D$, we have
\[
\frac{|(A+B)\cap[N]|}{N}\geq \alpha+\beta-\epsilon.
\]
\end{Question}

The next question concerns an extension of Kneser's theorem to higher dimensions. For simplicity, we restrict to the $2$-dimensional case, but from there it is straightforward to infer the analogous question for higher dimensions.
We define
\[
\liminf_{n,m\to\infty} a_{n,m}= \lim_{N\to\infty}\Bigg(\inf_{\substack{n\geq N\\m\geq N}} a_{n,m}\Bigg).
\]
The \define{lower density} of a set $A\subset \N^2$ is given by
\[
\underline{d}(A)=\liminf_{n,m\to\infty} \frac{|A\cap ([n]\times [m])|}{nm}.
\]
Note that all finite-index subgroups of $\Z^2$ are of the form $H_{a,b,c}=\{(an+bm,cm): m,n\in\Z\}$ for some $a,c\in\N$ and $b\in\{0,1,\ldots,a-1\}$.

\begin{Question}[Kneser's theorem in $\N^2$]
If $A, B \subseteq \N^2$ are non-empty sets then either
\begin{equation*}
\underline{d}(A+B)\geq \underline{d}(A)+\underline{d}(B),
\end{equation*}
or there exists $H=H_{a,b,c}\cap (\N\times \N)$ for some $a,c\in\N$ and $b\in\{0,1,\ldots,a-1\}$ such that $A+B$ equals, up to finitely many elements, a finite union of translates of $H$ and $\underline{d}(A+B)=d(A+H)+d(A+H)-d(H)$.
\end{Question}

As with \cref{q_logarithmic_density_Kneser} above, results of Griesmer \cite[Theorem 2.7 and Theorem 3.6]{Griesmer26arXiv} give nontrivial information about sets $A$ and $B$ satisfying $\underline{d}(A+B) < \underline{d}(A) + \underline{d}(B)$.
However, it does not appear that these results are sufficient to reach the very strong conclusion that $A+B$ is equal to a finite union of translates of a subgroup $H$ up to finitely many elements.

\subsection{More on inverse theorems for sumsets in the integers}

We begin with a question regarding a possible generalization of \cref{thm: main}.

\begin{Question}
\label{q_1}
Suppose $A,B\subset\N$ with $\underline{d}(A)>0$ and assume that $B$ meets every residue class in $\N$.
If $\underline{d}(A+B) = \underline{d}(A) + \underline{d}(B) < 1$, what can be said about the structure of $A$ and $B$?
\end{Question}

The difference between \cref{thm: main} and \cref{q_1} is that in the latter we do not assume that $A$ and $B$ have density. This brings the setting of \cref{q_1} closer to that of Kneser's theorem in the integers (\cref{thm_Kneser_for_integers}), where no assumptions on the existence of the densities are made.
The following example illustrates that without this assumption on the existence of the density, new phenomena emerge that did not show up in \cref{thm: main}.

\begin{Example}
Define sequences $(a_n)_{n \geq 0}$ and $(H_n)_{n \geq 0}$ recursively as follows:
    \begin{itemize}
        \item $a_0 = 0, H_0 = 1$;
        \item For each $n \geq 0$, let $a_{n+1} = 3(H_0 + H_1 + \dots + H_n) + 1$ so that
            \begin{equation} \label{eq: a_n}
                \frac{H_0 + H_1 + \dots + H_n}{a_{n+1}-1} = \frac{1}{3}.
            \end{equation}
        \item Choose $H_{n+1} \in \N$ such that $a_{n+1} = o(H_{n+1})$.
    \end{itemize}
Then the set $C = \N \cap \bigcup_{n \geq 0} [a_n, a_n + H_n)$ has $\underline{d}(C) = \frac{1}{3}$ and $\underline{d}(C + C) = \frac{2}{3}$.
Indeed, the fact that $\underline{d}(C) = \frac{1}{3}$ follows immediately from \eqref{eq: a_n}.
    By \cref{thm_Kneser_for_integers}, we know $\underline{d}(C + C) \geq 2 \underline{d}(C) = \frac{2}{3}$.
    To see that $\underline{d}(C + C) \leq \frac{2}{3}$, observe that
    \begin{equation*}
        C + C = \N \cap \bigcup_{n \geq 0} \left( \bigcup_{0 \leq m \leq n} [a_n + a_m, a_n + a_m + H_n + H_m) \right)
        \subset \N \cap \bigcup_{n \geq 0} [a_n, 2a_n + 2H_n),
    \end{equation*}
    so
    \begin{equation*}
        \underline{d}(C + C) \leq \lim_{N \to \infty} \frac{1}{a_{N+1}-1} \sum_{n=0}^N (a_n + 2H_n) = \lim_{N \to \infty} \frac{(2 + o(1))(H_0 + H_1 + \dots + H_N)}{a_{N+1}-1} = \frac{2}{3}.
    \end{equation*}
\end{Example}

Whilst our main result deals with the equality case $d_{\mathbf{N}}(A+B) = d(A) + d(B)$, it is natural to ask what can be said in the regime of near equality $d_{\mathbf{N}}(A+B) = d(A) + d(B) + \Oh(\epsilon)$. The following question inquires about this stability version of our result.

\begin{Question}[Stability of Theorem~\ref{thm: main}]
Is it true that for every $\alpha, \beta > 0$ with $\alpha + \beta < 1$ and every $\epsilon > 0$ there exists $\delta > 0$ such that the following holds:

Let $A, B \subset \N$ with $d(A) =\alpha$, $d(B)=\beta$, and suppose that $B$ meets every residue class in $\N$.
If $\mathbf{N} = (N_s)_{s \in \N}$ is a sequence of natural numbers with $N_s \to \infty$ and
\[
d(A) + d(B) \leq d_{\mathbf{N}}(A+B) \leq d(A) + d(B) +\delta,
\]
then there exist sets $A',B'\subset\N$ and a subsequence $\mathbf{N}'$ of $\mathbf{N}$ with
\[
d_{\mathbf{N}'}(A'+B') =d(A')+d(B'),\quad d_{\mathbf{N}'}(A'\triangle A)\leq\epsilon \quad\text{and}\quad d_{\mathbf{N}'}(B'\triangle B)\leq\epsilon.
\]
\end{Question}

We continue with a question closer to, but still weaker than, the original problem posed by \Erdos{}--Graham (\cref{proplem_335}).

\begin{Question}
\label{q_2}
Let $A, B \subseteq \N$ be subsets of the positive integers with the property that for all $a,b\in\N$ the densities $d(A \cap (a\N+b))$ and $d(B \cap (a\N+b))$ exist, and assume that one of the sets has positive density, say $d(A)>0$. 
If $\underline{d}(A+B) = d(A) + d(B)<1$, what can be said about the structure of $A$ and $B$?
\end{Question}

Note that imposing the existence of $d(A \cap (a\N+b))$ and $d(B \cap (a\N+b))$ is a different, and arguably less restrictive, regularity assumption than our assumption in \cref{thm: main} that $B$ meets every residue class. An illustrative example of a set $A\subset\N$ such that
$d(A \cap (a\N+b))$ exists for all $a,b\in\N$ and  $d(A+A)=2d(A)$ was constructed by Griesmer \cite[Example~3.2]{Griesmer13}.

\bigskip

In \cref{prop_ctmn}, we encounter sets $A, B \subseteq \N$ such that $A$ has positive density $d(A) \in (0,1)$, $B$ meets every residue class in $\N$, and $d(A+B)=d(A)$.
Our next question concerns potential additional properties the set $B$ must exhibit in such a situation.
Say that a set $B \subseteq \N$ is a \emph{$d$-essential component} if for every $A \subseteq \N$ with $d(A) \in (0,1)$, one has $\underline{d}(A+B) > d(A)$.
Essential components with respect to Schnirelmann density and lower density have been studied in the past, and we refer to the reader to \cite{Ruzsa87} for results on the asymptotics of essential components in the sense of Schnirelmann and lower density.
The sets $B$ appearing in \cref{prop_ctmn} are \emph{not} $d$-essential components.

\begin{Question}
\label{question_d-essential_component}
    Is every set with full upper density $\overline{d}(B) = 1$ a $d$-essential component?
\end{Question}

Finally, we inquire about an analogue of Freiman's theorem for infinite sets in the integers.

\begin{Question}[Freiman's theorem for density]
    Let $\alpha \in (0,1)$ and $K \in [1, \alpha^{-1})$.
    Suppose $A \subseteq \N$ is a set with density $d(A) = \alpha$ satisfying $d(A+A) \leq K \alpha$.
    Must $A$ be contained in a Bohr set of dimension $\ll_K 1$ and density $\ll_K \alpha$?
\end{Question}

%==========================================================
%==========================================================

%BIBLIOGRAPHY
%\bibliographystyle{plain}
%\bibliographystyle{aomplain}
\bibliographystyle{aomalphanomr}
\bibliography{mynewlibrary}

%\printbibliography

%\begin{thebibliography}{9}
%\bibitem{}
%\end{thebibliography} 

%==========================================================
%==========================================================

%AFFILIATION
\bigskip
\footnotesize
\noindent
Ethan Ackelsberg\\
\textsc{{\'E}cole Polytechnique F{\'e}d{\'e}rale de Lausanne (EPFL)}\\
\href{mailto:ethan.ackelsberg@epfl.ch}
{\texttt{ethan.ackelsberg@epfl.ch}}

\bigskip
\footnotesize
\noindent
Florian K.\ Richter\\
\textsc{{\'E}cole Polytechnique F{\'e}d{\'e}rale de Lausanne (EPFL)}\\
\href{mailto:f.richter@epfl.ch}
{\texttt{f.richter@epfl.ch}}

%END DOCUMENT
\end{document}